\documentclass{article}
\usepackage{psfrag}
\usepackage{float, graphicx}
\usepackage[]{epsfig}
\usepackage{amsmath, amsthm, amssymb,}
\usepackage{epsfig}
\usepackage{verbatim}
\usepackage{multicol}
\usepackage{url}
\usepackage{latexsym}
\usepackage{mathrsfs}
\usepackage{graphicx}
\usepackage{amsmath}
\usepackage{enumerate}
\usepackage[normalem]{ulem}
\usepackage{bm}
\usepackage{dsfont}
\usepackage{stmaryrd}
\usepackage{enumitem}
\usepackage{tikz}
\usepackage{cite}
\usepackage{color}
\usepackage{xcolor}

\usepackage{titlesec}

\titleformat{\subsubsection}[runin]
{\normalfont\normalsize\bfseries}
{\thesubsubsection}
{0.5em}
{}
[.]

\makeatletter

\def\@settitle{\begin{center}%
    \bfseries
 \normalfont\LARGE\@title
  \end{center}%
}
\def\@setauthors{\begin{center}%
 \normalsize\@author
  \end{center}%
}

\makeatother

\numberwithin{equation}{section}

\renewcommand{\cal}{\mathcal}
\newcommand\cA{{\mathcal A}}
\newcommand\cB{{\mathcal B}}
\newcommand{\cC}{{\cal C}}

\newcommand{\cL}{{\cal L}}
\newcommand{\cT}{{\cal T}}

\newcommand{\cP}{{\cal P}}
\newcommand{\cQ}{{\cal Q}}
\newcommand{\cR}{{\mathcal R}}
\newcommand{\cS}{{\mathcal S}}
\newcommand{\cU}{{\mathcal U}}

\newcommand\cW{{\mathcal W}}
\newcommand{\cX}{{\mathcal X}}

\newcommand{\sfX}{{\mathsf X}}
\newcommand{\sfC}{{\mathsf C}}

\newcommand{\sfD}{{\mathsf D}}

\newcommand{\sfS}{{\mathsf S}}

\newcommand{\sfT}{{\mathsf T}}
\newcommand{\sfL}{{\mathsf L}}
\newcommand{\sfR}{{\mathsf R}}

\newcommand{\fa}{{\mathfrak a}}
\newcommand{\fs}{{\mathfrak s}}

\newcommand{\fc}{{\mathfrak c}}

\newcommand{\fR}{{\frak R}}
\newcommand{\fC}{{\frak C}}
\newcommand{\fU}{{\frak U}}
\newcommand{\fV}{{\frak V}}

\newcommand{\bmt} {{\bm t }}
\newcommand{\bms} {{\bm s}}

\newcommand{\bmv}{{\bm{v}}}

\newcommand{\bmx}{{\bm{x}}}

\newcommand{\bmz} {{\bm {z}}}

\newcommand{\rd}{{\rm d}}

\newcommand{\ri}{\mathrm{i}}

\newcommand{\bC}{{\mathbb C}}

\newcommand{\bE}{\mathbb{E}}

\newcommand{\bN}{\mathbb{N}}
\newcommand{\bP}{\mathbb{P}}
\newcommand{\bI}{\mathbb{I}}
\newcommand{\bQ}{\mathbb{Q}}
\newcommand{\bR}{{\mathbb R}}

\newcommand{\bZ}{\mathbb{Z}}

\newcommand{\al}{\alpha}

\newcommand{\la}{\lambda}

\newcommand{\md}{m_d}
\newcommand{\msc}{m_{\rm sc}}

\DeclareMathOperator{\Tr}{Tr}

\DeclareMathOperator{\supp}{supp}

\DeclareMathOperator{\OO}{O}
\DeclareMathOperator{\oo}{o}

\DeclareMathOperator{\Ai}{Ai}

\renewcommand{\Re}{\mathop{\mathrm{Re}}}
\renewcommand{\Im}{\mathop{\mathrm{Im}}}
\renewcommand{\le}{\leq}
\renewcommand{\ge}{\geq}

\renewcommand{\leq}{\leqslant}
\renewcommand{\geq}{\geqslant}

\newcommand{\del}{\partial}

\newcommand{\qq}[1]{[\![{#1}]\!]}

\newcommand{\beq}{\begin{equation}}
\newcommand{\eeq}{\end{equation}}

\newcommand{\abs}[1]{\lvert #1 \rvert}

\usepackage[
  colorlinks=true,
]{hyperref}

\usepackage{aliascnt}

\newtheorem{theorem}{Theorem}[section]
\newtheorem*{theorem*}{Theorem}

\newaliascnt{lemma}{theorem}
\newtheorem{lemma}[lemma]{Lemma}
\aliascntresetthe{lemma}
\newtheorem*{lemma*}{Lemma}

\newaliascnt{corollary}{theorem}
\newtheorem{corollary}[corollary]{Corollary}
\aliascntresetthe{corollary}
\newtheorem*{corollary*}{Corollary}

\newaliascnt{proposition}{theorem}
\newtheorem{proposition}[proposition]{Proposition}
\aliascntresetthe{proposition}
\newtheorem*{proposition*}{Proposition}

\newaliascnt{assumption}{theorem}
\newtheorem{assumption}[assumption]{Assumption}
\aliascntresetthe{assumption}
\newtheorem*{assumption*}{Assumption}

\newaliascnt{claim}{theorem}

\aliascntresetthe{claim}

\newaliascnt{conjecture}{theorem}
\newtheorem{conjecture}[conjecture]{Conjecture}
\aliascntresetthe{conjecture}

\newaliascnt{definition}{theorem}
\newtheorem{definition}[definition]{Definition}
\aliascntresetthe{definition}
\newtheorem*{definition*}{Definition}

\newaliascnt{example}{theorem}

\aliascntresetthe{example}
\newtheorem*{example*}{Example}

\newaliascnt{remark}{theorem}
\newtheorem{remark}[remark]{Remark}
\aliascntresetthe{remark}
\newtheorem*{remark*}{Remark}

\newaliascnt{remarks}{theorem}

\aliascntresetthe{remarks}
\newtheorem*{remarks*}{Remarks}

\usepackage{cleveref}

\crefname{theorem}{Theorem}{Theorems}
\crefname{lemma}{Lemma}{Lemmas}
\crefname{proposition}{Proposition}{Propositions}
\crefname{assumption}{Assumption}{Assumptions}
\crefname{definition}{Definition}{Definitions}
\crefname{conjecture}{Conjecture}{Conjectures}
\crefname{corollary}{Corollary}{Corollaries}
\crefname{remark}{Remark}{Remarks}

\def\author#1{\par
    {\centering{\authorfont#1}\par\vspace*{0.05in}}
}

\def\titlefont{\fontsize{13}{15}\bfseries\boldmath\selectfont\centering{}}
\def\authorfont{\fontsize{13}{15}}

\let\affiliationfont\rhfont

\def\address#1{\par
    {\centering{\affiliationfont#1\par}}\par\vspace*{11pt}
}

\def\body{
\setcounter{footnote}{0}
\def\thefootnote{\alph{footnote}}
\def\@makefnmark{{$^{\rm \@thefnmark}$}}
}

\def\title#1{
    \thispagestyle{plain}
    \vspace*{-14pt}
    \vskip 79pt
    {\centering{\titlefont #1\par}}%
    \vskip 1em
}

\begin{document}

~\vspace{1.2cm}

\title{Loop Equations Characterize Random Matrix Statistics}

\vspace{1.2cm}

\noindent  \begin{minipage}[c]{0.49\textwidth}
 \author{Paul Bourgade}
\address{Courant Institute,  NYU\\
   bourgade@cims.nyu.edu}
\end{minipage}
 \begin{minipage}[c]{0.5\textwidth}
 \author{Jiaoyang Huang}
\address{University of Pennsylvania\\
   huangjy@wharton.upenn.edu}
\end{minipage}

\vspace{1.2cm}

\begin{abstract}
We prove that the universal local point processes of random matrix theory are characterized by their loop equation hierarchies. More precisely, for every rational $\beta>0$, the $\mathrm{Sine}_{\beta}$ point process is the unique solution of the bulk loop equation hierarchy, and the $\mathrm{Airy}_{\beta}$ point process is the unique solution of the edge loop equation hierarchy.

These uniqueness results provide a direct route to universality: it suffices to verify the corresponding approximate loop equations for the ensemble. In many models,  these equations follow from local laws and integration by parts.

\end{abstract}

\vspace{1.2cm}

{
  \hypersetup{linkcolor=black}
\setcounter{tocdepth}{2}
\tableofcontents
}

\newpage

\section{Introduction}

A central theme in random matrix theory is \emph{universality}: after a microscopic scaling, local eigenvalue statistics in
the bulk and at regular edges depend only on the symmetry class of the model,
or more generally on the parameter $\beta$, and not on the microscopic details
of the underlying distribution or potential.

This prediction, originally formulated by Wigner,  was extended to non-spectral correlated systems,  as demonstrated by several paradigmatic integrable models.
Examples include the Tracy--Widom fluctuations of the longest increasing
subsequence of a uniform random permutation \cite{MR1682248} and
integrable models of directed last-passage percolation and random growth
\cite{SFRM}, which belong to the broader one-dimensional KPZ universality
class  \cite{prahofer2002scale,corwin2012kardar,quastel2015one}. 

Random-matrix predictions also play a central role in two major conjectural
settings. The high-lying zeros of the Riemann zeta function are expected 
to exhibit GUE local statistics,  beginning with Montgomery's
pair-correlation conjecture and supported by Odlyzko's numerical computations
\cite{montgomery1973pair,odlyzko1987distribution}. In quantum chaos, the
Bohigas--Giannoni--Schmit conjecture predicts that quantum systems whose
classical dynamics are sufficiently chaotic have local spectral statistics
described by the appropriate Wigner--Dyson ensemble, determined by the
symmetries of the system \cite{bohigas1984characterization}. In contrast with
the preceding random-matrix and integrable-probability examples, these
conjectural directions remain open.\\

 We recall two major approaches to
proving the universality of random matrix statistics.  The first is based on exact algebraic formulas.  For
unitary invariant ensembles, the eigenvalues form determinantal point
processes, and the asymptotic analysis of the associated
Christoffel--Darboux kernels leads to the sine-kernel process in the bulk and
the Airy-kernel process at a regular soft edge.  For orthogonal and symplectic
ensembles, analogous Pfaffian, or quaternion determinantal, formulas play the
same role.  These exact structures, together with orthogonal polynomial and
Riemann--Hilbert steepest-descent methods, have provided one of the most
powerful routes to local random matrix statistics; see, for example,
\cite{mehta2004random,deift2000orthogonal,deift2009random,akemann2011oxford}.

A second major route is comparison with integrable ensembles and their perturbations, such as Gaussian and Gaussian-divisible ensembles.  In the setting of Wigner matrices,  
this approach led to the dynamical three-step scheme developed by
Erd\H{o}s, Schlein, Yau, Yin, and collaborators
\cite{MR2810797,MR2919197,erdHos2017dynamical}: 
\begin{itemize}
    \item[(i)] establish a local semicircle law, together with eigenvalue
    rigidity and eigenvector delocalization estimates
    \cite{erdHos2009local,erdHos2012rigidity};

    \item[(ii)] prove universality after a short Gaussian evolution, equivalently
    for Gaussian divisible ensembles.  In the complex Hermitian case, this can
    be done using exact determinantal formulas for GUE-divisible ensembles
     \cite{johansson2001universality,erdHos2010bulk}.  For general universality classes,  it follows
    from the short-time relaxation of the Dyson Brownian motion
    \cite{MR2810797,MR2919197,landon2019fixed},   identifying local statistics by comparison with the integrable
ensembles;
    \item[(iii)] remove the Gaussian component by a perturbative, density argument.  For
    sufficiently regular entry distributions this can be done by the reverse
    heat flow argument  \cite{erdHos2010bulk}.  More robust comparison methods include
    the four moment theorem of Tao and Vu
    \cite{MR2669449, tao2011random},  the Green function comparison theorem
   \cite{erdHos2012bulk} and the continuity along matrix dynamics \cite{MR3606475}.  
\end{itemize}

 This comparison approach through dynamics applies beyond Wigner matrices,  to prove
random matrix universality for random graphs,   band matrices,  $\beta$-ensembles,  non-centered  models with variance profiles,  convolution models appearing in free probability among others, 
see e.g.  \cite{YauICM}.  
A different comparison mechanism,  based on approximate transport maps,  provides alternative proofs of universality for $\beta$-ensembles \cite{MR3390602,bekerman2015transport} and also applies to perturbative multi-matrix models  \cite{MR3646880}.\\

 The purpose of the present paper is to develop a different point of view.
Rather than proving universality by exact algebraic formulas for correlation functions,  or by reducing a model to an exactly solvable
reference ensemble, we seek an intrinsic characterization of the limiting point
processes themselves,  through a BBGKY-type hierarchy.  Our characterization is expressed through \emph{loop
equations}.  Loop equations, also known as Dyson--Schwinger equations, master
equations, or Ward identities, originate from { linear responses,} invariance principles or
integration by parts identities.  They appeared in the theoretical physics
literature in the context of gauge theory and matrix models, for instance in
the work of 't Hooft, Makeenko--Migdal and Migdal  
\cite{t1974planar, makeenko1979exact,migdal1983loop}.  They were later introduced into the
mathematical analysis of random matrices by Johansson
\cite{johansson1998fluctuations}, who used them to derive macroscopic central
limit theorems for $\beta$-ensembles.  Loop-equation methods have since become
a standard tool in the study of global fluctuations and large-$N$ expansions of
log-gases and matrix models; see, for example,
\cite{shcherbina2013fluctuations,borot2013asymptotic,borot2015large,MR2346575}
and the monograph \cite{guionnet2019asymptotics}. 

For a $\beta$-ensemble with $N$ particles and  potential $V$, the loop equation hierarchy is a
system of identities for resolvent observables, or equivalently for cumulants of Stieltjes transforms of the eigenvalue
process.  
This hierarchy encodes the interplay between the logarithmic repulsion, the
confining potential $V$, and the finite-size correction terms.

Although the finite-$N$ loop equations are model-dependent, their microscopic
limits are universal.   When one zooms into the bulk at the scale of the
mean eigenvalue spacing, or into a regular spectral edge at the $N^{-2/3}$
scale, the rescaled loop equations converge to universal limiting hierarchies.
We call these limiting identities the \emph{bulk loop equations}
\eqref{e:loopeq2-bulk} and the \emph{edge loop equations}
\eqref{e:loopeq2-edge}.  Unlike the finite-$N$ loop equations, these
microscopic equations no longer contain the original potential.  They are
universal equations for the limiting local point processes.

Our main results show that the universal microscopic loop equations do not
merely hold for the $\beta$-random matrix limits, but 
characterize them. More precisely, for every rational $\beta>0$, we prove that
the $\mathrm{Sine}_{\beta}$ point process is the unique solution of the bulk
loop equation hierarchy, and that the $\mathrm{Airy}_{\beta}$ point process is
the unique solution of the edge loop equation hierarchy. These processes arise, respectively, as the universal bulk and
edge limits of Gaussian $\beta$-ensembles  \cite{valko2009continuum,ramirez2011beta}.

This result is analogous in spirit to Stein's method.  The standard Gaussian
distribution is characterized by the integration by parts identity
\begin{equation*}
    \mathbb E[X f(X)] = \mathbb E[f'(X)]
\end{equation*}
for a suitable class of test functions $f$.  In our setting, the role of
Stein's identity is played by the loop equation hierarchy.  
These identities are again derived by integration by parts, or equivalently by infinitesimal changes of variables, 
but now in the setting of the one-dimensional logarithmic gas and for a specific family of test functions, namely moments of Stieltjes transforms.\\

The key point is the passage from identities to characterization, which turns the approximate loop equations into a convergence
criterion for universal local statistics of random matrices and related models.
In the bulk, verifying the approximate loop equations yields convergence to the
$\mathrm{Sine}_{\beta}$ point process. At the edge,  the analogous criterion  yields convergence to the \(\mathrm{Airy}_{\beta}\) point process,
and hence to the Tracy--Widom\(_\beta\) law for the top eigenvalue. These
equations can often be verified by integration by parts, or by a discrete
analogue thereof, with the local law as the main input.

To demonstrate the scope of this approach beyond invariant ensembles, we apply
our characterization to random \(d\)-regular graphs. We prove bulk universality
for random \(d\)-regular graphs on \(N\) vertices with degree \(d\) growing
polylogarithmically in \(N\), and we show that the same characterization
principle significantly simplifies the proof of edge universality for
fixed-degree random \(d\)-regular graphs in \cite{huang2024ramanujan}.

\subsection{Main results}
Throughout the paper, we fix $\beta>0$. The $\mathrm{Sine}_{\beta}$ point
process is the universal bulk scaling limit of the Gaussian $\beta$-ensemble,
characterized by Valk{\'o} and Vir{\'a}g through the Brownian carousel
construction \cite{valko2009continuum}. We use the normalization in which its
intensity is $1/\pi$, or equivalently, the local mean spacing is $\pi$.

The $\mathrm{Airy}_{\beta}$ point process is the universal soft-edge scaling
limit of the Gaussian $\beta$-ensemble, characterized by Ram{\'i}rez, Rider,
and Vir{\'a}g through the spectrum of the stochastic Airy operator
\cite{ramirez2011beta}. We use the reflected convention in which the Airy
points are ordered decreasingly and extend to $-\infty$, matching the usual
ordering of the negative zeros of the Airy function. In this normalization,
the one-point density has the standard square-root left-tail asymptotics with
leading constant $1/\pi$.

We regard both $\mathrm{Sine}_{\beta}$ and $\mathrm{Airy}_{\beta}$ as locally
finite random point configurations on $\bR$. We write a configuration as
$\bmx=\{x_i\}_{i\in\bI}$, where $\bI=\bZ$ in the bulk case and
$\bI=\bN$ in the edge case. In the bulk case the particles are indexed
in increasing order, while in the edge case they are indexed in decreasing
order, so that $x_1\geq x_2\geq \cdots$ and $x_i\to -\infty$. Finally, we denote by
$\bC_+$ and $\bC_-$ the open upper and lower half-planes, respectively.

\begin{definition}
A holomorphic function \(s:\bC_+\to \bC_+\cup \bR\) is called a \emph{Nevanlinna function}. Such functions are also called \emph{Herglotz} or \emph{Pick} functions in the
literature.
Any Nevanlinna function \(s\) admits the integral representation
\begin{align}\label{e:Nevanlinna_representation}
s(w)=b+cw+\int_\bR\left( \frac{1}{x-w}-\frac{x}{1+x^2}\right)\rd \mu(x),
\end{align}
where \(b,c\in \bR\), \(c\ge 0\), and \(\mu\) is a Borel measure satisfying
\[
\int_\bR \frac{\rd \mu(x)}{1+x^2}<\infty.
\]
\end{definition}

Nevanlinna functions form a subclass of
$
\mathcal O(\bC_+):=\{f:\bC_+\to \bC \text{ holomorphic}\}$.
Whenever appropriate, we extend \(f\in \mathcal O(\bC_+)\) to the lower half-plane by Schwarz reflection,
$
f(\overline w)=\overline{f(w)}$,
and interpret boundary values on \(\bR\) as non-tangential limits.

\begin{definition}[Configuration]
A \emph{configuration} on \(\bR\) is a Radon measure of the form
\[
\mu=\sum_{x\in P}\delta_x,
\]
where \(P\) is a finite or countable multiset in \(\bR\), such that
\[
\mu(K)<\infty \qquad \text{for every compact set } K\subset \bR.
\]
Equivalently, \(\mu\) is a purely atomic Radon measure whose atoms have positive integer masses and such that every bounded set contains only finitely many atoms.
\end{definition}

\begin{definition}\label{def:s}
We say that a Nevanlinna function \(s\) is \emph{particle-generated} if the measure \(\mu\) in the representation \eqref{e:Nevanlinna_representation} is a configuration.
In this case, \(s\) extends to a meromorphic function on \(\bC\), satisfies
$
s(\overline w)=\overline{s(w)}$,
and its poles are precisely the atoms of \(\mu\), each with positive integer residue.\footnote{More precisely, if \(x\in \bR\) appears in \(P\) with multiplicity \(m\), then the residue of \(s\) at \(x\) is \(m\).}
\end{definition}

In particular, when \(P\) is finite, the Stieltjes transform
\[
\sum_{x\in P}\frac{1}{x-w}
\]
is a particle-generated Nevanlinna function, corresponding to the choice \(c=0\) and
$
b=\int_{\mathbb R}{x}/({1+x^2})\,\rd\mu(x).
$

When \(P\) is infinite,   well-balanced,  in the bulk case one could define the Stieltjes transform as a principal value.  We work with the broader class of 
particle-generated Nevanlinna functions instead,  for three reasons. First,  it allows a more unified treatment of loop equations in the bulk and at the edge.  
Second,  Nevanlinna functions are stable under locally uniform limits.
Third,  the Stieltjes transform is 
a basic observable in the random matrix setting which may not be natural in other contexts.
Our characterization criterion applies to general holomorphic observables \(s:\bC_+\to \bC_+\cup \bR\).

\begin{assumption}[Bulk loop equations]\label{a:bulk}
We assume that \(s(w)\) is a particle-generated Nevanlinna function, in ${\rm L}^{q}$ for any $q>0$.
For every integer \(p\ge 1\) and every
\(w_1,\dots,w_p\in\bC_+\cup\bC_-\), \(s\) satisfies the loop equation
\begin{align}\label{e:loopeq2-bulk}
\bE\left[
\left(
\frac{2-\beta}{\beta}\,\del_{w_1}s(w_1)
+s(w_1)^2+1
\right)
\prod_{j=2}^p s(w_j)
\right]
+\frac{2}{\beta}\,
\bE\left[
\sum_{j=2}^p
\del_{w_j}
\frac{s(w_1)-s(w_j)}{w_1-w_j}
\prod_{\substack{\ell=2\\ \ell\neq j}}^p s(w_\ell)
\right]
=0.
\end{align}
When \(p=1\), the empty product is interpreted as \(1\), and the second
term is absent. When \(w_j=w_1\), the corresponding difference quotient
is interpreted by its holomorphic extension to the diagonal; in particular,
\[
\left.
\del_{w_j}
\frac{s(w_1)-s(w_j)}{w_1-w_j}
\right|_{w_j=w_1}
=
\frac12\,\del_{w_1}^2s(w_1).
\]
\end{assumption}

\begin{theorem}\label{t:bulk-characterization}
Assume that \(\beta>0\) is rational. Under \Cref{a:bulk}, the poles of \(s\) define a point process on \(\bR\), and this point process has the same law as the $\mathrm{Sine}_{\beta}$ point process.
\end{theorem}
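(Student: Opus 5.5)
We propose to prove \Cref{t:bulk-characterization} by showing that the loop equations \eqref{e:loopeq2-bulk} determine all mixed moments $\bE[\prod_{j=1}^p s(w_j)]$, $w_j\in\bC_+\cup\bC_-$. Since $\mathrm{Sine}_\beta$ itself satisfies \Cref{a:bulk} (as a microscopic limit of Gaussian $\beta$-ensembles, whose finite-$N$ loop equations converge), it suffices to prove uniqueness. We then conclude that the law of $s$, and hence of its poles, is that of $\mathrm{Sine}_\beta$. For the last step we argue as follows. The moments $\bE[\prod s(w_j)]$ with $w_j$ ranging over both half-planes are $\mathrm L^q$-controlled for all $q$. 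They determine the joint moments of the real and imaginary parts of $s$ at finitely many points, and these characterize the finite-dimensional distributions of the random meromorphic function. For the moment problem we use the growth bounds on $\bE|s|^q$ together with a Carleman-type condition, or we restrict to $w$ with large imaginary part and use analyticity. The poles are then recovered from boundary values of $\Im s$, giving the point process.

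\textbf{Uniqueness.} We use the hierarchy structure. Set $F_p(w_1,\dots,w_p)=\bE[\prod_j s(w_j)]$. The equation at level $p$ reads
\[
\frac{2-\beta}{\beta}\partial_{w_1}F_p+\bE\Big[s(w_1)^2\prod_{j\ge2}s(w_j)\Big]+F_{p-1}+\frac{2}{\beta}\sum_{j}\partial_{w_j}(\text{difference quotients of }F_{p-1})=0 .
\]
Here the quadratic term is $F_{p+1}$ restricted to the diagonal $w_0=w_1$. The hierarchy is therefore not closed: each equation involves a higher correlation. To handle this, we take the difference $G_p$ of two solutions, which satisfies the homogeneous linear hierarchy, and look for a contraction or propagation estimate.

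The natural choice is to work with $w$ far from the real axis. As $\Im w_1\to\pm\infty$ we have $s(w)\to \mp\ri$: for the translation-invariant limit, $s^2+1\to0$ is forced by the $p=1$ equation. We would expand $s(w)=\mp\ri+\tilde s(w)$, with $\tilde s$ decaying like $e^{-c|\Im w|}$ or in a controlled power. The equations then become a linear ODE in $w_1$ for the leading quantity, $\frac{2-\beta}{\beta}\partial_{w_1}\tilde F \mp 2\ri \tilde F=\dots$, with lower-order terms coming from $\tilde s^2$ and $F_{p-1}$. Integrating from $\Im w_1=\infty$, where the boundary data are fixed, expresses each level in terms of the others with small coefficients. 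This yields uniqueness on a region far from the axis, and analytic continuation extends it to all of $\bC_+\cup\bC_-$.

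\textbf{Role of rationality and the main obstacle.} The delicate point is that the term $\tilde s^2$ couples to higher moments, so the naive induction on $p$ does not close. This is where rationality of $\beta=a/b$ should enter. We expect that multiplying suitable observables, for instance considering $\prod s(w_j)$ raised to multiples of $b$ or a Fourier/Jack-type combination, yields a finite closed subsystem. This subsystem should be solvable as a linear system whose coefficient matrix is triangular in a degree grading, for example in the total degree of monomials in $\tilde s$ and their derivatives with respect to the frequency in $w$.

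We expect the main obstacle to be exactly this closure: controlling the diagonal term $\bE[s(w_1)^2\cdots]$ in terms of lower-order data. The first approach to try is a Fourier analysis in the real parts of the $w_j$ using translation invariance. The ODE $\frac{2-\beta}{\beta}\partial_w$ against multiplication by $\pm2\ri$ produces characteristic frequencies that are integer combinations of $\beta$-dependent rates, and resonances among these frequencies are exactly controlled, and only finitely many, when $\beta$ is rational. A nondegeneracy of the resulting resonance conditions then gives uniqueness of the Fourier coefficients degree by degree.
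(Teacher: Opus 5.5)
Your outer frame agrees with the paper. $\mathrm{Sine}_\beta$ satisfies the hierarchy (\Cref{t:bulk_loop}), so only uniqueness is at stake, and the resolvent correlations $\bE[s(w_1)\cdots s(w_k)]$ determine the law. Note, though, that the growth needed for a Carleman argument is not part of \Cref{a:bulk}; it has to be derived, as in \Cref{p:bulk_concentration}.

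The genuine gap is the uniqueness step itself. Your scheme never produces a closed system, and the remedies you suggest do not supply one:
\begin{itemize}
\item \emph{Translation invariance is not a hypothesis.} The system of equations is translation invariant, but an individual solution is not known to be before uniqueness is proved. So Fourier analysis in $\Re w_j$ is not available.
\item \emph{Integrating from $\Im w_1=+\infty$ fails.} First, since $\Im s\ge 0$ on $\bC_+$, one has $s(w)\to+\ri$ there, not $-\ri$. Writing $s=\ri+\tilde s$, the linear part is $\frac{2-\beta}{\beta}\partial_{w_1}\tilde F+2\ri\tilde F$. For $\beta=2$ there is no derivative at all. For $\beta>2$ the homogeneous solution $e^{-2\ri\beta w_1/(2-\beta)}$ decays as $\Im w_1\to+\infty$, so boundary data at infinity do not fix it.
\item \emph{The quadratic term does not close.} $\bE[\tilde s(w_1)^2\prod_{j\ge2}s(w_j)]$ is a diagonal value of the next correlation. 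Since $\tilde s$ decays only like $1/\Im w$, not exponentially (this is the true size of its fluctuations), an expansion in $1/\Im w$ yields at best asymptotic series, never exact identities.
\item \emph{Concentration is not automatic.} $s^2+1\to 0$ is not forced by the $p=1$ equation, which only constrains a mean. The paper obtains it by a moment bootstrap with test polynomials in $s(w),s(\bar w)$.
\end{itemize}

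The missing idea is that the hierarchy linearizes on exponential observables, not on moments. With $S'=s$ one has $(a^{-2}\partial_w^2+1)e^{-aS}=(-a^{-1}s'+s^2+1)e^{-aS}$. Test the hierarchy against $\prod_\ell e^{-a_\ell S(w_\ell)}$. The contact term at $w_1$ adds $-\frac{2a_1}{\beta}s'(w_1)$ to $\frac{2-\beta}{\beta}s'(w_1)$, and $\frac{2-\beta-2a_1}{\beta}=-\frac{1}{a_1}$ holds exactly for $a_1\in\{1,-\beta/2\}$. So the quadratic term is absorbed into a second derivative of the observable.

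The base-point terms of $S(u)=\int_{\pm\ri}^u s$ cancel only if $\sum_\ell a_\ell=0$ within each half-plane. With $n$ exponents $1$ and $m$ exponents $-\beta/2$ this means $n=\beta m/2$, and this is where rationality of $\beta$ enters. The resulting observable $F=\bE[\operatorname{P.V.}\prod_j\prod_r(t_r-x_j)/\prod_a(s_a-x_j)^{\beta/2}]$ then satisfies the closed linear deformed CMS system \eqref{e:eq1}--\eqref{e:eq2}.

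From there you would still need three further steps:
\begin{itemize}
\item the commutator/Gr\"obner argument giving a $2^{n+m}$-dimensional solution space (\Cref{prop:Grobner-bulk});
\item sectorial solutions with leading factor $\exp(\ri\sum_i\epsilon_it_i+\ri\frac{\beta}{2}\sum_a\eta_as_a)$ (\Cref{thm:sectorial-branch-solution});
\item the bounds $|F|\le e^{\Re M+C(L+L^2)}$ and $F=e^{M+\OO(L)}$, derived from the loop-equation local law, to select the admissible branches and fix their coefficients to $1$ (\Cref{p:bulk_connection_formula}).
\end{itemize}
The correlations are then recovered as $(-1)^k\partial_{t_1^{(1)}}\cdots\partial_{t_1^{(k)}}F$ at the diagonal specialization. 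Your hint about integer powers or Jack-type combinations points vaguely toward products of characteristic-polynomial ratios. However, no polynomial test function of $s$ closes the hierarchy, and none of the ingredients above appears in your proposal, so as written it does not prove the theorem.
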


\begin{remark}[Non-uniqueness for $\beta=0$]
In the degenerate limit $\beta\to 0$, the bulk loop-equation hierarchy
\eqref{e:loopeq2-bulk} loses the interaction term and reduces to
\begin{align}
\bE\left[
\del_{w_1}s(w_1)
\prod_{j=2}^p s(w_j)
\right] 
+
\bE\left[
\sum_{j=2}^p 
\del_{w_j}
\frac{s(w_1)-s(w_j)}{w_1-w_j}
\prod_{\substack{\ell=2\\ \ell\neq j}}^p s(w_\ell)
\right]
=0 .
\end{align}
The resulting hierarchy is highly non-unique. In fact, it is satisfied by
any homogeneous Poisson point process of intensity $\lambda>0$. Moreover, since the
identities are linear in the law of the point process, arbitrary convex
mixtures of such solutions are again solutions.
\end{remark}

\begin{remark}[Uniqueness for general hierarchies]
Constraints on the correlations of particle systems at thermal equilibrium, often referred to as sum rules, 
are a basic tool for the study of charged fluids,  see e.g. \cite{Martin}.  
Various families of such constraints, or hierarchies thereof,  such as Dobrushin-Lanford-Ruelle, 
Kirkwood–Salzburg,  Kubo-Martin-Schwinger,  and Bogoliubov–Born–Green–Kirkwood–Yvon (BBGKY),  have been 
instrumental to describe particle systems on mesoscopic and macroscopic scales, often by truncations of the hierarchy.  

How full hierarchies may characterize the microscopic statistics is less understood.  In the case of regular,  short-range interaction, 
existence and uniqueness of solutions holds at low density \cite{Morrey, GallavottiVarboven},  and for integrable interaction \cite{GenoveseSimonella}.
\Cref{App:BBGKY} proves that the bulk loop equations from  \Cref{a:bulk} are equivalent to the BBGKY hierarchy at equilibrium,  for the logarithmic interaction.
 \Cref{t:bulk-characterization} therefore provides one example of sum rules characterizing a Gibbs state in the case of singular,  long-range interactions.
\end{remark}

\begin{remark}[The distribution of $s$ on the real line]
Herglotz--Pick (Nevanlinna) functions also play an important role in \cite{AizWar}, where it is proved that Stieltjes transforms of a wide class configurations,  evaluated at random points on the real line,  have a Cauchy distribution.
\end{remark}

The above theorem allows to prove the following convergence criterion in the bulk.

\begin{theorem}\label{t:bulk-approx-convergence}
Assume that \(\beta>0\) is rational. Let \(s_N\) be a sequence of random particle-generated Nevanlinna functions, and let \(\mu_N\) denote the associated configurations. Assume that there exists a sequence of events \(\Omega_N\) such that
\[
\bP(\Omega_N)=1-\oo_N(1)
\]
and $s_N(w)\bm1({\Omega_N})$ is in ${\rm L}^{q}$ for any $q>0$.
Assume moreover that for every \(p\ge 1\) and every compact set \(K\subset \bC_+\cup \bC_-\),  the following holds uniformly for all \(w_1,w_2,\dots,w_p\in K\):
\begin{align}\begin{split}\label{e:approx-loopeq2-bulk}
&
\Bigg|
\bE\left[
\bm1({\Omega_N})
\left(
\frac{2-\beta}{\beta}\del_{w_1} s_N(w_1)+s_N(w_1)^2+1
\right)
\prod_{j=2}^p s_N(w_j)
\right] \\
&
+\frac{2}{\beta}\bE\left[
\bm1({\Omega_N})
\sum_{j=2}^p
\del_{w_j}
\frac{s_N(w_1)-s_N(w_j)}{w_1-w_j}
\prod_{\substack{\ell=2\\ \ell\neq j}}^ps_N(w_\ell)
\right]
\Bigg|
=\oo_N(1)\cdot\left(1+\bE\left[\bm1(\Omega_N)\cR_N^{p+1}\right]\right),
\end{split}\end{align}
where $\cR_N=\max\{|s_N(w_1)|, |s_N(w_2)|,\cdots, |s_N(w_p)|\}$.
Then \(\mu_N\) converges in distribution, with respect to the vague topology, to the $\mathrm{Sine}_{\beta}$ point process.
\end{theorem}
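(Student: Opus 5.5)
The plan is to deduce \Cref{t:bulk-characterization}'s conclusion for subsequential limits via a compactness argument. \textbf{Step 1 (tightness).} On $\Omega_N$ we have $s_N(w)\in {\rm L}^q$ uniformly in $w$ ranging over compact sets; we will first extract uniform moment bounds. Taking $p=1$ in \eqref{e:approx-loopeq2-bulk} alone does not bound moments, because the right-hand side involves $\bE[\bm 1(\Omega_N)\cR_N^{2}]$. Instead, we will use the ${\rm L}^q$ assumption to show that $\sup_N\bE[\bm1(\Omega_N)|s_N(w)|^q]<\infty$ for each $w\in K$. If the hypothesis only gives finiteness for each $N$, we will bootstrap: apply the loop equation with $p$ large and $w_j=\overline{w_1}$ or $w_j=w_1$, so that the term $\bE[s_N(w_1)^2\prod_j s_N(w_j)]$ is controlled by lower moments plus $\oo(1)\bE[\cR_N^{p+1}]$, giving uniform bounds by induction on $p$. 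Since $\Im s_N(w)\ge \Im w\cdot\sum_x |x-w|^{-2}$, a uniform bound on $\bE[\bm1(\Omega_N)|\Im s_N(\ri y)|]$ (shifted to arbitrary centers $E+\ri y$) controls $\mu_N([E-1,E+1])$. This gives tightness of $\mu_N$ in the vague topology, and tightness of $s_N$ as random holomorphic functions on $\bC_+$ in the topology of locally uniform convergence. Here we use Montel's theorem: Nevanlinna functions with bounded values at one point form a normal family.

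\textbf{Step 2 (passing to the limit).} Take a subsequence along which $(\mu_N,s_N\bm1(\Omega_N))$ converges jointly in law to $(\mu,s)$. The limit $s$ is Nevanlinna, and we must show it is particle-generated with poles given by $\mu$. For this we use the Stieltjes inversion, which is continuous under vague convergence together with convergence of $b_N$ and $c_N$. Positive integer masses are preserved because vague limits of integer-valued configurations are integer-valued. The uniform moment bounds from Step 1 give uniform integrability of all polynomial observables in $s_N(w_j)$ and in its derivatives; derivatives are controlled by Cauchy estimates on a slightly larger compact set. Hence each expectation in \eqref{e:approx-loopeq2-bulk} converges, the error term $\oo_N(1)(1+\bE[\cR_N^{p+1}])$ vanishes, and $s$ satisfies \eqref{e:loopeq2-bulk} together with ${\rm L}^q$ bounds by Fatou's lemma. \Cref{t:bulk-characterization} then identifies the law of $\mu$ as $\mathrm{Sine}_\beta$. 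Since every subsequence has a further subsequence with this same limit, and $\bP(\Omega_N)\to1$, the full sequence $\mu_N$ converges.

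\textbf{Main obstacle.} The delicate point is the identification in Step 2 that the limit of the particle-generated functions is again particle-generated and corresponds to the limit configuration. The affine part $b_N+c_Nw$ could in principle escape to infinity or absorb mass, and mass of $\mu_N$ might escape to infinity in a way that changes $b$ or $c$ without affecting $\mu$. I would handle this by working directly with the measures: $\mu$ determines $s$ only up to $b$ and $c$, so I would check that the limiting loop equation forces the correct normalization. Concretely, $c=0$ should follow from the $p=1$ equation, since the term $s^2+1$ must have expectation balanced by the derivative term, which is impossible if $s$ grows linearly. As a fallback, I would note that the argument of \Cref{t:bulk-characterization} may only need the limit $s$ to be Nevanlinna with poles given by $\mu$.
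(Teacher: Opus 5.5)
Your route is the paper's: a moment bootstrap from the loop equations at $w$ and $\bar w$, tightness through normality of Nevanlinna families, subsequential limits, uniform integrability to reach the exact hierarchy, and then \Cref{t:bulk-characterization}. Two points need correcting. The second is exactly the step you flag as the main obstacle.

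First, the ${\rm L}^q$ hypothesis holds only for each fixed $N$, so it cannot give $\sup_N$ bounds. Its only role is to make $\bE[\bm{1}(\Omega_N)|s_N(w)|^{2q}]$ finite, so that the $\oo_N(1)$ error can be absorbed into the left-hand side. Uniformity in $N$ comes from your bootstrap, which closes in one step. Take $p=2q-1$, $w_1=w$, $q-2$ further copies of $w$ and $q$ copies of $\bar w$. Then the quadratic term is exactly $|s_N(w)|^{2q}=\cR_N^{p+1}$. The deterministic bounds \eqref{e:sbound} and \eqref{e:fest} dominate all other terms by $C_K\bigl(|s_N(w)|^{2q-1}+q|s_N(w)|^{2q-2}\bigr)$, and Young's inequality finishes.

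Second, the correspondence $s\leftrightarrow(b,c,\mu)$ is continuous only in the direction from $s$ to $\mu$, and that is all you need. If $s_N\to s$ locally uniformly, then the representing measures converge vaguely (\Cref{l:boundary_measure}). The proof uses $\Im[s_N(\ri)]\to\Im[s(\ri)]$ to get $\sup_N\bigl(c_N+\int(1+\lambda^2)^{-1}\,\rd\mu_N(\lambda)\bigr)<\infty$. This makes the Poisson smoothing $\int f(x)\Im[s_N(x+\ri y)]\,\rd x\to\pi\int f\,\rd\mu_N$, as $y\downarrow0$, uniform in $N$. Integer masses then pass to the limit, and the continuous mapping theorem identifies your $\mu$ with the representing measure of $s$.

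The version you state, ``vague convergence together with convergence of $b_N,c_N$'', is neither available nor true. Take multiplicity $N^2$ at the point $N$ with $b_N=c_N=0$: then $s_N(w)\to w$ locally uniformly while $\mu_N\to0$ vaguely, so the limit has $c=1$. No normalization of $b$ and $c$ is needed either, so drop the heuristic ``$c=0$ from the $p=1$ equation''. \Cref{a:bulk} allows arbitrary $b$ and $c\ge0$, so your fallback is the right reading. The characterization theorem recovers $c=0$ and the principal-value representation on its own, through \Cref{p:bulk_concentration}, where $s(\ri y)\to\ri$.
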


We now turn to analogous statements at the edge.  Note that the only difference in the loop equations below, when compared to Assumption \ref{a:bulk}, is the constant $+1$ transformed into $-w_1$, 
which is eventually responsible for breaking translation invariance.

\begin{assumption}[Edge loop equations]\label{a:edge}
We assume that \(s(w)\) is a particle-generated Nevanlinna function, 
in ${\rm L}^{q}$ for any $q>0$. 
For every integer \(p\ge 1\) and every 
\(w_1,w_2,\dots,w_p\in \bC_+\cup \bC_-\), \(s\) satisfies the loop equations
\begin{align}\label{e:loopeq2-edge}
\bE\left[
\left(
\frac{2-\beta}{\beta}\,\del_{w_1}s(w_1)
+s(w_1)^2-w_1
\right)
\prod_{j=2}^p s(w_j)
\right] 
+\frac{2}{\beta}\,
\bE\left[
\sum_{j=2}^p 
\del_{w_j}
\frac{s(w_1)-s(w_j)}{w_1-w_j}
\prod_{\substack{\ell=2\\ \ell\neq j}}^p s(w_\ell)
\right]
=0 .
\end{align}
When \(p=1\), the empty product is interpreted as \(1\), and the second term is absent.
\end{assumption}

\begin{theorem}\label{t:edge-characterization}
Assume that \(\beta>0\) is rational. Under \Cref{a:edge}, the poles of \(s\) define a point process on \(\bR\), and this point process has the same law as the \(\mathrm{Airy}_{\beta}\) point process.
\end{theorem}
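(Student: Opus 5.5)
The plan is to prove \Cref{t:edge-characterization} by the same route as \Cref{t:bulk-characterization}: first identify a finite-$N$ model whose law solves the edge loop equations \eqref{e:loopeq2-edge}, then show that the hierarchy determines the law.

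For existence, the natural candidate is the Stieltjes transform of the $\mathrm{Airy}_\beta$ process. It should be built as a particle-generated Nevanlinna function, with the linear and constant terms fixed by the square-root density $\frac{1}{\pi}\sqrt{-x}$. Concretely, one takes $s(w)=\lim_{N}$ of the rescaled and recentered Stieltjes transforms $N^{-1/3}\sum_i (x_i-w)^{-1}-(\text{deterministic shift})$ of the Gaussian $\beta$-ensemble at the edge. The finite-$N$ Gaussian loop equations rescale to \eqref{e:loopeq2-edge} with error terms that vanish, using rigidity and the $L^q$ bounds. The edge limit of Ramírez--Rider--Virág then identifies the poles as $\mathrm{Airy}_\beta$. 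This also shows that $\mathrm{Airy}_\beta$ solves the hierarchy, so what remains is uniqueness.

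For uniqueness, I would exploit the rationality $\beta=2a/b$ in the same way as the bulk proof. The idea is to pass to a Dyson Brownian motion or Calogero-type reformulation, in which the $p$-point loop equations form a closed hierarchy for the joint moments $\bE[\prod_j s(w_j)]$. This would go in four steps.

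1. Use \eqref{e:loopeq2-edge} with $w_j$ far from the real axis ($\Im w_j\to\infty$) to compute the asymptotic expansions of the correlators $\bE[\prod s(w_j)]$ order by order. The $-w_1$ term forces $s(w)\approx\sqrt{w}$ to leading order, and the subsequent orders are recursively determined by the hierarchy (a topological-recursion-like structure).
2. Show, via the Nevanlinna representation and the $L^q$ assumptions, that these asymptotic expansions determine the correlators exactly. This should use analyticity in $w$ and a Carleman or moment-determinacy type argument: the mixed moments of $s$ at finitely many points grow at most factorially, so they determine the joint law of $(s(w_1),\dots,s(w_p))$.
3. Deduce that the joint law of the values of $s$ is determined. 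Since $s$ is meromorphic with simple poles read off from boundary values, this yields the law of the configuration.
4. Alternatively to 1--3, and perhaps more robustly, derive from \eqref{e:loopeq2-edge} the stationarity of the law under the edge-rescaled Dyson Brownian motion with Airy drift, that is, a generator identity $\bE[\mathcal L F]=0$ for $F$ ranging over polynomials in $s(w_j)$. Then invoke uniqueness of the invariant measure.

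The rationality of $\beta$ is presumably what makes the relevant test function class (powers and products of $s$) rich enough to be dense in the domain of the generator, for instance through a Jack-polynomial or Calogero--Sutherland structure.

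The main obstacle is step 2/4: passing from the identities to uniqueness without translation invariance. At the edge, the solution is not stationary in space, so any bulk argument that relies on translation invariance must be replaced. I would try to reduce to the bulk result where possible. Conditioned on a window deep in the left tail, $x\ll -1$, the rescaled process should satisfy approximate bulk loop equations via \Cref{t:bulk-approx-convergence}. What the edge proof itself must supply is control of the top particles. I would attack this through a tightness estimate for $x_1$, obtained from the $p=1,2$ equations (which bound the variance of $s$ at large $|w|$), combined with the uniqueness of invariant measures for the edge generator.
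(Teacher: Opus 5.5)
Your existence half is fine and matches the paper: take the edge limit of the Gaussian $\beta$-ensemble and identify it with $\mathrm{Airy}_\beta$ via Ram\'irez--Rider--Vir\'ag. The uniqueness half, however, has a genuine gap, because none of your routes actually forces uniqueness.

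\textbf{Steps 1--3.} The hierarchy \eqref{e:loopeq2-edge} is nonlinear and not closed: the $p$-th equation contains $\bE[s(w_1)^2\prod_{j\ge2}s(w_j)]$, which is a $(p+1)$-point quantity evaluated at a coincident point. So steps 1--2 can at best produce formal large-$w$ asymptotic series for the correlators. Such series do not determine analytic functions, since they are blind to exponentially small terms. In this problem the competing solutions differ by exactly such Airy-type relative factors, of the form $e^{\pm\frac43 w^{3/2}}$.

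Carleman determinacy is about recovering a law from its moments, not about recovering a function from its asymptotic expansion. That part of your steps 2--3 is fine: the correlators $\bE[s(w_1)\cdots s(w_k)]$ do determine the law. But step 2 conflates the two statements.

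\textbf{Step 4 and the remaining ideas.} Step 4 is not a proof either. You have not shown that the loop equations amount to $\bE[\mathcal L F]=0$ on a core of the generator of an infinite-dimensional edge Dyson Brownian motion. Uniqueness of the invariant measure for such a singular, non-translation-invariant infinite system is also not something you can invoke. The bulk reduction deep in the left tail only constrains asymptotic local statistics, which many laws share. Tightness of $x_1$ is not where the difficulty lies. Finally, your account of why rationality of $\beta$ is needed is only a guess.

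\textbf{The missing idea.} You need a device that turns the nonlinear hierarchy into a closed \emph{linear} system with a \emph{finite-dimensional} solution space, so that asymptotic information can select a unique solution. The paper does this as follows.
\begin{enumerate}
\item It integrates the loop equations and tests them against exponentials $e^{-a_\ell S(w_\ell)}$, with $S'=s$ and $a_\ell\in\{1,-\beta/2\}$.
\item When the exponents balance separately in each half-plane, the base-point terms cancel. With integer multiplicities this is possible exactly when $\beta/2=p/q$ is rational, and this is the true role of rationality.
\item Under this balance, the Airy-regularized observables
$F(\bm t;\bm s)=e^{c_0(\sum_r t_r-\frac\beta2\sum_a s_a)}\,\bE\bigl[\prod_{j\ge1}\prod_r (t_r-x_j)e^{t_r/\fa_j}\prod_a\bigl((s_a-x_j)e^{s_a/\fa_j}\bigr)^{-\beta/2}\bigr]$,
with $c_0=\Ai'(0)/\Ai(0)$ and $n=\beta m/2$, satisfy a closed second-order system of deformed Calogero--Moser--Sutherland type. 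Its operators have the form $\partial_{t_i}^2-t_i+\cdots$ and $\partial_{s_a}^2-\frac{\beta^2}{4}s_a+\cdots$.
\item Commutator identities plus a Gr\"obner-basis argument show that the local solution space has dimension $2^{n+m}$. It is spanned by sectorial solutions with Airy asymptotics $\prod_i\phi_{\epsilon_i}(t_i)\prod_a\psi_{\eta_a}(s_a)$ times explicit algebraic factors.
\item A sub-Gaussian local law for $s(w)-\sqrt w$ is derived from the loop equations themselves by a moment bootstrap (\Cref{p:edge_concentration}); it is not assumed, and it does not come from tightness of $x_1$. It gives $F=e^{M+\OO(L)}$ on test configurations where each block of $p$ numerator and $q$ denominator variables lies in one half-plane at small mutual hyperbolic distance. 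This eliminates every branch except $(\bm-,\bm+)$ and fixes its coefficient.
\item The correlators are recovered as $\bE[s(w_1)\cdots s(w_k)]=(-1)^k\partial_{t_1^{(1)}}\cdots\partial_{t_1^{(k)}}F$ on the diagonal $t_r^{(\alpha)}=s_a^{(\alpha)}=w_\alpha$. The identity theorem then propagates uniqueness from the sector to the whole domain.
\end{enumerate}
Without a finite-rank, closed linear reformulation of this kind, your outline has no mechanism that rules out other solutions of the hierarchy.
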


\begin{theorem}\label{t:edge-approx-convergence}
Assume that \(\beta>0\) is rational. Let \(s_N\) be a sequence of random particle-generated Nevanlinna functions, and let \(\mu_N\) denote the associated configurations. Assume that there exists a sequence of events \(\Omega_N\) such that
\[
\bP(\Omega_N)=1-\oo_N(1)
\]
and $s_N(w)\bm1({\Omega_N})$ is in ${\rm L}^{q}$ for any $q>0$.
Assume moreover that for every \(p\ge 1\) and every compact set \(K\subset \bC_+\cup \bC_-\),  the following holds uniformly for all \(w_1,w_2,\dots,w_p\in K\):
\begin{align}\begin{split}\label{e:approx-loopeq2-edge}
&
\Bigg|
\bE\left[
\bm1({\Omega_N})
\left(
\frac{2-\beta}{\beta}\del_{w_1} s_N(w_1)+s_N(w_1)^2-w_1
\right)
\prod_{j=2}^p s_N(w_j)
\right]\\
&
+\frac{2}{\beta}\bE\left[
\bm1({\Omega_N})
\sum_{j=2}^p
\del_{w_j}
\frac{s_N(w_1)-s_N(w_j)}{w_1-w_j}
\prod_{\substack{\ell=2\\ \ell\neq j}}^ps_N(w_\ell)
\right]
\Bigg|
=\oo_N(1)\cdot\left(1+\bE\left[\bm1(\Omega_N)\cR_N^{p+1}\right]\right),
\end{split}\end{align}
where $\cR_N=\max\{|s_N(w_1)|, |s_N(w_2)|,\cdots, |s_N(w_p)|\}$.
Then \(\mu_N\) converges in distribution, with respect to the vague topology, to the \(\mathrm{Airy}_{\beta}\)  point process.
\end{theorem}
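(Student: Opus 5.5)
We reduce \Cref{t:edge-approx-convergence} to the uniqueness statement \Cref{t:edge-characterization} by a tightness and subsequence argument, exactly parallel to the way \Cref{t:bulk-approx-convergence} follows from \Cref{t:bulk-characterization}. The plan has three steps: (i) establish tightness of the random functions \(s_N\bm1(\Omega_N)\) in a topology of locally uniform convergence on \(\bC_+\cup\bC_-\); (ii) show that any subsequential limit \(s\) is a particle-generated Nevanlinna function in \({\rm L}^q\) for all \(q\), and that its poles are the vague limit of \(\mu_N\); (iii) pass to the limit in \eqref{e:approx-loopeq2-edge} to obtain \eqref{e:loopeq2-edge} for \(s\). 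Then \Cref{t:edge-characterization} identifies the law of the limiting configuration as \(\mathrm{Airy}_\beta\). Since every subsequence has a further subsequence converging to the same law, \(\mu_N\to\mathrm{Airy}_\beta\) vaguely in distribution.

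\emph{Step (i): uniform moment bounds.} This is the core step. The hypothesis only says each \(s_N\bm1(\Omega_N)\) is in \({\rm L}^q\), not uniformly in \(N\). I would bootstrap uniform bounds on \(M_{p}(N):=\sup_{w\in K}\bE[\bm1(\Omega_N)|s_N(w)|^{p}]\) from the approximate loop equations themselves.

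Take \(w_1=w\) and \(w_j\in\{w,\overline w\}\) in \eqref{e:approx-loopeq2-edge}, so that \(\prod_j s_N(w_j)\) becomes \(|s_N(w)|^{2k}\) or \(s_N(w)|s_N(w)|^{2k}\) up to conjugation. The term \(\bE[s_N(w)^2\cdots]\) then produces a \((p+1)\)-th moment. The remaining terms are controlled using the Nevanlinna structure.

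\begin{itemize}
\item Derivatives: \(|\del_w s(w)|\le \Im s(w)/\Im w\le |s(w)|/|\Im w|\), which follows from positivity of the measure in \eqref{e:Nevanlinna_representation}.
\item Difference quotients: these are similarly bounded by \(\Im\)-parts divided by \(\Im w\).
\end{itemize}

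Consequently every term other than \(s^2\prod s\) is of order at most \(p\), with constants depending on \(K\). This yields an inequality of the form \(M_{p+1}\le C(1+M_p)+\oo(1)(1+M_{p+1})\).

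The main obstacle is that \(s_N(w)^2\) is not positive, so it does not directly dominate \(|s_N|^2\). To handle this, I would use \(\Im\)-positivity in the form \(\Im(s^2)=2\Re s\,\Im s\) together with the choice \(w_j=\overline{w}\) for the other factors. Alternatively, one can combine the equations at \(w\) and \(\overline w\), which gives a positive quantity \(\bE[|s|^2\cdots]\). For instance, for \(p=1\) the equation gives \(\bE[s(w)^2]\approx w-\tfrac{2-\beta}{\beta}\bE[s'(w)]\), and the second-moment bound comes from higher \(p\) with conjugates. I expect this positivity bookkeeping to be the most delicate part. Once uniform moments hold, tightness on compacts follows from Cauchy estimates for holomorphic functions, since locally bounded families in probability are tight (Montel). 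The same bounds show \(\bE[\bm1(\Omega_N)\cR_N^{p+1}]\) is \(\OO(1)\), so the error term in \eqref{e:approx-loopeq2-edge} is indeed \(\oo(1)\).

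\emph{Steps (ii)–(iii): identifying the limit.} Use Skorokhod representation along a subsequence. Locally uniform limits of Nevanlinna functions are Nevanlinna, and the representing measures converge vaguely. The limit measure is integer-valued and atomic, because vague limits of configurations are configurations, or zero; the zero case is excluded by the \(p=1\) equation \(\bE[s^2]\approx w\), which is incompatible with \(s=b+cw\). So the limit is particle-generated, and \(\mu_N\to\mu\) vaguely; here one uses \(\bP(\Omega_N)\to1\).

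The error terms vanish as \(N\to\infty\). The expectations in \eqref{e:approx-loopeq2-edge} converge by uniform integrability, which follows from the uniform moment bounds at order \(p+2\); derivatives converge by locally uniform convergence. Hence \(s\) satisfies \Cref{a:edge}, and \Cref{t:edge-characterization} concludes.
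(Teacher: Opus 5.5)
Your outline has the same architecture as the paper's proof. The paper proves the edge case verbatim as the bulk case: moment bounds extracted from the approximate equations, tightness, Skorokhod, \Cref{l:boundary_measure}, passage to the limit by uniform integrability, and finally \Cref{t:edge-characterization}.

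The gap is in Step (i), which you correctly call the crux but do not close. The choices you list make the external product $|s_N(w)|^{2k}$ or $s_N(w)|s_N(w)|^{2k}$. Multiplied by $s_N(w)^2$, these give $s_N(w)^2|s_N(w)|^{2k}$ or $s_N(w)^3|s_N(w)|^{2k}$, neither of which has a sign. Your two remedies do not repair this. The identity $\Im(s^2)=2\Re s\,\Im s$ involves $\Re s$, which has no sign. Combining the equations at $w_1=w$ and $w_1=\overline w$ only produces real parts such as $\Re\bigl(s_N(w)^2X\bigr)$, which stay indefinite unless $X$ already carries $\overline{s_N(w)}^{\,2}$. (With conjugated external variables, the second equation is just the complex conjugate of the first.) So the inequality $M_{p+1}\le C(1+M_p)+\oo_N(1)(1+M_{p+1})$ is asserted, not derived.

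The missing idea is to use two more conjugates than non-conjugates among the external variables. For $q\ge2$, take $p=2q-1$ and $w_1=w$. Among $w_2,\dots,w_p$, put $q-2$ copies of $w$ and $q$ copies of $\overline w$. Then $s_N(w)^2\prod_{j\ge2}s_N(w_j)=s_N(w)^q\,\overline{s_N(w)}^{\,q}=|s_N(w)|^{2q}$ exactly, and $\cR_N^{p+1}=|s_N(w)|^{2q}$, so the error in \eqref{e:approx-loopeq2-edge} is of the same order as the left side.

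The remaining terms are lower order:
\begin{itemize}
\item the derivative term and the difference quotients are bounded by $C_K|s_N(w)|$, by \eqref{e:sbound} and \eqref{e:fest};
\item the new edge term $-w_1\prod_{j\ge2}s_N(w_j)$ is bounded by $C_K|s_N(w)|^{2q-2}$, since $K$ is compact.
\end{itemize}
This gives $\bE[\bm1(\Omega_N)|s_N(w)|^{2q}]\le C_K\,\bE[\bm1(\Omega_N)(|s_N(w)|^{2q-1}+q|s_N(w)|^{2q-2})]+\oo_N(1)\bigl(1+\bE[\bm1(\Omega_N)|s_N(w)|^{2q}]\bigr)$. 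Young's inequality then closes it directly, with no induction on the moment order. This is Step 1 of the proof of \Cref{t:bulk-approx-convergence}.

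With this fix the rest of your outline goes through. You still need to upgrade pointwise bounds, uniform in $w\in K$, to bounds on $\bE\bigl[\bm1(\Omega_N)\sup_K|s_N|^{2q}\bigr]$ before invoking Montel. The paper does this with $|s_N'|\le|s_N|/\Im w$, Gronwall along short segments, and a finite net. Averaging the Cauchy integral formula over circles also works.
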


In the preceding theorems, we assume that \(\beta>0\) is rational. This
assumption enters through the proof, rather than through the expected
characterization itself. More precisely, our argument uses the loop equations
to identify certain exponential observables, which in turn determine the
correlation functions of the point process. At present, the construction of
these exponential observables requires \(\beta\) to be rational.

This rationality phenomenon is consistent with the integrable-structure
literature surrounding \(\beta\)-ensembles. Exact formulas for rational
\(\beta\) have appeared in
\cite{ha1994exact,serban1996single,okounkov1997n,lesage1995dynamical};
these formulas are often naturally organized in terms of finitely many
quasiparticle and quasihole variables. We believe, however, that the
uniqueness statements themselves should remain valid for every \(\beta>0\).

\begin{conjecture}
For any \(\beta>0\), not necessarily rational, the following statements hold. Under \Cref{a:bulk}, the poles of \(s\) define a point process on \(\bR\), and this point process has the same law as the $\mathrm{Sine}_{\beta}$ point process. Under \Cref{a:edge}, the poles of \(s\) define a point process on \(\bR\), and this point process has the same law as the \(\mathrm{Airy}_{\beta}\) point process.
\end{conjecture}

 Finally, ${\rm Sine}_{\beta}$ has also been studied from the perspective of Gibbs measures: it was shown to satisfy the Dobrushin--Lanford--Ruelle equations \cite{MR4178183}. Our approach instead starts from the BBGKY/loop equation hierarchy and proves that it uniquely determines the process.

Another approach to uniqueness of the Sine$_\beta$ process is based on an
infinite-volume variational principle. Lebl\'e and Serfaty
\cite{leble2017large} established a large deviation principle for microscopic
empirical fields of log gases, whose rate function is an infinite-volume free
energy consisting of a specific relative entropy term and a renormalized
logarithmic energy term. Erbar, Huesmann, and Lebl\'e \cite{erbar2021one}
subsequently proved that, in one dimension, this free energy has a unique
minimizer, namely the Sine$_\beta$ point process.

The relation between this variational result and the loop-equation approach is
not direct. In particular, it is not known whether solving the full hierarchy
of loop equations forces a point process to minimize the infinite-volume
log-gas free energy. Moreover, the variational problem in
\cite{erbar2021one} is formulated for stationary point processes,  it  is
intrinsically a bulk theory after local averaging. It therefore does not cover non-stationary edge
limits, such as the Airy$_\beta$ process.

\subsection{Proof ideas}
We explain the proof in the bulk case; the proof of the edge theorem is parallel,
with the bulk loop equations replaced by the edge loop equations and
\(\mathrm{Sine}_\beta\) replaced by \(\mathrm{Airy}_\beta\). The argument has three main steps.\\

\noindent
\emph{Step 1: the loop equations imply concentration of the Stieltjes transform.}
The first role of the loop-equation hierarchy is quantitative: it forces the
microscopic Stieltjes transform to concentrate around its deterministic values,
\(\ri\) in the upper half-plane and \(-\ri\) in the lower half-plane, with
sub-Gaussian tails. This phenomenon is
already known in the setting of general \(\beta\)-ensembles, where the loop
equations have been shown to imply an optimal local law, together with rigidity
and sharp fluctuation estimates \cite{bourgade2022optimal}. Thus, in our
argument, the local law is not an additional input but a consequence of the loop
equations. We use this concentration estimate as the asymptotic input at
infinity for the auxiliary observables introduced in the next step.\\

\noindent
\emph{Step 2: the nonlinear loop hierarchy can be linearized.}
The central observation is that the loop equation hierarchy admits a linear
reformulation after passing to the observables
\begin{equation}\label{e:introF}
F(t_1,\dots,t_n;s_1,\dots,s_m)
:=
\bE\left[
\operatorname{P.V.}
\prod_{j\in \bZ}
\frac{\prod_{r=1}^n (t_r-x_j)}
{\prod_{a=1}^m (s_a-x_j)^{\beta/2}}
\right],
\end{equation}
where the expectation is taken with respect to the random point process and
\(t_1,\dots,t_n,s_1,\dots,s_m\in \bC_+\cup \bC_-\). These observables are
exponential linear statistics of the Nevanlinna function.

Under the half-plane balance condition \eqref{e:balance-halfplanes}, which in
particular forces \(\beta>0\) to be rational, the function \(F\) satisfies the
following system of linear differential equations:
\begin{align}\label{e:intro_eq1}
\left(
\partial_{t_i}^2+1
+\frac{2}{\beta}\sum_{j\neq i}
\frac{\partial_{t_i}-\partial_{t_j}}{t_i-t_j}
-\sum_{a=1}^m
\frac{\partial_{t_i}+(2/\beta)\partial_{s_a}}{t_i-s_a}
\right)F&=0,
\qquad 1\le i\le n,
\\[1mm]
\left(
\partial_{s_a}^2+\frac{\beta^2}{4}
+\frac{\beta}{2}\sum_{b\neq a}
\frac{\partial_{s_a}-\partial_{s_b}}{s_a-s_b}
-\sum_{i=1}^n
\frac{\partial_{s_a}+(\beta/2)\partial_{t_i}}{s_a-t_i}
\right)F&=0,
\qquad 1\le a\le m.
\label{e:intro_eq2}
\end{align}

The differential system \eqref{e:intro_eq1}--\eqref{e:intro_eq2} is closely
related to deformed Calogero--Moser--Sutherland (CMS) systems with two species
of variables. These are integrable systems of particles with long-range pair
interactions, where the interaction strengths depend on whether the two
particles belong to the same species or to different species. Such systems
arise naturally from the representation theory of Lie superalgebras, which are
variants of Lie algebras with two types of generators. Deformed CMS systems were introduced and studied, in the case \(m=1\), by
Chalykh--Feigin--Veselov \cite{chalykh1998new}, and were later systematically
developed by Sergeev and Veselov in the framework of generalized root systems
and Lie superalgebras
\cite{sergeev2001superanalogs,sergeev2004deformed,sergeev2005generalised,
sergeev2015dunkl}. In the trigonometric case, the integrability of the model is
encoded by a family of mutually commuting differential operators, often called
commuting quantum integrals. Their joint eigenfunctions are the super-Jack
polynomials, introduced in \cite{kerov1998boundary}. Recent work
\cite{atai2019orthogonality,hallnas2024new} further develops the orthogonality
theory of super-Jack polynomials and its relation to rational and
rational-harmonic deformed CMS systems. In our setting, the same two-species structure appears in a non-symmetric
holonomic form, \eqref{e:intro_eq1}--\eqref{e:intro_eq2}. Rather than working
only with the commuting Hamiltonians of the integrable system, we obtain one
second-order differential equation for each external variable. This form is
naturally adapted to the loop-equation hierarchy.

Closely related holonomic systems also appear in Selberg-type integral formulas
for \(\beta\)-ensembles. In particular, Desrosiers and Liu
\cite{desrosiers2015selberg} studied a Selberg-type integral with \(n+m\)
external variables and showed that it satisfies a holonomic system of \(n+m\)
non-symmetric linear PDEs related to deformed Calogero--Sutherland systems.
They further identified a distinguished normalized solution in terms of a
hypergeometric function built from super-Jack polynomials, and applied the
result to ratios of characteristic polynomials in classical
\(\beta\)-ensembles. Our differential system
\eqref{e:intro_eq1}--\eqref{e:intro_eq2} can be viewed as the bulk scaling limit
of this type of holonomic PDE system. In contrast, the limiting system by
itself does not determine a unique local solution: as we show below, its local
solution space has dimension \(2^{n+m}\), corresponding to the \(2^{n+m}\)
possible asymptotic branches.

In the special case \(\beta=2\), the observable \eqref{e:introF} becomes an
average of ratios of characteristic polynomials. This connects the system to
the classical theory of ratios of characteristic-polynomial averages in unitary ensembles,
including the works  \cite{akemann2003universal, borodin2006averages, baik2003products,conrey2005howe}.

The coupled equations \eqref{e:intro_eq1}--\eqref{e:intro_eq2} have a simple
duality.  In the \(\bmt\)-equations, the coefficient of the interaction
between two \(t\)-variables is \(2/\beta\).  In the \(\bms\)-equations, the
corresponding coefficient between two \(s\)-variables is \(\beta/2\).  These
two coefficients are reciprocals. More precisely, if we exchange the two sets of variables and their sizes,
$
   \bmt \leftrightarrow \bms,
   n \leftrightarrow m,
$
replace
\[
   \beta \longmapsto \frac{4}{\beta},
\]
and make the corresponding rescaling of the variables by \(\beta/2\), then
the two systems of differential equations are transformed into each other, up
to an overall nonzero factor.  Since the equations are set equal to zero, this
factor does not change the solution set.  In this sense, the
\(\bmt\)-variables and the \(\bms\)-variables play dual roles.

This is the same type of \(\beta \leftrightarrow 4/\beta\) duality that
appears in \(\beta\)-ensembles.  A common example comes from averages of
characteristic polynomials.  In many such formulas, one can exchange the
number of eigenvalues with the number of inserted characteristic polynomials,
provided one also replaces \(\beta\) by \(4/\beta\).  The equations above can
be viewed as a differential-equation form of this same symmetry.  We refer to
\cite{desrosiers2009duality,forrester2025dualities} for this duality and for
a recent survey of related dualities in random matrix theory.\\

\noindent
\emph{Step 3: solve the deformed CMS system and identify the physical branch.}
Once the observables have been shown to satisfy
\eqref{e:intro_eq1}--\eqref{e:intro_eq2}, the remaining task is to solve a
linear holonomic system. Concretely, this system consists of \(n+m\)
second-order linear partial differential equations in \(n+m\) variables. Using
standard \(D\)-module arguments, we show that the corresponding differential
operators form a Gröbner basis. We refer the reader to \cite{saito2013grobner} for further background.  Hence the quotient module admits a basis given
by square-free derivatives, and the local solution space has dimension
\(2^{n+m}\). Equivalently, the system can be prolonged to a flat first-order
system, whose solutions are determined by parallel transport.

The \(2^{n+m}\) asymptotic branches can moreover be described explicitly. For
each sign pattern
\[
(\epsilon_1,\ldots,\epsilon_n,\eta_1,\ldots,\eta_m)
\in \{\pm1\}^n \times \{\pm1\}^m ,
\]
there is a solution with leading exponential behavior
\begin{equation}
\exp\left(
\ri\sum_{i=1}^n \epsilon_i t_i
+\ri\frac{\beta}{2}\sum_{a=1}^m \eta_a s_a
\right).
\end{equation}
Thus the deformed CMS system has exactly \(2^{n+m}\) linearly independent
formal branches at infinity.

Once \(F\) has been uniquely identified, the correlation functions of
the point process can be recovered from it; these correlation functions, in
turn, identify the limiting process. In the bulk case, this limiting process is
\(\mathrm{Sine}_\beta\).

\subsection*{Outline of the paper.}

In \Cref{s:application}, we develop the convergence framework used throughout
the paper and prove \Cref{t:bulk-approx-convergence,t:edge-approx-convergence}, assuming \Cref{t:bulk-characterization,t:edge-characterization}.
As an application, we apply our characterization results to Wigner matrices in \Cref{s:wigner} and to random
\(d\)-regular graphs in \Cref{s:d-regular-graph}.

In \Cref{s:sine_airy}, we show that the \(\mathrm{Sine}_\beta\) point process
satisfies the bulk loop equation, while the \(\mathrm{Airy}_\beta\) point
process satisfies the edge loop equation. These results are obtained by taking
the bulk and edge scaling limits of the corresponding loop equations for
\(\beta\)-ensembles; the detailed proofs are deferred to \Cref{s:beta_ensemble}.
In \Cref{s:bulk_concentration}, we show that the bulk loop equation implies
concentration of the Stieltjes transform. The analogous proof in the edge case
is deferred to \Cref{s:asymp_edge}.

In \Cref{s:exp_ob}, we introduce the bulk and edge exponential observables.
Using the concentration estimates as input, we derive their asymptotic behaviors
as the spectral parameters tend to infinity. In \Cref{s:linearPDE}, we prove
that these observables satisfy systems of linear differential equations related
to deformed CMS operators.

In \Cref{s:characterization}, we prove our main characterization results,
assuming the classification of solutions to the deformed CMS systems. This
classification is carried out in \Cref{s:bulk_solution,s:edge_solution}, where
we solve the bulk and edge deformed CMS systems, respectively. We show that each
system has exactly \(2^{n+m}\) linearly independent solutions, corresponding to
the \(2^{n+m}\) possible leading exponential behaviors at infinity. For
\(\beta=2\), these asymptotic solutions become exact; this is verified in
\Cref{s:b2solution} in the bulk and in \Cref{s:edge-b2solution} at the
edge.   Finally, we collect some basic estimates on Airy functions in
\Cref{a:Airy}.

\subsection*{Notation.}

We write
$
\bC_+:=\{z\in\bC:\Im z>0\}$ and 
$\bC_-:=\{z\in\bC:\Im z<0\}$ 
for the open upper and lower half-planes. We denote by \(\bQ_{>0}\) the set of
positive rational numbers and by \(\bR_{<0}\) the set of negative real numbers.
For a positive integer, we write $\qq{N}:=\{1,2,\ldots,N\}$.
For two numbers \(a,b\), we use
\[
a\wedge b:=\min\{a,b\},\qquad a\vee b:=\max\{a,b\}.
\]

For a random variable \(X\) and \(\ell>0\), we denote
\[
\|X\|_{L^\ell}:=\bE[|X|^\ell]^{1/\ell}.
\]
For a matrix \(X\), we write \(\|X\|\) for its spectral norm. For a vector
\(\bmv\), we write \(\|\bmv\|_2\) for its Euclidean norm. We denote the identity
matrix by \(\bI\), and write \(\bI_n\) when we wish to emphasize that it is the
\(n\times n\) identity matrix.

For two quantities $X_N$ and $Y_N$ depending on $N$, 
we write that $X_N = \OO(Y_N )$ or $X_N\lesssim Y_N$ if there exists some universal constant such
that $|X_N| \leq C Y_N$ . We write $X_N = \oo(Y_N )$, or $X_N \ll Y_N$ if the ratio $|X_N|/Y_N\rightarrow 0$ as $N$ goes to infinity. We write
$X_N\asymp Y_N$ if there exists a universal constant $C>0$ such that $ Y_N/C \leq |X_N| \leq  C Y_N$. 

In Definition \ref{def:s},  the Nevanlinna functions $s$ correspond to configuration with interparticle distance $\asymp 1$, in particular it differs from  the usual Stieltjes transform of a configuration of $N$ particles $\lambda_i$'s on a compact set, denoted in this paper $m_N(z)=(1/N)\sum_{i=1}^N {1}/(\lambda_i-z)$.

Throughout the paper, fractional powers such as \(\sqrt w\), \(w^{3/2}\),
\(w^{-1/4}\), and \(w^{\beta/2}\) are taken with respect to the principal
branch of the logarithm. That is, for
\(w\in \mathbb C\setminus \bR_{<0}\),
\begin{align}\label{e:principle_branch}
w^a :
      = |w|^a e^{\ri a\arg w},
\qquad \arg w\in(-\pi,\pi).
\end{align}

 \subsection*{Acknowledgements.}
The research of P.B.  was supported by the NSF Grant DMS-2348202.
The research of J.H. was supported in part by NSF Grant DMS-2246664, DMS-2331096  and a Sloan Research Fellowship.
 We thank Alice Guionnet, Michel Pain, Xin Sun, Shengjing Xu, Horng-Tzer Yau, Ofer Zeitouni and Lingfu Zhang for enlightening discussions on loop equations. P.B. is particularly thankful to Alice Guionnet for early discussions about the central question addressed in this article, namely, the  characterization  of local statistics as the unique solution of  loop equations, which began at  the IAS program {\it Non-equilibrium Dynamics and Random Matrices} in 2013-2014. We thank Amol Aggarwal and Alexei Borodin for useful references related to the algebraic approach developed in this paper.

\section{Applications}
\label{s:application}
In this section we prove \Cref{t:bulk-approx-convergence} and
\Cref{t:edge-approx-convergence}. These results state that, for rational
\(\beta>0\), any sequence of random point processes on \(\bR\) satisfying the
approximate bulk or edge loop equations with vanishing error converges,
respectively, to the $\mathrm{Sine}_{\beta}$ point process or to the
$\mathrm{Airy}_{\beta}$ point process.

To illustrate the scope of our results for random matrix models, we apply our characterization to Wigner matrices and to adjacency matrices of random \(d\)-regular graphs. First, we give a short proof of bulk universality for Wigner matrices. We then prove bulk universality for random \(d\)-regular graphs on \(N\) vertices in the growing-degree regime \(d\gg(\log N)^{24}\). Finally, we explain how the proof of edge universality for fixed-degree random \(d\)-regular graphs in \cite{huang2024ramanujan} can be significantly simplified using our characterization.

\subsection{Properties of Nevanlinna functions}

We start with some estimates on the particle-generated Nevanlinna function \(s\).
Recall the integral representation from \eqref{e:Nevanlinna_representation}.
Writing
\[
s(w)=b+cw+\sum_{x\in P}\left(\frac{1}{x-w}-\frac{x}{1+x^2}\right),
\qquad c\ge 0,
\]
we have, for \(w=E+\ri\eta\),
\begin{align}\label{e:ims}
\Im [s(w)]=c\eta+\sum_{x\in P} \frac{\eta}{|x-w|^2}.
\end{align}
Hence
\begin{align}\label{e:sbound}
|s'(w)|
\le c+\sum_{x\in P} \frac{1}{|x-w|^2}
\le \frac{\Im [s(w)]}{\eta}.
\end{align}
Also, for \(z,w\in \bC\setminus\bR\),
\begin{align}
\frac{s(z)-s(w)}{z-w}
=
c+\sum_{x\in P} \frac{1}{(x-z)(x-w)},\qquad \partial_w\left(\frac{s(z)-s(w)}{z-w}\right)
=
\sum_{x\in P} \frac{1}{(x-z)(x-w)^2}.
\end{align}
Finally,
\begin{align}\begin{split}\label{e:fest}
&\left|\frac{s(z)-s(w)}{z-w}\right|
\leq c+\frac{1}{2}\sum_{x\in P}\frac{1}{|z-x|^2}+\frac{1}{|w-x|^2}\leq \frac{\Im [s(z)]}{2 \Im [z]}+\frac{\Im [s(w)]}{2 \Im [w]},\\
&\left|
\partial_w\left(\frac{s(z)-s(w)}{z-w}\right)
\right|
\le
\sum_{x\in P} \frac{1}{\Im [z]}\frac{1}{|x-w|^2}
\le
\frac{\Im [s(w)]}{\Im [z]\, \Im [w]}.
\end{split}\end{align}

We regard Nevanlinna functions as elements of
\[
\mathcal O(\bC_+):=\{f:\bC_+\to \bC \text{ holomorphic}\}.
\]
We endow \(\mathcal O(\bC_+)\) with the topology of locally uniform convergence: a sequence
\(\{f_n\}_{n\ge1}\subset \mathcal O(\bC_+)\) converges to
\(f\in \mathcal O(\bC_+)\) if and only if \(f_n\to f\) uniformly on every
compact subset of \(\bC_+\). With this topology, \(\mathcal O(\bC_+)\) is a
Fr\'echet space. Throughout this section, convergence of Nevanlinna functions
is understood in this sense.

The following lemma shows that locally uniform convergence of Nevanlinna
functions implies vague convergence  of the associated Borel measures. We postpone the proof of
the lemma to the end of this section.
\begin{lemma}\label{l:boundary_measure}
Let \(s\) and \(\{s_N\}_{N\ge1}\) be Nevanlinna functions, and let \(\mu\) and \(\{\mu_N\}_{N\ge1}\) be the corresponding Borel measures in the representation \eqref{e:Nevanlinna_representation}. If \(s_N\) converges to \(s\) uniformly on compact subsets of \(\bC_+\), then \(\mu_N\) converges to \(\mu\) in the vague topology.

Moreover, if each \(\mu_N\) is a configuration, then \(\mu\) is also a configuration. Equivalently, if each \(s_N\) is particle-generated, then so is \(s\).
\end{lemma}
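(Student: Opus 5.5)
The plan is to use the Stieltjes inversion formula together with the locally uniform bounds on compact subsets of \(\bC_+\). For \(w=E+\ri\eta\) with \(\eta>0\), \eqref{e:ims} (in its general form, with \(\mu\) in place of the sum over \(P\)) gives
\[
\frac1\pi\Im s(E+\ri\eta)=\frac{c\eta}{\pi}+\int_\bR P_\eta(E-x)\,\rd\mu(x),\qquad P_\eta(u)=\frac{\eta}{\pi(u^2+\eta^2)} .
\]

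\textbf{Uniform local mass bound.} Fix \(R>0\) and take \(\eta=1\). Since \(s_N\to s\) uniformly on the compact segment \(\{E+\ri: |E|\le R+1\}\), the values \(\Im s_N(E+\ri)\) are bounded uniformly in \(N\) and \(E\). Because \(P_1\ge c_0>0\) on \([-1,1]\), this gives \(\sup_N\mu_N([-R,R])<\infty\) for every \(R\). By Banach--Alaoglu (Helly selection), every subsequence of \(\mu_N\) has a further subsequence converging vaguely to some Radon measure \(\nu\). It therefore suffices to show that every such limit satisfies \(\nu=\mu\).

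\textbf{Identification of the limit.} Take \(\varphi\in C_c(\bR)\). By Fubini,
\[
\int\varphi(E)\,\frac1\pi\Im s_N(E+\ri\eta)\,\rd E=\frac{c_N\eta}{\pi}\int\varphi+\int (\varphi*P_\eta)\,\rd\mu_N .
\]
The left side converges to the same expression with \(s\) by uniform convergence on \(\operatorname{supp}\varphi+\ri\eta\). The main obstacle is on the right side: \(\varphi*P_\eta\) is not compactly supported, so we need uniform control of the tails of \(\mu_N\) against \(P_\eta\), and of \(c_N\).

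To get this control, note that \(\int \rd\mu_N(x)/(1+x^2)\le \pi\Im s_N(\ri)\), which is bounded uniformly in \(N\), and also \(c_N\le \Im s_N(\ri)\). Since \(|\varphi*P_\eta(x)|\lesssim_{\varphi,\eta}(1+x^2)^{-1}\), the tails are controlled. However, this bound does not by itself give vague convergence of the tail part. To handle it, pass to a further subsequence along which \(c_N\to c'\) and the finite measures \(\rd\mu_N/(1+x^2)\) converge weakly on the compactification \([-\infty,\infty]\). Any mass escaping to \(\pm\infty\), say \(m_\infty\), contributes to the limit exactly like a linear term, because \(\eta(1+x^2)/|x-w|^2\to\eta\). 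Thus in the limit
\[
\frac1\pi\Im s(E+\ri\eta)\,\rd E\ \text{ tested against }\varphi\ \text{ equals }\ \frac{(c'+m_\infty/\pi)\eta}{\pi}\int\varphi+\int(\varphi*P_\eta)\,\rd\nu .
\]
Now let \(\eta\downarrow0\). The first term vanishes, and \(\int(\varphi*P_\eta)\,\rd\nu\to\int\varphi\,\rd\nu\) by dominated convergence, using \(\int \rd\nu/(1+x^2)<\infty\), which follows from Fatou's lemma. The same limit applied to \(s\) itself yields \(\int\varphi\,\rd\mu\) by the standard Stieltjes inversion. Hence \(\nu=\mu\). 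Since every subsequence has a further subsequence with this same limit, \(\mu_N\to\mu\) vaguely.

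\textbf{Configuration part.} Suppose each \(\mu_N\) is a configuration and fix an open interval \(I\) whose endpoints are not atoms of \(\mu\). Then \(\mu_N(I)\to\mu(I)\) by vague convergence, using the portmanteau theorem for Radon measures, which applies since \(\mu(\partial I)=0\). Each \(\mu_N(I)\) is a nonnegative integer, so \(\mu(I)\in\bZ_{\ge0}\). Since \(\mu(I)\) is also bounded, \(\mu\) restricted to any such interval takes only integer values on all such subintervals. This forces \(\mu\) to be purely atomic with finitely many atoms of integer mass in each bounded set: a nonatomic part would make \(t\mapsto\mu((a,t))\) continuous and nonconstant somewhere, contradicting integrality. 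Thus \(\mu\) is a configuration, and \(s\) is particle-generated.
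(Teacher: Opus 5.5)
Your proof is correct. The configuration part matches the paper, but the vague-convergence part follows a different route.

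\textbf{How the two routes differ.} The paper never extracts subsequences. It starts from the same uniform bound \eqref{e:uniformbb} on $c_N+\int\rd\mu_N/(1+\lambda^2)$, obtained from $\Im s_N(\ri)$. It then proves the quantitative estimate \eqref{e:shift_diff}: $\int f(x)\Im s_N(x+\ri y)\,\rd x$ equals $\pi\int f\,\rd\mu_N$ up to an error $\OO(y\log(1/y))$, uniformly in $N$. It concludes with a three-$\varepsilon$ argument: fix $y$ small, then use locally uniform convergence on the horizontal line at height $y$. You instead use Helly selection and identify each subsequential limit by Stieltjes inversion. To get an exact identity at fixed $\eta$, you track the mass of $\rd\mu_N/(1+x^2)$ that escapes to $\pm\infty$ on the compactification.

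\textbf{On the escaping-mass step.} This bookkeeping is sound, because $(1+x^2)(\varphi*P_\eta)(x)$ extends continuously to $\pm\infty$ with value $\frac{\eta}{\pi}\int\varphi$. The escaping term is therefore $\frac{m_\infty\eta}{\pi}\int\varphi$, not $\frac{m_\infty\eta}{\pi^2}\int\varphi$; this slip is harmless since the term vanishes as $\eta\downarrow0$. The step could also be bypassed. The part of $\int(\varphi*P_\eta)\,\rd\mu_N$ coming from large $|x|$ is at most $C_\varphi\,\eta\int\rd\mu_N/(1+x^2)$, uniformly in $N$, which is exactly the paper's estimate \eqref{e:dier}.

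\textbf{What each route gives.} The paper's route gives an explicit uniform rate and needs no compactness. Yours is softer and avoids the Poisson-kernel error computation near $\supp f$.

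\textbf{Configuration part.} Your last sentence is a bit loose: if the atoms are dense, $t\mapsto\mu((a,t))$ need not be continuous anywhere. A cleaner statement is that this function is nondecreasing and integer-valued on the dense set of non-atoms. Hence it is constant there outside finitely many points in each bounded interval, so $\mu$ is carried by those points with integer masses.
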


\begin{proof}[Proof of \Cref{l:boundary_measure}]
From the representation \eqref{e:Nevanlinna_representation} of Nevanlinna functions, we have
\begin{align}
s(w)=b+cw+\int_{\bR}\left(\frac{1}{\la-w}-\frac{\la}{1+\la^2}\right)\rd \mu(\la),\quad
s_N(w)=b_N+c_N w+\int_{\bR}\left(\frac{1}{\la-w}-\frac{\la}{1+\la^2}\right)\rd \mu_N(\la),
\end{align}
where \(b,b_N,c,c_N\in\bR\), \(c,c_N\ge0\).
By taking imaginary parts, and using that \(s_N(z)\) converges to \(s(z)\), we have
\begin{align}\label{e:ims0}
\left(c_N +\int \frac{\rd\mu_N(\la)}{(\la-x)^2+y^2}\right) y=\Im[s_N(x+\ri y)]\rightarrow \Im[s(x+\ri y)]=\left(c +\int \frac{\rd\mu(\la)}{(\la-x)^2+y^2}\right) y.
\end{align}
By taking \(x+\ri y=\ri\), we conclude that
\begin{align}\label{e:uniformbb}
\sup_{N\geq 1}\left(c_N+\int_\bR \frac{\rd\mu_N(\la)}{1+\la^2}\right)<\infty,\quad c+\int_\bR \frac{\rd\mu(\la)}{1+\la^2}<\infty.
\end{align}

We now prove vague convergence. Let \(f\in C_c^\infty(\bR)\), and choose \(M\ge1\) such that
$
\supp f\subset[-M,M]$.
Fix \(y\in(0,1]\). We write
\begin{align*}
\int_\bR f(x)\,\Im[ s_N(x+\ri y)]\,\rd x
-\int_\bR f(x)\,\Im[ s(x+\ri y)]\,\rd x =
\int_\bR f(x)\,\Im\bigl[s_N(x+\ri y)-s(x+\ri y)\bigr]\,\rd x,
\end{align*}
which tends to \(0\) as \(N\to\infty\) by local uniform convergence.
Thus it suffices to show that
\begin{align}\label{e:poisson-approx}
\int_\bR f(x)\,\Im[ s_N(x+\ri y)]\,\rd x
\longrightarrow
\pi \int_\bR f(\la)\,\rd\mu_N(\la)
\qquad\text{as }y\downarrow0,
\end{align}
with an error bound uniform in \(N\), and similarly for \(s,\mu\). Once this is established, we obtain
\[
\int_\bR f\,\rd\mu_N \longrightarrow \int_\bR f\,\rd\mu,
\]
hence \(\mu_N\to\mu\) vaguely.

The statement \eqref{e:poisson-approx} follows from showing the following estimate and its analogue obtained by replacing \(\Im[s_N(x+\ri y)]\) and \(\mu_N\) with \(\Im[s(x+\ri y)]\) and \(\mu\):
\begin{align}\begin{split}\label{e:shift_diff}
&\phantom{{}={}}\left|\int_\bR \Im[s_N(x+\ri y)] f(x)\rd x-\pi\int_\bR  f(x)\rd\mu_N(x)\right|\\
&\leq C\bigl(M y\|f\|_\infty+M^2 y\ln (M/y)\|f'\|_\infty\bigr)\int_\bR \frac{\rd \mu_N(\la)}{1+\la^2}
+2c_N M \|f\|_\infty y.
\end{split}\end{align}
In fact, by \eqref{e:uniformbb}, the right-hand side of \eqref{e:shift_diff} tends to \(0\) as \(y\to 0+\), uniformly in \(N\).

Using \eqref{e:ims0}, we can rewrite the left-hand side of \eqref{e:shift_diff} as 
\begin{align}\label{e:diyi}
\int_\bR \Im[s_N(x+\ri y)] f(x)\rd x-\pi\int_\bR  f(x)\rd\mu_N(x)
=\int\left(\int_\bR \frac{y f(x) \rd x}{(x-\la)^2+y^2}-\pi f(\la)\right)  \rd \mu_N(\la) +c_N y\int_\bR f(x)\rd x,
\end{align}
where the last term is bounded by \(2c_NM\|f\|_\infty y\).

To bound the first term on the right-hand side of \eqref{e:diyi}, we consider two cases. For \(|\la|\geq 2M\), we have 
\begin{align}\begin{split}\label{e:dier}
\int_\bR \frac{y f(x) \rd x}{(x-\la)^2+y^2}
\leq \int_{-M}^M \frac{y \|f\|_\infty \rd x}{(x-\la)^2+y^2}
\leq \frac{10 My\|f\|_\infty }{1+\la^2}.
\end{split}\end{align}
For \(|\la|\leq 2M\),
\begin{align}\begin{split}\label{e:disan}
&\phantom{{}={}}\int_\bR \frac{y f(x) \rd x}{(x-\la)^2+y^2}-\pi f(\la)=
\int_{-4M}^{4M} \frac{y \bigl(f(x)-f(\la)\bigr) \rd x}{(x-\la)^2+y^2}
-\int_{\bR\setminus[-4M,4M]} \frac{y f(\la)\rd x}{(x-\la)^2+y^2}\\
&\lesssim 
y\|f'\|_\infty\int_{-4M}^{4M}\frac{|x-\la|\rd x}{(x-\la)^2+y^2}
+\|f\|_\infty\int_{x\in \bR\setminus[-4M, 4M]}\frac{y}{(x-\la)^2+y^2}\rd x\\
&\lesssim
y\ln(M/y)\|f'\|_\infty+\frac{y\|f\|_\infty}{M}\leq \frac{8M^2}{1+\la^2}\left(y\ln(M/y)\|f'\|_\infty+\frac{y\|f\|_\infty}{M}\right).
\end{split}\end{align}
By plugging \eqref{e:dier} and \eqref{e:disan} into \eqref{e:diyi}, we get
\begin{align}
\left|\int\left(\int_\bR \frac{y f(x) \rd x}{(x-\la)^2+y^2}-\pi f(\la)\right)  \rd \mu_N(\la)\right|
\leq C\bigl(M y\|f\|_\infty+M^2 y\ln (M/y)\|f'\|_\infty\bigr)\int_\bR \frac{\rd \mu_N(\la)}{1+\la^2},
\end{align}
and the claim \eqref{e:shift_diff} follows.

It remains to show that \(\mu\) is also a configuration. From \eqref{e:uniformbb} we know that \(\mu\) is locally finite. Now let \(I=(a,b)\) be a bounded interval such that
\(\mu(\{a\})=\mu(\{b\})=0\). By vague convergence,
\[
\mu(I)=\lim_{N\to\infty}\mu_N(I).
\]
Since each \(\mu_N\) is a configuration, \(\mu_N(I)\in \bZ_{\ge 0}\).  So its limit \(\mu(I)\) must also belong to \(\bZ_{\ge0}\).

Thus \(\mu\) assigns a nonnegative integer mass to every bounded interval whose endpoints are not atoms of \(\mu\). It follows that \(\mu\) is purely atomic, and each atom has positive integer mass. Since \(\mu\) is locally finite, it has only finitely many atoms in each bounded interval. Therefore \(\mu\) is a configuration.
\end{proof}

\subsection{Proof of \Cref{t:bulk-approx-convergence} and \Cref{t:edge-approx-convergence}}

\begin{proof}[Proof of \Cref{t:bulk-approx-convergence}]
The proof consists of four steps.

\noindent\emph{Step 1. Pointwise moment bounds.}
Let \(K\subset \bC_+\) be compact and set
\begin{align}\label{e:etaK}
\eta_K:=\inf_{z\in K}\Im z>0.
\end{align}
Fix an integer \(q\ge 2\). Taking \(p=2q-1\), and choosing $w_1=w$, and
\(w_2,w_3,\dots,w_p\) so that \(q-2\) of them are equal to \(w\) and the
remaining \(q\) are equal to \(\bar w\), in the bulk loop equation
\eqref{e:approx-loopeq2-bulk}, we get, since
\(s_N(\bar w)=\overline{s_N(w)}\),
\[
s_N(w)^2
\prod_{j=2}^{2q-1}s_N(w_j)
=
s_N(w)^2 s_N(w)^{q-2}s_N(\bar w)^q
=
|s_N(w)|^{2q}.
\]
Moreover,
$
\cR_N=\max\{|s_N(w_1)|,\dots,|s_N(w_p)|\}=|s_N(w)|$.
Thus, after isolating the leading quadratic term and taking absolute
values, \eqref{e:approx-loopeq2-bulk} gives
\begin{align}
\begin{split}\label{e:mmbound}
&\bE\left[
\bm1({\Omega_N})
|s_N(w)|^{2q}
\right] 
\leq C
\bE\left[
\bm1({\Omega_N})
\bigl(|\del_{w} s_N(w)|+1\bigr) |s_N(w)|^{2q-2}
\right] 
\\
&\quad
+C\bE\left[
\bm1({\Omega_N})
\sum_{j=1}^{2q-2}\left|
\del_{w_j}
\frac{s_N(w)-s_N(w_j)}{w-w_j}\right|
|s_N(w)|^{2q-3}
\right]
+\oo_N(1)\left(1+\bE\left[\bm1(\Omega_N)|s_N(w)|^{2q}\right]\right),
\end{split}
\end{align}
where $C$ depends only on $\beta$ and the \(\oo_N(1)\) error is uniform for \(w\in K\). 

For a particle-generated Nevanlinna function \(s_N\), the estimates
\eqref{e:sbound} and \eqref{e:fest} imply that, uniformly for
\(w\in K\) and \(z\in\{w,\bar w\}\),
\begin{align}\label{e:derivative-ui-bound}
|\del_w s_N(w)|
\leq \frac{|s_N(w)|}{\Im w}
\leq C_K|s_N(w)|,
\qquad
\left|
\del_z\frac{s_N(w)-s_N(z)}{w-z}
\right|
\leq C_K|s_N(w)|.
\end{align}
Plugging \eqref{e:derivative-ui-bound} into \eqref{e:mmbound}, we obtain
\begin{align}
\begin{split}\label{e:mmbound-reduced}
\bE\left[
\bm1({\Omega_N})
|s_N(w)|^{2q}
\right] 
\leq C_K
\bE\left[
\bm1({\Omega_N})
\bigl(|s_N(w)|^{2q-1}+q|s_N(w)|^{2q-2}\bigr)
\right]
+\oo_N(1)\left(1+\bE\left[\bm1(\Omega_N)|s_N(w)|^{2q}\right]\right).
\end{split}
\end{align}
We now apply Young's inequality in the forms
\[
C_K b^{2q-1}\le \frac14 b^{2q}+C_{q,K},
\qquad
C_K q b^{2q-2}\le \frac14 b^{2q}+C_{q,K}.
\]
Using these inequalities in \eqref{e:mmbound-reduced} and absorbing the
\(\oo_N(1)\)-term for all sufficiently large \(N\), we conclude that
there exists a constant \(C_{q,K}>0\) such that
\begin{equation}\label{e:pointwise-momentbb}
\sup_{z\in K}
\bE\bigl[\bm1(\Omega_N)|s_N(z)|^{2q}\bigr]
\leq C_{q,K}
\end{equation}
for all sufficiently large \(N\).

\noindent\emph{Step 2. Compact-uniform moment bounds.}
We next upgrade the pointwise estimate \eqref{e:pointwise-momentbb} to a
compact-uniform estimate. Let \(\delta>0\), and choose a finite
\(\delta\)-net \(\mathcal N_\delta\subset K\). Thus, for every
\(z\in K\), there exists \(w\in\mathcal N_\delta\) with
$
|z-w|\leq \delta$.
For such \(z\) and \(w\), define the line segment
\[
\gamma(t):=w+t(z-w),
\qquad 0\leq t\leq 1.
\]
Then
$
\Im \gamma(t)\geq \eta_K$ (recall from \eqref{e:etaK}).
Using the derivative estimate \eqref{e:derivative-ui-bound}
\[
|s_N'(\gamma(t))|
\leq \frac{|s_N(\gamma(t))|}{\Im \gamma(t)}
\leq \frac{|s_N(\gamma(t))|}{\eta_K},
\]
we get, for \(0\leq t\leq 1\),
\[
|s_N(\gamma(t))|
\leq
|s_N(w)|
+
|z-w|
\int_0^t
|s_N'(\gamma(r))|\,\rd r
\leq
|s_N(w)|
+
\frac{\delta}{\eta_K}
\int_0^t
|s_N(\gamma(r))|\,\rd r .
\]
By Gronwall's inequality,
\[
|s_N(z)|
\leq
\exp\left(\frac{\delta}{\eta_K}\right)
|s_N(w)|.
\]
Consequently,
\begin{equation}\label{e:finite-net-control}
\sup_{z\in K}|s_N(z)|^{2q}
\leq
\exp\left(\frac{2q\delta}{\eta_K}\right)
\sum_{w\in\mathcal N_\delta}
|s_N(w)|^{2q}.
\end{equation}
Multiplying by \(\bm1(\Omega_N)\), taking expectations, and using
\eqref{e:pointwise-momentbb}, we obtain
\begin{equation}\label{e:sup-momentbb}
\bE\left[
\bm1(\Omega_N)
\sup_{z\in K}|s_N(z)|^{2q}
\right]
\leq
C_{q,K}.
\end{equation}
By the symmetry
\(s_N(\bar z)=\overline{s_N(z)}\), the same estimate also holds for
compact sets \(K\subset \bC\setminus\bR\).

\noindent\emph{Step 3. Tightness and subsequential limits.}
By Markov's inequality, \eqref{e:sup-momentbb} implies
\[
\bP\left(
\Omega_N
\cap
\left\{
\sup_{z\in K}|s_N(z)|>M
\right\}
\right)
\leq
\frac{C_{q,K}}{M^{2q}}.
\]
Given \(\varepsilon>0\), choosing \(M\) sufficiently large gives
\begin{equation}\label{e:tight-on-Omega}
\limsup_{N\to\infty}
\bP\left(
\Omega_N
\cap
\left\{
\sup_{z\in K}|s_N(z)|>M
\right\}
\right)
\leq
\varepsilon.
\end{equation}
Since \(\bP(\Omega_N^\complement)=\oo_N(1)\), it follows that
\begin{equation}\label{e:tight-without-Omega}
\limsup_{N\to\infty}
\bP\left(
\sup_{z\in K}|s_N(z)|>M
\right)
\leq
\varepsilon.
\end{equation}
Thus the sequence \(\{s_N\}\) is tight in the topology of locally
uniform convergence on compact subsets of \(\bC\setminus\bR\).

Now take an arbitrary subsequence \(N_1<N_2<N_3<\cdots\). By tightness, after passing to a further subsequence, still denoted by \(N_j\), we may assume that
$
s_{N_j}\Longrightarrow s
$
in the topology of locally uniform convergence on compact subsets of \(\bC_+\). We also keep track of the indicators \(\bm1(\Omega_{N_j})\). Since
$
\bP(\Omega_{N_j})=1-\oo_{N_j}(1),
$
the pair \((s_{N_j},\bm1(\Omega_{N_j}))\) converges subsequentially in distribution to \((s,1)\).

By the Skorokhod representation theorem, we may realize \(s_{N_j}\), \(s\), and \(\bm1(\Omega_{N_j})\) on the same probability space so that
$
s_{N_j}\longrightarrow s
$
almost surely uniformly on compact subsets of \(\bC_+\), and
$
\bm1(\Omega_{N_j})\longrightarrow 1
$
almost surely.

Since each \(s_{N_j}\) is a Nevanlinna function, the locally uniform limit \(s\) is again a Nevanlinna function. Since each \(s_{N_j}\) is particle-generated, \Cref{l:boundary_measure} implies that \(s\) is also particle-generated. If \(\mu\) denotes the Borel measure corresponding to \(s\) in the representation \eqref{e:Nevanlinna_representation}, then, along this subsequence,
$
\mu_{N_j}\longrightarrow \mu
$
almost surely in the vague topology.

\noindent\emph{Step 4. Passing to the exact loop equations.}
It remains to pass to the limit in the approximate loop equations. Fix \(p\ge1\) and \(w_1,\dots,w_p\in \bC\setminus\bR\). We first record the needed uniform integrability. 
Combining \eqref{e:derivative-ui-bound}, \eqref{e:pointwise-momentbb}, and H\"older's inequality, we obtain that, for every fixed \(p\) and every fixed \(w_1,\dots,w_p\in \bC\setminus\bR\), the random variables
\begin{align}\label{e:uniform_integrable}
\bm1(\Omega_N)
\left(\frac{2-\beta}{\beta}\del_{w_1} s_N(w_1)+s_N(w_1)^2+1\right)
\prod_{j=2}^p s_N(w_j),\quad 
\bm1(\Omega_N)
\sum_{j=2}^p
\del_{w_j}\frac{s_N(w_1)-s_N(w_j)}{w_1-w_j}
\prod_{\substack{\ell=2\\ \ell\neq j}}^ps_N(w_\ell)
\end{align}
are uniformly integrable.

On the Skorokhod probability space, the almost sure locally uniform convergence \(s_{N_j}\to s\) implies, by Cauchy's integral formula, that the derivatives also converge locally uniformly. Therefore the integrands in \eqref{e:approx-loopeq2-bulk}, multiplied by \(\bm1(\Omega_{N_j})\), converge almost surely to the corresponding expressions with \(s\) in place of \(s_N\), since \(\bm1(\Omega_{N_j})\to1\) almost surely. By uniform integrability \eqref{e:uniform_integrable}, we may pass to the limit in expectation. Thus, using the approximate loop equation \eqref{e:approx-loopeq2-bulk},  this gives
\begin{align}
\bE\left[\left(\frac{2-\beta}{\beta}\del_{w_1} s(w_1)+s(w_1)^2+1\right)\prod_{j=2}^p s(w_j)\right]
+\frac{2}{\beta}\bE\left[\sum_{j=2}^p
\del_{w_j}\frac{s(w_1)-s(w_j)}{w_1-w_j}
\prod_{\substack{\ell=2\\ \ell\neq j}}^ps(w_\ell)\right]
=0.
\end{align}
Thus \(s\) satisfies the exact bulk loop equations.

By the characterization \Cref{t:bulk-characterization} for the exact bulk loop equations, the configuration \(\mu\) has the same law as the \(\mathrm{Sine}_{\beta}\) point process. We have shown that every subsequence of \(\{\mu_N\}_{N\ge1}\) has a further subsequence converging in distribution to the \(\mathrm{Sine}_{\beta}\)  point process. Therefore the full sequence \(\mu_N\) converges in distribution to the \(\mathrm{Sine}_{\beta}\) point process.
\end{proof}

\begin{proof}[Proof of \Cref{t:edge-approx-convergence}]
Using \Cref{t:edge-characterization} as input, the proof of
\Cref{t:edge-approx-convergence} is identical to the proof of
\Cref{t:bulk-approx-convergence}. We therefore omit the details.
\end{proof}

\subsection{Wigner matrices}
\label{s:wigner}

In this section, we illustrate how our characterization can be applied to 
Wigner matrices. Bulk universality for Wigner matrices was previously
established in several forms: averaged-energy universality and gap universality
were proved in
\cite{tao2011random,MR2810797,erdHos2010bulk,erdHos2012rigidity,MR2919197,erdHos2012bulk,erdHos2015gap},
while fixed-energy universality was proved later in
\cite{bourgade2016fixed,landon2019fixed}.
We show that the local semicircle law, together with a cumulant expansion, implies the microscopic approximate loop equation. Consequently, by \Cref{t:bulk-approx-convergence}, this yields a short proof of bulk universality at fixed  energy.   

\begin{definition}[Real Wigner matrix]
\label{d:wigner}
Let \(H=(h_{ij})_{i,j=1}^N\) be an \(N\times N\) real symmetric random matrix.
We assume that the upper-triangular entries
\(\{h_{ij}:1\le i\le j\le N\}\) are independent, and that
\(h_{ji}=h_{ij}\) for \(i\le j\). We assume
\[
        \bE[ h_{ij}]=0,
        \qquad
        \bE[(\sqrt N h_{ij})^2]=1,
        \qquad 1\le i,j\le N.
\]
Moreover, for every \(q\in\mathbb N\), there exists a constant \(C_q>0\),
independent of \(N,i,j\), such that
$
        \bE [|\sqrt N h_{ij}|^q]\le C_q.
$
\end{definition}

\begin{proposition}[Microscopic approximate loop equation for Wigner matrices]
\label{p:wigner-microscopic-loop}
Consider real Wigner matrices \(H\) as in Definition \ref{d:wigner}, with eigenvalues
\(\lambda_1\ge \lambda_2\ge \cdots\ge \lambda_N\). Fix \(E\in(-2,2)\), and set
\begin{align}
\label{e:defrhoE}
        \varrho_E:=\varrho_{\rm sc}(E):=\frac{\sqrt{4-E^2}}{2\pi}.
\end{align}
For \(w\in\bC\setminus\bR\), define the rescaled Stieltjes transform
\begin{align}
\label{e:defsN}
        s_N(w)
        :=
        \sum_{i=1}^N
        \frac{1}{N\pi\varrho_E(\lambda_i-E)-w}
        +
        \frac{E}{2\pi\varrho_E}.
\end{align}
Then, for every \(p\ge1\) and every compact set
\(K\subset\bC\setminus\bR\), uniformly for
\(w,w_2,\dots,w_p\in K\), we have
\begin{align}
\label{e:rrg-microscopic-approx-loop}
&
\bE\left[
\left(
\partial_w s_N(w)+s_N(w)^2+1
\right)
\prod_{j=2}^p s_N(w_j)
\right]
\nonumber\\
&\qquad
+
2\bE\left[
\sum_{j=2}^p
\partial_{w_j}
\frac{s_N(w)-s_N(w_j)}{w-w_j}
\prod_{\substack{i=2\\ i\neq j}}^p s_N(w_i)
\right]
=
\oo_N(1)
\left(
1+
\bE\left[\cR_N^{p+1}\right]
\right),
\end{align}
where
\[
        \cR_N
        :=
        \max\left\{|s_N(w)|,|s_N(w_2)|,\dots,|s_N(w_p)|\right\}.
\]
 As a consequence,
by \Cref{t:bulk-approx-convergence} with \(\beta=1\), the rescaled bulk
eigenvalue statistics converge to the Sine\(_1\) point process.
\end{proposition}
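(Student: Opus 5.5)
The plan is to derive \eqref{e:rrg-microscopic-approx-loop} from the resolvent identity together with a cumulant expansion, and then to rescale to the microscopic variable $w$.

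\emph{Setup and algebraic identity.} Let $G(z)=(H-z)^{-1}$ and $m_N(z)=N^{-1}\Tr G(z)$. The microscopic variable is
\[
z=z(w):=E+\frac{w}{N\pi\varrho_E}.
\]
With this choice, \eqref{e:defsN} reads
\[
s_N(w)=\frac{m_N(z)+E/2}{\pi\varrho_E}.
\]
Equivalently, $m_N=\pi\varrho_E s_N-E/2$, where $-E/2=\Re\msc(E+\ri0)$. Using $1-E^2/4=\pi^2\varrho_E^2$, a direct expansion gives
\[
m_N^2+zm_N+1=\pi^2\varrho_E^2\bigl(s_N^2+1\bigr)+\OO\bigl(|w|\,|s_N|/N+|w|/N\bigr).
\]
For derivatives, $\partial_w=(N\pi\varrho_E)^{-1}\partial_z$. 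This yields
\[
N^{-2}\Tr G(z)^2=N^{-1}\partial_z m_N=\pi^2\varrho_E^2\,\partial_w s_N .
\]
Likewise,
\[
N^{-2}\Tr G(z)G(z_j)^2=\pi^2\varrho_E^2\,\partial_{w_j}\frac{s_N(w)-s_N(w_j)}{w-w_j}.
\]
So every term of the target equation is $\pi^2\varrho_E^2$ times a natural resolvent quantity, and it suffices to prove the unscaled identity with error $\oo_N(1)(1+\bE[\cR_N^{p+1}])$.

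\emph{Cumulant expansion.} Start from $zG=HG-I$ and take the normalized trace:
\[
\bE\bigl[(zm_N+1)\Pi\bigr]=\bE\Bigl[\frac1N\sum_{i,j}h_{ij}G_{ji}\,\Pi\Bigr],
\qquad \Pi:=\prod_{j\ge2}m_N(z_j)+\text{const}.
\]
Concretely, $\Pi$ is the product of the $s_N(w_j)$, expanded. Apply the cumulant expansion $\bE[h_{ij}f]=\sum_k \frac{\kappa_{k+1}(h_{ij})}{k!}\bE[\partial_{ij}^k f]$, truncated at a large fixed order $\ell$.

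The second-cumulant term, with $\kappa_2=N^{-1}$ and the real-symmetric derivative rule $\partial_{ij}G=-G(E_{ij}+E_{ji})G$, produces three contributions:
\begin{itemize}
\item the term $-m_N^2\Pi$;
\item the term $-N^{-2}\Tr G^2\,\Pi$. This is the transpose contribution, giving $(2-\beta)/\beta=1$ times $\partial_w s_N$;
\item the terms $-\frac{2}{N^2}\Tr\bigl(G(z)G(z_j)^2\bigr)\prod_{\ell\ne j}$, coming from $\partial_{ij}$ hitting $\Pi$. These give the difference-quotient terms with coefficient $2=2/\beta$.
\end{itemize}
The diagonal corrections ($\delta_{ij}$ terms, and $\bE[h_{ii}^2]$ not matching) contribute $\OO(N^{-1}\cdot N^{-1}\sum_i|G_{ii}|^2)$-type terms. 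These are small once entries of $G$ are controlled.

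\emph{Error terms at the microscopic scale: the main obstacle.} We are at $\Im z\asymp1/N$, below the scale $N^{-1+\varepsilon}$ of the local law. The plan is to define $\Omega_N$ as the high-probability event on which the entrywise local law holds at all $z'=E'+\ri N^{-1+\varepsilon}$ in a window around $E$. We then transfer to scale $\eta\asymp 1/N$ using the following three facts:
\begin{itemize}
\item the monotonicity bound $|G_{ii}(E+\ri\eta)|\le C(\eta'/\eta)\Im G_{ii}(E+\ri\eta')$-type estimates, which give $|G_{ii}|\le N^{\varepsilon}$;
\item the Ward identity $\sum_j|G_{ij}|^2=\Im G_{ii}/\eta$;
\item $\Im G_{ii}\le N^{\varepsilon}$, by the same monotonicity.
\end{itemize}
Then a typical third-cumulant term $N^{-3/2}N^{-1}\sum_{i,j}\bE[G_{ii}G_{jj}G_{ij}\Pi]$ is bounded by $N^{-5/2}\cdot N\cdot\sum_i(N\Im G_{ii}/\eta)^{1/2}\cdot N^{2\varepsilon}\lesssim N^{-1/2+3\varepsilon}$ times $|\Pi|$. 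Higher cumulants are similar or better, and the truncation remainder is bounded using $\|G\|\le 1/\eta\le CN$ and the moment assumptions. When derivatives hit $\Pi$, they produce extra factors like $N^{-1}(G(z_j)^2)_{ij}$. These are handled by the same Ward bounds, and the polynomial growth in the $m_N(z_j)$ is absorbed via Hölder into $\bE[\cR_N^{p+1}]$.

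On $\Omega_N^c$, which has probability $N^{-D}$, we use the deterministic bound $|G_{ij}|\le CN$. This step, namely making every error term $\oo_N(1)(1+\bE[\cR_N^{p+1}])$ uniformly for $w$ in compact sets of $\bC\setminus\bR$, including keeping track of the $\Pi$-weighted Hölder bookkeeping, is the part I expect to require the most care. Finally, multiply through by $(\pi\varrho_E)^{-2}$ and invoke \Cref{t:bulk-approx-convergence} with $\beta=1$.
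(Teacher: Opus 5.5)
Your route is the paper's: take the trace of $(H-z)G=I$, add $m_N^2$, and cumulant-expand $\frac1N\sum_{ij}\bE[h_{ij}G_{ji}\Pi]$. Read $-m_N^2$, $-N^{-1}\partial_z m_N$ and the difference-quotient terms off the second cumulant. Push the entrywise local law from $\Im z=N^{-1+\varepsilon}$ down to $\Im z\asymp N^{-1}$ at a cost of $N^{\varepsilon}$ per entry, and rescale.

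\textbf{The gap.} One step fails as written: the truncation remainder cannot be controlled by $\|G\|\le 1/\eta\le CN$ and the moment bounds. At $\eta\asymp N^{-1}$, the $(\ell+1)$-st derivative of $f(x)=G^{(ij,x)}_{ji}\Pi^{(ij,x)}$ carries $\ell+2$ resolvent factors, each costing $N$ under the crude bound. (Each $\partial_{ij}^k m_N$ is $N^{-1}$ times a trace of $k+1$ resolvents separated by rank-two matrices, hence also of crude size $N^{k}$.) Against this, $\bE|h_{ij}|^{\ell+2}$ only gains $N^{-(\ell+2)/2}$, so the bound gets worse as $\ell$ grows.

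On the typical event you need $\sup_{|x|\le|h_{ij}|}|f^{(\ell+1)}(x)|\le N^{C\varepsilon}(1+\cR_N)^{p-1}$. This concerns the resolvent $G^{(ij,x)}$ of the matrix whose $(i,j),(j,i)$ entries are replaced by $x$, which an event defined only through $G$ of $H$ does not control. The missing ingredient is the paper's single-entry stability lemma:
\begin{itemize}
\item put $\max_{ij}|h_{ij}|\le N^{-1/2+\delta}$ into $\Omega_N$;
\item use the rank-two Woodbury formula to get $\max_{ab}|G^{(ij,x)}_{ab}|\le 2\Lambda$ and $|m_N^{(ij,x)}(z_a)-m_N(z_a)|\le CN^{-1/2+\delta}\Lambda^2$ for $|x|\le|h_{ij}|$, where $\Lambda=CN^{\varepsilon}$;
\item use $\|G\|\le CN$ only on $\Omega_N^{c}$, against $\bP(\Omega_N^c)\le N^{-D}$.
\end{itemize}
The paper also avoids high orders altogether. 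Its second-order expansion with the He--Knowles remainder $\bE[(N^{-1}|h|+|h|^3)\sup_{|x|\le|h|}|f''(x)|]$ involves only three resolvent factors. It gives $N^{-1}\cdot N^{2}\cdot N^{-3/2+3\varepsilon}\cdot N^{3\varepsilon}=N^{-1/2+6\varepsilon}$ (taking $\delta=\varepsilon$).

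\textbf{Smaller points.} You need $|G_{ab}|\le CN^{\varepsilon}$ for all entries, not only the diagonal, because products of several off-diagonal entries occur. Monotonicity of $\eta\mapsto\eta\Im G_{aa}$ alone only controls $\Im G_{aa}$. The full bound comes from integrating $|\partial_\eta G_{ab}|\le\sqrt{\Im G_{aa}\Im G_{bb}}/\eta$ downward from $N^{-1+\varepsilon}$, which is the paper's Gronwall lemma. With that bound the Ward identity gains nothing at this scale, since off-diagonal entries are of order one. The third-cumulant term is then simply $N^{-1}\cdot N^{2}\cdot N^{-3/2}\cdot N^{3\varepsilon}$; your displayed chain has a spurious factor $N$.

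Two normalizations are also wrong, though they cancel. With $\Pi=\prod_\ell s_N(w_\ell)$, the term where $\partial_{ij}$ hits $\Pi$ is $-\frac{2}{\pi\varrho_E N^{3}}\Tr\bigl(G(z)G(z_j)^2\bigr)\prod_{\ell\ne j}s_N(w_\ell)$. The correct identity is $N^{-3}\Tr\bigl(G(z)G(z_j)^2\bigr)=\pi^3\varrho_E^3\,\partial_{w_j}\frac{s_N(w)-s_N(w_j)}{w-w_j}$, not the $N^{-2}$ and $\pi^2\varrho_E^2$ versions you state. Combined, these give the coefficient $2\pi^2\varrho_E^2$ you need.
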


\subsubsection{Preparatory estimates}
We recall the following local semicircle law for Wigner matrices from \cite[Theorem 2.3]{erdos2013local}.
\begin{theorem}[{\cite[Theorem 2.3]{erdos2013local}}]
\label{t:local-semicircle-law}
Consider a real Wigner matrix as in \Cref{d:wigner}. Fix \(\gamma>0\). Then,
for any small \(\varepsilon>0\) and large \(D>0\), the following holds with
probability at least \(1-N^{-D}\), provided \(N\) is sufficiently large.
Uniformly for spectral parameters \(z=u+\ri\eta\) satisfying
\(|u|\le 10\) and \(N^{-1+\gamma}\le \eta\le 10\),
\begin{align}
\label{e:local-law-entry}
        \max_{i,j}
        \left|
        G_{ij}(z)-\delta_{ij}m_{\rm sc}(z)
        \right|
        \le
        N^\varepsilon
        \left(
        \sqrt{\frac{\Im [m_{\rm sc}(z)]}{N\eta}}
        +
        \frac{1}{N\eta}
        \right).
\end{align}
\end{theorem}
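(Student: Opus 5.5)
This is the entrywise local semicircle law. Since it is quoted from \cite[Theorem 2.3]{erdos2013local}, the paper could simply cite it; if we were to reprove it, we would follow the standard Schur-complement and bootstrap scheme. Write \(G=G(z)=(H-z)^{-1}\) and \(m_N=\frac1N\Tr G\). Let \(G^{(i)}\) be the resolvent of the minor with row and column \(i\) removed, and let \(\bm h_i\) be the \(i\)-th column of \(H\) with its \(i\)-th entry removed. The starting point is the Schur complement formula
\[
\frac{1}{G_{ii}}=h_{ii}-z-\langle \bm h_i, G^{(i)}\bm h_i\rangle .
\]
We split the quadratic form into its partial expectation \(\frac1N\Tr G^{(i)}\) plus a fluctuation \(Z_i\). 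Using \(\frac1N\Tr G^{(i)}=m_N+\OO((N\eta)^{-1})\) by interlacing, we get the perturbed self-consistent equation
\[
\frac{1}{G_{ii}}=-z-m_N+\Upsilon_i,
\]
where \(\Upsilon_i\) collects \(h_{ii}\), \(Z_i\) and the interlacing error. The off-diagonal entries are handled by \(G_{ij}=-G_{ii}G^{(i)}_{jj}\langle \bm h_i,G^{(ij)}\bm h_j\rangle\).

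\emph{Step 1 (large deviation bounds).} By the moment assumption in \Cref{d:wigner} and standard large deviation estimates for linear and quadratic forms in independent entries, with overwhelming probability
\[
|Z_i|+|\langle \bm h_i,G^{(ij)}\bm h_j\rangle|\le N^{\varepsilon}\left(\frac1N\sum_{k,l}|G^{(i)}_{kl}|^2\right)^{1/2}/N^{1/2}.
\]
The Ward identity \(\sum_l|G_{kl}|^2=\Im G_{kk}/\eta\) turns this into
\[
N^{\varepsilon}\sqrt{\frac{\Im m_N}{N\eta}}.
\]
Removing rows and columns changes the resolvent entries only slightly, through the resolvent identities \(G_{kl}-G^{(i)}_{kl}=G_{ki}G_{il}/G_{ii}\), as long as \(\Lambda:=\max_{i,j}|G_{ij}-\delta_{ij}m_{\rm sc}|\) is already small.

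\emph{Step 2 (stability and self-consistent equation).} Averaging over \(i\) and expanding \(1/G_{ii}\) around \(1/m_{\rm sc}\) gives
\[
m_N^2+zm_N+1=\OO(\Lambda^2+\max_i|\Upsilon_i|).
\]
The stability of the quadratic equation solved by \(m_{\rm sc}\) then yields
\[
|m_N-m_{\rm sc}|\lesssim \frac{\text{error}}{\sqrt{\kappa+\eta}+\sqrt{\text{error}}},
\]
where \(\kappa\) is the distance to the edges. With \(|u|\le 10\) this bound is uniform, and we must handle the regime near \(\pm2\) and outside the spectrum carefully. Plugging back into the individual Schur formulas bounds \(\max_i|G_{ii}-m_{\rm sc}|\), and the off-diagonal formula bounds \(\max_{i\ne j}|G_{ij}|\). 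Together these give \(\Lambda\le N^{\varepsilon}\bigl(\sqrt{\Im m_{\rm sc}/(N\eta)}+1/(N\eta)\bigr)\), provided we already know \(\Lambda\le N^{-c}\).

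\emph{Step 3 (continuity / bootstrap in \(\eta\)).} The a priori smallness of \(\Lambda\) holds trivially for \(\eta\asymp 10\), where \(\|G\|\le 1/\eta\). We then decrease \(\eta\) along a grid of mesh \(N^{-3}\) down to \(N^{-1+\gamma}\). Lipschitz continuity of \(G\) in \(z\) (with constant \(\eta^{-2}\le N^2\)) transfers the bound from one grid point to the next. The dichotomy from Step 2 (either \(\Lambda\) is small or it is of order \(1\)) rules out jumping to the bad branch, and a union bound over the \(N^{O(1)}\) grid points keeps probability \(1-N^{-D}\).

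The main obstacle is Step 2 near the spectral edges and for \(|u|>2\). There the stability constant degenerates like \((\kappa+\eta)^{-1/2}\), and the two roots of the self-consistent equation nearly coincide. This is where the dichotomy argument must be done carefully so that the bootstrap selects the correct root. For our bulk application at fixed \(E\in(-2,2)\) only the bulk regime is needed, where this difficulty disappears.
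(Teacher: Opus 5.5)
The paper does not prove this statement; it imports it from \cite[Theorem 2.3]{erdos2013local}. Your sketch follows the scheme of that reference, but as written it has a genuine gap at the spectral edges, and the gap is not the root-selection issue you flag.

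Set $\Psi:=\sqrt{\Im[m_{\rm sc}]/(N\eta)}+1/(N\eta)$. In Step~2 you bound the error in the averaged self-consistent equation by $\max_i|\Upsilon_i|\approx N^{\varepsilon}\Psi$. Stability then gives only $|m_N-m_{\rm sc}|\lesssim \delta/(\sqrt{\kappa+\eta}+\sqrt{\delta})$ with $\delta=N^{\varepsilon}\Psi$, which is $\lesssim\delta$ only where $\kappa+\eta\gtrsim\delta$. Near $\pm2$ this fails even on the correct branch. The two roots of $m^2+zm+1=\delta'$ are only $\asymp\sqrt{\kappa+\eta}$ apart, so an error $\delta'\gg\kappa+\eta$ moves $m$ by about $\sqrt{\delta'}$.

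Concretely, take $z=2+\ri N^{-2/3}$, which lies in the domain once $\gamma\le 1/3$. There $\Im[m_{\rm sc}]\asymp N^{-1/3}$, hence $\Psi\asymp N^{-1/3}\asymp\sqrt{\kappa+\eta}$. Your argument then gives $|m_N-m_{\rm sc}|$ of order $N^{-1/6+\varepsilon}$, whereas \eqref{e:local-law-entry} demands $N^{-1/3+\varepsilon}$. Since $|m_N-m_{\rm sc}|\le\max_i|G_{ii}-m_{\rm sc}|$, your route cannot bound $\Lambda$ better than it bounds $|m_N-m_{\rm sc}|$. So ``Together these give $\Lambda\le N^{\varepsilon}(\cdots)$'' does not follow there, and Steps~1--3 prove only a weak local law near the edges. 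The off-diagonal entries and the bulk/outside-spectrum regimes are fine, since they need no averaging.

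The missing ingredient is \emph{fluctuation averaging}. Each $Z_i$ is of size $N^{\varepsilon}\Psi$, but the average $\frac1N\sum_i Z_i$ is of size $N^{\varepsilon}\Psi^2$. Equivalently, $\frac1N\sum_i(1-\bE_i)[1/G_{ii}]$ is of size $N^{\varepsilon}\Psi^2$, where $\bE_i$ is the expectation over row $i$. The reason is that $Z_i$ depends only weakly on row $j$: expanding through $G^{(j)}$ in a high-moment computation produces an extra factor $\Psi$ for each index coincidence.

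With the averaged error improved to $\Psi^2$, stability gives $|m_N-m_{\rm sc}|\lesssim N^{\varepsilon}\Psi^2/(\sqrt{\kappa+\eta}+\Psi)\le N^{\varepsilon}\Psi$. Reinserting this into the individual Schur formulas gives \eqref{e:local-law-entry} on the whole domain. In practice one iterates this improvement, with $\Im[m_N]$ in the control parameter, starting from the weak law of your Steps~1--3 as a priori input.

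For the paper's application the gap is harmless. The event $\Omega_N$ only uses $\max_{i,j}|G_{ij}|\le 2$, which the weak law already gives because $|m_{\rm sc}|\le 1$.
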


Fix, once and for all, a small exponent
\[
        0<\gamma<\frac1{12}.
\]
Choosing \(\varepsilon<\gamma/2\) in \Cref{t:local-semicircle-law}, using
complex conjugation to pass from the upper half-plane to the lower half-plane,
and using the moment assumption in \Cref{d:wigner}, we obtain the following
good event. For every fixed \(D>0\), after increasing the moment order in the
entry estimate if necessary,
\[
        \bP(\Omega_N^\complement)\le N^{-D},
\]
where
\begin{align}
\label{e:defWOmegaN}
\Omega_N
:=
\left\{
\begin{array}{l}
H:\displaystyle\max_{i,j}|h_{ij}|\le N^{-1/2+\gamma},
\\[0.4em]
\displaystyle
\max_{i,j}|G_{ij}(z)|\le 2
\quad
\text{for all }z=u+\ri\eta,
\ |u|\le 10,
\ N^{-1+\gamma}\le |\eta|\le 10
\end{array}
\right\}.
\end{align}

\begin{lemma}
\label{l:Gijest}
On the event \(\Omega_N\), for every \(z=u+\ri\eta\in\bC\setminus\bR\) with
\(|u|\le 10\),
\begin{equation}
\label{eq:microscopic-entry-control}
        \max_{i,j}|G_{ij}(z)|
        \le
        \Lambda(z),
        \qquad
        \Lambda(z):=
        2\cdot \max\left\{1,\frac{N^\gamma}{N|\eta|}\right\}.
\end{equation}
In particular, if \(z=E+w/(N\pi\varrho_E)\) with \(w\) in a compact subset
\(K\subset\bC\setminus\bR\), then
\[
        \Lambda(z)\le C_KN^\gamma.
\]
\end{lemma}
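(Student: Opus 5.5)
The approach is to use the monotonicity of $\eta\mapsto \eta\,\Im[G_{ii}(u+\ri\eta)]$ and, for off-diagonal entries, the Cauchy--Schwarz/Ward-type bound. Then we transfer the bound $|G_{ij}|\le 2$, valid on $\Omega_N$ at height $\eta_0:=N^{-1+\gamma}$, down to smaller heights. For $|\eta|\ge N^{-1+\gamma}$ (and $|\eta|\le 10$) the claim is immediate from the definition \eqref{e:defWOmegaN} of $\Omega_N$, since $\Lambda(z)\ge 2$. For $|\eta|>10$ we use the trivial bound $|G_{ij}(z)|\le \|G(z)\|\le 1/|\eta|\le 2$. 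By complex conjugation ($G(\bar z)=\overline{G(z)}$, as $H$ is real symmetric) it suffices to treat $\eta>0$. The remaining case is therefore $0<\eta<\eta_0$.

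For the diagonal entries, write $G_{ii}(u+\ri\eta)=\sum_k |u_k(i)|^2/(\lambda_k-u-\ri\eta)$ in the eigenbasis. Then
\[
\eta\,\Im [G_{ii}(u+\ri\eta)]=\sum_k \frac{\eta^2|u_k(i)|^2}{(\lambda_k-u)^2+\eta^2}
\]
is nondecreasing in $\eta$, so
\[
\Im[ G_{ii}(u+\ri\eta)]\le \frac{\eta_0}{\eta}\,\Im[ G_{ii}(u+\ri\eta_0)]\le \frac{2\eta_0}{\eta}.
\]
For the full modulus we also need control of $\Re G_{ii}$. It is cleanest to bound all entries at once via
\[
|G_{ij}(z)|\le \sum_k\frac{|u_k(i)||u_k(j)|}{|\lambda_k-z|}\le \Bigl(\sum_k\frac{|u_k(i)|^2}{|\lambda_k-z|}\Bigr)^{1/2}\Bigl(\sum_k\frac{|u_k(j)|^2}{|\lambda_k-z|}\Bigr)^{1/2}.
\]
It then suffices to bound $\sum_k |u_k(i)|^2/|\lambda_k-u-\ri\eta|$.

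To bound this sum, we use the elementary inequality, valid for $0<\eta\le\eta_0$,
\[
\frac{1}{|\lambda-u-\ri\eta|}\le \frac{\eta_0}{\eta}\cdot\frac{\sqrt2\,\cdot}{|\lambda-u-\ri\eta_0|}\quad\text{(up to a constant)},
\]
which follows from $|x-\ri\eta_0|\le(\eta_0/\eta)|x-\ri\eta|$. This reduces the problem to bounding $\sum_k |u_k(i)|^2/|\lambda_k-u-\ri\eta_0|$. Here we must avoid a logarithmic loss from summing $1/|x|$ rather than $1/|x|^2$. The main obstacle is that $\Omega_N$ only controls $|G_{ii}|$, i.e. $\sum_k|u_k(i)|^2/(\lambda_k-z)$ with cancellation, rather than the absolute sum. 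I would handle this with a dyadic decomposition in $|\lambda_k-u|$. Using $\Im G_{ii}(u+\ri 2^l\eta_0)\le 2$ for dyadic heights $2^l\eta_0\le 10$, we get
\[
\sum_{k:|\lambda_k-u|\le 2^l\eta_0}|u_k(i)|^2\lesssim 2^l\eta_0.
\]
Summing $2^{-l}\eta_0^{-1}$ times these masses gives about $\log N$ terms each of size $O(1)$, which only loses a factor $\log N$ over the claimed constant. To avoid that loss, I would instead bound $|G_{ij}(u+\ri\eta)|$ by $(\eta_0/\eta)\max|G_{ij}(u+\ri\eta_0)|$ directly through the resolvent identity
\[
G(z)=G(z_0)+(z-z_0)G(z)G(z_0),\qquad z_0=u+\ri\eta_0.
\]
Combining this with the Ward identity $\sum_k|G_{ik}(z)|^2=\Im G_{ii}(z)/\eta$ and Cauchy--Schwarz gives
\[
|(G(z)G(z_0))_{ij}|\le \sqrt{\Im G_{ii}(z)/\eta}\,\sqrt{\Im G_{jj}(z_0)/\eta_0}.
\]
Together with $\Im G_{ii}(z)\le 2\eta_0/\eta$ from the monotonicity step and $|z-z_0|\le\eta_0$, this yields
\[
|G_{ij}(z)|\le 2+\eta_0\cdot\frac{\sqrt{2\eta_0}}{\eta}\cdot\frac{\sqrt2}{\sqrt{\eta_0}}= 2+\frac{2\eta_0}{\eta},
\]
which is at most $2\max\{1,\eta_0/\eta\}$ up to the constant. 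The constant can be adjusted, or absorbed, by running the argument from height $\eta_0/2$. Since $\eta_0/\eta=N^\gamma/(N\eta)$, this is the claimed bound $\Lambda(z)$.

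Finally, for $z=E+w/(N\pi\varrho_E)$ with $w\in K$, we have $|\eta|=|\Im w|/(N\pi\varrho_E)\ge c_K/N$ and $|u|\le 10$ for $N$ large. Hence $\Lambda(z)\le 2\max\{1,N^\gamma/c_K\}\le C_KN^\gamma$.
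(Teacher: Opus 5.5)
Your final argument is correct: the resolvent identity from $z_0=u+\ri\eta_0$, the Ward identity, and the monotonicity of $\eta\mapsto\eta\,\Im[G_{ii}(u+\ri\eta)]$ together prove the lemma. It takes a different route from the paper's proof.

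\textbf{How the paper argues.} The paper works along the whole vertical segment below $\eta_0$. It uses $\partial_sG(u+\ri s)=\ri G(u+\ri s)^2$ and the Ward identity to get
$$|\partial_sG_{ab}(u+\ri s)|\le \sqrt{\Im[G_{aa}]\,\Im[G_{bb}]}/s\le M(s)/s,\qquad M(s):=\max_{a,b}|G_{ab}(u+\ri s)|.$$
It then closes the integral inequality $M(s)\le 2+\int_s^{\eta_0}M(r)r^{-1}\,\rd r$ by Gronwall, which gives $M(s)\le 2\eta_0/s$.

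\textbf{How your argument differs.} You make a single jump from height $\eta_0$ to height $\eta$. The only quantity you evaluate below $\eta_0$ is $\Im[G_{ii}(z)]$, which enters through $\sum_k|G_{ik}(z)|^2=\Im[G_{ii}(z)]/\eta$. You control it directly by the monotone (Nevanlinna) structure of the spectral representation.

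\textbf{What each approach buys.} The paper bounds $\Im[G_{aa}]$ by the full modulus $M(s)$, which is the unknown itself, and therefore has to bootstrap via Gronwall. Its gain is a generic template: any quantity whose $\eta$-derivative is bounded by itself over $\eta$ can be handled the same way, and the same device reappears for $s_N$ in Step 2 of the proof of \Cref{t:bulk-approx-convergence}. Your version avoids the integral inequality by exploiting monotonicity, and it makes transparent that the $\eta_0/\eta$ loss enters only through $\Im[G_{ii}(z)]$.

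\textbf{The constant.} Drop the remark about running the argument from height $\eta_0/2$. That is not available, since $\Omega_N$ gives no information below $\eta_0$. It is also unnecessary: use $|z-z_0|=\eta_0-\eta$ instead of $|z-z_0|\le\eta_0$. Your estimate then reads
$$|G_{ij}(z)|\le 2+(\eta_0-\eta)\cdot\frac{2}{\eta}=\frac{2\eta_0}{\eta},$$
which is exactly $\Lambda(z)$ for $0<\eta<\eta_0$. This is the same sharp constant the paper obtains.

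The earlier dyadic and pointwise-kernel attempts in your proposal are superfluous and can be deleted.
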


\begin{proof}
It is enough to prove the estimate in the upper half-plane; the lower
half-plane follows by complex conjugation. Fix \(|u|\le10\) and define
\[
        M(s):=\max_{a,b}|G_{ab}(u+\ri s)|,
        \qquad s>0.
\]
If \(N^{-1+\gamma}\le s\le10\), then the bound
\begin{align}
\label{e:Mtrivial}
        M(s)\le2
\end{align}
holds directly from the definition of \(\Omega_N\). If \(s>10\), then the
trivial resolvent bound gives \(M(s)\le s^{-1}\le1\).

For \(s>0\), the identity \(\partial_sG(u+\ri s)=\ri G(u+\ri s)^2\) and the
Ward identity imply
\begin{align}
\label{e:derbb}
\begin{split}
        |\partial_s G_{ab}(u+\ri s)|
        &\le
        \sum_{k=1}^N |G_{ak}(u+\ri s)|\,|G_{kb}(u+\ri s)|
        \\
        &\le
        \left(\sum_k |G_{ak}(u+\ri s)|^2\right)^{1/2}
        \left(\sum_k |G_{kb}(u+\ri s)|^2\right)^{1/2}
        \\
        &=
        \frac{
        \sqrt{\Im [G_{aa}(u+\ri s)]\,\Im[ G_{bb}(u+\ri s)]}
        }{s}
        \le
        \frac{M(s)}{s}.
\end{split}
\end{align}
Now suppose \(0<s\le N^{-1+\gamma}\). For any indices \(a,b\),
\[
        |G_{ab}(u+\ri s)|
        \le
        |G_{ab}(u+\ri N^{-1+\gamma})|
        +
        \int_s^{N^{-1+\gamma}}
        |\partial_rG_{ab}(u+\ri r)|\,\rd r.
\]
Using \eqref{e:derbb}, we get
\[
        M(s)
        \le
        2+
        \int_s^{N^{-1+\gamma}}\frac{M(r)}{r}\,\rd r.
\]
By Gronwall's inequality,
\begin{align}
\label{e:M_Gronwall}
        M(s)
        \le
        2\exp\left(\int_s^{N^{-1+\gamma}}\frac{\rd r}{r}\right)
        =
        \frac{2N^\gamma}{Ns}.
\end{align}
Combining \eqref{e:Mtrivial} and \eqref{e:M_Gronwall} proves
\eqref{eq:microscopic-entry-control}.
\end{proof}

We recall the following cumulant expansion from \cite[Lemma 2.4]{he2018isotropic}.
\begin{lemma}[{\cite[Lemma 2.4]{he2018isotropic}}]
\label{l:cumulant-expansion}
Let \(h\) be a centered real random variable with
\[
        \bE h=0,
        \qquad
        \bE h^2=\frac1N,
\]
and finite moments of all orders. Let \(f:\mathbb R\to\mathbb C\) be smooth.
Then
\[
        \bE[hf(h)]
        =
        \frac1N\bE[f'(h)]
        +
        \mathcal R,
\]
where
\[
        |\mathcal R|
        \le
        \bE\left[
        \left(N^{-1}|h|+|h|^3\right)
        \sup_{|x|\le |h|}|f''(x)|
        \right].
\]
\end{lemma}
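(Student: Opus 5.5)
The plan is a direct second-order Taylor expansion around $0$, applied both to $f$ and to $f'$, with the moment conditions $\bE h=0$ and $\bE h^2=1/N$ cancelling the main terms. We may assume the right-hand side bound is finite; otherwise the statement is vacuous. In that case all expectations below are finite, since $|f(h)|$ and $|f'(h)|$ are controlled by $|f(0)|,|f'(0)|$ plus $\sup_{|x|\le|h|}|f''(x)|$ times powers of $|h|$, and $h$ has moments of all orders.

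\emph{Step 1 (expand $hf(h)$).} By Taylor's formula with integral remainder,
\[
f(h)=f(0)+hf'(0)+r_1(h),\qquad r_1(h)=\int_0^h (h-x)f''(x)\,\rd x,\qquad |r_1(h)|\le \tfrac12 h^2\sup_{|x|\le|h|}|f''(x)|.
\]
Multiply by $h$ and take expectations. Using $\bE h=0$ and $\bE h^2=1/N$, this gives
\[
\bE[hf(h)]=\frac{f'(0)}{N}+\bE[h\,r_1(h)].
\]

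\emph{Step 2 (expand $f'(h)$).} Similarly, $f'(h)=f'(0)+r_2(h)$ with $r_2(h)=\int_0^h f''(x)\,\rd x$, so that $|r_2(h)|\le |h|\sup_{|x|\le|h|}|f''(x)|$. Hence
\[
\frac1N\bE[f'(h)]=\frac{f'(0)}N+\frac1N\bE[r_2(h)].
\]

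\emph{Step 3 (combine).} Subtracting the two identities, the $f'(0)/N$ terms cancel and
\[
\cR=\bE[h\,r_1(h)]-\frac1N\bE[r_2(h)].
\]
The remainder bounds then give
\[
|\cR|\le \bE\Big[\big(\tfrac12|h|^3+N^{-1}|h|\big)\sup_{|x|\le|h|}|f''(x)|\Big],
\]
which is stronger than the claimed estimate.

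There is no genuine obstacle here. The only point needing care is integrability, so that the expectations can be split term by term; this is handled by the finiteness reduction at the start. Note that the factor $\tfrac12$ is simply discarded in the final bound.
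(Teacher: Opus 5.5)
Your proof is correct: Taylor-expanding $f$ to second order and $f'$ to first order about $0$, then using $\bE h=0$ and $\bE h^2=1/N$, is the $\ell=1$ case of the standard Taylor-expansion proof of the cumulant expansion in the cited source, which the paper quotes without reproducing a proof. Your integrability reduction is sound. You in fact obtain the slightly sharper constant $\tfrac12$ in front of $|h|^3$.
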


For \(i\le j\) and a real parameter \(x\), define
\begin{align}
\label{e:deltaij}
        (\Delta_{ij})_{ab}
        :=
        \delta_{ia}\delta_{jb}
        +
        \bm1(i\neq j)\delta_{ib}\delta_{ja},
        \qquad 1\le a,b\le N,
\end{align}
and
\begin{align}
\label{e:changeone}
        H^{(ij,x)}
        :=
        H+(x-h_{ij})\Delta_{ij},
        \quad
        G^{(ij,x)}(z)
        :=(H^{(ij,x)}-z)^{-1},
        \quad
        m_N^{(ij,x)}(z)
        :={1\over N}\Tr G^{(ij,x)}(z).
\end{align}
For a differentiable function \(F\) of the matrix entries, set
\[
        \partial_{ij}F(H)
        :=
        \left.
        \frac{\rd}{\rd x}F\bigl(H^{(ij,x)}\bigr)
        \right|_{x=h_{ij}},
        \qquad i\le j,
\]
and use the convention \(\partial_{ji}:=\partial_{ij}\) for \(i\leq j\).

\begin{lemma}[Single-entry resolvent stability]
\label{lem:Wigner-resolvent-expansion}
Assume that, for a spectral parameter \(z\in\bC\setminus\bR\),
\[
        \max_{a,b}|G_{ab}(z)|\le\Lambda,
        \qquad
        \max_{i,j}|h_{ij}|\le \Lambda N^{-1/2},
        \qquad
        \Lambda\le CN^\gamma,
\]
where \(\gamma>0\) is sufficiently small. Then, uniformly in all indices
\(a,b,i,j\in\qq{N}\),
\begin{align}
\label{e:Gbb}
        \sup_{|x|\le \Lambda N^{-1/2}}
        \left|
        G^{(ij,x)}_{ab}(z)-G_{ab}(z)
        \right|
        \le
        C N^{-1/2}\Lambda^3,
\end{align}
and
\begin{align}
\label{e:mbb}
        \sup_{|x|\le \Lambda N^{-1/2}}
        \left|
        m_N^{(ij,x)}(z)-m_N(z)
        \right|
        \le
        C N^{-1/2}\Lambda^3.
\end{align}
\end{lemma}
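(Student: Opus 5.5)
We compare $H^{(ij,x)}$ with $H$ through the second resolvent identity. Since $H^{(ij,x)}-H=(x-h_{ij})\Delta_{ij}$, we have
\[
G^{(ij,x)}=G-(x-h_{ij})\,G\Delta_{ij}G^{(ij,x)} .
\]
Note that $\Delta_{ij}$ has at most two nonzero entries, each equal to $1$. Set $\delta:=|x-h_{ij}|\le 2\Lambda N^{-1/2}$. Taking the $(a,b)$ entry gives
\[
G^{(ij,x)}_{ab}-G_{ab}=-(x-h_{ij})\bigl(G_{ai}G^{(ij,x)}_{jb}+\bm1(i\neq j)G_{aj}G^{(ij,x)}_{ib}\bigr).
\]

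We then bootstrap on $M:=\max_{a,b}|G^{(ij,x)}_{ab}|$. By the identity above, $M\le \Lambda+2\delta\Lambda M\le \Lambda+4\Lambda^2N^{-1/2}M$. Since $\Lambda\le CN^{\gamma}$ with $\gamma<1/4$, we have $4\Lambda^2N^{-1/2}\le 1/2$ for $N$ large. Hence $M\le 2\Lambda$, which gives $|G^{(ij,x)}_{ab}-G_{ab}|\le 2\delta\Lambda\cdot 2\Lambda\le 8\Lambda^3N^{-1/2}$. This is \eqref{e:Gbb}; in fact we obtain the better bound $\Lambda^2 N^{-1/2}$, and since $\Lambda\ge 1$ (from the definition in \Cref{l:Gijest}) it implies the stated $\Lambda^3$ bound.

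The bootstrap needs a priori finiteness of $M$. This holds trivially, since $\|G^{(ij,x)}\|\le 1/|\Im z|$. So the inequality $M\le\Lambda+\tfrac12 M$ can be rearranged directly, and no continuity argument in $x$ is needed. This rearrangement is the only delicate point.

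For \eqref{e:mbb} we average over the diagonal:
\[
m_N^{(ij,x)}-m_N=-\frac{x-h_{ij}}{N}\sum_a\bigl(G_{ai}G^{(ij,x)}_{ja}+\bm1(i\neq j)G_{aj}G^{(ij,x)}_{ia}\bigr).
\]
Bounding each term by the entrywise estimates already gives $|m_N^{(ij,x)}-m_N|\le C\delta\Lambda^2\le C\Lambda^3N^{-1/2}$. Alternatively, one can use Cauchy--Schwarz and the Ward identity to bound $\sum_a |G_{ai}||G^{(ij,x)}_{ja}|$ by $\bigl(\Im G_{ii}\,\Im G^{(ij,x)}_{jj}\bigr)^{1/2}/|\Im z|$. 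This is sharper, but unnecessary, since the crude bound already matches the claim. The constant $C$ depends only on the constant in $\Lambda\le CN^\gamma$, and all bounds are uniform in $a,b,i,j$ and in $|x|\le\Lambda N^{-1/2}$.
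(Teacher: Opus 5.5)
Your argument is correct and is essentially the paper's. The paper resolves the rank-two perturbation exactly via the Woodbury formula and inverts $\bI+(x-h_{ij})V^\top GU$ using $|x-h_{ij}|\Lambda\le 2\Lambda^2N^{-1/2}\ll 1$. You reach the same bound $C|x-h_{ij}|\Lambda^2$ by bootstrapping the second resolvent identity, and the trivial bound $\|G^{(ij,x)}\|\le|\Im z|^{-1}$ legitimately justifies the rearrangement.

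One aside is wrong but harmless. The argument does not give a ``better bound $\Lambda^2N^{-1/2}$'', since $|x-h_{ij}|$ can be as large as $2\Lambda N^{-1/2}$. What you actually proved is $8\Lambda^3N^{-1/2}$, which is exactly the claim, and no assumption $\Lambda\ge 1$ is needed for it.
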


\begin{proof}
 We use the finite-rank form of the resolvent identity. For \(i\neq j\), write
\[
        \Delta_{ij}=UV^\top,
        \qquad
        U=(e_i,e_j),
        \qquad
        V=(e_j,e_i).
\]
For \(i=j\), the same argument applies with \(U=V=e_i\). By the Woodbury
formula,
\[
        G^{(ij,x)}
        =
        G-
        (x-h_{ij})
        GU
        \bigl(\bI+(x-h_{ij})V^\top GU\bigr)^{-1}
        V^\top G.
\]
Every entry of \(GU\), \(V^\top G\), and \(V^\top GU\) is a linear combination
of at most two entries of \(G\). Hence
\[
        \max_{\alpha,\beta}
        |(V^\top GU)_{\alpha\beta}|
        \le C\Lambda.
\]
Since by our assumption,$ |x|+|h_{ij}|\le 2\Lambda N^{-1/2}$, we have
\[
        |x-h_{ij}|\Lambda
        \le
        2\Lambda^2N^{-1/2}
        \le
        2N^{-1/2+2\gamma},
\]
the matrix \(\bI+(x-h_{ij})V^\top GU\) is invertible for \(N\) large,
provided \(\gamma<1/4\), and its inverse has norm at most \(C\). Therefore,
for any \(a,b\),
\[
        |(G^{(ij,x)}-G)_{ab}|
  \le
        C|x-h_{ij}|
        \max_\alpha |(GU)_{a\alpha}|
        \max_\beta |(V^\top G)_{\beta b}|
\le
        C|x-h_{ij}|\Lambda^2
        \le
        C N^{-1/2}\Lambda^3.
\]
This proves \eqref{e:Gbb}, and \eqref{e:mbb} follows by taking the normalized
trace.
\end{proof}

\subsubsection{Proof of \Cref{p:wigner-microscopic-loop}}
\Cref{p:wigner-microscopic-loop} follows from cumulant expansion, and the following cumulant error estimate. We postpone its proof to the end of this section.
\begin{lemma}[Cumulant error estimate]
\label{l:errorE}
Fix \(p\ge1\) and a compact set \(K\subset\bC\setminus\bR\). Let
\(w,w_2,w_3,\dots,w_p\in K\), and set
\begin{align}
\label{e:wigner-za}
      z:=E+\frac{w}{N\pi \varrho_E},\quad   z_a:=E+\frac{w_a}{N\pi\varrho_E},
        \qquad 2\le a\le p.
\end{align}
Let
\[
        M_p:=\prod_{a=2}^p m_N(z_a),
        \qquad
        R_N:=\max\{ |m_N(z)|, |m_N(z_2)|,\cdots, |m_N(z_p)|\}.
\]
For \(i,j\in\qq{N}\), define
\[
        f_{ij}(x)
        :=
        G_{ij}^{(ij,x)}(z)
        \prod_{a=2}^p m_N^{(ij,x)}(z_a).
\]
Then
\begin{align}
\label{e:errorbb}
\frac1N
\sum_{i,j=1}^N
\bE\left[
\left(N^{-1}|h_{ij}|+|h_{ij}|^3\right)
\sup_{|x|\le |h_{ij}|}
|\partial_x^2f_{ij}(x)|
\right]
+
\frac1{N^2}
\sum_{i=1}^N
\left|
\bE\left[\partial_{ii}\left(G_{ii}(z)M_p\right)\right]
\right|
=
\oo_N(1)
\left(1+\bE[R_N^{p+1}]\right)
\end{align}
uniformly for \(w, w_2,w_3,\dots,w_p\in K\). 
\end{lemma}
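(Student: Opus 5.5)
Both error terms in \eqref{e:errorbb} will be handled by splitting on the good event \(\Omega_N\) from \eqref{e:defWOmegaN}: on \(\Omega_N\) we use the resolvent bounds of \Cref{l:Gijest} and the stability estimate of \Cref{lem:Wigner-resolvent-expansion}; on \(\Omega_N^\complement\) we use crude deterministic bounds against the tiny probability \(N^{-D}\). Throughout, \(\Im z,\Im z_a\asymp N^{-1}\), so \(\Lambda:=\Lambda(z)\le C_KN^\gamma\), and \(\|G^{(ij,x)}\|\le (\Im z)^{-1}\le CN\) deterministically.

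\emph{Derivatives of \(f_{ij}\).} We have \(\partial_x G^{(ij,x)}=-G^{(ij,x)}\Delta_{ij}G^{(ij,x)}\), so each derivative of \(G^{(ij,x)}_{ij}\) is a sum of products of three, respectively five, resolvent entries. Similarly \(\partial_x m_N^{(ij,x)}(z_a)=-\frac{2}{N}(G^{(ij,x)}(z_a)^2)_{ij}\), with a factor \(1/N\) in front. By the Ward identity, as in \eqref{e:derbb}, \(|(G^2)_{ij}|\le \Lambda/\Im z_a\lesssim N\Lambda\), so \(|\partial_x m_N^{(ij,x)}|\lesssim \Lambda^2\) and \(|\partial_x^2m_N^{(ij,x)}|\lesssim\Lambda^3\) after including the one-entry factors. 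Plain entries \(G_{ab}\) would only give \(\Lambda^2/N\), but that is not needed.

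\emph{Main term on \(\Omega_N\).} For \(|x|\le|h_{ij}|\le N^{-1/2+\gamma}\), \Cref{lem:Wigner-resolvent-expansion} lets us replace the perturbed entries by unperturbed ones up to \(N^{-1/2}\Lambda^3\ll\Lambda\), and likewise \(|m_N^{(ij,x)}(z_a)|\le |m_N(z_a)|+CN^{-1/2}\Lambda^3\). Expanding the product then gives
\[
\sup_{|x|\le|h_{ij}|}|\partial_x^2 f_{ij}(x)|\lesssim \sum_{k\le p-1}\Lambda^{C}\,(R_N+N^{-1/2+3\gamma})^{k}.
\]
Since \(\bE[|h_{ij}|^3]\lesssim N^{-3/2}\), the first term of \eqref{e:errorbb} is bounded by
\[
\frac1N\cdot N^2\cdot N^{-3/2}\,\Lambda^{C}\,\bE\big[(1+R_N)^{p-1}\big]=N^{-1/2+C\gamma}\,\bE\big[1+R_N^{p+1}\big].
\]
This is \(\oo_N(1)(1+\bE[R_N^{p+1}])\) provided \(\gamma<1/12\) is small relative to \(C\).

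\emph{Expected obstacle.} The step requiring care is counting the powers of \(\Lambda\) precisely. The bound is tight: \(N^{1/2}\Lambda^{C}\) must be beaten by an extra \(N^{-1}\). Where necessary I would sharpen \(|G_{ij}|\) for \(i\ne j\) using the local law \eqref{e:local-law-entry}, which gives \(N^{\varepsilon}(N\eta)^{-1/2}\) at mesoscopic \(\eta\), propagated down to microscopic \(\eta\) as in \Cref{l:Gijest}.

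\emph{Diagonal term.} \(\partial_{ii}(G_{ii}M_p)\) consists of three-entry products of size \(\Lambda^2\), times factors of size \(R_N^{p-2}\Lambda^2\). Hence
\[
\frac1{N^2}\sum_{i=1}^N|\bE[\partial_{ii}(G_{ii}M_p)]|\lesssim N^{-1}\Lambda^{C}\bE\big[1+R_N^{p}\big],
\]
which is negligible.

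\emph{Complement \(\Omega_N^\complement\).} Use the deterministic bounds \(|G|\le CN\) and \(|m|\le CN\). For \(|x|\le|h_{ij}|\) we get \(\sup|\partial_x^2 f_{ij}|\le N^{C_p}\), times the moments \(\bE[|h_{ij}|^{3}]\). Cauchy--Schwarz with \(\bP(\Omega_N^\complement)\le N^{-D}\) and \(D\) large makes this \(\oo_N(1)\), giving \eqref{e:errorbb}.
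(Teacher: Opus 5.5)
Your proposal is correct and follows the paper's proof. You split on \(\Omega_N\), bound the perturbed resolvent entries by \(\Lambda\le C_KN^\gamma\) via \Cref{l:Gijest} and \Cref{lem:Wigner-resolvent-expansion}, control \(\partial_x m_N^{(ij,x)}(z_a)\) with the Ward identity, and dispose of \(\Omega_N^\complement\) using \(\|G\|\le C_KN\) and \(\bP(\Omega_N^\complement)\le N^{-D}\).

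Two small tightenings are needed. First, on \(\Omega_N\) use the deterministic bound \(|h_{ij}|^3\le N^{-3/2+3\gamma}\) rather than \(\bE|h_{ij}|^3\), since the latter cannot simply be factored out of an expectation that also involves \(R_N\). Second, count the factors exactly: \(\partial_x^2G_{ij}\) is a sum of three-entry products, \(|\partial_xm_N^{(ij,x)}(z_a)|\lesssim\Lambda\) and \(|\partial_x^2m_N^{(ij,x)}(z_a)|\lesssim\Lambda^2\). This gives \(\sup|\partial_x^2f_{ij}|\lesssim\Lambda^3(1+R_N)^{p-1}\) and hence a total of \(N^{-1/2+6\gamma}\), which is small since \(\gamma<1/12\). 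So your ``expected obstacle'' does not arise, and the local-law sharpening you propose is unnecessary; it would not improve anything at microscopic \(\eta\) in any case.
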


\begin{proof}[Proof of \Cref{p:wigner-microscopic-loop}]
Fix \(w,w_2,w_3,\cdots,w_p\in K\), and define \(z, z_2,z_3 \cdots,z_p \) as
in \eqref{e:wigner-za}. Also set
\[
        M_p:=\prod_{a=2}^p m_N(z_a),
        \qquad
        M_p^{(a)}:=
        \prod_{\substack{b=2\\ b\neq a}}^p m_N(z_b),
        \qquad
        R_N:=\max\{ |m_N(z)|, |m_N(z_2)|,\cdots, |m_N(z_p)|\}.
\]

From \((H-z)G(z)=\bI\), taking the normalized trace gives
\[
        1+zm_N(z)
        =
        \frac1N\sum_{i,j=1}^N h_{ij}G_{ji}(z).
\]
Therefore
\begin{equation}
\label{eq:wigner-loop-start-mN}
\bE\left[
\left(1+zm_N(z)+m_N^2(z)\right)M_p
\right]
=
\frac1N
\sum_{i,j=1}^N
\bE\left[
        h_{ij}G_{ji}(z)M_p
\right]
+
\bE\left[m_N^2(z)M_p\right].
\end{equation}

We apply \Cref{l:cumulant-expansion} to the independent upper-triangular
entries. Rewriting the result as an ordered double sum, and adding one harmless
diagonal copy in order to use uniform derivative identities, gives
\begin{equation}
\label{eq:wigner-cumulant-step-mN}
\frac1N
\sum_{i,j=1}^N
\bE\left[
        h_{ij}G_{ij}(z)M_p
\right]
=
\frac1{N^2}
\sum_{i,j=1}^N
(1+\delta_{ij})
\bE\left[
        \partial_{ij}\left(G_{ij}(z)M_p\right)
\right]
+
\mathcal E_N,
\end{equation}
where the additional diagonal contribution is included in \(\mathcal E_N\), so it is given by \eqref{e:errorbb}.
By \Cref{l:errorE},
\begin{align}
\label{e:EN}
        |\mathcal E_N|
        \le
        \oo_N(1)
        \left(1+\bE[R_N^{p+1}]\right),
\end{align}
uniformly for \(w,w_2, w_3,\cdots,w_p\in K\).

We now compute the main term in \eqref{eq:wigner-cumulant-step-mN}. The
resolvent derivative identity gives
\[
        \partial_{ij}G_{kl}(z)
        =
        -G_{ki}(z)G_{jl}(z)
        -
        \bm1(i\neq j)G_{kj}(z)G_{il}(z).
\]
Consequently,
\[
        (1+\delta_{ij})\partial_{ij}G_{ij}(z)
        =
        -G_{ii}(z)G_{jj}(z)-G_{ij}^2(z),
\]
and, for every \(2\leq a\leq p\),
\[
        (1+\delta_{ij})\partial_{ij}m_N(z_a)
        =
        -\frac2NG^2_{ij}(z_a).
\]
Therefore
\begin{align}
\label{e:main-term-derivative}
\begin{split}
(1+\delta_{ij})
\partial_{ij}\left(G_{ij}(z)M_p\right)
=
\left(
-G_{ii}(z)G_{jj}(z)-G_{ij}^2(z)
\right)M_p
-
\frac{2G_{ij}(z)}{N}
\sum_{a=2}^p
G^2_{ij}(z_a)M_p^{(a)}.
\end{split}
\end{align}
Summing over \(i,j\), we obtain
\begin{align}
\label{e:main-term-summed}
\begin{split}
&
\frac1{N^2}
\sum_{i,j=1}^N
(1+\delta_{ij})
\partial_{ij}\left(G_{ij}(z)M_p\right)
=
-m_N^2(z)M_p
-
\frac1{N^2}\Tr G^2(z)\,M_p
-
\frac2{N^3}
\sum_{a=2}^p
\Tr(G(z)G^2(z_a))M_p^{(a)}.
\end{split}
\end{align}
Using
\[
        \partial_{z}m_N(z)
        =
        \frac1N\Tr G^2(z),\quad 
        \partial_{z_a}
        \frac{m_N(z)-m_N(z_a)}{z-z_a}
        =
        \frac1N\Tr(G(z)G^2(z_a)),
        \qquad 2\leq a\leq p,
\]
we get
\begin{align}
\begin{split}
\frac1{N^2}
\sum_{i,j=1}^N
\bE\left[(1+\delta_{ij})
\partial_{ij}\left(G_{ij}(z)M_p\right)
\right]
&=
-\bE[m_N^2(z)M_p]
-
\frac1N
\bE\left[(\partial_{z}m_N(z))M_p\right]
\\
&
-
\frac2{N^2}
\sum_{a=2}^p
\bE\left[
        \partial_{z_a}
        \frac{m_N(z)-m_N(z_a)}{z-z_a}
        M_p^{(a)}
\right].
\end{split}
\end{align}
Substituting this into \eqref{eq:wigner-loop-start-mN} and
\eqref{eq:wigner-cumulant-step-mN}, the term \(-\bE[m_N^2(z)M_p]\) cancels the
term \(+\bE[m_N^2(z)M_p]\). Hence
\begin{align}\begin{split}\label{e:loop1}
&
\bE\left[
\left(
1+z m_N(z)+m_N(z)^2
\right)M_p
\right]
=
-\frac1N\bE\left[
\left(
\partial_zm_N(z)
\right)M_p
\right]\\
&\qquad\quad
-\frac{2}{N^2}
\sum_{a=2}^p
\bE\left[
\partial_{z_a}
\frac{m_N(z)-m_N(z_a)}{z-z_a}
M_p^{(a)}
\right]
+
\oo_N(1)
\left(
1+
\bE\left[
R_N^{p+1}
\right]
\right).
\end{split}\end{align}

Starting from the loop equation in the form \eqref{e:loop1}, by linearity of the loop equation, we may replace
$
	M_p
$
by
$
	\prod_{a=2}^p
	(
		m_N(z_a)+E/2),
$ and get
\begin{align}\begin{split}\label{e:loop2}
&
\bE\left[
\left(
1+zm_N(z)+m_N(z)^2
+\frac1N\partial_{z}m_N(z)
\right)
\prod_{a=2}^p \left(m_N(z_a)+\frac{E}{2}\right)
\right]\\
&\qquad
+
\frac{2}{N^2}
\sum_{a=2}^p
\bE\left[
\partial_{z_a}
\frac{m_N(z)-m_N(z_a)}{z-z_a}
\prod_{\substack{b=2\\ b\neq a}}^p\left(m_N(z_b)+\frac{E}{2}\right)
\right]
=
\oo_N(1)
\left(
1+\bE\left[R_N^{p+1}\right]
\right).
\end{split}\end{align}

We now pass to microscopic variables. Recall from \eqref{e:defsN} and
\eqref{e:wigner-za} that
\[
m_N(z)=\pi \varrho_E s_N(w)-\frac E2,
\qquad
m_N(z_a)=\pi \varrho_E s_N(w_a)-\frac E2,
\quad 2\le a\le p,
\]
and the chain rule gives
\[
\frac1N\partial_z m_N(z)
=
(\pi\varrho_E)^2 \partial_w s_N(w).
\]
Moreover, by \eqref{e:defrhoE},
\[
(\pi \varrho_E)^2=1-\frac{E^2}{4}.
\]
Therefore
\begin{align}\begin{split}
\label{e:microscopic-leading-transform}
&1+z m_N(z)+m_N(z)^2+\frac1N\partial_zm_N(z)
 \\
&=
1+
\left(E+\frac{w}{N\pi\varrho_E}\right)
\left(\pi\varrho_E s_N(w)-\frac E2\right)
+
\left(\pi\varrho_E s_N(w)-\frac E2\right)^2
+
(\pi\varrho_E)^2\partial_w s_N(w)
 \\
&=
(\pi\varrho_E)^2
\left(
1+s_N(w)^2+\partial_w s_N(w)
\right)
+
\frac{w}{N}
\left(
s_N(w)-\frac{E}{2\pi\varrho_E}
\right).
\end{split}\end{align}
Similarly, for \(a\ge2\),
\begin{align}
\label{e:microscopic-difference-transform}
\partial_{z_a}
\frac{m_N(z)-m_N(z_a)}{z-z_a}
=
N^2(\pi \varrho_E)^3
\partial_{w_a}
\frac{s_N(w)-s_N(w_a)}{w-w_a}.
\end{align}

Substituting \eqref{e:microscopic-leading-transform} and
\eqref{e:microscopic-difference-transform} into \eqref{e:loop2}, using
\(R_N\lesssim 1+\cR_N\), and dividing by
\((\pi\varrho_E)^{p+1}\), we get
\begin{align}
\begin{split}
\label{e:final_loop}
&
\bE\left[
\left(
\partial_w s_N(w)
+
s_N(w)^2
+
1
\right)
\prod_{a=2}^p s_N(w_a)
\right]
+
2
\bE\left[
\sum_{a=2}^p
\left(
\partial_{w_a}
\frac{s_N(w)-s_N(w_a)}{w-w_a}
\right)
\prod_{\substack{b=2\\ b\neq a}}^p
s_N(w_b)
\right]
\\
&=
\oo_N(1)
\left(
1+\bE\left[\cR_N^{p+1}\right]
\right)
-
\frac{w}{N(\pi\varrho_E)^2}
\bE\left[
\left(
s_N(w)-\frac{E}{2\pi\varrho_E}
\right)
\prod_{a=2}^p s_N(w_a)
\right].
\end{split}
\end{align}

It remains to absorb the last term. Since \(K\subset \bC\setminus\bR\) is
compact, \(w\) is uniformly bounded on \(K\). Moreover,
\[
\left|
\left(
s_N(w)-\frac{E}{2\pi\varrho_E}
\right)
\prod_{a=2}^p s_N(w_a)
\right|
\le
C\left(
1+\cR_N^{p-1}+\cR_N^p
\right)
\le
C'\left(
1+\cR_N^{p+1}
\right).
\]
Thus
\[
\frac{|w|}{N(\pi\varrho_E)^2}
\bE\left[
\left|
\left(
s_N(w)-\frac{E}{2\pi\varrho_E}
\right)
\prod_{a=2}^p s_N(w_a)
\right|
\right]
=
\oo_N(1)
\left(
1+\bE\left[\cR_N^{p+1}\right]
\right).
\]
The final term on the right-hand side of \eqref{e:final_loop} can therefore
be absorbed into the error, yielding \eqref{e:rrg-microscopic-approx-loop}.
\end{proof}

\begin{proof}[Proof of \Cref{l:errorE}]
Since \(K\subset\bC\setminus\bR\) is compact, there are constants
\(0<c_K<C_K<\infty\) such that
\[
        c_KN^{-1}
        \le
        |\Im[z]|, |\Im[z_a]|
        \le
        C_KN^{-1},
        \qquad 2\le a\le p.
\]
On \(\Omega_N\), \Cref{l:Gijest} and \Cref{lem:Wigner-resolvent-expansion}
imply that, for \(|x|\le |h_{ij}|\leq N^{-1/2+\gamma}\) and all $z$
\[
        \max_{k,l}|G_{kl}^{(ij,x)}(z)|
        \le
        C_KN^\gamma
        =:
        \Lambda.
\]
The resolvent derivative identities are
\begin{align}\begin{split}
\label{e:Gder}
        \partial_xG^{(ij,x)}(z)
        &=-G^{(ij,x)}(z)\Delta_{ij}G^{(ij,x)}(z),
        \\
        \partial_x^2G^{(ij,x)}(z)
        &=
        2G^{(ij,x)}(z)\Delta_{ij}G^{(ij,x)}(z)
        \Delta_{ij}G^{(ij,x)}(z).
\end{split}\end{align}
Therefore
\[
        |\partial_x^rG_{kl}^{(ij,x)}(z)|
        \le
        C\Lambda^{r+1},
        \qquad r=0,1,2.
\]
For the normalized trace we use the Ward identity. Since
\(N|\Im z_a|\asymp_K1\),
\[
        |\partial_x m_N^{(ij,x)}(z_a)|
        \le C_K\Lambda,
        \qquad
        |\partial_x^2m_N^{(ij,x)}(z_a)|
        \le C_K\Lambda^2.
\]
Consequently, by the Leibniz rule,
\begin{align}
\label{e:ON}
        \bm1(\Omega_N)
        \sup_{|x|\le |h_{ij}|}
        |\partial_x^2f_{ij}(x)|
        \le
        C_{p,K}\Lambda^3(1+R_N)^{p-1}.
\end{align}
On \(\Omega_N\), we also have
\[
        N^{-1}|h_{ij}|+|h_{ij}|^3
        \le
        C N^{-3/2+3\gamma}.
\]
Thus
\begin{align}
\label{e:cc_error1}
\begin{split}
&
\frac1N
\sum_{i,j}
\bE\left[
\bm1(\Omega_N)
\left(N^{-1}|h_{ij}|+|h_{ij}|^3\right)
\sup_{|x|\le |h_{ij}|}
|\partial_x^2f_{ij}(x)|
\right]
\\
&\qquad
\le
C_{p,K}N^{-1/2+6\gamma}
\bE\left[(1+R_N)^{p-1}\right]
=
\oo_N(1)
\bE\left[(1+R_N)^{p-1}\right],
\end{split}
\end{align}
because \(\gamma<1/12\).

On \(\Omega_N^\complement\), we use the trivial resolvent bound
\[
        \|G^{(ij,x)}(z_a)\|
        \le
        |\Im z_a|^{-1}
        \le
        C_KN.
\]
Hence
\[
        \sup_{|x|\le |h_{ij}|}
        |\partial_x^2f_{ij}(x)|
        \le
        C_{p,K}N^{p+2}.
\]
By H\"older's inequality, the moment assumptions on the entries, and the bound
\(\bP(\Omega_N^\complement)\le N^{-D}\) with \(D\) chosen sufficiently large,
we get
\begin{align}
\label{e:C_error2}
&
\frac1N
\sum_{i,j}
\bE\left[
\bm1(\Omega_N^\complement)
\left(N^{-1}|h_{ij}|+|h_{ij}|^3\right)
\sup_{|x|\le |h_{ij}|}
|\partial_x^2f_{ij}(x)|
\right]
=
\oo_N(1).
\end{align}

It remains to estimate the diagonal correction. On \(\Omega_N\), the preceding
derivative bounds give
\[
        |\partial_{ii}(G_{ii}(z)M_p)|
        \le
        C_{p,K}\Lambda^2(1+R_N)^{p-1}.
\]
Therefore
\begin{align}
\label{e:diag_error}
        \frac1{N^2}
        \sum_i
        \bE\left[
        \bm1(\Omega_N)
        |\partial_{ii}(G_{ii}(z)M_p)|
        \right]
        \le
        C_{p,K}N^{-1+2\gamma}
        \bE\left[(1+R_N)^{p-1}\right].
\end{align}
The contribution of \(\Omega_N^\complement\) is again \(\oo_N(1)\) by the
trivial resolvent bound and H\"older's inequality. Combining
\eqref{e:cc_error1}, \eqref{e:C_error2}, and \eqref{e:diag_error}, and using
\((1+R_N)^{p-1}\le C_p(1+R_N^{p+1})\), proves \eqref{e:errorbb}.
\end{proof}

\begin{remark}[The role of Quantum Unique Ergodicity]
For symmetric random matrices with independent entries and a general expectation or variance profile,
the approximate loop equations can also be established provided one more a priori estimate is available,
namely Quantum Unique Ergodicity (QUE) \cite{MR3606475}.

To illustrate this, consider centered random matrices and denote
$s_{ij}=\bE[|h_{ij}|^2]$, normalized so that $\sum_j s_{ij}=1$ for all $i$.
Then the main term on the right-hand side of \eqref{eq:wigner-cumulant-step-mN} becomes
\[
\frac1{N}
\sum_{i,j=1}^N
s_{ij}
\bE\left[
        \partial_{ij}\left(G_{ij}(z)M_p\right)
\right].
\]
Summing over $j$ the $G_{ii}G_{jj}$ term in the analogue of \eqref{e:main-term-derivative}
yields the following multiple of $M_p$, where $v_\alpha$ denotes the normalized eigenvector
associated with the eigenvalue $\lambda_\alpha$:
\[
G_{ii}\sum_{j=1}^N s_{ij}G_{jj}
=
G_{ii}\sum_\alpha \sum_{j=1}^N
\frac{s_{ij}v_\alpha(j)^2}{\lambda_\alpha-z}  \notag\\
=
G_{ii}m_N
+\sum_\alpha
\frac{1}{\lambda_\alpha-z}
\sum_j
s_{ij}\Big(v_\alpha(j)^2-\frac1N\Big).
\]
QUE implies that the latter centered sum exhibits cancellations, namely it is
${\rm O}(N^{-1-\varepsilon})$ for some $\varepsilon>0$, with overwhelming probability.
Applying this estimate to all the terms arising from the summation in
\eqref{e:main-term-derivative}, one obtains the approximate loop equations.

For models with spatial dependence, including random band matrices,
QUE is known to be essential in the third step of the three-step strategy;
see, for example, \cite{YauICM} and the references therein.
QUE itself follows from multi-point resolvent estimates on mesoscopic scales
\cite{MR4334253}.
\end{remark}

\subsection{Random \(d\)-regular graphs}
\label{s:d-regular-graph}

In this section, we apply our characterization to random \(d\)-regular graphs.
We first prove bulk universality for random \(d\)-regular graphs on \(N\)
vertices in the growing-degree regime
$
d\gg(\log N)^{24}
$ in \Cref{s:d-reg-bulk}.
We then explain how the proof of edge universality for fixed-degree random
\(d\)-regular graphs in \cite{huang2024ramanujan} can be simplified using our
new characterization in \Cref{s:d-reg-edge}.

\subsubsection{Bulk loop equation}\label{s:d-reg-bulk}
Let \(A\) be the adjacency matrix of the uniform simple \(d\)-regular graph on
\(\llbracket N\rrbracket\), and introduce the normalized adjacency matrix
\begin{align}\label{e:defH}
  H:=\frac{A}{\sqrt{d-1}}.
  \end{align}

We give
a short proof of bulk universality for random \(d\)-regular graphs on \(N\)
vertices in the regime
$
d\gg (\log N)^{24}.
$
This extends the previous bulk universality results
\cite{bauerschmidt2017bulk,landon2019fixed}, which treated the range
$
N^{\fc}\le d\le N^{2/3-\fc},
$
to degrees growing as slowly as a polylogarithm of \(N\). By \Cref{t:bulk-approx-convergence}, bulk universality follows from the
following approximate local bulk loop equation. Our main focus is on the sparse regime,  for simplicity we assume that $d\leq N^{1/2}$.

\begin{proposition}[Microscopic approximate loop equation for random \(d\)-regular graphs]
\label{prop:rrg-approx-loop}
Consider the uniform simple random \(d\)-regular graph in the regime
\[
(\log N)^{24}\ll d\le N^{1/2}.
\]
We denote the eigenvalues of the rescaled adjacency matrix $H$ (recall from \eqref{e:defH}) by $\lambda_1\geq \lambda_2\geq \cdots\geq \lambda_N$.

Fix \(E\in(-2,2)\), and set
\begin{align}
\varrho_E:=\varrho_{\rm sc}(E):=\frac{\sqrt{4-E^2}}{2\pi},
\end{align}
For \(w\in\bC\setminus\bR\), define the rescaled Stieltjes transform
\begin{align}
s_N(w)
:=\sum_{i=1}^N \frac{1}{N\pi \varrho_E(\lambda_i-E)-w}
+\frac{E}{2\pi \varrho_E}
\end{align}
Then, for every \(p\ge1\) and every compact set
\(K\subset\bC\setminus\bR\), uniformly for
\(w, w_2, w_3,\dots,w_p\in K\), we have
\begin{align}\begin{split}
\label{e:rrg-microscopic-approx-loopbis}
&
\bE\left[
\left(
\partial_{w}s_N(w)
+
s_N(w)^2
+
1
\right)
\prod_{j=2}^p s_N(w_j)
\right]\\
&\qquad
+
2\bE\left[
\sum_{j=2}^p
\partial_{w_j}
\frac{s_N(w)-s_N(w_j)}{w-w_j}
\prod_{\substack{i=2\\ i\neq j}}^p
s_N(w_i)
\right]
=
\oo_N(1)
\left(
1+
\bE\left[\cR_N^{p+1}
\right]
\right),
\end{split}\end{align}
where
$
\cR_N
:=
\max\{ |s_N(w)|, |s_N(w_2)|,\dots,
\left|s_N(w_p)\right|\}$.
As a consequence, by \Cref{t:bulk-approx-convergence} with $\beta=1$, the rescaled bulk eigenvalue statistics converge to the Sine$_1$ point process.
\end{proposition}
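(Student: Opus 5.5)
The argument follows the Wigner case (\Cref{p:wigner-microscopic-loop}), with the cumulant expansion replaced by a switching (local resampling) identity adapted to the uniform $d$-regular graph. Set $z=E+w/(N\pi\varrho_E)$, $z_a=E+w_a/(N\pi\varrho_E)$ and $M_p=\prod_{a\ge2} m_N(z_a)$.

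\textbf{Step 1: starting identity and good event.} From $(H-z)G=\bI$ we get $1+zm_N(z)=\frac1N\sum_{i,j}H_{ij}G_{ji}(z)$. We now need a substitute for the Gaussian integration by parts $\bE[h_{ij}F]\approx N^{-1}\bE[\partial_{ij}F]$. For this we use a good event $\Omega_N$ on which a local law holds down to scale $\eta\ge N^{-1+\gamma}$:
\[
\max_{ij}|G_{ij}-\delta_{ij}m_{\rm sc}|\le (\log N)^{C}\Bigl(d^{-1/2}+(N\eta)^{-1/2}\Bigr).
\]
For growing degree this is available from the literature on random regular graphs (Bauerschmidt--Huang--Yau, and Huang--Yau for $d\gg(\log N)^{C}$), with probability $1-N^{-D}$. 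The Gronwall argument of \Cref{l:Gijest} then extends the bound $|G_{ij}|\le C_KN^\gamma$ to the microscopic scale, with the factor $N^\gamma$ replaced by a polylogarithm.

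\textbf{Step 2: switching identity.} We use the exchangeability of the uniform measure under simple switchings. If $(i,j),(k,l)$ are edges and $(i,k),(j,l)$ are non-edges, then replacing the first pair by the second pair preserves the law. Averaging over $k,l$ gives
\[
\bE[A_{ij}F(A)]=\frac{d}{N}\,\bE[F(A)]+\frac{d}{Nd}\sum_{k,l}\bE\bigl[A_{ij}A_{kl}(F(A)-F(T_{ij}^{kl}A))\bigr]+\text{(error)},
\]
where $T_{ij}^{kl}$ is the switching and the error accounts for non-simple configurations. Next we Taylor-expand $F(A)-F(TA)$, with $F=G_{ji}(z)M_p$, in the rank-four perturbation $D=T A-A$ using the resolvent expansion (Woodbury formula) as in \Cref{lem:Wigner-resolvent-expansion}. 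The first-order terms, after summation over $k,l$ with $A_{kl}$ averaged via the local law, reproduce exactly the main terms of the Wigner computation:
\[
-m_N^2M_p-\frac1N(\partial_z m_N) M_p-\frac{2}{N^2}\sum_a \partial_{z_a}\frac{m_N(z)-m_N(z_a)}{z-z_a}M_p^{(a)}.
\]
The term $\frac dN\bE F$ contributes $\frac{d}{N\sqrt{d-1}}\sum_{ij}G_{ij}$, i.e.\ the Perron direction, which is harmless since $G\mathbf 1=(d/\sqrt{d-1}-z)^{-1}\mathbf 1$. This yields the analogue of \eqref{e:loop1}. The remaining steps are verbatim \eqref{e:loop2}--\eqref{e:final_loop}: shift by $E/2$, rescale to microscopic variables, and absorb the $w/N$ term.

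\textbf{Step 3: error terms (main obstacle).} The hard part is showing that the higher-order switching terms are $\oo_N(1)(1+\bE[R_N^{p+1}])$, and this is what dictates $d\gg(\log N)^{24}$. Each extra order in $D$ costs a factor $(d-1)^{-1/2}\Lambda$ with $\Lambda=\max|G_{ij}|$ at microscopic scale, but summing over $N^2d$ switchings loses powers of $N$ back. These must be recovered from Ward identities, $\sum_k|G_{ik}|^2=\Im G_{ii}/\eta$, together with the off-diagonal smallness $|G_{ij}|\lesssim (\log N)^C d^{-1/2}$ for $i\ne j$ at mesoscopic scales, transferred to the microscopic scale.

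I would first organize the second-order terms by index coincidence pattern, bounding each by $d^{-1/2}(\log N)^{C}$, and then balance the exponents so that the total is $(\log N)^{12}/\sqrt d=\oo(1)$. I expect to need a self-consistent improvement: the naive bound $\Lambda\le N^\gamma$ from Gronwall is too weak. It should be replaced by the polylogarithmic bound obtained by comparing $G(z)$ with $G(z+\ri N^{-1+\gamma})$, using the local law at $\eta=(\log N)^C/N$. The contribution of $\Omega_N^\complement$ is handled, as in \eqref{e:C_error2}, by the trivial bound $\|G\|\le C_KN$ and $\bP(\Omega_N^\complement)\le N^{-D}$. Finally, \Cref{t:bulk-approx-convergence} with $\beta=1$ gives convergence to Sine$_1$.
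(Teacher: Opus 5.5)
There is a genuine gap in Step~2, and it comes from how you orient the switching identity. You weight by $A_{ij}A_{kl}$ and switch the edges $ij,kl$ away. (The prefactor should also be $1/(Nd)$, not $d/(Nd)$, since $\sum_{k,l}A_{kl}=Nd$.) After linearization the derivative is the same operator $\partial_{ij}^{kl}$ as in \eqref{e:DG}. However, the two terms that must carry the main part, $-G_{ji}^2-G_{ii}G_{jj}$, now enter as $-Nd\sum_{i,j}A_{ij}\bigl(G_{ij}(z)^2+G_{ii}(z)G_{jj}(z)\bigr)$. Likewise the external terms enter as $Nd\sum_{i,j}A_{ij}G_{ji}(z)\bigl(G(z_a)^2\bigr)_{ij}$.

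To produce $-m_N^2-\frac1N\partial_zm_N$ and $-\frac{2}{N^2}\partial_{z_a}\frac{m_N(z)-m_N(z_a)}{z-z_a}$, you would have to replace $A_{ij}$ by its mean $d/N$ inside these sums. This is not a trace identity. ``Averaging via the local law'' cannot do it either: at $\Im z\asymp 1/N$ the local law gives only $|G_{ij}|\le\Lambda$, with no concentration of $G_{ii}$ around $m_N$, while the targets are themselves of order one. So your claim that the first-order terms reproduce exactly the Wigner main terms is unsupported. Making your orientation work would need input you do not mention. A spectral-gap bound for $H$ on $\mathbf 1^\perp$ handles $\sum A_{ij}G_{ii}G_{jj}$ up to $\OO(\Lambda^2/\sqrt d)$. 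You would still need eigenvector delocalization to homogenize $\sum A_{ij}G_{ij}^2$ and the external term.

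The paper avoids all of this by using \Cref{c:ibp-switching} in the opposite orientation. There the weight is $A_{ik}A_{jl}$ and the perturbation $\xi_{ij}^{kl}$ \emph{creates} the edge $ij$. Now $i,j$ are free and only $k,l$ range over neighbours. Using $\sum_kA_{ik}=d$, $AG=\sqrt{d-1}(\bI+zG)$ and $G\mathbf 1=(d/\sqrt{d-1}-z)^{-1}\mathbf 1$, every term in \eqref{e:DG} and \eqref{e:Dm} collapses exactly. For example, $\sum A_{ik}A_{jl}G_{ji}^2=d^2\Tr G^2$, $\sum A_{ik}A_{jl}G_{ii}G_{jj}=d^2(\Tr G)^2$, and $\sum A_{ik}A_{jl}G_{jk}G_{li}=\Tr(AGAG)$. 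This yields \eqref{eq:weighted-switching-sum}--\eqref{eq:weighted-external-derivative-sum} with only $\OO(1/d)$ corrections, and the local law enters only through $\Lambda$.

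With that fix, your Step~3 is simpler than you anticipate. The local law of \Cref{t:local-semicircle-law-rrg-uniform} holds down to $\eta\ge(\log N)^4/N$, so Gronwall gives $\Lambda\le C_K(\log N)^4$ directly, with no $N^\gamma$ stage. The Woodbury remainder is $\OO(\Lambda^3/d)$ per switching. The $N^2d^2$ switchings are exactly compensated by the prefactor $1/(N^2d\sqrt{d-1})$, so the total is $\OO(\Lambda^3/\sqrt d)$ without Ward identities or off-diagonal smallness; this is where $d\gg(\log N)^{24}$ comes from. The condition $d\le N^{1/2}$ is used to make the switching-formula error $d^{3/2}\Lambda/N$ small.
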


The exponent in the condition \(d\gg (\log N)^{24}\) is not optimal, and we do
not attempt to optimize it. In fact, bulk universality is expected to hold for
every fixed degree \(d\geq 3\) \cite{jakobson1999eigenvalue}.

\begin{conjecture}
For every fixed integer \(d\geq 3\), the local statistics of the bulk eigenvalues
of random \(d\)-regular graphs on \(N\) vertices converge, as \(N\to\infty\), to
those of the {\rm GOE}.
\end{conjecture}

\subsubsection{Local law}
In this section, we recall the local law for random \(d\)-regular graphs with
polylogarithmic degree growth from \cite{bauerschmidt2017local}.
We recall the normalized adjacency matrix $H$ from \eqref{e:defH}, and define the Green's function and Stieltjes transform
\begin{align}\label{e:defG} 
        G(z):=(H-z)^{-1},
        \qquad
        m_N(z):=\frac1N \Tr G(z).
\end{align}
In the rest of the proof, we will repeatedly use the following relations 
\begin{equation}  \label{eq:rrg-resolvent-identities}
          \partial_z G(z)= G(z)^2,\quad
          G(z)\mathbf 1
        =
        \frac{1}{d/\sqrt{d-1}-z}\mathbf 1,
        \quad
        AG(z)
        =
        \sqrt{d-1}\bigl(\bI+zG(z)\bigr),
        \end{equation}
where $\mathbf 1$ denotes the trivial all-ones eigenvector of $H$, and the Ward identities
\begin{align}\label{e:Ward}
        \sum_{j=1}^N |G_{ij}(z)|^2
        =
        \sum_{j=1}^N |G_{ji}(z)|^2
        =
        \frac{\Im [G_{ii}(z)]}{\Im[ z]},\quad
        \frac1N\sum_{i,j=1}^N |G_{ij}(z)|^2
        =
        \frac{\Im[ m_N(z)]}{\Im [z]}.
\end{align}

For \(z\in \bC_+\), let
\[
        \varrho_{\rm sc}(x):=\frac{1}{2\pi}\sqrt{(4-x^2)_+},
        \qquad
        \msc(z)
        :=
        \int_{\mathbb R}\frac{\varrho_{\rm sc}(x)}{x-z}\,\rd x
        =
        \frac{-z+\sqrt{z^2-4}}{2},
\]
where the square root is chosen so that \(\sqrt{z^2-4}\sim z\) as
\(|z|\to\infty\). 

Define
\[
        D:=d\wedge \frac{N^2}{d^3},
        \qquad
        \Phi(z):=\frac{1}{\sqrt{N\eta}}+\frac{1}{\sqrt D},
\]
and, for \(r\in[0,1]\),
\[
        F_z(r)
        :=
        \left[
                \left(
                        1+\frac{1}{\sqrt{|z^2-4|}}
                \right)r
        \right]
        \wedge \sqrt r .
\]

We recall the following local semicircle law for the uniform random regular graph
from \cite[Theorem 1.1]{bauerschmidt2017local}.

\begin{theorem}[ {\cite[Theorem 1.1]{bauerschmidt2017local}}]
        \label{t:local-semicircle-law-rrg-uniform}
        Let \(G(z)\) be the Green's function \eqref{e:defG} of the uniform
        simple random \(d\)-regular graph. Let \(\xi=\xi_N\) satisfy
$
        \xi\log \xi \gg (\log N)^2$,
and assume that
$
        \xi^2\ll d\ll \left(N/\xi\right)^{2/3}$.
Then, with probability at least \(1-\exp(-\xi\log\xi)\),
\[
        \max_i |G_{ii}(z)-\msc(z)|
        =
        \OO\bigl(F_z(\xi\Phi(z))\bigr),
        \qquad
        \max_{i\neq j}|G_{ij}(z)|
        =
        \OO\bigl(\xi\Phi(z)\bigr),
\]
simultaneously for all \(z=E+\ri\eta\in\mathbb C_+\) satisfying
$
        \eta\gg{\xi^2}/{N}$.
\end{theorem}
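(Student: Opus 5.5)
Since this statement is quoted from \cite[Theorem 1.1]{bauerschmidt2017local}, the plan below reconstructs the strategy one would use to prove it. The estimate is proved by a multiscale bootstrap in the imaginary part $\eta$. At large $\eta$ (say $\eta\asymp 1$ or larger) the estimates follow from trivial resolvent bounds and the fact that $H$ has semicircle moments up to errors $\OO(1/\sqrt d)$. One then decreases $\eta$ in steps of size $N^{-10}$ down to $\eta\gg\xi^2/N$. At each step the Lipschitz continuity of $G$ in $z$ (with constant $\eta^{-2}$) transfers a weak a priori bound, say $|G_{ij}-\delta_{ij}\msc|\le \xi\Phi$ roughly doubled, from the previous scale. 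A self-improving estimate then upgrades this weak bound to the claimed one with probability $1-\exp(-\xi\log\xi)$. A union bound over the polynomially many grid points, together with continuity, gives simultaneity in $z$.

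The core is the self-improving step. First I would derive an approximate self-consistent equation for $G_{ii}$. The Schur complement formula cannot be used directly with independence, since rows of a $d$-regular adjacency matrix are dependent. I would therefore use local resampling via switchings: pick vertex $i$ and its $d$ neighbours, and replace the edges $\{i,j\}$ by uniformly random switched edges. This operation leaves the uniform measure on $d$-regular graphs invariant, and after the switch the neighbours of $i$ are approximately uniform vertices independent of the graph with $i$ removed. Applying Schur's formula in the switched graph gives
\[
G_{ii}=-\Bigl(z+\tfrac{1}{d-1}\sum_{j,k\sim i}G^{(i)}_{jk}\Bigr)^{-1}.
\]
Since the resampled neighbours are nearly uniform, the diagonal terms concentrate around $\frac{d}{d-1}m_N^{(i)}$. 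The off-diagonal terms form a quadratic form in nearly independent indices, so they concentrate with fluctuation controlled by the Ward identity \eqref{e:Ward}, namely $\Im m/(N\eta)$. This produces the $1/\sqrt{N\eta}$ contribution to $\Phi$, while the $1/\sqrt d$ contribution comes from the fluctuation of the diagonal sum over $d$ terms. The term $N^2/d^3$ in $D$ enters from the probability of creating multi-edges or short cycles during switching when $d$ is large. Large deviation estimates for these sums, uniform over the resampling, have to hold with probability $1-e^{-\xi\log\xi}$; I would obtain them from high-moment bounds of order $\xi\log\xi$. This is why the hypotheses require $\xi\log\xi\gg(\log N)^2$ and $\xi^2\ll d$.

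Next I would average the resulting equation $G_{ii}\approx -(z+m_N+\OO(\xi\Phi))^{-1}$ over $i$. Stability of the quadratic equation $m^2+zm+1=0$ then converts an error of size $\epsilon$ into $|m_N-\msc|\lesssim \epsilon/\sqrt{|z^2-4|}\wedge\sqrt\epsilon$. This is exactly the function $F_z(\xi\Phi)$ appearing for the diagonal entries. Feeding this back into the equation for each individual $G_{ii}$ gives the same bound for $\max_i|G_{ii}-\msc|$. The off-diagonal bound $|G_{ij}|\lesssim\xi\Phi$ follows from the Schur identity $G_{ij}=-G_{ii}\frac{1}{\sqrt{d-1}}\sum_{k\sim i}G^{(i)}_{kj}$ after resampling. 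The resampled sum is again a sum of $d$ nearly independent terms, whose variance is controlled by Ward's identity.

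I expect the main obstacle to be making the resampling rigorous. One must show that the switched graph's resolvent differs from the original by a controllable low-rank perturbation. One must also show that $G^{(i)}$ and the post-switch neighbours are approximately independent, all with error probabilities of the stretched form $e^{-\xi\log\xi}$ rather than merely polynomial. The first thing I would try is to define the switching as a Markov kernel that preserves the uniform measure. I would then condition on the graph outside a ball of radius one around $i$, and prove that the ``good'' configurations, with no short cycles through the switched edges, have conditional probability $1-\OO(d^3/N^2+d^2/N)$. The concentration bounds would then be carried out on that conditional space.
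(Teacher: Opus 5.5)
This theorem is not proved in the paper; it is quoted from \cite[Theorem 1.1]{bauerschmidt2017local}. Your outline matches the architecture of that proof: local resampling of the neighbourhood of $i$ by switchings, the Schur complement in the resampled graph, Ward-identity control of the quadratic form, stability of $m^2+zm+1=0$, and continuity in $\eta$. However, the two steps where the real work lies are set up in a way that would not go through.

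First, the source of randomness. In a $d$-regular graph, the graph with $i$ deleted determines the neighbours of $i$: they are exactly its vertices of degree $d-1$. Conditioning on the graph outside the ball of radius one around $i$ fixes them as well. So neither conditioning leaves the neighbourhood random, and your claim that after the switch the neighbours are nearly independent of ``the graph with $i$ removed'' is false for the switched graph. The new neighbours $c_1,\dots,c_d$ are nearly independent of the \emph{pre-switch} resolvent $G^{(i)}$. Schur's formula for $\widetilde G_{ii}$, however, involves $\widetilde G^{(i)}$. Its graph differs from $\mathcal G^{(i)}$ by order $d$ edges, and the removed edges $\{b_\mu,c_\mu\}$ are incident to the very points $c_\mu$ at which it is evaluated. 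This is not a low-rank perturbation, and at $\eta\approx\xi^2/N$ no operator-norm control is available. Replacing $\frac1{d-1}\sum_{\mu,\nu}\widetilde G^{(i)}_{c_\mu c_\nu}$ by its $G^{(i)}$-counterpart requires an entrywise resolvent expansion in the switched edges. In that expansion crude bounds lose powers of $d$, and the randomness of the switching data must be used a second time to get an error $\ll\xi\Phi$. This comparison is the core of the cited proof, and your proposal only names it.

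Second, the probability budget. Restricting to ``good'' configurations of conditional probability $1-\OO(d^3/N^2+d^2/N)$ cannot produce a failure probability $e^{-\xi\log\xi}\le e^{-(\log N)^2}$. Exchangeability $\mathcal G\overset{d}{=}\widetilde{\mathcal G}$ only gives $\bP(\mathcal G\in E)\le\bP(\text{bad})+\bP(\widetilde{\mathcal G}\in E,\ \text{good})$, so a discarded set of polynomial probability enters the failure probability directly. For $d\ge\sqrt N$ the quantity $d^2/N$ is not even below $1$. Inadmissible switchings must be kept in the argument rather than discarded. One performs the $\mu$-th switching only when it is admissible, so that $\widetilde{\mathcal G}$ is exactly uniform, and retains the old neighbour $a_\mu$ otherwise. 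The number of retained neighbours is then bounded by a binomial, Poisson-type tail in the auxiliary randomness; such tails are what produce probabilities of the form $e^{-\xi\log\xi}$. A smaller point concerns the initial scale, which does not come from moment bounds. The trivial eigenvalue $d/\sqrt{d-1}\approx\sqrt d$ makes the Neumann series diverge at $|z|\asymp 1$. Instead, you feed the deterministic bound $\max_{i,j}|G_{ij}|\le 1/\eta$ into the same self-consistent argument.
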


 Choosing
\[
        \xi:=\frac{(\log N)^2}{\sqrt{\log\log N}},
\]
the assumptions of the local law \Cref{t:local-semicircle-law-rrg-uniform} are satisfied whenever
\[
        d\ge (\log N)^4,
        \qquad
        d\le \frac{N^{2/3}}{(\log N)^{4/3}},
        \qquad
        \eta\ge \frac{(\log N)^4}{N}.
\]
Therefore, with probability at least \(1-\exp(-\xi\log\xi)\geq 1-e^{-(\log N)^2}\),
\[
        \max_i |G_{ii}(z)-\msc(z)|
        =
        \OO\bigl(F_z(\xi\Phi(z))\bigr),
        \qquad
        \max_{i\neq j}|G_{ij}(z)|
        =
        \OO\bigl(\xi\Phi(z)\bigr),
\]
simultaneously for all \(z=E+\ri\eta\in\mathbb C_+\) with
$
        \eta\ge {(\log N)^4}/{N}$.
In particular,
\[
        \max_{i,j}|G_{ij}(z)|
        \le 2
\]
throughout this spectral domain.

We introduce the following event \(\Omega_N\) of \(d\)-regular graphs on \(N\)
vertices:
\begin{align}\label{e:defOmegaN}
\Omega_N
:=
\left\{
\mathcal G:
\max_{i,j}|G_{ij}(z)|\le 2
\text{ for all }z=E+\ri\eta\in\bC_+,\ \eta\ge \frac{(\log N)^4}{N}
\right\}.
\end{align}

\begin{lemma}
On the event \(\Omega_N\), for any \(z=E+\ri\eta\in\bC_+\),
\begin{equation}
        \max_{i,j}|G_{ij}(z)|
        \le
        \Lambda(z),
        \qquad
        \Lambda(z):=2\cdot
        \max\left\{1, \frac{(\log N)^4}{N\eta}\right\},
        \qquad
        \eta>0.
        \label{eq:microscopic-entry-controlbis}
\end{equation}
Thus, if \(z=E+w/(\pi \varrho_E N)\) with \(w\) in a compact subset \(K\) of
\(\mathbb C_+\), then
\[
        \Lambda(z)\le C_K(\log N)^4 .
\]
\end{lemma}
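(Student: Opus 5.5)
This is the same argument as in \Cref{l:Gijest} for Wigner matrices, with the threshold $N^{-1+\gamma}$ replaced by $\eta_0:=(\log N)^4/N$. By the definition \eqref{e:defOmegaN}, the bound $\max_{i,j}|G_{ij}(z)|\le 2$ holds on $\Omega_N$ for every $z=E+\ri\eta$ with $\eta\ge\eta_0$, and for all real $E$ (the event is not restricted to $|E|\le 10$). The plan is to propagate this bound downward in $\eta$ along vertical lines by a Gronwall argument, which costs exactly the factor $\eta_0/\eta$.

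First fix $E\in\bR$ and set $M(s):=\max_{a,b}|G_{ab}(E+\ri s)|$ for $s>0$. For $s\ge\eta_0$ we have $M(s)\le 2$ directly from $\Omega_N$, which is already the claimed bound there.

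Next, I would bound the derivative. Since $\partial_s G=\ri G^2$, Cauchy--Schwarz together with the Ward identity \eqref{e:Ward} gives
\[
|\partial_sG_{ab}(E+\ri s)|\le\sum_k|G_{ak}||G_{kb}|\le\frac{\sqrt{\Im[G_{aa}]\,\Im[G_{bb}]}}{s}\le\frac{M(s)}{s}.
\]

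For $0<s<\eta_0$, integrating from $s$ up to $\eta_0$ yields
\[
M(s)\le 2+\int_s^{\eta_0}\frac{M(r)}{r}\,\rd r .
\]
Gronwall's inequality then gives $M(s)\le 2\exp(\log(\eta_0/s))=2\eta_0/s=2(\log N)^4/(Ns)$. Combining the two ranges proves \eqref{eq:microscopic-entry-controlbis}.

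For the consequence, take $z=E+w/(\pi\varrho_EN)$ with $w\in K\subset\bC_+$ compact. Then $N\eta=\Im[w]/(\pi\varrho_E)\ge c_K>0$, since $\varrho_E>0$ for $E\in(-2,2)$. Hence $\Lambda(z)\le 2\max\{1,(\log N)^4/c_K\}\le C_K(\log N)^4$.

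No step here is a real obstacle. The only point needing care is that the Gronwall step must be applied with $M$ taken as a maximum over all entries simultaneously, because the derivative bound for a single entry involves the diagonal entries $G_{aa}$ and $G_{bb}$. Since $M$ is a finite maximum of continuous functions, it is continuous, so Gronwall applies.
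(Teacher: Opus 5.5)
Your proof is correct and is essentially the paper's argument: the paper refers to the proof of \Cref{l:Gijest} with $N^\gamma$ replaced by $(\log N)^4$, and that proof is exactly the Ward-identity derivative bound followed by a backward Gronwall step from the threshold $(\log N)^4/N$, as you carry out. You are also right that $\Omega_N$ here already covers all $E\in\bR$ and all $\eta\ge(\log N)^4/N$, so the separate large-$\eta$ case needed in the Wigner setting does not arise.
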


\begin{proof}
The proof is exactly the same as that of \Cref{l:Gijest} after replacing $N^\gamma$ by $(\log N)^4$, so we omit it.
\end{proof}

\subsubsection{Switching calculus}

We define the signed adjacency matrices
\begin{equation}\label{e:defswitching}
    (\Delta_{ij})_{ab}
    :=
    \delta_{ia}\delta_{jb}
    +
    \delta_{ib}\delta_{ja},
    \qquad
    \xi_{ij}^{kl}
    :=
    \Delta_{ij}
    +
    \Delta_{kl}
    -
    \Delta_{ik}
    -
    \Delta_{jl}.
\end{equation}
Thus \(\Delta_{ij}\) is the adjacency matrix of the edge \(ij\), while
\(\xi_{ij}^{kl}\) is the signed matrix associated with the switching of edges.
For any function \(F\) of the adjacency matrix \(A\), define the discrete
difference operator and the continuous differential operator by
\begin{equation}
(D_{ij}^{kl}F)(A)
:=
F(A+\xi_{ij}^{kl})-F(A),
\qquad
(\partial_{ij}^{kl}F)(A)
:=
\sqrt{d-1}\left.\frac{\rd}{\rd t}F(A+t\xi_{ij}^{kl})\right|_{t=0}.
\end{equation}

We recall the following discrete integration-by-parts formula from
\cite[Corollary 3.2]{bauerschmidt2020edge}.

\begin{lemma}[{\cite[Corollary 3.2]{bauerschmidt2020edge}}]
    \label{c:ibp-switching}
Fix indices \(i,j\in\llbracket N\rrbracket\). Let
\(F=F_{ij}\) be a random variable, possibly depending on \(i,j\). Define
\[
    \mathcal C_{ij}(F,A)
    :=
    |F(A)|
    +
    \max_{k,l}
    \left|
        F\bigl(A+\xi_{ij}^{kl}\bigr)
    \right|.
\]
Then for $i\neq j$ we have
\begin{align}
    \label{e:ibp-switching}
    \bE\left[A_{ij}F(A)\right]
    &=
    \frac{d}{N}\,\bE\left[F(A)\right]
    +
    \frac{1}{Nd}
    \sum_{k,l}
    \bE\left[
        A_{ik}A_{jl}
        \left(
            F\bigl(A+\xi_{ij}^{kl}\bigr)-F(A)
        \right)
    \right]
    +
    \OO\left(
        \frac{d}{N}
        \bE\left[
            A_{ij}\mathcal C_{ij}(F,A)
        \right]
    \right).
\end{align}
\end{lemma}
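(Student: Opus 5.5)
The plan is to derive \eqref{e:ibp-switching} from the invariance of the uniform measure on simple $d$-regular graphs under simple switchings, via a double-counting argument. For $i\neq j$ and $k,l$, the matrix $A+\xi_{ij}^{kl}$ is the adjacency matrix of the graph obtained by deleting the edges $\{i,k\},\{j,l\}$ and inserting $\{i,j\},\{k,l\}$. It is again a simple $d$-regular graph precisely when
\[
I(A;k,l):=A_{ik}A_{jl}(1-A_{ij})(1-A_{kl})
\]
equals one and the four vertices $i,j,k,l$ are distinct. The map $(A,k,l)\mapsto(A+\xi_{ij}^{kl},k,l)$ is a bijection from valid forward switchings onto valid reverse switchings. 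The reverse switchings are those $(A',k,l)$ with
\[
A'_{ij}A'_{kl}(1-A'_{ik})(1-A'_{jl})=1
\]
and $i,j,k,l$ distinct. Since the measure is uniform, summing $F(A')$ over both sides of this bijection will give the exact identity
\begin{align*}
\bE\Big[\sum_{k,l}I(A;k,l)\,F\bigl(A+\xi_{ij}^{kl}\bigr)\Big]
=\bE\Big[A_{ij}F(A)\sum_{k,l}A_{kl}(1-A_{ik})(1-A_{jl})\Big],
\end{align*}
where both sums run over $k,l$ with $i,j,k,l$ distinct.

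\emph{Right-hand side.} For a $d$-regular graph, $\sum_{k,l}A_{kl}=Nd$. The excluded pairs are those with $k\in\{i,j\}$, $l\in\{i,j\}$, $A_{ik}=1$, or $A_{jl}=1$, and there are at most $O(d^2)$ of them. Hence the right-hand side equals
\[
Nd\,\bE[A_{ij}F(A)]+\OO\bigl(d^2\,\bE[A_{ij}|F(A)|]\bigr).
\]
After dividing by $Nd$, the error is $\OO\bigl((d/N)\bE[A_{ij}\mathcal C_{ij}]\bigr)$, as required.

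\emph{Left-hand side.} I will write $F(A+\xi)=F(A)+\bigl(F(A+\xi)-F(A)\bigr)$ and replace $I(A;k,l)$ by $A_{ik}A_{jl}$. Because $\sum_{k,l}A_{ik}A_{jl}=d^2$, the main part is
\[
d^2\,\bE[F]+\sum_{k,l}\bE\bigl[A_{ik}A_{jl}\bigl(F(A+\xi_{ij}^{kl})-F(A)\bigr)\bigr].
\]
Dividing by $Nd$ turns this into the first two terms of \eqref{e:ibp-switching}. What remains is to control the discrepancy between $I$ and $A_{ik}A_{jl}$, which has three sources:
\begin{itemize}
\item[(a)] the factor $A_{ij}$: these terms contribute at most $d^2 A_{ij}\,\mathcal C_{ij}$, so after dividing by $Nd$ they give exactly the stated error;
\item[(b)] coincident indices, such as $k=j$ or $l=i$, of which there are $O(d)$;
\item[(c)] the factor $A_{kl}$, i.e.\ pairs with $k\sim i$, $l\sim j$, $k\sim l$.
\end{itemize}

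The main obstacle is (c), together with (b): these terms do not visibly carry a factor $A_{ij}$. For them I would not bound $F(A+\xi)$ crudely. Instead I would treat $F(A+\xi_{ij}^{kl})-F(A)$ for degenerate $(k,l)$ by the convention that such switchings act trivially, so their contribution to the difference vanishes. Their contribution to the $F(A)$ term is then a defect $-(\#\text{degenerate pairs})\,F(A)$. I expect to show this defect is of lower order by a second application of the same switching identity, which relates configurations containing the path $i\sim k\sim l\sim j$ to configurations containing the edge $ij$, transferring the defect onto an $A_{ij}$-weighted quantity. If this transfer fails, the fallback is to check the exact form of \cite[Corollary 3.2]{bauerschmidt2020edge}, which carries such terms in $\mathcal C_{ij}$, and to cite it.
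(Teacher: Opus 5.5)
Your switching bijection, the exact identity, the count $Nd-\OO(d^2)$ on the reverse side, and the treatment of the $A_{ij}$-terms (a) are all correct. The paper gives no proof here, only the citation. One small correction to (b): when $A_{ik}A_{jl}=1$, the coincidences $k=j$ or $l=i$ force $A_{ij}=1$, so they already belong to (a). The genuinely new defects are $k=l$ and $A_{kl}=1$ with $A_{ij}=0$; they are counted by $(A^2)_{ij}$ and $(A^3)_{ij}$.

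\textbf{The gap.} Your plan for these defects cannot work. The convention that degenerate switchings act trivially is not available. The display \eqref{e:ibp-switching}, and the later Woodbury linearization of $D_{ij}^{kl}$ in the application, use $F(A+\xi_{ij}^{kl})$ literally for every $(k,l)$ with $A_{ik}A_{jl}=1$. More seriously, no second switching can move these terms into $\frac dN\bE[A_{ij}\mathcal C_{ij}(F,A)]$. That quantity only evaluates $F$ at graphs and at $A+\xi_{ij}^{kl}$ with $A_{ij}=1$. For such $A$, either $\xi_{ij}^{kl}=0$ (when $k=j$ or $l=i$) or $(A+\xi_{ij}^{kl})_{ij}\ge 2$. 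A degenerate switching of a graph with $A_{ij}=0$ instead produces a matrix with $(i,j)$-entry $1$ and some entry equal to $2$. So $F$ can be prescribed freely on those matrices.

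Concretely, take $F(M):=\mathbf 1\{M_{ij}=1\}\,\mathbf 1\{\max_{a,b}M_{ab}=2\}$; a smooth variant works too. Then $F(A)=0$ and $A_{ij}\mathcal C_{ij}(F,A)=0$ identically. So the left side, the first term and the error term in \eqref{e:ibp-switching} all vanish, while the second term equals
\[
\frac1{Nd}\,\bE\Bigl[(1-A_{ij})\bigl((A^2)_{ij}+(A^3)_{ij}\bigr)\Bigr]>0,
\]
which is strictly positive, e.g.\ for $d\ge 2$ and $N\ge d+2$. Under your convention, $F(M)=\mathbf 1\{M_{ij}=0\}$ gives the same contradiction. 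Hence the display, read literally, is false, and the defect is not lower order relative to the stated error. Citing the source does not rescue the display as written.

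\textbf{What you can prove.} Bounding (b) and (c) crudely by $\mathcal C_{ij}$, your decomposition gives
\begin{multline*}
\bE[A_{ij}F(A)]=\frac dN\bE[F(A)]+\frac1{Nd}\sum_{k,l}\bE\bigl[A_{ik}A_{jl}D_{ij}^{kl}F(A)\bigr]\\
+\OO\Bigl(\frac dN\bE\bigl[A_{ij}\mathcal C_{ij}(F,A)\bigr]+\frac1{Nd}\bE\bigl[\bigl((A^2)_{ij}+(A^3)_{ij}\bigr)\mathcal C_{ij}(F,A)\bigr]\Bigr).
\end{multline*}
This is the statement you should aim for, and it suffices for the application in Section~\ref{s:d-reg-bulk}. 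There, on $\Omega_N$, $\mathcal C_{ij}$ is bounded by $C_K\Lambda(1+R_N)^{p-1}$ uniformly in $i,j$. Moreover $\sum_{i,j}\bigl((A^2)_{ij}+(A^3)_{ij}\bigr)=N(d^2+d^3)$. So after the prefactor $(N\sqrt{d-1})^{-1}$ the extra term has the same size $C_K d^{3/2}\Lambda N^{-1}\bE[(1+R_N)^{p-1}]$ as in \eqref{eq:ibp-error-final}, and the complement of $\Omega_N$ is handled as before.
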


The next lemma replaces the discrete switching derivative by its linearized
continuous version.

\begin{lemma}[Resolvent expansion under one switching]
\label{lem:rrg-switching-resolvent-expansion}
Assume
\[
        \max_{a,b}|G_{ab}(z)|\le \Lambda,
        \qquad
        \frac{\Lambda}{\sqrt d}\ll 1 .
\]
Then, uniformly in all indices, $a,b,i,j,k,l\in \qq{N}$, we have
\begin{equation}
        D_{ij}^{kl}G_{ab}(z)
        =
        \frac1{\sqrt{d-1}}\,
        \partial_{ij}^{kl}G_{ab}(z)
        +
        \OO\!\left(\frac{\Lambda^3}{d}\right),
        \label{eq:DG-linearization}
\end{equation}
and similarly
\begin{equation}
        D_{ij}^{kl}m_N(z)
        =
        \frac1{\sqrt{d-1}}\,
        \partial_{ij}^{kl}m_N(z)
        +
        \OO\!\left(\frac{\Lambda^3}{d}\right).
        \label{eq:Dm-linearization}
\end{equation}
Moreover, the derivatives are given by
\begin{align}\begin{split}\label{e:DG}
\partial_{ij}^{kl}G(z)&=-G(z)\xi_{ij}^{kl}G(z)
=
-G_{ji}(z)^2
-G_{jj}(z)G_{ii}(z)
-G_{jk}(z)G_{li}(z)
-G_{jl}(z)G_{ki}(z)
\\
&
+G_{ji}(z)G_{ki}(z)
+G_{jl}(z)G_{ji}(z)
+G_{jk}(z)G_{ii}(z)
+G_{jj}(z)G_{li}(z),
\end{split}\end{align}
and
\begin{align}\begin{split}\label{e:Dm}
\partial_{ij}^{kl}m_N(w)
=
-\frac1N\Tr\left(G(w)\xi_{ij}^{kl}G(w)\right)
=
-\frac2N
\left[
(G(w)^2)_{ij}
+
(G(w)^2)_{kl}
-
(G(w)^2)_{ik}
-
(G(w)^2)_{jl}
\right].
\end{split}\end{align}
\end{lemma}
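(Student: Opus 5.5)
We will prove the lemma by exactly the finite-rank resolvent argument used for \Cref{lem:Wigner-resolvent-expansion}, adapted to the switching perturbation \(\xi_{ij}^{kl}\). Since \(A+\xi_{ij}^{kl}\) corresponds to \(H+\xi_{ij}^{kl}/\sqrt{d-1}\), we set \(G^{(t)}:=(H+t\,\xi_{ij}^{kl}/\sqrt{d-1}-z)^{-1}\), so that \(D_{ij}^{kl}G=G^{(1)}-G^{(0)}\) and \(\partial_{ij}^{kl}G=\sqrt{d-1}\,\partial_t G^{(t)}|_{t=0}\).

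The matrix \(\xi_{ij}^{kl}\) has rank at most four. We factor it as \(\xi_{ij}^{kl}=UV^\top\), where \(U,V\) are \(N\times r\) matrices (\(r\le 4\)) whose columns are signed standard basis vectors from \(\{e_i,e_j,e_k,e_l\}\). When indices coincide, the same factorization works with fewer columns, or we can simply keep \(r\le 8\) columns built from \(\Delta\)-pieces.

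The Woodbury formula then gives
\[
G^{(1)}=G-\frac{1}{\sqrt{d-1}}\,GU\Bigl(\bI+\tfrac{1}{\sqrt{d-1}}V^\top GU\Bigr)^{-1}V^\top G .
\]
The entries of \(V^\top GU\) are bounded combinations of entries of \(G\), so they are \(\OO(\Lambda)\). Since \(\Lambda/\sqrt d\ll1\), we may expand the inverse as a Neumann series:
\[
\Bigl(\bI+\tfrac{1}{\sqrt{d-1}}V^\top GU\Bigr)^{-1}=\bI+\OO(\Lambda/\sqrt d).
\]
The first term gives \(-(d-1)^{-1/2}G\xi_{ij}^{kl}G\), which equals \((d-1)^{-1}\partial_{ij}^{kl}G\) by the derivative formula. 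Here the normalization must be checked carefully, since \(\partial_{ij}^{kl}\) carries an extra \(\sqrt{d-1}\) factor and the claimed identity has \(1/\sqrt{d-1}\) in front of \(\partial\). I will verify the consistency of that factor against the definition \(\partial_t F(A+t\xi)\) and \(H=A/\sqrt{d-1}\). The remainder is a product of \(GU\), a matrix of size \(\OO(\Lambda/\sqrt d)\), and \(V^\top G\), times \(1/\sqrt{d-1}\), which is \(\OO(\Lambda^3/d)\) entrywise. This proves \eqref{eq:DG-linearization}.

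The explicit formula \eqref{e:DG} follows by differentiating \(G^{(t)}\), which gives \(\partial_t G=-G\,(\xi/\sqrt{d-1})\,G\). We then expand \((G\xi G)_{ab}\) using \(\xi=\Delta_{ij}+\Delta_{kl}-\Delta_{ik}-\Delta_{jl}\), and each \(\Delta_{xy}\) contributes \(G_{ax}G_{yb}+G_{ay}G_{xb}\). (The displayed indices in \eqref{e:DG} correspond to a specific \((a,b)\) entry, and I would just match them term by term.)

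For \(m_N\), we take the normalized trace of the Woodbury remainder, and the same entrywise bound carries over. The derivative \eqref{e:Dm} follows from cyclicity, \(\Tr(G\Delta_{xy}G)=2(G^2)_{xy}\), using the symmetry of \(G\).

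The main obstacle is bookkeeping rather than analysis: handling degenerate index coincidences, which change the rank of \(\xi\), and pinning down the \(\sqrt{d-1}\) normalization. The analytic input is only the smallness \(\Lambda/\sqrt d\ll1\), which guarantees that the \(r\times r\) inverse exists with bounded norm.
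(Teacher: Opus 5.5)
Your argument is the paper's argument: Woodbury for the finite-rank perturbation $\xi_{ij}^{kl}/\sqrt{d-1}$, then a Neumann expansion of the small inverse using $\Lambda/\sqrt d\ll1$, then subtraction of the linear term to get the $\OO(\Lambda^3/d)$ remainder, then the normalized trace for $m_N$, and finally a direct expansion of $-G\xi_{ij}^{kl}G$ for \eqref{e:DG}--\eqref{e:Dm}. There are two slips to fix.

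First, the normalization. Since $\partial_{ij}^{kl}G=\sqrt{d-1}\,\partial_tG^{(t)}|_{t=0}=-G\xi_{ij}^{kl}G$, the linear term $-(d-1)^{-1/2}G\xi_{ij}^{kl}G$ equals $(d-1)^{-1/2}\partial_{ij}^{kl}G$, not $(d-1)^{-1}\partial_{ij}^{kl}G$. This is exactly the factor in the statement.

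Second, the factorization. No factorization exists with $r\le4$ in which both $U$ and $V$ have single signed basis vectors as columns: each such rank-one term fills one entry, while $\xi_{ij}^{kl}$ has eight nonzero entries for distinct indices. Use your $r\le8$ fallback instead, or the paper's rank-two choice $U=(e_i-e_l,\,e_j-e_k)$, $V=(e_j-e_k,\,e_i-e_l)$, which holds uniformly even when indices coincide.
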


\begin{proof}
We recall from \eqref{e:defswitching} that $ \xi_{ij}^{kl}
    =
    \Delta_{ij}
    +
    \Delta_{kl}
    -
    \Delta_{ik}
    -
    \Delta_{jl}$. Write
\[
        \xi_{ij}^{kl}=UV^\top,
        \qquad
        U=(e_i-e_l,\ e_j-e_k),
        \qquad
        V=(e_j-e_k,\ e_i-e_l).
\]
By the Woodbury formula,
\begin{align}\label{e:woodbury}
        G(A+\xi_{ij}^{kl})
        =
        G
        -
        \frac1{\sqrt{d-1}}\,
        GU
        \left(
               \bI_2+\frac1{\sqrt{d-1}}V^\top GU
        \right)^{-1}
        V^\top G .
\end{align}
Every entry of \(V^\top GU\), \(GU\), and \(V^\top G\) is a linear combination
of at most four entries of \(G\). Hence, using \(\Lambda/\sqrt d\ll 1\), the
matrix
\[
       \bI_2+\frac1{\sqrt{d-1}}V^\top GU
\]
is invertible, and the Neumann series gives
\[
        \left\|
        \left(
               \bI_2+\frac1{\sqrt{d-1}}V^\top GU
        \right)^{-1}
        - \bI_2
        \right\|
        \le
        C\frac{\Lambda}{\sqrt d}.
\]
Subtracting the linear term
\[
-\frac{1}{\sqrt{d-1}}GUV^\top G=-\frac1{\sqrt{d-1}}G\xi_{ij}^{kl}G
\]
the expression \eqref{e:woodbury} gives
\[
        \left|
        D_{ij}^{kl}G_{ab}
        +
        \frac1{\sqrt{d-1}}(G\xi_{ij}^{kl}G)_{ab}
        \right|
        \le
        C\frac{\Lambda^3}{d}.
\]
Since
\[
\partial_{ij}^{kl}G(z)=-G(z)\xi_{ij}^{kl}G(z),
\]
this proves \eqref{eq:DG-linearization}. Taking the normalized trace gives the
claim for \(m_N\). Finally, \eqref{e:DG} and \eqref{e:Dm} follow from \eqref{eq:rrg-resolvent-identities} by straightforward computations.
\end{proof}

\subsubsection{Proof of \Cref{prop:rrg-approx-loop}}

In this section we prove \Cref{prop:rrg-approx-loop}, using the local law \Cref{t:local-semicircle-law-rrg-uniform} as input, the bulk loop equation follows from the discrete integration by part formula \Cref{c:ibp-switching}.

For
\(w, w_2, w_3,\dots,w_p\in K\), set
\begin{align}\label{e:zwrelation}
z=E+\frac{w}{N\pi \varrho_E}, \quad z_a=E+\frac{w_a}{N\pi \varrho_E}, \quad 2\leq a\leq p,
\end{align}
and 
\[
M_p:=\prod_{a=2}^p m_N(z_a),
\qquad
M_p^{(a)}:=\prod_{\substack{b=2\\ b\neq a}}^p m_N(z_b),
\qquad
R_N:=\max\{|m_N(z)|,|m_N(z_2)|,\dots,|m_N(z_p)|\}.
\]

Taking the trace in the identity
\[
HG(z)=\bI+zG(z)
\]
gives
\begin{equation}
\label{e:trace-resolvent-identity-rrg}
1+z m_N(z)
=
\frac1N\sum_{i,j}H_{ij}G_{ji}(z).
\end{equation}
Multiplying \eqref{e:trace-resolvent-identity-rrg} by \(M_p\) and adding
\(m_N(z)^2M_p\), we obtain
\begin{align}
\label{e:start-higher-loop}
\bE\left[
\left(
1+z m_N(z)+m_N(z)^2
\right)M_p
\right]
=
\bE\left[m_N(z)^2M_p\right]
+
\frac1{N\sqrt{d-1}}
\sum_{i,j}
\bE\left[
A_{ij}G_{ji}(z)M_p
\right].
\end{align}

We apply the switching integration-by-parts formula \Cref{c:ibp-switching} to
the observable
\[
F_{ij}(A):=G_{ji}(z)M_p.
\]
This gives
\begin{align}\begin{split}
\label{e:ibp-switching-copy}
&\phantom{{}={}}\frac1{N\sqrt{d-1}}
\sum_{i,j}
\bE\left[A_{ij}F_{ij}(A)\right]
=
\frac{d}{N^2\sqrt{d-1}}
\sum_{i\neq j}
\bE\left[F_{ij}(A)\right]\\
&+
\frac1{N^2d\sqrt{d-1}}
\sum_{i\neq j,k,l}
\bE\left[
A_{ik}A_{jl}D_{ij}^{kl}F_{ij}(A)
\right]
+
\OO\left(
\frac{\sqrt d}{N^2}
\sum_{i\neq j}
\bE\left[
A_{ij}\mathcal C_{ij}(F_{ij},A)
\right]
\right).
\end{split}\end{align}
For the first two terms on the right-hand side of \eqref{e:ibp-switching-copy}, the summations are over $i\neq j$, we can replace them by the corresponding summations with all $i,j$, together with an extra error
\begin{align}
\frac{d}{N^2\sqrt{d-1}}
\sum_{i}
\bE\left[F_{ii}(A)\right]+\frac1{N^2d\sqrt{d-1}}
\sum_{i,k,l}
\bE\left[
A_{ik}A_{il}D_{ii}^{kl}F_{ii}(A)
\right]=\OO\left(\frac{\sqrt{d}}{N^2}\sum_i \bE[\cC_{ii}(F_{ii},A)]\right)
\end{align}
Thus we can rewrite \eqref{e:ibp-switching-copy} as
\begin{align}\begin{split}
\label{e:ibp-switching-copy-clean}
&\phantom{{}={}}\frac1{N\sqrt{d-1}}
\sum_{i,j}
\bE\left[A_{ij}F_{ij}(A)\right]
=
\frac{d}{N^2\sqrt{d-1}}
\sum_{i,j}
\bE\left[F_{ij}(A)\right]\\
&+
\frac1{N^2d\sqrt{d-1}}
\sum_{i,j,k,l}
\bE\left[
A_{ik}A_{jl}D_{ij}^{kl}F_{ij}(A)
\right]
+
\OO\left(
\frac{\sqrt d}{N^2}
\left(\sum_{i\neq j}
\bE[
A_{ij}\mathcal C_{ij}(F_{ij},A)]+\sum_i \bE[\cC_{ii}(F_{ii},A)]\right)
\right).
\end{split}\end{align}

The first term on the right-hand side of \eqref{e:ibp-switching-copy-clean} is negligible. Indeed, by
\eqref{eq:rrg-resolvent-identities},
\[
\sum_iG_{ji}(z)
=
\frac{1}{d/\sqrt{d-1}-z}
=
\OO(d^{-1/2}),
\qquad
\sum_{i,j}G_{ji}(z)
=
\OO\left(\frac{N}{\sqrt d}\right).
\]
Therefore
\begin{align}
\frac{d}{N^2\sqrt{d-1}}
\sum_{i,j}
\bE\left[F_{ij}(A)\right]
=
\OO\left(
\frac1N
\bE\left[
 |M_p|
\right]
\right)
=
\oo_N(1)
\left(
1+\bE[R_N^{p+1}]
\right).
\label{eq:first-ibp-negligible}
\end{align}

We next control the integration-by-parts error in \eqref{e:ibp-switching-copy-clean}. Recall the event \(\Omega_N\)
from \eqref{e:defOmegaN} and \(\Lambda(z)\) from
\eqref{eq:microscopic-entry-controlbis}. For
\(z=E+w/(\pi\varrho_E N)\) with \(w\in K\), we have
\begin{align}\label{e:z_parameter}
\Lambda(z)\le C_K(\log N)^4,
\qquad
\frac1{N\Im [z]}\le C_K,
\qquad
(\log N)^{24}\ll d\leq N^{1/2}.
\end{align}
On \(\Omega_N\), by \Cref{lem:rrg-switching-resolvent-expansion} and
\eqref{eq:microscopic-entry-controlbis},
\[
|G_{ji}(z)|
+
\max_{k,l}|G_{ji}(z;A+\xi_{ij}^{kl})|
\le C_K\Lambda(z),\quad |m_N(z_a)|
+
\max_{k,l}|m_N(z_a;A+\xi_{ij}^{kl})|
\le C_K\bigl(1+|m_N(z_a)|\bigr),
\]
 for \(a=2,\dots,p\).
Thus
\begin{align}
\label{e:Cerror1}
\mathbf 1({\Omega_N})\mathcal C_{ij}(F,A)
\le
C_K\Lambda(z)
\prod_{a=2}^p\bigl(1+|m_N(z_a)|\bigr)\leq C_K\Lambda(z)
\prod_{a=2}^p(1+R_N)^{p-1}.
\end{align}

On the complement \(\Omega_N^\complement\), we use the trivial resolvent bound. Since
\(w,w_2,\dots,w_p\in K\subset \mathbb C\setminus\mathbb R\), all imaginary
parts satisfy
$
|\Im[ z_a]|\asymp_K N^{-1}$.
Hence
\begin{align}\label{e:trivial}
|G_{uv}(z_a)|\le \frac{C_K}{|\Im [z_a]|}\le C_K N,
\qquad
|m_N(z_a)|\le C_KN,
\end{align}
and the same bound holds for the switched matrix \(A+\xi_{ij}^{kl}\). Therefore
\[
\mathbf 1({\Omega_N^\complement})\mathcal C_{ij}(F,A)
\le
\mathbf 1({\Omega_N^\complement}) (C_K N)^p.
\]
Taking expectations and using
$
\mathbb P(\Omega_N^\complement)\le e^{-(\log N)^2}$,
we obtain
\begin{align}\label{e:Cerror2}
\bE\left[\mathbf 1({\Omega_N^\complement})\mathcal C_{ij}(F,A)\right]
\le
(C_KN)^p e^{-(\log N)^2}
=
\oo_N(1),
\end{align}
since \(p\) is fixed.

Combining the estimates \eqref{e:Cerror1} and \eqref{e:Cerror2} on \(\Omega_N\) and \(\Omega_N^\complement\), and using
\(\sum_{i,j}A_{ij}=Nd\), we get
\begin{align}
\frac{\sqrt d}{N^2}
\sum_{i,j}
\bE\left[
A_{ij}\mathcal C_{ij}(F,A)
\right]
&\le
C_K\frac{d^{3/2}\Lambda(z)}{N}
\bE\left[
(1+R_N)^{p-1}
\right]
+\oo_N(1)=
\oo_N(1)
\left(
1+\bE\left[R_N^{p+1}\right]
\right),
\label{eq:ibp-error-final}
\end{align}
where we used \(d\le N^{1/2}\), \(\Lambda(z)\le C_K(\log N)^4\), and
$
(1+R_N)^{p-1}\le C_p(1+R_N^{p+1})$.

It remains to expand the discrete derivative in the main switching term. We
write
\begin{align}
D_{ij}^{kl}F_{ij}(A)
&=
\frac{1}{\sqrt{d-1}}
\left[
\left(\partial_{ij}^{kl}G_{ji}(z)\right)M_p
+
G_{ji}(z)
\sum_{a=2}^p
\left(\partial_{ij}^{kl}m_N(z_a)\right)M_p^{(a)}
\right]
+
\mathcal E_{ij}^{kl}.
\label{eq:linearized-product-clean}
\end{align}
All nonlinear terms in the discrete product rule and all Taylor remainders are
included in \(\mathcal E_{ij}^{kl}\).

We shall use the following deterministic estimates for the linearized switching
terms in \eqref{eq:linearized-product-clean}. The first estimate controls the
Taylor and product-rule errors, while the last two identities identify the
contributions of the derivatives falling on \(G_{ji}(z)\) and on the external
Stieltjes transforms. We postpone the proof to \Cref{s:proof_main_error}.

\begin{lemma}\label{e:error_main}
Adopt the notation and assumptions of \Cref{prop:rrg-approx-loop}, and let
\(\Lambda=\Lambda(z)\) be as in \eqref{eq:microscopic-entry-controlbis}. For $z=E+w/(N\pi\varrho_E)$, $z_a=E+w_a/(N\pi \varrho_E)$, and $w, w_2, \cdots, w_p\in K\subset \bC\setminus \bR$, we have
\begin{align}
\label{e:replace_error}
\frac{\bm1(\Omega_N)}{N^2 d\sqrt{d-1}}
\sum_{i,j,k,l}A_{ik}A_{jl}|\mathcal E_{ij}^{kl}|
&\leq
\frac{C_K \Lambda^3}{\sqrt{d}}(1+R_N)^{p-1}.
\end{align}
Moreover, for \(z\in\bC_+\),
\begin{align}
\label{eq:weighted-switching-sum}
\frac{1}{N^2d(d-1)}
\sum_{i,j,k,l}
A_{ik}A_{jl}\partial_{ij}^{kl}G_{ji}(z)
&=
-m_N(z)^2
-\frac1N\partial_z m_N(z)
+
\OO\left(
\frac{1}{d}
\left(
1+|m_N(z)|^2+\frac{|m_N(z)|}{N\Im [z]}
\right)
\right).
\end{align}
Finally, for $z,w\in \bC_+$, we have
\begin{align}
\label{eq:weighted-external-derivative-sum}
&\frac1{N^2d(d-1)}
\sum_{i,j,k,l}
A_{ik}A_{jl}G_{ji}(z)\,
\partial_{ij}^{kl}m_N(w)
\nonumber\\
&\qquad
=
-\frac{2d}{N^2(d-1)}
\partial_w
\frac{m_N(z)-m_N(w)}{z-w}
+
\OO\left(
\frac{|m_N(z)|+|m_N(w)|}
{N^2d\,\Im[ z]\,\Im [w]}
\right).
\end{align}
If \(w=z\), the right-hand side is understood by taking the limit
\(w\to z\).
\end{lemma}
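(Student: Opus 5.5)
We prove the three estimates of \Cref{e:error_main} separately. Throughout we work on \(\Omega_N\) with \(\Lambda=\Lambda(z)\le C_K(\log N)^4\), and we use the regularity \(\sum_k A_{ik}=d\), the identities \eqref{eq:rrg-resolvent-identities} and the Ward identities \eqref{e:Ward}.

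\emph{The error bound \eqref{e:replace_error}.} Write \(F_{ij}=G_{ji}(z)M_p\) and expand \(D_{ij}^{kl}F_{ij}\) by the discrete product rule. Each factor changes as \(G\mapsto G+D G\) and \(m_N(z_a)\mapsto m_N(z_a)+D m_N(z_a)\). By \Cref{lem:rrg-switching-resolvent-expansion},
\[
D G_{ji}=(d-1)^{-1/2}\partial G_{ji}+\OO(\Lambda^3/d),\qquad D m_N=(d-1)^{-1/2}\partial m_N+\OO(\Lambda^3/d).
\]
From the explicit formulas \eqref{e:DG}--\eqref{e:Dm} we also have \(|\partial G_{ji}|\lesssim\Lambda^2\) and \(|\partial m_N(z_a)|\lesssim\Lambda^2\). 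Indeed, \((G^2)_{ab}\) is bounded by Cauchy--Schwarz and Ward by \(\Lambda/\Im z_a\lesssim N\Lambda\), which the prefactor \(1/N\) cancels.

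The remainder \(\mathcal E_{ij}^{kl}\) therefore consists of two kinds of terms. The Taylor remainders are of size \(\Lambda^3/d\). The products of at least two first-order changes are of size \((\Lambda^2/\sqrt d)^2=\Lambda^4/d\). The untouched factors contribute at most \((1+R_N)^{p-1}\), since \(|m_N(z_a)|\le R_N\) and the perturbed values are within \(\OO(\Lambda^2/\sqrt d)\). This gives \(|\mathcal E_{ij}^{kl}|\lesssim \Lambda^4 d^{-1}(1+R_N)^{p-1}\), up to adjusting the power of \(\Lambda\). Summing with weights \(A_{ik}A_{jl}\) produces \(N^2d^2\), and dividing by \(N^2d^{3/2}\) yields the factor \(\Lambda^{3\text{ or }4}/\sqrt d\). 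Either power is \(\oo(1)\) since \(d\gg(\log N)^{24}\).

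\emph{The identity \eqref{eq:weighted-switching-sum}.} Insert the eight terms of \eqref{e:DG} and carry out the sums over \(k,l\) using \(\sum_k A_{ik}G_{kb}=(AG)_{ib}=\sqrt{d-1}(\delta_{ib}+zG_{ib})\).
\begin{itemize}
\item The term \(-G_{jj}G_{ii}\) gives \(d^2\sum_{ij}G_{ii}G_{jj}=d^2N^2m_N^2\).
\item The term \(-G_{ji}^2\) gives \(d^2\,\mathrm{Tr}\,G^2=d^2N\partial_z m_N\).
\item The cross terms such as \(G_{jk}G_{li}\) become \(\sum_{ij}(GA)_{ji}(AG)_{ji}\cdot\) (factors), which reduce to \((d-1)\sum_{ij}(\delta+zG)_{ji}^2\). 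This is \(\OO(d(N+N|m_N|/\Im z+\dots))\), again by Ward.
\item The terms \(G_{jk}G_{ii}\) give \(d\sqrt{d-1}\sum_{ij}(\delta_{ij}+zG_{ij})G_{ii}\). Here we use \(\sum_j G_{ij}=(d/\sqrt{d-1}-z)^{-1}=\OO(d^{-1/2})\), which reduces them to \(\OO(dN|m_N|+d^{1/2}N)\).
\end{itemize}
After dividing by \(N^2d(d-1)\), the leading terms are \(-m_N^2-N^{-1}\partial_z m_N\). The remaining terms are \(\OO(d^{-1}(1+|m_N|^2+|m_N|/(N\Im z)))\); the factor \(d/(d-1)\) is absorbed into this error.

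\emph{The identity \eqref{eq:weighted-external-derivative-sum}.} The computation is analogous. Using \eqref{e:Dm}, the term \((G(w)^2)_{ij}\) weighted by \(d^2\) gives \(d^2\,\mathrm{Tr}(G(z)G(w)^2)\), which equals \(d^2N\partial_w\frac{m_N(z)-m_N(w)}{z-w}\) by the resolvent identity. The mixed terms, in which \(k\) or \(l\) is contracted through \(A\), lose a factor \(d\). They are bounded by Ward-type estimates of the form \(|\mathrm{Tr}(G(z)AG(w)^2A)|\lesssim d\,N\,(|m_N(z)|+|m_N(w)|)/(\Im z\,\Im w)\).

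\emph{Main obstacle.} The delicate part is the bookkeeping of the four-index sums. We must check that every cross term really loses a factor \(d\) through a contraction \(AG=\sqrt{d-1}(I+zG)\) or through the trivial eigenvector relation, and never merely loses \(\sqrt d\). Here we rely on \(\sum_jG_{ij}=\OO(d^{-1/2})\).
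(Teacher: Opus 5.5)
Your treatment of \eqref{eq:weighted-switching-sum} and \eqref{eq:weighted-external-derivative-sum} follows the paper's route: you contract the four-index sums using $AG=\sqrt{d-1}(\bI+zG)$ and $G\mathbf 1=(d/\sqrt{d-1}-z)^{-1}\mathbf 1$. Your argument for \eqref{e:replace_error}, however, does not prove the stated bound.

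You end with $\Lambda^4/\sqrt d$ and assert that the exponent is harmless because $d\gg(\log N)^{24}$. With the bound $|\partial_{ij}^{kl}m_N(z_a)|\lesssim\Lambda^2$ that you actually use, terms such as $G_{ji}(z)\,D_{ij}^{kl}m_N(z_a)\,D_{ij}^{kl}m_N(z_b)$ are even of size $\Lambda^5/d$. The exponent is not harmless. For $w\in K$ one has $N\Im z\asymp_K 1$, hence $\Lambda\asymp(\log N)^4$. So $\Lambda^3/\sqrt d=\oo(1)$ is exactly the hypothesis $d\gg(\log N)^{24}$, whereas $\Lambda^4/\sqrt d=\oo(1)$ would require $d\gg(\log N)^{32}$. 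The power of $\Lambda$ is the entire content of this estimate, and with your version the proof of \Cref{prop:rrg-approx-loop} does not close in the stated range.

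Two losses of a factor $\Lambda$ have to be avoided.

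\emph{First loss.} You correctly derived $|\partial_{ij}^{kl}m_N(z_a)|\lesssim\Lambda/(N\Im z_a)\le C_K\Lambda$, and you should use it. Since $G_{ji}(z)$ is the only factor whose first-order change has size $\Lambda^2/\sqrt d$, every product of two or more first-order changes is then $\OO(\Lambda^3/d)$.

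\emph{Second loss.} Your claim that the untouched factors cost at most $(1+R_N)^{p-1}$ is false when the untouched factor is $G_{ji}(z)$. The term $G_{ji}(z)\bigl(D_{ij}^{kl}M_p-(d-1)^{-1/2}\partial_{ij}^{kl}M_p\bigr)$ is pointwise only $\OO(\Lambda\cdot\Lambda^3 d^{-1}(1+R_N)^{p-2})$. The paper does not bound $|G_{ji}(z)|$ by $\Lambda$ here. It keeps $|G_{ji}(z)|$ inside the sum, uses $\sum_{k,l}A_{ik}A_{jl}=d^2$, and applies the Ward identity \eqref{e:Ward} in the form $\sum_{i,j}|G_{ij}(z)|\le N\bigl(N\Im m_N(z)/\Im z\bigr)^{1/2}$. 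This makes the contribution $\OO\bigl(\Lambda^3 d^{-1/2}(N\Im z)^{-1/2}(1+R_N)^{p-1}\bigr)$. Alternatively, you could sharpen the second-order remainder of $D_{ij}^{kl}m_N(z_a)$ to $\OO(\Lambda^2/(dN\Im z_a))$ by applying Ward to $V^\top G(z_a)^2U$ in the Woodbury expansion. With both corrections you recover the stated $C_K\Lambda^3/\sqrt d$.

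A smaller point on \eqref{eq:weighted-switching-sum}: your conclusion is right, but some intermediate sizes are off.
\begin{itemize}
\item The term $-G_{jl}G_{ki}$ is not an $\OO(dN\cdots)$ cross term. It factorizes as $-(\Tr AG)^2=-(d-1)N^2(1+zm_N)^2$, and it is precisely what produces the $d^{-1}(1+|m_N|^2)$ error for bounded $z$.
\item The $G_{jk}G_{ii}$ terms equal $d^2Nm_N/(d/\sqrt{d-1}-z)\asymp d^{3/2}N|m_N|$, not $\OO(dN|m_N|)$. After normalization they give $\OO(|m_N|/(N\sqrt d))$, which fits inside the error only because $\sqrt d\le N$.
\end{itemize}
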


\begin{proof}[Proof of \Cref{prop:rrg-approx-loop}]
Splitting the error term according to \(\Omega_N\) and \(\Omega_N^\complement\), using
\eqref{e:replace_error} and \eqref{e:z_parameter} such that $\Lambda(z)\leq C_K(\log N)^4\ll d^{1/6}$ on \(\Omega_N\), and using the trivial resolvent bound \eqref{e:trivial}
on \(\Omega_N^\complement\), we have 
\[
\frac1{N^2d\sqrt{d-1}}
\sum_{i,j,k,l}
\bE\left[
A_{ik}A_{jl}\mathcal E_{ij}^{kl}
\right]
=
\oo_N(1)
\left(
1+
\bE\left[
R_N^{p+1}
\right]
\right).
\]

Substituting \eqref{eq:linearized-product-clean} into
\eqref{e:ibp-switching-copy-clean}, the main switching term becomes
\begin{align}
&\frac1{N^2d(d-1)}
\sum_{i,j,k,l}
\bE\left[
A_{ik}A_{jl}
\left(\partial_{ij}^{kl}G_{ji}(z)\right)M_p
\right]
\nonumber\\
&\quad
+
\frac1{N^2d(d-1)}
\sum_{a=2}^p
\sum_{i,j,k,l}
\bE\left[
A_{ik}A_{jl}
G_{ji}(z)
\left(\partial_{ij}^{kl}m_N(z_a)\right)M_p^{(a)}
\right]
+
\oo_N(1)
\left(
1+
\bE\left[
R_N^{p+1}
\right]
\right).
\label{eq:switching-main-expanded-clean}
\end{align}

We now apply the two main estimates in \Cref{e:error_main}. The estimate \eqref{eq:weighted-switching-sum} gives
\begin{align}
&\frac1{N^2d(d-1)}
\sum_{i,j,k,l}
A_{ik}A_{jl}
\partial_{ij}^{kl}G_{ji}(z)
=
-m_N(z)^2
-\frac1N\partial_zm_N(z)
+
\oo_N(1)\bigl(1+R_N^2\bigr),
\label{eq:main-G-derivative-clean}
\end{align}
after using \eqref{e:z_parameter}.
Multiplying by \(M_p\) and taking expectations, the error contributes
$
\oo_N(1)
(
1+\bE[R_N^{p+1}]
)$.

Similarly, for each \(a=2,\dots,p\), the estimate \eqref{eq:weighted-external-derivative-sum} gives
\begin{align}
&\frac1{N^2d(d-1)}
\sum_{i,j,k,l}
A_{ik}A_{jl}
G_{ji}(z)
\partial_{ij}^{kl}m_N(z_a)
=
-\frac{2}{N^2}
\partial_{z_a}
\frac{m_N(z)-m_N(z_a)}{z-z_a}
+
\oo_N(1)\bigl(1+R_N\bigr),
\label{eq:main-external-derivative-clean}
\end{align}
where the factor \(d/(d-1)=1+\OO(d^{-1})\) has been absorbed into the error.
Again, after multiplying by \(M_p^{(a)}\) and taking expectations, the error is
$
\oo_N(1)
(
1+\bE[R_N^{p+1}]
)$.

Combining
\eqref{e:start-higher-loop},
\eqref{eq:first-ibp-negligible},
\eqref{eq:ibp-error-final},
\eqref{eq:switching-main-expanded-clean},
\eqref{eq:main-G-derivative-clean}, and
\eqref{eq:main-external-derivative-clean}, we get
\begin{align}\begin{split}\label{e:loop_d}
&
\bE\left[
\left(
1+z m_N(z)+m_N(z)^2
\right)M_p
\right]
=
\bE\left[
\left(
-\frac1N\partial_zm_N(z)
\right)M_p
\right]\\
&\qquad\quad
-\frac{2}{N^2}
\sum_{a=2}^p
\bE\left[
\partial_{z_a}
\frac{m_N(z)-m_N(z_a)}{z-z_a}
M_p^{(a)}
\right]
+
\oo_N(1)
\left(
1+
\bE\left[
R_N^{p+1}
\right]
\right).
\end{split}\end{align}

It remains only to pass from \eqref{e:loop_d} to microscopic variables. This is identical to the final step in the proof of \Cref{p:wigner-microscopic-loop}, and hence yields \eqref{e:rrg-microscopic-approx-loopbis}. We omit the details.
\end{proof}

\subsubsection{Proof of \Cref{e:error_main}}
\label{s:proof_main_error}
\begin{proof}[Proof of \Cref{e:error_main}]
We prove the three estimates separately.

\medskip
\noindent
\emph{Step 1: The replacement error.}

On the event \(\Omega_N\), we recall from
\Cref{lem:rrg-switching-resolvent-expansion} with $\Lambda=\Lambda(z)\leq C_K(\log N)^4\ll \sqrt{d}$ as defined in \eqref{eq:microscopic-entry-controlbis}, that
\begin{align}\begin{split}
D_{ij}^{kl}G_{ji}(z)
&=
(d-1)^{-1/2}\partial_{ij}^{kl}G_{ji}(z)
+\OO\left(\frac{\Lambda^3}{d}\right),
\label{eq:switching-error-G}
\\
D_{ij}^{kl}m_N(z_a)
&=
(d-1)^{-1/2}\partial_{ij}^{kl}m_N(z_a)
+\OO\left(\frac{\Lambda^3}{d}\right).
\end{split}\end{align}
Moreover using the explicit formulas \eqref{e:DG} and \eqref{e:Dm},
\[
|\partial_{ij}^{kl}G_{ji}(z)|\lesssim \Lambda^2,
\qquad
|\partial_{ij}^{kl}m_N(z_a)|\lesssim \frac{\Lambda}{N\Im[z_a]}\leq C_K \Lambda.
\]

By definition,
\[
\mathcal E_{ij}^{kl}
=
D_{ij}^{kl}\bigl(G_{ji}(z)M_p\bigr)
-
\frac1{\sqrt{d-1}}
\partial_{ij}^{kl}\bigl(G_{ji}(z)M_p\bigr).
\]
Using the discrete product rule and
\eqref{eq:switching-error-G}, we get
\begin{align}
|\mathcal E_{ij}^{kl}|
&\lesssim
|G_{ji}(z)|
\left|
D_{ij}^{kl}M_p
-
\frac1{\sqrt{d-1}}\partial_{ij}^{kl}M_p
\right|
+
\frac1{\sqrt d}
|\partial_{ij}^{kl}G_{ji}(z)|
\,|D_{ij}^{kl}M_p|
+
\frac{\Lambda^3}{d}
\prod_{a=2}^p
\left(|m_N(z_a)|+\frac{C_K\Lambda}{\sqrt d}\right).
\label{eq:Eijk-bound-start}
\end{align}
Again by the product rule and
\eqref{eq:switching-error-G},
\begin{align}\begin{split}
\left|
D_{ij}^{kl}M_p
-
\frac1{\sqrt{d-1}}\partial_{ij}^{kl}M_p
\right|
\lesssim
\frac{\Lambda^3}{d}(1+R_N)^{p-2},\quad
|D_{ij}^{kl}M_p|
\lesssim
\frac{\Lambda}{\sqrt d}(1+R_N)^{p-2}.
\label{eq:Mp-D-bound}
\end{split}\end{align}
Substituting \eqref{eq:Mp-D-bound} into
\eqref{eq:Eijk-bound-start} gives
\begin{equation}
\label{eq:Eijk-bound-final}
|\mathcal E_{ij}^{kl}|
\lesssim
|G_{ji}(z)|\frac{\Lambda^3}{d}(1+R_N)^{p-2}
+
\frac{\Lambda^3}{d}(1+R_N)^{p-1}.
\end{equation}

We now sum this estimate. By the Ward identity \eqref{e:Ward},
\[
\sum_{i,j}|G_{ij}(z)|
\le
N\left(\sum_{i,j}|G_{ij}(z)|^2\right)^{1/2}
=
N\left(\frac{N\Im [m_N(z)]}{\eta}\right)^{1/2}
\le
N\left(\frac{N|m_N(z)|}{\eta}\right)^{1/2}.
\]
Since
\[
\sum_{k,l}A_{ik}A_{jl}=d^2,
\]
the first term in \eqref{eq:Eijk-bound-final} gives
\begin{align*}
&\frac1{N^2d\sqrt{d-1}}
\sum_{i,j,k,l}
A_{ik}A_{jl}
|G_{ji}(z)|
\frac{\Lambda^3}{d}(1+R_N)^{p-2}
\lesssim
\frac{\Lambda^3}{\sqrt d}
\frac{1}{\sqrt{N\eta}}
(1+R_N)^{p-1}.
\end{align*}
The second term in \eqref{eq:Eijk-bound-final} gives
\[
\frac1{N^2d\sqrt{d-1}}
\sum_{i,j,k,l}
A_{ik}A_{jl}
\frac{\Lambda^3}{d}(1+R_N)^{p-1}
\lesssim
\frac{\Lambda^3}{\sqrt d}(1+R_N)^{p-1}.
\]
Therefore
\begin{align}
\frac{\bm1(\Omega_N)}{N^2 d\sqrt{d-1}}
\sum_{i,j,k,l}A_{ik}A_{jl}|\mathcal E_{ij}^{kl}|
&\lesssim
\frac{\Lambda^3}{\sqrt d}
\left(
\frac{1}{\sqrt{N\eta}}(1+R_N)^{p-1}
+
(1+R_N)^{p-1}\right)\lesssim \frac{C_K \Lambda^3}{\sqrt{d}}(1+R_N)^{p-1}
.
\end{align}

\medskip
\noindent
\emph{Step 2: The weighted sum with \(\partial_{ij}^{kl}G_{ji}(z)\).}

We will use the following identities from \eqref{eq:rrg-resolvent-identities} repeatedly in this proof
\begin{align}\label{e:idd}
AG(z)=\sqrt{d-1}\bigl(\bI+zG(z)\bigr),\quad 
\sum_iG_{ij}(z)
=
\sum_jG_{ij}(z)
=
\frac{1}{d/\sqrt{d-1}-z}.
\end{align}
We also recall from \eqref{e:DG}
\begin{align*}
\partial_{ij}^{kl}G_{ji}(z)
&=
-G_{ji}(z)^2
-G_{jj}(z)G_{ii}(z)
-G_{jk}(z)G_{li}(z)
-G_{jl}(z)G_{ki}(z)
\\
&\qquad
+G_{ji}(z)G_{ki}(z)
+G_{jl}(z)G_{ji}(z)
+G_{jk}(z)G_{ii}(z)
+G_{jj}(z)G_{li}(z)
.
\end{align*}
We compute the weighted sum term by term.

The first two terms are
\begin{align*}
&-\sum_{i,j,k,l}A_{ik}A_{jl}G_{ji}^2
=
-d^2\Tr G^2
=
-d^2N\,\partial_zm_N,\\
&-\sum_{i,j,k,l}A_{ik}A_{jl}G_{jj}G_{ii}
=
-d^2(\Tr G)^2
=
-d^2N^2m_N^2.
\end{align*}

For the third term, we use \eqref{e:idd}
\begin{align*}
&-\sum_{i,j,k,l}A_{ik}A_{jl}G_{jk}G_{li}
=
-\sum_{i,j}
\left(\sum_k A_{ik}G_{jk}\right)
\left(\sum_l A_{jl}G_{li}\right)
=
-\sum_{i,j}(AG)_{ij}(AG)_{ji}
\\
&=
-(d-1)
\left(
N+2z\Tr G+z^2\Tr G^2
\right)=
-(d-1)N
\left(
1+2zm_N+z^2\partial_zm_N
\right).
\end{align*}

For the fourth term,
\[
-\sum_{i,j,k,l}A_{ik}A_{jl}G_{jl}G_{ki}
=
-\left(\Tr AG\right)^2
=
-(d-1)N^2(1+zm_N)^2.
\]

For the fifth and sixth terms, using the row-sum identity \eqref{e:idd},
\begin{align*}
&\sum_{i,j,k,l}A_{ik}A_{jl}G_{ji}G_{ki}
=
d\sqrt{d-1}\,N
\frac{1+zm_N}{d/\sqrt{d-1}-z},\\
&\sum_{i,j,k,l}A_{ik}A_{jl}G_{jl}G_{ji}
=
d\sqrt{d-1}\,N
\frac{1+zm_N}{d/\sqrt{d-1}-z}.
\end{align*}

For the seventh and eighth terms,
\begin{align*}
&\sum_{i,j,k,l}A_{ik}A_{jl}G_{jk}G_{ii}
=
d^2N
\frac{m_N}{d/\sqrt{d-1}-z},\\
&\sum_{i,j,k,l}A_{ik}A_{jl}G_{jj}G_{li}
=
d^2N
\frac{m_N}{d/\sqrt{d-1}-z}.
\end{align*}

Adding the eight contributions, we obtain
\begin{align*}
&\sum_{i,j,k,l}A_{ik}A_{jl}\partial_{ij}^{kl}G_{ji}(z)
=
-d^2N^2m_N^2
-d^2N\partial_zm_N
-(d-1)N
\left(
1+2zm_N+z^2\partial_zm_N
\right)
\\
&\qquad
-(d-1)N^2(1+zm_N)^2
+
2d\sqrt{d-1}\,N
\frac{1+zm_N}{d/\sqrt{d-1}-z}
+
2d^2N
\frac{m_N}{d/\sqrt{d-1}-z}.
\end{align*}
Dividing by \(N^2d(d-1)\), and using
$
|\partial_zm_N(z)|
\le
{|m_N(z)|}/{\Im [z]}$,
we get
\begin{align*}
\frac{1}{N^2d(d-1)}
\sum_{i,j,k,l}
A_{ik}A_{jl}\partial_{ij}^{kl}G_{ji}(z)
&=
-m_N(z)^2
-\frac1N\partial_zm_N(z)
+
\OO\left(
\frac{
1+|m_N(z)|^2+\frac{|m_N(z)|}{N\Im [z]}
}{d}
\right).
\end{align*}

\medskip
\noindent
\emph{Step 3: The weighted sum with \(\partial_{ij}^{kl}m_N(w)\).}

We recall from \eqref{e:Dm}
\begin{align}
\partial_{ij}^{kl}m_N(w)
=
-\frac2N
\left[
(G(w)^2)_{ij}
+
(G(w)^2)_{kl}
-
(G(w)^2)_{ik}
-
(G(w)^2)_{jl}
\right].
\label{eq:switching-derivative-mw}
\end{align}
Substituting \eqref{eq:switching-derivative-mw} into \eqref{eq:weighted-external-derivative-sum}, we get
\begin{align}
&\frac1{N^2d(d-1)}
\sum_{i,j,k,l}
A_{ik}A_{jl}G_{ji}(z)\,
\partial_{ij}^{kl}m_N(w)
\nonumber\\
&=
-\frac2{d(d-1)N^3}
\sum_{i,j,k,l}
A_{ik}A_{jl}G_{ji}(z)
\left[
(G(w)^2)_{ij}
+
(G(w)^2)_{kl}
-
(G(w)^2)_{ik}
-
(G(w)^2)_{jl}
\right].
\label{eq:weighted-external-start}
\end{align}

We compute the four sums separately. First,
\[
\sum_{i,j,k,l}
A_{ik}A_{jl}G_{ji}(z)(G(w)^2)_{ij}
=
d^2\Tr\bigl(G(z)G(w)^2\bigr).
\]
Second,
\begin{align*}
\sum_{i,j,k,l}
A_{ik}A_{jl}G_{ji}(z)(G(w)^2)_{kl}
&=
\Tr\bigl(G(z)A G(w)^2 A\bigr)
\\
&=
(d-1)
\left[
\Tr G(z)
+
2w\Tr(G(z)G(w))
+
w^2\Tr(G(z)G(w)^2)
\right].
\end{align*}
Third,
\[
\sum_{i,j,k,l}
A_{ik}A_{jl}G_{ji}(z)(G(w)^2)_{ik}
=
d\sqrt{d-1}\,
\frac{\Tr G(w)+w\Tr G(w)^2}
{d/\sqrt{d-1}-z}.
\]
The fourth term is the same:
\[
\sum_{i,j,k,l}
A_{ik}A_{jl}G_{ji}(z)(G(w)^2)_{jl}
=
d\sqrt{d-1}\,
\frac{\Tr G(w)+w\Tr G(w)^2}
{d/\sqrt{d-1}-z}.
\]
Therefore
\begin{align}\begin{split}
&\frac1{N^2d(d-1)}
\sum_{i,j,k,l}
A_{ik}A_{jl}G_{ji}(z)\,
\partial_{ij}^{kl}m_N(w)
=
-\frac2{d(d-1)N^3}
\Bigg[
d^2\Tr\bigl(G(z)G(w)^2\bigr)
\\
&
+
(d-1)
\left(
\Tr G(z)
+
2w\Tr(G(z)G(w))
+
w^2\Tr(G(z)G(w)^2)
\right)
-
2d\sqrt{d-1}
\frac{\Tr G(w)+w\Tr G(w)^2}
{d/\sqrt{d-1}-z}
\Bigg].
\label{eq:weighted-external-trace-form}
\end{split}\end{align}

Finally, by the resolvent identity,
\[
\frac1N\Tr G(w)^2=\partial_wm_N(w),\quad \frac1N\Tr(G(z)G(w))
=
\frac{m_N(z)-m_N(w)}{z-w},
\]
and differentiating in \(w\) gives
\[
\frac1N\Tr(G(z)G(w)^2)
=
\partial_w
\frac{m_N(z)-m_N(w)}{z-w}.
\]
Substituting these identities into
\eqref{eq:weighted-external-trace-form}, we obtain
\begin{align}
&\frac1{N^2d(d-1)}
\sum_{i,j,k,l}
A_{ik}A_{jl}G_{ji}(z)\,
\partial_{ij}^{kl}m_N(w)
=
-\frac{2d}{N^2(d-1)}
\partial_w
\frac{m_N(z)-m_N(w)}{z-w}
\\
&
-\frac{2}{dN^2}
\left[
m_N(z)
+
2w\frac{m_N(z)-m_N(w)}{z-w}
+
w^2
\partial_w
\frac{m_N(z)-m_N(w)}{z-w}
\right]
+
\frac{4(m_N(w)+w\partial_wm_N(w))}
{\sqrt{d-1}N^2(d/\sqrt{d-1}-z)}.
\label{eq:weighted-external-expanded}
\end{align}
We recall from \eqref{e:sbound} and \eqref{e:fest}
\begin{align*}
&|\partial_wm_N(w)|
\lesssim
\frac{|m_N(w)|}{\Im [w]},\quad \left|
\frac{m_N(z)-m_N(w)}{z-w}
\right|
\lesssim
\frac{|m_N(z)|}{\Im [z]}
+
\frac{|m_N(w)|}{\Im[ w]},\\
\quad 
&\left|
\partial_w
\frac{m_N(z)-m_N(w)}{z-w}
\right|
\lesssim
\frac{|m_N(z)|+|m_N(w)|}{\Im [z]\,\Im [w]},
\end{align*}
the last  line of \eqref{eq:weighted-external-expanded} are bounded by
\[
\OO\left(
\frac{|m_N(z)|+|m_N(w)|}
{N^2d\,\Im[ z]\,\Im [w]}
\right).
\]
Thus
\begin{align*}
\frac1{N^2d(d-1)}
\sum_{i,j,k,l}
A_{ik}A_{jl}G_{ji}(z)\,
\partial_{ij}^{kl}m_N(w)=
-\frac{2d}{N^2(d-1)}
\partial_w
\frac{m_N(z)-m_N(w)}{z-w}
+
\OO\left(
\frac{|m_N(z)|+|m_N(w)|}
{N^2d\,\Im[ z]\,\Im[ w]}
\right).
\end{align*}
This completes the proof of \Cref{e:error_main}.
\end{proof}
\subsubsection{Edge loop equations for random \(d\)-regular graphs}
\label{s:d-reg-edge}

In this section, we explain how the proof of edge universality for random
\(d\)-regular graphs with fixed degree in \cite{huang2024ramanujan} can be
significantly simplified using the new characterization. 
In that work, edge universality is proved by first
establishing the edge loop equations for the Gaussian divisible ensemble
$
H+\sqrt t\,\mathrm{GOE}$,
where \(H\) is the normalized adjacency matrix of a random \(d\)-regular graph
and \(t>0\) is small. The rest of the argument then follows the standard
three-step scheme for random matrix universality
\cite{erdHos2017dynamical}: a local law, universality for a short Gaussian
divisible flow, and a comparison step back to the original ensemble.

For random \(d\)-regular graphs, especially when \(d\) is fixed or small, the
usual Green function comparison methods for Wigner-type matrices do not apply
directly. The microscopic loop equations provide an alternative comparison
mechanism: they show that the edge statistics are unchanged along the Gaussian
divisible flow \(H+\sqrt t\,\mathrm{GOE}\), and hence allow one to transfer edge
universality from the Gaussian divisible ensemble back to \(H\).

Our new characterization, \Cref{t:edge-approx-convergence}, simplifies this
argument. Instead of deriving and analyzing the edge loop equations for
the entire Gaussian divisible family \(H+\sqrt t\,\mathrm{GOE}\), it is enough
to verify the approximate edge loop equations directly for the original
ensemble \(H\). In other words, the characterization turns the comparison
problem into a direct criterion: once the edge loop equations for \(H\) are
established with sufficiently small errors, edge universality follows.

We recall the Kesten--McKay distribution
\begin{align}\label{e:KMdistribution}
\varrho_d(x)
:=
\mathbf 1_{x\in[-2,2]}
\left(
1+\frac1{d-1}-\frac{x^2}{d}
\right)^{-1}
\frac{\sqrt{4-x^2}}{2\pi}.
\end{align}
Near the spectral edges \(\pm 2\), the Kesten--McKay distribution has square-root
behavior:
\begin{align}\label{e:edge_behavior}
x\to \pm 2,
\qquad
\varrho_d(x)
=
\frac{\cA\sqrt{2\mp x}}{\pi}
+
\OO(|2\mp x|),
\qquad
\cA:=\frac{d(d-1)}{(d-2)^2}.
\end{align}
We denote by \(\md(z)\) the Stieltjes transform of \(\varrho_d\):
\begin{align*}
    \md(z)
    =
    \int_\bR \frac{\varrho_d(x)\,\rd x}{x-z},
    \qquad
    z\in \bC^+.
\end{align*}

\begin{definition}
Fix \(d\ge 3\) and a sufficiently small constant \(0<\fc<1\). Let
$
\fR:=({\fc}/{4})\log_{d-1}N$.
We define \(\Omega_N\) to be the event on which the following two conditions
hold:
\begin{enumerate}
    \item The number of vertices whose radius-\(\fR\) neighborhood is not a tree
    is at most \(N^\fc\).

    \item The radius-\(\fR\) neighborhood of every vertex has excess at most
    one. Here the excess means the number of independent cycles.
\end{enumerate}
\end{definition}

The event \(\Omega_N\) is typical. More precisely, by
\cite[Proposition 2.1]{huang2024spectrum}, applied with \(\omega_d=1\), we have
\[
\mathbb P(\Omega_N)
\ge
1-\OO\bigl(N^{-(1-\fc)}\bigr).
\]

For random \(d\)-regular graphs, the required approximate loop equations are
given in the following proposition. These equations can be viewed as the direct
analogue, for the original random regular graph ensemble, of the edge loop
equations proved in \cite[Theorem 3.8]{huang2024ramanujan} for the Gaussian
divisible family
$
H+\sqrt t\,\mathrm{GOE}$.
For an expository discussion of the first loop equation, corresponding to
\(p=1\), we refer to \cite{huang2026lecture}.

\begin{proposition}\label{p:loop}
Fix a degree \(d\ge 3\), and set the right spectral edge \(E=2\). Fix a small
\(\fc>0\). Introduce the microscopic window
\begin{align}\label{e:micro_window}
\mathbf M
:=
\left\{
w\in\bC:
N^{-2/3-\fc}\le |\Im[ w]|\le N^{-2/3+\fc},
\quad
-N^{-2/3+\fc}\le \Re[w]\le N^{-2/3+\fc}
\right\}.
\end{align}
Let $
\Delta(z):=m_N(z)-m_d(z)$.
Fix \(p\ge1\) and \(w,w_2,\dots,w_p\in\mathbf M\), and set
\[
z:=E+w,
\qquad
z_a:=E+w_a,
\qquad 2\le a\le p.
\]
Then, for \(N\) sufficiently large,
\begin{align}
\begin{split}\label{e:micro_loop}
&\bE\left[
\bm1(\Omega_N)\left(
\Delta(z)^2
+
2\cA\sqrt{z-E}\,\Delta(z)
+
\frac{1}{N}\partial_z m_N(z)
\right)
\prod_{a=2}^p \Delta(z_a)
\right]
\\
&\quad
+
\frac{2}{N^2}
\sum_{a=2}^p
\bE\left[
\bm1(\Omega_N)\partial_{z_a}
\left(
\frac{m_N(z)-m_N(z_a)}{z-z_a}
\right)
\prod_{\substack{b=2\\ b\neq a}}^p
\Delta(z_b)
\right]
=
\OO\left(N^{-(p+1)/3-10\fc}\right).
\end{split}
\end{align}
Here, when \(p=1\), the empty product is interpreted as \(1\), and the sum over
\(a=2,\dots,p\) is absent.
\end{proposition}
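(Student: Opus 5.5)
Since \(d\) is fixed, the cumulant and switching expansions used in \Cref{prop:rrg-approx-loop} no longer give small errors, and a different route is needed.

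The plan is to derive \eqref{e:micro_loop} by combining the switching integration by parts of \Cref{c:ibp-switching} with a local resolvent expansion. On \(\Omega_N\), radius-\(\fR\) neighborhoods are trees (or have excess at most one), so the exact tree Green's function of the infinite \(d\)-regular tree provides an explicit self-consistent equation. Concretely, I would follow the route of \cite{huang2024ramanujan,huang2024spectrum}, and of the expository \(p=1\) treatment in \cite{huang2026lecture}. First, use the local law near the edge from \cite{huang2024spectrum}: on \(\Omega_N\), with overwhelming probability, \(|G_{ij}(z)-P_{ij}(z)|\) and \(|\Delta(z)|\) are at most \(N^{-1/3+\OO(\fc)}\) uniformly in \(z\in E+\mathbf M\). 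Here \(P\) is the Green's function of the radius-\(\fR\) extended neighborhood, with boundary self-energy given by the Kesten--McKay data \(\md\). Second, write \(1+zm_N=\frac1N\sum_{ij}H_{ij}G_{ji}\), multiply by \(\prod_a\Delta(z_a)\), and expand the right side with \Cref{c:ibp-switching}. The switchings are performed at distance \(\fR\) from the root, so each switching term can be resolved through the Schur complement formula on the tree neighborhood. Averaging over the switched pairs \((k,l)\) then replaces boundary Green's function entries by their averages, which reproduces \(m_N\) (not \(\md\)).

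Third, the leading algebra should produce a self-consistent relation \(Q(m_N,z)=0\) at the macroscopic level, whose root is \(\md\). Expanding \(Q\) to second order around \(\md\) near the edge \(E=2\) gives the quadratic \(\Delta^2+2\cA\sqrt{z-E}\,\Delta\). The coefficient \(\cA\) comes from the square-root behavior \eqref{e:edge_behavior}, via \(\partial_m Q\propto \md-\overline{\md}\sim 2\cA\sqrt{z-E}\). The fluctuation term \(\frac1N\partial_zm_N\) arises from the diagonal \(G_{ji}^2\) contributions, exactly as in Step 2 of the proof of \Cref{e:error_main}. The derivative falling on \(\Delta(z_a)\) produces the \(\frac{2}{N^2}\partial_{z_a}\frac{m_N(z)-m_N(z_a)}{z-z_a}\) terms, through the computation of Step 3 there. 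Fourth, I would control the errors. The term \(\Delta^2\) is of size \(N^{-2/3}\), and each \(\Delta(z_a)\) is of size \(N^{-1/3+\OO(\fc)}\), so the target error \(N^{-(p+1)/3-10\fc}\) requires all remainders to beat the leading terms by a factor \(N^{-10\fc}\). The contributions from vertices with non-tree neighborhoods are at most \(N^{\fc-1}\) relative, by the first condition defining \(\Omega_N\), and are harmless. The cubic Taylor remainder \(\OO(|\Delta|^3)\) and the tree-approximation errors, which are exponentially small in \(\fR\) (like \((d-1)^{-\fR}\)), are also fine.

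The main obstacle is the error in the expansion at fixed \(d\). The switching remainders are \(\OO(1)\)-relative per switch rather than \(\OO(d^{-1/2})\), so naive bounds fail. To overcome this, I would use the tree-like structure to show that the averaged remainders vanish to leading order, by averaging over random boundary edges as in \cite{huang2024ramanujan}. The cancellation must be strong enough to reach the precision \(N^{-(p+1)/3-10\fc}\), which is the delicate heart of that work. The strategy is to iterate the switching/Schur expansion a bounded number of times until every remaining term carries an extra factor of \(\Delta\), or of \(1/(N\Im z)\), beyond the target order.
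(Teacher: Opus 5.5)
The paper gives no separate proof of this proposition. It presents it as the analogue, for $H$ itself, of the edge loop equations of \cite[Theorem 3.8]{huang2024ramanujan} for $H+\sqrt t\,\mathrm{GOE}$, and cites \cite{huang2026lecture} for $p=1$. Your outline also hands the decisive cancellation to that work, so what it adds are the intermediate steps, and the central one does not hold at fixed $d$. You obtain $\frac1N\partial_z m_N$ and the terms $\frac{2}{N^2}\partial_{z_a}\frac{m_N(z)-m_N(z_a)}{z-z_a}$ ``exactly as in'' Steps 2 and 3 of the proof of \Cref{e:error_main}. Those steps are a first-order computation. They replace $D_{ij}^{kl}$ by $(d-1)^{-1/2}\partial_{ij}^{kl}$ at a cost $\OO(\Lambda^3/d)$, and they discard every term of relative size $1/d$. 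At fixed $d$, where $\Lambda\asymp1$, both are comparable to what is kept, as you note yourself in your first sentence. Concretely, if you keep the $1/d$ terms of Step 2 at fixed $d$, the resulting macroscopic relation is $1+zm+\frac{d}{d-1}m^2+\frac1d(1+zm)^2=0$. The Kesten--McKay transform $\md$ does not satisfy it: at $z=2$ the left side equals $2d/(d-2)^2$. Every term of \eqref{e:micro_loop} has size $N^{-(p+1)/3+\OO(\fc)}$, while the error must be $N^{-(p+1)/3-10\fc}$. So the equation can only come from an exact, non-linearized treatment of the switching response on the tree neighborhood, together with a proof that all remainders cancel to relative order $N^{-10\fc}$. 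That is exactly what the proposal does not supply. Also, \Cref{c:ibp-switching} switches the edge $ij$ itself, so it is not a resampling of edges at distance $\fR$ from the root.

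Two items in your error budget are also incorrect. First, errors attached to the finite radius are not exponentially small in $\fR$. With $\fR=(\fc/4)\log_{d-1}N$ one has $(d-1)^{-\fR}=N^{-\fc/4}$, far larger than the required relative precision $N^{-10\fc}$. Moreover, near $E=2$ one has $|\msc(z)|=1-\OO(|z-2|^{1/2})$, so the recursion $m\mapsto-1/(z+m)$ carries boundary perturbations to the root essentially undamped over $\fR\asymp\log N$ generations. Such errors have to cancel; they do not decay. Second, averaging over resampled boundary edges does not reproduce $m_N$. The entries entering the Schur complement are Green's function entries of the graph with the ball removed. Their edge average $Q=\frac1{Nd}\sum_{b\sim c}G^{(b)}_{cc}$ concentrates at $\msc$, not at $\md$, so substituting $m_N$ changes the self-consistent equation at order one. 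The natural closed relation is for $Q-\msc$. The form $\Delta^2+2\cA\sqrt{z-E}\,\Delta$ only appears after passing to $\Delta=m_N-\md$, and this is where $\cA=\frac{d}{d-1}\md(2)^2=\frac{d(d-1)}{(d-2)^2}$ enters. As written, the proposal amounts to the same citation the paper relies on, plus a sketch whose explicit steps would not produce \eqref{e:micro_loop}.
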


The equations \eqref{e:micro_loop} can be viewed as a centered version of the
edge loop equations \eqref{e:loopeq2-edge}. Namely, under the edge scaling,
\[
\frac{N^{1/3}}{\cA^{2/3}}\,
\Delta\left(E+\frac{w}{(\cA N)^{2/3}}\right)
\]
converges to \(s(w)-\sqrt w\) in the edge loop equations
\eqref{e:loopeq2-edge}. The full set of edge loop equations then follows by
taking suitable linear combinations.

\section{Bulk and edge loop equations}

The purpose of this section is twofold. First, we identify the loop equations
satisfied by the limiting point processes in the bulk and edge regimes. Namely,
the \(\mathrm{Sine}_\beta\) point process satisfies the bulk loop equation, while
the \(\mathrm{Airy}_\beta\) point process satisfies the edge loop equation.
Second, we show that these loop equations yield concentration estimates for the
corresponding normalized Stieltjes transforms. More precisely, after the
appropriate normalization, these Stieltjes transforms concentrate around the
deterministic quantities \(\pm \ri\) in the bulk and \(\sqrt z\) at the edge,
with sub-Gaussian tails. All results in this section hold for every
\(\beta>0\).

\subsection{{\rm Sine}$_\beta$ and {\rm Airy}$_\beta$ point process}
\label{s:sine_airy}
For the $\mathrm{Sine}_{\beta}$ point
process, we use the normalization in which its
intensity is $1/\pi$, or equivalently, the local mean spacing is $\pi$. For the $\mathrm{Airy}_{\beta}$ point process, we use the convention in which the Airy
points are ordered decreasingly and extend to $-\infty$. In this normalization,
the one-point density has the standard square-root left-tail asymptotics with
leading constant $1/\pi$. The following theorems state that
the \(\mathrm{Sine}_\beta\) point process satisfies the bulk loop equation, and
the \(\mathrm{Airy}_\beta\) point process satisfies the edge loop equation.

\begin{theorem}\label{t:bulk_loop}
Fix \(\beta>0\), and let \((x_j)_{j\in\mathbb Z}\) denote the 
\(\mathrm{Sine}_\beta\) point process with intensity \(1/\pi\), indexed so that
\begin{align}
\cdots \leq x_{-2}\leq x_{-1}\leq x_0\leq x_1\leq x_2\leq \cdots .
\end{align}
Then its principal-value Stieltjes transform
\begin{align}
s(w)=\operatorname{P.V.}\sum_{j\in\mathbb Z}\frac{1}{x_j-w}
\end{align}
satisfies the bulk loop equation \eqref{e:loopeq2-bulk}.
\end{theorem}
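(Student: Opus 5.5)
The plan is to derive the bulk loop equation for $\mathrm{Sine}_\beta$ as a microscopic limit of the exact finite-$N$ loop equations of the Gaussian $\beta$-ensemble. This is the route announced in the outline, with details deferred to \Cref{s:beta_ensemble}.

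\textbf{Finite-$N$ identities.} Consider the Gaussian $\beta$-ensemble with density proportional to $\prod_{i<j}|\lambda_i-\lambda_j|^\beta e^{-\frac{\beta N}{4}\sum_i\lambda_i^2}$. For $z,z_2,\dots,z_p\in\bC\setminus\bR$, integrate by parts against the vector field $\lambda_i\mapsto (\lambda_i-z)^{-1}\prod_{a\ge2}m_N(z_a)$, where $m_N(z)=\frac1N\sum_i(\lambda_i-z)^{-1}$. This is the Schwinger--Dyson identity and is exact. Symmetrizing the pair term $\sum_{i\ne j}\frac{1}{(\lambda_i-\lambda_j)(\lambda_i-z)}=\frac12\sum_{i\neq j}\frac{1}{(\lambda_i-z)(\lambda_j-z)}$ produces $m_N(z)^2$ up to a diagonal correction of order $\frac1N\partial_z m_N$. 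The derivative of the external factors produces $\frac{2}{\beta N^2}\partial_{z_a}\frac{m_N(z)-m_N(z_a)}{z-z_a}$.

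The outcome is the analogue of \eqref{e:loop1}, but with no error term:
\[
\bE\Big[\big(1+zm_N+m_N^2+\tfrac{2-\beta}{\beta N}\partial_zm_N\big)M_p\Big]+\frac{2}{\beta N^2}\sum_a\bE\Big[\partial_{z_a}\tfrac{m_N(z)-m_N(z_a)}{z-z_a}M_p^{(a)}\Big]=0 .
\]

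\textbf{Microscopic change of variables.} Fix $E\in(-2,2)$ and set $s_N(w)=\sum_i\frac{1}{N\pi\varrho_E(\lambda_i-E)-w}+\frac{E}{2\pi\varrho_E}$. Replace $M_p$ by $\prod_a(m_N(z_a)+E/2)$ using linearity, then apply exactly the algebra of \eqref{e:microscopic-leading-transform}--\eqref{e:final_loop}. This shows that $s_N$ satisfies \eqref{e:approx-loopeq2-bulk} with error $\frac{w}{N}\bE[(s_N-\frac{E}{2\pi\varrho_E})\prod s_N(w_a)]$. That error is $\oo_N(1)(1+\bE[\cR_N^{p+1}])$, with $\Omega_N$ the whole space.

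\textbf{Limit identification.} The hypotheses of \Cref{t:bulk-approx-convergence} apart from its conclusion then hold. Steps 1--4 of its proof, which do not use \Cref{t:bulk-characterization}, give three things: tightness, locally uniform subsequential convergence $s_N\to s$ on $\bC\setminus\bR$, and the statement that every subsequential limit satisfies \eqref{e:loopeq2-bulk} exactly.

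It remains to identify $s$ with the principal-value Stieltjes transform of $\mathrm{Sine}_\beta$. By \cite{valko2009continuum}, the rescaled configurations $\mu_N$ converge vaguely in law to $\mathrm{Sine}_\beta$. By \Cref{l:boundary_measure} and Skorokhod coupling, the measure of any subsequential limit $s$ is therefore $\mathrm{Sine}_\beta$. The Nevanlinna representation then says $s=b+cw+\int(\frac1{x-w}-\frac{x}{1+x^2})\rd\mu$, so we still need to show that $c=0$ and that $b$ matches the principal-value normalization.

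\textbf{Main obstacle.} The hard step is pinning down the affine part. A Nevanlinna limit could in principle pick up a linear term $cw$ from mass escaping to infinity, and the constant could be random. To exclude this, I would use rigidity for the Gaussian $\beta$-ensemble (the optimal local law of \cite{bourgade2022optimal}). This gives that $\sum_{|x_i|>L}\big(\frac{1}{x_i-w}-\frac{1}{x_i}\big)$ and the symmetric truncated sums $\sum_{|x_i|\le L}\frac1{x_i-w}$ are uniformly controlled in $N$ in $L^q$. The far-away contribution then tends to the deterministic value that the centering $\frac{E}{2\pi\varrho_E}$ was designed to cancel.

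Passing $N\to\infty$ and then $L\to\infty$ should yield $s(w)=\lim_L\sum_{|x_j|\le L}\frac1{x_j-w}$ almost surely, which is the P.V. transform; convergence of this limit follows from rigidity and stationarity of $\mathrm{Sine}_\beta$. Finally, the law of $s$ is a measurable function of the law of $\mu$, and the latter is the same for every subsequence. So the full sequence converges, and the P.V. transform of $\mathrm{Sine}_\beta$ satisfies \eqref{e:loopeq2-bulk}. The $L^q$ integrability required in \Cref{a:bulk} follows from the uniform moment bound \eqref{e:sup-momentbb} and Fatou's lemma.
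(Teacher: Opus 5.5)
Your proposal is correct, and up to the last step it coincides with the paper's proof. The paper applies \Cref{p:bulk-loop-beta} to the Gaussian ensemble at $E=0$. It then reruns the tightness and uniform-integrability argument of \Cref{t:bulk-approx-convergence} to get a subsequential limit $s$ satisfying \eqref{e:loopeq2-bulk} exactly. Finally it identifies the law of the poles as $\mathrm{Sine}_\beta$ via \cite{valko2009continuum} and \Cref{l:boundary_measure}.

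Your direct integration by parts is a fine substitute for \Cref{p:bulk-loop-beta}. Because the resulting $\OO(N^{-1})$ error already has the form allowed in \eqref{e:approx-loopeq2-bulk}, you do not need the finite-$N$ local law that \Cref{p:bulk-loop-beta} uses for moment bounds. One small correction: the symmetrization should read $\sum_{i\neq j}\frac{1}{(\lambda_i-\lambda_j)(\lambda_i-z)}=-\frac12\sum_{i\neq j}\frac{1}{(\lambda_i-z)(\lambda_j-z)}$, with a minus sign, although your displayed identity has the right signs.

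The real difference is how you pin down the affine part $b+cw$. You import finite-$N$ rigidity of the Gaussian ensemble and control far-field sums uniformly in $N$. The paper needs no such input. Once you have shown that the limit $s$ is a particle-generated Nevanlinna function in $\mathrm L^q$ satisfying \eqref{e:loopeq2-bulk}, it falls under \Cref{a:bulk}. Then \Cref{p:bulk_concentration}, whose proof uses only the loop equations (so there is no circularity), gives the representation \eqref{e:szrep} directly. The hierarchy itself forces $\bE|s(w)-\ri|^{2q}\le (Cq)^q\Im[w]^{-2q}$, hence $s(\ri y)\to\ri$, $c=0$, and the principal-value normalization.

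Your route can be completed, but it is heavier, and as sketched the key claim is stated too weakly. Uniform $\mathrm L^q$ bounds on the far-field and truncated sums are not enough. You need the far-field sum to converge to $-E/(2\pi\varrho_E)$ as $L\to\infty$, uniformly in $N$. The most economical way to get this from \eqref{eq:local_law_beta} is the bound $\bE|s_N(\ri L)-\ri|^q\lesssim_q L^{-q}+(L/N)^q$. This passes to the limit, and together with rigidity of the limiting configuration it lets you run the comparison argument from the proof of \eqref{e:szrep}.

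What the paper's route buys is that the principal-value normalization is intrinsic to the loop hierarchy, so any limit of approximate solutions automatically has that form. Yours ties the identification to estimates on the concrete approximating model.
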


\begin{theorem}\label{t:edge_loop}
Fix \(\beta>0\), and let \((x_j)_{j\geq 1}\) denote the 
\(\mathrm{Airy}_\beta\) point process, ordered decreasingly so that
\begin{align}
x_1\geq x_2\geq x_3\geq \cdots, 
\qquad x_j\to -\infty \quad \text{as } j\to\infty .
\end{align}
Then its normalized Stieltjes transform
\begin{align}
s(w)=\sum_{j=1}^\infty
\left(\frac{1}{x_j-w} -\frac{1}{\fa_j}\right)
-\frac{\Ai'(0)}{\Ai(0)} .
\end{align}
satisfies the edge loop equation \eqref{e:loopeq2-edge}.
\end{theorem}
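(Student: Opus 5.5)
We prove that the $\mathrm{Airy}_\beta$ point process satisfies the edge loop equation \eqref{e:loopeq2-edge} as a scaling limit of the Gaussian $\beta$-ensemble at the soft edge, mirroring the proof of \Cref{t:bulk_loop}. The plan has three steps: derive exact finite-$N$ loop equations, rescale them to the edge and pass to the limit, and identify the limiting observable with the normalized $s$ of \Cref{t:edge_loop}.

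\emph{Step 1: finite-$N$ loop equations.} We take the Gaussian $\beta$-ensemble with density proportional to $\prod_{i<j}|\lambda_i-\lambda_j|^\beta e^{-\frac{\beta N}{4}\sum\lambda_i^2}$. Integrating by parts against $\sum_i \partial_{\lambda_i}\bigl[(\lambda_i-z)^{-1}\prod_{a}m_N(z_a)\bigr]$ gives the exact hierarchy for $m_N(z)=\frac1N\sum_i(\lambda_i-z)^{-1}$. Each term arises from one source:
\begin{itemize}
\item the symmetrized Vandermonde gives $\frac\beta2 m_N(z)^2$;
\item the diagonal derivative gives $(1-\frac\beta2)\frac1N\partial_z m_N$;
\item the potential gives $-\frac{\beta}{2}(1+zm_N(z))$, exactly, with no remainder since $V$ is quadratic;
\item differentiating the spectator factors gives $\frac1{N^2}\partial_{z_a}\frac{m_N(z)-m_N(z_a)}{z-z_a}$.
\end{itemize}
This is precisely \eqref{e:loop1} with the coefficients $\frac{2-\beta}{\beta}$ and $\frac2\beta$ and no error term.

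\emph{Step 2: edge rescaling and limit.} We set $z=2+w N^{-2/3}$ and introduce the centered rescaled transform
\[
\tilde s_N(w):=N^{1/3}\bigl(m_N(z)-\msc(z)\bigr)+\sqrt{w},
\]
so that $N^{1/3}(m_N-\msc)\approx \tilde s_N-\sqrt w$. Expanding $\msc(z)=-1+N^{-1/3}\sqrt{w}+\OO(N^{-2/3})$ and using the self-consistent equation $\msc^2+z\msc+1=0$, the one-point equation becomes, after multiplying by $N^{2/3}$,
\[
\frac{2-\beta}{\beta}\partial_w\tilde s_N+\tilde s_N^2-w=\oo(1).
\]
The terms $\msc\cdot(\cdot)$ telescope into $2\sqrt w\,(\tilde s_N-\sqrt w)$, and these combine with $(\tilde s_N-\sqrt w)^2$ to give $\tilde s_N^2-w$. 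The multi-point terms rescale exactly as in \eqref{e:microscopic-difference-transform}.

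To pass to the limit, we take the following known inputs:
\begin{itemize}
\item edge convergence of GβE to $\mathrm{Airy}_\beta$ \cite{ramirez2011beta};
\item uniform moment bounds for $\tilde s_N(w)$ on compact sets of $\bC\setminus\bR$, which follow from edge rigidity \cite{bourgade2022optimal}, or alternatively from Steps 1--2 of the proof of \Cref{t:bulk-approx-convergence}, which use only the approximate equations.
\end{itemize}
With uniform integrability, the limit gives \eqref{e:loopeq2-edge} for the limiting Nevanlinna function $\lim \tilde s_N$. The multi-point equations for $\prod(\tilde s_N(w_a))$ are obtained from those for $\prod(m_N-\msc)$ by the linearity trick of \eqref{e:loop2}.

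\emph{Step 3 (main obstacle): identifying the limit with $s$.} We must show that $\tilde s_N$ converges to $s(w)=\sum_j\bigl((x_j-w)^{-1}-\fa_j^{-1}\bigr)-\Ai'(0)/\Ai(0)$, where the $\fa_j$ are the Airy zeros. By \Cref{l:boundary_measure}, any limit is a Nevanlinna function whose measure is the Airy$_\beta$ configuration. Any two such functions with the same measure differ by an additive term $b+cw$, so it remains to pin down the real constants $b$ and $c$.

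We determine these constants from the large-$w$ behavior. First, $\tilde s_N(w)-\sqrt w\to0$ as $|w|\to\infty$ along $\ri\bR_+$, uniformly in $N$ and in expectation; this follows from the local law or rigidity on mesoscopic edge scales. This forces $c=0$ and fixes $b$. Second, the same normalization holds for $s$: this follows from the left-tail density $\sqrt{|x|}/\pi$ together with the rigidity of the Airy$_\beta$ points around the $\fa_j$, and the constant $-\Ai'(0)/\Ai(0)$ is exactly what makes $\sum(\frac1{\fa_j-w}-\frac1{\fa_j})-\frac{\Ai'(0)}{\Ai(0)}=-\Ai'(w)/\Ai(w)\sim\sqrt w$.

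Making this tail control uniform, so that the limits $N\to\infty$ and $w\to\infty$ can be interchanged, is the step I expect to be hardest. It requires the optimal rigidity estimates at the edge.
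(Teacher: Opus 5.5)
Your Steps 1--2 follow the paper's route. The paper applies \Cref{p:edge-loop-beta} to the Gaussian $\beta$-ensemble; your hand computation is that proposition for a quadratic potential, where the $\Delta_N$ remainder vanishes. It then uses the Ram\'irez--Rider--Vir\'ag edge limit and reruns the tightness and uniform-integrability argument of \Cref{t:bulk-approx-convergence}. This gives a subsequential limit $s$ that is a particle-generated Nevanlinna function in every $L^q$, satisfies the exact edge loop equations, and has poles forming the Airy$_\beta$ configuration. (A minor point: with your $w$-dependent centering $-N^{1/3}\msc(z)+\sqrt w$, the function $\tilde s_N$ is not particle-generated, so \Cref{l:boundary_measure} does not apply verbatim. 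Centering at the constant $\msc(2)=-1$ as in \eqref{e:defsN3} avoids this and changes $\tilde s_N$ only by a deterministic $\OO_K(N^{-1/3})$.)

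The real difference is Step 3, and the obstacle you anticipate does not arise in the paper. Since the limit $s$ satisfies \Cref{a:edge}, \Cref{p:edge_concentration} applies to $s$ itself. From the loop equations alone it gives the moment bounds on $s(w)-\sqrt w$, the rigidity \eqref{e:concentration2} of the poles around $-(3\pi j/2)^{2/3}$, and the representation \eqref{e:normalized}. Because the poles are Airy$_\beta$-distributed, this is the theorem, just as the proof of \Cref{t:bulk_loop} uses \eqref{e:szrep}. No interchange of $N\to\infty$ and $w\to\infty$ is needed, and no external input on Airy$_\beta$.

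In your route, the first normalization is actually easy rather than hard. At $z=2+\ri yN^{-2/3}$, \eqref{eq:local_law_beta} gives $\bE|\tilde s_N(\ri y)-\sqrt{\ri y}|^q=N^{q/3}\bE|m_N(z)-\msc(z)|^q\le C_q y^{-q}$ for all large $N$, and Fatou at fixed $y$ passes this to the limit.

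The weak point is the second normalization. You appeal to ``rigidity of the Airy$_\beta$ points around the $\fa_j$'' without a source or a rate. The left-tail density alone only gives $x_j=\fa_j(1+\oo(1))$, which does not even make the series defining $s$ converge; you need something like $|x_j-\fa_j|\lesssim j^{1/3-\varepsilon}$. You could try to import this by passing Gaussian $\beta$-ensemble edge rigidity to the limit. The more economical route, which the paper takes, is to derive it from the loop equations satisfied by the limit, which keeps the argument self-contained.
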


Both \Cref{t:bulk_loop} and \Cref{t:edge_loop} are proved by taking the bulk and
edge scaling limits, respectively, of the Gaussian \(\beta\)-ensemble. Their
proofs are given in \Cref{s:beta_loop}.

\subsection{Local Law}
In this section, we collect  some consequences for point processes satisfying the
bulk/edge loop equations.  \Cref{p:bulk_concentration} states that if a Nevanlinna function $s$
associated to a point
process satisfies the bulk loop equation \eqref{e:loopeq2-bulk}, then \(s(w)\)  has a sub-Gaussian tail and
is the Stieltjes transform of its empirical density.
\Cref{p:edge_concentration} states that if a point process satisfies the edge
loop equation \eqref{e:loopeq2-edge}, then \(s(w)\) has a sub-Gaussian tail and is the \emph{normalized} Stieltjes
transform of its empirical density.

The proofs follow a method introduced in \cite{bourgade2022optimal}, which was recently used in \cite{bourgadeSodinZeitouni} to compute the excess of the ${\rm Sine}_{\beta}$
  point process. The local law below is formulated for general Nevanlinna functions, whereas \cite{bourgadeSodinZeitouni} works with the principal-value Stieltjes transform,  and obtains concentration estimates with explicit dependence on the inverse temperature $\beta$.

\label{s:bulk_concentration}
\begin{proposition}\label{p:bulk_concentration}
Under Assumption \ref{a:bulk}, for each fixed \(\beta>0\), there exists a constant
\(C=C(\beta)>0\) such that, for every \(w=E+\ri\eta\in \bC_+\) with
\(\eta>0\) and every integer \(q\ge 1\),
\begin{equation}\label{eqn:LocalLaw}
\mathbb E\bigl[|s(w)-\ri|^{2q}\bigr]
\le \frac{(Cq)^{q}}{\eta^{2q}} .
\end{equation}
Moreover, the poles of \(s(w)\) can be ordered as
\[
\cdots \le x_{-2}\le x_{-1}\le x_0\le x_1\le x_2\le \cdots .
\]
There exists a random variable \(M>0\) such that, almost surely,
\begin{align}\label{e:concentration1}
|x_j-\pi j|\le M\bigl(\log(2+|j|)\bigr)^2 .
\end{align}
Furthermore, for any \(w\in \bC_+\cup \bC_-\), almost surely,
\begin{align}\label{e:szrep}
s(w)={\rm P.V.}\sum_{j=-\infty}^{\infty}\frac{1}{x_j-w}.
\end{align}
\end{proposition}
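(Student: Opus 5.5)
We follow the strategy of \cite{bourgade2022optimal}: use the loop equations to obtain a recursive moment inequality for the centered quantity
\[
g(w):=s(w)-\ri,
\]
and then turn the moment bounds into rigidity.

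\emph{Step 1: a recursion for moments of $g$.} Fix $w=E+\ri\eta\in\bC_+$. In \eqref{e:loopeq2-bulk}, take $p=2q$, $w_1=w$, and choose the remaining $w_j$ so that the product $\prod_{j\ge2}s(w_j)$ becomes $g(w)^{q-1}\overline{g(w)}^{q}$. Since the hierarchy is linear in the products, it holds with $s(w_j)$ replaced by $s(w_j)-\ri$ or by $\overline{s}-(-\ri)$: expand the products and use the lower-$p$ equations.

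Write $s^2+1=g^2+2\ri g$. The leading term is then
\[
\bE\bigl[(g^2+2\ri g)\,g^{q-1}\bar g^{q}\bigr].
\]
The error terms involve $\partial_w s$ and the difference-quotient derivatives. By \eqref{e:sbound} and \eqref{e:fest}, these are bounded by $\Im s/\eta^2$, which is at most $(1+|g|)/\eta^2$. To use the linear term, first note that $\Im s>0$ gives $|g+2\ri|\ge |g|$ and $|g+2\ri|\geq 1$. Therefore
\[
|g^2+2\ri g|=|g|\,|g+2\ri|\ge \max\{|g|^2,|g|\}.
\]
This yields a bound of the form
\[
\bE|g|^{2q}\lesssim \frac{q}{\eta^2}\,\bE\bigl[(1+|g|)|g|^{2q-2}\bigr].
\]
The factor $q$ comes from the $2q-1$ difference-quotient terms. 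We also need the bound without dividing by $|g+2\ri|$. To get it, multiply instead by $(g+2\ri)^{-1}$-type observables, or argue separately on $\{|g|\le1\}$ and $\{|g|>1\}$.

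Hölder's inequality and Young's inequality then close the recursion:
\[
\bE|g|^{2q}\le (Cq/\eta^2)^{q}.
\]
This is \eqref{eqn:LocalLaw}. The $\eta$-dependence needs care for large $\eta$. There, use that $\eta|g|$ is bounded in terms of $\Im s$, and put the singular factor $\eta^{-1}$ in the derivative bounds.

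\emph{Step 2: from moments to counting.} By Markov's inequality, the tail of $|s(w)-\ri|$ is sub-Gaussian at scale $\eta^{-1}$. A union bound over a lattice of $w=E+\ri\eta$, with $\eta\ge1$ dyadic and $E$ integer, together with the Lipschitz control of $s$ from \eqref{e:sbound} and Borel--Cantelli, gives an almost sure bound. Namely, there is a random $M$ with
\[
|s(E+\ri\eta)-\ri|\le M\log(2+|E|+\eta)/\eta \quad\text{for all such } w.
\]
Then apply Helffer--Sjöstrand, or the Poisson-kernel argument of \Cref{l:boundary_measure}, on intervals $[0,L]$. This shows that the number of poles in $[a,b]$ is $(b-a)/\pi+\OO(M\log^2)$. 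The constant $c$ in the Nevanlinna representation must vanish: otherwise $\Im s(\ri\eta)\ge c\eta$ for large $\eta$, which contradicts the bound. Moreover, the poles are infinite in both directions. Label them so that $x_0$ is the first pole at or above $0$. Inverting the counting function gives \eqref{e:concentration1}.

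\emph{Step 3: identification \eqref{e:szrep}.} The rigidity \eqref{e:concentration1} makes the symmetric partial sums $\sum_{|j|\le J}(x_j-w)^{-1}$ converge; call the limit $\tilde s$. The difference $s-\tilde s$ then has the form $b'+c w$ with $c=0$, that is, it is a real constant $b'$. Letting $\eta\to\infty$, both $s$ and $\tilde s$ tend to $\ri$: $s$ by \eqref{eqn:LocalLaw}, and $\tilde s$ by the rigidity estimate. Hence $b'=0$, which gives \eqref{e:szrep}.

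The main obstacle is Step 1. It requires a careful algebraic reduction of the hierarchy to the centered variables, and it requires tracking constants well enough to get the $(Cq)^q$ growth uniformly in $\eta$.
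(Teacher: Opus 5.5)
\textbf{Gap in Step~1.} With your observable $g^{q-1}\bar g^{q}$, the loop equation gives an identity whose leading term is $\mathbb E\bigl[(g+2\ri)\,|g|^{2q}\bigr]$. This is a \emph{complex} expectation. The pointwise bound $|g|\,|g+2\ri|\ge\max\{|g|^2,|g|\}$ does not bound the modulus of this expectation from below.

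Your proposed repairs are not available either. The hierarchy only gives identities for polynomial observables in the $s(w_j)$. So you cannot localize to the events $\{|g|\le1\}$ and $\{|g|>1\}$, and you cannot insert $(g+2\ri)^{-1}$ without an extension argument you do not supply.

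The honest fallback is to take imaginary parts, which puts $\mathbb E[(1+\Im s)|g|^{2q}]$ on the left. But the $\partial_w s$ term is only bounded by $\eta^{-1}\mathbb E[\Im s\,|g|^{2q-1}]$. It multiplies a monomial of degree $2q-1$, not $2q-2$ as in your recursion, so it has the same homogeneity as the left side. This route naturally yields only $\mathbb E|g|^{2q}\lesssim (Cq/\eta^2)^q\,\mathbb E[\Im s(w)]$, which would need a uniform bound on $\mathbb E\Im s(w)$ that you do not have a priori.

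Even granting the recursion you display, $\mathbb E|g|^{2q}\lesssim q\eta^{-2}\mathbb E[(1+|g|)|g|^{2q-2}]$, H\"older only closes it to $\mathbb E|g|^{2q}\lesssim (Cq/\eta^2)^q+(Cq/\eta^2)^{2q}$. That is $\|g\|_{L^{2q}}\lesssim q/\eta^2$, far weaker than \eqref{eqn:LocalLaw} when $\eta\lesssim\sqrt q$.

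\textbf{The missing idea.} Use $Q:=s^2+1=g(g+2\ri)$ itself as the basic variable. Apply the polynomially extended loop equation with $F(u,v)=(u^2+1)^{q-1}(v^2+1)^q$, $u=s(w)$, $v=s(\bar w)$. Then the leading term is $\mathbb E|Q|^{2q}\ge0$ and there is no phase. Equivalently, multiply by the conjugate $\overline{g+2\ri}$, which is a polynomial, rather than by $(g+2\ri)^{-1}$.

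The error terms are bounded by $\eta^{-1}\Im s\,|Q|^{2q-1}$ and $q\eta^{-2}\Im s\,|s|\,|Q|^{2q-2}$. Young's inequality then gives $\mathbb E|Q|^{2q}\le (Cq/\eta^2)^q\,\mathbb E[(\Im s)^q|s|^q]$.

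Since $\Im s\ge0$, you have $|g|\le|g+2\ri|$ and $|g+2\ri|\ge1$. Hence $|g|\le|Q|$, $|g|^2\le|Q|$, and $|s|^{2q}\le C^q(1+|Q|^q)$. So $X:=\mathbb E|Q|^{2q}$ satisfies $X\le A(1+X^{1/2})$ with $A=(Cq/\eta^2)^q$, and $\mathbb E|g|^{2q}\le\min\{X,X^{1/2}\}\le 2A$. The degree doubling on the left ($|Q|\sim|g|^2$) is exactly what absorbs the quadratic factor $\Im s\,|s|$ on the right.

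\textbf{Steps~2 and~3.} These follow essentially the paper's route: net plus Borel--Cantelli, Helffer--Sj\"ostrand counting, and comparison with $-\cot w+\sum_j\bigl((x_j-w)^{-1}-(\pi j-w)^{-1}\bigr)$. Below $\eta=1$, use the monotonicity of $\eta\mapsto\eta\Im s(E+\ri\eta)$. In Step~3, the moment bound gives $s(\ri y)\to\ri$ only in probability. That suffices because $s-\widetilde s$ is a real constant, but you should say so.
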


We next recall the Airy function, denoted by~$\Ai$, which solves the Airy equation: $\Ai''(w)-w\Ai(w)=0$, with $\Ai(w)\to 0$ as $w\to \infty$ along $\bR_+$. 
All the zeros of $\Ai$ are on the real line, and are all negative, and we denote them as $0>\fa_1>\fa_2>\fa_3>\cdots$. We refer to \Cref{a:Airy} for further background on Airy functions.

\begin{proposition}\label{p:edge_concentration}
Under \Cref{a:edge}, for each fixed \(\beta>0\), there exists a constant
\(C=C(\beta)>0\) such that the following holds. Let
\(w=E+\ri\eta\in\bC_+\), with \(\eta>0\), and let \(q\ge 1\) be an integer, then
\begin{equation}\label{eqn:edge}
\mathbb{E}\left[\left|s(w)-\sqrt w\right|^{2q}\right]
\le
\frac{(Cq)^q}{\eta^{2q}} \frac{|w|^q}{(\Im[\sqrt{w}])^{2q}}.
\end{equation}
If we further assume that \(E\ge \eta\), then
\begin{equation}\label{eqn:edge2}
\mathbb{E}\left[\left|\Im\left[s(w)-\sqrt w\right]\right|^{2q}\right]
\le
\frac{(Cq)^q}{\eta^{q}E^q}
+
\frac{(Cq)^{2q}}{\eta^{4q}E^q}.
\end{equation}
Moreover, the poles of \(s(w)\) can be ordered as
\[
x_1\ge x_2\ge \cdots .
\]
There exists a random variable \(M>0\) such that, almost surely,
\begin{align}\label{e:concentration2}
\left|x_j+\left(\frac{3\pi j}{2}\right)^{2/3}\right|
\le
M\frac{\bigl(\log(2+j)\bigr)^2}{j^{1/3}} .
\end{align}
Furthermore, for any \(w\in \bC_+\cup \bC_-\), almost surely,
\begin{align}\label{e:normalized}
s(w)=\sum_{i=1}^\infty
\left(\frac{1}{x_i-w} -\frac{1}{\fa_i}\right)
-\frac{\Ai'(0)}{\Ai(0)} .
\end{align}
\end{proposition}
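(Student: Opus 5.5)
The plan follows the scheme of \Cref{p:bulk_concentration}, which adapts \cite{bourgade2022optimal}, with the deterministic profile $\ri$ replaced by $\sqrt w$. Note that $\sqrt w$ solves the $p=1$ edge equation at leading order, since $(\sqrt w)^2-w=0$.

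\textbf{Step 1: a centered hierarchy.} Write $\Delta(w):=s(w)-\sqrt w$. Since $s(w)^2-w=\Delta(w)^2+2\sqrt w\,\Delta(w)$, the edge loop equation \eqref{e:loopeq2-edge} becomes a hierarchy for $\Delta$. By linearity in the product $\prod_{j\ge2}s(w_j)$, we may replace each $s(w_j)$ by $\Delta(w_j)$; the difference quotients and $\del_w s$ are left unchanged, up to the explicit deterministic term $\del_w\sqrt w=1/(2\sqrt w)$. Choose $w_1=w$ and take the other arguments to be $q-1$ copies of $w$ and $q$ copies of $\bar w$. This gives a moment identity of the form
\[
2\sqrt w\,\bE\bigl[\Delta|\Delta|^{2q-2}\overline{\Delta}\bigr]\;\approx\;-\bE\bigl[\Delta^2|\Delta|^{2q-2}\overline{\Delta}\bigr]-\tfrac{2-\beta}{\beta}\bE[\del_w s\cdots]-\tfrac{2}{\beta}\sum_j\bE[\del_{w_j}\tfrac{s(w)-s(w_j)}{w-w_j}\cdots].
\]

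\textbf{Step 2: the key moment bound.} The danger is the quadratic term $\Delta^2$, which cannot be absorbed perturbatively for large $\Delta$. We handle it as in the bulk case, using the Nevanlinna structure. Taking imaginary parts after multiplying by a suitable phase, one uses that $\Im s\ge0$ and $\Im\sqrt w>0$ on $\bC_+$. This gives the positivity
\[
|\Delta^2+2\sqrt w\Delta|\gtrsim |\Delta|\,(|\Delta|+\Im\sqrt w)
\]
for the relevant branch, which holds because $s$ and $\sqrt w$ lie in the same half plane, so $\Delta+2\sqrt w=s+\sqrt w$ has imaginary part at least $\Im\sqrt w$. The derivative and difference-quotient terms are bounded by \eqref{e:sbound} and \eqref{e:fest}, namely by $\Im s/\eta\le(|\Delta|+|\sqrt w|)/\eta$, each time costing one power of $\Delta$. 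One then obtains a recursive inequality of the type
\[
\bE|\Delta|^{2q}(\Im\sqrt w)^{2}\lesssim \frac{q|w|}{\eta^2}\,\bE|\Delta|^{2q-2}+\dots,
\]
and Young/Hölder closes it to give \eqref{eqn:edge} with the $(Cq)^q$ constant. \emph{This is the main obstacle}: one has to control the combinatorial factor $q$ from the $2q-2$ difference-quotient terms uniformly, and to track the $w$-dependence through $\Im\sqrt w$, which degenerates on the positive real axis. If the direct positivity fails, I would first try the bulk trick of working with $\Im\Delta$, which is nonnegative up to $\Im\sqrt w$. For \eqref{eqn:edge2} with $E\ge\eta$, where $\Im\sqrt w\asymp\eta/\sqrt E$ is small, I would redo the estimate for $\Im\Delta$ alone. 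The improvement comes from $\Im s(w)=\eta\sum_x|x-w|^{-2}$ being small when $w$ is to the right of the spectrum. The real-part fluctuation then enters only quadratically, which produces the second term $(Cq)^{2q}/(\eta^{4q}E^q)$.

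\textbf{Step 3: rigidity and representation.} From \eqref{eqn:edge}, Markov's inequality and a union bound over a dyadic grid of points $w=E+\ri\eta$ with $E\to-\infty$ and $\eta\asymp(\log)^2/\sqrt{|E|}$, one gets a simultaneous local law. Integrating $\Im s$ against the Poisson kernel, as in \Cref{l:boundary_measure}, yields counting estimates $\#\{x_j\ge E\}=\tfrac{2}{3\pi}|E|^{3/2}+\OO(\log^2)$. Borel--Cantelli then gives \eqref{e:concentration2}, and the same estimates give finiteness of the poles to the right, which allows the decreasing ordering. Finally, rigidity makes the series $\sum_i(1/(x_i-w)-1/\fa_i)$ converge. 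The difference between $s$ and this series is a Nevanlinna function with no poles, hence of the form $b+cw$. The local law $s(w)-\sqrt w=\oo(|w|)$ as $w\to\ri\infty$, compared with the known asymptotics of $\Ai'/\Ai$, forces $c=0$ and $b=-\Ai'(0)/\Ai(0)$, which is \eqref{e:normalized}.
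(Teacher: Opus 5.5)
Your Step 2 does not prove \eqref{eqn:edge}, and the pointwise inequality it relies on is false. Since $s^2-w=\Delta\,(s+\sqrt w)$, the fact that $s$ and $\sqrt w$ both lie in $\bC_+$ gives only $|s+\sqrt w|\ge \Im(s+\sqrt w)\ge\Im\sqrt w$. Nothing prevents $s$ from sitting near the wrong branch $-\sqrt w$. For example, for $s=-\overline{\sqrt w}\in\bC_+$ one has $|\Delta|^2=4(\Re\sqrt w)^2=2(|w|+E)$, whereas $|\Delta^2+2\sqrt w\,\Delta|=4\Re\sqrt w\,\Im\sqrt w=2\eta$. So $|\Delta^2+2\sqrt w\,\Delta|\gtrsim|\Delta|(|\Delta|+\Im\sqrt w)$ fails badly when $E\gg\eta$. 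This wrong-branch scenario is exactly what the factor $|w|^q/(\Im\sqrt w)^{2q}$ in \eqref{eqn:edge} pays for, so it must be handled rather than excluded. Moreover, with test functions built from $\Delta$, the loop equation produces the signed quantity $\bE[(s+\sqrt w)|\Delta|^{2q}]$. Its only usable sign information is its imaginary part $\bE[(\Im s+\Im\sqrt w)|\Delta|^{2q}]$, which by itself loses a factor $\bE[\Im s]/\Im\sqrt w$ and does not give a bound of the stated form. Your recursion is asserted, not derived.

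The missing idea is to run the moment argument on $P(w)=s(w)^2-w$ itself. Take $F(u,v)=(u^2-w)^{q-1}(v^2-\bar w)^{q}$ at $u=s(w)$, $v=s(\bar w)$; then the left-hand side is exactly $\bE|P|^{2q}$. Using \eqref{e:sbound}, \eqref{e:fest} and Young's inequality gives $\bE|P|^{2q}\le (Cq/\eta^2)^q\,\bE[(\Im s)^q|s|^q]$. This closes via $|s|^2\le|w|+|P|$ to $\bE|P|^{2q}\le(Cq/\eta^2)^{2q}+(Cq/\eta^2)^q|w|^q$. A separate deterministic step then converts this bound. First, $\min\{|s-\sqrt w|,|s+\sqrt w|\}\le |P|/\sqrt{|w|}\wedge|P|^{1/2}$. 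Second, select the branch via $|s-\sqrt w|\le|s+\sqrt w|+2\sqrt{|w|}\le C\frac{\sqrt{|w|}}{\Im\sqrt w}|s+\sqrt w|$. Splitting according to $|w|\gtrless q/\eta^2$ then yields \eqref{eqn:edge}.

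The same gap affects \eqref{eqn:edge2}. The mechanism there is not that $w$ lies to the right of the spectrum; the bound is claimed for every $E\ge\eta$, including points inside the configuration. Rather, $s,\sqrt w\in\bC_+$ forces $|\Im(s-\sqrt w)|\le\Im(s+\sqrt w)$. Hence $|\Im(s-\sqrt w)|\le\min\{|s-\sqrt w|,|s+\sqrt w|\}\le|P|/\sqrt{|w|}$, with no branch loss. Combined with $\Im\sqrt w\le\eta/(2\sqrt E)$, this gives $\Im s\,|s|\le C(\eta+|P|)$. Inserting this into the $P$-moment inequality yields \eqref{eqn:edge2}.

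In Step 3, you need $x_1<\infty$ before you can order the poles decreasingly. This must come from \eqref{eqn:edge2} at $\eta=1$ along integers $E\to+\infty$, plus Borel--Cantelli. Note that \eqref{eqn:edge} degenerates on the positive axis, and your grid only covers $E\to-\infty$. Also, counting with an $\OO((\log a)^2)$ error on a window of length $a$ cannot come from Poisson smoothing at a single scale. Integrating a pointwise error of order $1/\eta$ over length $a$ costs order $a/\eta$, which is essentially $a^{3/2}$ up to logarithms when $\eta\asymp(\log a)^2/\sqrt a$. You need the Helffer--Sj\"ostrand formula (\Cref{lem:HS}), fed with an almost-sure multiscale bound $|s(x+\ri y)-\sqrt{x+\ri y}|\le Mn/y$ on dyadic rectangles, so that the error integrates to a logarithm in $y$. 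The final identification \eqref{e:normalized} is fine once rigidity is available. However, fixing the constant requires $s(\ri y)-\sqrt{\ri y}\to0$, not merely $\oo(|w|)$.
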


 \Cref{s:bulk_local_law}
is devoted to the proof of \Cref{p:bulk_concentration}. The proof of
\Cref{p:edge_concentration} is analogous and is postponed to
\Cref{s:asymp_edge}.

\subsection{Bulk and edge loop equations for $\beta$-ensemble}
\label{s:beta_loop}
In this section, we show that, under the one-cut condition, the
\(\beta\)-ensemble with a general potential satisfies the approximate bulk and
edge loop equations, namely \eqref{e:approx-loopeq2-bulk} and
\eqref{e:approx-loopeq2-edge}. Both results follow from straightforward
computations, with the local laws for \(\beta\)-ensembles up to microscopic scale
\cite{bourgade2022optimal} as an input.
We defer the proofs to
\Cref{s:beta_ensemble}. We then prove \Cref{t:bulk_loop} and \Cref{t:edge_loop} by taking the bulk and
edge scaling limits of these approximate loop equations.

Consider the $\beta$-ensemble
\begin{equation}
	\label{eq:beta_ensembles_density}
	\rd \mu_N(\lambda_1,\dots,\lambda_N)
	=
	\frac{1}{Z_N}
	\prod_{1\leq i<j\leq N}
	\abs{\lambda_i-\lambda_j}^{\beta}
	e^{
		-\frac{\beta N}{2}\sum_{k=1}^N V(\lambda_k)
	}
	\rd \lambda_1\cdots \rd \lambda_N .
\end{equation}
\begin{assumption}\label{a:Vasump}
We assume the following.
\begin{enumerate}[label=(A\arabic*)]
\item \label{assumption:analytic}
The potential \(V\) is analytic on \(\bC\) and real-valued on $\bR$.

\item \label{assumption:V'_at_infinity}
There exist constants \(M_0,C,c>0\) such that
\[
	V'(x)\geq c,
	\qquad
	\sup_{y\in[M_0,x]}\frac{V'(y)}{y}\leq C V(x),
	\qquad x\geq M_0,
\]
and the analogous estimates hold as \(x\to -\infty\), equivalently for
\(\widetilde V(x):=V(-x)\).

\item \label{assumption:off-criticality}
The equilibrium measure \(\mu_V\) is one-cut regular. More precisely, its density
\(\varrho_V\) is positive on a single interval \([A,B]\) and has square-root behavior
at both endpoints.

\item \label{assumption:large_dev}
The effective potential
\[
	x\longmapsto
	\frac{V(x)}{2}
	-
	\int \log\abs{x-t}\,\rd\mu_V(t)
\]
achieves its minimum only on \([A,B]\).
\end{enumerate}
\end{assumption}
Write
\begin{align}\label{e:defr}
	\varrho_V(t)
	=
	\frac{1}{\pi}r(t)\sqrt{(t-A)(B-t)},
	\qquad t\in[A,B],
\end{align}
and denote by \(m_V\) the Stieltjes transform of \(\mu_V\),
\[
	m_V(z):=\int_A^B \frac{\rd\mu_V(t)}{t-z}.
\]
We also write
\begin{align}\label{e:defmB}
	m_B:=m_V(B),
\end{align}
where \(m_V(B)\) denotes the continuous extension of \(m_V\) to the right edge.

\begin{proposition}[Approximate bulk loop equations for \(\beta\)-ensembles]
	\label{p:bulk-loop-beta}
Assume that the potential \(V\) satisfies \Cref{a:Vasump}. Fix a bulk energy
\(E\in(A,B)\), and write
$
	\varrho_E:=\varrho_V(E)$.
For \(w\in\bC\setminus\bR\), define
\begin{align}\label{e:defsN0}
	s_N(w)
	=
	\sum_{k=1}^N
	\frac{1}{
		N\pi\varrho_E(\lambda_k-E)-w
	}
	+
	\frac{V'(E)}{2\pi\varrho_E}.
\end{align}
Thus \(s_N\) is a particle-generated Nevanlinna function associated with the
configuration
\[
	\sum_{k=1}^N
	\delta_{N\pi \varrho_E(\lambda_k-E)}.
\]
Then, for every \(p\ge1\) and every compact set
\(K\subset\bC\setminus\bR\), uniformly for
\(w_1,w_2\dots,w_p\in K\),
\begin{align}
	\label{e:beta-loopeq2-bulk}
	\bE\left[
		\left(
			\frac{2-\beta}{\beta}\partial_{w_1} s_N(w_1)
			+s_N(w_1)^2+1
		\right)
		\prod_{j=2}^p s_N(w_j)
	\right]
	+
	\frac{2}{\beta}
	\bE\left[
		\sum_{j=2}^p
		\partial_{w_j}
		\frac{s_N(w_1)-s_N(w_j)}{w_1-w_j}
		\prod_{\substack{\ell=2\\ \ell\neq j}}^ps_N(w_\ell)
	\right]
	=
	\oo_N(1).
\end{align}
\end{proposition}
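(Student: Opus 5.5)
The plan is to derive the finite-$N$ loop equations for the $\beta$-ensemble \eqref{eq:beta_ensembles_density} by integration by parts, and then pass to microscopic variables exactly as in the proof of \Cref{p:wigner-microscopic-loop}. Let $m_N(z)=N^{-1}\sum_k(\lambda_k-z)^{-1}$. For $z,z_2,\dots,z_p\in\bC\setminus\bR$, apply $\sum_k\partial_{\lambda_k}$ to the integrand $\frac{1}{N}\frac{1}{\lambda_k-z}\prod_{a\ge2}m_N(z_a)$ against the density. Symmetrizing the logarithmic interaction term via $\sum_{i\neq k}\frac{1}{(\lambda_k-\lambda_i)(\lambda_k-z)}=\frac12\sum_{i\ne k}\frac{1}{(\lambda_i-z)(\lambda_k-z)}$ gives the exact first loop equation
\begin{align*}
\bE\Big[\Big(\tfrac{\beta}{2}m_N(z)^2+\big(1-\tfrac{\beta}{2}\big)\tfrac1N\partial_z m_N(z)-\tfrac{\beta}{2}\tfrac1N\textstyle\sum_k\frac{V'(\lambda_k)}{\lambda_k-z}\Big)M_p\Big]+\tfrac{1}{N^2}\textstyle\sum_{a}\bE\Big[\partial_{z_a}\frac{m_N(z)-m_N(z_a)}{z-z_a}M_p^{(a)}\Big]=0,
\end{align*}
where $M_p=\prod_{a\ge2}m_N(z_a)$. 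The boundary terms vanish by the growth assumption \ref{assumption:V'_at_infinity}.

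Next I handle the potential term. I write
\[
\frac1N\sum_k\frac{V'(\lambda_k)}{\lambda_k-z}=V'(z)m_N(z)+\frac1N\sum_k\frac{V'(\lambda_k)-V'(z)}{\lambda_k-z}=:V'(z)m_N(z)+b_N(z).
\]
Here $b_N(z)$ is a smooth linear statistic, since $V$ is analytic. By the local law and rigidity of \cite{bourgade2022optimal}, $b_N(z)=b_V(z)+\OO(N^{-1+\varepsilon})$ with overwhelming probability, uniformly for $z$ near $E$, where $b_V(z)=\int\frac{V'(t)-V'(z)}{t-z}\rd\mu_V(t)$. The equilibrium relation $m_V^2-V'm_V-b_V=0$ then lets me complete the square. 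Writing $m_N=m_V+\Delta$, the quadratic part becomes $\tfrac{\beta}{2}\big((m_N-V'/2)^2-(m_V-V'/2)^2\big)$.

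At a bulk point, $m_V(E+\ri0)-V'(E)/2=\ri\pi\varrho_E$. With $z=E+w/(N\pi\varrho_E)$ and $m_N(z)-V'(E)/2=\pi\varrho_E\, s_N(w)-V'(E)$, up to the constant shift coming from \eqref{e:defsN0}, expanding in $1/N$ produces $(\pi\varrho_E)^2\big(s_N(w)^2+1+\tfrac{2-\beta}{\beta}\partial_ws_N(w)\big)$ after dividing by $\beta/2$. The interaction term becomes $\tfrac{2}{\beta}(\pi\varrho_E)^3N^2\partial_{w_a}\frac{s_N(w)-s_N(w_a)}{w-w_a}$, exactly as in \eqref{e:microscopic-difference-transform}. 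As in the Wigner case, I then replace $M_p$ by $\prod(m_N(z_a)-V'(E)/2)$ using linearity, which is valid because the hierarchy for lower $p$ holds.

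The main obstacle is controlling the errors. There are three sources: the Taylor errors $V'(z)-V'(E)=\OO(N^{-1})$ multiplied by $m_N$ (of order $N\cdot N^{-1}$ times $s_N$, which must be shown small after the $N^{-2}$ normalization); the fluctuation of $b_N(z)-b_V(z)$ multiplied by $M_p$; and $m_V(z)-m_V(E)=\OO(N^{-1})$. Each must be $\oo(N^{-2})$ in the macroscopic normalization when multiplied by $\prod s_N$. For this I plan to use the microscopic local law of \cite{bourgade2022optimal}, which gives $|s_N(w)|\le N^{\varepsilon}$ with overwhelming probability for $w\in K$, together with the trivial bound $|s_N|\le CN$ on the exceptional event. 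The delicate point is that the error of $b_N$ must be $\oo(N^{-1})$ in absolute terms. I expect to get this from rigidity $|\lambda_k-\gamma_k|\le N^{-2/3+\varepsilon}\hat k^{-1/3}$ plus the Lipschitz smoothness of $t\mapsto (V'(t)-V'(z))/(t-z)$, which gives error $N^{-1+\varepsilon}$. Multiplied by $N$ from the $1/N$ rescaling of $s_N$, this becomes $N^{\varepsilon}\cdot N^{-1}$ after dividing by $N^2(\pi\varrho_E)^2$, and hence $\oo_N(1)$ uniformly on $K$.
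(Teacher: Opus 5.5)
Your route is the paper's: the exact finite-$N$ loop equation (the paper quotes it as \cite[Proposition A.3]{bourgade2022optimal} rather than re-deriving it), the split $\frac1N\sum_k\frac{V'(\lambda_k)}{\lambda_k-z}=V'(z)m_N(z)+b_N(z)$, the equilibrium relation at $E$, the shift of the product by linearity, and the rescaling to $w$. The write-up, however, has sign slips.

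\emph{Signs.} The symmetrization identity is $\sum_{i\neq k}\frac{1}{(\lambda_k-\lambda_i)(\lambda_k-z)}=-\frac12\sum_{i\neq k}\frac{1}{(\lambda_i-z)(\lambda_k-z)}$. The potential enters the loop equation as $+\frac{\beta}{2}\frac1N\sum_k\frac{V'(\lambda_k)}{\lambda_k-z}$, not with a minus sign. Correspondingly, the equilibrium relation is $m_V^2+V'm_V+b_V=0$, with $m_V(E+\ri0)=-\frac{V'(E)}{2}+\ri\pi\varrho_E$ and $b_V(E)=\frac{V'(E)^2}{4}+\pi^2\varrho_E^2$. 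With these signs one has exactly $m_N(z)+\frac{V'(E)}{2}=\pi\varrho_E s_N(w)$, which is why \eqref{e:defsN0} carries $+V'(E)/(2\pi\varrho_E)$. The product must then be shifted to $\prod_a\bigl(m_N(z_a)+\frac{V'(E)}{2}\bigr)$. Your remark ``up to the constant shift'' papers over this; with your signs, the centred variable would not be the $s_N$ of the statement.

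\emph{Scaling of the errors.} Since $s_N$ is an affine image of $m_N$ involving no power of $N$, every term of the macroscopic equation is already of order one. Indeed $\frac1N\partial_zm_N(z)=(\pi\varrho_E)^2\partial_ws_N(w)$ and $N^{-2}\partial_{z_a}\frac{m_N(z)-m_N(z_a)}{z-z_a}=(\pi\varrho_E)^3\partial_{w_a}\frac{s_N(w)-s_N(w_a)}{w-w_a}$. So each error only needs to be $\oo(1)$ after multiplication by $\prod_a s_N(w_a)$, not $\oo(N^{-2})$, and $b_N-b_V$ only needs to be $\oo(1)$, not $\oo(N^{-1})$.

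Neither of your stated requirements holds in general. For instance $V'(z)-V'(E)=V''(E)w/(N\pi\varrho_E)+\OO(N^{-2})$ is exactly of order $N^{-1}$, and $b_N-b_V$ has fluctuations of order $N^{-1}$. Your own bound $N^{-1+\varepsilon}$ does not meet the second requirement, and the ``multiply by $N$, divide by $N^2$'' step is spurious. The conclusion $\oo_N(1)$ is right, but for the simple reason that each error is $\OO(N^{-1})$ against $\OO(1)$ main terms.

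This is how the paper closes the argument. It writes $m_N^2+V'(z)m_N+b_V(z)=(\pi\varrho_E)^2\bigl(s_N(w)^2+1\bigr)+\OO_K(N^{-1})(1+|s_N(w)|)$. It then uses $\bE|b_N(z)-b_V(z)|^{2q}\le (Cq)^{2q}N^{-2q}$ from \eqref{eq:Delta_moment_bound}, $\sup_{w\in K}\bE|s_N(w)|^q\le C_{q,K}$ from \eqref{eq:local_law_beta} with $N\eta\asymp1$, and H\"older. Rigidity and overwhelming-probability truncation are unnecessary. If you do truncate, you must also control $b_N$ on the exceptional event, which needs moment bounds on the $\lambda_k$, since $(V'(\lambda)-V'(z))/(\lambda-z)$ grows in $\lambda$.
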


\begin{proposition}[Approximate edge loop equations for \(\beta\)-ensembles]
	\label{p:edge-loop-beta}
Assume that \(V\) satisfies \Cref{a:Vasump}. Let
\begin{align}\label{e:defgamma}
	\gamma:=\bigl(r(B)^2(B-A)\bigr)^{-1/3}.
\end{align}
For \(w\in\bC\setminus\bR\), define the edge-scaled empirical Stieltjes transform
\begin{align}\label{e:defsN3}
	s_N(w)
	=
	\sum_{k=1}^N
	\frac{1}{
		\gamma^{-1}N^{2/3}(\lambda_k-B)-w
	}
	-
	\gamma N^{1/3}m_B.
\end{align}
Thus \(s_N\) is a particle-generated Nevanlinna function associated with the
configuration
\[
	\sum_{k=1}^N
	\delta_{\gamma^{-1}N^{2/3}(\lambda_k-B)}.
\]
Then, for every \(p\geq1\) and every compact set
\(K\subset\bC\setminus\bR\), uniformly for
\(w_1,\dots,w_p\in K\),
\begin{align}
	\label{e:edge-loop-from-beta}
	\bE\left[
		\left(
			\frac{2-\beta}{\beta}\partial_{w_1} s_N(w_1)
			+s_N(w_1)^2
			-w_1
		\right)
		\prod_{j=2}^p s_N(w_j)
	\right]
	+
	\frac{2}{\beta}
	\bE\left[
		\sum_{j=2}^p
		\partial_{w_j}
		\frac{s_N(w_1)-s_N(w_j)}{w_1-w_j}
		\prod_{\substack{\ell=2\\ \ell\neq j}}^ps_N(w_\ell)
	\right]
	=
	\oo_N(1).
\end{align}
\end{proposition}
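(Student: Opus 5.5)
The plan is to derive the finite-$N$ loop equations for the $\beta$-ensemble by integration by parts, rescale to the edge variables, and control the non-universal remainders with the microscopic local law of \cite{bourgade2022optimal}. The steps follow the Wigner computation of \Cref{p:wigner-microscopic-loop}.

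\emph{Step 1: finite-$N$ identity.} Set $m_N(z)=\frac1N\sum_k(\lambda_k-z)^{-1}$ and $M_p=\prod_{a\ge2}m_N(z_a)$. Integrate by parts against the density \eqref{eq:beta_ensembles_density}, using the vector field $\partial_{\lambda_k}$ applied to $\frac1N(\lambda_k-z)^{-1}M_p$ and summing over $k$. This gives the exact hierarchy
\[
\bE\Big[\Big(m_N(z)^2+\tfrac{2-\beta}{\beta N}\partial_z m_N(z)-\tfrac1N\sum_k\tfrac{V'(\lambda_k)}{\lambda_k-z}\Big)M_p\Big]+\tfrac{2}{\beta N^2}\sum_{a}\bE\Big[\partial_{z_a}\tfrac{m_N(z)-m_N(z_a)}{z-z_a}M_p^{(a)}\Big]=0 .
\]
Next split $\frac1N\sum_k\frac{V'(\lambda_k)}{\lambda_k-z}=V'(z)m_N(z)+\frac1N\sum_k\frac{V'(\lambda_k)-V'(z)}{\lambda_k-z}$. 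The second piece is analytic in $z$ and equals $b_N(z)$; since $V$ is analytic, it can be written as a contour integral of $m_N$ away from the spectrum.

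\emph{Step 2: center and rescale.} Write $m_N=m_V+\Delta$. The equilibrium relation is $m_V^2+V'm_V+b_V=0$ with $b_V(z)=\int\frac{V'(t)-V'(z)}{t-z}\rd\mu_V(t)$. Subtracting it leaves
\[
\Delta^2+(2m_V+V')\Delta+\tfrac{2-\beta}{\beta N}\partial_z m_N+(b_V-b_N)
\]
as the leading factor. Since $2m_V(z)+V'(z)=-2\pi\ri\,\varrho$ continued off the real line, near $B$ it behaves like $-2r(B)\sqrt{B-A}\sqrt{z-B}$ (up to sign and branch conventions).

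Now substitute $z=B+\gamma N^{-2/3}w$ and $\Delta(z)=\gamma^{-1}N^{-1/3}\bigl(s_N(w)-\sqrt w\bigr)$ with a suitable sign; this is consistent with the shift $-\gamma N^{1/3}m_B$ in \eqref{e:defsN3}. The choice \eqref{e:defgamma} of $\gamma$ is exactly what normalizes both the cross term and the Airy term. Under this substitution, $\Delta^2+(2m_V+V')\Delta$ becomes, up to the common factor $\gamma^{-2}N^{-2/3}$, the quantity $(s_N-\sqrt w)^2+2\sqrt w\,(s_N-\sqrt w)=s_N^2-w$, plus corrections of relative order $N^{-2/3}|w|$ coming from the Taylor expansion of $2m_V+V'$ at $B$. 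The derivative and difference-quotient terms scale exactly into $\frac{2-\beta}{\beta}\partial_w s_N$ and $\frac2\beta\partial_{w_j}\frac{s_N(w_1)-s_N(w_j)}{w_1-w_j}$, as in \eqref{e:microscopic-difference-transform}.

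The products $\prod_a\Delta(z_a)$ rescale to $\prod_a(s_N(w_a)-\sqrt{w_a})$. Since the loop equations are linear in the inserted factors, I pass to the uncentered products $\prod s_N(w_a)$ by taking linear combinations over subsets, as remarked after \Cref{p:loop}. The inductive structure in $p$ makes the lower-order correction terms consistent.

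\emph{Step 3: error terms (the main obstacle).} The remainders are of three kinds:
\begin{itemize}
\item the Taylor corrections of $m_V$ and $V'$ near $B$;
\item the term $(b_V-b_N)M_p$;
\item the contributions of configurations far from typical.
\end{itemize}
For the second kind, $b_N-b_V=\frac1{2\pi\ri}\oint\frac{V'(\xi)-V'(z)}{\xi-z}\,\Delta(\xi)\,\rd\xi$ over a contour at macroscopic distance from $[A,B]$. There the rigidity and optimal local law of \cite{bourgade2022optimal} give $\Delta(\xi)=\OO(N^{-1+\varepsilon})$ with overwhelming probability. After multiplying by the rescaling factor $\gamma^{2}N^{2/3}$, this term is $\OO(N^{-1/3+\varepsilon})$ times $\prod|s_N(w_a)|$. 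The products are controlled by the local law at microscopic scales near the edge, which bounds the moments of $s_N(w)$ for $w$ in compacts of $\bC\setminus\bR$ uniformly in $N$. The delicate point is that \eqref{e:edge-loop-from-beta} demands an $\oo_N(1)$ error with no $\cR_N$ factor, so these uniform moment bounds are genuinely needed, together with large-deviation estimates from \ref{assumption:large_dev} and \ref{assumption:V'_at_infinity} to handle eigenvalues outside a neighborhood of $[A,B]$. On the complement of the good event, I would use the trivial bound $|m_N(z)|\le N^{2/3}/|\Im w|$ combined with its superpolynomially small probability.
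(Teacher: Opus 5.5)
Your proposal is correct and follows essentially the paper's argument. The common ingredients are the finite-$N$ loop equation, the edge rescaling in which the choice of $\gamma$ makes $\gamma N^{1/3}(2m_V+V')\approx 2\sqrt w$ at $z=B+\gamma N^{-2/3}w$, and the moment bound $\bE|b_N(z)-b_V(z)|^{2q}\le (Cq)^{2q}N^{-2q}$ on the $V'$-remainder (the paper's $\Delta_N$). The paper cites that bound from \cite{bourgade2022optimal} instead of re-deriving it by your contour-plus-rigidity argument. Both arguments then use local-law moment bounds on $s_N(w)$, $w\in K$, to get an $\oo_N(1)$ error with no $\cR_N$ factor.

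The differences are cosmetic. The paper centers the inserted factors at the constant $m_B$, so they equal $s_N(w_a)$ exactly and the detour through $\prod_a(s_N(w_a)-\sqrt{w_a})$ is unnecessary. That detour is in any case only exact up to a deterministic $\OO(N^{-1/3})$ shift. Also, in your Step~1 identity the $V'$-term enters with a plus sign, and the remainder is $b_N-b_V$, not $b_V-b_N$.
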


\begin{remark}
Propositions \ref{p:bulk-loop-beta}  and \ref{p:edge-loop-beta} together with  \Cref{t:bulk-approx-convergence,t:edge-approx-convergence} imply universality for
$\beta$-ensembles,  in the bulk and at the edge.  Previous proofs proceed by comparison with the Gaussian $\beta$-ensemble \cite{MR3253704,MR3192527,landon2019fixed,bekerman2015transport,MR3390602}.
\end{remark}

\begin{proof}[Proof of \Cref{t:bulk_loop}]
We apply \Cref{p:bulk-loop-beta} to the Gaussian \(\beta\)-ensemble,
that is, to \(V(x)=x^2\), at the bulk point \(E=0\). Then \(V'(0)=0\).
Writing \(\varrho_0:=\varrho_V(0)\), the renormalized Stieltjes transform
in \eqref{e:defsN0} becomes
\[
s_N(w)
=
\sum_{k=1}^N
\frac{1}{N\pi\varrho_0\lambda_k-w}.
\]
The corresponding microscopic point process
\[
\mu_N:=\sum_{k=1}^N \delta_{N\pi\varrho_0\lambda_k}
\]
converges in distribution to the \(\mathrm{Sine}_\beta\) point process
with intensity \(1/\pi\), as constructed in \cite{valko2009continuum}.

By the same tightness and uniform-integrability argument as in
\Cref{t:bulk-approx-convergence}, after passing to a subsequence we may
assume that
$
s_N \Longrightarrow s$ 
in the topology of locally uniform convergence on compact subsets of
\(\bC\setminus\bR\), and the limit $s$ satisfies the exact bulk loop equation
\eqref{e:loopeq2-bulk}. Moreover, the associated limiting particle
configuration \(\mu\) has the \(\mathrm{Sine}_\beta\) law.

Ordering the atoms of \(\mu\) as
\[
\cdots \le x_{-2}\le x_{-1}\le x_0\le x_1\le x_2\le\cdots,
\]
the representation \eqref{e:szrep} gives
\[
s(w)
=
\mathrm{P.V.}\int \frac{\rd\mu(x)}{x-w}
=
\mathrm{P.V.}\sum_{j=-\infty}^{\infty}
\frac{1}{x_j-w}.
\]
Therefore the Stieltjes transform of the \(\mathrm{Sine}_\beta\) point
process satisfies the bulk loop equation \eqref{e:loopeq2-bulk}.
\end{proof}

\begin{proof}[Proof of \Cref{t:edge_loop}]
The result follows by the same argument as in the proof of \Cref{t:bulk_loop}, 
now applying \Cref{p:edge-loop-beta} to the Gaussian \(\beta\)-ensemble and using 
the fact that its edge scaling limit is the \(\mathrm{Airy}_\beta\) point process 
\cite{ramirez2011beta}. We therefore omit the details.
\end{proof}

\subsection{Proof of bulk local law}
\label{s:bulk_local_law}

\subsubsection{Proof of \Cref{eqn:LocalLaw}}
The high-moment estimate \eqref{eqn:LocalLaw} follows essentially from the
analysis of loop equations for \(\beta\)-ensembles in
\cite[Theorem 1.1]{bourgade2022optimal}. The proof is based on a moment bootstrap from the bulk loop equation.  We first
derive deterministic bounds on \(s'(w)\) and on the difference quotient
\(\partial_w((s(z)-s(w))/(z-w))\) in terms of \(\Im[s(w)]\).  Applying
the loop equation to suitable polynomial test functions then yields a
self-consistent estimate for the high moments of
\(Q(w):=s(w)^2+1\).  Since \(Q(w)=(s(w)-\ri)(s(w)+\ri)\) and \(s\) is a
Nevanlinna function, moments of \(Q(w)\) control moments of \(s(w)-\ri\).
A Young inequality argument closes the estimate and gives the desired
sub-Gaussian-type bound for \(s(w)-\ri\).

\begin{proof}[Proof of \eqref{eqn:LocalLaw}]
Let \(w=E+\ri\eta\), with
\(\eta>0\). Define
\[
Q(w):=s(w)^2+1,
\qquad
f(z,w):=\partial_w\left(\frac{s(z)-s(w)}{z-w}\right),
\qquad
f(w,w):=\frac12 s''(w).
\]
For a monomial \(F(u,v)=u^a v^b\), the loop equation
\eqref{e:loopeq2-bulk}, applied with $w_1=w$,  \(a\) copies of \(w\) and \(b\) copies of
\(\bar w\), together with \(s(\bar w)=\overline{s(w)}\), gives
\begin{align*}
\mathbb E\!\left[Q(w)\,F(s(w),s(\bar w))\right]
&=
\left(1-\frac{2}{\beta}\right)
\mathbb E\!\left[s'(w)\,F(s(w),s(\bar w))\right] \\
&\quad
-\frac{2}{\beta}\,
\mathbb E\!\left[
f(w,w)\,\partial_1F(s(w),s(\bar w))
+f(w,\bar w)\,\partial_2F(s(w),s(\bar w))
\right].
\end{align*}
By linearity, the same identity holds for every polynomial \(F\).

Now take
\[
F(u,v)=(u^2+1)^{q-1}(v^2+1)^q.
\]
Then
\[
\partial_1F(u,v)
=
2(q-1)u(u^2+1)^{q-2}(v^2+1)^q,
\qquad
\partial_2F(u,v)
=
2qv(u^2+1)^{q-1}(v^2+1)^{q-1}.
\]
Therefore,
\begin{align}
\mathbb E\bigl[|Q(w)|^{2q}\bigr]
\le
C\,\mathbb E\!\left[|s'(w)|\,|Q(w)|^{2q-1}\right]
+
Cq\,\mathbb E\!\left[
\bigl(|f(w,w)|+|f(w,\bar w)|\bigr)
|s(w)|\,|Q(w)|^{2q-2}
\right].
\label{eq:Xq-pre}
\end{align}

Substituting the bounds \eqref{e:sbound} and \eqref{e:fest} into
\eqref{eq:Xq-pre} yields
\begin{align*}
\mathbb E\bigl[|Q(w)|^{2q}\bigr]
&\le
C\,\mathbb E\!\left[
\frac{\Im [s(w)]}{\eta}\,|Q(w)|^{2q-1}
\right]
+
Cq\,\mathbb E\!\left[
\frac{\Im [s(w)]\,|s(w)|}{\eta^2}\,|Q(w)|^{2q-2}
\right].
\end{align*}

We now apply Young's inequality in the forms
\begin{align}\label{e:Young}
Cab^{2q-1}\le \frac14 b^{2q}+(C'a)^{2q},
\qquad
Cqa b^{2q-2}\le \frac14 b^{2q}+(C'qa)^q.
\end{align}
After absorbing the resulting \(\frac12\mathbb E[|Q(w)|^{2q}]\) contribution
into the left-hand side, we obtain
\[
\mathbb E\bigl[|Q(w)|^{2q}\bigr]
\le
\left(\frac{Cq}{\eta^2}\right)^q
\left(
\mathbb E\bigl[(\Im [s(w)])^{2q}\bigr]
+
\mathbb E\bigl[(\Im [s(w)])^q|s(w)|^q\bigr]
\right).
\]
In particular, since $|\Im[s(w)]|\leq |s(w)|$,  we have
\begin{align}\label{e:EQbb}
\mathbb E\bigl[|Q(w)|^{2q}\bigr]
\le
\left(\frac{Cq}{\eta^2}\right)^q
\mathbb E\bigl[(\Im [s(w)])^q|s(w)|^q\bigr].
\end{align}

Since \(s\) is a Nevanlinna function, \(\Im [s(w)]\ge 0\). Moreover,
\[
Q(w)=(s(w)-\ri)(s(w)+\ri),
\]
and for \(\Im [s(w)]\ge 0\) we have
\[
|s(w)-\ri|\le |s(w)+\ri|,
\qquad
|s(w)+\ri|\ge 1.
\]
Hence
\begin{equation}\label{eq:sQcompare}
|s(w)-\ri|^2\le |Q(w)|,
\qquad
|s(w)-\ri|\le |Q(w)|.
\end{equation}

As a consequence of \eqref{eq:sQcompare}, we have
\[
\Im[s(w)]^q|s(w)|^{q}\leq |s(w)|^{2q}\le (1+|s(w)-\ri|)^{2q}\le C^q(1+|Q(w)|^{q}).
\]
By plugging the above estimate into \eqref{e:EQbb}, we conclude
\begin{align}\label{e:Qzbound}
\mathbb E\bigl[|Q(w)|^{2q}\bigr]
\le
\left(\frac{Cq}{\eta^2}\right)^q
\left(
1+\mathbb E\bigl[|Q(w)|^{2q}\bigr]^{1/2}
\right).
\end{align}

By \eqref{eq:sQcompare},
\begin{align}\label{e:Qzbound2}
\mathbb E\bigl[|s(w)-\ri|^{2q}\bigr]
\le
\mathbb E\bigl[|Q(w)|^q\bigr]
\le
\mathbb E\bigl[|Q(w)|^{2q}\bigr]^{1/2}, \quad
\mathbb E\bigl[|s(w)-\ri|^{2q}\bigr]
\le
\mathbb E\bigl[|Q(w)|^{2q}\bigr]
.
\end{align}

There are two cases. If
\(\mathbb E[|Q(w)|^{2q}]\le 1\), then \eqref{e:Qzbound} and
\eqref{e:Qzbound2} imply
\[
\mathbb E\bigl[|s(w)-\ri|^{2q}\bigr]
\le
\mathbb E\bigl[|Q(w)|^{2q}\bigr]
\le
2\left(\frac{Cq}{\eta^2}\right)^q.
\]
If \(\mathbb E[|Q(w)|^{2q}]\ge 1\), then dividing \eqref{e:Qzbound} by
\(\mathbb E[|Q(w)|^{2q}]^{1/2}\) gives
\[
\mathbb E\bigl[|s(w)-\ri|^{2q}\bigr]
\le
\mathbb E\bigl[|Q(w)|^{2q}\bigr]^{1/2}
\le
2\left(\frac{Cq}{\eta^2}\right)^q.
\]
Hence \eqref{eqn:LocalLaw} holds in all cases.
\end{proof}

The sub-Gaussian tail estimate \eqref{eqn:LocalLaw}, together with a union
bound, implies the following almost-sure uniform bound.

\begin{lemma}[Almost-sure uniform bound on dyadic rectangles]
\label{prop:as-uniform-rectangles}
Adopt the assumptions of \Cref{p:bulk_concentration}. For \(n\ge 1\), define
\[
\mathcal R_n
:=
\Bigl\{w=E+\ri\eta:\ |E|\le 2^n,\ 2^{-n}\leq\eta\le 2^n\Bigr\},
\qquad
\mathcal R'_n
:=
\Bigl\{w=E+\ri\eta:\ |E|\le 2^n,\ 0<\eta\le 2^n\Bigr\}.
\]
Then there exists an almost surely finite random variable \(M\) such that,
almost surely,
\begin{equation}\label{eq:as-uniform-rectangles}
\sup_{w=E+\ri\eta\in \mathcal R_n}\eta\,|s(w)-\ri|
\le M n,
\qquad n\ge 1,
\end{equation}
and
\begin{equation}\label{eq:im-as-uniform-rectangles}
\sup_{w=E+\ri\eta\in \mathcal R'_n}
\eta\,|\Im[s(w)-\ri]|
\le M n,
\qquad n\ge 1.
\end{equation}
\end{lemma}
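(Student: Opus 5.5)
We derive both bounds from the moment estimate \eqref{eqn:LocalLaw}, evaluated on a finite net of each rectangle, and then combine the Borel--Cantelli lemma with the deterministic regularity of Nevanlinna functions.

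\emph{Net and union bound.} For each \(n\), choose a net \(\mathcal N_n\subset\mathcal R_n\) of points \(w=E+\ri\eta\) adapted to the local scale, so that every \(z\in\mathcal R_n\) lies within distance \(\Im[z]/4\) of some \(w\in\mathcal N_n\) with \(\Im w\asymp\Im z\). A dyadic decomposition in \(\eta\) with spacing proportional to \(\eta\) in \(E\) gives \(|\mathcal N_n|\le C 2^{4n}\). For each \(w\in\mathcal N_n\), Markov's inequality and \eqref{eqn:LocalLaw} give
\[
\bP\bigl(\eta|s(w)-\ri|>t\bigr)\le \frac{(Cq)^q}{t^{2q}} .
\]
We optimise by taking \(q\asymp t^2\), which yields a sub-Gaussian tail \(e^{-ct^2}\). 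With \(t=Kn\), the union bound becomes \(C2^{4n}e^{-cK^2n^2}\), which is summable in \(n\) for every \(K>0\). Borel--Cantelli then produces a random \(n_0\) such that, for \(n\ge n_0\), \(\eta|s(w)-\ri|\le n\) on all of \(\mathcal N_n\). The finitely many \(n<n_0\) are handled by enlarging \(M\), since \(s\) is a.s.\ finite on the compact sets \(\mathcal R_n\).

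\emph{From the net to the rectangle.} Let \(z\in\mathcal R_n\) and let \(w\in\mathcal N_n\) be its nearby net point. We run the Gronwall argument of Step 2 in the proof of \Cref{t:bulk-approx-convergence}, using \(|s'|\le\Im[s]/\Im\le|s|/\Im\) along the segment from \(w\) to \(z\). Since \(|z-w|\le\Im z/4\) and the imaginary part stays comparable along the segment, this gives \(|s(z)|\le C|s(w)|\). Hence
\[
\Im[z]\,|s(z)-\ri|\le C\Im[w](|s(w)-\ri|+1)\le C'n ,
\]
where we use \(\Im w\le 2^{n+1}\lesssim\) and absorb the bounded term into \(Mn\) for \(n\ge1\). (The contribution \(\Im w\cdot 1\) is at most \(2^{n}\) and is not \(O(n)\), so it needs more care. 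Instead, apply Gronwall to \(s-\ri\) directly: \(|s'|\le|s-\ri|/\eta+1/\eta\), which gives \(|s(z)-\ri|\le C(|s(w)-\ri|+1)\). Multiplying by \(\eta\) leaves the term \(C\eta\), which is too large for large \(\eta\). To handle it, note that for \(\eta\ge1\) the integral representation gives \(\Im s\le\) bounded times \(\Im s(\ri\eta')\)-type comparisons. Better still, use the derivative bound \(|s'(u)|\le\Im[s(u)]/\Im u\) together with the mean-value estimate \(|s(z)-s(w)|\le|z-w|\sup|s'|\le\tfrac14\sup\Im s\le\tfrac14\sup(|s-\ri|+1)\). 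Multiplying by \(\eta\) still produces \(\eta\), so we make the net finer for large \(\eta\): take spacing \(\min(\eta/4,1/4)\), so that \(|z-w|\le1/4\) and the error term becomes \(\eta|z-w|/\eta=O(1)\). The cardinality bound \(|\mathcal N_n|\le C2^{4n}\) still holds.) With this finer net, the error term is \(O(1)\), and \eqref{eq:as-uniform-rectangles} follows.

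\emph{Second bound.} The quantity \(\eta\Im[s(w)]=\sum_x\eta^2/|x-w|^2+c\eta^2\) is nondecreasing in \(\eta\) for fixed \(E\). Therefore, for \(0<\eta<2^{-n}\),
\[
\eta\Im[s(E+\ri\eta)]\le 2^{-n}\Im[s(E+\ri 2^{-n})]\le 1+Mn ,
\]
where the second inequality comes from the first bound on \(\mathcal R_n\). Also \(\eta|\Im \ri|=\eta\le 2^{-n}\). Together with \eqref{eq:as-uniform-rectangles} for \(\eta\ge2^{-n}\), this gives \eqref{eq:im-as-uniform-rectangles}.

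The main obstacle is the net-to-continuum step. One must choose the spacing carefully, scaling like \(\eta\) for small \(\eta\) but capped at \(O(1)\) for large \(\eta\), so that the error after multiplying by \(\eta\) is \(O(n)\) while the net cardinality grows only exponentially in \(n\), which the \(e^{-cn^2}\) tails still beat.
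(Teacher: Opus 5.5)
Your final argument, with the net mesh capped at $\min(\eta/4,1/4)$, is correct and is essentially the paper's proof. The paper's net is the uniform lattice of mesh $2^{-n}$, which automatically has mesh $\le\min(\eta,1)$ on $\mathcal R_n$ at the cost of $C2^{4n}$ points, still beaten by $e^{-cn^2}$. It then uses the same Gronwall step with $|s'|\le(|s-\ri|+1)/\Im w$, and the same monotonicity of $\eta\,\Im[s(E+\ri\eta)]$ for the second bound. You should delete the abandoned intermediate attempts in your parenthetical, since only the capped-mesh version actually controls the $C\eta$ term.
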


\begin{proof}
By \eqref{eqn:LocalLaw} and Markov's inequality,
\[
\mathbb P\!\left(\eta |s(E+\ri\eta)-\ri|\ge t\right)
\le \inf_{q\ge 1}\frac{(Cq)^q}{t^{2q}} .
\]
Optimizing in \(q\) gives constants \(c,C>0\) such that
\begin{equation}\label{eq:dyadic-subgaussian}
\mathbb P\!\left(\eta |s(E+\ri\eta)-\ri|\ge t\right)
\le C e^{-c t^2},
\qquad E\in \bR,\ \eta>0,\ t\ge 1 .
\end{equation}

Fix \(n\ge 1\), and define the lattice net
\[
\mathcal N_n:=\bigl\{2^{-n}(a+\ri b):\ a\in \mathbb Z,\ |a|\le 2^{2n},\ b\in \mathbb Z,\ 1\le b\le 2^{2n}\bigr\}.
\]
Its cardinality satisfies
\[
|\mathcal N_n|\leq C 2^{4n}
\]
By \eqref{eq:dyadic-subgaussian} and a union bound,
\[
\mathbb P\Bigl(
\exists\, w=E+\ri\eta\in \mathcal N_n:
\eta |s(w)-\ri|\ge n
\Bigr)
\le C2^{4n}e^{-cn^2}.
\]

The right-hand side is summable in \(n\). Hence, by Borel--Cantelli, almost
surely there exists \(n_0(\omega)\) such that, for all \(n\ge n_0(\omega)\),
\begin{equation}\label{eq:dyadic-net-good}
\eta |s(w)-\ri|\le n
\qquad
\text{for every } w=E+\ri\eta\in \mathcal N_n .
\end{equation}

We next pass from the net to the whole rectangle $\cR_n$. From \eqref{e:sbound} and
\(\Im[s(w)]\le |s(w)-\ri|+1\), we have
\begin{equation}\label{eq:dyadic-derivative-bound}
|s'(w)|\le \frac{|s(w)-\ri|+1}{\Im[ w]}.
\end{equation}
Let \(w_0=E_0+\ri\eta_0\in\mathcal N_n\), and let
\(w=E+\ri\eta\in {\mathcal R}_n\) satisfy
\[
|\Re [w]-\Re [w_0]|\le 2^{-n-1},
\qquad
|\Im [w]-\Im [w_0]|\le 2^{-n-1}.
\]
Since \(\eta_0\ge 2^{-n}\), the straight-line path from \(w_0\) to \(w\)
stays in the region \(\Im [\zeta]\ge \eta_0/2\). Applying
\eqref{eq:dyadic-derivative-bound} along this path and using Gronwall's
inequality gives
\begin{equation}\label{eq:local-stability-short}
|s(w)-s(w_0)|
\le
C\frac{2^{-n}}{\eta_0}\bigl(|s(w_0)-\ri|+1\bigr).
\end{equation}
Moreover, \(\eta/\eta_0\le C\). Therefore, using
\eqref{eq:dyadic-net-good},
\[
\eta |s(w)-\ri|
\le
C\eta_0|s(w_0)-\ri|
+
C2^{-n}\bigl(|s(w_0)-\ri|+1\bigr).
\]
Since \(2^{-n}\le \eta_0\), \(2^{-n}\le 1\le n\), and
\(\eta_0|s(w_0)-\ri|\le n\), we get
\[
\eta |s(w)-\ri|\le Cn .
\]
Thus, almost surely,
\[
\sup_{w\in {\mathcal R}_n}\eta |s(w)-\ri|
\le Cn,
\qquad n\ge n_0(\omega).
\]
Increasing the random constant to handle the finitely many indices
\(n<n_0(\omega)\), we obtain \eqref{eq:as-uniform-rectangles}.

It remains to control the imaginary part for \(0<\eta\le 2^{-n}\). By
\eqref{e:ims}, for each fixed \(E\in\bR\), the function
\[
\eta\longmapsto \eta\,\Im [s(E+\ri\eta)]
=
c\eta^2+\sum_{x\in P}
\frac{\eta^2}{(x-E)^2+\eta^2}
\]
is nondecreasing. Hence, with \(\eta_0:=2^{-n}\),
\[
\eta\,\Im [s(E+\ri\eta)]
\le
\eta_0\,\Im [s(E+\ri\eta_0)].
\]
Using \eqref{eq:as-uniform-rectangles} at \(E+\ri\eta_0\), we get
\[
\eta\,|\Im[s(E+\ri\eta)-\ri]|
\le
\eta\,\Im[ s(E+\ri\eta)]+\eta
\le
\eta_0\,\Im[ s(E+\ri\eta_0)]+1
\le
Mn+2 .
\]
After increasing \(M\), this gives \eqref{eq:im-as-uniform-rectangles}.
\end{proof}

\subsubsection{Helffer--Sj{\"o}strand formula}
In this section, we recall the Helffer--Sj{\"o}strand formula,
\Cref{lem:HS}, which allows us to control the difference between two measures
through their associated Nevanlinna functions. We will use \Cref{lem:HS} to
derive particle concentration estimates in \Cref{l:numberbound}.

Let \(f\in C_c^\infty(\bR;\bR)\), and let
\(\chi\in C_c^\infty(\bR;\bR)\) be an even cutoff function such that
\(\chi\equiv 1\) in a neighborhood of \(0\). Define the almost-analytic
extension
\begin{align}\label{e:ftilde0}
\widetilde f(x+\ri y):=(f(x)+\ri y f'(x))\chi(y),
\qquad w=x+\ri y .
\end{align}
Then, for every \(\lambda\in \bR\),
\[
f(\lambda)=\frac{1}{\pi}\int_{\bR^2}
\frac{\partial_{\bar w}\widetilde f(x+\ri y)}
{\lambda-x-\ri y}\,\rd x\,\rd y,
\qquad
\partial_{\bar w}:=\frac12(\partial_x+\ri\partial_y).
\]
A direct computation gives
\begin{equation}\label{e:dbar-ftilde}
\partial_{\bar w}\widetilde f(x+\ri y)
=
\frac{\ri}{2}\,y\chi(y)f''(x)
+
\frac{\ri}{2}\,\bigl(f(x)+\ri y f'(x)\bigr)\chi'(y).
\end{equation}

The scalar formula immediately implies the following standard consequence.

\begin{lemma}\label{lem:HSf}
Let \(s\) be a Nevanlinna function associated with the
Borel measure $\mu$. Then, for every test function \(f\in C_c^\infty(\bR;\bR)\),
\[
\int_{\bR} f(\lambda)\,\rd\mu(\lambda)
=
\frac{2}{\pi}
\int_{x+\ri y\in \bC_+}
\Re\!\left[
\partial_{\bar w}\widetilde f(x+\ri y)\,
s(x+\ri y)
\right]
\,\rd x\,\rd y .
\]
\end{lemma}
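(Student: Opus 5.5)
The plan is to deduce the lemma from the scalar Helffer--Sj\"ostrand formula by integrating against \(\mu\) and using Fubini. I will write \(s\) in the Nevanlinna representation \eqref{e:Nevanlinna_representation} and, for \(w\in\bC_+\), split it as \(s(w)=b+cw+\int(\frac{1}{\lambda-w}-\frac{\lambda}{1+\lambda^2})\,\rd\mu(\lambda)\). I will then check that the polynomial part \(b+cw\) and the compensating constant terms contribute nothing. The key identity behind this is
\[
\int_{\bR^2}\partial_{\bar w}\widetilde f(x+\ri y)\,g(x+\ri y)\,\rd x\,\rd y=0
\]
for every entire \(g\). It follows from Stokes' theorem, since \(\partial_{\bar w}(\widetilde f g)=g\,\partial_{\bar w}\widetilde f\) and \(\widetilde f\) is compactly supported. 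Applied with \(g=1\) and \(g=w\), this kills \(b\), \(cw\), and the \(\lambda\)-dependent constants \(\lambda/(1+\lambda^2)\). The integration over \(\lambda\) and the \(\bR^2\) integral will be exchanged by Fubini; that justification is deferred to the third step.

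Second, I will reduce the integral over \(\bR^2\) to one over \(\bC_+\) using reflection. The scalar formula gives
\[
\int f\,\rd\mu=\frac1\pi\int_{\bR^2}\partial_{\bar w}\widetilde f(w)\int\frac{\rd\mu(\lambda)}{\lambda-w}\,\rd x\,\rd y .
\]
Since \(\chi\) is even, \(\widetilde f(\bar w)=\overline{\widetilde f(w)}\), and hence \(\partial_{\bar w}\widetilde f\) at \(\bar w\) equals the complex conjugate of \(\partial_{\bar w}\widetilde f\) at \(w\), up to the sign coming from \(y\mapsto -y\). I will verify this sign from \eqref{e:dbar-ftilde}: the two terms \(\frac{\ri}{2}y\chi f''\) and \(\frac{\ri}{2}(f+\ri y f')\chi'\) transform consistently, with \(\chi'\) odd. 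Combined with \(s(\bar w)=\overline{s(w)}\), the lower half-plane contribution is the complex conjugate of the upper one. The total is therefore \(2\Re\) of the \(\bC_+\) integral, which gives the factor \(\frac{2}{\pi}\Re\).

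Third, I will justify Fubini, which I expect to be the only delicate step. Near \(y=0\) the kernel \(1/|\lambda-w|\) is not uniformly integrable against \(\mu\). However, \(|\partial_{\bar w}\widetilde f|\lesssim |y|\,|f''(x)|\) near the real axis, because \(\chi'\) vanishes near \(0\). This cancels the singularity: \(|y|/|\lambda-x-\ri y|\le 1\), so the integrand is bounded, with compact support in \((x,y)\), for \(\lambda\) in a compact set. For \(|\lambda|\) large, the combined kernel \(\frac{1}{\lambda-w}-\frac{\lambda}{1+\lambda^2}\), minus its compensator, is \(\OO(1/(1+\lambda^2))\) uniformly for \(w\) in the support of \(\widetilde f\). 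Its integral against \(\mu\) then converges because \(\int\rd\mu/(1+\lambda^2)<\infty\). Together these give absolute integrability and complete the argument.
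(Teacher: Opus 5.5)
Your proposal is correct and is essentially the classical argument the paper has in mind: the paper only remarks that the lemma follows immediately from the scalar Helffer--Sj\"ostrand formula and omits the proof, citing Erd\H{o}s--Yau. Your steps match that argument, namely integrating the scalar formula against $\mu$, justifying Fubini via $|y|/|\lambda-w|\le 1$ near the axis and the $\OO(1/(1+\lambda^2))$ decay of the compensated kernel, removing $b+cw$ and the compensator by Stokes, and folding $\bC_-$ onto $\bC_+$ using that $\chi$ is even.

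Two small points of precision. First, the reflection identity $(\partial_{\bar w}\widetilde f)(\bar w)=\overline{(\partial_{\bar w}\widetilde f)(w)}$ holds exactly, with no extra sign, and $\rd x\,\rd y$ is invariant under $y\mapsto -y$. Second, the display in your second step should use $s(w)-b-cw$, i.e.\ the compensated kernel, rather than $\int \rd\mu(\lambda)/(\lambda-w)$, which need not converge.
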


\begin{lemma}\label{lem:HS}
Let \(s\) and \(s_\varrho\) be two Nevanlinna functions associated with the
Borel measures
\[
\sum_{x\in P}\delta_x,
\qquad
\frac{1}{\pi}\varrho(x)\,\rd x,
\]
respectively. Then, for any \(\eta>0\) such that \(\chi(\eta)=1\),
\begin{align}
\left|
\sum_{x\in P} f(x)
-\frac{1}{\pi}\int_{\bR} f(x)\varrho(x)\,\rd x
\right|
\le
C\bigl(\mathrm{I}+\mathrm{II}+\mathrm{III}+\mathrm{IV}\bigr),
\label{e:HS}
\end{align}
where
\begin{align*}
\mathrm{I}
&:=
\iint_{y\ge 0}
\bigl(|f(x)|+y|f'(x)|\bigr)\,|\chi'(y)|\,
|s(x+\ri y)-s_\varrho(x+\ri y)|
\,\rd x\,\rd y,\\[1mm]
\mathrm{II}
&:=
\iint_{0\le y\le \eta}
|f''(x)|\,y|\chi(y)|\,
\bigl|\Im\bigl[s(x+\ri y)-s_\varrho(x+\ri y)\bigr]\bigr|
\,\rd x\,\rd y,\\[1mm]
\mathrm{III}
&:=
\iint_{y\ge \eta}
|f'(x)|\,\bigl|\partial_y\bigl(y\chi(y)\bigr)\bigr|\,
|s(x+\ri y)-s_\varrho(x+\ri y)|
\,\rd x\,\rd y,\\[1mm]
\mathrm{IV}
&:=
\eta\int_{\bR}
|f'(x)|\,|s(x+\ri \eta)-s_\varrho(x+\ri \eta)|
\,\rd x .
\end{align*}
Here \(C\) depends only on the cutoff function \(\chi\).
\end{lemma}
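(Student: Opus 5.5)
We have to prove Lemma \ref{lem:HS}, the Helffer--Sj\"ostrand comparison bound. The plan is to apply Lemma \ref{lem:HSf} to both Nevanlinna functions and subtract. With $\Delta(w):=s(w)-s_\varrho(w)$, this gives
\[
\sum_{x\in P}f(x)-\frac1\pi\int f\varrho\,\rd x
=\frac{2}{\pi}\int_{\bC_+}\Re\bigl[\partial_{\bar w}\widetilde f(x+\ri y)\,\Delta(x+\ri y)\bigr]\rd x\,\rd y .
\]
One caveat: Lemma \ref{lem:HSf} is applied to $s_\varrho$, whose measure $\varrho\,\rd x/\pi$ is absolutely continuous. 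The formula is linear in the measure, so it holds for both, and the affine parts $b+cw$ cancel after integration. Indeed, $\int\partial_{\bar w}\widetilde f\,\rd x\,\rd y$ and $\int\partial_{\bar w}\widetilde f\,w\,\rd x\,\rd y$ vanish by Stokes' theorem on the full plane together with the evenness of $\chi$. I would check this once and then say that the formula depends only on the measure.

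Next, insert \eqref{e:dbar-ftilde}. The term containing $\chi'(y)$ is bounded by $\frac12(|f|+y|f'|)|\chi'||\Delta|$, which is exactly $\mathrm{I}$. What remains is
\[
\frac1\pi\int_{y>0}\Re\bigl[\ri\,y\chi(y)f''(x)\Delta(x+\ri y)\bigr]\rd x\,\rd y
=-\frac1\pi\int_{y>0}y\chi(y)f''(x)\,\Im\Delta(x+\ri y)\,\rd x\,\rd y,
\]
where we use that $f''$ is real. Split this at $y=\eta$. The region $0<y\le\eta$ is bounded directly by $\mathrm{II}$. Only the imaginary part appears here, and this is the point of the lemma, since $\Im\Delta$ is controlled down to $y=0$ by the monotonicity in Lemma \ref{prop:as-uniform-rectangles}.

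On $y\ge\eta$, integrate by parts twice: first in $x$, then in $y$. Writing $f''(x)\Im\Delta=\partial_x\bigl(f'(x)\bigr)\Im\Delta$, the $x$-integration by parts gives $-\int f'(x)\,\partial_x\Im\Delta$, with no boundary terms because $f$ has compact support. Then use the Cauchy--Riemann equations for the holomorphic function $\Delta$, namely $\partial_x\Im\Delta=-\partial_y\Re\Delta$. This turns the term into
\[
\pm\frac1\pi\int f'(x)\int_{\eta}^{\infty}y\chi(y)\,\partial_y\Re\Delta\,\rd y\,\rd x .
\]
Now integrate by parts in $y$. The upper boundary term vanishes because $\chi$ has compact support. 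The lower boundary term at $y=\eta$ equals $\eta\chi(\eta)f'(x)\Re\Delta(x+\ri\eta)$; since $\chi(\eta)=1$, its absolute value is at most $\mathrm{IV}$. The bulk term is $\int f'(x)\,\partial_y(y\chi(y))\Re\Delta\,\rd y\,\rd x$, which is bounded by $\mathrm{III}$.

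Collecting the four pieces gives \eqref{e:HS}, with $C$ depending only on $\chi$ (in fact $C=1$ up to the factor $2/\pi$). The only delicate points are justifying the integrations by parts and the reduction to the measures. Smoothness of $\Delta$ on $\{y\ge\eta\}$ and the compact supports of $f$ and $\chi$ make the integration-by-parts steps routine. The main thing I would double-check is the first step: that Lemma \ref{lem:HSf} legitimately applies to $s_\varrho$ and that the linear terms $b+cw$ truly drop out.
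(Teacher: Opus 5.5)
Your proposal is correct and is the classical Helffer--Sj\"ostrand argument that the paper relies on; the paper omits the proof and cites Erd\H{o}s--Yau, Section 11.2. The steps are: apply \Cref{lem:HSf} to $s-s_\varrho$; bound the $\chi'$ piece by $\mathrm{I}$; reduce the $f''$ piece to $\Im(s-s_\varrho)$ and bound it by $\mathrm{II}$ below $\eta$; above $\eta$, integrate by parts in $x$, use Cauchy--Riemann, and integrate by parts in $y$, which produces $\mathrm{III}$ and, since $\chi(\eta)=1$, the boundary term $\mathrm{IV}$. Two small remarks: the bookkeeping actually gives the constant $C=1/\pi$, independent of $\chi$, and your concern about the affine part $b+cw$ is already covered by \Cref{lem:HSf}, which the paper states for every Nevanlinna function.
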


These identities and estimates are classical; see, for example,
\cite[Section 11.2]{erdHos2017dynamical}. We omit the proofs.

\subsubsection{Proof of \Cref{e:concentration1}}

The concentration of the particle locations follows from the following counting
estimate. We prove it using the Helffer--Sj{\"o}strand formula,
\Cref{lem:HS}, together with the uniform bound on the Nevanlinna function from
\Cref{prop:as-uniform-rectangles}.

\begin{lemma}[Bulk counting estimate]\label{l:numberbound}
Adopt the assumptions of \Cref{p:bulk_concentration}. Then there exists an
almost surely finite random variable \(M\) such that, almost surely, for every
\(a\ge 2\),
\begin{align}\label{e:interval_bb}
\left|\#(P\cap[0,a])-\frac{a}{\pi}\right|
\le M(\log a)^2, \quad
\left|\#(P\cap[-a,0])-\frac{a}{\pi}\right|
\le M(\log a)^2 .
\end{align}
\end{lemma}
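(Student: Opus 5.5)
We will apply \Cref{lem:HS} with the reference density $\varrho\equiv 1$. For this density the Nevanlinna function is exactly $s_\varrho\equiv\ri$: the measure $\frac1\pi\,\rd x$ has $b=c=0$ and $\int(\frac{1}{x-w}-\frac{x}{1+x^2})\frac{\rd x}{\pi}=\ri$ for $w\in\bC_+$. The difference $s-s_\varrho=s-\ri$ is therefore precisely the quantity controlled almost surely by \Cref{prop:as-uniform-rectangles}.

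\emph{Choice of test function.} Fix $a\ge2$, with dyadic scale $n\asymp\log_2 a$ so that $[-1,a+1]$ sits well inside $\{|E|\le 2^{n}\}$. Take $f\in C_c^\infty$ with $f\equiv1$ on $[0,a]$, supported in $[-\ell,a+\ell]$, and with transition regions of width $\ell$. Then $|f'|\lesssim \ell^{-1}$ and $|f''|\lesssim\ell^{-2}$ on the two transition intervals. First take $\ell=1$. The counting error coming from the transition regions is bounded by $\#(P\cap[-1,0]\cup[a,a+1])+O(1)$. Local counts in unit intervals are themselves controlled: bounding $\Im s(E+\ri)\le M n+1$ via \eqref{eq:as-uniform-rectangles} gives $\#(P\cap[E-1,E+1])\lesssim Mn$. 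We then get $\#(P\cap[0,a])$ up to an $O(Mn)$ error, provided $\sum f(x)-\frac1\pi\int f$ is $O(M n^2)$.

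\emph{Estimating the four terms.} Choose the cutoff $\chi$ equal to $1$ on $[-1,1]$ and supported in $[-2,2]$, and take $\eta=2^{-n}$ (with $n$ increased if needed, still $O(\log a)$).
\begin{enumerate}[label=(\roman*)]
\item Term I lives on $1\le y\le2$, where $|s-\ri|\le Mn/y\lesssim Mn$. The factor $|f|$ is supported on a set of length $\asymp a$, which would give $O(aMn)$. This is too crude, and it is the main obstacle. To handle it, we will not bound I absolutely. Instead we note that the $\chi'$-contribution is $\frac{2}{\pi}\int\Re[\frac{\ri}{2}(f+\ri yf')\chi'(y)(s-\ri)]$, where $f$ is constant on $[0,a]$. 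On the bulk region we integrate by parts in $x$, or equivalently use Cauchy's theorem: $s-\ri$ is holomorphic, and the $y$-integral of $\chi'$ combines with the $f''$ term. The cleanest route is to rewrite the whole Helffer--Sj\"ostrand integral through Stokes on the strip, so that only boundary contributions near the transition regions survive. Concretely, for $x\in[\ell,a-\ell]$ we have $\partial_{\bar w}\widetilde f=\frac{\ri}{2}\chi'(y)$, and $\int \Re[\ri\chi'(y)(s(x+\ri y)-\ri)]\,\rd y$ integrated in $x$ over $[\ell,a-\ell]$ equals, by holomorphy ($\partial_y(s-\ri)=\ri\,\partial_x(s-\ri)$), boundary terms in $x$ at the ends, each $O(Mn)$.
\item Term II, on $y\le\eta$, uses \eqref{eq:im-as-uniform-rectangles}: $y|\Im(s-\ri)|\le Mn$. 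Since $f''$ is supported on length $O(1)$, this gives $O(Mn\,\eta)$.
\item Term III is $\int_{y\ge\eta}|f'|\,|s-\ri|\,\rd y\lesssim\int_\eta^2 Mn/y\,\rd y\lesssim Mn\log(1/\eta)\lesssim Mn^2$.
\item Term IV is $\eta\cdot Mn/\eta=O(Mn)$.
\end{enumerate}

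\emph{Conclusion.} Collecting these bounds gives an $O(M(\log a)^2)$ error for $[0,a]$. The interval $[-a,0]$ is handled symmetrically. A single random $M$ suffices because the only random input is \Cref{prop:as-uniform-rectangles}, which holds simultaneously for all $n$.
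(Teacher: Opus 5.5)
\textbf{The gap is in term $\mathrm{I}$.} You correctly flag it as the main obstacle, but the holomorphy/Stokes fix you propose does not work.

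For $x$ in the plateau where $f\equiv1$, set $g:=s-\ri$. Integrating by parts in $y$ (using $\chi(1)=1$, $\chi(2)=0$ and $\partial_y g(x+\ri y)=\ri\,\partial_x g(x+\ri y)$) gives
\[
\int_1^2\chi'(y)\,g(x+\ri y)\,\rd y=-g(x+\ri)-\ri\int_1^2\chi(y)\,\partial_x g(x+\ri y)\,\rd y .
\]
After integrating over $x\in[\ell,a-\ell]$, only the second piece becomes boundary terms in $x$. The horizontal integral $-\int_\ell^{a-\ell}\bigl(s(x+\ri)-\ri\bigr)\,\rd x$ survives. It cannot be removed, because $\int\chi'=-1\neq0$: $\chi'$ is not the $y$-derivative of a function vanishing at both ends.

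The surviving quantity is in fact the unknown itself. Since $\Re[\ri\chi'(y)g]=-\chi'(y)\Im g$, the plateau part of $\frac{2}{\pi}\iint\Re[\partial_{\bar w}\widetilde f\,g]$ is a weighted average over $y\in[1,2]$ of $\frac1\pi\int_\ell^{a-\ell}\bigl(\Im s(x+\ri y)-1\bigr)\,\rd x$. That is a unit-scale smoothing of the counting discrepancy on $[0,a]$, so bounding it is the original problem. (Likewise, applying Stokes to the whole Helffer--Sj\"ostrand integral just returns its left-hand side.) With a cutoff at height $O(1)$, the only available pointwise input, $|s-\ri|\lesssim Mn$ at $y\asymp1$, gives $\mathrm{I}=O(aMn)$, and nothing better.

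\textbf{The missing idea is to put the cutoff at height comparable to $a$.} Take $\chi\equiv1$ on $[-a,a]$ with $\supp\chi\subset[-a-1,a+1]$ and $|\chi'|\le C$; this is what the paper does, with $\eta=1$.
\begin{itemize}
\item Term $\mathrm{I}$ is then supported on $y\in[a,a+1]$. There \eqref{eq:as-uniform-rectangles} applies, since $a+1\le2^n$, and gives $|s-\ri|\le Mn/y\lesssim Mn/a$. This cancels the factor $|\supp f|\lesssim a$, so $\mathrm{I}\lesssim Mn$.
\item The price is paid in term $\mathrm{III}$. Now $\partial_y(y\chi(y))=1$ all the way up to $y=a$, and integrating $Mn/y$ to that height gives $Mn\log a\asymp Mn^2$. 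This is where the $(\log a)^2$ comes from. The layer $y\in[a,a+1]$, where $|\partial_y(y\chi)|\lesssim a$, contributes only $O(Mn)$.
\end{itemize}
With this choice of $\chi$, the rest of your argument goes through unchanged: the reference function $s_\varrho\equiv\ri$, your bounds on $\mathrm{II}$ and $\mathrm{IV}$ (with either $\eta=1$ or $\eta=2^{-n}$), the handling of the transition regions, and the single random $M$.
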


\begin{proof}[Proof of \eqref{e:concentration1}]
Fix an outcome \(\omega\) on which \Cref{l:numberbound} holds. Enumerate the
poles of \(s\) increasingly as
\[
\cdots \le x_{-1}\le x_0\le x_1\le \cdots ,
\]
with only a finite ambiguity near the origin. Then there exists an almost
surely finite random variable \(M\) such that
\begin{equation}\label{e:estimatexj}
|x_j-\pi j|\le M\bigl(\log(2+|j|)\bigr)^2,
\qquad j\in \mathbb Z .
\end{equation}
Indeed, for \(j\ge 1\), \Cref{l:numberbound} gives
\[
j=\#(P\cap(0,x_j])+\OO(1)
=
\frac{x_j}{\pi}
+
\OO\bigl((\log x_j)^2\bigr),
\]
and hence
\[
|x_j-\pi j|\le M\bigl(\log(2+j)\bigr)^2 .
\]
The argument for \(j\le -1\) is identical, using the estimate on
\([-a,0]\). Enlarging \(M\) handles the finitely many indices near the origin.
\end{proof}

\begin{proof}[Proof of \Cref{l:numberbound}]
We prove the estimate on \([0,a]\); the estimate on \([-a,0]\) is identical.

Fix an outcome \(\omega\) on which \Cref{prop:as-uniform-rectangles} holds, and
let \(M=M(\omega)\) be the corresponding random constant. Given \(a\ge 2\),
choose \(n\ge 1\) such that
\[
2^{n-1}\le a+1\le 2^n .
\]
Then \(n\asymp \log a\).

Choose smooth functions \(f_+,f_-\in C_c^\infty(\mathbb R)\) such that
\[
0\le f_+\le 1,\qquad
f_+\equiv 1 \text{ on } [0,a],\qquad
\supp f_+\subset [-1,a+1],
\]
and
\[
0\le f_-\le 1,\qquad
f_-\equiv 1 \text{ on } [1,a-1],\qquad
\supp f_-\subset [0,a],
\]
with
\[
|f_\pm'|+|f_\pm''|\le C .
\]

We claim that, for either \(f=f_+\) or \(f=f_-\),
\begin{equation}\label{e:HS-sharp-bound}
\left|
\sum_{x\in P}f(x)
-\frac{1}{\pi}\int_{\mathbb R} f(x)\,\rd x
\right|
\le C M n^2 .
\end{equation}
To prove this claim, choose the cutoff in the almost-analytic extension \eqref{e:ftilde0} so that
\[
\chi\equiv 1 \text{ on } [-a,a],
\qquad
\supp \chi\subset [-a-1,a+1],
\qquad
|\chi'|\le C .
\]
We apply \Cref{lem:HS} with \(s_\varrho(w)\equiv \ri\), corresponding to
\(\varrho\equiv 1\), and with \(\eta=1\). Since
\[
\supp f\subset [-1,a+1]\subset [-2^n,2^n],
\qquad
\supp \chi\subset [-a-1,a+1]\subset [-2^n,2^n],
\]
the estimates from \Cref{prop:as-uniform-rectangles} apply throughout the
region of integration.

We now bound the four terms in \Cref{lem:HS}. For the term \(\mathrm{II}\), we
use
\[
y\,|\Im[s(x+\ri y)-\ri]|\le M n,
\qquad 0<y\le 1,
\]
and the fact that \(f''\) is supported in two intervals of \(\OO(1)\)-length.
Thus
\[
\mathrm{II}\le CMn .
\]
For \(\mathrm{IV}\), using \(|s(x+\ri)-\ri|\le Mn\) and the fact that \(f'\) is
supported in two intervals of \(\OO(1)\)-length gives
\[
\mathrm{IV}\le CMn .
\]
For \(\mathrm{III}\), note that \(\partial_y(y\chi(y))=1\) for
\(1\le y\le a\), while the transition region has \(\OO(1)\)-length. Since
\[
|s(x+\ri y)-\ri|\le \frac{Mn}{y},
\qquad 1\le y\le a+1,
\]
and \(f'\) is supported in a set of \(\OO(1)\)-measure, we obtain
\[
\mathrm{III}\le CMn\int_1^{a+1}\frac{\rd y}{y}\le CMn^2 .
\]
Finally, \(\mathrm{I}\) is supported where \(\chi'\neq 0\), hence where
\(y\in[a,a+1]\). Using again \(|s(x+\ri y)-\ri|\le Mn/y\), together with
\(|\supp f|\le Ca\) and \(|\supp f'|\le C\), yields
\[
\mathrm{I}\le CMn .
\]
Combining these bounds gives \eqref{e:HS-sharp-bound}.

We now apply \eqref{e:HS-sharp-bound} to \(f_+\). Since
\(f_+\ge \mathbf 1_{[0,a]}\),
\[
\#(P\cap[0,a])
\le
\sum_{x\in P} f_+(x)
\le
\frac{1}{\pi}\int_{\mathbb R} f_+(x)\,\rd x
+
CMn^2
\le
\frac{a}{\pi}+C+CMn^2 .
\]
Similarly, applying \eqref{e:HS-sharp-bound} to \(f_-\) and using
\(f_-\le \mathbf 1_{[0,a]}\), we get
\[
\#(P\cap[0,a])
\ge
\sum_{x\in P} f_-(x)
\ge
\frac{1}{\pi}\int_{\mathbb R} f_-(x)\,\rd x
-
CMn^2
\ge
\frac{a}{\pi}-C-CMn^2 .
\]
Combining the upper and lower bounds, and using \(n\asymp \log a\), gives
\[
\left|\#(P\cap[0,a])-\frac{a}{\pi}\right|
\le M(\omega)(\log a)^2 ,
\]
after increasing \(M(\omega)\). This proves the first estimate in \eqref{e:interval_bb}.
\end{proof}

\subsubsection{Proof of \Cref{e:szrep}}
\label{s:decomposition}
The proof compares \(s\) with an explicitly constructed principal-value sum \eqref{e:defs-tilde-s} having the same poles and the same normalization at infinity; the Nevanlinna representation then shows that their imaginary parts agree, so their difference is a real constant, which must vanish by the normalization.
\begin{proof}[Proof of \eqref{e:szrep}]
Define
\begin{equation}\label{e:defs-tilde-s}
\widetilde s(w)
:=
-\cot w
+
\sum_{j\in\bZ}
\left(
\frac1{x_j-w}
-
\frac1{\pi j-w}
\right),
\qquad w\in \bC\setminus\bR .
\end{equation}
By \eqref{e:concentration1}, the series in \eqref{e:defs-tilde-s} converges
absolutely and locally uniformly on \(\bC\setminus\bR\). Indeed, for every
compact \(K\subset \bC\setminus\bR\),
\begin{align}\label{e:xpdiff}
\left|
\frac1{x_j-w}
-
\frac1{\pi j-w}
\right|
=
\frac{|x_j-\pi j|}{|x_j-w|\,|\pi j-w|}
\le
C_K\frac{(\log(2+|j|))^2}{1+j^2},
\qquad w\in K .
\end{align}
Hence \(\widetilde s\) is holomorphic on \(\bC\setminus\bR\). Using the
partial fraction expansion
\[
-\cot w=\mathrm{P.V.}\sum_{j\in\bZ}\frac1{\pi j-w},
\]
we obtain
\[
\widetilde s(w)
=
\mathrm{P.V.}\sum_{j\in\bZ}\frac1{x_j-w}.
\]

We next check the normalization at infinity. Since
\[
-\cot(\ri y)\to \ri,
\qquad y\to+\infty,
\]
it remains to show that the summation term in \eqref{e:defs-tilde-s} tends to
zero at \(w=\ri y\). Split the sum into \(|j|\le y\) and \(|j|>y\). For
\(|j|\le y\), \eqref{e:concentration1} implies
\[
|x_j-\ri y|+|\pi j-\ri y|\gtrsim y,
\]
and therefore
\[
\sum_{|j|\le y}
\left|
\frac1{x_j-\ri y}
-
\frac1{\pi j-\ri y}
\right|
\le
\sum_{|j|\le y}
\frac{C(\log(2+|j|))^2}{y^2}
\le
\frac{C(\log y)^2}{y}.
\]
For \(|j|>y\), \eqref{e:concentration1} gives
\[
|x_j-\ri y|+|\pi j-\ri y|\gtrsim |j|,
\]
and hence
\[
\sum_{|j|>y}
\left|
\frac1{x_j-\ri y}
-
\frac1{\pi j-\ri y}
\right|
\le
\sum_{|j|>y}
\frac{C(\log(2+|j|))^2}{j^2}
\le
\frac{C(\log y)^2}{y}.
\]
Thus the summation term in \eqref{e:defs-tilde-s} tends to zero, and therefore
\[
\widetilde s(\ri y)\longrightarrow \ri,
\qquad y\to+\infty.
\]

On the other hand, \Cref{prop:as-uniform-rectangles} implies
\[
|s(\ri y)-\ri|
\le
\frac{M\log(2+y)}{y}
\longrightarrow 0,
\qquad y\to+\infty.
\]
Thus
\[
s(\ri y)\longrightarrow \ri .
\]

For \(w=E+\ri\eta\in\bC_+\), the definition of \(\widetilde s\) gives
\[
\Im [\widetilde s(w)]
=
\sum_{j\in\bZ}
\frac{\eta}{(x_j-E)^2+\eta^2}.
\]
Since \(s\) is a Nevanlinna function whose representing measure is
\(\sum_{j\in\bZ}\delta_{x_j}\), its imaginary part has the form
\[
\Im[s(w)]
=
c\eta
+
\sum_{j\in\bZ}
\frac{\eta}{(x_j-E)^2+\eta^2}
\]
for some \(c\ge 0\). The convergence \(s(\ri y)\to\ri\) as \(y\to+\infty\)
forces \(c=0\). Therefore,
\[
\Im[ s(w)]=\Im [\widetilde s(w)],
\qquad w\in\bC_+.
\]

It follows that
\[
g(w):=s(w)-\widetilde s(w)
\]
is holomorphic on \(\bC_+\) and has identically vanishing imaginary part.
Hence \(g\) is a real constant on \(\bC_+\). Since both \(s(\ri y)\) and
\(\widetilde s(\ri y)\) converge to \(\ri\) as \(y\to+\infty\), this constant
is zero. Therefore
\[
s(w)
=
\widetilde s(w)
=
\mathrm{P.V.}\sum_{j\in\bZ}\frac1{x_j-w},
\qquad w\in\bC_+.
\]
By Schwarz reflection, the same identity holds for all
\(w\in\bC\setminus\bR\). This proves \eqref{e:szrep}.
\end{proof}

\section{Exponential observables}
\label{s:exp_ob}
In this section, we introduce certain observables, which are exponential linear statistics of the Nevanlinna function \(s\). Equivalently, when the summation representation of
\(s\) is available, namely \eqref{e:szrep} and \eqref{e:normalized},  they can be viewed as products of ratios of
linear factors over the random point configuration. These
observables are the  analogues of ratios of characteristic polynomials.
Using the concentration estimates \Cref{p:bulk_concentration} and \Cref{p:edge_concentration} as input, we derive the asymptotic behavior of these observables at infinity.

\subsection{Bulk exponential observables}

\begin{proposition}[Bulk exponential observables]\label{prop:pv-product-smallL}
Adopt \Cref{a:bulk}, and assume
\[
	\frac{\beta}{2}=\frac{p}{q}\in \bQ_{>0}.
\]
Fix \(k\ge 1\), signs \(\tau_\alpha\in\{\pm1\}\), and points
\[
	t_r^{(\alpha)},\,s_a^{(\alpha)}\in \bC_{\tau_\alpha},
	\qquad
	1\le r\le p,\quad 1\le a\le q,\quad 1\le \alpha\le k .
\]
For each triple \((\alpha,r,a)\), choose a piecewise \(C^1\) path
\[
	\gamma_{r,a}^{(\alpha)}\subset \bC_{\tau_\alpha}
\]
from \(s_a^{(\alpha)}\) to \(t_r^{(\alpha)}\), and define
\begin{equation}\label{e:def-G-product}
	G
	:=
	-\frac{1}{q}
	\sum_{\alpha=1}^k
	\sum_{r=1}^p
	\sum_{a=1}^q
	\int_{\gamma_{r,a}^{(\alpha)}} s(u)\,\rd u .
\end{equation}
Since \(s\) is holomorphic on each half-plane, \(G\) is independent of the
choice of paths.

From the representation \eqref{e:szrep}, the exponential \(e^G\) coincides
with the principal-value product
\[
	e^G
	=
	\operatorname{P.V.}
	\prod_{j\in \bZ}
	\prod_{\alpha=1}^k
	\frac{
		\prod_{r=1}^p (t_r^{(\alpha)}-x_j)
	}{
		\prod_{a=1}^q (s_a^{(\alpha)}-x_j)^{\beta/2}
	},
\]
where the powers \((s^{(\alpha)}_a-x_j)^{\beta/2}\) are defined using the principal
branch on \(\bC\setminus\bR_{<0}\), as specified in
\eqref{e:principle_branch}.
Moreover, this random variable is integrable. We set
\begin{equation}\label{e:defF0}
	F:=\bE[e^G].
\end{equation}
Define
\begin{align}
	M
	&:=
	-\ri
	\sum_{\alpha=1}^k \tau_\alpha
	\left(
		\sum_{r=1}^p t_r^{(\alpha)}
		-\frac{\beta}{2}\sum_{a=1}^q s_a^{(\alpha)}
	\right),
	\label{e:def-M-product}
	\\
	L
	&:=
	\sum_{\alpha=1}^k
	\sum_{r=1}^p
	\sum_{a=1}^q
	\log\!\left(
		1+
		\frac{
			|t_r^{(\alpha)}-s_a^{(\alpha)}|
		}{
			\sqrt{
				|\Im[t_r^{(\alpha)}]|\,
				|\Im[s_a^{(\alpha)}]|
			}
		}
	\right).
	\label{e:def-L-product}
\end{align}
Then the following estimates hold.

\begin{enumerate}
\item[\textup{(i)}]
There exists a constant \(C>0\), depending only on \(p,q,k\), such that
\begin{equation}\label{e:F-upper-general}
	|F|\leq e^{\Re[M]+C(L+L^2)}.
\end{equation}

\item[\textup{(ii)}]
There exists a deterministic constant \(L_0>0\), depending only on \(p,q,k\),
such that, whenever \(L\le L_0\),
\begin{equation}\label{e:logF-smallL-general}
	F=e^{M+\OO(L)}
\end{equation}
where the implicit constant depends only on \(p,q,k\).
\end{enumerate}\end{proposition}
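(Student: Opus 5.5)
The plan is to split $s=\ri\tau+(s-\ri\tau)$ along each path, where $\tau=\pm1$ on $\bC_{\pm}$. The deterministic part should give exactly $M$, and the random part should be controlled by the sub-Gaussian local law \eqref{eqn:LocalLaw} (and its conjugate in $\bC_-$).

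\emph{Deterministic part and the product formula.} For each $\alpha$, $\int_{\gamma^{(\alpha)}_{r,a}}\ri\tau_\alpha\,\rd u=\ri\tau_\alpha(t_r^{(\alpha)}-s_a^{(\alpha)})$. Summing with weight $-1/q$ over $r$ and $a$ and using $p/q=\beta/2$ gives $-\ri\tau_\alpha(\sum_r t_r-\frac{\beta}{2}\sum_a s_a)$, whose sum over $\alpha$ is $M$. Hence $e^G=e^{M}e^{X}$ with
\[
X:=-\frac1q\sum_{\alpha,r,a}\int_{\gamma^{(\alpha)}_{r,a}}(s(u)-\ri\tau_\alpha)\,\rd u .
\]
For the principal-value product identity I would insert \eqref{e:szrep} and integrate termwise. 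The antiderivative of $1/(x_j-u)$ is $-\log(x_j-u)$, so each term gives $\log\frac{t-x_j}{s-x_j}$ along a path in one half-plane, where logarithms are unambiguous. The exponent $-1/q$ and the multiplicities $q$ of $t_r$ and $p$ of $s_a$ then yield $\prod_r(t_r-x_j)/\prod_a(s_a-x_j)^{p/q}$. The symmetric truncation $|j|\le J$ converges by the locally uniform convergence of the P.V.\ sum, which was shown in the proof of \eqref{e:szrep}. The branch $(s_a-x_j)^{\beta/2}$ agrees with the principal branch because each $s_a-x_j$ stays in a single half-plane.

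\emph{Bounding $X$.} I would choose a good path for each pair $(t,s)$ in $\bC_\tau$: go vertically from $s$ and from $t$ up to height $h=\max(|\Im t|,|\Im s|,|t-s|)$, and join them by a horizontal segment at height $h$. By \eqref{eqn:LocalLaw}, $\|s(u)-\ri\|_{L^{2q'}}\le C\sqrt{q'}/\Im u$ for every $q'$. Minkowski's inequality along the path then gives
\[
\Bigl\|\int_\gamma (s-\ri\tau)\Bigr\|_{L^{2q'}}\le C\sqrt{q'}\int_\gamma\frac{|\rd u|}{\Im u}.
\]
The vertical pieces contribute $\log(h/|\Im t|)+\log(h/|\Im s|)$, and the horizontal piece contributes at most $|t-s|/h\le1$. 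This is $\lesssim \log(1+|t-s|/\sqrt{|\Im t||\Im s|})+\log(h^2/(|\Im t||\Im s|))$. The second term is not obviously bounded by $L$ when the imaginary parts are very unequal. I would fix this by noting that $L$ already dominates $\log(\max|\Im|/\min|\Im|)$ up to constants: $|t-s|\ge|\Im t-\Im s|$, so $|t-s|/\sqrt{\Im t\,\Im s}\gtrsim\sqrt{\Im t/\Im s}$ when the ratio is large. Hence $\|X\|_{L^{2q'}}\le C\sqrt{q'}\,L$ for all $q'$, so $X$ is sub-Gaussian with variance proxy $CL^2$. Consequently $\bE[e^{|X|}]\le e^{C(L+L^2)}$, which gives integrability and \textup{(i)}, since $|F|\le e^{\Re M}\bE[e^{\Re X}]$.

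\emph{Part (ii).} Write $F=e^M\bE[e^X]$. Since $|e^X-1|\le|X|e^{|X|}$, Cauchy--Schwarz gives $|\bE[e^X]-1|\le\|X\|_2\|e^{|X|}\|_2\le CL\,e^{C(L+L^2)}\le C'L$ for $L\le L_0$. Choosing $L_0$ small enough that $C'L_0\le1/2$, we can take the logarithm: $\bE[e^X]=e^{\OO(L)}$, which is \eqref{e:logF-smallL-general}.

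I expect the main obstacle to be the path-length estimate $\int_\gamma|\rd u|/\Im u\lesssim L$, uniformly in configurations where the imaginary parts differ wildly or $|t-s|$ is huge. Handling it requires the case analysis comparing $h$ with $\sqrt{|\Im t||\Im s|}$ described above. The remaining steps (Minkowski along the path and the moment-to-exponential conversion) are routine.
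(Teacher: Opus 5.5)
Your proposal is correct and follows essentially the same route as the paper. Both arguments split off $\ri\tau_\alpha$ to write $G=M+\Xi$, bound $\|\Xi\|_{L^\ell}\le C\sqrt{\ell}\,L$ by Minkowski's inequality along a path using the sub-Gaussian local law, and then convert these moments into the exponential bound (i) and the small-$L$ expansion (ii).

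The only difference is the choice of path. The paper integrates along the hyperbolic geodesic, so $\int_\gamma|\rd u|/|\Im u|$ is exactly the hyperbolic distance $2\,\mathrm{arsinh}\bigl(|t-s|/(2\sqrt{|\Im t||\Im s|})\bigr)\le 2\log\bigl(1+|t-s|/\sqrt{|\Im t||\Im s|}\bigr)$, and no case analysis is needed. Your box path also works, but for (ii), where every contribution must be $\OO(L)$, the case analysis needs two extra points:
\begin{itemize}
\item the moderate-ratio regime $\Im t/\Im s\in[1,2]$, which is covered by $\log(\Im t/\Im s)\lesssim(\Im t-\Im s)/\sqrt{\Im t\,\Im s}$;
\item the horizontal segment, which should be bounded by $\min\{1,|t-s|/\sqrt{|\Im t||\Im s|}\}$ rather than just by $1$.
\end{itemize}
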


\begin{corollary}\label{c:absolute_bound}
Adopt \Cref{a:bulk}, and assume
\[
	\frac{\beta}{2}=\frac{p}{q}\in \bQ_{>0},
	\qquad
	n=kp,
	\qquad
	m=kq .
\]
Fix
\[
	t_r,s_a\in \bC_+\cup \bC_-,
	\qquad
	1\le r\le n,\quad 1\le a\le m .
\]
Define
\begin{align}
	M
	&:=
	\sum_{r=1}^n |\Im[t_r]|
	-\frac{\beta}{2}\sum_{a=1}^m |\Im[s_a]|,
	\label{e:def-M-abs}
	\\
	L
	&:=
	\sum_{r=1}^n
	\sum_{a=1}^m
	\log\!\left(
		1+
		\frac{
			|(\Re[t_r]+\ri|\Im[t_r]|)-(\Re[s_a]+\ri|\Im[s_a]|)|
		}{
			\sqrt{|\Im[t_r]|\,|\Im[s_a]|}
		}
	\right).
	\label{e:def-L-abs}
\end{align}
Then there exists a constant \(C>0\), depending only on \(p,q,k\), such that,
\begin{align}\label{e:absolutebound}
	\bE\left[
		\left|
		\operatorname{P.V.}
		\prod_{j\in \bZ}
		\frac{
			\prod_{r=1}^n (t_r-x_j)
		}{
			\prod_{a=1}^m (s_a-x_j)^{\beta/2}
		}
		\right|
	\right]
	\le
	\bE\left[
		\left|
		\operatorname{P.V.}
		\prod_{j\in \bZ}
		\frac{
			\prod_{r=1}^n (t_r-x_j)
		}{
			\prod_{a=1}^m (s_a-x_j)^{\beta/2}
		}
		\right|^2
	\right]^{1/2}
	\le
	e^{M+C(L+L^2)}.
\end{align}
Moreover, the exponential observable
\begin{equation}\label{e:defF1}
F(t_1,\dots,t_n;s_1,\dots,s_m)
:=
\bE\left[
	\operatorname{P.V.}
	\prod_{j\in \bZ}
	\frac{
		\prod_{r=1}^n (t_r-x_j)
	}{
		\prod_{a=1}^m (s_a-x_j)^{\beta/2}
	}
\right]
\end{equation}
is holomorphic on
$
\bC^n\times(\bC_+\cup\bC_-)^m$.
\end{corollary}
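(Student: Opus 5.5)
The plan is to reduce everything to Proposition~\ref{prop:pv-product-smallL}(i), applied with each variable moved to the upper half-plane and the blocks doubled to handle the second moment. The first inequality in \eqref{e:absolutebound} is Cauchy--Schwarz, so the content is the $L^2$ bound and holomorphy.

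\emph{Step 1: reduction to the upper half-plane.} For $x_j\in\bR$ we have $|t-x_j|=|\bar t-x_j|$. Also
\[
|(s-x_j)^{\beta/2}|=|s-x_j|^{\beta/2}=|(\bar s-x_j)^{\beta/2}|,
\]
since the principal branch gives $|w^a|=|w|^a$ for real $a$. The principal-value product is a limit of symmetric partial products, and taking moduli commutes with such limits. Hence
\[
\Bigl|\operatorname{P.V.}\prod_{j\in\bZ}\frac{\prod_{r}(t_r-x_j)}{\prod_a(s_a-x_j)^{\beta/2}}\Bigr|
=
\Bigl|\operatorname{P.V.}\prod_{j\in\bZ}\frac{\prod_{r}(t_r^\sharp-x_j)}{\prod_a(s_a^\sharp-x_j)^{\beta/2}}\Bigr|,
\]
where $w^\sharp:=\Re[w]+\ri|\Im[w]|\in\bC_+$. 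The modulus squared is then the analogous product with each $t_r^\sharp$ and $s_a^\sharp$ repeated twice. This follows from
\[
|t^\sharp-x|^2=(t^\sharp-x)(\overline{t^\sharp}-x),
\]
or, staying in $\bC_+$, from the fact that $|X|^2=|X^2|$ is the modulus of the product with doubled entries.

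\emph{Step 2: apply the proposition.} Group the $2n=2kp$ points $t^\sharp$ and the $2m=2kq$ points $s^\sharp$ into $2k$ blocks of sizes $(p,q)$, all with sign $\tau_\alpha=+1$. Proposition~\ref{prop:pv-product-smallL}(i) concerns $F=\bE[e^G]$, which is not literally $\bE|X|^2$. To get a genuine bound on the second moment, I would instead use $|e^{G}|^2=e^{2\Re G}$. Here $2\Re G$ equals $G$ for the doubled configuration with paths in $\bC_+$, up to the complex conjugate of the lower-half-plane pieces.

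More concretely, $|X|^2=X\overline X$. The factor $\overline X$ is the product with conjugated variables $\overline{t^\sharp},\overline{s^\sharp}\in\bC_-$ with branch $\overline{w^{\beta/2}}=(\bar w)^{\beta/2}$, which is valid off $\bR_{<0}$. So $|X|^2$ is exactly $e^{G'}$ for $2k$ blocks: $k$ blocks with $\tau=+1$ at $(t^\sharp,s^\sharp)$ and $k$ blocks with $\tau=-1$ at their conjugates. It is also nonnegative, so $F'=\bE|X|^2$.

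With this choice, $\Re M'=2\bigl(\sum_r\Im t_r^\sharp-\tfrac\beta2\sum_a\Im s_a^\sharp\bigr)=2M$. The conjugate blocks contribute the same pairwise distances, so $L'\le 2L$, and in fact within-block sums are bounded by $L$. Since the $\log$ terms in $L$ for the true pairing range over all $(r,a)$, the block-restricted $L'$ is at most $2L$. Part (i) then gives $\bE|X|^2\le e^{2M+C(L+L^2)}$, and taking square roots gives \eqref{e:absolutebound}.

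\emph{Step 3: holomorphy.} For fixed $\omega$, the integrand $e^{G}$ is holomorphic in $(t,s)\in\bC^n\times(\bC_+\cup\bC_-)^m$. In $t$ it is entire, since $e^{-\frac1q\int s}$ contributes a factor with zeros at the poles, as the product form shows. On compact sets the bound \eqref{e:absolutebound} is locally uniform, because $M$ and $L$ are continuous there, giving uniform integrability. Holomorphy of $F$ then follows from Morera and Fubini.

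The main obstacle is that $L$ involves $\log(|t-s|/\sqrt{\Im t\,\Im s})$, which blows up as $\Im t_r\to0$ with $t_r$ real. So for $t\in\bC^n$ including real points, I would prove holomorphy first off the real set and then extend by the Riemann extension theorem. That requires a local $L^1$ bound near real $t_r$, which I would obtain by bounding $|t_r-x_j|\le|t_r+\ri-x_j|$ and monotonicity, comparing with $t_r+\ri$. Making this comparison rigorous inside the principal-value product is the delicate point.
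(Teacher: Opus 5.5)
Your Steps 1 and 2 are exactly the paper's argument for the second-moment bound. You reduce to $\bC_+$ by conjugating individual variables, write $|X|^2=X\overline X$ as an exponential observable with $k$ blocks of sign $+1$ at $(t^\sharp,s^\sharp)$ and $k$ conjugate blocks of sign $-1$, and apply Proposition~\ref{prop:pv-product-smallL}(i) with $\Re M'=2M$ and $L'\le 2L$. Your inequality $L'\le 2L$ is in fact the accurate statement. The paper writes $L_\ast=2L$, but the block-restricted sum contains only $kpq$ of the $k^2pq$ nonnegative terms in $2L$, so only $\le$ holds, and that is all that is needed.

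The gap is in Step 3.

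\emph{Riemann extension does not apply.} The set where some $t_r$ is real is a real hypersurface, not a complex-analytic subset. A function holomorphic off $\bR$ and bounded near it need not extend holomorphically; take $1$ on $\bC_+$ and $0$ on $\bC_-$. So ``holomorphy off the real set, then extend'' fails as written.

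\emph{No extension step is needed.} The paper instead proves a local $L^1$ bound on every compact $K\subset\bC^n\times\prod_a\bC_{\eta_a}$, with real $t_r$ included. Set $\widehat t_r:=\Re[t_r]+\ri R$ with $R\ge\sup_K|\Im[t_r]|+1$. Then $|t_r-x_j|\le|\widehat t_r-x_j|$ for every $j$. This inequality holds for each finite symmetric truncation and passes to the limit, so the comparison inside the principal-value product is not delicate at all. The already-proved $L^2$ bound at $(\widehat{\bmt},\bms)$ is uniformly finite, since these points range over a compact subset of $(\bC_+)^n\times\prod_a\bC_{\eta_a}$. Each realization is entire in $t_r$, so Fubini and Morera give holomorphy in $t_r$ on all of $\bC$ directly, and likewise in $s_a$ on $\bC_{\eta_a}$.

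Two further points. Your shift $t_r\mapsto t_r+\ri$ dominates only when $\Im[t_r]\ge -1/2$, which is why a uniform height $R$ is used. Morera--Fubini gives only separate holomorphy. You still need Hartogs--Osgood, using separate holomorphy plus the local boundedness just obtained, to conclude that $F$ is jointly holomorphic; your proposal omits this step.
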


\begin{proof}[Proof of \Cref{prop:pv-product-smallL}]

We first recall the concentration estimates in both half-planes. By \eqref{eqn:LocalLaw} and
Schwarz reflection, for every \(z\in \bC_\tau\), \(\tau\in\{\pm1\}\), and every
integer \(\ell\ge 1\),
\begin{equation}\label{e:s-local-law-both-halfplanes}
\bE\left[|s(z)-\ri\tau|^{\ell}\right]\leq \bE\left[|s(z)-\ri\tau|^{2\ell}\right]^{1/2}
\le \frac{(C\sqrt{\ell})^\ell}{|\Im [z]|^{\ell}}\Longrightarrow \|s(z)-\ri \tau\|_{L^\ell}\leq \frac{C\ell}{|\Im[z]|}
\end{equation}

For \(w,z\in \bC_\tau\), define
\[
Y_\tau(w,z):=
-\int_\gamma \bigl(s(u)-\ri\tau\bigr)\,\rd u,
\]
where \(\gamma\subset \bC_\tau\) is any piecewise \(C^1\) path from \(z\) to \(w\).
Again, \(Y_\tau(w,z)\) is path-independent because \(s\) is holomorphic on
\(\bC_\tau\).

By Minkowski's inequality and \eqref{e:s-local-law-both-halfplanes},
\[
\|Y_\tau(w,z)\|_{L^\ell}
\le \int_\gamma \|s(u)-\ri\tau\|_{L^\ell}\,|\rd u|
\le C\sqrt \ell \int_\gamma \frac{|\rd u|}{|\Im [u]|}.
\]
Choose \(\gamma\) to be the hyperbolic geodesic (see e.g. \cite[Section 7]{beardon2012geometry}) from \(z\) to \(w\) in the half-plane
\(\bC_\tau\). 
Then the last integral is the hyperbolic distance, so
\begin{equation}\label{e:Y-hyp-bound}
\|Y_\tau(w,z)\|_{L^\ell}
\le C\sqrt \ell \cdot 2{\rm arsinh}\left(\frac{|w-z|}{2\sqrt{\Im[w]\Im[z]}}\right)
\leq
2C\sqrt \ell\,
\log\!\left(
1+\frac{|w-z|}{\sqrt{|\Im [w]|\,|\Im [z]|}}
\right).
\end{equation}

Now decompose the integral in \eqref{e:def-G-product} as
\[
-\int_{\gamma_{r,a}^{(\alpha)}} s(u)\,\rd u
=
-\ri\tau_\alpha\bigl(t_r^{(\alpha)}-s_a^{(\alpha)}\bigr)
+
Y_{\tau_\alpha}\!\bigl(t_r^{(\alpha)},s_a^{(\alpha)}\bigr).
\]
Summing over \((\alpha,r,a)\) and using \(p/q=\beta/2\), we obtain
\[
G=
-\ri\sum_{\alpha=1}^k \tau_\alpha
\left(
\sum_{r=1}^p t_r^{(\alpha)}
-\frac{\beta}{2}\sum_{a=1}^q s_a^{(\alpha)}
\right)
+\Xi=M+\Xi,
\]
where
\begin{equation}\label{e:def-Xi-product}
\Xi:=
\frac{1}{q}
\sum_{\alpha=1}^k\sum_{r=1}^p\sum_{a=1}^q
Y_{\tau_\alpha}\!\bigl(t_r^{(\alpha)},s_a^{(\alpha)}\bigr).
\end{equation}

Applying Minkowski again and using \eqref{e:Y-hyp-bound}, we get
\begin{align}\label{e:Z-Lm-bound}
\|\Xi\|_{L^\ell}
\le
\frac{1}{q}
\sum_{\alpha=1}^k\sum_{r=1}^p\sum_{a=1}^q
\left\|Y_{\tau_\alpha}\!\bigl(t_r^{(\alpha)},s_a^{(\alpha)}\bigr)\right\|_{L^\ell}
\le C\sqrt \ell\,L.
\end{align}
 In particular,
\begin{align}\label{e:themean}
|\bE[\Xi]|\le \|\Xi\|_{L^1}\le CL.
\end{align}

To prove \eqref{e:F-upper-general}, note that \(\Re[ \Xi]\) is a real
sub-Gaussian random variable with scale \(CL\) and mean bounded by $CL$, by \eqref{e:Z-Lm-bound} and \eqref{e:themean}. Hence
\[
\bE[e^{\Re[ \Xi]}]\le e^{C(L+L^2)}.
\]
Therefore
\[
|F|
\le e^{\Re [M]}\,\bE[e^{\Re [\Xi]}]
\le e^{\Re [M] + C(L+L^2)},
\]
which proves \eqref{e:F-upper-general}.

We now turn to the small-\(L\) regime. By Taylor's expansion
\[
\bE[e^\Xi]-1
=
\sum_{\ell=1}^\infty \frac{\bE[\Xi^\ell]}{\ell!}.
\]
Using \eqref{e:Z-Lm-bound},
\[
\left|\frac{\bE[\Xi^\ell]}{\ell!}\right|
\le \frac{\|\Xi\|_{L^\ell}^\ell}{\ell!}
\le \frac{(C\sqrt \ell\,L)^\ell}{\ell!}.
\]
By Stirling's bound \(\ell!\ge (\ell/e)^\ell\),
\[
\frac{(C\sqrt \ell\,L)^\ell}{\ell!}
\le \left(\frac{CeL}{\sqrt \ell}\right)^\ell
\le (C_1L)^\ell.
\]
Hence there exists \(L_0>0\) such that for all \(L\le L_0\),
\[
\left|\bE[e^\Xi]-1\right|
\le \sum_{\ell=1}^\infty (C_1L)^\ell
\le C_2L.
\]
Consequently,
\begin{equation}\label{e:F-factor-smallL}
|e^{-M}F -1|\le C_2L,
\end{equation}
and \eqref{e:logF-smallL-general} follows.
\end{proof}

\begin{proof}[Proof of \Cref{c:absolute_bound}]
The first inequality in \eqref{e:absolutebound} is the Cauchy--Schwarz inequality.

For the second inequality, note first that both sides of \eqref{e:absolutebound}
are unchanged if any individual \(t_r\) or \(s_a\) is replaced by its complex
conjugate. Indeed, on the left this is immediate since each \(x_j\in \bR\), so
the modulus of the principal-value product depends only on
\(\Re [t_r], |\Im [t_r]|, \Re [s_a], |\Im [s_a]|\). On the right, the quantities \(M\)
and \(L\) are defined precisely in terms of these data. Therefore, without loss
of generality, we may assume that
$
t_1,\dots,t_n,\ s_1,\dots,s_m\in \bC_+$.

Set
\[
X:=
\operatorname{P.V.}\prod_{j\in \bZ}
\frac{\prod_{r=1}^n (t_r-x_j)}
{\prod_{a=1}^m (s_a-x_j)^{\beta/2}}.
\]
Since \(x_j\in \bR\), we then have
\begin{equation}\label{e:modsq-product}
|X|^2
=
\operatorname{P.V.}\prod_{j\in \bZ}
\frac{\prod_{r=1}^n (t_r-x_j)(\overline{t_r}-x_j)}
{\prod_{a=1}^m (s_a-x_j)^{\beta/2}(\overline{s_a}-x_j)^{\beta/2}}.
\end{equation}

Next choose arbitrary partitions
\[
\{1,\dots,n\}=I_1\sqcup\cdots\sqcup I_k,
\qquad
\{1,\dots,m\}=A_1\sqcup\cdots\sqcup A_k,
\]
with \(|I_\alpha|=p\) and \(|A_\alpha|=q\) for every \(1\le \alpha\le k\).
Using these partitions, define \(2k\) blocks by
\begin{align*}
&t_r^{(\alpha)}:=t_{I_\alpha(r)},
\qquad
s_a^{(\alpha)}:=s_{A_\alpha(a)},
\qquad
\tau_\alpha:=+1,
\qquad 1\le \alpha\le k,\quad 1\leq r\leq p, \quad 1\leq a\leq q,\\
&t_r^{(k+\alpha)}:=\overline{t_{I_\alpha(r)}},
\qquad
s_a^{(k+\alpha)}:=\overline{s_{A_\alpha(a)}},
\qquad
\tau_{k+\alpha}:=-1,
\qquad 1\le \alpha\le k,\quad 1\leq r\leq p, \quad 1\leq a\leq q,
\end{align*}
where $I_\al(r)$ is the $r$-th element of $I_\al$, and $A_\al$ is the $a$-th element of $A_\al$.
Then \eqref{e:modsq-product} is exactly of the form covered by
\Cref{prop:pv-product-smallL}. Hence \eqref{e:F-upper-general} yields
\begin{equation}\label{e:modsq-bound}
\bE[|X|^2]
\le
\exp\{\Re [M_\ast] + C(L_\ast+L_\ast^2)\},
\end{equation}
where
\[
M_\ast
:=
-\ri\sum_{\alpha=1}^{2k}\tau_\alpha
\left(
\sum_{r=1}^p t_r^{(\alpha)}
-\frac{\beta}{2}\sum_{a=1}^q s_a^{(\alpha)}
\right),
\]
and
\[
L_\ast
:=
\sum_{\alpha=1}^{2k}\sum_{r=1}^p\sum_{a=1}^q
\log\!\left(
1+\frac{|t_r^{(\alpha)}-s_a^{(\alpha)}|}
{\sqrt{|\Im [t_r^{(\alpha)}]|\,|\Im [s_a^{(\alpha)}]|}}
\right).
\]

Recall $M$ and $L$ from \eqref{e:def-M-abs} and \eqref{e:def-L-abs}. Since all \(t_r,s_a\in \bC_+\), we have
\[
\Re [M_\ast]
=
2\sum_{r=1}^n \Im [t_r]
-\beta\sum_{a=1}^m \Im [s_a]
=
2M.
\]
Moreover,
\begin{align*}
L_\ast
&=
2\sum_{\alpha=1}^{k}\sum_{r=1}^p\sum_{a=1}^q
\log\!\left(
1+\frac{|t_r^{(\alpha)}-s_a^{(\alpha)}|}
{\sqrt{|\Im [t_r^{(\alpha)}]|\,|\Im [s_a^{(\alpha)}]|}}
\right) =
2\sum_{r=1}^n\sum_{a=1}^m
\log\!\left(
1+\frac{|t_r-s_a|}
{\sqrt{\Im [t_r]\Im [s_a]}}
\right) 
=
2L.
\end{align*}

Substituting these relations into \eqref{e:modsq-bound}, taking square root, and enlarging the constant
\(C\) if necessary, we obtain
\[
\bE[|X|^2]^{1/2}
\le
e^{M+C(L+L^2)},
\]
which is exactly the second inequality in \eqref{e:absolutebound}.

In the following we prove that \(F\) as in \eqref{e:defF1}  is holomorphic on
\[
\bC^n\times(\bC_+\cup\bC_-)^m.
\]
Since \((\bC_+\cup\bC_-)^m\) is a disjoint union of connected components, it
suffices to fix a sign vector
\[
\bm\eta=(\eta_1,\dots,\eta_m)\in\{\pm1\}^m
\]
and prove holomorphicity on
\[
\Omega_{\bm\eta}:=\bC^n\times \prod_{a=1}^m \bC_{\eta_a}.
\]

Fix \(s_1,\dots,s_m\in \prod_{a=1}^m\bC_{\eta_a}\). For each realization of
the point process, the principal-value product
\begin{equation}\label{e:Gexp}
\mathcal G(t_1,\dots,t_n;s_1,\dots,s_m)
:=
\operatorname{P.V.}
\prod_{j\in \bZ}
\frac{\prod_{r=1}^n(t_r-x_j)}
{\prod_{a=1}^m(s_a-x_j)^{\beta/2}}
\end{equation}
is entire in \(t_1,\dots,t_n\). Moreover, after fixing the branches of
\((s_a-x_j)^{\beta/2}\) on the half-plane \(\bC_{\eta_a}\), it is holomorphic
in each \(s_a\in\bC_{\eta_a}\).

We first prove a local \(L^1\) bound. By Cauchy--Schwarz,
\begin{equation}\label{e:ubb}
\bE\bigl[|\mathcal G(t_1,\dots,t_n;s_1,\dots,s_m)|\bigr]
\le
\bE\bigl[|\mathcal G(t_1,\dots,t_n;s_1,\dots,s_m)|^2\bigr]^{1/2}.
\end{equation}
Let \(K\subset \Omega_{\bm\eta}\) be compact. Choose \(R>0\) such that
\[
R\ge
\max_{1\le r\le n}\sup_{(t,s)\in K}|\Im[t_r]|+1.
\]
For \((t_1, \cdots, t_n,s_1,\cdots, s_m)\in K\), define
\[
\widehat t_r:=\Re[t_r]+\ri R,
\qquad 1\le r\le n.
\]
Since \(x_j\in\bR\), we have
$
|t_r-x_j|\le |\widehat t_r-x_j|,
$ for $j\in\bZ$.
Therefore, first for finite symmetric truncations of the principal-value
product and then by passing to the limit,
\[
|\mathcal G(t_1,\dots,t_n;s_1,\dots,s_m)|
\le
|\mathcal G(\widehat t_1,\dots,\widehat t_n;s_1,\dots,s_m)|.
\]
The variables \(\widehat t_1,\dots,\widehat t_n\) lie in the upper half-plane
and range over a compact subset of \((\bC_+)^n\), while
\((s_1,\dots,s_m)\) ranges over a compact subset of
\(\bC_{\eta_1}\times \cdots\times \bC_{\eta_m}\). Hence the absolute bound \eqref{e:absolutebound}
implies
\[
\sup_{(t,s)\in K}
\bE\bigl[
|\mathcal G(\widehat t_1,\dots,\widehat t_n;s_1,\dots,s_m)|^2
\bigr]^{1/2}
<\infty.
\]
Consequently,
\begin{equation}\label{e:local-L1-bound-G}
\sup_{(t,s)\in K}
\bE\bigl[
|\mathcal G(t_1,\dots,t_n;s_1,\dots,s_m)|
\bigr]
<\infty.
\end{equation}

We now prove separate holomorphicity. Fix all variables except \(t_r\), and let
\(\omega_r\) be any closed contour in the \(t_r\)-plane. The contour together
with the fixed values of the remaining variables is contained in some compact
subset of \(\Omega_{\bm\eta}\). By \eqref{e:local-L1-bound-G}, Fubini's theorem
allows us to interchange expectation and contour integration:
\[
\int_{\omega_r}
F(t_1,\dots,t_n;s_1,\dots,s_m)\,\rd t_r
=
\bE\left[
\int_{\omega_r}
\mathcal G(t_1,\dots,t_n;s_1,\dots,s_m)\,\rd t_r
\right].
\]
For each realization, \(t_r\mapsto \mathcal G(t_1,\dots,t_n;s_1,\dots,s_m)\)
is entire, so the inner contour integral vanishes. Hence
\[
\int_{\omega_r}
F(t_1,\dots,t_n;s_1,\dots,s_m)\,\rd t_r=0.
\]
By Morera's theorem, \(F\) is holomorphic in each \(t_r\).

The same argument applies to each \(s_a\), with contours lying inside the fixed
half-plane \(\bC_{\eta_a}\). Indeed, for each realization,
\(s_a\mapsto \mathcal G(t_1,\dots,t_n;s_1,\dots,s_m)\) is holomorphic on
\(\bC_{\eta_a}\), and the local bound \eqref{e:local-L1-bound-G} justifies the
same interchange of expectation and contour integration. Thus \(F\) is
separately holomorphic in all variables on \(\Omega_{\bm\eta}\).

Finally, \eqref{e:local-L1-bound-G} shows that \(F\) is locally bounded on
\(\Omega_{\bm\eta}\). By the Hartogs--Osgood theorem (see \cite[Theorem 1.2.5]{krantz2001function}), separate holomorphicity
together with local boundedness implies joint holomorphicity. Therefore \(F\)
is jointly holomorphic on \(\Omega_{\bm\eta}\). Since \(\bm\eta\) was arbitrary,
\(F\) is holomorphic on
$
\bC^n\times(\bC_+\cup\bC_-)^m$.
\end{proof}

\subsection{Edge exponential observables}
\begin{proposition}[Edge exponential observables]\label{p:edge_est}
Adopt \Cref{a:edge}, and assume
\[
	\frac{\beta}{2}=\frac{p}{q}\in \bQ_{>0}.
\]
Fix \(k\ge 1\), signs \(\tau_\alpha\in\{\pm1\}\), and points
\begin{equation}\label{e:domain-edge-exp}
	t_r^{(\alpha)},\,s_a^{(\alpha)}\in \bC_{\tau_\alpha},
	\qquad
	1\le r\le p,\quad 1\le a\le q,\quad 1\le \alpha\le k .
\end{equation}
For each triple \((\alpha,r,a)\), choose a piecewise \(C^1\) path
\[
	\gamma_{r,a}^{(\alpha)}\subset \bC_{\tau_\alpha}
\]
from \(s_a^{(\alpha)}\) to \(t_r^{(\alpha)}\), and define
\begin{equation}\label{e:def-G-product-edge}
	G
	:=
	-\frac{1}{q}
	\sum_{\alpha=1}^k
	\sum_{r=1}^p
	\sum_{a=1}^q
	\int_{\gamma_{r,a}^{(\alpha)}} s(u)\,\rd u .
\end{equation}
Since \(s\) is holomorphic on each half-plane, \(G\) is independent of the
choice of paths.
From the representation \eqref{e:normalized}, \(e^G\)
admits the renormalized product representation
\begin{equation}
	e^G
	=
	\prod_{\alpha=1}^k
	e^{
		c_0
		\left(
			\sum_{r=1}^p t_r^{(\alpha)}
			-\frac{\beta}{2}\sum_{a=1}^q s_a^{(\alpha)}
		\right)
	}
	\times
	\prod_{j\ge 1}
	\prod_{\alpha=1}^k
	\frac{
		\prod_{r=1}^p
		(t_r^{(\alpha)}-x_j)
		e^{t_r^{(\alpha)}/\fa_j}
	}{
		\prod_{a=1}^q
		\left(
			(s_a^{(\alpha)}-x_j)
			e^{s_a^{(\alpha)}/\fa_j}
		\right)^{\beta/2}
	},
\end{equation}
where
$c_0:=\Ai'(0)/\Ai(0)$, and  the powers \((s^{(\alpha)}_a-x_j)^{\beta/2}\) are defined using the principal
branch on \(\bC\setminus\bR_{<0}\), as specified in
\eqref{e:principle_branch}.
Moreover, this random variable is integrable. We set
\begin{equation}\label{e:defF0-edge}
	F:=\bE[e^G].
\end{equation}
Then \(F\) is holomorphic on the domain specified in
\eqref{e:domain-edge-exp}.

Define
\begin{align}
	M
	&:=
	-\frac{2}{3}
	\sum_{\alpha=1}^k
	\left(
		\sum_{r=1}^p
		\left(t_r^{(\alpha)}\right)^{3/2}
		-\frac{\beta}{2}
		\sum_{a=1}^q
		\left(s_a^{(\alpha)}\right)^{3/2}
	\right),
	\label{e:def-M-product-edge}
	\\
	L
	&:=
	\sum_{\alpha=1}^k
	\sum_{r=1}^p
	\sum_{a=1}^q
	\log\!\left(
		1+
		\frac{
			|t_r^{(\alpha)}-s_a^{(\alpha)}|
		}{
			\sqrt{
				|\Im[t_r^{(\alpha)}]|\,
				|\Im[s_a^{(\alpha)}]|
			}
		}
	\right).
	\label{e:def-L-product-edge}
\end{align}
Here the powers \(w^{3/2}\) are taken with the principal branch.

If, in addition,
\begin{equation}\label{e:firstqq}
	|\arg t_r^{(\alpha)}|,\,
	|\arg s_a^{(\alpha)}|
	\in
	(\pi/4,\pi),
	\qquad
	1\le r\le p,\quad 1\le a\le q,\quad 1\le \alpha\le k,
\end{equation}
then there exists a deterministic constant \(L_0>0\), depending only on
\(p,q,k\), such that, whenever \(L\le L_0\),
\begin{equation}\label{e:logF-smallL-edge}
	F
	=
	\exp\{M+\OO(L)\},
\end{equation}
where the implicit constant depends only on \(p,q,k\).
\end{proposition}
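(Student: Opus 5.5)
The plan is to copy the proof of \Cref{prop:pv-product-smallL}, with the reference value $\ri\tau$ of $s$ replaced by the edge profile $\sqrt{u}$. The concentration input comes from \Cref{p:edge_concentration}.

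\textbf{Product representation, integrability and holomorphy.} The product formula follows by integrating the series \eqref{e:normalized} term by term along the paths $\gamma^{(\alpha)}_{r,a}$. Term by term integration is justified by the rigidity \eqref{e:concentration2}, which makes the renormalized sum converge locally uniformly on $\bC\setminus\bR$. The factors $e^{t/\fa_j}$ and the prefactor $e^{c_0(\cdot)}$ come from the terms $-1/\fa_j$ and $-c_0$. The exponent balance $p\cdot 1=q\cdot\beta/2$ makes the weighted sum of the constant parts combine correctly.

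Integrability and holomorphy of $F$ will follow once we have an $L^2$ bound that is locally uniform. For that we use the decomposition below: the reference part is deterministic, and the fluctuation is sub-Gaussian by \eqref{eqn:edge}. Holomorphy then follows from Morera and Hartogs--Osgood, exactly as in \Cref{c:absolute_bound}.

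\textbf{Decomposition.} For $w,z\in\bC_\tau$ set
\[
Y_\tau(w,z):=-\int_\gamma\bigl(s(u)-\sqrt u\bigr)\,\rd u ,
\]
where $\sqrt u$ is taken in the principal branch, which is holomorphic on each half-plane. Since
\[
-\int_z^w\sqrt u\,\rd u=-\tfrac23\bigl(w^{3/2}-z^{3/2}\bigr),
\]
summing with weights $1/q$ and using $p/q=\beta/2$ gives
\[
G=M+\Xi,\qquad \Xi=\tfrac1q\sum Y_{\tau_\alpha}\bigl(t_r^{(\alpha)},s_a^{(\alpha)}\bigr),
\]
with $M$ as in \eqref{e:def-M-product-edge}.

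\textbf{Moment bounds on $\Xi$.} By \eqref{eqn:edge} and Schwarz reflection,
\[
\|s(u)-\sqrt u\|_{L^\ell}\le C\sqrt\ell\,\frac{|u|^{1/2}}{|\Im[u]|\,|\Im[\sqrt u]|}.
\]
The key geometric point is the following. Under \eqref{e:firstqq}, every point along a suitable path between $s^{(\alpha)}_a$ and $t^{(\alpha)}_r$ satisfies $|\arg u|\in(\pi/4,\pi)$. Hence $\arg\sqrt u\in(\pi/8,\pi/2)$ in absolute value, so $|\Im\sqrt u|\ge c|u|^{1/2}$. The ratio $|u|^{1/2}/|\Im\sqrt u|$ is therefore bounded, and the bound reduces to the bulk form $C\sqrt\ell/|\Im u|$.

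The main obstacle is choosing this path. The hyperbolic geodesic in $\bC_\tau$ is an arc of a circle orthogonal to $\bR$, and it may leave the sector $|\arg u|>\pi/4$ when both endpoints are near the ray $\arg u=\pi/4$. I would handle this in one of two ways:
\begin{itemize}
\item The region $\{\pi/4<\arg u<\pi\}$ is a sector of opening $3\pi/4$. Its hyperbolic metric is comparable to $|\rd u|/|\Im u|$ only away from the boundary ray $\arg u=\pi/4$, so the first option is to bound the path length in that metric by $C\log\bigl(1+|w-z|/\sqrt{\Im w\,\Im z}\bigr)$.
\item Alternatively, take a concrete path: go vertically up from each endpoint to height $\max(|\Im w|,|\Im z|,|w-z|)$, then travel horizontally. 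Enlarging the sector bound slightly, this path stays in $|\arg u|\ge\pi/4-\epsilon$, since moving up increases $|\arg u|$ toward $\pi/2$ when the real part is positive, and it stays at least as large otherwise. Its $|\rd u|/|\Im u|$-length is $O(L)$, again up to constants.
\end{itemize}
Either way, Minkowski's inequality gives $\|\Xi\|_{L^\ell}\le C\sqrt\ell\,L$.

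\textbf{Conclusion.} The Taylor expansion of $\bE[e^\Xi]$ with Stirling's bound gives $|\bE e^\Xi-1|\le C_2L$ for $L\le L_0$. This yields $F=e^{M}\bE[e^\Xi]=\exp\{M+\OO(L)\}$, which is \eqref{e:logF-smallL-edge}.
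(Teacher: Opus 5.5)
Your proposal is correct and is essentially the paper's proof. It uses the same decomposition $G=M+\Xi$ around the profile $\sqrt{u}$, reduces \eqref{eqn:edge} to the bulk-type bound $C\sqrt{\ell}/|\Im u|$ on the sector $\{\pi/4<|\arg u|<\pi\}$, applies Minkowski's inequality along paths in that sector, and closes with the same Taylor/Stirling argument for $\bE[e^\Xi]$.

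The one correction is that your ``main obstacle'' is not an obstacle. The sector is hyperbolically geodesically convex in each half-plane, because it is a sublevel set of the hyperbolic distance to the half-plane $\{\Re u\le 0\}$. Concretely, a geodesic arc joining two points of the ray $\arg u=\pi/4$ bulges into $\arg u>\pi/4$, and this convexity is exactly what the paper invokes.

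Your vertical-then-horizontal path is nevertheless a valid substitute, and your first option is unnecessary. That path stays in the open sector itself, with no enlargement needed. Its $|\rd u|/|\Im u|$-length is at most $C\log\bigl(1+|w-z|/\sqrt{|\Im w|\,|\Im z|}\bigr)$.
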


\begin{proof}
The proof is parallel to that of the bulk estimate, so we only indicate the
changes.

Write
\[
G=M+\Xi,
\]
where $M$ is from \eqref{e:def-M-product-edge} and 
\[
\Xi:=
-\frac{1}{q}
\sum_{\alpha=1}^k\sum_{r=1}^p\sum_{a=1}^q
\int_{\gamma_{r,a}^{(\alpha)}}
\left(s(u)-\sqrt{u}\right)\rd u.
\]

We recall the edge concentration estimate \eqref{eqn:edge}: for any
\(w=E+\ri\eta\) with \(\eta>0\), and for every \(q\ge 1\),
\begin{equation}\label{e:edge2}
\mathbb{E}\left[\left|s(w)-\sqrt w\right|^{2q}\right]
\le
\frac{(Cq)^q}{\eta^{2q}}
\frac{|w|^q}{(\Im[\sqrt{w}])^{2q}}.
\end{equation}
We also note that
$
s(\overline w)=\overline{s(w)}$ and
$\sqrt{\overline w}=\overline{\sqrt w}$.
Therefore, \(s(w)-\sqrt w\) has a uniform sub-Gaussian tail on every compact
subset of \(\bC\setminus\bR\). In particular, \(e^G\) is integrable. And \(F\), as defined in \eqref{e:defF0-edge}, is holomorphic on the domain specified in
\eqref{e:domain-edge-exp}.

When \(\arg w\in(\pi/4,\pi)\), we have
\[
\Im[\sqrt w]\asymp |w|^{1/2},
\]
and hence \eqref{e:edge2} reduces to
\begin{equation}\label{e:edge3}
\mathbb{E}\left[\left|s(w)-\sqrt w\right|^{2q}\right]
\le
\frac{(Cq)^q}{\eta^{2q}}.
\end{equation}
This is the same form as the bulk concentration estimate \eqref{eqn:LocalLaw}.

Thus, under the assumption \eqref{e:firstqq}, using \eqref{e:edge3} as input, noticing that the region $\arg w\in (\pi/4,\pi)$ is hyperbolically geodesically convex, 
exactly as in the proof of \Cref{prop:pv-product-smallL},  this gives, for every
integer \(\ell\ge 1\),
\[
\|\Xi\|_{L^\ell}\le C\sqrt \ell\,L.
\]
Therefore
\[
F=\bE[e^G]=e^{M}\bE[e^\Xi].
\]
As in the bulk case, the sub-Gaussian bound implies
\[
\bigl|\bE[e^\Xi]-1\bigr|\le C L
\]
provided \(L\le L_0\), for some sufficiently small deterministic constant
\(L_0>0\). Hence
\[
F=e^{M+\OO(L)},
\]
which is exactly \eqref{e:logF-smallL-edge}.
\end{proof}

\section{Linear differential equations}
\label{s:linearPDE}

In this section, we prove that the exponential observables introduced in
\Cref{s:exp_ob} satisfy linear systems of second-order differential equations.
These systems are the bulk and edge deformed CMS equations corresponding to
the two sets of variables \(t\) and \(s\).  They are obtained from the loop
equation hierarchy for the Nevanlinna function.  While this hierarchy
is nonlinear in the Nevanlinna function itself, its action on exponential
observables linearizes, yielding the deformed CMS equations.

\subsection{Bulk deformed Calogero–Moser–Sutherland operators}

\begin{proposition}\label{p:bulk_equation}
Adopt \Cref{a:bulk}, assume that \(\beta/2\in\bQ_{>0}\), and let
\[
n=\frac{\beta m}{2}.
\]
Define the exponential observable
\begin{equation}\label{e:defF}
F(t_1,\dots,t_n;s_1,\dots,s_m)
:=
\bE\left[
\operatorname{P.V.}
\prod_{j\in \bZ}
\frac{\prod_{r=1}^n (t_r-x_j)}
{\prod_{a=1}^m (s_a-x_j)^{\beta/2}}
\right],
\end{equation}
where the expectation is taken with respect to the underlying random point
process, and the powers \((s_a-x_j)^{\beta/2}\) are defined using the principal
branch on \(\bC\setminus\bR_{<0}\), as specified in
\eqref{e:principle_branch}.

Assume that
$
	t_1,\dots,t_n,s_1,\dots,s_m\in \bC_+\cup\bC_-
$,
satisfy the half-plane balance conditions
\begin{align}\label{e:balance-halfplanes}
\sum_{r=1}^n \bm 1(t_r\in \bC_+)
-\frac{\beta}{2}\sum_{a=1}^m \bm 1(s_a\in \bC_+)
=0,\quad 
\sum_{r=1}^n \bm 1(t_r\in \bC_-)
-\frac{\beta}{2}\sum_{a=1}^m \bm 1(s_a\in \bC_-)
=0 .
\end{align}
Then \(F\) satisfies the bulk deformed CMS equations
\begin{align}\label{e:eq1}
\left(
\partial_{t_i}^2+1
+\frac{2}{\beta}\sum_{j\neq i}
\frac{\partial_{t_i}-\partial_{t_j}}{t_i-t_j}
-\sum_{a=1}^m
\frac{\partial_{t_i}+(2/\beta)\partial_{s_a}}{t_i-s_a}
\right)F=0,
\qquad 1\le i\le n,
\end{align}
and
\begin{align}\label{e:eq2}
\left(
\partial_{s_a}^2+\frac{\beta^2}{4}
+\frac{\beta}{2}\sum_{b\neq a}
\frac{\partial_{s_a}-\partial_{s_b}}{s_a-s_b}
-\sum_{i=1}^n
\frac{\partial_{s_a}+(\beta/2)\partial_{t_i}}{s_a-t_i}
\right)F=0,
\qquad 1\le a\le m.
\end{align}

Moreover, by \Cref{c:absolute_bound}, the observable \(F\) defined in
\eqref{e:defF} is holomorphic in \(t_1,\dots,t_n\) on \(\bC^n\), and
holomorphic in each \(s_a\) on its chosen half-plane. The equations \eqref{e:eq1}--\eqref{e:eq2} continue to hold under this analytic
continuation, as meromorphic identities in the variables.
\end{proposition}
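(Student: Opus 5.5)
We write the random observable as $e^{G}$ with
\[
G=-\sum_{r=1}^n\int^{t_r}s(u)\,\rd u+\frac{\beta}{2}\sum_{a=1}^m\int^{s_a}s(u)\,\rd u ,
\]
with base points chosen in matching half-planes. The half-plane balance condition \eqref{e:balance-halfplanes} is exactly what allows the pairing into $(t_r,s_a)$ paths within one half-plane as in \Cref{prop:pv-product-smallL}. Then $\partial_{t_i}e^G=-s(t_i)e^G$ and $\partial_{s_a}e^G=\frac{\beta}{2}s(s_a)e^G$. Hence
\[
\partial_{t_i}^2e^G=\bigl(s(t_i)^2-s'(t_i)\bigr)e^G,\qquad
\partial_{s_a}^2e^G=\tfrac{\beta}{2}\bigl(s'(s_a)+\tfrac{\beta}{2}s(s_a)^2\bigr)e^G .
\]
Every term of \eqref{e:eq1} applied to $e^G$ is therefore a polynomial in values of $s$ times $e^G$. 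The goal is to show that the expectation of the total combination vanishes by the loop equations.

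The key step is to upgrade the hierarchy \eqref{e:loopeq2-bulk} from polynomial test functions $\prod_j s(w_j)$ to the exponential test function $e^G$. The loop equation is linear in the test function, and differentiating $\prod_j s(w_j)$ symmetrically corresponds to the derivation $\Phi\mapsto \frac{2}{\beta}\sum_j \partial_{w_j}\frac{s(w_1)-s(w_j)}{w_1-w_j}\partial_{s(w_j)}\Phi$. So for a general functional $\Phi$ of $s$ the identity should read
\[
\bE\Bigl[\bigl(\tfrac{2-\beta}{\beta}s'(w)+s(w)^2+1\bigr)\Phi\Bigr]+\tfrac{2}{\beta}\bE\bigl[D_w\Phi\bigr]=0,
\]
where $D_w$ is the directional derivative of $\Phi$ in the direction $\delta s(u)=\partial_u\frac{s(w)-s(u)}{w-u}$. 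For $\Phi=e^G$ we have $\delta\int^{t}s=\frac{s(w)-s(t)}{w-t}$ up to a base-point term. The base-point terms cancel: since they are the same constant for each path pair, they enter with total weight $n-\frac{\beta}{2}m=0$, which is again the balance condition. This gives
\[
D_we^G=\Bigl[-\sum_r\frac{s(w)-s(t_r)}{w-t_r}+\frac{\beta}{2}\sum_a\frac{s(w)-s(s_a)}{w-s_a}\Bigr]e^G .
\]
To justify this, expand $e^G$ as a limit of polynomial truncations. One route is a Taylor series of $e^{\Xi}$, approximating each $\int s$ by Riemann sums of $s$-values. The sub-Gaussian bounds of \Cref{p:bulk_concentration} and the moment estimates in the proof of \Cref{prop:pv-product-smallL} give dominated convergence; the deterministic bounds \eqref{e:sbound} and \eqref{e:fest} control $D_w$ of the truncations. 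I expect this limiting argument, with the uniform integrability needed to interchange the series with the loop identity, to be the main technical obstacle.

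Next take $w=t_i$ in the extended identity. Apply \eqref{e:eq1} to $e^G$ and divide by $e^G$. The term $\partial_{t_i}^2+1$ gives $s(t_i)^2-s'(t_i)+1$. The $t$–$t$ cross terms give $-\frac{2}{\beta}\sum_{j\ne i}\frac{s(t_i)-s(t_j)}{t_i-t_j}$. The mixed terms give $\sum_a\frac{s(t_i)-s(s_a)}{t_i-s_a}$. The total is
\[
\Bigl(s(t_i)^2+1-s'(t_i)\Bigr)-\tfrac{2}{\beta}\sum_{j\neq i}\frac{s(t_i)-s(t_j)}{t_i-t_j}+\sum_a\frac{s(t_i)-s(s_a)}{t_i-s_a}.
\]
This should match the extended loop identity with $w=t_i$ and $\Phi=e^G$. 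The diagonal $r=i$ term in $D_{t_i}$ is $-s'(t_i)$ times $\frac{2}{\beta}$, and it combines with $\frac{2-\beta}{\beta}s'(t_i)$ to give exactly $-s'(t_i)$. Taking expectations then yields \eqref{e:eq1}.

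For \eqref{e:eq2}, apply the same identity at $w=s_a$ and multiply by $(\beta/2)^2$. The term $\partial_{s_a}^2+\frac{\beta^2}{4}$ gives $\frac{\beta^2}{4}(s^2+1)+\frac{\beta}{2}s'$. The diagonal contribution of $D_{s_a}$ is $+\frac{\beta}{2}s'(s_a)\cdot\frac{2}{\beta}$, which combines with $\frac{2-\beta}{\beta}s'$ to give $\frac{2}{\beta}s'$; multiplied by $\beta^2/4$ this is $\frac{\beta}{2}s'$, as required. The off-diagonal terms match in the same way.

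Finally, both sides of \eqref{e:eq1}–\eqref{e:eq2} are holomorphic by \Cref{c:absolute_bound}, with derivatives of $F$ equal to expectations of derivatives of $e^G$ by the local $L^1$ bounds and Cauchy estimates. The identities therefore extend, by the identity theorem, to the full domain as meromorphic identities.
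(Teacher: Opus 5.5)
Your proposal is correct and takes essentially the paper's route. The paper also extends the loop hierarchy from products $\prod_j s(w_j)$ to exponential test functions of $S(u)=\int_{\pm\ri}^{u}s(\zeta)\,\rd\zeta$: it integrates the loop equation in $w_2,\dots,w_p$, passes to polynomials and then to power series by dominated convergence from the sub-Gaussian bounds, cancels the base-point terms via the balance condition, and reads off \eqref{e:eq1}--\eqref{e:eq2} by the same algebra you do, including the diagonal contributions $-s'(t_i)$ and $+\frac{\beta}{2}s'(s_a)$.

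One point to state more carefully: the base points $\ri$ and $-\ri$ differ, so the base-point difference quotients cancel only because the weights sum to zero in each half-plane separately. That is, you need both identities in \eqref{e:balance-halfplanes}, not merely $n-\frac{\beta}{2}m=0$.
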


\begin{remark}
When \(\beta=2\), the two systems \eqref{e:eq1} and \eqref{e:eq2} coincide and reduce to
\begin{align}\begin{split}\label{e:bulkbeta=2}
\left(
\del_{t_i}^2+1
+\sum_{j\neq i}\frac{\del_{t_i}-\del_{t_j}}{t_i-t_j}
-\sum_{j=1}^m\frac{\del_{t_i}+\del_{s_j}}{t_i-s_j}
\right)F=0,
\qquad 1\le i\le n,\\
\left(
\del_{s_i}^2+1
+\sum_{j\neq i}\frac{\del_{s_i}-\del_{s_j}}{s_i-s_j}
-\sum_{j=1}^m\frac{\del_{s_i}+\del_{t_j}}{s_i-t_j}
\right)F
=0,
\qquad 1\le i\le n.
\end{split}\end{align}
\end{remark}

\begin{proof}[Proof of \Cref{p:bulk_equation}]
We divide the proof into several steps.

\medskip
\noindent
\emph{Step 1. Integrated loop equation.}
For \(u\in \bC_+\cup\bC_-\), define
\[
\varepsilon(u)=
\begin{cases}
+1,& u\in \bC_+,\\
-1,& u\in \bC_-,
\end{cases}
\qquad
S(u):=\int_{\varepsilon(u)\ri}^{\,u}s(\zeta)\,\rd\zeta .
\]
Then \(S\) is holomorphic on each half-plane and satisfies \(S'(u)=s(u)\).

Throughout the proof, all difference quotients are interpreted by their
holomorphic extensions whenever their two arguments coincide. In particular,
\[
\left.
\frac{S'(w_1)-S'(w_j)}{w_1-w_j}
\right|_{w_j=w_1}
=
S''(w_1),
\qquad
\left.
\partial_{w_j}
\frac{S'(w_1)-S'(w_j)}{w_1-w_j}
\right|_{w_j=w_1}
=
\frac12 S'''(w_1).
\]

We start from the bulk loop equation \eqref{e:loopeq2-bulk}. For
\(w_1,\dots,w_p\in\bC_+\cup\bC_-\), it gives
\begin{align}\label{e:loopeq}
&\bE\left[
\left(
\frac{2-\beta}{\beta}S''(w_1)
+(S'(w_1))^2
+1
\right)
\prod_{j=2}^p S'(w_j)
\right]
+\frac2\beta
\bE\left[
\sum_{j=2}^p
\partial_{w_j}
\frac{S'(w_1)-S'(w_j)}{w_1-w_j}
\prod_{\substack{\ell=2\\ \ell\ne j}}^pS'(w_\ell)
\right]
=0.
\end{align}
Because the integrand in the second term is holomorphic in \(w_j\), including
at \(w_j=w_1\) under the convention above, we may integrate
\eqref{e:loopeq} in each variable \(w_j\), \(2\le j\le p\), from
\(\varepsilon(w_j)\ri\) to \(w_j\). We obtain
\begin{align}\begin{split}\label{e:integrated-loop}
&\bE\left[
\left(
\frac{2-\beta}{\beta}S''(w_1)
+(S'(w_1))^2
+1
\right)
\prod_{j=2}^p S(w_j)
\right]
\\
&\qquad
+\frac2\beta
\bE\left[
\sum_{j=2}^p
\left(
\frac{S'(w_1)-S'(w_j)}{w_1-w_j}
-
\frac{S'(w_1)-S'(\varepsilon(w_j)\ri)}
{w_1-\varepsilon(w_j)\ri}
\right)
\prod_{\substack{\ell=2\\ \ell\ne j}}^pS(w_\ell)
\right]
=0.
\end{split}\end{align}

\medskip
\noindent
\emph{Step 2. Exponential test functions.}
Choose
\[
O_\ell(u)=e^{-a_\ell u},
\qquad
a_\ell\in\{1,-\beta/2\},
\qquad
1\le \ell\le p,
\]
and set
\[
\widetilde O_\ell(u):=e^{|a_\ell|u},
\qquad
u\ge0.
\]
Then
\[
|O_\ell(S(w_\ell))|+|O_\ell'(S(w_\ell))|
\le
C\,\widetilde O_\ell(|S(w_\ell)|),
\]
where \(C\) depends only on \(\beta\).

Using \eqref{e:sbound}, we have
\[
\left|
\frac{2-\beta}{\beta}S''(w_1)
+(S'(w_1))^2
+1
\right|
\le
C\left(
\frac{\Im[s(w_1)]}{\Im[w_1]}
+|s(w_1)|^2+1
\right).
\]
Hence
\begin{align}\begin{split}\label{e:loop-int-1}
&\bE\left[
\left|
\frac{2-\beta}{\beta}S''(w_1)
+(S'(w_1))^2
+1
\right|
\prod_{\ell=1}^p \widetilde O_\ell(|S(w_\ell)|)
\right]
\\
&\qquad\le
C\,\bE\left[
\left(
\frac{\Im[s(w_1)]}{\Im[w_1]}
+|s(w_1)|^2+1
\right)
\prod_{\ell=1}^p \widetilde O_\ell(|S(w_\ell)|)
\right]
<\infty.
\end{split}\end{align}
Here the last finiteness follows from \eqref{eqn:LocalLaw}, which gives
sub-Gaussian tails for \(s(w_1)\), together with the corresponding exponential
integrability of the path integrals \(S(w_\ell)\).

Similarly, \eqref{e:sbound} and \eqref{e:fest} give
\begin{align}
\begin{split}\label{e:loop-int-2}
&\bE\left[
\left|
S''(w_1)
-
\frac{S'(w_1)-S'(\varepsilon(w_1)\ri)}
{w_1-\varepsilon(w_1)\ri}
\right|
\prod_{\ell=1}^p \widetilde O_\ell(|S(w_\ell)|)
\right]
<\infty,
\\
&\bE\left[
\left|
\frac{S'(w_1)-S'(w_j)}{w_1-w_j}
-
\frac{S'(w_1)-S'(\varepsilon(w_j)\ri)}
{w_1-\varepsilon(w_j)\ri}
\right|
\prod_{\ell=1}^p \widetilde O_\ell(|S(w_\ell)|)
\right]
<\infty,
\end{split}
\end{align}
for every \(2\le j\le p\). The second estimate remains valid when
\(w_j=w_1\), in which case it reduces to the first estimate.

To pass from \eqref{e:integrated-loop} to general test functions, we first
consider polynomials in \(S(w_1),\dots,S(w_p)\). We apply
\eqref{e:integrated-loop} at higher levels of the loop hierarchy, allowing
additional variables to coincide with any of \(w_1,\dots,w_p\), and then
group the equal variables. This is legitimate because the difference
quotients have been extended holomorphically to the diagonals. A repeated
variable at \(w_1\) produces the factor
\[
S''(w_1)
-
\frac{S'(w_1)-S'(\varepsilon(w_1)\ri)}
{w_1-\varepsilon(w_1)\ri}
\]
multiplying the derivative of the polynomial with respect to its first
argument. Similarly, a repeated variable at \(w_j\), \(j\ge2\), produces
\[
\frac{S'(w_1)-S'(w_j)}{w_1-w_j}
-
\frac{S'(w_1)-S'(\varepsilon(w_j)\ri)}
{w_1-\varepsilon(w_j)\ri}
\]
multiplying the derivative with respect to its \(j\)-th argument.

Thus \eqref{e:integrated-loop} extends to polynomial test functions of
\(S(w_1),\dots,S(w_p)\). Using the power-series expansions of the
exponentials and the integrability estimates
\eqref{e:loop-int-1}--\eqref{e:loop-int-2}, we may then pass to the limit by
dominated convergence. This gives
\begin{align}\begin{split}
0={}&
\bE\left[
\left(
\frac{2-\beta}{\beta}S''(w_1)
+(S'(w_1))^2
+1
\right)
\prod_{\ell=1}^p O_\ell(S(w_\ell))
\right]
\\
&+\frac2\beta
\bE\left[
\left(
S''(w_1)
-
\frac{S'(w_1)-S'(\varepsilon(w_1)\ri)}
{w_1-\varepsilon(w_1)\ri}
\right)
O_1'(S(w_1))
\prod_{\ell=2}^p O_\ell(S(w_\ell))
\right]
\\
&+\frac2\beta
\sum_{j=2}^p
\bE\left[
\left(
\frac{S'(w_1)-S'(w_j)}{w_1-w_j}
-
\frac{S'(w_1)-S'(\varepsilon(w_j)\ri)}
{w_1-\varepsilon(w_j)\ri}
\right)
O_1(S(w_1))O_j'(S(w_j))
\prod_{\ell\ne 1,j}O_\ell(S(w_\ell))
\right].
\end{split}\end{align}
Since \(O_\ell'(u)=-a_\ell O_\ell(u)\), this becomes
\begin{align}\begin{split}\label{e:loopeq3}
0={}&
\bE\left[
\left(
\frac{2-\beta}{\beta}S''(w_1)
+(S'(w_1))^2
+1
\right)
\prod_{\ell=1}^p O_\ell(S(w_\ell))
\right]
\\
&-\frac{2a_1}{\beta}
\bE\left[
\left(
S''(w_1)
-
\frac{S'(w_1)-S'(\varepsilon(w_1)\ri)}
{w_1-\varepsilon(w_1)\ri}
\right)
\prod_{\ell=1}^p O_\ell(S(w_\ell))
\right]
\\
&-\frac2\beta
\sum_{j=2}^p a_j
\bE\left[
\left(
\frac{S'(w_1)-S'(w_j)}{w_1-w_j}
-
\frac{S'(w_1)-S'(\varepsilon(w_j)\ri)}
{w_1-\varepsilon(w_j)\ri}
\right)
\prod_{\ell=1}^p O_\ell(S(w_\ell))
\right].
\end{split}\end{align}

\medskip
\noindent
\emph{Step 3. Cancellation of base-point terms.}
Assume that
\begin{equation}\label{e:separate-balance}
\sum_{\ell:\,\varepsilon(w_\ell)=+1} a_\ell=0,
\qquad
\sum_{\ell:\,\varepsilon(w_\ell)=-1} a_\ell=0.
\end{equation}
Then the terms involving \(S'(\ri)\) and \(S'(-\ri)\) cancel in
\eqref{e:loopeq3}. Hence \eqref{e:loopeq3} reduces to
\begin{align}\label{e:loopeq4}
0={}&
\bE\left[
\left(
\frac{2-\beta-2a_1}{\beta}S''(w_1)
+(S'(w_1))^2
+1
\right)
\prod_{\ell=1}^p O_\ell(S(w_\ell))
\right]
\notag\\
&-\frac2\beta
\sum_{j=2}^p a_j
\bE\left[
\frac{S'(w_1)-S'(w_j)}{w_1-w_j}
\prod_{\ell=1}^p O_\ell(S(w_\ell))
\right].
\end{align}

Since \(a_1\in\{1,-\beta/2\}\), we have
\[
\frac{2-\beta-2a_1}{\beta}
=
-\frac{1}{a_1}.
\]
Therefore
\[
\left(
-\frac{1}{a_1}S''(w_1)
+(S'(w_1))^2
+1
\right)e^{-a_1S(w_1)}
=
\left(
\frac{\partial_{w_1}^2}{a_1^2}+1
\right)e^{-a_1S(w_1)}.
\]
Moreover,
\[
-a_j
\frac{S'(w_1)-S'(w_j)}{w_1-w_j}
\prod_{\ell=1}^p O_\ell(S(w_\ell))
=
\frac{(a_j/a_1)\partial_{w_1}-\partial_{w_j}}{w_1-w_j}
\left(
\prod_{\ell=1}^p O_\ell(S(w_\ell))
\right).
\]
Substituting these identities into \eqref{e:loopeq4}, we obtain the master
identity
\begin{align}\label{e:master}
&\bE\left[
\left(
\frac{\partial_{w_1}^2}{a_1^2}
+1
\right)
\prod_{\ell=1}^p O_\ell(S(w_\ell))
\right]
+\frac2\beta
\sum_{j=2}^p
\bE\left[
\frac{(a_j/a_1)\partial_{w_1}-\partial_{w_j}}{w_1-w_j}
\left(
\prod_{\ell=1}^p O_\ell(S(w_\ell))
\right)
\right]
=0.
\end{align}

\medskip
\noindent
\emph{Step 4. Derivation of \eqref{e:eq1} and \eqref{e:eq2}}
By the representation \eqref{e:szrep}
\[
e^{-a_\ell S(w_\ell)}
=
\prod_{j\in \bZ}
\frac{(w_\ell-x_j)^{a_\ell}}
{(\varepsilon(w_\ell)\ri-x_j)^{a_\ell}},
\]
the balance condition \eqref{e:separate-balance} implies that all base-point
factors cancel. Therefore
\begin{equation}\label{e:observable-product}
\prod_{\ell=1}^p O_\ell(S(w_\ell))
=
\prod_{j\in \bZ}\prod_{\ell=1}^p (w_\ell-x_j)^{a_\ell}.
\end{equation}

To derive \eqref{e:eq1}, take \(w_1=t_i\) and \(a_1=1\). Let the remaining
variables be the other \(t\)-variables with exponent \(1\), and the
\(s\)-variables with exponent \(-\beta/2\). Then
\eqref{e:separate-balance} is exactly \eqref{e:balance-halfplanes}, and
\eqref{e:observable-product} becomes
\[
\prod_{j\in \bZ}
\frac{\prod_{r=1}^n(t_r-x_j)}
{\prod_{a=1}^m(s_a-x_j)^{\beta/2}}.
\]
We obtain
\eqref{e:eq1} from \eqref{e:master}.

Similarly, to derive \eqref{e:eq2}, take \(w_1=s_i\) and
\(a_1=-\beta/2\), with the remaining variables again given by the other
\(t\)- and \(s\)-variables. Multiplying the resulting \eqref{e:master} by
\(\beta^2/4\),  this gives \eqref{e:eq2}.

\medskip
\noindent
\emph{Step 5. Holomorphic continuation in the \(t\)-variables.}

The equations \eqref{e:eq1} and \eqref{e:eq2} hold on the nonempty open domain
where the half-plane balance condition \eqref{e:balance-halfplanes} is
satisfied. Since both sides are meromorphic in \(t_1,\dots,t_n\), and by \Cref{c:absolute_bound}, \(F\) is
entire in these variables, the identities extend by analytic continuation to
all \(t_1,\dots,t_n\in\bC\), away from the collision hyperplanes where the
coefficients have poles. Equivalently, they hold as meromorphic identities in
the \(t\)-variables. This completes the proof.
\end{proof}

\subsection{Edge deformed Calogero–Moser–Sutherland operators}

\begin{proposition}\label{p:edge_equation}
Adopt \Cref{a:edge}, assume that \(\beta/2\in\bQ_{>0}\), and let
\[
n=\frac{\beta m}{2}.
\]
Define the exponential observable
\begin{equation}\label{e:defFedge}
	F(t_1,\dots,t_n;s_1,\dots,s_m)
	=
	e^{
		c_0
		\left(
			\sum_{r=1}^n t_r
			-\frac{\beta}{2}\sum_{a=1}^m s_a
		\right)
	}
	\times
	\bE\left[\prod_{j\ge 1}
	\frac{
		\prod_{r=1}^n
		(t_r-x_j)
		e^{t_r/\fa_j}
	}{
		\prod_{a=1}^m
		\left(
			(s_a-x_j)
			e^{s_a/\fa_j}
		\right)^{\beta/2}
	}\right],
\end{equation}
where $c_0:=\Ai'(0)/\Ai(0)$, and  the powers \((s^{(\alpha)}_a-x_j)^{\beta/2}\) are defined using the principal
branch on \(\bC\setminus\bR_{<0}\), as specified in
\eqref{e:principle_branch}.

Assume that \(t_1,\dots,t_n,s_1,\dots,s_m\in \bC_+\cup \bC_-\) satisfy
\begin{align}\label{e:edge-balance-halfplanes}
\sum_{r=1}^n \bm 1(t_r\in \bC_+)-\frac{\beta}{2}\sum_{a=1}^m \bm 1(s_a\in \bC_+)=0,
\qquad
\sum_{r=1}^n \bm 1(t_r\in \bC_-)-\frac{\beta}{2}\sum_{a=1}^m \bm 1(s_a\in \bC_-)=0.
\end{align}
Then \(F\) satisfies
\begin{align}\label{e:edge_eq1}
\left(
\del_{t_i}^2-t_i
+\frac{2}{\beta}\sum_{j\neq i}\frac{\del_{t_i}-\del_{t_j}}{t_i-t_j}
-\sum_{j=1}^m \frac{\del_{t_i}+(2/\beta)\del_{s_j}}{t_i-s_j}
\right)F=0,
\qquad 1\le i\le n,
\end{align}
and
\begin{align}\label{e:edge_eq2}
\left(
\del_{s_i}^2-\frac{\beta^2}{4}s_i
+\frac{\beta}{2}\sum_{j\neq i}\frac{\del_{s_i}-\del_{s_j}}{s_i-s_j}
-\sum_{j=1}^n \frac{\del_{s_i}+(\beta/2)\del_{t_j}}{s_i-t_j}
\right)F=0,
\qquad 1\le i\le m.
\end{align}
\end{proposition}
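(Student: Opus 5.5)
We follow the proof of \Cref{p:bulk_equation} step by step. The edge loop equation \eqref{e:loopeq2-edge} replaces the constant $+1$ by $-w_1$, and the base points $\pm\ri$ are replaced by ones adapted to the normalization \eqref{e:normalized}.

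\emph{Integrated loop equation.} For $u\in\bC_\pm$ set $S(u):=\int_{\varepsilon(u)\ri}^{u}s(\zeta)\,\rd\zeta$, so that $S'=s$. We integrate \eqref{e:loopeq2-edge} in $w_2,\dots,w_p$ from $\varepsilon(w_j)\ri$ to $w_j$. This gives the analogue of \eqref{e:integrated-loop}, with leading factor $\frac{2-\beta}{\beta}S''(w_1)+S'(w_1)^2-w_1$ and the same base-point corrections.

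\emph{Exponential test functions.} Next we pass to exponential test functions $O_\ell(u)=e^{-a_\ell u}$ with $a_\ell\in\{1,-\beta/2\}$. We first extend to polynomials in $S(w_1),\dots,S(w_p)$ through the higher loop equations, with repeated variables handled by the diagonal extension. We then pass to the power series by dominated convergence. Integrability comes from \eqref{eqn:edge} in place of \eqref{eqn:LocalLaw}: on compact subsets of $\bC\setminus\bR$ the variable $s(w)-\sqrt w$ has uniform sub-Gaussian tails, so $e^{|a_\ell||S(w_\ell)|}$ is integrable. This is the same estimate already used to show that $F$ in \Cref{p:edge_est} is integrable and holomorphic. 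The derivative and difference-quotient bounds \eqref{e:sbound}, \eqref{e:fest} are unchanged.

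\emph{Cancellation.} Under the balance condition \eqref{e:edge-balance-halfplanes}, which is \eqref{e:separate-balance}, the terms involving $S'(\pm\ri)$ cancel exactly as before. Using $\frac{2-\beta-2a_1}{\beta}=-1/a_1$ we obtain the master identity
\[
\bE\Big[\Big(\tfrac{\partial_{w_1}^2}{a_1^2}-w_1\Big)\prod_\ell O_\ell(S(w_\ell))\Big]+\frac2\beta\sum_{j\ge2}\bE\Big[\frac{(a_j/a_1)\partial_{w_1}-\partial_{w_j}}{w_1-w_j}\prod_\ell O_\ell(S(w_\ell))\Big]=0.
\]
Taking $w_1=t_i$, $a_1=1$ gives \eqref{e:edge_eq1}. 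Taking $w_1=s_i$, $a_1=-\beta/2$ and multiplying by $\beta^2/4$ turns $-w_1$ into $-\frac{\beta^2}{4}s_i$, which gives \eqref{e:edge_eq2}.

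\emph{Identification with $F$.} The remaining step is to show that $\prod_\ell O_\ell(S(w_\ell))$ equals the renormalized product inside \eqref{e:defFedge}. By \eqref{e:normalized}, $s(u)=c_0+\sum_{j\ge1}\big(\frac1{x_j-u}-\frac1{\fa_j}\big)$, where $-\Ai'(0)/\Ai(0)$ is the constant written in \eqref{e:normalized}; we fix the sign of $c_0$ so that it matches the prefactor in \eqref{e:defFedge}. Integrating termwise, which is justified by the locally uniform convergence coming from the rigidity \eqref{e:concentration2}, gives
\[
e^{-aS(w)}=e^{-ac_0(w-\varepsilon\ri)}\prod_{j}\Big(\tfrac{(w-x_j)e^{w/\fa_j}}{(\varepsilon\ri-x_j)e^{\varepsilon\ri/\fa_j}}\Big)^{a}.
\]
Under the balance $\sum_{\varepsilon_\ell=\pm}a_\ell=0$, all base-point factors, including the exponentials $e^{\varepsilon\ri/\fa_j}$ and $e^{ac_0\varepsilon\ri}$, cancel. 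What remains is exactly the random variable in \eqref{e:defFedge}. Branch consistency of the powers $\beta/2$ on each half-plane is handled as in the bulk case.

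The main obstacle is this identification step. We must check that termwise integration of the conditionally convergent series is legitimate, and that the $\beta/2$-powers and the infinite products converge (absolutely, thanks to the factors $e^{u/\fa_j}$ and $\sum_j|x_j-\fa_j|/\fa_j^2<\infty$ from \eqref{e:concentration2}), so that they agree with $e^{G}$. This is the edge counterpart of the principal-value computation in Step 4 of \Cref{p:bulk_equation}.
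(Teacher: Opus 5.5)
Your proposal is correct and follows the paper's proof essentially verbatim: the same primitive $S$ with base points $\pm\ri$, the same exponential test functions with cancellation under the half-plane balance, the same master identity \eqref{e:edge-master}, and the same identification of $\prod_\ell O_\ell(S(w_\ell))$ with the Airy-regularized product. On that last point you are more careful than the paper, which treats $\Gamma_{\bmx}$ only as a formal product. One cosmetic remark: the sign of the constant is determined, not chosen. With your $c_0$ equal to the constant $-\Ai'(0)/\Ai(0)$ from \eqref{e:normalized}, the factor $e^{-ac_0(w-\varepsilon\ri)}$ is automatically $e^{a(\Ai'(0)/\Ai(0))(w-\varepsilon\ri)}$, which is exactly the prefactor in \eqref{e:defFedge} with the paper's $c_0=\Ai'(0)/\Ai(0)$.
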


\begin{remark}
When \(\beta=2\), the two systems \eqref{e:edge_eq1} and \eqref{e:edge_eq2} coincide, and reduce to
\begin{align}\begin{split}\label{e:edgebeta=2}
\left(
\del_{t_i}^2-t_i
+\sum_{j\neq i}\frac{\del_{t_i}-\del_{t_j}}{t_i-t_j}
-\sum_{j=1}^m\frac{\del_{t_i}+\del_{s_j}}{t_i-s_j}
\right)F
=0,
\qquad 1\le i\le n,\\
\left(
\del_{s_i}^2-s_i
+\sum_{j\neq i}\frac{\del_{s_i}-\del_{s_j}}{s_i-s_j}
-\sum_{j=1}^m\frac{\del_{s_i}+\del_{t_j}}{s_i-t_j}
\right)F
=0,
\qquad 1\le i\le n.
\end{split}\end{align}
\end{remark}

\begin{proof}[Proof of \Cref{p:edge_equation}]
For \(u\in \bC_+\cup\bC_-\), define
\[
\varepsilon(u)=
\begin{cases}
+1,& u\in \bC_+,\\
-1,& u\in \bC_-,
\end{cases}
\qquad
S(u):=\int_{\varepsilon(u)\ri}^{\,u} s(\zeta)\,\rd\zeta .
\]
Then \(S\) is holomorphic on each half-plane and satisfies \(S'(u)=s(u)\).

Assume that
\begin{equation}\label{e:edge-separate-balance}
\sum_{\ell:\,\varepsilon(w_\ell)=+1} a_\ell=0,
\qquad
\sum_{\ell:\,\varepsilon(w_\ell)=-1} a_\ell=0.
\end{equation}
Take
\[
O_\ell(u)=e^{-a_\ell u},
\qquad
a_\ell\in\{1,-\beta/2\}.
\]
Exactly as in the bulk case, using the edge loop equation in place of the bulk
loop equation,  this gives the master identity
\begin{align}\label{e:edge-master}
\bE\left[
\left(\frac{\partial_{w_1}^2}{a_1^2}-w_1\right)
\prod_{\ell=1}^p O_\ell(S(w_\ell))
\right]
+\frac{2}{\beta}\sum_{j=2}^p
\bE\left[
\frac{(a_j/a_1)\partial_{w_1}-\partial_{w_j}}{w_1-w_j}
\left(\prod_{\ell=1}^p O_\ell(S(w_\ell))\right)
\right]
=0.
\end{align}

Next we identify the observable. In the edge case, the natural analogue of the
characteristic product is the Airy-regularized product
\begin{equation}\label{e:defGammaEdge}
\Gamma_{\bmx}(z)
:=
e^{c_0 z}\prod_{j\ge1}(z-x_j)e^{z/\fa_j},
\qquad
c_0:=\frac{\Ai'(0)}{\Ai(0)}.
\end{equation}
We emphasize that \eqref{e:defGammaEdge} should be understood as a formal
regularized product; the ratios appearing in \eqref{e:defFedge}, however, are
well defined.

From the representation \eqref{e:normalized}, we formally have
\[
\partial_u\log \Gamma_{\bmx}(u)=-s(u)=-S'(u).
\]
Therefore,
\[
e^{-S(u)}
=
\frac{\Gamma_{\bmx}(u)}
{\Gamma_{\bmx}(\varepsilon(u)\ri)}.
\]
Hence
\[
\prod_{\ell=1}^p O_\ell(S(w_\ell))
=
\prod_{\ell=1}^p
\left(
\frac{\Gamma_{\bmx}(w_\ell)}
{\Gamma_{\bmx}(\varepsilon(w_\ell)\ri)}
\right)^{a_\ell}.
\]
Under the balance condition \eqref{e:edge-separate-balance}, the base-point
factors cancel, and so
\begin{equation}\label{e:edge-observable-product}
\prod_{\ell=1}^p O_\ell(S(w_\ell))
=
\prod_{\ell=1}^p \Gamma_{\bmx}(w_\ell)^{a_\ell}.
\end{equation}

To derive \eqref{e:edge_eq1}, take \(w_1=t_i\) and \(a_1=1\). Let the remaining
variables be the other \(t\)-variables with exponent \(1\), and the
\(s\)-variables with exponent \(-\beta/2\). Then
\eqref{e:edge-separate-balance} is exactly \eqref{e:edge-balance-halfplanes},
and \eqref{e:edge-observable-product} becomes
\[
\exp\left\{
	c_0
	\left(
		\sum_{r=1}^n t_r
		-\frac{\beta}{2}\sum_{a=1}^m s_a
	\right)
\right\}
\prod_{j\ge 1}
\frac{
	\prod_{r=1}^n
	(t_r-x_j)e^{t_r/\fa_j}
}{
	\prod_{a=1}^m
	\left(
		(s_a-x_j)e^{s_a/\fa_j}
	\right)^{\beta/2}
}.
\]
We obtain \eqref{e:edge_eq1} from \eqref{e:edge-master}.

Similarly, to derive \eqref{e:edge_eq2}, take \(w_1=s_i\) and
\(a_1=-\beta/2\), with the remaining variables again given by the other
\(t\)- and \(s\)-variables. Multiplying the resulting identity by
\(\beta^2/4\),  this gives \eqref{e:edge_eq2}.
\end{proof}

\section{Unique characterization}
\label{s:characterization}
In this section, we prove our main results \Cref{t:bulk-characterization} and \Cref{t:edge-characterization}.
\subsection{Solutions of the bulk deformed CMS operators}

In this section we discuss solutions of the bulk deformed CMS system
\eqref{e:eq1}--\eqref{e:eq2}. These are second-order linear differential
equations in the \(n\) variables \(t_1,\dots,t_n\) and the \(m\) variables
\(s_1,\dots,s_m\). We construct sectorial solutions indexed by sign patterns
\[
(\bm\epsilon,\bm\eta)
=
(\epsilon_1,\dots,\epsilon_n,\eta_1,\dots,\eta_m)
\in \{\pm1\}^n\times \{\pm1\}^m.
\]
The solution associated with \((\bm\epsilon,\bm\eta)\) is characterized by the
leading exponential behavior
\[
\exp\!\left(
\ri\sum_{i=1}^n \epsilon_i t_i
+
\ri\frac{\beta}{2}\sum_{a=1}^m \eta_a s_a
\right).
\]

We first consider the special case \(\beta=2\), where the bulk deformed CMS
operators are given explicitly in \eqref{e:bulkbeta=2}. In this case, one can
write down the solutions in closed form. The proof is a direct verification and
is postponed to \Cref{s:b2solution}.

\begin{theorem}[Explicit solutions for \(\beta=2\)]
\label{thm:beta2-explicit}
For each sign pattern
\[
(\bm\epsilon,\bm\eta)
=
(\epsilon_1,\dots,\epsilon_n,\eta_1,\dots,\eta_m)
\in \{\pm1\}^n\times\{\pm1\}^m,
\]
write
\[
(\bmt,\bms)=(t_1,\dots,t_n,s_1,\dots,s_m).
\]
Define
\begin{align}\begin{split}
\label{eq:beta2-explicit-F}
F_{\bm\epsilon,\bm\eta}(\bmt,\bms)
&=
\exp\!\left(
\ri\sum_{i=1}^n \epsilon_i t_i
+
\ri\sum_{a=1}^m \eta_a s_a
\right)
\prod_{1\le i<j\le n}
(t_i-t_j)^{(\epsilon_i\epsilon_j-1)/2}
\\
&\qquad\times
\prod_{1\le a<b\le m}
(s_a-s_b)^{(\eta_a\eta_b-1)/2}
\prod_{i=1}^n
\prod_{a=1}^m
(t_i-s_a)^{(1+\epsilon_i\eta_a)/2}.
\end{split}\end{align}
Then \(F_{\bm\epsilon,\bm\eta}\) solves the \(\beta=2\) system
\eqref{e:bulkbeta=2}. Moreover, the \(2^{n+m}\) solutions obtained in this way
are linearly independent.
\end{theorem}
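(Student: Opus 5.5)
The plan has two parts: a direct verification that each $F_{\bm\epsilon,\bm\eta}$ solves \eqref{e:bulkbeta=2}, and a linear independence argument based on the distinct exponentials. At $\beta=2$ both families of equations in \eqref{e:bulkbeta=2} have the same form, and the ansatz \eqref{eq:beta2-explicit-F} is symmetric under exchanging the roles of $(\bmt,\bm\epsilon)$ and $(\bms,\bm\eta)$, up to the sign of $t_i-s_a$, which only contributes constant factors. So it suffices to verify the $t_i$-equation.

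For the verification I would write $F=e^{\Phi}$, where
\[
\partial_{t_i}\Phi=\ri\epsilon_i+\sum_{j\ne i}\frac{\alpha_{ij}}{t_i-t_j}+\sum_a\frac{\gamma_{ia}}{t_i-s_a},
\]
with $\alpha_{ij}=(\epsilon_i\epsilon_j-1)/2\in\{0,-1\}$ and $\gamma_{ia}=(1+\epsilon_i\eta_a)/2\in\{0,1\}$; $\partial_{s_a}\Phi$ is defined analogously with $\eta$ and $-\gamma_{ia}$. The equation $D_iF=0$ becomes $\partial^2_{t_i}\Phi+(\partial_{t_i}\Phi)^2+1+\sum_{j\ne i}\frac{\partial_{t_i}\Phi-\partial_{t_j}\Phi}{t_i-t_j}-\sum_a\frac{\partial_{t_i}\Phi+\partial_{s_a}\Phi}{t_i-s_a}=0$, and I would collect terms by order.
\begin{itemize}
\item \emph{Constant terms:} $(\ri\epsilon_i)^2+1=0$.
\item \emph{Simple poles in $t_i-t_j$:} the coefficient is $2\ri\epsilon_i\alpha_{ij}+\ri(\epsilon_i-\epsilon_j)$. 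It vanishes because $\alpha_{ij}=-1$ exactly when $\epsilon_i\neq\epsilon_j$, and then $\epsilon_i-\epsilon_j=2\epsilon_i$.
\item \emph{Simple poles in $t_i-s_a$:} the coefficient is $2\ri\epsilon_i\gamma_{ia}-\ri(\epsilon_i+\eta_a)$, which vanishes for the same reason.
\item \emph{Double poles:} the coefficient of $(t_i-t_j)^{-2}$ is $-\alpha_{ij}+\alpha_{ij}^2+2\alpha_{ij}$, which is zero for $\alpha_{ij}\in\{0,-1\}$. Similarly, the coefficient of $(t_i-s_a)^{-2}$ is $-\gamma_{ia}+\gamma_{ia}^2-\gamma_{ia}+\gamma_{ia}$, which is zero for $\gamma_{ia}\in\{0,1\}$.
\item \emph{Mixed cross terms:} these have the form $\frac{1}{(t_i-u)(t_i-v)}$, together with the three-body pieces coming from $\partial_{t_j}\Phi$ and $\partial_{s_a}\Phi$ evaluated at other variables.
\end{itemize}

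The cross terms are the main obstacle. I would handle them by grouping the terms for each triple of distinct variables $\{t_i,u,v\}$ and applying the partial-fraction identity
\[
\frac{1}{(x-y)(x-z)}+\frac{1}{(y-x)(y-z)}+\frac{1}{(z-x)(z-y)}=0.
\]
This reduces the cancellation to a finite case check over the sign patterns of $(\epsilon_i,\epsilon_j,\eta_a)$ (and the analogous triples), each involving products of coefficients in $\{0,\pm1\}$. This check is the laborious step. To organize it, I would use the fact that the $\beta=2$ structure makes $F$ a Cauchy-type double alternant. One could try to identify $F_{\bm\epsilon,\bm\eta}$ with a suitable Cauchy determinant of exponentials, as in the classical ratios-of-characteristic-polynomials formulas, which might make the cross terms cancel structurally. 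If that identification does not work out, the triple-by-triple check is finite and will do.

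For linear independence, suppose $\sum c_{\bm\epsilon,\bm\eta}F_{\bm\epsilon,\bm\eta}=0$ on an open set. Fix generic values of all variables except a chosen direction, and restrict to a ray $t_i=t_i^0+\lambda\theta_i$, $s_a=s_a^0+\lambda\phi_a$ with real $\lambda\to+\infty$. Along the ray, each $F_{\bm\epsilon,\bm\eta}$ equals $e^{\ri\lambda(\sum\epsilon_i\theta_i+\sum\eta_a\phi_a)}$ times a function of rational growth in $\lambda$. I would choose complex $\theta,\phi$ generic enough that the exponents $\ri(\sum\epsilon_i\theta_i+\sum\eta_a\phi_a)$ are pairwise distinct with distinct real parts. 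The term with the largest real part then dominates all the others, which forces its coefficient to vanish. Inducting on the remaining terms gives $c_{\bm\epsilon,\bm\eta}=0$ for all sign patterns. Alternatively, independence of the distinct characters $\exp(\ri\sum\epsilon_it_i+\ri\sum\eta_as_a)$ over the field of rational functions (an Artin-type lemma) gives the same conclusion directly.
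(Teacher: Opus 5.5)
Your plan is correct, and it differs from the paper only in how the solution property is verified. Two parts coincide with the paper:
\begin{itemize}
\item The linear-independence argument is essentially the paper's: restrict to a ray along which the exponents have pairwise distinct real parts. Since all exponents in \eqref{eq:beta2-explicit-F} are integers, the $F_{\bm\epsilon,\bm\eta}$ are meromorphic on $\bC^{n+m}$, so a relation on an open set persists along the whole ray.
\item Your reduction of the $s_a$-equations to the $t_i$-equations via the $t\leftrightarrow s$ symmetry is valid.
\end{itemize}

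The paper does not verify the ansatz in the original variables. It first gauges away the mixed factor: writing $F=PG$ with $P=\prod_{i,a}(t_i-s_a)$, Theorem~\ref{thm:beta2-reduction} shows that the deformed $\beta=2$ system becomes the ordinary scalar $A_{n+m-1}$ CMS system $\mathcal L_rG=0$ in the merged variables $x=(\bmt,\bms)$. In that system every pair has the same coupling and every pair exponent is $(\sigma_r\sigma_u-1)/2$. One uniform check of $e^{\ri\bm\sigma\cdot x}\prod_{r<u}(x_r-x_u)^{(\sigma_r\sigma_u-1)/2}$ then suffices.

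Your direct computation is self-contained but has to track two coupling types. The paper's route explains where the mixed exponent $(1+\epsilon_i\eta_a)/2$ comes from, and the same reduction is reused verbatim in the edge case with Airy Slater determinants.

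The one step you left undone, the cross terms, closes in a line. Write the $t_i$-operator as $\partial_{t_i}^2+1+\sum_u\frac{\kappa_u\partial_{t_i}-\partial_u}{t_i-u}$, where $\kappa=1$ on all $t$-variables (including $t_i$) and $\kappa=-1$ on all $s$-variables. Let $e_{xy}$ be the exponent of the pair $\{x,y\}$ in $F$.

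Fix an unordered pair $\{u,v\}$ of variables other than $t_i$. Collect the contributions involving both $u$ and $v$:
\begin{itemize}
\item from $(\partial_{t_i}\Phi)^2$;
\item from $\kappa_u\partial_{t_i}\Phi/(t_i-u)$ and $\kappa_v\partial_{t_i}\Phi/(t_i-v)$;
\item from $-\partial_u\Phi/(t_i-u)$ and $-\partial_v\Phi/(t_i-v)$.
\end{itemize}
By your partial-fraction identity these combine into
\[
\frac{2e_{t_iu}e_{t_iv}+\kappa_u e_{t_iv}+\kappa_v e_{t_iu}-e_{uv}}{(t_i-u)(t_i-v)} .
\]
Set $\tilde\sigma_x:=\kappa_x\sigma_x$, where $\sigma_x$ is the sign attached to $x$. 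Then every exponent takes the uniform form $e_{xy}=\kappa_x\kappa_y(\tilde\sigma_x\tilde\sigma_y-1)/2$. With $A=\epsilon_i\tilde\sigma_u$ and $B=\epsilon_i\tilde\sigma_v$, the numerator equals
\[
\tfrac{\kappa_u\kappa_v}{2}\bigl[(A-1)(B-1)+(A-1)+(B-1)-(AB-1)\bigr]=0 .
\]
No Cauchy-determinant identification is needed. The flip $\sigma\mapsto\kappa\sigma$ is exactly the paper's gauge by $P$ in disguise.
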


We now return to general \(\beta>0\). Fix a sign pattern
\[
(\bm\epsilon,\bm\eta)
=
(\epsilon_1,\dots,\epsilon_n,\eta_1,\dots,\eta_m)
\in \{\pm1\}^n\times\{\pm1\}^m.
\]
For \(1\le i<j\le n\), \(1\le a<b\le m\), and
\(1\le i\le n\), \(1\le a\le m\), define
\begin{equation}
\label{eq:alpha-gamma-rho-clean}
\alpha_{ij}:=\frac{\epsilon_i\epsilon_j-1}{\beta},
\qquad
\gamma_{ab}:=\frac{\beta}{4}(\eta_a\eta_b-1),
\qquad
\rho_{ia}:=\frac{1+\epsilon_i\eta_a}{2}.
\end{equation}
Equivalently,
\begin{align}\label{eq:alpha-gamma-rho2}
\alpha_{ij}=
\begin{cases}
0,& \epsilon_i=\epsilon_j,\\[1mm]
-2/\beta,& \epsilon_i\neq \epsilon_j,
\end{cases}
\qquad
\gamma_{ab}=
\begin{cases}
0,& \eta_a=\eta_b,\\[1mm]
-\beta/2,& \eta_a\neq \eta_b,
\end{cases}
\qquad
\rho_{ia}=
\begin{cases}
1,& \epsilon_i=\eta_a,\\[1mm]
0,& \epsilon_i\neq \eta_a.
\end{cases}
\end{align}

Let
\begin{equation}\label{e:defX}
\mathfrak X
:=
\Bigl\{
(\bmt,\bms)\in \bC^{n+m}:
t_i\neq t_j,\ s_a\neq s_b,\ t_i\neq s_a
\text{ for all } i\neq j,\ a\neq b
\Bigr\}.
\end{equation}
Fix a simply connected domain \(\mathfrak C\subset \mathfrak X\). On
\(\mathfrak C\), choose branches of
\[
\log(t_i-t_j),\qquad
\log(s_a-s_b),\qquad
\log(t_i-s_a).
\]
We define the branch factor
\begin{align}\begin{split}
\label{eq:general-branch-factor-clean}
\Phi_{\bm\epsilon,\bm\eta}(\bmt,\bms)
&:=
\exp\!\left(
\ri\sum_{i=1}^n \epsilon_i t_i
+
\ri\frac{\beta}{2}\sum_{a=1}^m \eta_a s_a
\right)
\prod_{1\le i<j\le n}(t_i-t_j)^{\alpha_{ij}}
\\
&\qquad\times
\prod_{1\le a<b\le m}(s_a-s_b)^{\gamma_{ab}}
\prod_{i=1}^n
\prod_{a=1}^m
(t_i-s_a)^{\rho_{ia}}.
\end{split}\end{align}
We remark that when $\beta=2$, the branch factor $\Phi_{\bm\epsilon,\bm\eta}(\bmt,\bms)=F_{\bm\epsilon,\bm\eta}(\bmt,\bms)$
solves the \(\beta=2\) system
\eqref{e:bulkbeta=2}.

We also introduce the separation scale
\begin{align}\label{e:defRbulk}
R(\bmt,\bms)
:=
\min\Bigl(
\min_{1\le i<j\le n}|t_i-t_j|,
\min_{1\le i\le n,\ 1\le a\le m}|t_i-s_a|,
\min_{1\le a<b\le m}|s_a-s_b|
\Bigr).
\end{align}

The following theorem shows that, for general \(\beta>0\), the same phenomenon
persists: each sign pattern gives rise to a solution of the bulk deformed CMS
system \eqref{e:eq1}--\eqref{e:eq2}, whose leading asymptotic factor is the branch factor
\eqref{eq:general-branch-factor-clean}. We postpone its proof to \Cref{s:bulk_solution}.

\begin{theorem}[Bulk local solution space and ordered sectorial solutions]
\label{thm:sectorial-branch-solution}
The following two statements hold.

\begin{enumerate}
\item[\textup{(i)}]
For every point \((\bmt_0,\bms_0)\in\mathfrak X\), the space of germs at
\((\bmt_0,\bms_0)\) of holomorphic solutions of the bulk deformed CMS system
\eqref{e:eq1}--\eqref{e:eq2} has dimension \(2^{n+m}\).

\item[\textup{(ii)}]
There exists a sufficiently large constant \(D=D(\beta,n,m)\) such that
the following holds. Write
\[
(z_1,\dots,z_{n+m}):=(t_1,\dots,t_n,s_1,\dots,s_m),
\]
and consider the unbounded ordered domain
\begin{align}\label{e:bulkC}
\fC:=\left\{(\bmt,\bms)\in\mathfrak X:
\Re[z_{\ell+1}]-\Re[z_\ell]>D,\quad 1\leq \ell<n+m\right\}.
\end{align}
On \(\fC\), choose branches of the logarithms entering
\(\Phi_{\bm\epsilon,\bm\eta}\). For each sign pattern
\((\bm\epsilon,\bm\eta)\in\{\pm1\}^n\times\{\pm1\}^m\), there exists a
holomorphic solution \(F_{\bm\epsilon,\bm\eta}\) of
\eqref{e:eq1}--\eqref{e:eq2} on \(\fC\) of the form
\begin{equation}\label{e:bulk_defF}
F_{\bm\epsilon,\bm\eta}(\bmt,\bms)
=
\Phi_{\bm\epsilon,\bm\eta}(\bmt,\bms)
W_{\bm\epsilon,\bm\eta}(\bmt,\bms).
\end{equation}
The correction factor satisfies
\begin{equation}\label{eq:W-basic-est}
W_{\bm\epsilon,\bm\eta}(\bmt,\bms)
=
1+\OO\!\bigl(R(\bmt,\bms)^{-1}\bigr),
\end{equation}
and, for subsets \(I\subseteq\{1,\dots,n\}\) and
\(A\subseteq\{1,\dots,m\}\),
\begin{equation}\label{eq:jet-basic-est}
\prod_{i\in I}\partial_{t_i}
\prod_{a\in A}\partial_{s_a}
W_{\bm\epsilon,\bm\eta}(\bmt,\bms)
=
\OO\!\bigl(R(\bmt,\bms)^{-|I|-|A|-1}\bigr).
\end{equation}
The estimates are uniform on \(\fC\), and the implicit constants depend only
on \(\beta,n,m\). Moreover, the \(2^{n+m}\) solutions obtained in this way
are linearly independent. The same statement holds if the inequalities defining the ordered domain are
imposed in any other fixed order of the \(t\)- and \(s\)-coordinates.
\end{enumerate}
\end{theorem}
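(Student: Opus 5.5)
For part (i), the plan is to treat the system \eqref{e:eq1}--\eqref{e:eq2} as a $D$-module problem on $\mathfrak X$. Write the operators as $P_i=\partial_{t_i}^2+\dots$ and $Q_a=\partial_{s_a}^2+\dots$. Each has principal symbol $\xi_i^2$ (resp. $\zeta_a^2$), and all lower-order terms are first order with coefficients holomorphic on $\mathfrak X$.

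The key step is compatibility. We will check that all commutators $[P_i,P_j]$, $[P_i,Q_a]$, $[Q_a,Q_b]$ lie in the left ideal generated by the $P$'s and $Q$'s. Equivalently, S-polynomials reduce to zero modulo the ideal, so the operators form a Gröbner basis for a term order refining total degree. This is the standard integrability of deformed CMS operators with two species and coupling constants $k=2/\beta$, $k^{-1}=\beta/2$, with cross coupling normalized as in \eqref{e:eq1}--\eqref{e:eq2}. We expect it to reduce to a finite computation involving at most three distinct variables, because commutators of two-body difference operators only produce three-body terms, which cancel by the usual identity $\frac{1}{(x-y)(x-z)}+\text{cyclic}=0$ with weights matched through the relation between $2/\beta$ and $\beta/2$.

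Given the Gröbner property, the standard monomials are the square-free derivatives $\prod_{i\in I}\partial_{t_i}\prod_{a\in A}\partial_{s_a}$. Hence the quotient module has rank $2^{n+m}$ over the field of meromorphic functions. The system is then equivalent to a flat first-order connection $\partial_{z_\ell}\mathbf Y=\Omega_\ell\mathbf Y$ on the vector $\mathbf Y$ of square-free jets. By Frobenius, local solutions are determined by initial jets, which gives dimension exactly $2^{n+m}$ at every point of $\mathfrak X$. We expect this compatibility verification to be the main obstacle. If the full commutator computation becomes unwieldy, the fallback is to first verify flatness at $\beta=2$, where Theorem~\ref{thm:beta2-explicit} supplies $2^{n+m}$ explicit independent solutions and so forces the connection to be flat there. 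We would then still need a direct argument for general $\beta$, since flatness is polynomial in $\beta$ only after a careful normalization.

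For part (ii), substitute $F=\Phi_{\bm\epsilon,\bm\eta}W$. The exponents $\alpha_{ij},\gamma_{ab},\rho_{ia}$ in \eqref{eq:alpha-gamma-rho-clean} are chosen exactly so that the $O(R^{-1})$ terms cancel. Concretely, $\partial_{t_i}\Phi/\Phi=\ri\epsilon_i+\sum_j\alpha_{ij}/(t_i-t_j)+\sum_a\rho_{ia}/(t_i-s_a)$, and the cross terms $2\ri\epsilon_i\alpha_{ij}/(t_i-t_j)$ must cancel against $\frac{2}{\beta}\ri(\epsilon_i-\epsilon_j)/(t_i-t_j)$, and similarly for the other terms. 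This is the check that selects the indicial exponents. What remains is a system for $W$ of the form $(\partial_{z_\ell}^2+2\ri\kappa_\ell\partial_{z_\ell}+\dots)W=V_\ell W$, where $\kappa_\ell\in\{\epsilon_i,\beta\eta_a/2\}$ and the potentials $V_\ell$ are $O(R^{-2})$.

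In the flat first-order form, this becomes $\mathbf Y=\mathbf Y_0+\text{(Volterra integral operator)}$. We will solve it by successive approximation, integrating along paths toward infinity inside the ordered domain $\fC$, for example moving $z_{n+m}$ to $+\infty$ and then the others in order. The direction of integration is chosen according to the sign of $\kappa_\ell$ so that the oscillatory factors $e^{2\ri\kappa_\ell z_\ell}$ are bounded along the path. In $\fC$ the separations grow linearly along such paths, so the kernels are integrable ($\int R^{-2}$ converges). Choosing $D$ large makes the Volterra map a contraction, which yields \eqref{eq:W-basic-est}. Differentiating the integral equation gives the jet bounds \eqref{eq:jet-basic-est}.

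Linear independence follows because the leading behaviors $\Phi_{\bm\epsilon,\bm\eta}$ have distinct exponential factors. Sending each $z_\ell$ to infinity along suitable imaginary directions within $\fC$ separates them, so any vanishing linear combination must be trivial. Other orderings of the coordinates are handled by the same argument after relabeling.
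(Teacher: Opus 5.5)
Your part (i) is the paper's argument: commutator identities, Buchberger's criterion with the square-free monomials as standard monomials, and the resulting flat Pfaffian system of rank $2^{n+m}$. One wording fix: ``the commutators lie in the left ideal'' is automatic, since $[P_i,P_j]=P_iP_j-P_jP_i$. What has to be checked is that these operators, of order at most $3$, reduce to zero modulo the generators. The paper does this through explicit identities such as $[\cT_i,\cT_j]=\frac{4}{\beta(t_i-t_j)^2}(\cT_j-\cT_i)$ and $[\cT_i,\cS_a]=\frac{4}{\beta(t_i-s_a)^2}\cS_a-\frac{\beta}{(t_i-s_a)^2}\cT_i$.

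Part (ii) has a genuine gap. After conjugation by $\Phi_{\bm\epsilon,\bm\eta}$, only the scalar potentials $V_i^{(t)},V_a^{(s)}$ are $\OO(R^{-2})$. The operators still contain first-order terms with $\OO(R^{-1})$ coefficients, such as $2\Omega_i\partial_{t_i}$, $\frac{2}{\beta}\frac{\partial_{t_i}-\partial_{t_j}}{t_i-t_j}$ and $\frac{\partial_{t_i}+(2/\beta)\partial_{s_a}}{t_i-s_a}$. Reducing the repeated derivatives they create also produces $\OO(1)$ couplings between square-free jets of different orders. Use the natural variables $\lambda^{|I|+|A|}\partial_{t_I}\partial_{s_A}W$ along a path with $R\asymp\lambda$. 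Then both the coupling of $W$ into the nonempty jets (through $V$) and the remaining couplings among jets have size $\lambda^{-1}$. With a backward propagator that is merely bounded, which is all your argument uses, the Volterra forcing is controlled only by $\int_\lambda^\infty\sigma^{-1}\,\rd\sigma=\infty$. The $\OO(1)$ triangular couplings would moreover make the propagator grow polynomially. So the bound ``$\int R^{-2}$ converges'' does not close the contraction. Separately, your sequential path (first $z_{n+m}\to+\infty$, then the others) does not make all separations grow. On each leg the mutual distances of the coordinates not being moved are frozen, so $R$ stays bounded and the coefficients $1/(z_i-z_j)$ among them never become small.

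The missing ingredient is an exponential dichotomy along a simultaneous complex ray. The paper uses $\bm z+r\bm\omega$ with $\omega_j=j+\ri\sigma_j$ and $\bm\sigma=(\bm\epsilon,\bm\eta)$. This ray stays in $\fC$ and gives $R(\bm z+r\bm\omega)\ge\frac14(R(\bm z)+r)$. It also makes the diagonal rate of the rescaled jet $(I,A)$ equal to $-2\ri\sum_{i\in I}\epsilon_i\omega_i-\beta\ri\sum_{a\in A}\eta_a\omega_{n+a}$, whose real part is $2|I|+\beta|A|>0$. Together with the block-lower-triangular structure of the $\OO(1)$ part, this bounds the full backward propagator by $Ce^{-c(\sigma-\lambda)}$. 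Hence $\int_\lambda^\infty e^{-c(\sigma-\lambda)}\sigma^{-1}\,\rd\sigma=\OO(\lambda^{-1})$, and the Volterra map contracts once $R(\bm z)>D$ is large.

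You would also still need to turn the raywise solution into a holomorphic solution of the full system on $\fC$. The paper does this using flatness and uniqueness of the normalized radial solution: the constructed jet is horizontal in every coordinate direction, depends holomorphically on the base point, and agrees across overlapping base points. Your linear-independence argument via separated exponential rates is fine once these estimates are available. The paper instead uses invertibility of the character table of $\{\pm1\}^{n+m}$ in the leading jets.
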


\subsection{Proof of \Cref{t:bulk-characterization}}
Our main result, \Cref{t:bulk-characterization}, follows from the proposition
below. The proposition states that the classification of the bulk deformed CMS
system \eqref{e:eq1}--\eqref{e:eq2} in
\Cref{thm:sectorial-branch-solution}, together with the asymptotics of the
exponential observables from \Cref{prop:pv-product-smallL} and
\Cref{c:absolute_bound}, uniquely determines the exponential observable.

In the remainder of this section, we first prove
\Cref{t:bulk-characterization}, using \Cref{p:bulk_connection_formula} as an
input. We then prove \Cref{p:bulk_connection_formula} in \Cref{s:proof_connect}.

\begin{proposition}\label{p:bulk_connection_formula}
Adopt \Cref{a:bulk}, and assume
\[
\frac{\beta}{2}=\frac pq\in\bQ_{>0},
\qquad n=kp,
\qquad m=kq.
\]
Define
\[
F(t_1,\dots,t_n;s_1,\dots,s_m)
:=
\bE\left[
\operatorname{P.V.}\prod_{j\in\bZ}
\frac{\prod_{i=1}^n(t_i-x_j)}
{\prod_{a=1}^m(s_a-x_j)^{\beta/2}}
\right].
\]
Fix \(\bm\eta=(\eta_1,\dots,\eta_m)\in\{\pm1\}^m\) such that
\[
\#\{a:\eta_a=+1\}=k_+q,
\qquad
\#\{a:\eta_a=-1\}=k_-q,
\qquad
k_++k_-=k.
\]
Let \(D\) be as in \Cref{thm:sectorial-branch-solution}, write
\((z_1,\dots,z_{n+m})=(t_1,\dots,t_n,s_1,\dots,s_m)\), and consider the
unbounded domain
\[
\fC:=\left\{(\bmt,\bms)\in\mathfrak X:
\Re[z_{\ell+1}]-\Re[z_\ell]>D,\ 1\leq\ell<n+m,
\quad s_a\in\bC_{\eta_a},\ 1\leq a\leq m\right\}.
\]
Recall the sectorial solutions from \eqref{e:bulk_defF}, then, on \(\fC\),
\[
F
=
\sum_{\bm\epsilon\in\mathcal E_{\bm\eta}}
F_{\bm\epsilon,\bm\eta},
\qquad
\mathcal E_{\bm\eta}
:=
\left\{\bm\epsilon\in\{\pm1\}^n:
\sum_{i=1}^n\epsilon_i+\frac\beta2\sum_{a=1}^m\eta_a=0\right\}.
\]
Equivalently,
\[
\#\{i:\epsilon_i=+1\}=k_-p,
\qquad
\#\{i:\epsilon_i=-1\}=k_+p.
\]
\end{proposition}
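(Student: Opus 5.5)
The plan is to show that $F$ lies in the $2^{n+m}$-dimensional solution space of \Cref{thm:sectorial-branch-solution}, expand it in the sectorial basis, and then read off each coefficient from the asymptotics of \Cref{prop:pv-product-smallL}, using \Cref{c:absolute_bound} to discard the wrong $\bm\eta$-branches.

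\emph{Step 1 ($F$ solves the system on $\fC$).} Since $\#\{a:\eta_a=+1\}=k_+q$, any assignment placing $k_+p$ of the $t_i$ in $\bC_+$ and $k_-p$ in $\bC_-$ satisfies the balance condition \eqref{e:balance-halfplanes}. By \Cref{p:bulk_equation}, $F$ then satisfies \eqref{e:eq1}--\eqref{e:eq2} there. By its Step 5 and the holomorphy in \Cref{c:absolute_bound}, these equations continue to hold for all $\bmt\in\bC^n$ away from collisions, with each $s_a\in\bC_{\eta_a}$ fixed. Hence $F$ is a holomorphic solution on $\fC$, which is connected and contained in the ordered domain \eqref{e:bulkC}. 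By \Cref{thm:sectorial-branch-solution}(i)--(ii), and since the $2^{n+m}$ sectorial solutions are independent, analytic continuation gives constants $c_{\bm\epsilon,\bm\eta'}$ with
\[
F=\sum_{(\bm\epsilon,\bm\eta')}c_{\bm\epsilon,\bm\eta'}F_{\bm\epsilon,\bm\eta'}\quad\text{on }\fC .
\]

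\emph{Step 2 (ruling out $\bm\eta'\neq\bm\eta$).} Send $\Im[s_a]\to\eta_a\infty$ inside $\bC_{\eta_a}$, keeping the remaining variables fixed, or scaled so that the real-part ordering persists. The bound \eqref{e:absolutebound} gives $|F|\le e^{\sum|\Im t_r|-\frac\beta2\sum|\Im s_a|+C(L+L^2)}$, where $L$ grows only logarithmically. A branch with $\eta'_a=-\eta_a$ carries the factor $|e^{\ri\frac\beta2\eta'_a s_a}|=e^{+\frac\beta2|\Im s_a|}$, multiplied by polynomial powers and $W=1+\OO(R^{-1})$. Such growth is incompatible with the bound. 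Working one index at a time, starting from the dominant exponential among the surviving terms, forces $c_{\bm\epsilon,\bm\eta'}=0$ unless $\bm\eta'=\bm\eta$.

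\emph{Step 3 (extracting $c_{\bm\epsilon,\bm\eta}$).} Fix $\bm\epsilon$, and put $t_i$ in the half-plane $\tau_i:=-\epsilon_i$. Take all imaginary parts $\Im t_i=-\epsilon_iY$ and $\Im s_a=\eta_aY$ with $Y\to\infty$, keeping the real separations of size about $D$. Pair the variables into $k$ blocks of $p$ $t$'s and $q$ $s$'s lying in the same half-plane; this is possible exactly when $\bm\epsilon\in\mathcal E_{\bm\eta}$. The within-block quantity $L$ is then $\OO(1/Y)\to0$, so \Cref{prop:pv-product-smallL}(ii) gives
\[
F=e^{M}\bigl(1+\OO(Y^{-1})\bigr),\qquad e^{M}=\exp\Bigl(\ri\sum_i\epsilon_it_i+\ri\tfrac\beta2\sum_a\eta_as_a\Bigr).
\]
Every other branch $\bm\epsilon'$ has size $e^{Y\sum_i\epsilon_i\epsilon'_i}\le e^{(n-2)Y}$ times polynomial factors, which is exponentially smaller than $e^{nY}$. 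Dividing by $e^M$ isolates $c_{\bm\epsilon,\bm\eta}\,\Phi_{\bm\epsilon,\bm\eta}e^{-M}W\to c_{\bm\epsilon,\bm\eta}\cdot\lim\Phi_{\bm\epsilon,\bm\eta}e^{-M}$, and this must equal $1$. For $\bm\epsilon\notin\mathcal E_{\bm\eta}$, no balanced assignment $\tau=-\bm\epsilon$ exists. In that case, take instead the regime where $\bm\epsilon$ would dominate and use the upper bound of \Cref{prop:pv-product-smallL}(i), together with the fact that $\sum_i|\Im t_i|$ must be matched by $\frac\beta2\sum_a|\Im s_a|$ in each half-plane, to force $c_{\bm\epsilon,\bm\eta}=0$.

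\emph{Main obstacle.} The delicate point is the normalization in Step 3: one must check that $\Phi_{\bm\epsilon,\bm\eta}e^{-M}\to1$ in this regime. The algebraic prefactors $(t_i-t_j)^{\alpha_{ij}}$, $(s_a-s_b)^{\gamma_{ab}}$ and $(t_i-s_a)^{\rho_{ia}}$ with opposite-sign pairs each scale like powers of $2\ri Y$. So the total exponent must be shown to vanish, equivalently the branch choices must be fixed to absorb the constant, using $n_\pm,m_\pm$ and $\beta/2=p/q$. I would first verify this against the explicit $\beta=2$ solutions of \Cref{thm:beta2-explicit}. If a residual power of $Y$ remains, I would recheck it against the $e^{\OO(L)}$ error, since $F e^{-M}\to1$ leaves no room for one; if the exponent genuinely fails to cancel, the normalization of $\Phi_{\bm\epsilon,\bm\eta}$ in \Cref{thm:sectorial-branch-solution} would have to be adjusted. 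A second point to check is that the exponential dominance comparison in Steps 2 and 3 is uniform in the polynomial and $W$ corrections, which follows from \eqref{eq:W-basic-est} because $R\asymp D$ stays bounded below on $\fC$.
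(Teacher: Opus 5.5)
Your architecture matches the paper's: expand $F$ in the sectorial basis on the ordered domain, remove $\bm\eta'\neq\bm\eta$ with \eqref{e:absolutebound}, remove the unbalanced $\bm\epsilon$, then normalize with \Cref{prop:pv-product-smallL}(ii). However, your elimination of $\bm\epsilon\notin\mathcal E_{\bm\eta}$ does not work as described. There is no exponential mismatch to exploit. For any choice of half-planes and imaginary parts, the exponential factor of $\Phi_{\bm\epsilon,\bm\eta}$ has modulus $e^{-\sum_i\epsilon_i\Im t_i-\frac\beta2\sum_a|\Im s_a|}\le e^{\sum_i|\Im t_i|-\frac\beta2\sum_a|\Im s_a|}$. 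The right-hand side is exactly the exponent in \eqref{e:absolutebound}, with equality when $t_i\in\bC_{-\epsilon_i}$, so the per-half-plane mismatches cancel in the total. Also, \Cref{prop:pv-product-smallL}(i) is unavailable here, because an unbalanced assignment admits no block decomposition; only \Cref{c:absolute_bound} applies. The contradiction must come from the algebraic prefactor, and for that you need $L$ bounded: if $L\sim\log\Lambda$, the factor $e^{C(L+L^2)}$ swamps every power. The paper takes $t_r=\xi_r-\ri\epsilon_r\Lambda$ and $s_a=\zeta_a+\ri\eta_a\Lambda$ with fixed real parts. Then $M=n\Lambda-\frac\beta2 m\Lambda=0$, $L=\OO(1)$, and $|F|\lesssim1$. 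Only cross-half-plane pairs carry nonzero exponents, and $|\Phi_{\bm\epsilon,\bm\eta}|\asymp\Lambda^{\kappa}$ with
\[
\kappa=-\tfrac2\beta v(n-v)-\tfrac\beta2 u(m-u)+v(m-u)+u(n-v)=\tfrac qp\bigl(v-\tfrac{pu}{q}\bigr)^2,
\qquad v=\#\{i:\epsilon_i=-1\},\quad u=\#\{a:\eta_a=+1\}.
\]
This is positive exactly when $\bm\epsilon\notin\mathcal E_{\bm\eta}$. Every other $\bm\epsilon'$-branch is smaller by at least $e^{-2\Lambda}$, so $c_{\bm\epsilon,\bm\eta}=0$. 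The same count resolves your normalization worry: for admissible $\bm\epsilon$ it gives $\kappa=0$, since $-\frac{2p^2}{\beta}-\frac{\beta q^2}{2}+2pq=0$ for each pair of opposite blocks. Hence no residual power of the large parameter survives, and the remaining unimodular constant is absorbed by the branch normalization of $\Phi_{\bm\epsilon,\bm\eta}$.

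Second, your normalization step keeps the real separations of order $D$. Same-half-plane pairs then differ only in real part, so $R(\bmt,\bms)\asymp D$ stays bounded, and \eqref{eq:W-basic-est} gives only $W=1+\OO(D^{-1})$, not $W\to1$. You would therefore only obtain $c_{\bm\epsilon,\bm\eta}=1+\OO(D^{-1})$ unless the separations also tend to infinity. Let the real separations grow while remaining small compared with the imaginary parts. The paper uses $t^{(\alpha)}_r=\Lambda\xi_{I_\alpha(r)}+\tau_\alpha\ri(\Lambda^4+\Lambda)$ and $s^{(\alpha)}_a=\Lambda\zeta_{A_\alpha(a)}+\tau_\alpha\ri(\Lambda^4-\Lambda)$, which gives $R\asymp\Lambda$ and $L=\OO(\Lambda^{-3})$. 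A similar issue affects your $\bm\eta'$ step. If you move only the $s_a$ with the $t_r$ fixed, all patterns sharing the same $\bm\eta'$ grow at the same exponential rate, while $W$ is only known up to $1+\OO(R^{-1})$ with $R$ bounded, so cancellation among them is not excluded. The paper puts all variables on separated scales, $t_r=\xi_r+\ri\Lambda^r$ and $s_a=\zeta_a+\ri\eta_a\Lambda^{n+a}$. Then every sign pattern has its own exponential weight, $R\gtrsim\Lambda$, and the lexicographically dominant surviving term cannot be cancelled.
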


\begin{proof}[Proof of \Cref{t:bulk-characterization}]
By \Cref{t:bulk_loop}, we already know that the
\(\mathrm{Sine}_{\beta}\) point process satisfies the microscopic bulk loop
equations \eqref{e:loopeq2-bulk}. It remains to prove uniqueness: any random
point process satisfying \eqref{e:loopeq2-bulk} and the assumptions of the
\Cref{t:bulk-characterization} must have the same law.

By the representation \eqref{e:szrep},
\[
s(w)=\operatorname{P.V.}\sum_{j=-\infty}^{\infty}\frac{1}{x_j-w}.
\]
Thus the random point process
$
\{\cdots \le x_{-2}\le x_{-1}\le x_0\le x_1\le x_2\le \cdots\}
$
is characterized by the resolvent correlation functions
\begin{equation}\label{e:k-point-correlation}
\bE\bigl[s(w_1)s(w_2)\cdots s(w_k)\bigr],
\qquad
w_1,\dots,w_k\in \bC_+\cup\bC_- .
\end{equation}

We next show that the correlation functions in
\eqref{e:k-point-correlation} can be recovered from the exponential
observables. Consequently, once these observables are uniquely determined, the
correlation functions are uniquely determined as well.
Let
\(\tau_\alpha\in\{\pm1\}\) be such that
\[
w_\alpha\in\bC_{\tau_\alpha},
\qquad 1\le \alpha\le k.
\]
Consider
\[
	F
	=
	\bE\left[
	\operatorname{P.V.}
	\prod_{j\in \bZ}
	\prod_{\alpha=1}^k
	\frac{
		\prod_{i=1}^p (t_i^{(\alpha)}-x_j)
	}{
		\prod_{a=1}^q (s_a^{(\alpha)}-x_j)^{\beta/2}
	}
	\right],
\]
where
\[
	t_i^{(\alpha)},\,s_a^{(\alpha)}\in \bC_{\tau_\alpha},
	\qquad
	1\le i\le p,\quad 1\le a\le q,\quad 1\le \alpha\le k .
\]
Since \(\beta/2=p/q\), the observable equals \(1\) after the diagonal
specialization
\begin{equation}\label{e:wspeical}
w_\alpha
=
t_1^{(\alpha)}
=\cdots=
t_p^{(\alpha)}
=
s_1^{(\alpha)}
=\cdots=
s_q^{(\alpha)},
\qquad 1\le \alpha\le k.
\end{equation}
Differentiating before this specialization gives
\[
\partial_{t_1^{(\alpha)}}
\log\left[
\operatorname{P.V.}
\prod_{j\in\bZ}
\frac{
	\prod_{i=1}^p (t_i^{(\alpha)}-x_j)
}{
	\prod_{a=1}^q (s_a^{(\alpha)}-x_j)^{\beta/2}
}
\right]
=
\operatorname{P.V.}\sum_{j\in\bZ}\frac{1}{t_1^{(\alpha)}-x_j}
=
-s(t_1^{(\alpha)}).
\]
Therefore, after applying
\(\partial_{t_1^{(1)}}\cdots\partial_{t_1^{(k)}}\) and then imposing
\eqref{e:wspeical}, all undifferentiated product factors become \(1\), and we
obtain
\[
\bE\left[s(w_1)\cdots s(w_k)\right]
=
(-1)^k
\left.
\partial_{t_1^{(1)}}\cdots\partial_{t_1^{(k)}}F
\right|_{w_\alpha
=
t_1^{(\alpha)}
=\cdots=
t_p^{(\alpha)}
=
s_1^{(\alpha)}
=\cdots=
s_q^{(\alpha)},1\le \alpha\le k.
}.
\]

By \Cref{p:bulk_connection_formula}, the observable \(F\) is uniquely
determined on the nonempty ordered domain \(\fC\) appearing there.
 Moreover, by \Cref{c:absolute_bound},
\(F\) is holomorphic in the numerator variables
\(t_i^{(\alpha)}\in\bC\) and holomorphic in the denominator variables
\(s_a^{(\alpha)}\in\bC_{\tau_\alpha}\). Hence, by the identity theorem for
holomorphic functions, this uniquely determines \(F\) on the whole domain
\[
(t_i^{(\alpha)})\in\bC,
\qquad
(s_a^{(\alpha)})\in\bC_{\tau_\alpha},\quad 1\leq i\leq p, \quad 1\leq a\leq q, \quad 1\leq \al\leq k.
\]
In particular, the derivatives at the specialization \eqref{e:wspeical} are
unique. Thus all correlation functions \eqref{e:k-point-correlation} are
uniquely determined. This proves
\Cref{t:bulk-characterization}.
\end{proof}

\subsection{Proof of \Cref{p:bulk_connection_formula}}\label{s:proof_connect}
\begin{proof}[Proof of \Cref{p:bulk_connection_formula}]
Choose real numbers
\[
\xi_1<\cdots<\xi_n<\zeta_1<\cdots<\zeta_m
\]
whose successive gaps are larger than \(2D\). We use these same real parts
throughout the proof. Since the constants in
\eqref{eq:W-basic-est}--\eqref{eq:jet-basic-est} are universal, we may
increase \(D\) so that
\[
|W_{\bm\epsilon,\bm\eta'}-1|\leq\frac14
\]
on the ordered domain, uniformly over all sign patterns. In particular, all
correction factors are bounded above and away from zero.

The observable \(F\) is a holomorphic solution of
\eqref{e:eq1}--\eqref{e:eq2} on \(\fC\). By
\Cref{thm:sectorial-branch-solution}, the sectorial solutions form a basis,
and hence
\begin{equation}\label{e:F-branch-expansion-local}
F
=
\sum_{(\bm\epsilon,\bm\eta')\in\{\pm1\}^{n+m}}
 c_{\bm\epsilon,\bm\eta'}F_{\bm\epsilon,\bm\eta'},
\qquad
c_{\bm\epsilon,\bm\eta'}\in\bC.
\end{equation}
All configurations below remain in the same unbounded domain \(\fC\), so the
solutions and the coefficients in this expansion do not change during the
three comparisons.

\medskip
\noindent
\emph{Step 1: Forcing \(\bm\eta'=\bm\eta\).}

For a large parameter \(\Lambda>0\), set
\[
t_r:=\xi_r+\ri\Lambda^r,
\quad 1\leq r\leq n,
\qquad
s_a:=\zeta_a+\eta_a\ri\Lambda^{n+a},
\quad 1\leq a\leq m.
\]
For all sufficiently large \(\Lambda\), these variables are pairwise
distinct and
$
R(\bmt,\bms)\gtrsim \Lambda.
$

By \Cref{c:absolute_bound}, applied to the configuration
\((\bmt,\bms)\) above, we have
\begin{equation}\label{e:F-decay-sM}
|F(\bmt,\bms)|
\leq
\exp\!\left(
\sum_{r=1}^n\Lambda^r
-\frac{\beta}{2}\sum_{a=1}^m\Lambda^{n+a}
+\OO\bigl((\log\Lambda)^2\bigr)
\right).
\end{equation}

On the other hand, by the explicit formula for
\(\Phi_{\bm\epsilon,\bm\eta'}\) and the estimate
\[
W_{\bm\epsilon,\bm\eta'}(\bmt,\bms)
=
1+\OO\bigl(R(\bmt,\bms)^{-1}\bigr),
\]
each branch satisfies
\begin{equation}\label{e:branch-sM-asymp}
\begin{split}
F_{\bm\epsilon,\bm\eta'}(\bmt,\bms)
&=
\exp\!\left(
-\sum_{r=1}^n\epsilon_r\Lambda^r
-\frac{\beta}{2}
 \sum_{a=1}^m\eta_a\eta'_a\Lambda^{n+a}
\right)
\Lambda^{\nu_{\bm\epsilon,\bm\eta'}}
C_{\bm\epsilon,\bm\eta'}
\left(1+\OO(\Lambda^{-1})\right),
\end{split}
\end{equation}
for some exponent
\(\nu_{\bm\epsilon,\bm\eta'}\in\bR\) and some nonzero coefficient
\(C_{\bm\epsilon,\bm\eta'}\) depending on
\((\bm\xi,\bm\zeta)\).

Suppose that
$
c_{\bm\epsilon,\bm\eta'}\neq0
$
for some \(\bm\eta'\neq\bm\eta\), and let \(a_0\) be the largest index
such that
$
\eta'_{a_0}\neq\eta_{a_0}.
$
Then
\begin{align}
-\frac{\beta}{2}
\sum_{a=1}^m
\eta_a\eta'_a\Lambda^{n+a}
&=
-\frac{\beta}{2}
\sum_{a=1}^m\Lambda^{n+a}
+
\beta
\sum_{\{a:\eta'_a\neq\eta_a\}}
\Lambda^{n+a}
=
-\frac{\beta}{2}
\sum_{a=1}^m\Lambda^{n+a}
+
\beta\Lambda^{n+a_0}
+
\OO(\Lambda^{n+a_0-1}).
\end{align}
The additional term
\(\beta\Lambda^{n+a_0}\) dominates all contributions involving the
\(t\)-variables, which are of order at most \(\Lambda^n\), as well as
the error \(\OO((\log\Lambda)^2)\) in
\eqref{e:F-decay-sM}.

Moreover, because all variables have been placed on separated scales,
distinct sign patterns
\((\bm\epsilon,\bm\eta')\) give distinct exponential weights in
\(\Lambda\). Thus, among all terms with nonzero coefficients and
\(\bm\eta'\neq\bm\eta\), there is a unique lexicographically dominant
exponential weight, which cannot be cancelled by the remaining terms.
This contradicts the upper bound \eqref{e:F-decay-sM}. Hence
\[
c_{\bm\epsilon,\bm\eta'}=0
\qquad
\text{whenever }\bm\eta'\neq\bm\eta.
\]

Therefore,
\begin{equation}\label{e:F-epsilon-only}
F
=
\sum_{\bm\epsilon\in\{\pm1\}^n}
c_{\bm\epsilon}
F_{\bm\epsilon,\bm\eta},
\qquad
c_{\bm\epsilon}
:=
c_{\bm\epsilon,\bm\eta}.
\end{equation}

\smallskip
\noindent
\emph{Step 2: only balanced sign patterns survive.}

Let
\[
u:=\#\{a:\eta_a=+1\}=k_+q,
\qquad
m-u=\#\{a:\eta_a=-1\}=k_-q.
\]
Fix \(\bm\epsilon=(\epsilon_1, \epsilon_2,\cdots, \epsilon_n)\in\{\pm1\}^n\), and let
\[
v:=\#\{i:\epsilon_i=-1\}.
\]
Take large \(\Lambda>0\) and set
\begin{align}\label{e:c_t_s}
t_r:=\xi_r-\ri \epsilon_r \Lambda,\quad 1\leq r\leq n,
\quad 
s_a:=\zeta_a+\ri \eta_a \Lambda, \quad 1\leq a\leq m.
\end{align}
Then by \Cref{c:absolute_bound}, with
\[
M=\sum_{r=1}^n \Lambda-\frac{\beta}{2}\sum_{a=1}^m \Lambda=0,
\quad
L=\sum_{r=1}^n\sum_{a=1}^m
\log \left(1+\frac{|\xi_r-\zeta_a|}{\Lambda}\right)=\OO(1),
\]
we conclude that
\begin{align}\label{e:fub1}
|F(\bmt,\bms)|\leq e^{M+C(L+L^2)}\lesssim 1,
\end{align}
provided we take $\Lambda$ large enough.

With the choice of parameters in \eqref{e:c_t_s}, \eqref{eq:general-branch-factor-clean} gives
\begin{align*}
|\Phi_{\bm\epsilon,\bm\eta}(\bmt,\bms)|
&= \exp\left(\Lambda {\sum_{i=1}^n \epsilon^2_i -\Lambda\frac{\beta}{2}\sum_{a=1}^m\eta^2_a}\right)\prod_{1\le i<j\le n}|t_i-t_j|^{\alpha_{ij}}
\prod_{1\le a<b\le m}|s_a-s_b|^{\gamma_{ab}}
\prod_{i=1}^n\prod_{a=1}^m |t_i-s_a|^{\rho_{ia}}\\
&=\prod_{1\le i<j\le n}|t_i-t_j|^{\alpha_{ij}}
\prod_{1\le a<b\le m}|s_a-s_b|^{\gamma_{ab}}
\prod_{i=1}^n\prod_{a=1}^m |t_i-s_a|^{\rho_{ia}}.
\end{align*}
We recall that $v=\#\{i: \epsilon_i=-1\}$ and $u=\#\{a: \eta_a=+1 \}$, and recall $\al_{ij}$, $\gamma_{ab}$ and $\rho_{ia}$ from \eqref{eq:alpha-gamma-rho2}.
By a direct count of the algebraic powers of \(\Lambda\), we can further simplify the above expression as
\begin{align}\label{e:fexp1}
\Phi_{\bm\epsilon,\bm\eta}(\bmt,\bms)=\Lambda^{\kappa}
C_{\bm\epsilon,\bm\eta}(\bm\xi,\bm\zeta)(1+\OO(\Lambda^{-1}),\quad
F_{\bm\epsilon,\bm\eta}(\bmt,\bms)=\Phi_{\bm\epsilon,\bm\eta}(\bmt,\bms)(1+\OO(R(\bmt,\bms)^{-1})),
\end{align}
where \(|C_{\bm\epsilon,\bm\eta}(\bm\xi,\bm\zeta)|\asymp 1\), and
\begin{align*}\begin{split}
\kappa
&=
-\frac{2}{\beta}v(n-v)-\frac{\beta}{2}u(m-u)+v(m-u)+u(n-v)\\
&=\frac{q}{p}v^2+\frac{p}{q}u^2-2uv
=\frac{q}{p}\left(v-\frac{pu}{q}\right)^2
\ge 0.
\end{split}\end{align*}
Thus the exponent  \(\kappa=0\)  if and only if
\[
v=\frac{pu}{q}=pk_+=\frac{\beta u}{2}.
\]
Since
\[
\sum_{i=1}^n \epsilon_i=n-2v,
\qquad
\sum_{a=1}^m \eta_a=2u-m,
\]
this is equivalent to
\begin{equation}\label{e:balance-eps-eta}
\sum_{i=1}^n \epsilon_i+\frac{\beta}{2}\sum_{a=1}^m \eta_a=0.
\end{equation}

If \eqref{e:balance-eps-eta} fails, then \(\kappa>0\), so the branch
\(F_{\bm\epsilon,\bm\eta}(\bmt,\bms)\) in \eqref{e:fexp1} grows like a positive power of \(\Lambda\). For
generic choices of \((\bm\xi,\bm\zeta)\), the leading coefficients
\(C_{\bm\epsilon,\bm\eta}(\bm\xi,\bm\zeta)\) corresponding to distinct \(\bm\epsilon\) are
linearly independent, since they come from distinct monomials in the explicit
factor \(\Phi_{\bm\epsilon,\bm\eta}\). Because \(F(\bmt,\bms)\) in \eqref{e:fub1} remains bounded, this
forces
\[
c_{\bm\epsilon}=0
\qquad\text{unless}\qquad
\sum_{i=1}^n \epsilon_i+\frac{\beta}{2}\sum_{a=1}^m \eta_a=0.
\]
Hence
\begin{equation}\label{e:F-admissible}
F=\sum_{\bm\epsilon\in \mathcal E_{\bm\eta}} c_{\bm\epsilon}\,F_{\bm\epsilon,\bm\eta},
\qquad
\mathcal E_{\bm\eta}
:=
\left\{
\bm\epsilon\in\{\pm1\}^n:
\sum_{i=1}^n \epsilon_i+\frac{\beta}{2}\sum_{a=1}^m \eta_a=0
\right\}.
\end{equation}

\smallskip
\noindent
\emph{Step 3: for every admissible \(\bm\epsilon\), one has \(c_{\bm\epsilon}=1\).}
Fix \(\bm\epsilon\in\mathcal E_{\bm\eta}\). Then there exist partitions
\[
\{1,\dots,n\}=I_1\sqcup\cdots\sqcup I_k,
\qquad
\{1,\dots,m\}=A_1\sqcup\cdots\sqcup A_k,
\]
with \(|I_\alpha|=p\) and \(|A_\alpha|=q\) for every
\(1\le \alpha\le k\), such that for each block there is a sign
\(\tau_\alpha\in\{\pm1\}\) satisfying
\[
\epsilon_{I_\alpha(r)}=-\tau_\alpha,
\qquad
\eta_{A_\alpha(a)}=\tau_\alpha,
\qquad
1\le r\le p,\quad 1\le a\le q.
\]
Here \(I_\alpha(r)\) denotes the \(r\)-th element of \(I_\alpha\), and
\(A_\alpha(a)\) denotes the \(a\)-th element of \(A_\alpha\).

After relabelling the variables according to these blocks, write
\[
\epsilon_r^{(\alpha)}:=\epsilon_{I_\alpha(r)},
\qquad
t_r^{(\alpha)}:=t_{I_\alpha(r)},
\qquad
\eta_a^{(\alpha)}:=\eta_{A_\alpha(a)},
\qquad
s_a^{(\alpha)}:=s_{A_\alpha(a)}.
\]
Thus
\begin{align}\label{e:blocksign}
\epsilon_r^{(\alpha)}=-\tau_\alpha,
\quad
\eta_a^{(\alpha)}=\tau_\alpha,\quad 1\le r\le p,\quad 1\le a\le q,\quad 1\leq \al \leq k.
\end{align}

We then specialize the variables in each block by setting
\begin{align}\label{e:parameter}
t_r^{(\alpha)}
:=
\Lambda\xi_{I_\alpha(r)}
+
\tau_\alpha\ri(\Lambda^4+\Lambda),
\qquad
s_a^{(\alpha)}
:=
\Lambda\zeta_{A_\alpha(a)}
+
\tau_\alpha\ri(\Lambda^4-\Lambda).
\end{align}
For all sufficiently large \(\Lambda\), this configuration lies in the same
domain \(\fC\), and \(R(\bmt,\bms)\asymp\Lambda\). Moreover, the variables
in each block lie in \(\bC_{\tau_\alpha}\).

Then \eqref{e:logF-smallL-general}, together with
\begin{align}\begin{split}\label{e:M_exp}
M
&=
-\ri \sum_{\alpha=1}^k \tau_\alpha
\left(
\sum_{r=1}^p t_r^{(\alpha)}
-\frac{\beta}{2}\sum_{a=1}^q s_a^{(\alpha)}
\right)
\\
&=
-\ri \sum_{\alpha=1}^k\tau_\alpha
\left[
\Lambda
\left(
\sum_{r=1}^p \xi_{I_\alpha(r)}
-\frac{\beta}{2}\sum_{a=1}^q \zeta_{A_\alpha(a)}
\right)
+
2p\tau_\alpha\ri\Lambda
\right]
\\
&=
2pk\Lambda
-\ri\Lambda\sum_{\alpha=1}^k\tau_\alpha
\left(
\sum_{r=1}^p \xi_{I_\alpha(r)}
-\frac{\beta}{2}\sum_{a=1}^q \zeta_{A_\alpha(a)}
\right),
\end{split}\end{align}
and
\[
L
=
\sum_{\alpha=1}^k \sum_{r=1}^p\sum_{a=1}^q
\log\!\left(
1+\frac{|t_r^{(\alpha)}-s_a^{(\alpha)}|}
{\sqrt{|\Im[ t_r^{(\alpha)}]|\,|\Im [s_a^{(\alpha)}]|}}
\right)
=
\OO\!\left(\frac{\Lambda}{\Lambda^4}\right)
=
\OO(\Lambda^{-3}),
\]
yields
\begin{equation}\label{e:F-main-asymp}
F(\bmt,\bms)
=
e^{2pk\Lambda}
\exp\!\left(
-\ri\Lambda\sum_{\alpha=1}^k\tau_\alpha
\left(
\sum_{r=1}^p \xi_{I_\alpha(r)}
-\frac{\beta}{2}\sum_{a=1}^q \zeta_{A_\alpha(a)}
\right)
\right)
\bigl(1+\OO(\Lambda^{-1})\bigr).
\end{equation}

For the distinguished branch \(F_{\bm\epsilon,\bm\eta}\), the formula
\eqref{eq:general-branch-factor-clean} gives
\begin{equation}\label{e:F11}
F_{\bm\epsilon,\bm\eta}
=
\Phi_{\bm\epsilon,\bm\eta}W_{\bm\epsilon,\bm\eta}
=
\Phi_{\bm\epsilon,\bm\eta}\bigl(1+\OO(\Lambda^{-1})\bigr),
\end{equation}
because \(R(\bmt,\bms)\asymp \Lambda\) and
\(W_{\bm\epsilon,\bm\eta}=1+\OO(R(\bmt,\bms)^{-1})\).
By plugging in \eqref{e:blocksign} and \eqref{e:parameter}, the exponential
part of \(\Phi_{\bm\epsilon,\bm\eta}\) is
\begin{align*}
&\exp\!\left(
\ri\sum_{\alpha=1}^k\sum_{r=1}^p
\epsilon_r^{(\alpha)}t_r^{(\alpha)}
+
\ri\frac{\beta}{2}
\sum_{\alpha=1}^k\sum_{a=1}^q
\eta_a^{(\alpha)}s_a^{(\alpha)}
\right)
=
\exp\!\left[
-\ri\sum_{\alpha=1}^k\tau_\alpha
\left(
\sum_{r=1}^p t_r^{(\alpha)}
-\frac{\beta}{2}\sum_{a=1}^q s_a^{(\alpha)}
\right)
\right].
\end{align*}
By the computation of \(M\) in \eqref{e:M_exp}, this equals
\begin{equation}\label{e:F22}
\exp\!\left[
2pk\Lambda
-\ri\Lambda\sum_{\alpha=1}^k\tau_\alpha
\left(
\sum_{r=1}^p \xi_{I_\alpha(r)}
-\frac{\beta}{2}\sum_{a=1}^q \zeta_{A_\alpha(a)}
\right)
\right].
\end{equation}

Next we show that the algebraic part of
\(\Phi_{\bm\epsilon,\bm\eta}\) contributes only a
\begin{equation}\label{e:F33}
1+\OO(\Lambda^{-1})
\end{equation}
factor. Within a fixed block, or for two different blocks
\(\alpha\neq\beta\) such that \(\tau_\alpha=\tau_\beta\), all \(t\)-variables
have the same \(\epsilon\)-sign, all \(s\)-variables have the same
\(\eta\)-sign, and the \(\epsilon\)-signs and \(\eta\)-signs are opposite.
Therefore
\[
\alpha_{ij}=0,\qquad \gamma_{ab}=0,\qquad \rho_{ia}=0
\]
for all such pairs of variables. Thus these pairs give no algebraic
contribution.

For two different blocks \(\alpha\neq\beta\) with
\(\tau_\alpha\neq\tau_\beta\), all cross-block differences are of order
\(\Lambda^4\), while their lower-order corrections are of size
\(\OO(\Lambda)\). Hence, after factoring out the corresponding powers of
\(\Lambda^4\), their product is a fixed nonzero constant times
\(1+\OO(\Lambda^{-3})\). Moreover, the total power of \(\Lambda^4\)
cancels. Indeed, the cross-block exponents are
\[
-\frac{2}{\beta}\cdot p^2,
\qquad
-\frac{\beta}{2}\cdot q^2,
\qquad
1\cdot 2pq,
\]
and their sum is
\[
-\frac{2p^2}{\beta}-\frac{\beta q^2}{2}+2pq
=
-pq-pq+2pq
=
0,
\]
where we used \(\beta/2=p/q\). Thus the algebraic part is a fixed nonzero
constant times \(1+\OO(\Lambda^{-3})\). By the multiplicative normalization
of \(\Phi_{\bm\epsilon,\bm\eta}\) fixed above, this constant is equal to
\(1\). Therefore the algebraic part is \(1+\OO(\Lambda^{-1})\).

Combining \eqref{e:F11}, the exponential part \eqref{e:F22}, and the
algebraic estimate \eqref{e:F33}, we obtain
\begin{equation}\label{e:Feps-main-asymp}
F_{\bm\epsilon,\bm\eta}(\bmt,\bms)
=
e^{2pk\Lambda}
\exp\!\left(
-\ri\Lambda\sum_{\alpha=1}^k\tau_\alpha
\left(
\sum_{r=1}^p \xi_{I_\alpha(r)}
-\frac{\beta}{2}\sum_{a=1}^q \zeta_{A_\alpha(a)}
\right)
\right)
\bigl(1+\OO(\Lambda^{-1})\bigr).
\end{equation}

Now let \(\bm\epsilon'\in\mathcal E_{\bm\eta}\) with
\(\bm\epsilon'\neq\bm\epsilon\). In the following, we estimate
\(F_{\bm\epsilon',\bm\eta}(\bmt,\bms)\). For each block \(\alpha\), define the
number of mismatches
\[
m_\alpha(\bm\epsilon')
:=
\#\left\{
1\le r\le p:
\epsilon'_{I_\alpha(r)}\neq -\tau_\alpha
\right\}.
\]
Since \(\bm\epsilon'\neq\bm\epsilon\), we have
\[
m(\bm\epsilon'):=\sum_{\alpha=1}^k m_\alpha(\bm\epsilon')\ge 1.
\]

We compare the exponential part of
\(\Phi_{\bm\epsilon',\bm\eta}\) with that of the distinguished branch. Using
\eqref{e:parameter}, the modulus of the exponential contribution from the
\(\alpha\)-th block is
\begin{align*}
&\left|
\exp\left(
\ri\sum_{r=1}^p \epsilon'_{I_\alpha(r)}t_r^{(\alpha)}
+
\ri\frac{\beta}{2}\sum_{a=1}^q \eta_a^{(\alpha)}s_a^{(\alpha)}
\right)
\right|
\\
&\qquad=
\exp\left(
-\tau_\alpha(\Lambda^4+\Lambda)
\sum_{r=1}^p \epsilon'_{I_\alpha(r)}
-\frac{\beta}{2}q(\Lambda^4-\Lambda)
\right)
\\
&\qquad=
\exp\left(
(p-2m_\alpha(\bm\epsilon'))(\Lambda^4+\Lambda)
-p(\Lambda^4-\Lambda)
\right)
\\
&\qquad=
\exp\left(
2p\Lambda
-
2m_\alpha(\bm\epsilon')(\Lambda^4+\Lambda)
\right),
\end{align*}
where we used \(\beta q/2=p\). Multiplying over
\(\alpha=1,\dots,k\), this gives
\begin{align}\label{e:exppart}
\left|
\textnormal{exponential part of }\Phi_{\bm\epsilon',\bm\eta}
\right|
=
e^{2pk\Lambda}
e^{-2m(\bm\epsilon')(\Lambda^4+\Lambda)}
\le
e^{2pk\Lambda}e^{-2(\Lambda^4+\Lambda)} .
\end{align}

It remains to bound the algebraic part. Between different blocks, the
separations are at most of order \(\Lambda^4\), while all nonzero
separations are bounded below by a constant multiple of \(\Lambda\). Hence
all algebraic factors in \(\Phi_{\bm\epsilon',\bm\eta}\) are bounded by
\(C\Lambda^4\), and there are at most \((n+m)^2\) such factors. Therefore
\begin{align}\label{e:algpart}
\left|
\textnormal{algebraic part of }\Phi_{\bm\epsilon',\bm\eta}
\right|
\le
(C\Lambda)^{4(n+m)^2}.
\end{align}
Since \(W_{\bm\epsilon',\bm\eta}=1+\OO(\Lambda^{-1})\), combining
\eqref{e:exppart} and \eqref{e:algpart} gives
\begin{equation}\label{e:wrong-branch-negligible}
|F_{\bm\epsilon',\bm\eta}(\bmt,\bms)|
\le
e^{2pk\Lambda}
e^{-2(\Lambda^4+\Lambda)}
(C\Lambda)^{4(n+m)^2}
=
\oo(e^{2pk\Lambda}),
\end{equation}
provided \(\Lambda\) is sufficiently large.

Substituting the special configuration above into \eqref{e:F-admissible}, and
using \eqref{e:F-main-asymp}, \eqref{e:Feps-main-asymp}, and
\eqref{e:wrong-branch-negligible}, we obtain
\[
1+\oo(1)=c_{\bm\epsilon}(1+\oo(1))+\oo(1).
\]
Hence \(c_{\bm\epsilon}=1\). Since
\(\bm\epsilon\in \mathcal E_{\bm\eta}\) was arbitrary, we conclude
that
\[
c_{\bm\epsilon}=1
\qquad\text{for every }\bm\epsilon\in \mathcal E_{\bm\eta}.
\]

Combining this with \eqref{e:F-admissible}, we have proved on \(\fC\) that
\[
F=\sum_{\bm\epsilon\in \mathcal E_{\bm\eta}}
F_{\bm\epsilon,\bm\eta}.
\]
\end{proof}

\subsection{Solutions of the edge deformed CMS operators}
In this section, we study solutions of the edge deformed CMS system
\eqref{e:edge_eq1}--\eqref{e:edge_eq2}. As in the bulk case, we construct
sectorial solutions indexed by sign patterns
\[
(\bm\epsilon,\bm\eta)
=
(\epsilon_1,\dots,\epsilon_n,\eta_1,\dots,\eta_m)
\in \{\pm1\}^n\times\{\pm1\}^m.
\]

For \(z\in \bC_\pm\), set
\begin{align}\label{e:defphi}
\phi_-(z):=\Ai(z),
\qquad
\phi_+(z):=
\begin{cases}
\ri\omega \Ai(\omega z),
 & z\in \bC_+,\\[2mm]
-\ri\omega^2 \Ai(\omega^2 z) & z\in \bC_-.
\end{cases}
\end{align}
Both functions solve the Airy equation
\[
\phi''(z)-z\phi(z)=0.
\]
Moreover, by \eqref{e:Ai_asymp}, \eqref{e:Bi-minus-iAi-upper}, and
\eqref{e:Bi-plus-iAi-lower}, they satisfy the asymptotics
\begin{align}\label{e:phi_asymptotics}
\phi_\pm(z)
\sim
\frac{1}{2\sqrt{\pi}} z^{-1/4}
\exp\!\left(\pm\frac23 z^{3/2}\right),
\qquad
z\to\infty,\quad |\arg z|< \pi-\delta .
\end{align}
We refer to \Cref{a:Airy} for further background on Airy functions. We remark that $\phi_+$ in \eqref{e:defphi} is constructed such that the asymptotics \eqref{e:phi_asymptotics} hold.

For the \(s\)-variables, we introduce the rescaled Airy functions
\begin{align}\label{e:defpsi}
\psi_\pm(s)
:=
\phi_\pm\!\left(\left(\frac{\beta}{2}\right)^{2/3}s\right).
\end{align}
Then \(\psi_\pm\) solve
\[
\psi_\pm''(s)-\frac{\beta^2}{4}s\,\psi_\pm(s)=0.
\]
The solution associated with a sign pattern
\((\bm\epsilon,\bm\eta)\) is characterized by the leading Airy behavior
\[
\left(\prod_{i=1}^n\phi_{\epsilon_i}(t_i)\right)
\left(\prod_{a=1}^m\psi_{\eta_a}(s_a)\right).
\]

We first consider the special case \(\beta=2\), where the edge deformed CMS
operators are given explicitly in \eqref{e:edgebeta=2}. In this case, one can
write down the solutions in closed form. The proof is a direct verification and
is postponed to \Cref{s:edge-b2solution}.

\begin{theorem}[Explicit solutions for \(\beta=2\)]
\label{thm:edge-beta2-explicit}
For each sign pattern
\[
\bm\sigma
=
(\epsilon_1,\dots,\epsilon_n,\eta_1,\dots,\eta_m)
\in \{\pm1\}^{n+m},
\]
write
\[
\bmx=(x_1,\dots,x_{n+m})
:=
(\bmt,\bms)=(t_1,\dots,t_n,s_1,\dots,s_m).
\]
Define
\[
D_{\bm\sigma}(\bmx)
:=
\det\!\left[
\partial_x^{\,j-1}\phi_{\sigma_r}(x_r)
\right]_{1\le r,j\le n+m}.
\]
Let
\[
\Delta(\bmt):=\prod_{1\le i<j\le n}(t_i-t_j),
\qquad
\Delta(\bms):=\prod_{1\le a<b\le m}(s_a-s_b),
\]
and set
\begin{equation}
\label{eq:F-sigma-edge}
F_{\bm\sigma}(\bmt,\bms)
:=
\frac{D_{\bm\sigma}(\bmx)}{\Delta(\bmt)\Delta(\bms)}.
\end{equation}
Then \(F_{\bm\sigma}\) solves the \(\beta=2\) edge system
\eqref{e:edgebeta=2}. Moreover, the \(2^{n+m}\) solutions obtained in this way
are linearly independent.
\end{theorem}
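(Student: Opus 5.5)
Note first that the system \eqref{e:edgebeta=2} treats all $n+m$ variables alike except for the signs in the interaction terms: same-species pairs enter with $+\frac{\partial_{x_r}-\partial_{x_\ell}}{x_r-x_\ell}$, cross-species pairs with $-\frac{\partial_{x_r}+\partial_{x_\ell}}{x_r-x_\ell}$. The plan is a determinantal ("Wronskian") identity. Put $\Delta_{\rm full}(\bmx):=\prod_{r<\ell}(x_r-x_\ell)$ and $G:=D_{\bm\sigma}/\Delta_{\rm full}$. Then $F_{\bm\sigma}=G\cdot \Delta_{\rm full}/(\Delta(\bmt)\Delta(\bms))=G\,\prod_{i,a}(t_i-s_a)$. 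So it suffices to prove two things: a PDE for $G$, and a conjugation identity moving that PDE through the factor $\prod_{i,a}(t_i-s_a)$.

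\textbf{The PDE for $G$.} The claim is that $G$ solves
\[
\Bigl(\partial_{x_r}^2-x_r+\sum_{\ell\ne r}\frac{2}{x_r-x_\ell}\partial_{x_r}-\sum_{\ell\ne r}\frac{2}{x_r-x_\ell}\partial_{x_\ell}\Bigr)G=0 .
\]
This is the standard fact that a ratio of an Airy-kernel determinant to the Vandermonde solves the $\beta=2$ rational CMS-type equation. To verify it:
\begin{itemize}
\item Row $r$ of $D_{\bm\sigma}$ depends only on $x_r$.
\item Since $\phi''=x\phi$, we have $\partial_{x_r}^2 D_{\bm\sigma}=x_rD_{\bm\sigma}+(\text{terms where row } r \text{ is replaced by } (\phi^{(j+1)}_{\sigma_r}(x_r))_j \text{ minus } x_r\cdot\text{row})$.
\item The key point is that $\phi^{(j+1)}-x\phi^{(j-1)}=(j-1)\phi^{(j-2)}$, which follows by differentiating the Airy equation. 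Hence each $\phi_\sigma$ satisfies $\phi^{(k+2)}=x\phi^{(k)}+k\phi^{(k-1)}$, i.e. the row space is closed under $\partial^2-x$ up to lower shifts. This identifies $D_{\bm\sigma}$ with the $n+m$ dimensional analogue of $\det[\phi_{\sigma_r}^{(j-1)}(x_r)]$ used in Christoffel--Darboux / Airy kernel formulas.
\end{itemize}
Rather than expanding directly, I would use the classical argument: $D_{\bm\sigma}/\Delta_{\rm full}$ equals a Gaussian (Airy) matrix integral / ratio of characteristic polynomials. Alternatively, argue inductively via Laplace expansion along row $r$, using that $\Delta_{\rm full}^{-1}(\sum_r \partial_r^2)\Delta_{\rm full}$ structure gives the antisymmetric drift $2\sum_\ell(\partial_r-\partial_\ell)/(x_r-x_\ell)$. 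Concretely, I expect the clean route to be the per-variable identity
\[
\Delta_{\rm full}^{-1}\circ(\partial_r^2-x_r)\circ \Delta_{\rm full}\ \text{acting on } G,
\]
combined with the observation that $(\partial_r^2-x_r)D_{\bm\sigma}$ is a sum of determinants with shifted columns. That sum must match $\sum_\ell \frac{2}{x_r-x_\ell}(\dots)$ after dividing; I would check this by comparing both sides as antisymmetric functions divisible by $\Delta_{\rm full}$ and comparing growth/degree in the difference variables.

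\textbf{Conjugating by the cross factor.} With $P:=\prod_{i,a}(t_i-s_a)$, I compute $P^{-1}\circ L_r\circ P$ for each $t_i$ (and symmetrically for each $s_a$). The first derivatives of $\log P$ are $\sum_a 1/(t_i-s_a)$ and $-\sum_i 1/(t_i-s_a)$. The conjugation flips the sign of the cross-species drift: the cross term $\frac{2}{t_i-s_a}(\partial_{t_i}-\partial_{s_a})$ combines with the new first-order terms $-\frac{2}{t_i-s_a}\partial_{t_i}$ to give $-\frac{\partial_{t_i}+\partial_{s_a}}{t_i-s_a}$ in the required form. It also produces zeroth-order terms: second derivatives of $\log P$, squares, and interaction-times-$\partial\log P$. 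These must cancel via the three-term identity $\frac{1}{(x-y)(x-z)}+\text{cyclic}=0$. This bookkeeping is where I expect the main obstacle. It is also the part of the argument where the single-species versus cross-species coefficients ($1$ versus $-1$ at $\beta=2$) must match exactly.

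\textbf{Linear independence.} Here I would use the asymptotics \eqref{e:phi_asymptotics}. Place the variables on separated scales $x_r=\Lambda^r e^{\ri\theta}$ with $|\theta|<\pi-\delta$. The leading behaviour of $D_{\bm\sigma}$ is then dominated by the product $\prod_r\phi_{\sigma_r}(x_r)$ times a Vandermonde-type factor in $x_r^{1/2}$, since $\partial^{j-1}\phi_\pm\approx(\pm\sqrt{x})^{j-1}\phi_\pm$. That Vandermonde factor is nonzero because the $\pm\sqrt{x_r}$ are distinct. Distinct $\bm\sigma$ therefore give distinct exponential weights $\exp(\frac23\sum_r\sigma_r x_r^{3/2})$. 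A lexicographic dominance argument, as in Step 1 of the proof of \Cref{p:bulk_connection_formula}, then excludes any nontrivial linear relation.
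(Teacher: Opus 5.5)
Your architecture is the paper's: write $F_{\bm\sigma}=P\,G$ with $P=\prod_{i,a}(t_i-s_a)$ and $G=D_{\bm\sigma}/\Delta_{\rm full}$, reduce to a scalar $A$-type system, lift back, and get independence from Airy asymptotics. However, the reduced equation you state for $G$ is wrong by a factor $2$. The correct operator is $\sfL_r=\partial_{x_r}^2-x_r+\sum_{\ell\ne r}\frac{\partial_{x_r}-\partial_{x_\ell}}{x_r-x_\ell}$. Indeed $2\sum_r(\partial_{x_r}\log\Delta_{\rm full})\partial_{x_r}=\sum_r\sum_{\ell\ne r}\frac{\partial_{x_r}-\partial_{x_\ell}}{x_r-x_\ell}$, so $\sum_r\sfL_r=\Delta_{\rm full}^{-1}\circ\sum_r(\partial_{x_r}^2-x_r)\circ\Delta_{\rm full}$, with coefficient $1$ per $r$, not $2$. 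Already for $n+m=2$, with $u=x_1-x_2$ and using $\phi''=x\phi$, one computes $(\partial_{x_1}^2-x_1)G+c\,u^{-1}(\partial_{x_1}-\partial_{x_2})G=(c-1)\bigl[u^{-2}\bigl(2\phi_{\sigma_1}'(x_1)\phi_{\sigma_2}'(x_2)-(x_1+x_2)\phi_{\sigma_1}(x_1)\phi_{\sigma_2}(x_2)\bigr)-2u^{-3}D_{\bm\sigma}\bigr]$. This vanishes only for $c=1$. With your coefficient the conjugation step cannot close either: $\frac{2}{t_i-s_a}(\partial_{t_i}-\partial_{s_a})-\frac{2}{t_i-s_a}\partial_{t_i}=-\frac{2\partial_{s_a}}{t_i-s_a}$, not $-\frac{\partial_{t_i}+\partial_{s_a}}{t_i-s_a}$, and the same-species drift would stay $2$ instead of $1$. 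With coefficient $1$ the first-order terms match exactly, and the zeroth-order terms cancel by the three-term algebra you anticipate; this is the paper's reduction theorem.

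The more serious gap is that, even with the right coefficient, nothing in the proposal proves that $G$ satisfies each of the $n+m$ equations $\sfL_rG=0$ separately, and that is the actual content of the theorem. Conjugating the Laplacian by $\Delta_{\rm full}$ only gives the summed equation. The per-variable conjugate $\Delta_{\rm full}^{-1}\circ(\partial_{x_r}^2-x_r)\circ\Delta_{\rm full}$ is not $\sfL_r$: it has no $\partial_{x_\ell}$-terms for $\ell\ne r$, and it carries the potential $2\sum_{u<v,\ u,v\ne r}(x_r-x_u)^{-1}(x_r-x_v)^{-1}$. By your own recursion, $(\partial_{x_r}^2-x_r)D_{\bm\sigma}$ is the single determinant with row $r$ replaced by $\bigl((j-1)\phi^{(j-2)}_{\sigma_r}(x_r)\bigr)_j$. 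This is nonzero; only its sum over $r$ vanishes, since that sum equals $\det M\cdot\operatorname{tr}N$ with $N$ strictly upper triangular. So for each fixed $r$ you need an exact identity matching this determinant against the first-order interaction terms.

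Your fallback of comparing ``antisymmetric functions divisible by $\Delta_{\rm full}$'' and degrees does not apply. Different rows carry different Airy solutions $\phi_{\sigma_r}$, so $D_{\bm\sigma}$ has no antisymmetry, and the entries are transcendental, so there is no degree to compare. The matrix-integral remark is never developed.

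The paper supplies this step as follows. It writes $\phi_{\sigma_r}(x)=\int_{\Gamma_{\sigma_r}}e^{z^3/3-xz}\,\rd z$, so that $D_{\bm\sigma}=c\int\Delta(\bmz)e^{S(\bmz;\bmx)}\,\rd\bmz$. Then $\sfL_r$ acts on the kernel $\Delta(\bmz)e^{S}/\Delta(\bmx)$ as multiplication by an explicit function $B_r(\bmx,\bmz)$. Using Lagrange interpolation and partial fractions over index subsets containing $r$, $B_r\Delta(\bmz)e^{S}$ is exhibited as a finite sum of total $z$-derivatives, whose integrals over the Airy contours vanish. Some such exact per-variable identity is the missing idea. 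Your linear-independence argument is essentially the paper's.
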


We now return to general \(\beta>0\). Fix a sign pattern
\[
(\bm\epsilon,\bm\eta)
=
(\epsilon_1,\dots,\epsilon_n,\eta_1,\dots,\eta_m)
\in \{\pm1\}^n\times\{\pm1\}^m.
\]
Define the configuration space
\begin{equation}\label{e:defedgeX}
\sfX
:=
\Bigl\{
(\bmt,\bms)\in(\bC_+\cup \bC_-)^{n+m}:
t_i,s_a\neq 0,\ 
t_i\neq t_j,\ s_a\neq s_b,\ t_i\neq s_a,\
|\arg t_i|, |\arg s_a|<\pi
\Bigr\}.
\end{equation}
Fix a simply connected domain
\(\sfC\subset\sfX\). Since
\(|\arg t_i|,|\arg s_a|< \pi-\delta\), the principal square-root branches
$
\sqrt{t_i},\sqrt{s_a}$
are well defined on \(\sfC\). Then, for all choices of signs, the functions
\[
\pm\sqrt{t_i}\pm\sqrt{t_j},\qquad
\pm\sqrt{s_a}\pm\sqrt{s_b},\qquad
\pm\sqrt{t_i}\pm\sqrt{s_a}
\]
are holomorphic and nonvanishing on \(\sfC\). Hence, we may also choose branches of
\[
\log(\pm\sqrt{t_i}\pm\sqrt{t_j}),\qquad
\log(\pm\sqrt{s_a}\pm\sqrt{s_b}),\qquad
\log(\pm\sqrt{t_i}\pm\sqrt{s_a}).
\]
We then define the branch factor
\begin{align}\begin{split}
\label{eq:edge-PhiF-clean}
\Phi_{\bm\epsilon,\bm\eta}(\bmt,\bms)
&:=
\left(\prod_{i=1}^n\phi_{\epsilon_i}(t_i)\right)
\left(\prod_{a=1}^m\psi_{\eta_a}(s_a)\right)
\prod_{1\le i<j\le n}
(\epsilon_i\sqrt{t_i}+\epsilon_j\sqrt{t_j})^{-2/\beta}
\\
&\qquad\times
\prod_{1\le a<b\le m}
(\eta_a\sqrt{s_a}+\eta_b\sqrt{s_b})^{-\beta/2}
\prod_{i=1}^n
\prod_{a=1}^m
(\epsilon_i\sqrt{t_i}-\eta_a\sqrt{s_a}).
\end{split}\end{align}

We also introduce the separation scale
\begin{align}\label{e:defsfR}
\sfR(\bmt,\bms)
:=
\min\Bigl(
\min_{1\le i\le n}|t_i|,
\min_{1\le a\le m}|s_a|,
\min_{1\le i<j\le n}|t_i-t_j|,
\min_{1\le i\le n,\ 1\le a\le m}|t_i-s_a|,
\min_{1\le a<b\le m}|s_a-s_b|
\Bigr).
\end{align}

The following theorem shows that, for general \(\beta>0\), the same phenomenon
persists: each sign pattern gives rise to a solution of the edge deformed CMS
system \eqref{e:edge_eq1}--\eqref{e:edge_eq2}, whose leading asymptotic factor
is the branch factor \eqref{eq:edge-PhiF-clean}. We postpone the proof to
\Cref{s:edge_solution}.

\begin{theorem}[Edge local solution space and sectorial solutions]
\label{thm:edge-sectorial-solution}
Fix \(0<\delta<\pi/2\). The following two statements hold.

\begin{enumerate}
\item[\textup{(i)}]
For every point \((\bmt_0,\bms_0)\in\sfX\), the space of germs at
\((\bmt_0,\bms_0)\) of holomorphic solutions of the edge deformed CMS system
\eqref{e:edge_eq1}--\eqref{e:edge_eq2} has dimension \(2^{n+m}\).

\item[\textup{(ii)}]
There exists a sufficiently large constant \(D=D(\beta,n,m)\) such that
the following holds. Write
\[
(z_1,\dots,z_{n+m}):=(t_1,\dots,t_n,s_1,\dots,s_m).
\]
For any $\bm\tau\in \{\pm\}^{n+m}$, consider the unbounded ordered domain
\begin{align}\begin{split}\label{e:edgeC}
\sfC=\sfC(\bm\tau,D)
:=\Bigl\{\bmz\in\bC^{n+m}:\;&
\left|\arg z_j-\frac{\pi}{3}\tau_j\right|<\frac{\pi}{12},
\quad 1\le j\le n+m,\\
&\Re [z_{N}]>D,\qquad
\Re [z_{r}]-\Re [z_{r+1}]>D,
\quad 1\le r<n+m
\Bigr\}.\end{split}\end{align}
On \(\sfC\), choose branches of the logarithms entering
\(\Phi_{\bm\epsilon,\bm\eta}\). For each sign pattern
\((\bm\epsilon,\bm\eta)\in\{\pm1\}^n\times\{\pm1\}^m\), there exists a
holomorphic solution \(F_{\bm\epsilon,\bm\eta}\) of
\eqref{e:edge_eq1}--\eqref{e:edge_eq2} on \(\sfC\) of the form
\begin{equation}\label{e:edge_defF}
F_{\bm\epsilon,\bm\eta}
=
\Phi_{\bm\epsilon,\bm\eta}
W_{\bm\epsilon,\bm\eta}.
\end{equation}
The correction factor satisfies
\begin{equation}\label{eq:edge-W-basic-est}
W_{\bm\epsilon,\bm\eta}(\bmt,\bms)
=
1+\OO\!\left(\sfR(\bmt,\bms)^{-3/2}\right),
\end{equation}
and, for subsets \(I\subseteq\{1,\dots,n\}\) and
\(A\subseteq\{1,\dots,m\}\),
\begin{equation}\label{eq:edge-jet-basic-est}
\partial_{\bmt_I}\partial_{\bms_A}
W_{\bm\epsilon,\bm\eta}(\bmt,\bms)
=
\OO\!\left(
\sfR(\bmt,\bms)^{-|I|-|A|-3/2}
\right).
\end{equation}
The estimates are uniform on \(\sfC\), and their implicit constants depend
only on \(\beta,n,m\). Moreover, the \(2^{n+m}\) solutions obtained in this
way are linearly independent.
\end{enumerate}
\end{theorem}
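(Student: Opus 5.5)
Part (i) I would prove exactly as in the bulk case, by a Gröbner basis argument for the left ideal generated by the $n+m$ operators
\[
P_i=\partial_{t_i}^2-t_i+\tfrac2\beta\sum_{j\ne i}\tfrac{\partial_{t_i}-\partial_{t_j}}{t_i-t_j}-\sum_a\tfrac{\partial_{t_i}+(2/\beta)\partial_{s_a}}{t_i-s_a},
\qquad Q_a=\partial_{s_a}^2-\tfrac{\beta^2}{4}s_a+\cdots,
\]
taken in the Weyl algebra localized on $\sfX$. The edge operators differ from the bulk ones only in their zeroth-order terms, $-t_i$ and $-\tfrac{\beta^2}{4}s_a$ in place of $1$ and $\tfrac{\beta^2}{4}$. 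These terms do not affect the principal symbols $\xi_i^2$, $\zeta_a^2$. The leading monomials are therefore pairwise coprime, and Buchberger's criterion reduces the Gröbner property to integrability: one must check that the commutators $[P_i,P_j]$, $[P_i,Q_a]$, $[Q_a,Q_b]$ lie in the left ideal. I would do this by repeating the bulk computation. The only new terms are the commutators of the potentials with the first-order interaction terms. For example,
\[
\Bigl[\tfrac{\partial_{t_i}-\partial_{t_j}}{t_i-t_j},\,t_i+t_j\Bigr]=0,
\qquad
\Bigl[\tfrac{\partial_{t_i}+(2/\beta)\partial_{s_a}}{t_i-s_a},\,\tfrac{\beta^2}{4}s_a\Bigr]\ \text{versus}\ \Bigl[\cdots,t_i\Bigr].
\]
These cross terms cancel thanks to the relative factors $2/\beta$ and $\beta/2$. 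This is exactly the linear potential compatibility that the loop equations already force through the master identity \eqref{e:edge-master}. Once integrability holds, the square-free derivatives $\partial_{\bmt_I}\partial_{\bms_A}$ form a basis of the quotient module, and prolonging the system gives a flat connection of rank $2^{n+m}$ on $\sfX$. Hence germs of solutions form a space of dimension $2^{n+m}$.

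For (ii), I would prolong to the flat first-order system $\partial_{z_\ell}Y=A_\ell Y$ for the jet vector $Y=(\partial_{\bmt_I}\partial_{\bms_A}F)_{I,A}$. I would then conjugate by the branch factor, writing $F=\Phi_{\bm\epsilon,\bm\eta}W$, and compute the equation satisfied by $W$. Since $\phi_{\epsilon}''=t\phi_\epsilon$ and $\psi_\eta''=\tfrac{\beta^2}4 s\psi_\eta$, the dominant terms cancel. The algebraic factors $(\epsilon_i\sqrt{t_i}+\epsilon_j\sqrt{t_j})^{-2/\beta}$ and the mixed factors are chosen so that they cancel the first-order interaction terms to leading order. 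This cancellation uses
\[
\partial_t\log\phi_\epsilon(t)=\epsilon\sqrt t+\OO(|t|^{-1}),
\qquad
\tfrac{\epsilon_i\sqrt{t_i}-\epsilon_j\sqrt{t_j}}{t_i-t_j}=\tfrac{1}{\epsilon_i\sqrt{t_i}+\epsilon_j\sqrt{t_j}} .
\]
The resulting equations for $W$ take the form $\mathcal L_{\bm\epsilon,\bm\eta}W=\mathcal V W$, where $\mathcal L$ is a first-order transport operator $2\epsilon_i\sqrt{t_i}\,\partial_{t_i}+\cdots$ (plus $\partial^2$) and the remainder $\mathcal V$ has size $\OO(\sfR^{-2})$ relative to the transport coefficient $\sqrt{\sfR}$. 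I would then solve this by a contraction (Volterra) argument. Writing $W=1+\mathcal K W$, the inverse transport is integrated along rays going to infinity inside the sector $|\arg z_j-\tfrac\pi3\tau_j|<\tfrac\pi{12}$. On these rays $\Re(\epsilon z^{3/2})$ is monotone, so the appropriate Airy exponential is either recessive or dominant. The integration direction, toward or away from infinity, is chosen per coordinate so that the kernel is bounded, in the same way one constructs Airy functions with prescribed asymptotics. The kernel then gains a factor $\int^\infty |z|^{-2}|z|^{-1/2}\,|\rd z|\sim\sfR^{-3/2}$, which gives \eqref{eq:edge-W-basic-est}. Cauchy estimates on polydiscs of radius $\asymp\sfR$, which are contained in a slightly larger sector of the same type, then give the jet bounds \eqref{eq:edge-jet-basic-est}. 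The ordering $\Re z_r-\Re z_{r+1}>D$ keeps all pairwise separations comparable to the distance along the rays, so the interaction terms stay integrable along the rays. Linear independence follows from the distinct leading behaviours $\prod\phi_{\epsilon_i}\prod\psi_{\eta_a}$: pushing the variables to infinity on separated scales along the sector rays, as in Step~1 of the proof of \Cref{p:bulk_connection_formula}, isolates each coefficient of a vanishing linear combination in turn.

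I expect the main obstacle to be the contraction step in (ii). A single fixed-point equation must mix integration toward and away from infinity in different variables, because each sign pattern has its own recessive and dominant directions. One also has to show that the coupled second-order corrections $\partial_{z_\ell}^2W$ do not destroy the $\sfR^{-3/2}$ gain. I would first try to avoid this difficulty by working directly with the flat connection. The sectorial solution would be characterized as the unique flat section whose gauge-transformed jet vector tends to the prescribed constant vector along one multi-ray to infinity. Existence would then reduce to a one-variable Levinson-type asymptotic theorem applied successively in $z_{n+m},z_{n+m-1},\dots,z_1$. Flatness transports this solution to all of $\sfC$, and the estimates are propagated by Gronwall, using $\|A_\ell-A_\ell^{(0)}\|=\OO(\sfR^{-2})$.
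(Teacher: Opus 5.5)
Your part (i) follows the paper: edge commutator identities, Buchberger's criterion, and prolongation to the flat rank-$2^{n+m}$ system on square-free jets. One correction: the commutators of the linear potentials with the interaction terms do \emph{not} cancel. For example,
$\bigl[\tfrac{2}{\beta}\tfrac{\partial_{t_i}-\partial_{t_j}}{t_i-t_j},\,t_i-t_j\bigr]=\tfrac{4}{\beta(t_i-t_j)}$,
and the mixed terms contribute $\tfrac{\beta}{t_i-s_a}$. These are exactly the zeroth-order parts of $\tfrac{4}{\beta(t_i-t_j)^2}(\sfT_j-\sfT_i)$ and of $\tfrac{4}{\beta(t_i-s_a)^2}\sfS_a-\tfrac{\beta}{(t_i-s_a)^2}\sfT_i$, which is why the bulk identities survive with the same coefficients. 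The appeal to the master identity is not an argument for integrability.

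The genuine gap is in (ii), at the step you yourself flag. On $\sfC$ one has $\arg z_j^{3/2}\in\tau_j(3\pi/8,5\pi/8)$, so the sector straddles the anti-Stokes direction $\arg z=\tau_j\pi/3$ and $\Re z_j^{3/2}$ changes sign inside it. There is therefore no assignment of ``recessive'' or ``dominant'' that is uniform on $\sfC$. Your per-coordinate choice of integration direction would depend on the base point. Moreover, integrating from a finite base point produces neither a uniquely normalized solution nor the uniform bound $W=1+\OO(\sfR^{-3/2})$ on the unbounded domain.

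Your fallback does not close this gap, for three reasons.
\begin{enumerate}
\item Propagating estimates by Gronwall along paths in $\sfC$ loses factors $\exp\bigl(C\int\sqrt{|z|}\,|\rd z|\bigr)$. The principal part of the jet connection has entries of size $\sqrt{|z_\ell|}$ with growing modes in generic directions.
\item The claim $\|A_\ell-A_\ell^{(0)}\|=\OO(\sfR^{-2})$ is false. The $\Omega_i,\Sigma_a$ and pair-interaction coefficients are only $\OO(\sfR^{-1})$, and they produce non-integrable $1/\lambda$ couplings.
\item Sending one coordinate at a time to infinity leaves $\sfR$ bounded, so it cannot yield the stated uniform estimates.
\end{enumerate}

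The missing idea is to choose the \emph{path direction} rather than the integration direction, and to do so for all coordinates at once. For each sign pattern $\bm\sigma$, the paper fixes a single vector $\bm\omega$ with $\omega_j=K^{n+m+1-j}\exp[\ri\tau_j(\pi/3+\sigma_j\pi/16)]$. Along $\bm z+r\bm\omega$ this choice guarantees four things:
\begin{itemize}
\item the sector and ordering conditions persist;
\item $\sfR$ grows like $\sfR(\bm z)+r$;
\item $\Re[-\sigma_j\omega_j\sqrt{z_j+r\omega_j}]\gtrsim(\sfR(\bm z)+r)^{1/2}$ for every $j$;
\item consequently every coordinate can be integrated from infinity, with no mixing.
\end{itemize}
In the variable $\zeta=\tfrac23\lambda^{3/2}$, the scaled jets $\lambda^{|I|+|A|}\partial_{\bmt_I}\partial_{\bms_A}W$ satisfy a block-lower-triangular system. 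Its diagonal entries have real part at least $c$. Its off-diagonal principal entries, which are unbounded when coordinates sit at very different scales, are controlled by differences of diagonal real parts. This makes the backward propagator decay exponentially and the Volterra map a contraction in the norm $\sup\zeta\|U_\ast\|$.

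From this one reads off $W(\bm z)=U_0(\zeta_0)=1+\OO(\sfR^{-3/2})$ together with the jet bounds, raywise uniqueness, holomorphy, horizontality via flatness, and gluing, since the same $\bm\omega$ serves every base point. Your separated-scales argument for linear independence is an acceptable alternative to the paper's character-table jet matrix.
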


\subsection{Proof of \Cref{t:edge-characterization}}
Our characterization of Airy point process \Cref{t:edge-characterization}, follows from the proposition
below. The proposition states that the classification of the edge deformed CMS
system \eqref{e:edge_eq1}--\eqref{e:edge_eq2} in
\Cref{thm:edge-sectorial-solution}, together with the asymptotics of the
exponential observables from \Cref{p:edge_est}, uniquely determines the exponential observable.

In the remainder of this section, we first prove
\Cref{t:edge-characterization}, using \Cref{p:edge_formula} as an
input. We then prove \Cref{p:edge_formula} in \Cref{s:edge_proof_connect}.

\begin{proposition}
\label{p:edge_formula}
Adopt \Cref{a:edge}, and assume
\[
\frac{\beta}{2}=\frac{p}{q}\in\bQ_{>0},
\qquad
n=kp,
\qquad
m=kq.
\]
Define
\begin{align}\label{e:defF-edge}
F(t_1,\dots,t_n;s_1,\dots,s_m)
:=
\bE\left[
\frac{\prod_{r=1}^n e^{c_0t_r}}
{\prod_{a=1}^m e^{(\beta/2)c_0s_a}}
\prod_{j\ge 1}
\frac{\prod_{r=1}^n (t_r-x_j)e^{t_r/\fa_j}}
{\prod_{a=1}^m \bigl((s_a-x_j)e^{s_a/\fa_j}\bigr)^{\beta/2}}
\right],
\end{align}
where \(c_0=\Ai'(0)/\Ai(0)\).

Fix sign patterns
\[
\bm\epsilon=(\epsilon_1,\dots,\epsilon_n)\in\{\pm1\}^n,
\qquad
\bm\eta=(\eta_1,\dots,\eta_m)\in\{\pm1\}^m,
\]
satisfying
\begin{align}\label{e:balance-halfplanes-edge}
\#\{r:\epsilon_r=+1\}=k_+p,\quad
\#\{r:\epsilon_r=-1\}=k_-p,\quad
\#\{a:\eta_a=+1\}=k_+q,\quad
\#\{a:\eta_a=-1\}=k_-q,
\end{align}
where \(k_++k_-=k\), and set
\[
\bm\tau
:=
(\epsilon_1,\dots,\epsilon_n,\eta_1,\dots,\eta_m).
\]
For \(D=D(\beta,n,m)>0\) sufficiently large, and  let $\sfC=\sfC(\bm\tau,D)$ be as in \eqref{e:edgeC}.

Recall the sectorial solutions \(F_{\bm\epsilon,\bm\eta}\) from
\eqref{e:edge_defF}. Choose the logarithmic branches entering
\(\Phi_{\bm-,\bm+}\) according to the conventions used in the proof below.
Then, on \(\sfC\),
\begin{equation}\label{e:F-edge-one-branch}
F
=
2^{k(p+q)/2}
\left(\frac{\beta}{2}\right)^{kq/6}
\pi^{k(p+q)/2}
\exp\!\left(-\ri\pi\vartheta_{p,q}(k_+,k_-)\right)
F_{\bm-,\bm+},
\end{equation}
where
\begin{equation}\label{e:edge-phase-vartheta}
\vartheta_{p,q}(k_+,k_-)
:=
\frac{q}{2}
\left[
p\bigl(k_-^2+2k_+k_- -k_+^2\bigr)
+k_- -k_+
\right].
\end{equation}
Here \(\bm+\) and \(\bm-\) denote the all-plus and all-minus sign patterns,
respectively.
\end{proposition}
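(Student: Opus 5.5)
The proof will follow the bulk argument of \Cref{p:bulk_connection_formula}, with \Cref{thm:edge-sectorial-solution} in place of \Cref{thm:sectorial-branch-solution} and \Cref{p:edge_est} in place of \Cref{prop:pv-product-smallL}. Throughout, the $\bm\tau$-dependence of the domain $\sfC(\bm\tau,D)$ in \eqref{e:edgeC} is kept fixed.

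\emph{Step 0: expansion in the basis.} On $\sfC$, the variables $t_r,s_a$ lie in the half-planes $\bC_{\epsilon_r},\bC_{\eta_a}$. By \eqref{e:balance-halfplanes-edge}, the half-plane balance condition \eqref{e:edge-balance-halfplanes} holds there. Hence \Cref{p:edge_equation} shows that $F$ solves \eqref{e:edge_eq1}--\eqref{e:edge_eq2}, and \Cref{thm:edge-sectorial-solution} gives an expansion
\[
F=\sum_{(\bm\epsilon',\bm\eta')}c_{\bm\epsilon',\bm\eta'}F_{\bm\epsilon',\bm\eta'}
\]
with constant coefficients. The claim is that only the pattern $(\bm-,\bm+)$ survives, and that its coefficient is the stated constant.

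\emph{Step 1: kill all other patterns.} By \eqref{e:phi_asymptotics}, each $F_{\bm\epsilon',\bm\eta'}$ behaves like
\[
\exp\Bigl(\tfrac23\sum_r\epsilon'_r t_r^{3/2}+\tfrac23\cdot\tfrac\beta2\sum_a\eta'_a s_a^{3/2}\Bigr)
\]
times algebraic factors and $1+\OO(\sfR^{-3/2})$. On the other hand, $F$ is controlled by \eqref{e:logF-smallL-edge}: $F=e^{M+\OO(L)}$, with
\[
M=-\tfrac23\Bigl(\sum t_r^{3/2}-\tfrac\beta2\sum s_a^{3/2}\Bigr),
\]
which is exactly the exponent of the $(\bm-,\bm+)$ pattern. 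Condition \eqref{e:firstqq} holds because $|\arg z|\in(\pi/4,5\pi/12)$ on $\sfC$.

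I will push points to infinity inside $\sfC$ on separated scales, taking $z_j=\lambda_j e^{\ri\pi\tau_j/3}$ with $\lambda_j=\Lambda^{c_j}$ growing at distinct rates. This ordering is compatible with the real-part constraints in \eqref{e:edgeC} if the scales are chosen decreasing in $j$. On each ray $\arg z=\pm\pi/3$ we have $z^{3/2}=\pm\ri|z|^{3/2}$, so $\Re z^{3/2}=0$ exactly on the rays and the moduli of the exponentials all coincide there. This is harmful, so instead I will perturb the argument slightly, to $\pm(\pi/3-\delta')$, still inside the sector. Then $\Re z^{3/2}>0$ on both rays, and flipping one sign changes the modulus by $e^{\pm c\lambda_j^{3/2}}$.

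Choosing the block organization so that $L$ stays $\OO(1)$ is delicate, since pairs $t_r,s_a$ must be comparable. I therefore expect to use upper bounds rather than asymptotics in this step: a $|F|$ bound analogous to \eqref{e:F-upper-general}, obtained from \eqref{eqn:edge} exactly as in the bulk proof. Taking the largest-scale variable at which a nonzero $c$ has a wrong sign, that term dominates and has modulus exceeding the bound on $|F|$, by the same lexicographic argument as Step 1 of the bulk proof. This eliminates every pattern except $(\bm-,\bm+)$.

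\emph{Step 2: identify the constant.} Using block configurations as in \eqref{e:parameter}, one places each $\alpha$-block (with $p$ $t$'s and $q$ $s$'s) near a common far point in $\sfC$ with intra-block spread $\Lambda$ and location $\Lambda^4$. Then $L=\OO(\Lambda^{-3})$, and \eqref{e:logF-smallL-edge} gives $F=e^{M}(1+o(1))$. Comparing with $\Phi_{\bm-,\bm+}$ requires several explicit ingredients:
\begin{itemize}
\item the Airy prefactors $(2\sqrt\pi)^{-1}z^{-1/4}$, which give $(2\sqrt\pi)^{-(n+m)}$ together with $(\beta/2)^{-m/6}$ from $\psi$;
\item the factors $(-\sqrt t_i-\sqrt t_j)^{-2/\beta}$, $(\sqrt s_a+\sqrt s_b)^{-\beta/2}$ and $(-\sqrt t_i-\sqrt s_a)$;
\item the powers of $\Lambda$, which must cancel by $\beta q/2=p$, as in the bulk count.
\end{itemize}
The remaining constant is a product of powers of $2$, $\pi$, $\beta/2$ and phases coming from the chosen branches of $\log(-\cdots)$. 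This yields \eqref{e:F-edge-one-branch}, with the phase $\vartheta_{p,q}$ determined by counting pairs within and across the $k_\pm$ groups.

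I expect the main obstacle to be Step 1. The edge asymptotics make the oscillatory and decaying parts of $z^{3/2}$ interact, so both the sectors and the scales must be chosen to make $\Re$ of the exponents strictly separated. A second delicate point is the bookkeeping of branches and constants in Step 2.
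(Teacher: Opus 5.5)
\textbf{The gap is in Step 1.} Your Step 0 is correct. Step 2 is the paper's comparison: your $\Lambda^4$-location, $\Lambda$-spread blocks also give $L=\OO(\Lambda^{-3})$ and the same limiting constant.

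In Step 1 you tilt every variable, $t$'s and $s$'s alike, to $\arg z=\pm(\pi/3-\delta')$, so both $\Re t_r^{3/2}>0$ and $\Re s_a^{3/2}>0$. The $s$-factor of a branch is $\psi_{\eta'_a}(s_a)\approx e^{\eta'_a\frac{\beta}{3}s_a^{3/2}}$. So a wrong sign $\eta'_a=-1$ multiplies the modulus by $e^{-\frac{2\beta}{3}\Re s_a^{3/2}}$, which makes that branch \emph{smaller} than $e^{\Re M}$. An upper bound $|F|\le e^{\Re M+\OO(\log^2\Lambda)}$ therefore cannot exclude it.

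On top of this, the ordering in $\sfC$ together with your separated decreasing scales puts every $t_r$ on a larger scale than every $s_a$. Hence your lexicographic argument only kills patterns with some $\epsilon'_r=+1$. All patterns $(\bm-,\bm\eta')$ with $\bm\eta'\neq\bm+$ survive. A single asymptotic comparison in Step 2 can neither remove them nor determine their $2^m$ coefficients.

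\textbf{The fix.} Tilt in opposite directions: the $t$'s toward the real axis and the $s$'s away from it, so that $\Re t_r^{3/2}>0>\Re s_a^{3/2}$. Then $(\bm-,\bm+)$ is the unique pattern of minimal modulus, and every wrong sign, in a $t$- or an $s$-variable, increases the modulus by a factor $e^{c\lambda^{3/2}}$.

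This is exactly what the paper's configuration \eqref{e:edge-test-config} does. There $(t_r^{(\alpha)})^{3/2}=\ri\tau_\alpha\Lambda^3+\xi_{I_\alpha(r)}\Lambda^2$ and $(s_a^{(\alpha)})^{3/2}=\ri\tau_\alpha\Lambda^3-\zeta_{A_\alpha(a)}\Lambda^2$, with all variables on the single scale $\Lambda^2$ and intra-block spread $\Lambda$.

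That configuration also removes your worry about $L$. In it $L=\OO(\Lambda^{-1})$, so \eqref{e:logF-smallL-edge} applies directly, and the oscillating $\ri\tau_\alpha\Lambda^3$ terms cancel blockwise in $M$. You then need neither separated scales nor an edge analogue of \eqref{e:F-upper-general}. The paper does not state such an analogue, though it would follow from the bulk proof under \eqref{e:firstqq} and the hyperbolic convexity of the sector.

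The paper then reuses the same configuration in Step 2 to read off $c_{\bm-,\bm+}$. There the same-half-plane factors (modulus $\approx 2|z|^{1/2}$) produce the net power $2^{-k(p+q)/2}$ once combined with the Airy prefactors, while the cross-half-plane factors (modulus $\approx\sqrt3|z|^{1/2}$) carry total exponent $0$.
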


\begin{proof}[Proof of \Cref{t:edge-characterization}]
By \Cref{t:edge_loop}, we already know that the
\(\mathrm{Airy}_{\beta}\) point process satisfies the microscopic edge loop
equations \eqref{e:loopeq2-edge}. It remains to prove uniqueness: any random
point process satisfying \eqref{e:loopeq2-edge} and the assumptions of
\Cref{t:edge-characterization} must have the same law.

By the representation \eqref{e:normalized},
\[
s(w)
=
\sum_{i=1}^\infty
\left(
\frac{1}{x_i-w}
-
\frac{1}{\fa_i}
\right)
-\frac{\Ai'(0)}{\Ai(0)} .
\]
 Hence to show the uniqueness of the random point process, it is enough to show that the
resolvent correlation functions
\begin{align}\label{e:edge-correlation}
\bE\bigl[s(w_1)s(w_2)\cdots s(w_k)\bigr],
\qquad
w_1,\dots,w_k\in \bC_+\cup\bC_-,
\end{align}
are uniquely determined.

We next show that the correlation functions in \eqref{e:edge-correlation} can
be recovered from the edge exponential observables. Consequently, once these
observables are uniquely determined, the correlation functions are uniquely
determined as well.

Let \(\tau_\alpha\in\{\pm1\}\) be such that
\[\label{e:edge-exp-observable-char}
w_\alpha\in\bC_{\tau_\alpha},
\qquad 1\le \alpha\le k.
\]
Consider the edge exponential observable
\begin{equation} F = \prod_{\alpha=1}^k e^{ c_0 \left( \sum_{r=1}^p t_r^{(\alpha)} -\frac{\beta}{2}\sum_{a=1}^q s_a^{(\alpha)} \right) } \times \bE\left[\prod_{j\ge 1} \prod_{\alpha=1}^k \frac{ \prod_{r=1}^p (t_r^{(\alpha)}-x_j) e^{t_r^{(\alpha)}/\fa_j} }{ \prod_{a=1}^q \left( (s_a^{(\alpha)}-x_j) e^{s_a^{(\alpha)}/\fa_j} \right)^{\beta/2} }\right], \end{equation}
where
\[
t_r^{(\alpha)},\,s_a^{(\alpha)}\in\bC_{\tau_\alpha},
\qquad
1\le r\le p,\quad 1\le a\le q,\quad 1\le \alpha\le k,\quad c_0:=\frac{\Ai'(0)}{\Ai(0)}.
\]

As in the bulk case, differentiating the observable before taking the diagonal
specialization recovers the  resolvent correlations. Therefore,
\begin{align}\label{e:wspeical2}
\bE\bigl[s(w_1)s(w_2)\cdots s(w_k)\bigr]
=
(-1)^k
\left.
\partial_{t_1^{(1)}}\cdots\partial_{t_1^{(k)}}F
\right|_{
w_\alpha
=
t_1^{(\alpha)}
=\cdots=
t_p^{(\alpha)}
=
s_1^{(\alpha)}
=\cdots=
s_q^{(\alpha)},
\ 1\le \alpha\le k } .
\end{align}

By \Cref{p:edge_formula}, the observable \(F\) is uniquely determined on the
nonempty open domain \(\sfC\) introduced there.  Moreover, by
\Cref{p:edge_est}, \(F\) is holomorphic in the variables
\[
t_r^{(\alpha)},\,s_a^{(\alpha)}\in\bC_{\tau_\alpha},\quad 1\leq r\leq p, \quad 1\leq a\leq q, \quad 1\leq \al\leq k.
\]
Hence, by the identity theorem for holomorphic functions, \(F\) is uniquely
determined on this domain. In particular, the derivatives at the specialization
in \eqref{e:wspeical2} are unique. Thus all correlation functions
\eqref{e:edge-correlation} are uniquely determined. This proves \Cref{t:edge-characterization}.
\end{proof}

\subsection{Proof of \Cref{p:edge_formula}}\label{s:edge_proof_connect}
\begin{proof}[Proof of \Cref{p:edge_formula}]
By \Cref{thm:edge-sectorial-solution}, the sectorial solutions form a basis
on \(\sfC\). Hence
\begin{equation}\label{e:F-edge-branch-expansion}
F
=
\sum_{(\bm\epsilon',\bm\eta')\in\{\pm1\}^{n+m}}
c_{\bm\epsilon',\bm\eta'}
F_{\bm\epsilon',\bm\eta'},
\qquad
c_{\bm\epsilon',\bm\eta'}\in\bC.
\end{equation}
We will show that all coefficients vanish except possibly the one corresponding
to the $F_{\bm-,\bm+}$ branch.

\smallskip
\noindent
\emph{Step 1: forcing $\bm\epsilon'=\bm-$ and $\bm\eta'=\bm+$.}

By the balance condition \eqref{e:balance-halfplanes-edge}, there exist
partitions
\[
\{1,\dots,n\}=I_1\sqcup\cdots\sqcup I_k,
\qquad
\{1,\dots,m\}=A_1\sqcup\cdots\sqcup A_k,
\]
with \(|I_\alpha|=p\) and \(|A_\alpha|=q\) for every
\(1\le \alpha\le k\), such that each block has a sign
\(\tau_\alpha\in\{\pm1\}\) satisfying
\[
\epsilon_{I_\alpha(r)}=\tau_\alpha,
\qquad
\eta_{A_\alpha(a)}=\tau_\alpha,
\qquad
1\le r\le p,\quad 1\le a\le q.
\]
Here \(I_\alpha(r)\) denotes the \(r\)-th element of \(I_\alpha\), and
\(A_\alpha(a)\) denotes the \(a\)-th element of \(A_\alpha\).

After relabelling the variables according to these blocks, write
\[
\epsilon_r^{(\alpha)}:=\epsilon_{I_\alpha(r)},
\qquad
t_r^{(\alpha)}:=t_{I_\alpha(r)},
\qquad
\eta_a^{(\alpha)}:=\eta_{A_\alpha(a)},
\qquad
s_a^{(\alpha)}:=s_{A_\alpha(a)}.
\]
Then
\begin{equation}
\epsilon_r^{(\alpha)}=\eta_a^{(\alpha)}=\tau_\alpha,
\qquad
t_r^{(\alpha)},s_a^{(\alpha)}\in\bC_{\tau_\alpha},
\qquad
1\le r\le p,\quad 1\le a\le q,\quad 1\leq \al\leq k .
\end{equation}

Choose positive parameters
\[
\xi_1>\xi_2>\cdots>\xi_n>0,
\qquad
0<\zeta_1<\zeta_2<\cdots<\zeta_m,
\]
outside the finitely many proper real hyperplanes on which two distinct sign
patterns have the same real exponential rate below. For \(\Lambda>0\), set
\begin{align}\label{e:edge-test-config}
t_r^{(\alpha)}
&:=
\Lambda^2
\left(
1-\frac{\ri\tau_\alpha\xi_{I_\alpha(r)}}{\Lambda}
\right)^{2/3}
e^{\ri\tau_\alpha\pi/3},
&
s_a^{(\alpha)}
&:=
\Lambda^2
\left(
1+\frac{\ri\tau_\alpha\zeta_{A_\alpha(a)}}{\Lambda}
\right)^{2/3}
e^{\ri\tau_\alpha\pi/3}.
\end{align}
All powers are principal powers. Uniformly in the indices,
\begin{align}
t_r^{(\alpha)}
&=
\Lambda^2e^{\ri\tau_\alpha\pi/3}
+
\frac{\Lambda}{3}
(\sqrt3-\ri\tau_\alpha)\xi_{I_\alpha(r)}
+
\OO(1),
\nonumber\\
s_a^{(\alpha)}
&=
\Lambda^2e^{\ri\tau_\alpha\pi/3}
+
\frac{\Lambda}{3}
(-\sqrt3+\ri\tau_\alpha)\zeta_{A_\alpha(a)}
+
\OO(1).
\label{eq:edge-test-common-expansion}
\end{align}
In particular,
\[
\Re[t_i]
=
\frac{\Lambda^2}{2}
+
\frac{\sqrt3}{3}\Lambda\xi_i
+
\OO(1),
\qquad
\Re[s_a]
=
\frac{\Lambda^2}{2}
-
\frac{\sqrt3}{3}\Lambda\zeta_a
+
\OO(1).
\]
Consequently,
\[
\Re[t_1]>\cdots>\Re[t_n]>
\Re[s_1]>\cdots>\Re[s_m]>D
\]
for every sufficiently large \(\Lambda\). Moreover,
\[
\arg t_r^{(\alpha)}
\longrightarrow
\frac{\pi}{3}\tau_\alpha,
\qquad
\arg s_a^{(\alpha)}
\longrightarrow
\frac{\pi}{3}\tau_\alpha.
\]
Thus
$
(\bmt(\Lambda),\bms(\Lambda))\in\sfC
$
for every sufficiently large \(\Lambda\). The strict inequalities among the
parameters also imply
\begin{equation}\label{e:R-edge-scale}
\sfR(\bmt(\Lambda),\bms(\Lambda))
\asymp\Lambda.
\end{equation}
Indeed, two variables in opposite half-planes are separated on the scale
\(\Lambda^2\), while two variables in the same half-plane are separated on
the scale \(\Lambda\).

Using the principal branches, we have
\begin{align}\label{e:edge-powers}
\bigl(t_r^{(\alpha)}\bigr)^{3/2}
=
\ri\tau_\alpha\Lambda^3
+
\xi_{I_\alpha(r)}\Lambda^2,
\qquad
\bigl(s_a^{(\alpha)}\bigr)^{3/2}
=
\ri\tau_\alpha\Lambda^3
-
\zeta_{A_\alpha(a)}\Lambda^2.
\end{align}
The quantity \(L\) in \Cref{p:edge_est} satisfies
\[
L
=
\sum_{\alpha=1}^k\sum_{r=1}^p\sum_{a=1}^q
\log\!\left(
1+\frac{|t_r^{(\alpha)}-s_a^{(\alpha)}|}
{\sqrt{|\Im [t_r^{(\alpha)}]|\,|\Im[ s_a^{(\alpha)}]|}}
\right)
=
\OO(\Lambda^{-1}),
\]
Furthermore, using \(p=(\beta/2)q\), the \(\ri\tau_\alpha\Lambda^3\)-terms cancel
inside each block, and therefore
\begin{align}
M
&=
-\frac{2}{3}
\sum_{\alpha=1}^k
\left(
\sum_{r=1}^p \bigl(t_r^{(\alpha)}\bigr)^{3/2}
-\frac{\beta}{2}\sum_{a=1}^q \bigl(s_a^{(\alpha)}\bigr)^{3/2}
\right)
=
-\frac{2}{3}\Lambda^2
\sum_{\alpha=1}^k
\left(
\sum_{r=1}^p\xi_{I_\alpha(r)}
+
\frac{\beta}{2}
\sum_{a=1}^q\zeta_{A_\alpha(a)}
\right).
\end{align}
Hence
\eqref{e:logF-smallL-edge} gives
\begin{equation}\label{e:F-edge-asymp}
F(\bmt,\bms)
=
\exp\!\left[
-\frac{2}{3}\Lambda^2
\sum_{\alpha=1}^k
\left(
\sum_{r=1}^p\xi_{I_\alpha(r)}
+
\frac{\beta}{2}
\sum_{a=1}^q\zeta_{A_\alpha(a)}
\right)
\right]
\bigl(1+\OO(\Lambda^{-1})\bigr).
\end{equation}

Next we estimate the sectorial branches. Recall that
\[
F_{\bm\epsilon',\bm\eta'}=\Phi_{\bm\epsilon',\bm\eta'}W_{\bm\epsilon',\bm\eta'},
\]
where the explicit branch factor is given by \eqref{eq:edge-PhiF-clean}. Namely,
\begin{align}\begin{split}\label{e:factorp}
\Phi_{\bm\epsilon',\bm\eta'}(t,s)
&=
\left(\prod_{r=1}^n\phi_{\epsilon'_r}(t_r)\right)
\left(\prod_{a=1}^m\psi_{\eta'_a}(s_a)\right)
\prod_{1\le i<j\le n}
(\epsilon'_i\sqrt{t_i}+\epsilon'_j\sqrt{t_j})^{-2/\beta}\\
&\qquad
\times
\prod_{1\le a<b\le m}
(\eta'_a\sqrt{s_a}+\eta'_b\sqrt{s_b})^{-\beta/2}
\prod_{i=1}^n\prod_{a=1}^m
(\epsilon'_i\sqrt{t_i}-\eta'_a\sqrt{s_a}).
\end{split}\end{align}

For \(0<|\arg z|<\pi-\delta\), the Airy asymptotics
\eqref{e:phi_asymptotics} imply
\begin{align}
\begin{split}\label{e:phi-psi-edge-asymp}
\phi_+(z)
=
\frac{1}{2\sqrt{\pi}\,z^{1/4}}
e^{\frac{2}{3}z^{3/2}}
\left(1+\OO_\delta(|z|^{-3/2})\right),\quad 
\phi_-(z)
=
\frac{1}{2\sqrt{\pi}\,z^{1/4}}
e^{-\frac{2}{3}z^{3/2}}
\left(1+\OO_\delta(|z|^{-3/2})\right).
\end{split}
\end{align}
Consequently, for the normalized Airy functions \(\psi_\pm\) defined in
\eqref{e:defpsi}, we have
\begin{align}
\begin{split}\label{e:psi-edge-asymp}
\psi_+(z)
&=
\frac{1}{2^{5/6}\beta^{1/6}\sqrt{\pi}\,z^{1/4}}
e^{\frac{\beta}{3}z^{3/2}}
\left(1+\OO_\delta(|z|^{-3/2})\right),
\\
\psi_-(z)
&=
\frac{1}{2^{5/6}\beta^{1/6}\sqrt{\pi}\,z^{1/4}}
e^{-\frac{\beta}{3}z^{3/2}}
\left(1+\OO_\delta(|z|^{-3/2})\right).
\end{split}
\end{align}

For the special configuration \eqref{e:edge-test-config} above, \eqref{e:edge-powers} implies that
\begin{align}\begin{split}\label{e:phi-edge-special}
\phi_+\bigl(t_r^{(\alpha)}\bigr)
&=
e^{
\frac23\ri\tau_\alpha\Lambda^3+\frac23\xi_{I_\alpha(r)}\Lambda^2
+\OO(\log\Lambda)
},\quad
\phi_-\bigl(t_r^{(\alpha)}\bigr)
=
e^{
-\frac23\ri\tau_\alpha\Lambda^3-\frac23\xi_{I_\alpha(r)}\Lambda^2
+\OO(\log\Lambda)
},
\\
\psi_+\bigl(s_a^{(\alpha)}\bigr)
&=
e^{
\frac{\beta}{3}\ri\tau_\alpha\Lambda^3-\frac{\beta}{3}\zeta_{A_\alpha(a)}\Lambda^2
+\OO(\log\Lambda)
},
\quad
\psi_-\bigl(s_a^{(\alpha)}\bigr)
=
e^{
-\frac{\beta}{3}\ri\tau_\alpha\Lambda^3+\frac{\beta}{3}\zeta_{A_\alpha(a)}\Lambda^2
+\OO(\log\Lambda)
}.
\end{split}
\end{align}

Moreover,  every algebraic factor in \eqref{e:factorp} contributes at most
\(\exp(\OO(\log\Lambda))\). Since \(\sfR(\bmt,\bms)\asymp \Lambda\), we also have
\begin{align}\label{e:Wterm}
W_{\bm\epsilon',\bm\eta'}(\bmt,\bms)=1+\OO(\sfR(\bmt,\bms)^{-3/2})=1+\OO(\Lambda^{-3/2}).
\end{align}

Combining these estimates, we obtain
\begin{equation}\label{e:edge-branch-asymp-clean}
F_{\bm\epsilon',\bm\eta'}(\bmt,\bms)
=
\exp\!\left[
\frac{2}{3}\Lambda^2
\sum_{\alpha=1}^k
\left(
\sum_{r=1}^p \epsilon_r^{\prime(\alpha)}\xi_{I_\alpha(r)}
-\frac{\beta}{2}\sum_{a=1}^q \eta_a^{\prime(\alpha)}\zeta_{A_\alpha(a)}
\right)
+\ri \Lambda^3\Theta_{\bm\epsilon',\bm\eta'}
+\OO(\log\Lambda)
\right],
\end{equation}
where 
\begin{align}
\Theta_{\bm\epsilon',\bm\eta'}=\frac{2}{3}\sum_{\alpha=1}^k\tau_\al
\left(
\sum_{r=1}^p \epsilon_r^{\prime(\alpha)}
+\frac{\beta}{2}\sum_{a=1}^q \eta_a^{\prime(\alpha)}
\right)\in \mathbb R.
\end{align}
 In particular, the term
\(\ri\Lambda^3\Theta_{\bm\epsilon',\bm\eta'}\) is purely oscillatory and does not affect
the magnitude of \(F_{\bm\epsilon',\bm\eta'}(\bmt,\bms)\), and we conclude
\begin{align}\label{e:edge-branch-asymp-clean}
\left|
F_{\bm\epsilon',\bm\eta'}(\bmt,\bms)
\right|
=
\exp\!\Biggl[
&\frac{2}{3}\Lambda^2
\sum_{\alpha=1}^k
\left(
\sum_{r=1}^p
\epsilon'_{I_\alpha(r)}\xi_{I_\alpha(r)}
-
\frac{\beta}{2}
\sum_{a=1}^q
\eta'_{A_\alpha(a)}\zeta_{A_\alpha(a)}
\right)
+
\OO(\log\Lambda)
\Biggr].
\end{align}
The real rate of the
\((\bm-,\bm+)\)-branch is
\[
-\frac{2}{3}
\sum_{\alpha=1}^k
\left(
\sum_{r=1}^p\xi_{I_\alpha(r)}
+
\frac{\beta}{2}
\sum_{a=1}^q\zeta_{A_\alpha(a)}
\right),
\]
which is exactly the rate in \eqref{e:F-edge-asymp}. Since all the parameters
are positive, every other sign pattern has a strictly larger real rate. By
the generic choice of the parameters, all these rates are distinct.

If a coefficient of a branch other than
\((\bm-,\bm+)\) in \eqref{e:F-edge-branch-expansion} were nonzero, the
unique branch of maximal real rate among the nonzero terms would dominate
the right-hand side exponentially, contradicting
\eqref{e:F-edge-asymp}. Therefore
\[
c_{\bm\epsilon',\bm\eta'}=0
\qquad
\text{unless}\qquad
(\bm\epsilon',\bm\eta')=(\bm-,\bm+),
\]
and hence
\[
F=c_{\bm-,\bm+}F_{\bm-,\bm+}.
\]

\smallskip
\noindent
\emph{Step 2: determine $c_{\bm-,\bm+}$.}

It remains to determine the coefficient $c_{\bm-,\bm+}$. Using the Airy asymptotics \eqref{e:psi-edge-asymp}, we can rewrite this distinguished branch as
\begin{align}\begin{split}\label{e:edge-minus-plus-factor}
\Phi_{\bm-,\bm+}
={}&
\left(\prod_{r=1}^n\phi_-(t_r)\right)
\left(\prod_{a=1}^m\psi_+(s_a)\right)
\prod_{1\le i<j\le n}
(-\sqrt{t_i}-\sqrt{t_j})^{-2/\beta}
\\
&\times
\prod_{1\le a<b\le m}
(\sqrt{s_a}+\sqrt{s_b})^{-\beta/2}
\prod_{i=1}^n\prod_{a=1}^m
(-\sqrt{t_i}-\sqrt{s_a})\\
&=(1+\OO(\Lambda^{-3})\exp\!\left[
-\frac{2}{3}\Lambda^2
\sum_{\alpha=1}^k
\left(
\sum_{r=1}^p\xi_{I_\alpha(r)}
+
\frac{\beta}{2}
\sum_{a=1}^q\zeta_{A_\alpha(a)}
\right)
\right]\times I
\end{split}\end{align}
where  the algebraic factors $I$ in \eqref{e:edge-minus-plus-factor} is given by
\begin{align}\label{e:algebraic_exp}
I:=\frac{\prod_{1\le i<j\le n}
(-\sqrt{t_i}-\sqrt{t_j})^{-2/\beta}
\prod_{1\le a<b\le m}
(\sqrt{s_a}+\sqrt{s_b})^{-\beta/2} \prod_{i=1}^n\prod_{a=1}^m
(-\sqrt{t_i}-\sqrt{s_a})}{\prod_{i=1}^n 2\sqrt{\pi}t_r^{1/4}\prod_{a=1}^m 2^{5/6}\beta^{1/6}\sqrt{\pi}s_a^{1/4}}
\end{align}
For any \(t_i\) or \(s_a\) lying in \(\bC_\sigma\), \(\sigma\in\{\pm1\}\), the test
configuration \eqref{e:edge-test-config} gives
\[
\sqrt{t_i},\sqrt{s_a}
=
\Lambda e^{\ri\sigma\pi/6}\bigl(1+\OO(\Lambda^{-1})\bigr),
\qquad
t_i^{1/4},s_a^{1/4}
=
\Lambda^{1/2}e^{\ri\sigma\pi/12}\bigl(1+\OO(\Lambda^{-1})\bigr).
\]
Hence, if two variables lie in the same half-plane \(\bC_\sigma\), then
\[
\sqrt{u}+\sqrt{v}
=
2\Lambda e^{\ri\sigma\pi/6}\bigl(1+\OO(\Lambda^{-1})\bigr),\quad 
-\sqrt{u}-\sqrt{v}
=
2\Lambda e^{-\ri5\sigma\pi/6}\bigl(1+\OO(\Lambda^{-1})\bigr),
\]
whereas if they lie in opposite half-planes, then
\[
\sqrt{u}+\sqrt{v}
=
\sqrt{3}\,\Lambda\bigl(1+\OO(\Lambda^{-1})\bigr),\quad -\sqrt{u}-\sqrt{v}
=
\sqrt{3}e^{\ri \pi}\,\Lambda\bigl(1+\OO(\Lambda^{-1})\bigr).
\]
In the above, we chose the logarithmic branches so that, 
same-half-plane negative sums have arguments tending to
\(-5\pi/6\) in the upper half-plane and \(5\pi/6\) in the lower
half-plane, while cross-half-plane negative sums have arguments tending to
\(\pi\),

Using \(\beta=2p/q\), $n=kp$ and $m=kq$, the total power of \(\Lambda\) in the algebraic prefactor $I$ is
\[
-\frac{kp(kp-1)}{\beta}
-\frac{\beta kq(kq-1)}{4}
+k^2pq-\frac{k(p+q)}{2}
=0.
\]
The absolute value of the remaining constant is
\[
2^{-k(p+q)/2}
\left(\frac{\beta}{2}\right)^{-kq/6}
\pi^{-k(p+q)/2}.
\]

We recall the balance sign condition \eqref{e:balance-halfplanes-edge}. Then the total phase of the algebraic prefactor $I$ divided by \(\pi\) is
\begin{align*}
&-\frac{p+q}{12}(k_+-k_-)
+
\frac{5}{3\beta}
\left[
\binom{k_+p}{2}-\binom{k_-p}{2}
\right]
-
\frac{2p^2}{\beta}k_+k_-
-
\frac{\beta}{12}
\left[
\binom{k_+q}{2}-\binom{k_-q}{2}
\right]
-
\frac{5pq}{6}(k_+^2-k_-^2)
+
2pqk_+k_-.
\end{align*}
Using \(\beta/2=p/q\), this equals
\[
\frac{q}{2}
\left[
p\bigl(k_-^2+2k_+k_- -k_+^2\bigr)
+k_--k_+
\right].
\]
Since
\(W_{\bm-,\bm+}=1+\OO(\Lambda^{-3/2})\), we obtain
\begin{align}\label{e:F--branch-final}
F_{\bm-,\bm+}(\bmt,\bms)
={}&
2^{-k(p+q)/2}
\left(\frac{\beta}{2}\right)^{-kq/6}
\pi^{-k(p+q)/2}
\nonumber\\
&\times
\exp\!\left\{
\frac{\ri\pi q}{2}
\left[
p\bigl(k_-^2+2k_+k_- -k_+^2\bigr)
+k_--k_+
\right]
\right\}
\nonumber\\
&\times
\exp\!\left[
-\frac{2}{3}\Lambda^2
\sum_{\alpha=1}^k
\left(
\sum_{r=1}^p\xi_{I_\alpha(r)}
+
\frac{\beta}{2}
\sum_{a=1}^q\zeta_{A_\alpha(a)}
\right)
\right]
\bigl(1+\OO(\Lambda^{-1})\bigr).
\end{align}
Comparing \eqref{e:F--branch-final} with
\eqref{e:F-edge-asymp}, we conclude that
\[
c_{\bm-,\bm+}
=
2^{k(p+q)/2}
\left(\frac{\beta}{2}\right)^{kq/6}
\pi^{k(p+q)/2}
\exp\!\left\{
-\frac{\ri\pi q}{2}
\left[
p\bigl(k_-^2+2k_+k_- -k_+^2\bigr)
+k_--k_+
\right]
\right\}.
\]
This proves \eqref{e:F-edge-one-branch}.
\end{proof}

\section{Solutions of the bulk deformed CMS operators}

\label{s:bulk_solution}
We prove the first statement of \Cref{thm:sectorial-branch-solution}, namely that
the solution space has dimension \(2^{n+m}\), in \Cref{s:bulk_local_rank}. We derive the conjugated equations in \Cref{s:conjugate_bulk}.
We then prove the second statement, namely the construction of solutions with
prescribed leading behavior, in \Cref{s:bulk_construct_solution}.

\subsection{Local holonomic rank}
\label{s:bulk_local_rank}
For $1\le i\le n$ and $1\le a\le m$, define
\begin{align}
\cT_i
&:=
\partial_{t_i}^2+1
+\frac{2}{\beta}\sum_{j\neq i}\frac{\partial_{t_i}-\partial_{t_j}}{t_i-t_j}
-\sum_{a=1}^m\frac{\partial_{t_i}+(2/\beta)\partial_{s_a}}{t_i-s_a},
\label{eq:Ti-def}
\\[1mm]
\cS_a
&:=
\partial_{s_a}^2+\frac{\beta^2}{4}
+\frac{\beta}{2}\sum_{b\neq a}\frac{\partial_{s_a}-\partial_{s_b}}{s_a-s_b}
-\sum_{i=1}^n\frac{\partial_{s_a}+(\beta/2)\partial_{t_i}}{s_a-t_i}.
\label{eq:Sa-def}
\end{align}
Then the bulk deformed CMS system \eqref{e:eq1}--\eqref{e:eq2} can be written as
\begin{align}\label{e:bulk_eq}
\cT_iF=0,\qquad 1\le i\le n,\qquad 
\cS_aF=0,\qquad 1\le a\le m.
\end{align}

We first record the following commutator identities for the operators \(\cT_i\) and \(\cS_a\).
\begin{proposition}[Bulk commutator identities]\label{prop:commutator-identities}
For the operators $\cT_i,\cS_a$ defined in \eqref{eq:Ti-def}--\eqref{eq:Sa-def}, the
following commutator identities hold:
\begin{align}
[\cT_i,\cT_j]
&=
\frac{4}{\beta (t_i-t_j)^2}\,(\cT_j-\cT_i),
\qquad 1\le i\neq j\le n,
\label{eq:comm-tt-proof}
\\[1mm]
[\cS_a,\cS_b]
&=
\frac{\beta}{(s_a-s_b)^2}\,(\cS_b-\cS_a),
\qquad 1\le a\neq b\le m,
\label{eq:comm-ss-proof}
\\[1mm]
[\cT_i,\cS_a]
&=
\frac{4}{\beta (t_i-s_a)^2}\,\cS_a
-\frac{\beta}{(t_i-s_a)^2}\,\cT_i,
\qquad 1\le i\le n,\ 1\le a\le m.
\label{eq:comm-ts-proof}
\end{align}
\end{proposition}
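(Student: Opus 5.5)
The plan is a direct symbolic computation in the Weyl algebra with rational coefficients, organized so that the only surviving terms are the ones proportional to \(\cT_i,\cT_j,\cS_a,\cS_b\). Write every operator as \(\partial^2+c+\sum\kappa\,\frac{\partial_x-\lambda\partial_y}{x-y}\), where \(\kappa,\lambda\) depend on the species of the pair \((x,y)\). For \(\cT_i\) the pairs are \((t_i,t_j)\) with \(\kappa=2/\beta,\lambda=1\) and \((t_i,s_a)\) with \(\kappa=-1,\lambda=-2/\beta\). For \(\cS_a\) the pairs are \((s_a,s_b)\) with \(\kappa=\beta/2,\lambda=1\) and \((s_a,t_i)\) with \(\kappa=-1,\lambda=-\beta/2\). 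The constants \(1\) and \(\beta^2/4\) commute with everything, so they play no role.

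\textbf{Step 1: the second-order–first-order brackets.} The basic identities are
\[
[\partial_x^2,f]=2f_x\partial_x+f_{xx},\qquad \Big[\partial_x,\tfrac1{x-y}\Big]=-\tfrac1{(x-y)^2},\qquad \Big[\partial_y,\tfrac1{x-y}\Big]=\tfrac1{(x-y)^2}.
\]
For \([\cT_i,\cT_j]\), the brackets \([\partial_{t_i}^2,\text{first-order part of }\cT_j]\) and \([\text{first-order part of }\cT_i,\partial_{t_j}^2]\) are the only sources of second-order terms. Only coefficients that depend on both the differentiated variable and the other operator's variable contribute. Concretely, these are the mutual term \(\frac{2}{\beta}\frac{\partial_{t_i}-\partial_{t_j}}{t_i-t_j}\) in each operator. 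Computing these brackets gives
\[
\frac{4}{\beta(t_i-t_j)^2}\,(\partial_{t_j}^2-\partial_{t_i}^2)+(\text{lower order}).
\]
This fixes the prefactor \(4/(\beta(t_i-t_j)^2)\) and shows that the bracket must equal that prefactor times \((\cT_j-\cT_i)\) plus a first-order remainder. The same check for \([\cT_i,\cS_a]\) uses the mixed term, whose coefficients are \(-1\) with \(\lambda=-2/\beta\) on one side and \(-\beta/2\) on the other. It yields second-order part
\[
\frac{4}{\beta(t_i-s_a)^2}\partial_{s_a}^2-\frac{\beta}{(t_i-s_a)^2}\partial_{t_i}^2,
\]
which matches \eqref{eq:comm-ts-proof}. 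The identity \eqref{eq:comm-ss-proof} follows from \eqref{eq:comm-tt-proof} by the \(\beta\leftrightarrow 4/\beta\), \(\bmt\leftrightarrow\bms\) duality from the introduction. A direct check is also easy, since \(\kappa\lambda\)-structures are symmetric under that swap.

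\textbf{Step 2: first-order and zeroth-order remainders.} Subtract the claimed right-hand side and show that the remaining first-order operator vanishes. The remainder splits into three kinds of contributions.

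First, there are brackets among the first-order parts \(\sum\kappa\frac{\partial_x-\lambda\partial_y}{x-y}\). These produce terms with coefficients of the form \(\frac{1}{(x-y)(x-z)(y-z)}\)-type products over triples of variables. The triple terms (involving a third variable \(t_k\) or \(s_b\)) must cancel through the partial-fraction identity
\[
\frac{1}{(x-y)(x-z)}+\frac{1}{(y-x)(y-z)}+\frac{1}{(z-x)(z-y)}=0
\]
and its derivative versions. This is the classical mechanism behind CMS integrability. I would organize the computation by grouping all terms for a fixed third variable \(z\) and checking the cancellation separately for each species combination \((t,t,t),(t,t,s),(t,s,s),(t,s,t)\), and similarly for the other brackets. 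The species-dependent constants must be consistent exactly because \(\kappa_{ts}\lambda_{ts}\) and \(\kappa_{st}\lambda_{st}\) are related by \(2/\beta\) and \(\beta/2\).

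Second, there are the two-variable terms, involving only \(x,y\). These come from \(f_{xx}\)-type pieces and from the prefactor multiplying the first-order parts of \(\cT_j-\cT_i\). Matching them amounts to a one-line check that the pair terms obey \(\partial_x\)-versus-\(\partial_y\) derivative relations.

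Third, the zeroth-order terms vanish because every coefficient is annihilated or reproduced consistently. In particular, the constants \(1,\beta^2/4\) enter the right-hand side as \(\frac{4}{\beta(t_i-s_a)^2}\cdot\frac{\beta^2}{4}-\frac{\beta}{(t_i-s_a)^2}\cdot 1=0\). In \eqref{eq:comm-tt-proof} they enter as \(1-1=0\).

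\textbf{Main obstacle.} The main obstacle is the bookkeeping of the three-body terms in Step 2 for the mixed bracket \([\cT_i,\cS_a]\). There the third variable can be a \(t_k\) or an \(s_b\), and the asymmetric weights \(\lambda\) must conspire. I would first verify the case \(n=m=1\), where there are no third variables, to pin down the two-body identity. I would then treat a single spectator variable of each species, since the commutator is a sum over spectators of identical local contributions.
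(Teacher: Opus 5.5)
Your argument is correct and is essentially the paper's. The paper writes $\cT_i,\cS_a$ in terms of the pair pieces
\[
\partial_{t_i}^2+1,\quad \partial_{s_a}^2+\tfrac{\beta^2}{4},\quad \tfrac{2}{\beta}\tfrac{\partial_{t_i}-\partial_{t_j}}{t_i-t_j},\quad \tfrac{\beta}{2}\tfrac{\partial_{s_a}-\partial_{s_b}}{s_a-s_b},\quad \tfrac{1}{t_i-s_a}\bigl(\tfrac{\beta}{2}\partial_{t_i}+\partial_{s_a}\bigr).
\]
It then checks one two-body identity and the three-body identities with a single $t$- or $s$-spectator, and sums them, using that pieces in disjoint variables commute. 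Your extra steps are sound:
\begin{itemize}
\item The principal-symbol computation fixes the prefactors in advance.
\item The zeroth-order check $\frac{4}{\beta(t_i-s_a)^2}\cdot\frac{\beta^2}{4}-\frac{\beta}{(t_i-s_a)^2}=0$ shows why the constants $1$ and $\beta^2/4$ must be paired.
\item The duality shortcut for \eqref{eq:comm-ss-proof} is valid; the paper instead checks that identity locally. With $u_a=\frac{\beta}{2}s_a$ and $v_i=\frac{\beta}{2}t_i$, one has $\cS_a=\frac{\beta^2}{4}\,\cT_a^{(4/\beta)}(\bm u,\bm v)$ with the roles of $n,m$ swapped, and $\frac{\beta^2}{4}\cdot\frac{\beta}{(u_a-u_b)^2}=\frac{\beta}{(s_a-s_b)^2}$.
\end{itemize}

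One correction to your Step 2 bookkeeping: the three-variable terms produced by brackets of first-order parts do not cancel among themselves. With $\cU_{ij}:=\frac{2}{\beta}\frac{\partial_{t_i}-\partial_{t_j}}{t_i-t_j}$, one finds
\[
[\cU_{ij},\cU_{jk}]+[\cU_{ik},\cU_{jk}]+[\cU_{ik},\cU_{ij}]=\frac{4}{\beta(t_i-t_j)^2}\,(\cU_{jk}-\cU_{ik}).
\]
This is exactly the $t_k$-spectator part of $\frac{4}{\beta(t_i-t_j)^2}(\cT_j-\cT_i)$, and the same happens for an $s_a$-spectator.

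Conversely, in \eqref{eq:comm-tt-proof} the right-hand side has no two-variable first-order term, contrary to what you wrote. The mutual pieces of $\cT_j$ and $\cT_i$ coincide and cancel in $\cT_j-\cT_i$. The two-body bracket $[\partial_{t_i}^2+\cU_{ij},\,\partial_{t_j}^2+\cU_{ij}]$ equals exactly $\frac{4}{\beta(t_i-t_j)^2}(\partial_{t_j}^2-\partial_{t_i}^2)$, with its $(t_i-t_j)^{-3}$ terms cancelling. So you should run the partial-fraction cancellation against the spectator terms of the right-hand side, not among the brackets alone.
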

\begin{proof}[Proof of \Cref{prop:commutator-identities}]
Introduce the building blocks
\begin{align*}
\cA_i
&:=\partial_{t_i}^2+1,
&
\cB_a
&:=\partial_{s_a}^2+\frac{\beta^2}{4},
\\[1mm]
\cU_{ij}
&:=\frac{2}{\beta}\frac{1}{t_i-t_j}
  \bigl(\partial_{t_i}-\partial_{t_j}\bigr)
=\cU_{ji},
&
\cW_{ab}
&:=\frac{\beta}{2}\frac{1}{s_a-s_b}
  \bigl(\partial_{s_a}-\partial_{s_b}\bigr)
=\cW_{ba},
\\[1mm]
\cX_{ia}
&:=\frac{1}{t_i-s_a}
  \biggl(\frac{\beta}{2}\partial_{t_i}+\partial_{s_a}\biggr).
\end{align*}
Then the definitions of \(\cT_i\) and \(\cS_a\) become
\begin{align*}
\cT_i
&=
\cA_i+\sum_{j\neq i}\cU_{ij}
-\frac{2}{\beta}\sum_{a=1}^m \cX_{ia},
\\[1mm]
\cS_a
&=
\cB_a+\sum_{b\neq a}\cW_{ab}
+\sum_{i=1}^n \cX_{ia}.
\end{align*}

We shall repeatedly use the following elementary consequences of the Leibniz rule. If
\(f,g\) are scalar functions and \(\cP,\cQ\) are commuting constant-coefficient first-order
differential operators, then
\begin{align}
[\partial_x^2,f\cP]
&=
(\partial_x^2 f)\cP+2(\partial_x f)\partial_x \cP,
\label{eq:basic-comm-1-rewrite}
\\[1mm]
[f\cP,g\cQ]
&=
f(\cP g)\cQ-g(\cQ f)\cP.
\label{eq:basic-comm-2-rewrite}
\end{align}

First fix distinct \(i,j\), and set
\[
\lambda_{ij}:=\frac{4}{\beta(t_i-t_j)^2}.
\]
Applying \eqref{eq:basic-comm-1-rewrite}--\eqref{eq:basic-comm-2-rewrite} gives the
following local identities:
\begin{align*}
[\cA_i+\cU_{ij},\,\cA_j+\cU_{ij}]
&=
\lambda_{ij}(\cA_j-\cA_i),
\\[1mm]
[\cA_i+\cU_{ij}+\cU_{ik},\,\cU_{jk}]
+
[\cU_{ik},\,\cA_j+\cU_{ij}]
&=
\lambda_{ij}(\cU_{jk}-\cU_{ik}),
\qquad k\neq i,j,
\\[1mm]
\Bigl[\cA_i+\cU_{ij}-\frac{2}{\beta}\cX_{ia},\,
-\frac{2}{\beta}\cX_{ja}\Bigr]
+
\Bigl[-\frac{2}{\beta}\cX_{ia},\,\cA_j+\cU_{ij}\Bigr]
&=
\lambda_{ij}
\Bigl(-\frac{2}{\beta}\cX_{ja}
+\frac{2}{\beta}\cX_{ia}\Bigr).
\end{align*}

In the expansion of \([\cT_i,\cT_j]\), all terms involving disjoint sets of variables commute.
Thus the only nonzero contributions are precisely the local pieces above. Hence
\begin{align*}
[\cT_i,\cT_j]
=
\lambda_{ij}(\cA_j-\cA_i)
+
\lambda_{ij}\sum_{k\neq i,j}(\cU_{jk}-\cU_{ik})
+
\lambda_{ij}\sum_{a=1}^m
\Bigl(-\frac{2}{\beta}\cX_{ja}
+\frac{2}{\beta}\cX_{ia}\Bigr)
=
\lambda_{ij}(\cT_j-\cT_i).
\end{align*}
This proves \eqref{eq:comm-tt-proof}.

Next fix distinct \(a,b\), and set
\[
\mu_{ab}:=\frac{\beta}{(s_a-s_b)^2}.
\]
The corresponding local identities are
\begin{align*}
[\cB_a+\cW_{ab},\,\cB_b+\cW_{ab}]
&=
\mu_{ab}(\cB_b-\cB_a),
\\[1mm]
[\cB_a+\cW_{ab}+\cW_{ac},\,\cW_{bc}]
+
[\cW_{ac},\,\cB_b+\cW_{ab}]
&=
\mu_{ab}(\cW_{bc}-\cW_{ac}),
\qquad c\neq a,b,
\\[1mm]
[\cB_a+\cW_{ab}+\cX_{ia},\,\cX_{ib}]
+
[\cX_{ia},\,\cB_b+\cW_{ab}]
&=
\mu_{ab}(\cX_{ib}-\cX_{ia}).
\end{align*}
Again, after expanding \([\cS_a,\cS_b]\), all disjoint-variable commutators vanish. Therefore
\begin{align*}
[\cS_a,\cS_b]
=
\mu_{ab}(\cB_b-\cB_a)
+
\mu_{ab}\sum_{c\neq a,b}(\cW_{bc}-\cW_{ac})
+
\mu_{ab}\sum_{i=1}^n(\cX_{ib}-\cX_{ia})
=
\mu_{ab}(\cS_b-\cS_a).
\end{align*}
This proves \eqref{eq:comm-ss-proof}.

Finally fix \(i,a\), and set
\[
\rho_{ia}:=\frac{4}{\beta(t_i-s_a)^2},
\qquad
\sigma_{ia}:=\frac{\beta}{(t_i-s_a)^2}.
\]
The mixed local identities are
\begin{align*}
\Bigl[\cA_i-\frac{2}{\beta}\cX_{ia},\,\cB_a+\cX_{ia}\Bigr]
&=
\rho_{ia}(\cB_a+\cX_{ia})
-
\sigma_{ia}
\Bigl(\cA_i-\frac{2}{\beta}\cX_{ia}\Bigr),
\\[1mm]
[\cA_i-\tfrac{2}{\beta}\cX_{ia}+\cU_{ij},\,\cX_{ja}]
+
[\cU_{ij},\,\cB_a+\cX_{ia}]
&=
\rho_{ia}\cX_{ja}
-
\sigma_{ia}\cU_{ij},
\qquad j\neq i,
\\[1mm]
\Bigl[\cA_i-\frac{2}{\beta}\cX_{ia}-\frac{2}{\beta}\cX_{ib},\,\cW_{ab}\Bigr]
+
\Bigl[-\frac{2}{\beta}\cX_{ib},\,\cB_a+\cX_{ia}\Bigr]
&=
\rho_{ia}\cW_{ab}
-
\sigma_{ia}
\Bigl(-\frac{2}{\beta}\cX_{ib}\Bigr),
\qquad b\neq a.
\end{align*}
Expanding \([\cT_i,\cS_a]\) and keeping only the nonzero local contributions gives
\begin{align*}
[\cT_i,\cS_a]
&=
\rho_{ia}(\cB_a+\cX_{ia})
-
\sigma_{ia}
\Bigl(\cA_i-\frac{2}{\beta}\cX_{ia}\Bigr)
\\
&\quad
+
\sum_{j\neq i}
\bigl(\rho_{ia}\cX_{ja}-\sigma_{ia}\cU_{ij}\bigr)
+
\sum_{b\neq a}
\biggl(
\rho_{ia}\cW_{ab}
-
\sigma_{ia}
\Bigl(-\frac{2}{\beta}\cX_{ib}\Bigr)
\biggr)
=
\rho_{ia}\cS_a-\sigma_{ia}\cT_i.
\end{align*}
This proves \eqref{eq:comm-ts-proof}.
\end{proof}

Let
\begin{align}\label{e:defK}
K:=\bC(t_1,\dots,t_n,s_1,\dots,s_m),
\end{align}
and let
\begin{align}\label{e:defDK}
D_K
:=
K\langle
\partial_{t_1},\dots,\partial_{t_n},
\partial_{s_1},\dots,\partial_{s_m}
\rangle
\end{align}
be the algebra of differential operators with coefficients in \(K\).

For \(1\le i\le n\) and \(1\le a\le m\), write
\[
\cT_i=\partial_{t_i}^2+\cP_i,
\qquad
\cS_a=\partial_{s_a}^2+\cQ_a,
\]
where \(\cP_i,\cQ_a\in D_K\) have derivative order at most \(1\). Let
\begin{align}\label{e:defI}
\mathcal I
:=
D_K\cT_1+\cdots+D_K\cT_n+D_K\cS_1+\cdots+D_K\cS_m
\subset D_K
\end{align}
be the left ideal generated by \(\cT_1,\dots,\cT_n,\cS_1,\dots,\cS_m\).

For subsets
\[
I\subseteq\{1,\dots,n\},
\qquad
A\subseteq\{1,\dots,m\},
\]
write
\[
\partial_{t_I}:=\prod_{i\in I}\partial_{t_i},
\qquad
\partial_{s_A}:=\prod_{a\in A}\partial_{s_a},
\]
where the factors are arranged in increasing order. We call
\[
\partial_{t_I}\partial_{s_A}F
\]
a \emph{square-free derivative} of \(F\). The word ``square-free'' means that no
variable is differentiated more than once. There are exactly \(2^{n+m}\) such
derivatives.

We shall prove that
\[
G:=\{\cT_1,\dots,\cT_n,\cS_1,\dots,\cS_m\}
\]
is a left Gr{\"o}bner basis of \(\mathcal I\). In the present setting, this means
that every operator in \(D_K\) can be reduced, modulo the generators in \(G\), to
a unique normal form whose derivative monomials are not divisible by any of the
leading monomials
\[
\operatorname{lm}(\cT_i)=\partial_{t_i}^2,
\qquad
\operatorname{lm}(\cS_a)=\partial_{s_a}^2.
\]
Consequently, the reduced monomials are precisely the square-free derivative
monomials, namely those in which no derivative appears with exponent \(2\) or
larger. For further background, see \cite{saito2013grobner}.

\begin{proposition}[Square-free normal forms]\label{prop:Grobner-bulk}
Assume that the commutator identities in
\Cref{prop:commutator-identities} hold. Then the residue classes of the
square-free derivative monomials
\[
\partial_{t_I}\partial_{s_A},
\qquad
I\subseteq\{1,\dots,n\},
\quad
A\subseteq\{1,\dots,m\},
\]
form a \(K\)-basis of \(D_K/\mathcal I\). Equivalently, every class in
\(D_K/\mathcal I\) has a unique representative of the form
\[
\sum_{I,A} c_{I,A}\,\partial_{t_I}\partial_{s_A},
\qquad
c_{I,A}\in K.
\]
In particular,
\[
\dim_K(D_K/\mathcal I)=2^{n+m}.
\]
\end{proposition}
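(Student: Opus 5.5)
We prove the proposition in two parts: a spanning statement obtained by a reduction algorithm, and a linear independence statement obtained from Buchberger's criterion, whose S-pairs are controlled by the commutator identities of \Cref{prop:commutator-identities}.

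\emph{Spanning.} Order the derivative monomials $\partial_t^{\bm a}\partial_s^{\bm b}$ by total degree, refined by any monomial order (for instance degree-lexicographic) on the exponent vectors; this is a well-order. Let $\mathcal P\in D_K$, and suppose some monomial $\partial^\gamma$ in $\mathcal P$ has an exponent at least $2$ in the variable $t_i$. Write $\partial^\gamma=\partial^{\gamma'}\partial_{t_i}^2$. Then
\[
\partial^{\gamma'}\cT_i=\partial^\gamma+\partial^{\gamma'}\cP_i,
\]
and the right-hand side equals $\partial^\gamma$ plus terms of strictly lower total order, since $\cP_i$ has order at most $1$. Subtracting a $K$-multiple of this element of $\mathcal I$ removes $\partial^\gamma$ and introduces only lower monomials. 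The same reduction applies with $\cS_a$ when some exponent in $s_a$ is at least $2$. By well-ordering the process terminates, so every class in $D_K/\mathcal I$ is a $K$-combination of the $2^{n+m}$ square-free classes.

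\emph{Independence.} We use Buchberger's criterion in the Weyl algebra over the field $K$: the generating set $G$ is a Gr\"obner basis if and only if every S-pair reduces to zero modulo $G$. For two distinct generators, say $\cT_i$ and $\cT_j$, the leading monomials $\partial_{t_i}^2$ and $\partial_{t_j}^2$ are coprime and their leading coefficients are $1$. Take as S-polynomial the element
\[
\partial_{t_j}^2\cT_i-\partial_{t_i}^2\cT_j.
\]
Write $\cT_i\cT_j-\cT_j\cT_i$ in two ways. Expanding with $\cT_i=\partial_{t_i}^2+\cP_i$ shows that this S-polynomial equals
\[
[\cT_j,\cT_i]+\cP_j\cT_i-\cP_i\cT_j
\]
plus lower-order correction terms coming from moving coefficients past derivatives. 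By \eqref{eq:comm-tt-proof}, the commutator $[\cT_j,\cT_i]$ is a $K$-linear combination of $\cT_i$ and $\cT_j$, so the whole expression lies in $K\cT_i+K\cT_j+D_K^{\le1}\cT_i+D_K^{\le1}\cT_j$. To conclude that the S-pair reduces to zero we need a standard representation: one in which each summand $c\,\partial^\delta\,g$ has leading monomial below $\operatorname{lm}\bigl(\partial_{t_j}^2\partial_{t_i}^2\bigr)$. The representation above has this form, because every coefficient multiplying a generator has order at most $1$, so each summand has order at most $3<4$. The pairs $(\cS_a,\cS_b)$ and $(\cT_i,\cS_a)$ are handled identically using \eqref{eq:comm-ss-proof} and \eqref{eq:comm-ts-proof}.

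Hence $G$ is a Gr\"obner basis. Its normal forms are unique, so no nonzero $K$-combination of square-free monomials lies in $\mathcal I$. Together with spanning, this gives $\dim_K(D_K/\mathcal I)=2^{n+m}$.

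The main obstacle is the S-pair bookkeeping: the generators have nonconstant coefficients in $K$, so $\partial_{t_j}^2\cT_i$ is not simply a product of leading terms, and one must check that every error term produced when reordering has order at most $3$. An alternative that avoids the S-pair argument is to prove independence directly, via the prolonged flat connection $\nabla$ on the rank-$2^{n+m}$ bundle whose frame is the square-free derivatives. The commutator identities give the flatness $[\nabla_i,\nabla_j]=0$, so at any point of $\mathfrak X$ the initial data can be chosen freely. This yields $2^{n+m}$ independent local solutions, and these force the square-free classes to be independent in $D_K/\mathcal I$: a $K$-relation $\sum c_{I,A}\partial_{t_I}\partial_{s_A}\in\mathcal I$ would annihilate every solution, contradicting the free choice of initial jets. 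Since this flatness check is also what proves part (i) of \Cref{thm:sectorial-branch-solution}, that route does the most work for the rest of the argument.
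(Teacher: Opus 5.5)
Your main argument is the paper's proof: spanning by the reductions $\partial_{t_i}^2\equiv-\cP_i$, $\partial_{s_a}^2\equiv-\cQ_a$, then Buchberger's criterion, where each of the three S-pair types is rewritten via \eqref{eq:comm-tt-proof}--\eqref{eq:comm-ts-proof} as a left combination of two generators with coefficients of order at most $1$, hence with summands of order at most $3<4$. The substitution $\partial_{t_j}^2=\cT_j-\cP_j$ is exact in $D_K$ and gives $\operatorname{Sp}(\cT_i,\cT_j)=[\cT_j,\cT_i]-\cP_j\cT_i+\cP_i\cT_j$ with no reordering corrections (your signs on the last two terms are flipped), so the bookkeeping obstacle you flag does not arise. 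Your flat-connection alternative is not an independent shortcut: checking that the prolonged connection is well defined and flat is the same consistency check, and the paper deduces flatness from this proposition rather than the other way round.
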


\begin{proof}
The equations \(\cT_i=0\) and \(\cS_a=0\) should be thought of as reduction rules:
\begin{equation}\label{e:reduce}
\partial_{t_i}^2\equiv -\cP_i,
\qquad
\partial_{s_a}^2\equiv -\cQ_a
\qquad
\mod \mathcal I.
\end{equation}
Since \(\cP_i\) and \(\cQ_a\) have derivative order at most \(1\), each reduction
replaces a repeated derivative by lower-order terms. Thus the square-free
derivative monomials are the natural candidates for a basis.

The point is to prove that these reductions are consistent.
 For example, suppose
we want to reduce
$
\partial_{t_i}^2\partial_{t_j}^2.
$
There are two possible first steps: we may first replace
\(\partial_{t_i}^2\) by \(-\cP_i\), or we may first replace
\(\partial_{t_j}^2\) by \(-\cP_j\). This gives
$
-\partial_{t_j}^2\cP_i$
or 
$-\partial_{t_i}^2\cP_j$.
Thus the obstruction to consistency is, up to sign, the so-called \(S\)-pair
\[
\operatorname{Sp}(\cT_i,\cT_j)
:=
\partial_{t_j}^2\cT_i-\partial_{t_i}^2\cT_j=\del_{t_j}^2\cP_i-\del_{t_i}^2\cP_j.
\]

Buchberger's criterion is a systematic way to check exactly this. It says that,
to prove that a set of generators is a left Gr{\"o}bner basis, it is enough to check
the differences coming from all possible pairwise overlaps of leading monomials.
These differences are called \(S\)-pairs.
It is enough to check that the \(S\)-pair is trivial modulo the generators, in
the sense that it can be written as a left linear combination of the generators
whose terms all have leading monomial strictly smaller than the original overlap.
Equivalently, its remainder after division by the generators is zero.

In our case,
\[
\cT_i=\partial_{t_i}^2+\cP_i,
\qquad
\cS_a=\partial_{s_a}^2+\cQ_a,
\]
with \(\cP_i,\cQ_a\) of order at most \(1\). Hence the only possible overlaps are
\[
\partial_{t_i}^2\partial_{t_j}^2,
\qquad
\partial_{s_a}^2\partial_{s_b}^2,
\qquad
\partial_{t_i}^2\partial_{s_a}^2.
\]
Equivalently, we only need to check the three types of pairs
\[
(\cT_i,\cT_j),
\qquad
(\cS_a,\cS_b),
\qquad
(\cT_i,\cS_a).
\]

\medskip
\noindent
\emph{1. The pair \((\cT_i,\cT_j)\), with \(i\neq j\).}

The corresponding \(S\)-pair is
\[
\operatorname{Sp}(\cT_i,\cT_j)
:=
\partial_{t_j}^2\cT_i-\partial_{t_i}^2\cT_j.
\]
Using
\[
\partial_{t_j}^2=\cT_j-\cP_j,
\qquad
\partial_{t_i}^2=\cT_i-\cP_i,
\]
we get
\begin{align*}
\operatorname{Sp}(\cT_i,\cT_j)=
(\cT_j-\cP_j)\cT_i-(\cT_i-\cP_i)\cT_j =
[\cT_j,\cT_i]-\cP_j\cT_i+\cP_i\cT_j.
\end{align*}
By the commutator identity \eqref{eq:comm-tt-proof},
\[
\operatorname{Sp}(\cT_i,\cT_j)
=
\left(\frac{4}{\beta(t_i-t_j)^2}-\cP_j\right)\cT_i
+
\left(\cP_i-\frac{4}{\beta(t_i-t_j)^2}\right)\cT_j.
\]
This is a left linear combination of \(\cT_i\) and \(\cT_j\). Moreover, the terms
involving \(\cP_i\) and \(\cP_j\) have derivative order at most \(3\), while the
overlap
$
\partial_{t_i}^2\partial_{t_j}^2
$
has derivative order \(4\). Hence this \(S\)-pair reduces to \(0\) modulo the
generators.

\medskip
\noindent
\emph{2. The pair \((\cS_a,\cS_b)\), with \(a\neq b\).}

The corresponding \(S\)-pair is
\[
\operatorname{Sp}(\cS_a,\cS_b)
:=
\partial_{s_b}^2\cS_a-\partial_{s_a}^2\cS_b.
\]
Using
\[
\partial_{s_b}^2=\cS_b-\cQ_b,
\qquad
\partial_{s_a}^2=\cS_a-\cQ_a,
\]
we compute
\begin{align*}
\operatorname{Sp}(\cS_a,\cS_b)
&=
(\cS_b-\cQ_b)\cS_a-(\cS_a-\cQ_a)\cS_b =
[\cS_b,\cS_a]-Q_b\cS_a+\cQ_a\cS_b.
\end{align*}
By the commutator identity \eqref{eq:comm-ss-proof},
we have
\[
\operatorname{Sp}(\cS_a,\cS_b)
=
\left(\frac{\beta}{(s_a-s_b)^2}-\cQ_b\right)\cS_a
+
\left(Q_a-\frac{\beta}{(s_a-s_b)^2}\right)\cS_b.
\]
Again this is a left linear combination of the generators. The terms involving
\(\cQ_a\) and \(\cQ_b\) have derivative order at most \(3\), while the overlap
$
\partial_{s_a}^2\partial_{s_b}^2
$
has derivative order \(4\). Hence this \(S\)-pair reduces to \(0\).

\medskip
\noindent
\emph{3. The mixed pair \((\cT_i,\cS_a)\).}

The corresponding \(S\)-pair is
\[
\operatorname{Sp}(\cT_i,\cS_a)
:=
\partial_{s_a}^2\cT_i-\partial_{t_i}^2\cS_a.
\]
Using
\[
\partial_{s_a}^2=\cS_a-\cQ_a,
\qquad
\partial_{t_i}^2=\cT_i-\cP_i,
\]
we get
\begin{align*}
\operatorname{Sp}(\cT_i,\cS_a)
=
(\cS_a-\cQ_a)\cT_i-(\cT_i-\cP_i)\cS_a =
[\cS_a,\cT_i]-\cQ_a\cT_i+\cP_i\cS_a.
\end{align*}
By the mixed commutator identity \eqref{eq:comm-ts-proof},
we have
\[
\operatorname{Sp}(\cT_i,\cS_a)
=
\left(\frac{\beta}{(t_i-s_a)^2}-\cQ_a\right)\cT_i
+
\left(\cP_i-\frac{4}{\beta(t_i-s_a)^2}\right)\cS_a.
\]
This is a left linear combination of \(\cT_i\) and \(\cS_a\). The terms involving
\(\cP_i\) and \(\cQ_a\) have derivative order at most \(3\), while the overlap
$
\partial_{t_i}^2\partial_{s_a}^2
$
has derivative order \(4\). Hence this \(S\)-pair also reduces to \(0\).

\medskip

We have checked all possible overlaps. By Buchberger's criterion,
$
G:=\{\cT_1,\dots,\cT_n,\cS_1,\dots,\cS_m\}
$
is a left Gr{\"o}bner basis of \(\mathcal I\). Therefore the initial ideal is
generated by the leading monomials of these operators:
\[
\bigl(
\partial_{t_1}^2,\dots,\partial_{t_n}^2,
\partial_{s_1}^2,\dots,\partial_{s_m}^2
\bigr).
\]

A derivative monomial is standard if and only if it is not divisible by any of
these leading monomials. Equivalently, no derivative
\(\partial_{t_i}\) or \(\partial_{s_a}\) appears with exponent \(2\) or larger.
Thus the standard monomials are precisely
\[
\partial_{t_I}\partial_{s_A},
\qquad
I\subseteq\{1,\dots,n\},
\quad
A\subseteq\{1,\dots,m\}.
\]
These are exactly the square-free derivative monomials.
The normal-form theorem for Gr{\"o}bner bases now implies that the residue classes
of these standard monomials form a \(K\)-basis of \(D_K/\mathcal I\). Since
there are \(2^{n+m}\) choices of \((I,A)\), we obtain
$
\dim_K(D_K/\mathcal I)=2^{n+m}.
$
\end{proof}

\begin{proof}[Proof of the first statement of \Cref{thm:sectorial-branch-solution}]
We show that the second-order system \eqref{e:eq1}--\eqref{e:eq2} is equivalent,
near every point of \(\mathfrak X\), to a flat first-order system of size
\(2^{n+m}\).

\medskip
\noindent
\emph{Step 1: repeated derivatives can be eliminated.}

We can rewrite the equations \eqref{e:eq1}--\eqref{e:eq2} as follows:
\begin{align}
\partial_{t_i}^2F
&=
-F
-\frac{2}{\beta}\sum_{j\neq i}
\frac{\partial_{t_i}-\partial_{t_j}}{t_i-t_j}F
+\sum_{a=1}^m
\frac{\partial_{t_i}+(2/\beta)\partial_{s_a}}{t_i-s_a}F,
\label{eq:ti-second-reduction}
\\[1mm]
\partial_{s_a}^2F
&=
-\frac{\beta^2}{4}F
-\frac{\beta}{2}\sum_{b\neq a}
\frac{\partial_{s_a}-\partial_{s_b}}{s_a-s_b}F
+\sum_{i=1}^n
\frac{\partial_{s_a}+(\beta/2)\partial_{t_i}}{s_a-t_i}F.
\label{eq:sa-second-reduction}
\end{align}
Thus, whenever a derivative of $F$ contains \(\partial_{t_i}^2\) or
\(\partial_{s_a}^2\), we may replace that repeated derivative by lower-order
derivatives using \eqref{eq:ti-second-reduction} or
\eqref{eq:sa-second-reduction}.

\Cref{prop:Grobner-bulk}, says exactly that all such reduction choices are
compatible. Equivalently, every derivative of a solution can be written uniquely
as a rational linear combination of the \(2^{n+m}\) square-free derivatives
$
\partial_{t_I}\partial_{s_A}F$.

\medskip
\noindent
\emph{Step 2: introduce the finite set of unknowns.}
We recall the notation from \eqref{e:defK}, \eqref{e:defDK}, and
\eqref{e:defI}: \(K\) is the field of rational functions, \(D_K\) is the
algebra of differential operators with coefficients in \(K\), and
\(\mathcal I\) is the left ideal generated by the differential equations.

By \Cref{prop:Grobner-bulk}, the residue classes
\[
e_{I,A}:=
\bigl[\partial_{t_I}\partial_{s_A}\bigr],
\qquad
I\subseteq\{1,\dots,n\},
\quad
A\subseteq\{1,\dots,m\},
\]
form a \(K\)-basis of \(D_K/\mathcal I\). In concrete terms, this means that
every derivative can be reduced uniquely to a \(K\)-linear combination of the
square-free derivatives
$
\partial_{t_I}\partial_{s_A}F$.

\medskip
\noindent
\emph{Step 3: the square-free derivatives satisfy a first-order system.}

Since the classes \(e_{I,A}\) form a basis of \(D_K/\mathcal I\), multiplication by
any coordinate derivative can be expressed in this basis. Thus there are matrices
\(M_{t_i}\) and \(M_{s_a}\), with entries in \(K\), such that
\begin{align}\begin{split}\label{eq:M-def-clean}
\partial_{t_i} e_{I,A}
&=
\sum_{J,B}
(M_{t_i})_{(I,A),(J,B)}\,e_{J,B},
\\
\partial_{s_a} e_{I,A}
&=
\sum_{J,B}
(M_{s_a})_{(I,A),(J,B)}\,e_{J,B}.
\end{split}\end{align}
The entries of these matrices are rational functions on \(\mathfrak X\), because
they are obtained by repeatedly using the reduction rules
\eqref{eq:ti-second-reduction} and \eqref{eq:sa-second-reduction}.

Now let \(F\) be a local holomorphic solution of
\eqref{e:bulk_eq}. Define the vector of square-free derivatives
\[
U(F):=
\bigl(\partial_{t_I}\partial_{s_A}F\bigr)_{I,A}.
\]
Then \eqref{eq:M-def-clean} implies
\begin{align}\begin{split}\label{eq:pfaff}
\partial_{t_i}U(F)
&=
M_{t_i}\,U(F),
\qquad 1\le i\le n,
\\
\partial_{s_a}U(F)
&=
M_{s_a}\,U(F),
\qquad 1\le a\le m.
\end{split}
\end{align}
Thus the original scalar second-order system gives a first-order linear system
for the \(2^{n+m}\) square-free derivatives of \(F\).

\medskip
\noindent
\emph{Step 4: the first-order system is flat.}

The coordinate derivatives commute:
\[
[\partial_{t_i},\partial_{t_j}]=0,
\qquad
[\partial_{s_a},\partial_{s_b}]=0,
\qquad
[\partial_{t_i},\partial_{s_a}]=0.
\]
Therefore their induced actions on \(D_K/\mathcal I\) commute. Written in the basis
\(\{e_{I,A}\}\), this gives
\begin{align}\begin{split}\label{eq:flat}
\partial_{t_i}M_{t_j}-\partial_{t_j}M_{t_i}
&=
[M_{t_i},M_{t_j}],
\\
\partial_{s_a}M_{s_b}-\partial_{s_b}M_{s_a}
&=
[M_{s_a},M_{s_b}],
\\
\partial_{t_i}M_{s_a}-\partial_{s_a}M_{t_i}
&=
[M_{t_i},M_{s_a}],
\end{split}\end{align}
where \([A,B]=AB-BA\). These are exactly the compatibility equations for the
first-order system \eqref{eq:pfaff}. Hence the
system is flat.

\medskip
\noindent
\emph{ Step 5: local existence and dimension of the solution space.}

Fix $(\bmt_0,\bms_0)\in \mathfrak X$, and choose a sufficiently small simply connected neighborhood
$\Omega\subset \mathfrak X$ of $(\bmt_0,\bms_0)$. On $\Omega$, the matrices $M_{t_i},M_{s_a}$ are holomorphic.
Since $\Omega$ is simply connected and the  system \eqref{eq:pfaff} is flat, parallel transport is
independent of the path. Equivalently, for each initial value
\[
c\in \bC^{2^{n+m}},
\]
there is a unique holomorphic solution of \eqref{eq:pfaff} with
$U(p)=c$.

Indeed, along any smooth path $\gamma:[0,1]\to \Omega$, the restriction of $U$ satisfies
the linear ordinary differential equation
\[
\frac{\rd V}{\rd\tau}
=
\sum_{i=1}^n \dot\gamma_i(\tau)\,M_{t_i}(\gamma(\tau))\,V
+
\sum_{a=1}^m \dot\gamma_{n+a}(\tau)\,M_{s_a}(\gamma(\tau))\,V,
\qquad V(0)=c,
\]
and flatness means that the resulting transport depends only on the endpoints.
Therefore the space of local holomorphic solutions of the first-order system \eqref{eq:pfaff} has dimension
$2^{n+m}$.

It remains to check that this first-order system \eqref{eq:pfaff} is equivalent to the original
scalar system \eqref{e:bulk_eq}. We have already seen that every scalar solution \(F\) of \eqref{e:bulk_eq} produces a
solution \(U(F)\) of the first-order system \eqref{eq:pfaff}. Conversely, suppose \(U=(U_{I,A})\)
solves the first-order system \eqref{eq:pfaff}. Define
\[
F:=U_{\emptyset,\emptyset}.
\]
Since
\[
\partial_{t_i}e_{\emptyset,\emptyset}=e_{\{i\},\emptyset},
\qquad
\partial_{s_a}e_{\emptyset,\emptyset}=e_{\emptyset,\{a\}},
\]
the first-order system gives
\[
U_{\{i\},\emptyset}=\partial_{t_i}F,
\qquad
U_{\emptyset,\{a\}}=\partial_{s_a}F.
\]
Repeating the same argument for larger subsets \(I,A\), we obtain
\[
U_{I,A}=\partial_{t_I}\partial_{s_A}F
\]
for every square-free pair \((I,A)\).

Finally, the reduction rules encoded in the matrices $M_{t_i}$ and $M_{s_a}$ are precisely the equations
\(\cT_i=0\) and \(\cS_a=0\). Therefore the identities
\[
\partial_{t_i}^2\equiv -\cP_i,
\qquad
\partial_{s_a}^2\equiv -\cQ_a
\qquad
\mod \mathcal I
\]
imply
\[
\cT_iF=0,
\qquad
\cS_aF=0.
\]
Hence \(F\) is a holomorphic solution of the original system.

Therefore the space of germs at \((\bmt_0,\bms_0)\) of holomorphic solutions of
\eqref{e:bulk_eq} is naturally identified with the space of germs of
solutions of the flat first-order system. This space has dimension \(2^{n+m}\),
as claimed.
\end{proof}

\subsection{The conjugated system}\label{s:conjugate_bulk}
For each sign pattern \((\bm\epsilon,\bm\eta)\), we write
$
F=\Phi_{\bm\epsilon,\bm\eta}W$,
where \(\Phi_{\bm\epsilon,\bm\eta}\) contains the prescribed leading exponential
and algebraic factors. This separates the expected leading asymptotic behavior
from the unknown correction term. The correction \(W\) is expected to satisfy
\(W\to 1\) at infinity. In the following proposition, we conjugate the original
operators by \(\Phi_{\bm\epsilon,\bm\eta}\) and derive the resulting equations
for \(W\).

We recall $\Phi_{\bm\epsilon, \bm\eta}$ from \eqref{eq:general-branch-factor-clean}
\begin{align}\label{eq:branch_factor_copy}
\Phi_{\bm\epsilon,\bm\eta}(\bmt,\bms)
:=
\exp\!\left(
\ri\sum_{i=1}^n \epsilon_i t_i
+\ri\frac{\beta}{2}\sum_{a=1}^m \eta_a s_a
\right)
\prod_{1\le i<j\le n}(t_i-t_j)^{\alpha_{ij}}
\prod_{1\le a<b\le m}(s_a-s_b)^{\gamma_{ab}}
\prod_{i=1}^n\prod_{a=1}^m (t_i-s_a)^{\rho_{ia}},
\end{align}
and define
\begin{align}\begin{split}
\label{eq:Omega-Sigma}
\Omega_i
&:=
\sum_{j\neq i}\frac{\alpha_{ij}}{t_i-t_j}
+\sum_{a=1}^m \frac{\rho_{ia}}{t_i-s_a},
\\[1mm]
\Sigma_a
&:=
\sum_{b\neq a}\frac{\gamma_{ab}}{s_a-s_b}
+\sum_{i=1}^n \frac{\rho_{ia}}{s_a-t_i}.
\end{split}\end{align}

\begin{proposition}[Conjugated operator family]
\label{prop:conjugated-family-clean}
Let \(F=\Phi_{\bm\epsilon,\bm\eta}W\). Then the original system \eqref{e:bulk_eq}
\begin{align}
\cT_i F=0,\qquad 1\le i\le n,\qquad 
\cS_a F=0,\qquad 1\le a\le m,
\end{align}
is equivalent to
\begin{align}
(\cL_i^{(t)}+\mathcal R_i^{(t)})W&=0,
\qquad i=1,\dots,n,
\label{eq:conj-t-eq-clean}
\\[1mm]
(\cL_a^{(s)}+\mathcal R_a^{(s)})W&=0,
\qquad a=1,\dots,m,
\label{eq:conj-s-eq-clean}
\end{align}
where
\begin{align}
\cL_i^{(t)}
&=
\partial_{t_i}^2+2\ri\epsilon_i\,\partial_{t_i},
\label{eq:Lti-clean}
\\[1mm]
\mathcal R_i^{(t)}
&=
2\Omega_i\,\partial_{t_i}
+\frac{2}{\beta}\sum_{j\neq i}\frac{\partial_{t_i}-\partial_{t_j}}{t_i-t_j}
-\sum_{a=1}^m\frac{\partial_{t_i}+(2/\beta)\partial_{s_a}}{t_i-s_a}
+V_i^{(t)},
\label{eq:Rti-clean}
\\[2mm]
\cL_a^{(s)}
&=
\partial_{s_a}^2+\ri\beta\eta_a\,\partial_{s_a},
\label{eq:Lsa-clean}
\\[1mm]
\mathcal R_a^{(s)}
&=
2\Sigma_a\,\partial_{s_a}
+\frac{\beta}{2}\sum_{b\neq a}\frac{\partial_{s_a}-\partial_{s_b}}{s_a-s_b}
-\sum_{i=1}^n\frac{\partial_{s_a}+(\beta/2)\partial_{t_i}}{s_a-t_i}
+V_a^{(s)},
\label{eq:Rsa-clean}
\end{align}
and
\begin{align}
V_i^{(t)}
&=
\partial_{t_i}\Omega_i+\Omega_i^2
+\frac{2}{\beta}\sum_{j\neq i}\frac{\Omega_i-\Omega_j}{t_i-t_j}
-\sum_{a=1}^m\frac{\Omega_i+(2/\beta)\Sigma_a}{t_i-s_a},
\label{eq:Vti-simplified}
\\[1mm]
V_a^{(s)}
&=
\partial_{s_a}\Sigma_a+\Sigma_a^2
+\frac{\beta}{2}\sum_{b\neq a}\frac{\Sigma_a-\Sigma_b}{s_a-s_b}
-\sum_{i=1}^n\frac{\Sigma_a+(\beta/2)\Omega_i}{s_a-t_i}.
\label{eq:Vsa-simplified}
\end{align}
\end{proposition}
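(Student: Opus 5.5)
The plan is a direct conjugation computation. Since \(\Phi:=\Phi_{\bm\epsilon,\bm\eta}\) is holomorphic and nonvanishing on the simply connected domain where the branches are fixed, \(F=\Phi W\) gives a bijection between solutions \(F\) and functions \(W\). Moreover \(\cT_iF=0\) holds if and only if \(\Phi^{-1}\cT_i\Phi W=0\), and similarly for \(\cS_a\). The task therefore reduces to computing the conjugated operators \(\Phi^{-1}\cT_i\Phi\) and \(\Phi^{-1}\cS_a\Phi\) and matching them with \(\cL+\mathcal R\).

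The first step is to compute the logarithmic derivatives of the branch factor from \eqref{eq:branch_factor_copy}:
\[
\Phi^{-1}\partial_{t_i}\Phi=\ri\epsilon_i+\Omega_i,
\qquad
\Phi^{-1}\partial_{s_a}\Phi=\ri\tfrac{\beta}{2}\eta_a+\Sigma_a .
\]
Here \(\Omega_i,\Sigma_a\) are as in \eqref{eq:Omega-Sigma}; for the \(s\)-sum one uses \(\partial_{s_a}\log(t_i-s_a)=1/(s_a-t_i)\). It follows that
\[
\Phi^{-1}\partial_{t_i}\Phi=\partial_{t_i}+\ri\epsilon_i+\Omega_i,
\]
\[
\Phi^{-1}\partial_{t_i}^2\Phi=\partial_{t_i}^2+2(\ri\epsilon_i+\Omega_i)\partial_{t_i}+(\ri\epsilon_i+\Omega_i)^2+\partial_{t_i}\Omega_i .
\]
Substituting these into \(\cT_i\) produces four groups of terms.
\begin{enumerate}
\item The second-order and constant-coefficient first-order terms give \(\cL_i^{(t)}=\partial_{t_i}^2+2\ri\epsilon_i\partial_{t_i}\).
\item The remaining first-order terms give \(2\Omega_i\partial_{t_i}\) together with the original interaction operators, all unchanged. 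This is exactly the first-order part of \(\mathcal R_i^{(t)}\).
\item The zeroth-order terms involving \(\Omega,\Sigma\) give \(V_i^{(t)}\).
\item The purely constant leftover is \((\ri\epsilon_i)^2+1=0\), which works because \(\epsilon_i^2=1\).
\end{enumerate}

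The second step handles the zeroth-order terms in detail. These are
\[
\partial_{t_i}\Omega_i+\Omega_i^2+2\ri\epsilon_i\Omega_i
+\frac{2}{\beta}\sum_{j\ne i}\frac{\ri(\epsilon_i-\epsilon_j)+\Omega_i-\Omega_j}{t_i-t_j}
-\sum_a\frac{\ri\epsilon_i+\ri\eta_a+\Omega_i+(2/\beta)\Sigma_a}{t_i-s_a}.
\]
The claim \eqref{eq:Vti-simplified} is equivalent to the cancellation of all terms carrying a factor \(\ri\):
\[
2\epsilon_i\Omega_i+\frac{2}{\beta}\sum_{j\ne i}\frac{\epsilon_i-\epsilon_j}{t_i-t_j}-\sum_a\frac{\epsilon_i+\eta_a}{t_i-s_a}=0 .
\]
This identity is checked term by term using the definitions \eqref{eq:alpha-gamma-rho-clean} and \(\epsilon_i^2=1\).
\begin{itemize}
\item For the \(t_j\) terms, \(2\epsilon_i\alpha_{ij}=\frac{2}{\beta}(\epsilon_j-\epsilon_i)\), which cancels \(\frac{2}{\beta}(\epsilon_i-\epsilon_j)\).
\item For the \(s_a\) terms, \(2\epsilon_i\rho_{ia}=\epsilon_i+\eta_a\), which cancels the last sum.
\end{itemize}
The \(s\)-equations are handled in the same way, with \(\Phi^{-1}\partial_{s_a}^2\Phi\) producing \(\ri\beta\eta_a\partial_{s_a}\). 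The constant leftover is \(-\beta^2/4+\beta^2/4=0\). The imaginary terms cancel by the analogous checks:
\begin{itemize}
\item \(\beta\eta_a\gamma_{ab}=\frac{\beta^2}{4}(\eta_b-\eta_a)\), which cancels \(\frac{\beta}{2}\cdot\frac{\beta}{2}(\eta_a-\eta_b)\);
\item \(\beta\eta_a\rho_{ia}=\frac{\beta}{2}(\eta_a+\epsilon_i)\), which cancels \(\frac{\beta}{2}\eta_a+\frac{\beta}{2}\epsilon_i\).
\end{itemize}

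The main obstacle is purely bookkeeping: keeping the signs of \(1/(t_i-s_a)\) versus \(1/(s_a-t_i)\) consistent in \(\Sigma_a\) and in the cross terms, and correctly distributing the constant shifts \(\ri\epsilon_j\) and \(\ri\tfrac{\beta}{2}\eta_a\) that enter when the difference operators \(\partial_{t_i}-\partial_{t_j}\) and \(\partial_{t_i}+(2/\beta)\partial_{s_a}\) are conjugated. No analytic input is needed; the equivalence holds because multiplication by the nonvanishing \(\Phi\) is invertible.
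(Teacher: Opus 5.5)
Your proposal is correct and follows the paper's proof essentially line by line: you conjugate by $\Phi_{\bm\epsilon,\bm\eta}$ using the logarithmic derivatives $\ri\epsilon_i+\Omega_i$ and $\ri\frac{\beta}{2}\eta_a+\Sigma_a$, note that the constant terms vanish because $\epsilon_i^2=\eta_a^2=1$, and cancel the remaining $\ri$-terms via the defining relations for $\alpha_{ij}$, $\gamma_{ab}$, $\rho_{ia}$. Your term-by-term checks, such as $2\epsilon_i\alpha_{ij}=\frac{2}{\beta}(\epsilon_j-\epsilon_i)$ and $2\epsilon_i\rho_{ia}=\epsilon_i+\eta_a$ together with their $s$-analogues, are exactly the identities the paper uses.
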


\begin{proof}
Let
$
\Phi:=\Phi_{\bm\epsilon,\bm\eta}$
be as in \eqref{eq:branch_factor_copy}. Since \(\Phi\) is nowhere vanishing on
$\mathfrak X$, the equations
\[
\cT_iF=0,
\qquad
\cS_aF=0,
\qquad
F=\Phi W,
\]
are equivalent to
\[
\widetilde{\cT}_i W=0,
\qquad
\widetilde{\cS}_a W=0,
\]
where
\[
\widetilde{\cT}_i:=\Phi^{-1}\cT_i\Phi,
\qquad
\widetilde{\cS}_a:=\Phi^{-1}\cS_a\Phi.
\]

We recall \(\Omega_i\) and \(\Sigma_a\) from \eqref{eq:Omega-Sigma} and set
\begin{align}\label{e:mui}
\mu_i:=\partial_{t_i}\log\Phi
=
\ri\epsilon_i+\Omega_i,
\qquad
\nu_a:=\partial_{s_a}\log\Phi
=
\ri\frac{\beta}{2}\eta_a+\Sigma_a.
\end{align}
Then
\begin{align}\label{e:first}
\Phi^{-1}\partial_{t_i}\Phi
=
\partial_{t_i}+\mu_i,
\qquad
\Phi^{-1}\partial_{s_a}\Phi
=
\partial_{s_a}+\nu_a,
\end{align}
and
\begin{align}
\begin{split}\label{e:second}
\Phi^{-1}\partial_{t_i}^2\Phi
&=
(\partial_{t_i}+\mu_i)^2
=
\partial_{t_i}^2
+2\mu_i\partial_{t_i}
+\partial_{t_i}\mu_i+\mu_i^2,
\\
\Phi^{-1}\partial_{s_a}^2\Phi
&=
(\partial_{s_a}+\nu_a)^2
=
\partial_{s_a}^2
+2\nu_a\partial_{s_a}
+\partial_{s_a}\nu_a+\nu_a^2.
\end{split}
\end{align}

We first compute \(\widetilde{\cT}_i\).  A substitution of \eqref{e:first} and
\eqref{e:second} into \(\cT_i\), recalled in \eqref{eq:Ti-def},  gives
\begin{align}
\widetilde{\cT}_i
=
(\partial_{t_i}+\mu_i)^2+1
+\frac{2}{\beta}\sum_{j\neq i}
\frac{(\partial_{t_i}+\mu_i)-(\partial_{t_j}+\mu_j)}{t_i-t_j}
-\sum_{a=1}^m
\frac{(\partial_{t_i}+\mu_i)+(2/\beta)(\partial_{s_a}+\nu_a)}
{t_i-s_a}.
\label{eq:Ttilde-expanded-clean}
\end{align}
Using \(\mu_i=\ri\epsilon_i+\Omega_i\) from \eqref{e:mui}, the derivative terms in
\eqref{eq:Ttilde-expanded-clean} are
\begin{align}\label{e:derivative_part}
\partial_{t_i}^2
+2\ri\epsilon_i\partial_{t_i}
+2\Omega_i\partial_{t_i}
+\frac{2}{\beta}\sum_{j\neq i}
\frac{\partial_{t_i}-\partial_{t_j}}{t_i-t_j}
-\sum_{a=1}^m
\frac{\partial_{t_i}+(2/\beta)\partial_{s_a}}{t_i-s_a}.
\end{align}
The scalar part is
\begin{align}
\begin{split}\label{eq:Vti-hat-clean}
&\partial_{t_i}\Omega_i
+
(\ri\epsilon_i+\Omega_i)^2
+1
+\frac{2}{\beta}\sum_{j\neq i}
\frac{\ri(\epsilon_i-\epsilon_j)+\Omega_i-\Omega_j}{t_i-t_j}
-\sum_{a=1}^m
\frac{\ri(\epsilon_i+\eta_a)+\Omega_i+(2/\beta)\Sigma_a}{t_i-s_a}
\\
&=
(1-\epsilon_i^2)
+\ri\left(
2\epsilon_i\Omega_i
+\frac{2}{\beta}\sum_{j\neq i}
\frac{\epsilon_i-\epsilon_j}{t_i-t_j}
-\sum_{a=1}^m
\frac{\epsilon_i+\eta_a}{t_i-s_a}
\right)
+V_i^{(t)},
\end{split}
\end{align}
where we have separated the terms involving only \(\Omega_i,\Sigma_a\) from the
terms involving \(\epsilon_i,\eta_a\).

Since \(\epsilon_i^2=1\), the constant term \(1-\epsilon_i^2\) in
\eqref{eq:Vti-hat-clean} vanishes. For the remaining term in parentheses,
substitute the definition of \(\Omega_i\) from \eqref{eq:Omega-Sigma}:
\begin{align*}
2\epsilon_i\Omega_i
+\frac{2}{\beta}\sum_{j\neq i}
\frac{\epsilon_i-\epsilon_j}{t_i-t_j}
-\sum_{a=1}^m
\frac{\epsilon_i+\eta_a}{t_i-s_a}
=
\sum_{j\neq i}
\frac{2\epsilon_i\alpha_{ij}+\frac{2}{\beta}(\epsilon_i-\epsilon_j)}
{t_i-t_j}
+
\sum_{a=1}^m
\frac{2\epsilon_i\rho_{ia}-(\epsilon_i+\eta_a)}
{t_i-s_a}
=0.
\end{align*}
This vanishes by the defining relations \eqref{eq:alpha-gamma-rho-clean} for the
exponents \(\alpha_{ij}\) and \(\rho_{ia}\). Hence the scalar part
\eqref{eq:Vti-hat-clean} is \(V_i^{(t)}\), and \eqref{eq:conj-t-eq-clean}
follows by combining \eqref{eq:Ttilde-expanded-clean} and
\eqref{e:derivative_part}.

The computation for \(\widetilde{\cS}_a\) is analogous. Substituting
\eqref{e:first} and \eqref{e:second} into \(\cS_a\), recalled in
\eqref{eq:Sa-def}, we obtain
\begin{align}
\widetilde{\cS}_a
=
(\partial_{s_a}+\nu_a)^2+\frac{\beta^2}{4}
+\frac{\beta}{2}\sum_{b\neq a}
\frac{(\partial_{s_a}+\nu_a)-(\partial_{s_b}+\nu_b)}{s_a-s_b}
-\sum_{i=1}^n
\frac{(\partial_{s_a}+\nu_a)+(\beta/2)(\partial_{t_i}+\mu_i)}
{s_a-t_i}.
\label{eq:Stilde-expanded-clean}
\end{align}
Using
$
\nu_a=\ri\beta\eta_a/2+\Sigma_a
$
from \eqref{e:mui}, the derivative terms in \eqref{eq:Stilde-expanded-clean} are
\begin{align}\label{e:der_part}
\partial_{s_a}^2
+\ri\beta\eta_a\,\partial_{s_a}
+2\Sigma_a\partial_{s_a}
+\frac{\beta}{2}\sum_{b\neq a}
\frac{\partial_{s_a}-\partial_{s_b}}{s_a-s_b}
-\sum_{i=1}^n
\frac{\partial_{s_a}+(\beta/2)\partial_{t_i}}{s_a-t_i}.
\end{align}
The scalar part is
\begin{align}
\begin{split}\label{eq:Vsa-hat-clean}
&\partial_{s_a}\Sigma_a
+
\left(\ri\frac{\beta}{2}\eta_a+\Sigma_a\right)^2
+\frac{\beta^2}{4}
+\frac{\beta}{2}\sum_{b\neq a}
\frac{\ri\frac{\beta}{2}(\eta_a-\eta_b)+\Sigma_a-\Sigma_b}{s_a-s_b}
-\sum_{i=1}^n
\frac{\ri\frac{\beta}{2}(\eta_a+\epsilon_i)+\Sigma_a+(\beta/2)\Omega_i}
{s_a-t_i}
\\
&=
\frac{\beta^2}{4}(1-\eta_a^2)
+\ri\left(
\beta\eta_a\Sigma_a
+
\frac{\beta^2}{4}\sum_{b\neq a}
\frac{\eta_a-\eta_b}{s_a-s_b}
-
\frac{\beta}{2}\sum_{i=1}^n
\frac{\eta_a+\epsilon_i}{s_a-t_i}
\right)
+V_a^{(s)}.
\end{split}
\end{align}
Since \(\eta_a^2=1\), the constant term vanishes. For the remaining term in
parentheses, substitute the definition of \(\Sigma_a\) from \eqref{eq:Omega-Sigma}:
\begin{align*}
\beta\eta_a\Sigma_a
+
\frac{\beta^2}{4}\sum_{b\neq a}
\frac{\eta_a-\eta_b}{s_a-s_b}
-
\frac{\beta}{2}\sum_{i=1}^n
\frac{\eta_a+\epsilon_i}{s_a-t_i}
=
\sum_{b\neq a}
\frac{\beta\eta_a\gamma_{ab}+\frac{\beta^2}{4}(\eta_a-\eta_b)}
{s_a-s_b}
+
\sum_{i=1}^n
\frac{\beta\eta_a\rho_{ia}-\frac{\beta}{2}(\eta_a+\epsilon_i)}
{s_a-t_i}
=0.
\end{align*}
Again, this vanishes by the defining relations \eqref{eq:alpha-gamma-rho-clean}
for the exponents \(\gamma_{ab}\) and \(\rho_{ia}\). Hence the scalar part
\eqref{eq:Vsa-hat-clean} is \(V_a^{(s)}\), and \eqref{eq:conj-s-eq-clean}
follows by combining \eqref{eq:Stilde-expanded-clean} and \eqref{e:der_part}.
This proves the proposition.
\end{proof}

\subsection{Solutions attached to the branch factors}
\label{s:bulk_construct_solution}

In this section, we prove the second statement of
\Cref{thm:sectorial-branch-solution}. The proof reduces the conjugated system
\eqref{eq:conj-t-eq-clean}--\eqref{eq:conj-s-eq-clean} to a one-dimensional
problem along a carefully chosen shifted ray, constructed in \Cref{p:direction}.
In \Cref{prop:radial-block}, we show that along this ray the scaled
square-free derivatives satisfy a block first-order system in the radial
variable \(\lambda\). The non-empty derivative block has a positive spectral
gap, which allows us to impose the condition that this block decays at infinity
by solving a Volterra integral equation. The Volterra operator is a contraction, and hence gives a unique solution of the
first-order system with the desired asymptotic estimates. Translating these
estimates back from the scaled derivative system gives the desired estimates
for \(W\) and its square-free derivatives.

\begin{proposition}\label{p:direction}
Let \(\fC\) be the domain \eqref{e:bulkC}. Fix
\[
\bm\sigma=(\sigma_1,\dots,\sigma_{n+m})\in\{\pm1\}^{n+m},
\]
and define
\[
\bm\omega=(\omega_1,\dots,\omega_{n+m}),
\qquad
\omega_j=j+\ri\sigma_j,
\qquad
1\leq j\leq n+m.
\]
Then, for every
\[
\bm z=(z_1,\dots,z_{n+m})\in\fC
\]
and every \(r\geq0\), we have $\bm z+r\bm\omega\in\fC$ and 
\begin{align}
R\bigl(\bm z+r\bm\omega\bigr)
&\geq
\frac14\bigl(R(\bm z)+r\bigr),
\label{e:Rlowerbb}
\end{align}
where \(R(\bm z)\) is defined in \eqref{e:defRbulk}.
\end{proposition}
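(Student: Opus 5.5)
The proof is an elementary check that each pairwise separation entering $R$ grows at least linearly along the ray $r\mapsto \bm z+r\bm\omega$. Membership in $\fC$ is immediate. Since $\Re[\omega_{\ell+1}]-\Re[\omega_\ell]=1>0$, the ordering inequalities $\Re[z_{\ell+1}]-\Re[z_\ell]>D$ are preserved and in fact improved by $r$. Distinct coordinates remain distinct, because their real parts are separated by more than $D$. So $\bm z+r\bm\omega\in\mathfrak X$, and hence lies in $\fC$.

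For \eqref{e:Rlowerbb}, fix a pair $j<\ell$ of coordinates. Every difference entering $R$ is of the form $z_\ell-z_j$ up to sign, whether it is a $t$--$t$, $t$--$s$ or $s$--$s$ pair. Along the ray,
\[
(z_\ell+r\omega_\ell)-(z_j+r\omega_j)=(z_\ell-z_j)+r(\ell-j)+\ri r(\sigma_\ell-\sigma_j).
\]
Take real parts. Since $\bm z\in\fC$ we have $\Re[z_\ell-z_j]>(\ell-j)D>0$, and so
\[
\bigl|(z_\ell+r\omega_\ell)-(z_j+r\omega_j)\bigr|\ge \Re[z_\ell-z_j]+r(\ell-j)\ge \Re[z_\ell-z_j]+r .
\]
The remaining issue is comparing $\Re[z_\ell-z_j]$ with $|z_\ell-z_j|\ge R(\bm z)$, since the imaginary part of $z_\ell-z_j$ may be large.

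To handle the imaginary part, I would split into two cases according to whether $|\Im[z_\ell-z_j]|\le 2\Re[z_\ell-z_j]+2r$ or not.

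In the first case,
\[
|z_\ell-z_j|\le 3\Re[z_\ell-z_j]+2r,
\]
and therefore
\[
\Re[z_\ell-z_j]+r\ge \tfrac13|z_\ell-z_j|+\tfrac13 r\ge \tfrac14\bigl(R(\bm z)+r\bigr).
\]

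In the second case the imaginary part dominates. The perturbation $r(\sigma_\ell-\sigma_j)$ has modulus at most $2r$. Using the reverse triangle inequality on the imaginary part,
\[
|\text{new difference}|\ge |\Im[z_\ell-z_j]|-2r\ge \tfrac12|\Im[z_\ell-z_j]|+\Re[z_\ell-z_j]-r .
\]
Averaging this with the real-part bound $\ge \Re[z_\ell-z_j]+r$ gives
\[
|\text{new difference}|\ge \Re[z_\ell-z_j]+\tfrac14|\Im[z_\ell-z_j]|\ge \tfrac14|z_\ell-z_j|.
\]
Also $|\Im[z_\ell-z_j]|>2r$ here, so $\tfrac14|\Im[z_\ell-z_j]|\ge \tfrac18|\Im|+\tfrac14 r$. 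The constants need slight care, but one obtains $\ge \tfrac14(R(\bm z)+r)$.

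Finally, take the minimum over all pairs to get \eqref{e:Rlowerbb}. The only delicate point is this case bookkeeping for the constant $\tfrac14$. If the constant does not come out, I would use the weaker but sufficient fact $\max(a,b)\ge\frac12(a+b)$. Applied to the two lower bounds $\Re[z_\ell-z_j]+r$ and $|\Im[z_\ell-z_j]|-2r$, it gives
\[
|\text{new difference}|\ge \tfrac12\bigl(\Re[z_\ell-z_j]+|\Im[z_\ell-z_j]|\bigr)-\tfrac r2\cdot 1+\dots
\]
Treating $r\le R(\bm z)$ and $r\ge R(\bm z)$ separately then finishes the argument.
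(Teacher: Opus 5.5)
Your proof is correct in substance. One step is not justified as written, though. In Case 2 the displayed chain ends with $|d_r|\ge \Re[z_\ell-z_j]+\tfrac18|\Im[z_\ell-z_j]|+\tfrac14 r$, where $d_r:=(z_\ell+r\omega_\ell)-(z_j+r\omega_j)$. That bound only yields $\tfrac18 R(\bm z)+\tfrac14 r$, not $\tfrac14(R(\bm z)+r)$. The equal-weight averaging loses too much: take $\Re[z_\ell-z_j]$ small relative to $r$ and $|\Im[z_\ell-z_j]|=10r$ to see it fall short.

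The repair removes the need for a case split. For every pair, $d_r$ has real part $\ge \Re[z_\ell-z_j]+r$ and imaginary part of modulus $\ge |\Im[z_\ell-z_j]|-2r$. Hence
\[
|d_r|\ge \tfrac34\bigl(\Re[z_\ell-z_j]+r\bigr)+\tfrac14\bigl(|\Im[z_\ell-z_j]|-2r\bigr)\ge \tfrac14\bigl(|z_\ell-z_j|+r\bigr)\ge \tfrac14\bigl(R(\bm z)+r\bigr).
\]
Your fallback also works: $\max\{r,\tfrac12(R(\bm z)-r)\}\ge \tfrac12 r+\tfrac14(R(\bm z)-r)=\tfrac14(R(\bm z)+r)$. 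No split at $r=R(\bm z)$ is needed.

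The paper uses the same mechanism: two lower bounds for $|d_r|$, combined by $\max\{a,b\}\ge\lambda a+(1-\lambda)b$. It gets the second bound without splitting into real and imaginary parts. Since $\Re[z_j-z_i]>0$ and $r(j-i)\ge0$, adding $r(j-i)$ cannot decrease the modulus. So
\[
|d_r|\ge |z_j-z_i+r(j-i)|-2r\ge R(\bm z)-2r,
\]
and then $\max\{r,R(\bm z)-2r\}\ge \tfrac34 r+\tfrac14(R(\bm z)-2r)=\tfrac14(R(\bm z)+r)$. This is shorter and avoids your case analysis. Your route via $|\Re|+|\Im|\ge|\cdot|$ reaches the same constant only with the right weights.
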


\begin{proof}
For \(1\leq \ell<n+m\), we have
\[
\begin{aligned}
&\Re\left[
\bigl(z_{\ell+1}+r\omega_{\ell+1}\bigr)
-
\bigl(z_\ell+r\omega_\ell\bigr)
\right]=
\Re[z_{\ell+1}-z_\ell]+r
>
D.
\end{aligned}
\]
Thus \(\bm z+r\bm\omega\in\fC\).

Fix \(1\leq i<j\leq n+m\). Since \(j-i\geq1\),
\begin{align*}
&
\left|
\bigl(z_j+r\omega_j\bigr)
-
\bigl(z_i+r\omega_i\bigr)
\right|
\geq
\Re[z_j-z_i]+r(j-i)
\geq r.
\end{align*}
On the other hand,
\begin{align*}
\left|
\bigl(z_j+r\omega_j\bigr)
-
\bigl(z_i+r\omega_i\bigr)
\right|
=
\left|
z_j-z_i+r(j-i)+\ri r(\sigma_j-\sigma_i)
\right|
\geq
\left|z_j-z_i+r(j-i)\right|
-r|\sigma_j-\sigma_i|.
\end{align*}
Since \(\Re[z_j-z_i]>0\) and \(r(j-i)\geq0\), adding \(r(j-i)\) to
the real part cannot decrease the modulus. Moreover,
\(|\sigma_j-\sigma_i|\leq2\). Therefore
\[
\left|
\bigl(z_j+r\omega_j\bigr)
-
\bigl(z_i+r\omega_i\bigr)
\right|
\geq
|z_j-z_i|-2r
\geq
R(\bm z)-2r.
\]
Combining the two lower bounds gives
\begin{align*}
&
\left|
\bigl(z_j+r\omega_j\bigr)
-
\bigl(z_i+r\omega_i\bigr)
\right|
\geq
\max\left\{r,\;R(\bm z)-2r\right\}
\geq
\frac34r+\frac14\bigl(R(\bm z)-2r\bigr)
=
\frac14\bigl(R(\bm z)+r\bigr).
\end{align*}
Taking the minimum over \(1\leq i<j\leq n+m\) proves
\eqref{e:Rlowerbb}.
\end{proof}

\begin{proposition}[Radial block system along shifted rays]
\label{prop:radial-block}
Let \(\fC\) be the domain \eqref{e:bulkC}, fix a sign pattern
\[
(\bm\epsilon,\bm\eta)=\bm\sigma
=(\epsilon_1,\dots,\epsilon_n,\eta_1,\dots,\eta_m)
=(\sigma_1,\dots,\sigma_{n+m})\in\{\pm1\}^{n+m},
\]
and let
\[
\bm z=(\bmt,\bms)\in\fC,
\qquad
R_0\leq R(\bm z)\leq 2R_0.
\]
Let \(\bm\omega\) be as in \Cref{p:direction}, and define
\[
\gamma_{\bm z,\bm\omega}(\lambda)
:=
\bm z+(\lambda-R_0)\bm\omega,
\qquad
\lambda\in[R_0,\infty).
\]

Let \(\mathcal J\) be the set of square-free index pairs
\[
(I,A),
\qquad
I\subseteq\{1,\dots,n\},
\quad
A\subseteq\{1,\dots,m\},
\]
and write
\[
\mathcal J^\ast:=\mathcal J\setminus\{(\varnothing,\varnothing)\}.
\]
Order the components indexed by \(\mathcal J^\ast\) by increasing
\(|I|+|A|\).

Let \(W\) be a holomorphic solution of the conjugated bulk system
\eqref{eq:conj-t-eq-clean}--\eqref{eq:conj-s-eq-clean}. Define
\[
Y_{I,A}(\lambda;\bm z)
:=
\lambda^{|I|+|A|}
\partial_{t_I}\partial_{s_A}W
\bigl(\gamma_{\bm z,\bm\omega}(\lambda)\bigr),
\qquad
(I,A)\in\mathcal J,
\]
and write
\[
Y=(Y_0,Y_\ast),
\qquad
Y_0(\lambda;\bm z)
:=
W\bigl(\gamma_{\bm z,\bm\omega}(\lambda)\bigr),
\]
where \(Y_\ast\) collects the components corresponding to
\((I,A)\in\mathcal J^\ast\).

Then there exist matrix-valued functions
\[
C_{01}(\lambda;\bm z),
\qquad
C_{10}(\lambda;\bm z),
\qquad
C_{11}(\lambda;\bm z),
\]
and a matrix \(\Lambda(\lambda;\bm z)\) on the non-empty derivative block,
whose entries are uniformly bounded for \(\lambda\ge R_0\), such that
\begin{align}
\frac{\rd}{\rd\lambda}Y_0
&=
\frac1\lambda C_{01}(\lambda;\bm z)Y_\ast,
\label{eq:radial-top}
\\[1mm]
\frac{\rd}{\rd\lambda}Y_\ast
&=
\Lambda(\lambda;\bm z)Y_\ast
+
\frac1\lambda C_{10}(\lambda;\bm z)Y_0
+
\frac1\lambda C_{11}(\lambda;\bm z)Y_\ast.
\label{eq:radial-bottom}
\end{align}
The matrix \(\Lambda(\lambda;\bm z)\) is block lower triangular with respect
to the ordering by \(|I|+|A|\). More precisely, for
\((I,A),(J,B)\in\mathcal J^\ast\) with \((I,A)\ne(J,B)\),
\begin{equation}
\Lambda_{(I,A),(J,B)}(\lambda;\bm z)=0
\qquad\text{unless}\qquad
|J|+|B|<|I|+|A|.
\label{eq:lambda-triangular}
\end{equation}
Its diagonal entries are independent of \(\lambda\) and \(\bm z\), and are
given by
\begin{equation}
\Lambda_{(I,A),(I,A)}(\lambda;\bm z)
=
\Lambda_{I,A}(\bm\omega)
:=
-2\ri\sum_{i\in I}\epsilon_i\omega_i
-
\beta\ri\sum_{a\in A}\eta_a\omega_{n+a}.
\label{eq:lambda-IA}
\end{equation}
In particular,
\begin{equation}
\Re\Lambda_{I,A}(\bm\omega)
=
2|I|+\beta|A|,
\qquad
(I,A)\in\mathcal J^\ast.
\label{eq:spectral-gap}
\end{equation}
\end{proposition}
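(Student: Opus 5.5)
The plan is to differentiate $Y$ along the ray by the chain rule and then reduce every resulting derivative to square-free form using the conjugated system of \Cref{prop:conjugated-family-clean}. Since $\frac{\rd}{\rd\lambda}\gamma_{\bm z,\bm\omega}(\lambda)=\bm\omega$, we have
\[
\frac{\rd}{\rd\lambda}\bigl[\partial_{t_I}\partial_{s_A}W(\gamma(\lambda))\bigr]
=\sum_{i=1}^n\omega_i\,\partial_{t_i}\partial_{t_I}\partial_{s_A}W+\sum_{a=1}^m\omega_{n+a}\,\partial_{s_a}\partial_{t_I}\partial_{s_A}W .
\]
For $i\notin I$ (resp.\ $a\notin A$) the new derivative is again square-free, with one more index. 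For $i\in I$ (resp.\ $a\in A$) a repeated derivative appears, which we eliminate by applying $\partial_{t_{I\setminus i}}\partial_{s_A}$ to \eqref{eq:conj-t-eq-clean}. This gives
\[
\partial_{t_i}^2\partial_{t_{I\setminus i}}\partial_{s_A}W=-2\ri\epsilon_i\,\partial_{t_I}\partial_{s_A}W-\partial_{t_{I\setminus i}}\partial_{s_A}\mathcal R^{(t)}_iW .
\]
The analogous relation in $s$ comes from \eqref{eq:conj-s-eq-clean}, with leading term $-\ri\beta\eta_a\,\partial_{t_I}\partial_{s_A}W$. Both $\mathcal R$ terms carry coefficients that are rational and homogeneous of degree $\le -1$ in the pairwise differences. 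Their derivatives are homogeneous of degree $\le -1-k$ after $k$ differentiations, and the potentials $V$ are of degree $-2$.

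The first key step is to verify the block structure by bookkeeping the powers of $\lambda$. Multiply by $\lambda^{|I|+|A|}$ and add the term $\frac{|I|+|A|}{\lambda}Y_{I,A}$ from differentiating the prefactor. Then:
\begin{itemize}
\item The diagonal contribution is $\sum_{i\in I}\omega_i(-2\ri\epsilon_i)+\sum_{a\in A}\omega_{n+a}(-\ri\beta\eta_a)$. This is exactly $\Lambda_{I,A}(\bm\omega)$ in \eqref{eq:lambda-IA}. Since $\omega_j=j+\ri\sigma_j$ and $\sigma_j=\epsilon_j$ or $\eta_a$, its real part is $2|I|+\beta|A|$, which is \eqref{eq:spectral-gap}.
\item The terms with $i\notin I$ produce $\lambda^{|I|+|A|}\partial_{t_{I\cup i}}\partial_{s_A}W=\lambda^{-1}Y_{I\cup i,A}$, which go into $\frac1\lambda C_{11}$, or into $\frac1\lambda C_{01}$ when $(I,A)=(\varnothing,\varnothing)$.
\item Expanding $\partial_{t_{I\setminus i}}\partial_{s_A}\mathcal R^{(t)}_iW$ by Leibniz gives a coefficient of degree $-1-k$ times a derivative of order $|I|+|A|-k$ (first-order part) or $|I|+|A|-1-k'$ (potential part). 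The derivative is not necessarily square-free, so it is reduced recursively by the same rules. By the Gröbner normal form in \Cref{prop:Grobner-bulk} this terminates. Each reduction trades one order of differentiation for either a constant $\epsilon,\eta$ factor or a degree $-1$ coefficient, so the homogeneity deficit is preserved.
\end{itemize}

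The second key step is to bound the coefficients uniformly. By \Cref{p:direction}, $R(\gamma(\lambda))\ge\frac14(R_0+\lambda-R_0)\gtrsim\lambda$, and the ray stays in $\fC$, so all coefficients of degree $-d$ are $\OO(\lambda^{-d})$ uniformly in $\bm z$. After multiplying by the $\lambda$-weights, a term of order $\ell<|I|+|A|$ coming with coefficient degree $-d$ contributes $\lambda^{|I|+|A|-\ell-d}Y_{\text{order }\ell}$.

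The main obstacle is the bookkeeping showing $|I|+|A|-\ell-d\le 0$ always, with equality only for terms feeding strictly lower blocks. Terms where the constants $2\ri\epsilon_i$ multiply lower-order remainders arise from the recursive reduction and have $d=|I|+|A|-\ell$ exactly. These contributions are $\OO(1)$ and connect order $|I|+|A|$ to strictly smaller orders, so they are placed into $\Lambda$ as its strictly lower-triangular part, giving \eqref{eq:lambda-triangular}. Everything with strictly negative net power goes into $\frac1\lambda C_{10}$ or $\frac1\lambda C_{11}$.

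I would organize this as an induction on $|I|+|A|$. The inductive claim is that the reduced normal form of any monomial of order $k$ has coefficients of degree $\le$ (target order) $-k$, with the degree-$0$ part involving only $\epsilon,\eta$ constants. This is checked directly from the two reduction identities above.

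The $Y_0$ equation has no diagonal term because $\cL$ has no zeroth-order term. Hence $\frac{\rd}{\rd\lambda}Y_0=\sum_j\omega_j\partial_{z_j}W=\frac1\lambda\sum_j\omega_jY_{\{j\}}$, which gives \eqref{eq:radial-top}.
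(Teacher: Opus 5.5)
Your overall route is the paper's: chain rule along the shifted ray, elimination of the repeated derivative via the conjugated equations, and counting powers of $\lambda$ using $R(\gamma_{\bm z,\bm\omega}(\lambda))\ge\lambda/4$. The $O(1)$ couplings to strictly lower orders go into the off-diagonal part of $\Lambda$. Your $Y_0$ equation, the $i\notin I$ terms, and the diagonal entry with its real part are correct.

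The gap is in the step you call the main obstacle: the invariant you propose to prove by induction is false. Write a reduced term as a coefficient decaying like $\lambda^{-d}$ times a square-free derivative of order $\ell$, coming from a monomial of order $k$. Your claim says $d+\ell\ge k$, apart from a constant-coefficient part.

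This already fails for $\partial_{t_1}\partial_{t_2}^2W$, which arises for $(I,A)=(\{1,2\},\varnothing)$. Take the term $\frac{2}{\beta}\frac{\partial_{t_2}-\partial_{t_1}}{t_2-t_1}$ of $\mathcal R^{(t)}_2$ and follow it by the principal reduction of the resulting $\partial_{t_1}^2W$. This gives $-\frac{4\ri\epsilon_1}{\beta(t_2-t_1)}\partial_{t_1}W$, so $d=\ell=1$ and $d+\ell=k-1$. Terms of this kind are exactly the off-diagonal entries of $\Lambda$. Your own sentence ``$d=|I|+|A|-\ell$ exactly'' contradicts the inductive claim. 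Likewise, the principal step does not ``preserve the homogeneity deficit'': it lowers the order by one and gains no decay.

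Worse, even the corrected bound $d+\ell\ge k-1$ fails for general monomials. $\partial_{t_1}^2\partial_{t_2}^2W$ reduces to $-4\epsilon_1\epsilon_2\,\partial_{t_1}\partial_{t_2}W$ plus decaying terms, and $\partial_{t_1}^3W$ reduces to $-4\partial_{t_1}W$ plus decaying terms. Such a monomial at order $|I|+|A|+1$ would produce a coefficient of size $\lambda$. Gr\"obner termination does not rule this out.

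What is missing is the structural fact the paper's proof relies on: every monomial generated has at most one repeated variable, with exponent two.
\begin{itemize}
\item Each substitution of the conjugated equation for $\partial_{z_j}^2W$ takes place inside a square-free operator $\partial^{\alpha}$ not containing $z_j$.
\item The Leibniz rule then yields only three kinds of terms: $(\partial^{\alpha_1}c)\,\partial^{\alpha_2}\partial_{z_k}W$ or $(\partial^{\alpha_1}V)\,\partial^{\alpha_2}W$ with $\alpha_2\subseteq\alpha$, or a constant times the square-free $\partial^{\alpha}\partial_{z_j}W$.
\item Consequently $d+\ell=|I|+|A|+1$ along $\mathcal R$-steps. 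A principal step is used at most once per chain and terminates it with $d+\ell=|I|+|A|$.
\item That principal step gives $d=0$ only when it is the very first step, which is the diagonal term. Otherwise it gives a target of strictly lower order.
\item Since a principal step always leaves a derivative, $Y_0$ enters the $Y_\ast$ equation only through $\lambda^{-1}C_{10}$. This is required by \eqref{eq:radial-bottom}, and your outline does not check it.
\end{itemize}
The absence of a zeroth-order term in $\mathcal L^{(t)}_i$ and $\mathcal L^{(s)}_a$ matters here and throughout the counting. It is not the reason for the form of the $Y_0$ equation, which is just the chain rule.

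A smaller point: homogeneity of degree $-d$ alone does not give $O(\lambda^{-d})$ uniformly on the unbounded domain \eqref{e:bulkC}; consider $(t_1-t_3)/(t_1-t_2)^2$. You need that all coefficients and their derivatives are finite sums of products of reciprocals $1/(z_j-z_k)$. This holds by \eqref{eq:Omega-Sigma}, \eqref{eq:Vti-simplified} and \eqref{eq:Vsa-simplified}.
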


\begin{proof}
By \Cref{p:direction}, along the shifted ray
\(\gamma_{\bm z,\bm\omega}\) we have
\[
R\bigl(\gamma_{\bm z,\bm\omega}(\lambda)\bigr)
\gtrsim\lambda,\quad \lambda\geq R_0.
\]
\emph{Step 1: The equation for \(Y_0\).}
First we notice that
\[
Y_0(\lambda;\bm z)
=
W\bigl(\gamma_{\bm z,\bm\omega}(\lambda)\bigr).
\]
By the chain rule,
\[
\frac{\rd}{\rd\lambda}Y_0
=
\sum_{i=1}^n
\omega_i\,\partial_{t_i}W
\bigl(\gamma_{\bm z,\bm\omega}(\lambda)\bigr)
+
\sum_{a=1}^m
\omega_{n+a}\,\partial_{s_a}W
\bigl(\gamma_{\bm z,\bm\omega}(\lambda)\bigr).
\]
Since
\[
Y_{\{i\},\varnothing}
=
\lambda\partial_{t_i}W
\bigl(\gamma_{\bm z,\bm\omega}(\lambda)\bigr),
\qquad
Y_{\varnothing,\{a\}}
=
\lambda\partial_{s_a}W
\bigl(\gamma_{\bm z,\bm\omega}(\lambda)\bigr),
\]
we obtain
\[
\frac{\rd}{\rd\lambda}Y_0
=
\frac{1}{\lambda}
\left(
\sum_{i=1}^n \omega_i\,Y_{\{i\},\varnothing}
+
\sum_{a=1}^m \omega_{n+a}\,Y_{\varnothing,\{a\}}
\right).
\]
This is \eqref{eq:radial-top}. The entries of \(C_{01}\) are either \(0\) or
one of the components of \(\bm\omega\), hence are uniformly bounded.

\medskip

\emph{Step 2: The equations for the non-empty square-free derivatives.}

Fix a non-empty square-free pair
\[
(I,A)\in\mathcal J^\ast.
\]
By the chain rule,
\begin{align}
\begin{split}\label{eq:radial-chain}
\frac{\rd}{\rd\lambda}Y_{I,A}
=
\frac{|I|+|A|}{\lambda}Y_{I,A}
+
\lambda^{|I|+|A|}
\left(
\sum_{i=1}^n
\omega_i\,
\partial_{t_i}\partial_{t_I}\partial_{s_A}W
+
\sum_{a=1}^m
\omega_{n+a}\,
\partial_{s_a}\partial_{t_I}\partial_{s_A}W
\right)
\bigl(\gamma_{\bm z,\bm\omega}(\lambda)\bigr).
\end{split}
\end{align}

If \(i\notin I\), then
\[
\partial_{t_i}\partial_{t_I}\partial_{s_A}W
=
\partial_{t_{I\cup\{i\}}}\partial_{s_A}W
\]
is still square-free. Therefore
\[
\lambda^{|I|+|A|}
\partial_{t_i}\partial_{t_I}\partial_{s_A}W
\bigl(\gamma_{\bm z,\bm\omega}(\lambda)\bigr)
=
\frac{1}{\lambda}
Y_{I\cup\{i\},A}(\lambda;\bm z).
\]
Similarly, if \(a\notin A\), then
\[
\lambda^{|I|+|A|}
\partial_{s_a}\partial_{t_I}\partial_{s_A}W
\bigl(\gamma_{\bm z,\bm\omega}(\lambda)\bigr)
=
\frac{1}{\lambda}
Y_{I,A\cup\{a\}}(\lambda;\bm z).
\]
These terms contribute only to the \(\lambda^{-1}\)-part of
\eqref{eq:radial-bottom}.

Next suppose \(i\in I\). Write
$
I'=I\setminus\{i\}$.
Then the relevant derivative is
\begin{equation}\label{e:two_der}
\lambda^{|I|+|A|}
\partial_{t_i}^2\partial_{t_{I'}}\partial_{s_A}W.
\end{equation}
We recall the conjugated equations from \Cref{prop:conjugated-family-clean}:
\begin{align}
\partial_{t_i}^2W
&=
-2\ri\epsilon_i\,\partial_{t_i}W
-\mathcal R_i^{(t)}W,
\label{eq:radial-reduce-t}
\\
\partial_{s_a}^2W
&=
-\ri\beta\eta_a\,\partial_{s_a}W
-\mathcal R_a^{(s)}W.
\label{eq:radial-reduce-s}
\end{align}
Using \eqref{eq:radial-reduce-t}, we rewrite \eqref{e:two_der} as
\begin{align}
\begin{split}\label{e:two_der2}
\lambda^{|I|+|A|}
\partial_{t_i}^2\partial_{t_{I'}}\partial_{s_A}W
&=
\lambda^{|I|+|A|}
\partial_{t_{I'}}\partial_{s_A}
\left(
-2\ri\epsilon_i\,\partial_{t_i}
-\mathcal R_i^{(t)}
\right)W
\\
&=
-2\ri\epsilon_i\,
\lambda^{|I|+|A|}
\partial_{t_I}\partial_{s_A}W
-
\lambda^{|I|+|A|}
\partial_{t_{I'}}\partial_{s_A}
\bigl(\mathcal R_i^{(t)}W\bigr).
\end{split}
\end{align}
Multiplying by \(\omega_i\), the leading term in \eqref{e:two_der2} contributes
\begin{equation}\label{e:leading1}
-2\ri\epsilon_i\omega_i\,Y_{I,A}.
\end{equation}

We now estimate the second term on the right-hand side of \eqref{e:two_der2}.
The coefficients
\begin{equation}\label{e:fraction}
\frac{1}{t_i-t_j},
\qquad
\frac{1}{s_a-s_b},
\qquad
\frac{1}{t_i-s_a}
\end{equation}
have degree \(-1\). From \eqref{eq:Omega-Sigma}, the functions
\(\Omega_i\) and \(\Sigma_a\) also have degree \(-1\). Hence, in
\eqref{eq:Rti-clean} and \eqref{eq:Rsa-clean}, the coefficients of first-order
derivatives have degree \(-1\). Finally, the formulas
\eqref{eq:Vti-simplified} and \eqref{eq:Vsa-simplified} show directly that
\(V_i^{(t)}\) and \(V_a^{(s)}\) are sums of terms of degree \(-2\). Therefore, along the
shifted ray, for square-free pairs \((J,B)\),
\begin{align}\label{eq:first-order-coefficient-size}
\partial_{t_J}\partial_{s_B}
\bigl(\text{a first-order coefficient}\bigr)
=
\OO\bigl(\lambda^{-1-|J|-|B|}\bigr),
\quad
\partial_{t_J}\partial_{s_B}V_i^{(t)},
\quad
\partial_{t_J}\partial_{s_B}V_a^{(s)}
=
\OO\bigl(\lambda^{-2-|J|-|B|}\bigr).
\end{align}
Here we used \Cref{p:direction}, which gives
\(R(\gamma_{\bm z,\bm\omega}(\lambda))\ge\lambda/4\).

Consider first a term obtained from a first-order part of
\(\mathcal R_i^{(t)}\). Suppose that, after applying Leibniz' rule, exactly
\(r\) of the derivatives in
\(\partial_{t_{I'}}\partial_{s_A}\) fall on its coefficient. The coefficient
is then \(\OO(\lambda^{-1-r})\), while the differential monomial acting on
\(W\) has total order \(|I|+|A|-r\). If this monomial is square-free, then,
after passing to the scaled variables, its coefficient is
\[
\lambda^{|I|+|A|}
\lambda^{-1-r}
\lambda^{-(|I|+|A|-r)}
=
\lambda^{-1}.
\]
Hence it belongs to the \(\lambda^{-1}C_{11}Y_\ast\) part of
\eqref{eq:radial-bottom}.

If the resulting monomial is not square-free, it contains exactly one
repeated variable. Suppose first that the repeated variable is \(t_j\). We
use
\[
\partial_{t_j}^2W
=
-2\ri\epsilon_j\partial_{t_j}W
-
\mathcal R_j^{(t)}W.
\]
The principal term
\(-2\ri\epsilon_j\partial_{t_j}W\) removes the repetition without supplying
an additional factor \(\lambda^{-1}\). The resulting square-free derivative
has total order \(|I|+|A|-r-1\), and its scaled coefficient is
\[
\lambda^{|I|+|A|}
\lambda^{-1-r}
\lambda^{-(|I|+|A|-r-1)}
=1.
\]
Since
\[
|I|+|A|-r-1<|I|+|A|,
\]
this gives a uniformly bounded coupling from a strictly lower-order
square-free derivative into the equation for \(Y_{I,A}\). The case in which
the repeated variable is \(s_b\) is identical, using
\eqref{eq:radial-reduce-s}.

If instead we use the corresponding remainder
\(\mathcal R_j^{(t)}W\), or \(\mathcal R_b^{(s)}W\) when the repeated variable
is \(s_b\), every first-order coefficient supplies one additional factor
\(\lambda^{-1}\), while the total order of the derivative on \(W\) decreases
by one. Derivatives falling on that new coefficient increase its decay and
decrease the remaining derivative order by the same amount. A scalar
coefficient supplies two powers of \(\lambda^{-1}\), and the resulting
differential monomial is square-free. Thus the sum
\[
\text{decay order of the coefficient}
+
\text{total order of the derivative on }W
\]
remains equal to \(|I|+|A|+1\). The new differential monomial is either
square-free, in which case its scaled coefficient is \(\OO(\lambda^{-1})\),
or it contains again exactly one repeated variable. In the latter case we
repeat the same reduction. Each use of a remainder decreases the total order
of the derivative on \(W\), so the procedure terminates after finitely many
steps.

There are only two possible terminal cases. If no principal term is used at
the last step, the final coefficient decay order and the final square-free
derivative order add up to \(|I|+|A|+1\), and hence the scaled coefficient is
\(\OO(\lambda^{-1})\). If a principal term is used, it removes the unique
repetition and lowers the derivative order by one without changing the
coefficient decay. The final coefficient decay order and the final derivative
order then add up to \(|I|+|A|\); the scaled coefficient is \(\OO(1)\), and
the final square-free derivative has strictly smaller total order than
\(|I|+|A|\).

A term obtained directly from the scalar part \(V_i^{(t)}\) is simpler.
If exactly \(r\) derivatives in
\(\partial_{t_{I'}}\partial_{s_A}\) fall on the scalar coefficient, then the
coefficient is \(\OO(\lambda^{-2-r})\), while the derivative acting on \(W\)
has total order \(|I|+|A|-1-r\). Hence its scaled coefficient is
\[
\lambda^{|I|+|A|}
\lambda^{-2-r}
\lambda^{-(|I|+|A|-1-r)}
=
\lambda^{-1}.
\]
Thus all scalar terms contribute to the \(\lambda^{-1}C_{10}Y_0\) or
\(\lambda^{-1}C_{11}Y_\ast\) part of \eqref{eq:radial-bottom}.

Similarly, if \(a\in A\) and
\[
A'=A\setminus\{a\},
\]
then
\[
\lambda^{|I|+|A|}
\partial_{s_a}^2\partial_{t_I}\partial_{s_{A'}}W
\]
is reduced using \eqref{eq:radial-reduce-s}. Its leading contribution, before
multiplication by \(\omega_{n+a}\), is
\[
-\ri\beta\eta_a\,
\lambda^{|I|+|A|}
\partial_{t_I}\partial_{s_A}W.
\]
After multiplying by \(\omega_{n+a}\), this contributes
\begin{equation}\label{e:leading2}
-\beta\ri\eta_a\omega_{n+a}\,Y_{I,A}.
\end{equation}
Again, all remaining terms come from \(\mathcal R_a^{(s)}\), and the same
conclusion as above holds.

We have therefore proved the following dichotomy for every term produced by
\(\mathcal R_i^{(t)}\) or \(\mathcal R_a^{(s)}\):
\begin{itemize}
\item it is a uniformly bounded multiple of a square-free derivative of
strictly smaller total order; or
\item it carries a factor \(\lambda^{-1}\), after which its coefficient is
uniformly bounded.
\end{itemize}
The first class gives the strict block-lower-triangular part of
\(\Lambda(\lambda;\bm z)\), while the second class gives
\(C_{10}\) and \(C_{11}\). Notice also that an \(\OO(1)\) term cannot involve
\(Y_0\), because a principal reduction always leaves at least one derivative
of \(W\).

Combining the leading contributions \eqref{e:leading1} and
\eqref{e:leading2} for all \(i\in I\) and \(a\in A\), the diagonal coefficient
of \(Y_{I,A}\) is
\[
\Lambda_{I,A}(\bm\omega)
:=
-2\ri\sum_{i\in I}\epsilon_i\omega_i
-\beta\ri\sum_{a\in A}\eta_a\omega_{n+a}.
\]
This gives \eqref{eq:radial-bottom}, \eqref{eq:lambda-triangular}, and
\eqref{eq:lambda-IA}. Uniform boundedness follows from
\eqref{eq:first-order-coefficient-size},
the fact that there are only finitely many square-free derivatives, and the
uniform boundedness of the components of \(\bm\omega\).

\medskip

\emph{Step 3: The real parts of the diagonal entries.}

It remains to compute the real part of the diagonal entries. By the construction
of \(\bm\omega\) in \eqref{p:direction},
\[
\Im[\omega_i]=\epsilon_i,
\qquad 1\le i\le n,
\qquad
\Im[\omega_{n+a}]=\eta_a,
\qquad 1\le a\le m.
\]
Hence
\[
\Re[-2\ri\epsilon_i\omega_i]=2\epsilon_i\Im[\omega_i]=2,
\]
and
\[
\Re[-\beta\ri\eta_a\omega_{n+a}]
=
\beta\eta_a\Im[\omega_{n+a}]
=
\beta.
\]
Therefore
$
\Re[\Lambda_{I,A}(\bm\omega)]
=
2|I|+\beta |A|,
$
which proves \eqref{eq:spectral-gap}.
\end{proof}

\begin{proof}[Proof of the second statement in \Cref{thm:sectorial-branch-solution}]

Fix first \(\bm z=(\bmt,\bms)\in\fC\), set \(R_0:=R(\bm z)\), let \(\bm\omega\) be as in \eqref{p:direction},
 and consider
the shifted ray
\[
\gamma_{\bm z,\bm\omega}(\lambda)
:=
\bm z+(\lambda-R_0)\bm\omega,
\qquad \lambda\in[R_0,\infty).
\]
Along this ray, \Cref{prop:radial-block} gives
\begin{align}
\frac{\rd}{\rd\lambda}Y_0
&=
\frac1\lambda C_{01}(\lambda;\bm z)Y_\ast,
\label{eq:thm-radial-top-clean}
\\
\frac{\rd}{\rd\lambda}Y_\ast
&=
\Lambda(\lambda;\bm z)Y_\ast
+
\frac1\lambda C_{10}(\lambda;\bm z)Y_0
+
\frac1\lambda C_{11}(\lambda;\bm z)Y_\ast.
\label{eq:thm-radial-bottom-clean}
\end{align}
The matrices in these equations are uniformly bounded.

\medskip
\noindent
\emph{Step 1: Exponential decay of the principal backward evolution.}
Set
\[
c_0:=\min\{2,\beta\}>0.
\]
Recall from \eqref{eq:lambda-IA} that the diagonal entries of \(\Lambda(\lambda;\bmz)\) are independent
of \(\lambda\) and are given by
\[
\Lambda_{I,A}(\bm\omega)
=
-2\ri\sum_{i\in I}\epsilon_i\omega_i
-\beta\ri\sum_{a\in A}\eta_a\omega_{n+a}.
\]
By \eqref{eq:spectral-gap},
\[
\Re\Lambda_{I,A}(\bm\omega)
=
2|I|+\beta|A|
\ge c_0,
\qquad
(I,A)\in\mathcal J^\ast.
\]
Consequently, the backward evolution generated by the diagonal matrix
\(\Lambda(\bm\omega)\) satisfies
\begin{equation}\label{eq:diagonal-backward-bound}
\left\|
e^{-(\sigma-\lambda)\Lambda(\bm\omega)}
\right\|
\le
e^{-c_0(\sigma-\lambda)},
\qquad
\sigma\ge\lambda\ge R_0.
\end{equation}

Let \(\mathcal E(\lambda,\sigma;\bmz)\) denote the backward evolution associated
with the full triangular matrix \(\Lambda(\lambda;\bmz)\), namely
\begin{align}
\partial_\lambda\mathcal E(\lambda,\sigma;\bmz)
&=
\Lambda(\lambda;\bmz)\mathcal E(\lambda,\sigma;\bmz),
\qquad
\mathcal E(\sigma,\sigma;\bmz)=I.
\label{eq:backward-evolution-def}
\end{align}
The matrix
\[
\Lambda(\lambda;\bmz)-\Lambda(\bm\omega)
\]
has vanishing diagonal entries. Moreover, by the triangularity statement in
\Cref{prop:radial-block},
\begin{equation}\label{eq:strict-degree-increase}
\bigl(
\Lambda(\lambda;\bmz)-\Lambda(\bm\omega)
\bigr)_{(I,A),(J,B)}
=0
\qquad\text{unless}\qquad
|J|+|B|<|I|+|A|.
\end{equation}
Therefore every product containing \(n+m\) factors of
\(\Lambda(\lambda;\bmz)-\Lambda(\bm\omega)\) vanishes, even when the factors are
evaluated at different values of \(\lambda\) and are separated by diagonal
evolution factors.

Repeated variation of constants around the diagonal evolution therefore gives
the finite expansion
\begin{align}\begin{split}
&\mathcal E(\lambda,\sigma;\bmz)
=
e^{-(\sigma-\lambda)\Lambda(\bm\omega)}+
\sum_{r=1}^{n+m-1}(-1)^r
\int_{\lambda\le\tau_1\le\cdots\le\tau_r\le\sigma}
e^{-(\tau_1-\lambda)\Lambda(\bm\omega)}
\bigl(
\Lambda(\tau_1;\bmz)-\Lambda(\bm\omega)
\bigr)
\\
&\times
e^{-(\tau_2-\tau_1)\Lambda(\bm\omega)}
\bigl(
\Lambda(\tau_2;\bmz)-\Lambda(\bm\omega)
\bigr)
\cdots
e^{-(\tau_r-\tau_{r-1})\Lambda(\bm\omega)}
\bigl(
\Lambda(\tau_r;\bmz)-\Lambda(\bm\omega)
\bigr)
e^{-(\sigma-\tau_r)\Lambda(\bm\omega)}
\,\rd\tau_1\cdots\rd\tau_r.
\label{eq:finite-variation-expansion}
\end{split}\end{align}
Here, when \(r=1\), the intermediate diagonal factors are absent.

After increasing \(M\) if necessary, the uniform boundedness of the
off-diagonal entries gives
\[
\left\|
\Lambda(\lambda;\bmz)-\Lambda(\bm\omega)
\right\|
\le M,
\qquad
\lambda\ge R_0.
\]
For the term in \eqref{eq:finite-variation-expansion} containing \(r\)
off-diagonal factors, \eqref{eq:diagonal-backward-bound} gives
\begin{align*}
&
\left\|
e^{-(\tau_1-\lambda)\Lambda(\bm\omega)}
\right\|
\left\|
e^{-(\tau_2-\tau_1)\Lambda(\bm\omega)}
\right\|
\cdots
\left\|
e^{-(\sigma-\tau_r)\Lambda(\bm\omega)}
\right\|
\le
e^{-c_0(\tau_1-\lambda)}
e^{-c_0(\tau_2-\tau_1)}
\cdots
e^{-c_0(\sigma-\tau_r)}
=
e^{-c_0(\sigma-\lambda)}.
\end{align*}
The ordered integration region
$
\lambda\le\tau_1\le\cdots\le\tau_r\le\sigma
$
has volume
$
{(\sigma-\lambda)^r}/{r!}.
$
Taking norms in \eqref{eq:finite-variation-expansion}, we therefore obtain, after enlarging the constant \(C\),
\begin{align}\label{eq:triangular-backward-bound}
\left\|
\mathcal E(\lambda,\sigma;\bmz)
\right\|
&\le
e^{-c_0(\sigma-\lambda)}
\sum_{r=0}^{n+m-1}
\frac{\bigl(M(\sigma-\lambda)\bigr)^r}{r!}
\leq
C e^{-\frac{c_0}{2}(\sigma-\lambda)},
\qquad
\sigma\ge\lambda\ge R_0.
\end{align}

\medskip
\noindent
\emph{Step 2: Reduction to a Volterra equation.}

We impose the normalization
\[
Y_0(\lambda)\to 1,
\qquad
Y_\ast(\lambda)\to 0,
\qquad \lambda\to\infty.
\]
Integrating \eqref{eq:thm-radial-top-clean} from \(\lambda\) to \(\infty\), we
obtain
\begin{equation}\label{eq:Y0-representation-clean}
Y_0(\lambda)=1+(\mathcal K Y_\ast)(\lambda),
\qquad
(\mathcal K Y_\ast)(\lambda):=
-\int_\lambda^\infty
\frac1\sigma C_{01}(\sigma;\bmz)Y_\ast(\sigma)\,\rd\sigma.
\end{equation}
Substituting this into \eqref{eq:thm-radial-bottom-clean} and using variation of
constants gives
\begin{equation}\label{eq:volterra-fixed-clean}
Y_\ast=\mathcal T(Y_\ast),
\end{equation}
where
\begin{align}
(\mathcal T Y_\ast)(\lambda)
:=
-\int_\lambda^\infty
\mathcal E(\lambda,\sigma;\bmz)
\frac1\sigma
\Bigl[
C_{10}(\sigma;\bmz)\bigl(1+\mathcal K Y_\ast(\sigma)\bigr)
+
C_{11}(\sigma;\bmz)Y_\ast(\sigma)
\Bigr]
\,\rd\sigma.
\label{eq:T-def-clean}
\end{align}

\medskip
\noindent
\emph{Step 3: The fixed-point space and the basic estimates.}

Let \(\mathcal B_{\bmz}\) be the Banach space of continuous functions
\[
Y_\ast:[R_0,\infty)\to\mathbb C^{|\mathcal J^\ast|}
\]
with norm
\[
\|Y_\ast\|_{\mathcal B_{\bmz}}
:=
\sup_{\lambda\ge R_0}\lambda \|Y_\ast(\lambda)\|_2.
\]
If \(Y_\ast\in\mathcal B_{\bmz}\), then by
\eqref{eq:Y0-representation-clean},
\begin{align*}
|(\mathcal K Y_\ast)(\lambda)|
&\le
M\int_\lambda^\infty \frac{\|Y_\ast(\sigma)\|_2}{\sigma}\,\rd\sigma
\le
M\|Y_\ast\|_{\mathcal B_{\bmz}}
\int_\lambda^\infty \sigma^{-2}\,\rd\sigma
=
\frac{M}{\lambda}\|Y_\ast\|_{\mathcal B_{\bmz}}.
\end{align*}
Hence
\begin{equation}\label{eq:K-est-clean}
|(\mathcal K Y_\ast)(\lambda)|
\le
\frac{C}{\lambda}\|Y_\ast\|_{\mathcal B_{\bmz}}.
\end{equation}

With \eqref{eq:triangular-backward-bound}, \eqref{eq:T-def-clean} and \eqref{eq:K-est-clean},  we have
\begin{align*}
\lambda\|(\mathcal T Y_\ast)(\lambda)\|_2
&\le
M\lambda\int_\lambda^\infty
\left\|\mathcal E(\lambda,\sigma;\bmz)\right\|
\frac1\sigma
\Bigl(
1+|(\mathcal K Y_\ast)(\sigma)|+\|Y_\ast(\sigma)\|_2
\Bigr)\,\rd\sigma
\\
&\le
CM\lambda\int_\lambda^\infty
e^{-c_0(\sigma-\lambda)/2}
\Bigl(
\sigma^{-1}
+\|Y_\ast\|_{\mathcal B_{\bmz}}\sigma^{-2}
\Bigr)\,\rd\sigma
\le
C_0 + C_1 R_0^{-1}\|Y_\ast\|_{\mathcal B_{\bmz}}.
\end{align*}
Therefore
\begin{equation}\label{eq:T-map-ball-clean}
\|\mathcal T Y_\ast\|_{\mathcal B_{\bmz}}
\le
C_0 + C_1 R_0^{-1}\|Y_\ast\|_{\mathcal B_{\bmz}}.
\end{equation}

Similarly, for \(Y_\ast,Z_\ast\in\mathcal B_{\bmz}\),
\[
|(\mathcal K Y_\ast)(\lambda)-(\mathcal K Z_\ast)(\lambda)|
\le
\frac{C}{\lambda}\|Y_\ast-Z_\ast\|_{\mathcal B_{\bmz}}.
\]
Hence
\begin{align*}
\lambda\|(\mathcal T Y_\ast-\mathcal T Z_\ast)(\lambda)\|_2
&\le
M\lambda\int_\lambda^\infty
\left\|\mathcal E(\lambda,\sigma;\bmz)\right\|
\frac1\sigma
\Bigl(
|(\mathcal K Y_\ast)(\sigma)-(\mathcal K Z_\ast)(\sigma)|
+
\|Y_\ast(\sigma)-Z_\ast(\sigma)\|_2
\Bigr)\,\rd\sigma
\\
&\le
CM\lambda\int_\lambda^\infty
e^{-c_0(\sigma-\lambda)/2}
\sigma^{-2}\rd\sigma\,
\|Y_\ast-Z_\ast\|_{\mathcal B_{\bmz}}
\le
C_1R_0^{-1}\|Y_\ast-Z_\ast\|_{\mathcal B_{\bmz}}.
\end{align*}
Thus
\begin{equation}\label{eq:T-contraction-clean}
\|\mathcal T Y_\ast-\mathcal T Z_\ast\|_{\mathcal B_{\bmz}}
\le
C_1R_0^{-1}\|Y_\ast-Z_\ast\|_{\mathcal B_{\bmz}}.
\end{equation}

\medskip
\noindent
\emph{Step 4: Existence, asymptotics, and raywise uniqueness.}

If \(R_0\) is sufficiently large, then \(C_1R_0^{-1}<1\), so by \eqref{eq:T-contraction-clean}
\(\mathcal T\) is a contraction on \(\mathcal B_{\bmz}\). Hence
\eqref{eq:volterra-fixed-clean} admits a unique fixed point
\(Y_\ast\in\mathcal B_{\bmz}\). Defining \(Y_0\) by
\eqref{eq:Y0-representation-clean}, we obtain a unique solution
\((Y_0,Y_\ast)\) of
\eqref{eq:thm-radial-top-clean}--\eqref{eq:thm-radial-bottom-clean}
satisfying
\[
Y_0(\lambda)\to 1,
\qquad
Y_\ast(\lambda)\to 0,
\qquad \lambda\to\infty.
\]
Moreover, \eqref{eq:K-est-clean} and \eqref{eq:T-map-ball-clean} imply
\[
Y_0(\lambda)=1+\OO(\lambda^{-1}),
\qquad
Y_\ast(\lambda)=\OO(\lambda^{-1}),
\qquad \lambda\ge R_0.
\]

Now define
\[
W(\bmz):=Y_0(R_0).
\]
By construction,
\[
W(\bmz)=1+\OO(R_0^{-1})=1+\OO(R(\bmz)^{-1}).
\]
For each non-empty square-free pair \((I,A)\in\mathcal J^\ast\), the
corresponding component of \(Y_\ast(R_0)\) is
\[
Y_{I,A}(R_0)
=
R(\bmz)^{|I|+|A|}
\partial_{t_I}\partial_{s_A}W(\bmz).
\]
Thus the bound \(Y_\ast(R_0)=\OO(R_0^{-1})\) gives
\[
\partial_{t_I}\partial_{s_A}W(\bmz)
=
\OO\bigl(R(\bmz)^{-|I|-|A|-1}\bigr).
\]

The same argument applied to the difference of two normalized solutions has
no constant term in \eqref{eq:Y0-representation-clean}. Therefore the
corresponding Volterra map is a strict contraction with zero forcing, and the
difference vanishes. This proves uniqueness along every admissible shifted
ray.

\medskip
\noindent
\emph{Step 5: Holomorphic dependence and gluing on the ordered domain.}

Fix \(\bm z_0\in\fC\). After shrinking to a small neighborhood \(\fU\) of
\(\bm z_0\), we may choose a fixed number \(R_0\) such that
\(R_0\leq R(\bm z)\leq 2R_0\) for all
\(\bm z\in\fU\). Use the same vector \(\bm\omega\) and the rays
\(\bm z+(\lambda-R_0)\bm\omega\), \(\lambda\geq R_0\), throughout
\(\fU\). \Cref{p:direction} gives the required separation
uniformly on these rays. Since \(R_0\) and \(\bm\omega\) are fixed, the
coefficients of the Volterra equation depend holomorphically on \(\bm z\),
and the contraction estimate is uniform. Therefore its fixed point depends
holomorphically on \(\bm z\).

We have so far solved the first-order system only in the radial direction. To
check the remaining coordinate directions, fix one coordinate and compare two
ways of differentiating the constructed square-free jet: differentiation with
respect to the base point, and the corresponding coordinate equation of the
full square-free first-order system. Their difference is the error in that
coordinate equation. By flatness, radial differentiation commutes with every
coordinate equation, so this error satisfies the homogeneous radial system.
The estimates above hold uniformly for nearby base points, and therefore the
error has zero normalization at infinity. Raywise uniqueness forces it to
vanish. Thus the constructed jet is a holomorphic horizontal section of the
full flat first-order system, and its empty component is a holomorphic
solution \(W_{\bm\epsilon,\bm\eta}^{(\fU)}\) of the conjugated system.

The local constructions agree on overlaps. Indeed, all neighborhoods use the
same vector \(\bm\omega\). At a point \(\bm z\) in an overlap, restrict both
horizontal sections to the geometric ray \(\bm z+r\bm\omega\), \(r\geq0\).
After using the same radial parameter, their scaled square-free jets solve
the same radial system and have the same normalization at infinity. The
raywise uniqueness from Step~4 therefore shows that their values at
\(r=0\) agree. Hence the local solutions glue to a holomorphic function
\(W_{\bm\epsilon,\bm\eta}\) on all of \(\fC\), and the
estimates \eqref{eq:W-basic-est}--\eqref{eq:jet-basic-est} hold there with
constants depending only on \(\beta,n,m\). Finally,
\(F_{\bm\epsilon,\bm\eta}:=\Phi_{\bm\epsilon,\bm\eta}
W_{\bm\epsilon,\bm\eta}\) solves the original system.

\medskip
\noindent
\emph{Step 6: Linear independence.}

From the explicit formula for \(\Phi_{\epsilon,\eta}\), we have
\[
\partial_{t_i}\log\Phi_{\bm\epsilon,\bm\eta}
=
\ri\epsilon_i+\OO(D^{-1}),
\qquad
\partial_{s_a}\log\Phi_{\epsilon,\eta}
=
\ri\frac{\beta}{2}\eta_a+\OO(D^{-1}).
\]
For a square-free pair \((I,A)\), the explicit formula for the branch factor
and the estimates for \(W_{\bm\epsilon,\bm\eta}\) give, uniformly on
\(\fC\),
\begin{align}\label{e:local_jet_matrix}
&\Phi_{\bm\epsilon,\bm\eta}^{-1}
\partial_{t_I}\partial_{s_A}
F_{\bm\epsilon,\bm\eta}
=
\ri^{|I|+|A|}
\left(\frac\beta2\right)^{|A|}
\prod_{i\in I}\epsilon_i
\prod_{a\in A}\eta_a
+
\OO(D^{-1}).
\end{align}
After dividing each row by its nonzero prefactor, the leading matrix is the
character table
\[
\left(
\prod_{i\in I}\epsilon_i
\prod_{a\in A}\eta_a
\right)_{(I,A),(\bm\epsilon,\bm\eta)}
\]
of \(\{\pm1\}^{n+m}\).
In particular, its rows satisfy the orthogonality
relations
\[
\sum_{\epsilon,\eta}
\left(
\prod_{i\in I}\epsilon_i
\prod_{a\in A}\eta_a
\right)
\left(
\prod_{j\in J}\epsilon_j
\prod_{b\in B}\eta_b
\right)
=
2^{n+m}\bm1_{\{(I,A)=(J,B)\}},
\]
and therefore the leading matrix is invertible.
Increasing \(D\), if necessary, the matrix in
\eqref{e:local_jet_matrix} is also invertible. Therefore the
\(2^{n+m}\) solutions \(F_{\bm\epsilon,\bm\eta}\) are linearly independent.
\end{proof}

\section{Solutions of the edge deformed CMS operators}
\label{s:edge_solution}

The proof of \Cref{thm:edge-sectorial-solution} is parallel to the bulk case. We prove the first statement of \Cref{thm:edge-sectorial-solution}, namely that
the solution space has dimension \(2^{n+m}\), in \Cref{s:edge_local_rank}. We derive the conjugated equations in \Cref{s:conjugate_edge}.
We then prove the second statement, namely the construction of solutions with
prescribed leading behavior, in \Cref{s:edge_construct_solution}.

\subsection{Local holonomic rank}
\label{s:edge_local_rank}
For $1\le i\le n$ and $1\le a\le m$, define
\begin{align}
\mathsf  T_i
&:=
\partial_{t_i}^2-t_i
+\frac{2}{\beta}\sum_{j\neq i}\frac{\partial_{t_i}-\partial_{t_j}}{t_i-t_j}
-\sum_{a=1}^m\frac{\partial_{t_i}+(2/\beta)\partial_{s_a}}{t_i-s_a},
\label{eq:edgeTi-def}
\\[1mm]
\mathsf  S_a
&:=
\partial_{s_a}^2-\frac{\beta^2}{4}s_a
+\frac{\beta}{2}\sum_{b\neq a}\frac{\partial_{s_a}-\partial_{s_b}}{s_a-s_b}
-\sum_{i=1}^n\frac{\partial_{s_a}+(\beta/2)\partial_{t_i}}{s_a-t_i}.
\label{eq:edgeSa-def}
\end{align}
Then the edge deformed CMS operators \eqref{e:edge_eq1}--\eqref{e:edge_eq2} can be written as
\begin{align}\label{e:edgeeq}
\mathsf  T_iF=0,\qquad 1\le i\le n,
\qquad 
\mathsf  S_aF=0,\qquad 1\le a\le m.
\end{align}

Using the following edge commutator identities as input,  the first statement of \Cref{thm:edge-sectorial-solution} can be proven 
in the same way as the bulk case \Cref{s:bulk_local_rank}. So we omit its proof.
\begin{proposition}[Edge commutator identities]
\label{prop:edge-commutators}
For the operators \(\mathsf T_i,\mathsf S_a\) defined in
\eqref{eq:edgeTi-def}--\eqref{eq:edgeSa-def}, the following commutator
identities hold:
\begin{align}
[\mathsf T_i,\mathsf T_j]
&=
\frac{4}{\beta (t_i-t_j)^2}\,(\mathsf T_j-\mathsf T_i),
\qquad 1\le i\neq j\le n,
\label{eq:edge-comm-tt}
\\[1mm]
[\mathsf S_a,\mathsf S_b]
&=
\frac{\beta}{(s_a-s_b)^2}\,(\mathsf S_b-\mathsf S_a),
\qquad 1\le a\neq b\le m,
\label{eq:edge-comm-ss}
\\[1mm]
[\mathsf T_i,\mathsf S_a]
&=
\frac{4}{\beta (t_i-s_a)^2}\,\mathsf S_a
-\frac{\beta}{(t_i-s_a)^2}\,\mathsf T_i,
\qquad 1\le i\le n,\ 1\le a\le m.
\label{eq:edge-comm-ts}
\end{align}
\end{proposition}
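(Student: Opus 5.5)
The plan is to reduce the edge commutators to the bulk ones in \Cref{prop:commutator-identities}. The only difference between the operators is the zeroth-order potential. Write
\[
\mathsf T_i=\cT_i-1-t_i,\qquad \mathsf S_a=\cS_a-\frac{\beta^2}{4}-\frac{\beta^2}{4}s_a .
\]
Constants commute with everything, so they can be dropped at once. It then suffices to compute the extra commutators coming from the multiplication operators \(t_i\) and \(\frac{\beta^2}{4}s_a\), and to show they vanish, or more precisely that their sum is zero.

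For \([\mathsf T_i,\mathsf T_j]\), expand bilinearly:
\[
[\mathsf T_i,\mathsf T_j]=[\cT_i,\cT_j]-[\cT_i,t_j]-[t_i,\cT_j]+[t_i,t_j].
\]
The last term is zero. For a multiplication operator \(f\), only the first-order and second-order parts of \(\cT_i\) contribute to \([\cT_i,f]\). Since \(\partial_{t_i}^2\) does not see \(t_j\), we get
\[
[\cT_i,t_j]=\frac{2}{\beta}\frac{-1}{t_i-t_j},\qquad [t_i,\cT_j]=-[\cT_j,t_i]=-\frac{2}{\beta}\frac{-1}{t_j-t_i}.
\]
These are equal, so their contributions cancel. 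We then still have to match the right-hand side: the bulk identity gives
\[
\frac{4}{\beta(t_i-t_j)^2}(\cT_j-\cT_i),
\]
whereas the claimed edge identity has \(\mathsf T_j-\mathsf T_i=\cT_j-\cT_i-(t_j-t_i)\). The discrepancy is
\[
\frac{4}{\beta(t_i-t_j)^2}(t_i-t_j)=\frac{4}{\beta(t_i-t_j)},
\]
and it must be absorbed by the first-order terms computed above. So I will carefully recompute \([\cT_i,t_j]\), \([t_i,\cT_j]\) and \([\cT_i,t_i]\). The term \([\cT_i,t_i]\) does not appear here, but the analogous terms do appear in the mixed case, together with \(\partial_{t_i}^2 t_i=t_i\partial_{t_i}^2+2\partial_{t_i}\). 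Tracking signs in these small computations is where errors are likely. I expect the net first-order commutator terms to produce exactly \(\pm\frac{4}{\beta(t_i-t_j)}\), matching the shift on the right-hand side.

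The same scheme handles the other two cases. For \([\mathsf S_a,\mathsf S_b]\) the potentials \(\frac{\beta^2}{4}s_a\) enter against the \(\frac{\beta}{2}\) coupling. For the mixed identity \([\mathsf T_i,\mathsf S_a]\) the terms to compute are
\[
[\cT_i,\tfrac{\beta^2}{4}s_a],\qquad [t_i,\cS_a],
\]
coming from the \(\cX_{ia}\)-type terms, and they must balance the right-hand side corrections
\[
\frac{4}{\beta(t_i-s_a)^2}\Bigl(-\frac{\beta^2}{4}s_a\Bigr)-\frac{\beta}{(t_i-s_a)^2}(-t_i).
\]
This combination is \(\frac{\beta}{(t_i-s_a)^2}(t_i-s_a)=\frac{\beta}{t_i-s_a}\), a clean simple pole. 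The coefficients \(2/\beta\) and \(\beta/2\) in the \(\cX_{ia}\) terms are exactly what makes the two sides match.

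The main obstacle is only bookkeeping of signs and coefficients in these first-order commutators. If the direct check fails, I will fall back on the local building-block decomposition used in the proof of \Cref{prop:commutator-identities}: add \(-t_i\) to \(\cA_i\) and \(-\frac{\beta^2}{4}s_a\) to \(\cB_a\), and re-verify each local identity.
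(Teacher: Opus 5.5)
Your reduction to \Cref{prop:commutator-identities} is the right idea and is essentially the paper's argument. The paper redoes the local block calculation and notes that the linear potentials only add the scalar terms needed to turn the bulk right-hand sides into the edge ones; you make this explicit by using the bulk identity as a black box.

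However, your key step contains a sign error that, taken literally, would make the identity fail. You are right that $[\cT_i,t_j]$ and $[t_i,\cT_j]$ are equal (both are $-\frac{2}{\beta(t_i-t_j)}$). But both enter your expansion with a minus sign, so they add rather than cancel:
\[
-[\cT_i,t_j]-[t_i,\cT_j]=\frac{4}{\beta(t_i-t_j)}.
\]
This is exactly the discrepancy $\frac{4}{\beta(t_i-t_j)^2}(t_i-t_j)$ you computed on the right-hand side. If the two terms cancelled, $[\mathsf T_i,\mathsf T_j]$ would equal $[\cT_i,\cT_j]$, and the edge identity would be off by that simple pole.

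The other two cases close the same way. In the mixed case, $-\frac{\beta^2}{4}[\cT_i,s_a]=\frac{\beta}{2(t_i-s_a)}$ and $-[t_i,\cS_a]=\frac{\beta}{2(t_i-s_a)}$, which sum to $\frac{\beta}{t_i-s_a}$. In the $s$--$s$ case, $-\frac{\beta^2}{4}\bigl([\cS_a,s_b]+[s_a,\cS_b]\bigr)=\frac{\beta^3}{4(s_a-s_b)}$, which equals $\frac{\beta}{(s_a-s_b)^2}\cdot\frac{\beta^2}{4}(s_a-s_b)$.

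Two smaller corrections.
\begin{itemize}
\item No term of the form $[\cT_i,t_i]$ or $[\partial_{t_i}^2,t_i]$ ever arises, in the mixed case or elsewhere. Each multiplication operator is commuted only with the other operator, where its variable appears through at most a first-order derivative. So all corrections are scalar functions, which is why they can match the scalar corrections on the right-hand side.
\item Dropping the constants $1$ and $\beta^2/4$ is justified on the left-hand side by commutativity. On the right-hand side of the mixed identity, however, they appear with different coefficients, so you must check that
\[
\frac{4}{\beta(t_i-s_a)^2}\cdot\frac{\beta^2}{4}-\frac{\beta}{(t_i-s_a)^2}=0 .
\]
This does hold. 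In the other two identities the constants cancel in the differences.
\end{itemize}
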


\begin{proof}
This is proved by the same local calculation as for the bulk operators \Cref{prop:commutator-identities}.  One decomposes
\(\mathsf T_i\) and \(\mathsf S_a\) into the basic pairwise pieces
\[
\partial_{t_i}^2-t_i,\qquad
\partial_{s_a}^2-\frac{\beta^2}{4}s_a,\qquad
\frac{2}{\beta}\frac{\partial_{t_i}-\partial_{t_j}}{t_i-t_j},\qquad
\frac{\beta}{2}\frac{\partial_{s_a}-\partial_{s_b}}{s_a-s_b},\qquad
\frac{\partial_{t_i}+(2/\beta)\partial_{s_a}}{t_i-s_a},
\]
checks the corresponding two-body and three-body commutators, and then sums the local
identities.  Compared with the bulk case, the linear potentials \(-t_i\) and
\(-\beta^2 s_a/4\) contribute exactly the scalar terms needed to replace
the bulk operators by the edge operators on the right-hand side. They do not
change the coefficients of the commutator identities.
\end{proof}

\subsection{The conjugated system}\label{s:conjugate_edge}
We recall the (rescaled) Airy functions $\phi_\pm,\psi_\pm$ and their asymptotics from \eqref{e:defphi}, \eqref{e:phi_asymptotics} and \eqref{e:defpsi}. We have
\[
\phi_\pm''(z)=z\,\phi_\pm(z),
\qquad
q_\pm(z):=\frac{\phi_\pm'(z)}{\phi_\pm(z)}
=\pm \sqrt{z}-\frac{1}{4z}+\OO(z^{-5/2}),
\]
and
\[
\psi_\pm''(s)=\frac{\beta^2}{4}s\,\psi_\pm(s),
\qquad
p_\pm(s):=\frac{\psi_\pm'(s)}{\psi_\pm(s)}
=
\pm \frac{\beta}{2}\sqrt{s}-\frac{1}{4s}+\OO(s^{-5/2}).
\]
Now fix sign vectors
\[
\bm\epsilon=(\epsilon_1,\dots,\epsilon_n)\in\{\pm1\}^n,
\qquad
\bm\eta=(\eta_1,\dots,\eta_m)\in\{\pm1\}^m,
\]
and write
\begin{align}\label{e:defqi}
q_i:=q_{\epsilon_i}(t_i),
\qquad
p_a:=p_{\eta_a}(s_a).
\end{align}
Then $q_i$ and $p_a$ satisfy the Riccati
identity
\begin{align}\label{e:Riccati}
q_i'+q_i^2=t_i,\quad p_a'+p_a^2=\frac{\beta^2}{4}s_a
\end{align}
It is convenient to separate off the leading square-root terms:
\begin{equation}
\label{eq:edge-r-v}
r_i:=q_i-\epsilon_i\sqrt{t_i}=-\frac{1}{4t_i}+\OO(t_i^{-5/2}),
\qquad
v_a:=p_a-\frac{\beta}{2}\eta_a\sqrt{s_a}=-\frac{1}{4s_a}+\OO(s_a^{-5/2}).
\end{equation}

We also recall the branch factor $\Phi_{\bm\epsilon,\bm\eta}(\bmt,\bms)$ from \eqref{eq:edge-PhiF-clean}
\begin{align}
\label{eq:edge-PhiF}
\Phi_{\bm\epsilon,\bm\eta}(\bmt,\bms)
&:=
\left(\prod_{i=1}^n\phi_{\epsilon_i}(t_i)\right)
\left(\prod_{a=1}^m\psi_{\eta_a}(s_a)\right)
\prod_{1\le i<j\le n}
(\epsilon_i\sqrt{t_i}+\epsilon_j\sqrt{t_j})^{-2/\beta}
\nonumber\\
&\qquad\times
\prod_{1\le a<b\le m}
(\eta_a\sqrt{s_a}+\eta_b\sqrt{s_b})^{-\beta/2}
\prod_{i=1}^n
\prod_{a=1}^m
(\epsilon_i\sqrt{t_i}-\eta_a\sqrt{s_a}).
\end{align}
We also introduce
\begin{align}\begin{split}
\label{eq:edge-Omega-Sigma-clean}
\Omega_i
&:=
-\frac{\epsilon_i}{\beta\sqrt{t_i}}
\sum_{j\neq i}\frac{1}{\epsilon_i\sqrt{t_i}+\epsilon_j\sqrt{t_j}}
+\frac{\epsilon_i}{2\sqrt{t_i}}
\sum_{a=1}^m\frac{1}{\epsilon_i\sqrt{t_i}-\eta_a\sqrt{s_a}},
\\[1mm]
\Sigma_a
&:=
-\frac{\beta\eta_a}{4\sqrt{s_a}}
\sum_{b\neq a}\frac{1}{\eta_a\sqrt{s_a}+\eta_b\sqrt{s_b}}
-\frac{\eta_a}{2\sqrt{s_a}}
\sum_{i=1}^n\frac{1}{\epsilon_i\sqrt{t_i}-\eta_a\sqrt{s_a}}.
\end{split}\end{align}

\begin{proposition}[Conjugated operator family]
\label{prop:edge-conjugated-family}
Let
$
F=\Phi_{\bm\epsilon,\bm\eta}W$.
Then the original edge system \eqref{e:edgeeq},
\[
\sfT_iF=0,\qquad 1\le i\le n,
\qquad
\sfS_aF=0,\qquad 1\le a\le m,
\]
is equivalent to
\begin{align}
(\sfL_i^{(t)}+\sfR_i^{(t)})W&=0,
\qquad i=1,\dots,n,
\label{eq:edge-conj-t-clean}
\\[1mm]
(\sfL_a^{(s)}+\sfR_a^{(s)})W&=0,
\qquad a=1,\dots,m,
\label{eq:edge-conj-s-clean}
\end{align}
where
\begin{align}
\sfL_i^{(t)}
&=
\partial_{t_i}^2+2q_i\,\partial_{t_i},
\label{eq:edge-Lt-clean}
\\[1mm]
\sfR_i^{(t)}
&=
2\Omega_i\,\partial_{t_i}
+\frac{2}{\beta}\sum_{j\neq i}\frac{\partial_{t_i}-\partial_{t_j}}{t_i-t_j}
-\sum_{a=1}^m\frac{\partial_{t_i}+(2/\beta)\partial_{s_a}}{t_i-s_a}
+V_i^{(t)},
\label{eq:edge-Rt-clean}
\\[2mm]
\sfL_a^{(s)}
&=
\partial_{s_a}^2+2p_a\,\partial_{s_a},
\label{eq:edge-Ls-clean}
\\[1mm]
\sfR_a^{(s)}
&=
2\Sigma_a\,\partial_{s_a}
+\frac{\beta}{2}\sum_{b\neq a}\frac{\partial_{s_a}-\partial_{s_b}}{s_a-s_b}
-\sum_{i=1}^n\frac{\partial_{s_a}+(\beta/2)\partial_{t_i}}{s_a-t_i}
+V_a^{(s)}.
\label{eq:edge-Rs-clean}
\end{align}
Here \(q_i,p_a,r_i,v_a\) are defined in \eqref{e:defqi} and
\eqref{eq:edge-r-v}. The scalar terms are
\begin{align}
V_i^{(t)}
&=
\partial_{t_i}\Omega_i+\Omega_i^2+2r_i\Omega_i
+\frac{2}{\beta}\sum_{j\neq i}
\frac{r_i-r_j+\Omega_i-\Omega_j}{t_i-t_j}
-\sum_{a=1}^m
\frac{r_i+(2/\beta)v_a+\Omega_i+(2/\beta)\Sigma_a}{t_i-s_a},
\label{eq:edge-Vt-simplified}
\\[1mm]
V_a^{(s)}
&=
\partial_{s_a}\Sigma_a+\Sigma_a^2+2v_a\Sigma_a
+\frac{\beta}{2}\sum_{b\neq a}
\frac{v_a-v_b+\Sigma_a-\Sigma_b}{s_a-s_b}
-\sum_{i=1}^n
\frac{v_a+(\beta/2)r_i+\Sigma_a+(\beta/2)\Omega_i}{s_a-t_i}.
\label{eq:edge-Vs-simplified}
\end{align}
\end{proposition}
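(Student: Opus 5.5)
The plan is to repeat the computation of \Cref{prop:conjugated-family-clean} verbatim, with the exponential replaced by Airy factors. Since \(\Phi:=\Phi_{\bm\epsilon,\bm\eta}\) is holomorphic and nowhere vanishing on the simply connected domain \(\sfC\), the system \eqref{e:edgeeq} is equivalent to \(\Phi^{-1}\sfT_i\Phi W=0\), \(\Phi^{-1}\sfS_a\Phi W=0\). I would first compute the logarithmic derivatives
\[
\mu_i:=\partial_{t_i}\log\Phi=q_i+\Omega_i,\qquad \nu_a:=\partial_{s_a}\log\Phi=p_a+\Sigma_a .
\]
To check this, differentiate each algebraic factor of \eqref{eq:edge-PhiF}. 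For example,
\[
\partial_{t_i}\log(\epsilon_i\sqrt{t_i}+\epsilon_j\sqrt{t_j})^{-2/\beta}=-\frac{\epsilon_i}{\beta\sqrt{t_i}}\,\frac{1}{\epsilon_i\sqrt{t_i}+\epsilon_j\sqrt{t_j}},
\]
and \(\partial_{t_i}\log(\epsilon_i\sqrt{t_i}-\eta_a\sqrt{s_a})\) gives the second sum in \(\Omega_i\). The \(s\)-derivatives similarly produce \(\Sigma_a\), with the sign flip coming from \(-\eta_a\sqrt{s_a}\). This yields exactly \eqref{eq:edge-Omega-Sigma-clean}.

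Next, substitute \(\partial_{t_i}\mapsto\partial_{t_i}+\mu_i\) and \(\partial_{s_a}\mapsto\partial_{s_a}+\nu_a\) into \eqref{eq:edgeTi-def}. The derivative part is
\[
\partial_{t_i}^2+2q_i\partial_{t_i}+2\Omega_i\partial_{t_i}+(\text{the unchanged interaction first-order terms}),
\]
which gives \(\sfL_i^{(t)}\) together with the derivative part of \(\sfR_i^{(t)}\). The scalar part is
\[
\partial_{t_i}\mu_i+\mu_i^2-t_i+\frac2\beta\sum_{j\ne i}\frac{\mu_i-\mu_j}{t_i-t_j}-\sum_a\frac{\mu_i+(2/\beta)\nu_a}{t_i-s_a}.
\]
By the Riccati identity \eqref{e:Riccati}, \(q_i'+q_i^2-t_i=0\), so what remains is \(\partial_{t_i}\Omega_i+\Omega_i^2+2q_i\Omega_i\) plus the interaction terms. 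Write \(q_i=\epsilon_i\sqrt{t_i}+r_i\) and \(p_a=\frac\beta2\eta_a\sqrt{s_a}+v_a\). Every term containing \(r_i,v_a\) or \(\Omega,\Sigma\) matches \eqref{eq:edge-Vt-simplified}. The leftover square-root cross terms are
\[
2\epsilon_i\sqrt{t_i}\,\Omega_i+\frac2\beta\sum_{j\ne i}\frac{\epsilon_i\sqrt{t_i}-\epsilon_j\sqrt{t_j}}{t_i-t_j}-\sum_a\frac{\epsilon_i\sqrt{t_i}+\eta_a\sqrt{s_a}}{t_i-s_a},
\]
and these must vanish. Using \(t_i-t_j=(\sqrt{t_i}-\sqrt{t_j})(\sqrt{t_i}+\sqrt{t_j})\), we have
\[
\frac{\epsilon_i\sqrt{t_i}-\epsilon_j\sqrt{t_j}}{t_i-t_j}=\frac{1}{\epsilon_i\sqrt{t_i}+\epsilon_j\sqrt{t_j}}
\]
for either relative sign, since \(\epsilon^2=1\). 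This cancels \(2\epsilon_i\sqrt{t_i}\) times the first sum of \(\Omega_i\). Likewise,
\[
\frac{\epsilon_i\sqrt{t_i}+\eta_a\sqrt{s_a}}{t_i-s_a}=\frac{1}{\epsilon_i\sqrt{t_i}-\eta_a\sqrt{s_a}},
\]
which cancels the second sum. This is exactly why the exponents \(-2/\beta\) and \(+1\) and the combinations \(\epsilon_i\sqrt{t_i}\pm\epsilon_j\sqrt{t_j}\) are chosen as they are in \eqref{eq:edge-PhiF}.

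The \(s\)-equation is handled identically. Riccati for \(p_a\) removes \(-\tfrac{\beta^2}{4}s_a\). The cross terms are
\[
\beta\eta_a\sqrt{s_a}\,\Sigma_a+\frac{\beta^2}{4}\sum_{b\ne a}\frac{\eta_a\sqrt{s_a}-\eta_b\sqrt{s_b}}{s_a-s_b}-\frac\beta2\sum_i\frac{\eta_a\sqrt{s_a}+\epsilon_i\sqrt{t_i}}{s_a-t_i},
\]
and they vanish by the same factorization identities. This leaves \eqref{eq:edge-Vs-simplified} and the stated \(\sfL_a^{(s)},\sfR_a^{(s)}\).

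The only delicate step is this square-root cancellation. I need to keep the branches consistent, with \((\sqrt{t_i})^2=t_i\) on \(\sfC\) and the signs of the factors in \(\Omega_i,\Sigma_a\) tracked correctly. Everything else is bookkeeping parallel to the bulk case.
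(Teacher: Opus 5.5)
Your proposal is correct and follows essentially the same route as the paper: conjugate by $\Phi_{\bm\epsilon,\bm\eta}$ using $\mu_i=q_i+\Omega_i$ and $\nu_a=p_a+\Sigma_a$, remove the linear potentials with the Riccati identities, split off $r_i,v_a$, and cancel the leftover square-root cross terms via $\frac{\epsilon_i\sqrt{t_i}-\epsilon_j\sqrt{t_j}}{t_i-t_j}=\frac{1}{\epsilon_i\sqrt{t_i}+\epsilon_j\sqrt{t_j}}$ and $\frac{\epsilon_i\sqrt{t_i}+\eta_a\sqrt{s_a}}{t_i-s_a}=\frac{1}{\epsilon_i\sqrt{t_i}-\eta_a\sqrt{s_a}}$. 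The sign bookkeeping you flag works out as in the paper. In particular, in the $s$-equation one uses $\frac{\eta_a\sqrt{s_a}+\epsilon_i\sqrt{t_i}}{s_a-t_i}=-\frac{1}{\epsilon_i\sqrt{t_i}-\eta_a\sqrt{s_a}}$, which exactly offsets the second sum in $\beta\eta_a\sqrt{s_a}\,\Sigma_a$.
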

\begin{proof}
Let
$
\Phi:=\Phi_{\bm\epsilon,\bm\eta}$
be as in \eqref{eq:edge-PhiF}. Since \(\Phi\) is nowhere vanishing on $\sfX$, the equations
\[
\sfT_iF=0,
\qquad
\sfS_aF=0,
\qquad
F=\Phi W,
\]
are equivalent to
\[
\widetilde{\sfT}_iW=0,
\qquad
\widetilde{\sfS}_aW=0,
\]
where
\[
\widetilde{\sfT}_i:=\Phi^{-1}\sfT_i\Phi,
\qquad
\widetilde{\sfS}_a:=\Phi^{-1}\sfS_a\Phi.
\]

We recall \(\Omega_i\) and \(\Sigma_a\) from
\eqref{eq:edge-Omega-Sigma-clean} and set
\begin{align}\label{eq:edge-mu-nu}
\mu_i
:=
\partial_{t_i}\log\Phi
=
q_i+\Omega_i,
\qquad
\nu_a
:=
\partial_{s_a}\log\Phi
=
p_a+\Sigma_a.
\end{align}
Then
\begin{align}\label{eq:edge-first-conj}
\Phi^{-1}\partial_{t_i}\Phi
=
\partial_{t_i}+\mu_i,
\qquad
\Phi^{-1}\partial_{s_a}\Phi
=
\partial_{s_a}+\nu_a,
\end{align}
and
\begin{align}
\begin{split}\label{eq:edge-second-conj}
\Phi^{-1}\partial_{t_i}^2\Phi
&=
(\partial_{t_i}+\mu_i)^2
=
\partial_{t_i}^2
+2\mu_i\partial_{t_i}
+\partial_{t_i}\mu_i+\mu_i^2,
\\
\Phi^{-1}\partial_{s_a}^2\Phi
&=
(\partial_{s_a}+\nu_a)^2
=
\partial_{s_a}^2
+2\nu_a\partial_{s_a}
+\partial_{s_a}\nu_a+\nu_a^2.
\end{split}
\end{align}

We first compute \(\widetilde{\sfT}_i\). Substituting
\eqref{eq:edge-first-conj} and \eqref{eq:edge-second-conj} into \(\sfT_i\),
recalled in \eqref{eq:edgeTi-def}, we obtain
\begin{align}
\widetilde{\sfT}_i
&=
(\partial_{t_i}+\mu_i)^2-t_i
+\frac{2}{\beta}\sum_{j\neq i}
\frac{(\partial_{t_i}+\mu_i)-(\partial_{t_j}+\mu_j)}{t_i-t_j}
-\sum_{a=1}^m
\frac{(\partial_{t_i}+\mu_i)+(2/\beta)(\partial_{s_a}+\nu_a)}
{t_i-s_a}.
\label{eq:edge-Ttilde-expanded-clean}
\end{align}
Using \(\mu_i=q_i+\Omega_i\) from \eqref{eq:edge-mu-nu}, the derivative terms
in \eqref{eq:edge-Ttilde-expanded-clean} are
\begin{align}\label{eq:edge-T-derivative-part}
\partial_{t_i}^2
+2q_i\partial_{t_i}
+2\Omega_i\partial_{t_i}
+\frac{2}{\beta}\sum_{j\neq i}
\frac{\partial_{t_i}-\partial_{t_j}}{t_i-t_j}
-\sum_{a=1}^m
\frac{\partial_{t_i}+(2/\beta)\partial_{s_a}}{t_i-s_a}.
\end{align}
Thus the first two terms give \(\sfL_i^{(t)}\), and the remaining first-order
terms give the differential part of \(\sfR_i^{(t)}\).

The scalar part is
\begin{align}
\begin{split}\label{eq:edge-Vti-hat-clean}
&\partial_{t_i}\Omega_i
+
q_i' + q_i^2 - t_i
+
\Omega_i^2
+2q_i\Omega_i
+\frac{2}{\beta}\sum_{j\neq i}
\frac{q_i-q_j+\Omega_i-\Omega_j}{t_i-t_j}
-\sum_{a=1}^m
\frac{q_i+(2/\beta)p_a+\Omega_i+(2/\beta)\Sigma_a}{t_i-s_a}.
\end{split}
\end{align}
Using \eqref{e:Riccati}, the term \(q_i'+q_i^2-t_i\) vanishes. Using the decomposition \eqref{eq:edge-r-v},  the scalar part \eqref{eq:edge-Vti-hat-clean} becomes
\begin{align}
\begin{split}\label{eq:edge-Vti-split-clean}
&V_i^{(t)}
+
2\epsilon_i\sqrt{t_i}\,\Omega_i
+\frac{2}{\beta}\sum_{j\neq i}
\frac{\epsilon_i\sqrt{t_i}-\epsilon_j\sqrt{t_j}}{t_i-t_j}
-\sum_{a=1}^m
\frac{\epsilon_i\sqrt{t_i}+\eta_a\sqrt{s_a}}{t_i-s_a}.
\end{split}
\end{align}
For the remaining square-root terms, use
\[
\frac{\epsilon_i\sqrt{t_i}-\epsilon_j\sqrt{t_j}}{t_i-t_j}
=
\frac{1}{\epsilon_i\sqrt{t_i}+\epsilon_j\sqrt{t_j}},
\qquad
\frac{\epsilon_i\sqrt{t_i}+\eta_a\sqrt{s_a}}{t_i-s_a}
=
\frac{1}{\epsilon_i\sqrt{t_i}-\eta_a\sqrt{s_a}}.
\]
Substituting the definition of \(\Omega_i\) from
\eqref{eq:edge-Omega-Sigma-clean}, we get
\begin{align*}
&2\epsilon_i\sqrt{t_i}\,\Omega_i
+\frac{2}{\beta}\sum_{j\neq i}
\frac{\epsilon_i\sqrt{t_i}-\epsilon_j\sqrt{t_j}}{t_i-t_j}
-\sum_{a=1}^m
\frac{\epsilon_i\sqrt{t_i}+\eta_a\sqrt{s_a}}{t_i-s_a}
\\
&=
-\frac{2}{\beta}
\sum_{j\neq i}
\frac{1}{\epsilon_i\sqrt{t_i}+\epsilon_j\sqrt{t_j}}
+
\sum_{a=1}^m
\frac{1}{\epsilon_i\sqrt{t_i}-\eta_a\sqrt{s_a}}
+\frac{2}{\beta}
\sum_{j\neq i}
\frac{1}{\epsilon_i\sqrt{t_i}+\epsilon_j\sqrt{t_j}}
-
\sum_{a=1}^m
\frac{1}{\epsilon_i\sqrt{t_i}-\eta_a\sqrt{s_a}}
=0.
\end{align*}
Hence the scalar part is exactly \(V_i^{(t)}\), and
\eqref{eq:edge-conj-t-clean} follows by combining
\eqref{eq:edge-Ttilde-expanded-clean} and
\eqref{eq:edge-T-derivative-part}.

The computation for \(\widetilde{\sfS}_a\) is analogous. Substituting
\eqref{eq:edge-first-conj} and \eqref{eq:edge-second-conj} into \(\sfS_a\),
recalled in \eqref{eq:edgeSa-def}, we obtain
\begin{align}
\widetilde{\sfS}_a
&=
(\partial_{s_a}+\nu_a)^2-\frac{\beta^2}{4}s_a
+\frac{\beta}{2}\sum_{b\neq a}
\frac{(\partial_{s_a}+\nu_a)-(\partial_{s_b}+\nu_b)}{s_a-s_b}
-\sum_{i=1}^n
\frac{(\partial_{s_a}+\nu_a)+(\beta/2)(\partial_{t_i}+\mu_i)}
{s_a-t_i}.
\label{eq:edge-Stilde-expanded-clean}
\end{align}
Using \(\nu_a=p_a+\Sigma_a\) from \eqref{eq:edge-mu-nu}, the derivative terms
in \eqref{eq:edge-Stilde-expanded-clean} are
\begin{align}\label{eq:edge-S-derivative-part}
\partial_{s_a}^2
+2p_a\partial_{s_a}
+2\Sigma_a\partial_{s_a}
+\frac{\beta}{2}\sum_{b\neq a}
\frac{\partial_{s_a}-\partial_{s_b}}{s_a-s_b}
-\sum_{i=1}^n
\frac{\partial_{s_a}+(\beta/2)\partial_{t_i}}{s_a-t_i}.
\end{align}
Thus the first two terms give \(\sfL_a^{(s)}\), and the remaining first-order
terms give the differential part of \(\sfR_a^{(s)}\).

The scalar part is
\begin{align}
\begin{split}\label{eq:edge-Vsa-hat-clean}
&\partial_{s_a}\Sigma_a
+
p_a' + p_a^2-\frac{\beta^2}{4}s_a
+
\Sigma_a^2
+2p_a\Sigma_a
+\frac{\beta}{2}\sum_{b\neq a}
\frac{p_a-p_b+\Sigma_a-\Sigma_b}{s_a-s_b}
-\sum_{i=1}^n
\frac{p_a+(\beta/2)q_i+\Sigma_a+(\beta/2)\Omega_i}{s_a-t_i}.
\end{split}
\end{align}
Using \eqref{e:Riccati}, 
the term \(p_a'+p_a^2-\beta^2 s_a/4\) vanishes. Using the decomposition \eqref{eq:edge-r-v}, the scalar part \eqref{eq:edge-Vsa-hat-clean} becomes
\begin{align}
\begin{split}\label{eq:edge-Vsa-split-clean}
&V_a^{(s)}
+
\beta\eta_a\sqrt{s_a}\,\Sigma_a
+\frac{\beta^2}{4}\sum_{b\neq a}
\frac{\eta_a\sqrt{s_a}-\eta_b\sqrt{s_b}}{s_a-s_b}
-\frac{\beta}{2}\sum_{i=1}^n
\frac{\eta_a\sqrt{s_a}+\epsilon_i\sqrt{t_i}}{s_a-t_i}.
\end{split}
\end{align}
For the remaining square-root terms, use
\[
\frac{\eta_a\sqrt{s_a}-\eta_b\sqrt{s_b}}{s_a-s_b}
=
\frac{1}{\eta_a\sqrt{s_a}+\eta_b\sqrt{s_b}},
\qquad
\frac{\eta_a\sqrt{s_a}+\epsilon_i\sqrt{t_i}}{s_a-t_i}
=
-\frac{1}{\epsilon_i\sqrt{t_i}-\eta_a\sqrt{s_a}}.
\]
Substituting the definition of \(\Sigma_a\) from
\eqref{eq:edge-Omega-Sigma-clean}, we get
\begin{align*}
&\beta\eta_a\sqrt{s_a}\,\Sigma_a
+\frac{\beta^2}{4}\sum_{b\neq a}
\frac{\eta_a\sqrt{s_a}-\eta_b\sqrt{s_b}}{s_a-s_b}
-\frac{\beta}{2}\sum_{i=1}^n
\frac{\eta_a\sqrt{s_a}+\epsilon_i\sqrt{t_i}}{s_a-t_i}
\\
&=
-\frac{\beta^2}{4}
\sum_{b\neq a}
\frac{1}{\eta_a\sqrt{s_a}+\eta_b\sqrt{s_b}}
-\frac{\beta}{2}
\sum_{i=1}^n
\frac{1}{\epsilon_i\sqrt{t_i}-\eta_a\sqrt{s_a}}
+\frac{\beta^2}{4}
\sum_{b\neq a}
\frac{1}{\eta_a\sqrt{s_a}+\eta_b\sqrt{s_b}}
+\frac{\beta}{2}
\sum_{i=1}^n
\frac{1}{\epsilon_i\sqrt{t_i}-\eta_a\sqrt{s_a}}
=0.
\end{align*}
Hence the scalar part is exactly \(V_a^{(s)}\), and
\eqref{eq:edge-conj-s-clean} follows by combining
\eqref{eq:edge-Stilde-expanded-clean} and
\eqref{eq:edge-S-derivative-part}. This proves the proposition.
\end{proof}

\begin{lemma}[Size of the conjugated coefficients]
\label{lem:edge-coeff-estimates}
Let \(\sfC\) be a domain of the form \eqref{e:edgeC}, with \(D\)
sufficiently large, let \((\bmt,\bms)\in\sfC\), and set
$
R=\sfR(\bmt,\bms)
$
recall from \eqref{e:defsfR}. Then, uniformly on \(\sfC\),
\begin{align}
&\Omega_i=\OO(R^{-1}),
\qquad
\Sigma_a=\OO(R^{-1}),
\label{eq:edge-Om-Sig-est}
\\
&q_i-\epsilon_i\sqrt{t_i}=\OO(R^{-1}),
\qquad
p_a-\eta_a\frac{\beta}{2}\sqrt{s_a}=\OO(R^{-1}),
\label{eq:edge-q-p-est}
\\
&V_i^{(t)}=\OO(R^{-2}),
\qquad
V_a^{(s)}=\OO(R^{-2}).
\label{eq:edge-V-est}
\end{align}
Moreover, if \(\sfD\) is any differential monomial of total order
\(\ell\), then
\begin{align}
&\sfD\Omega_i,
\quad
\sfD\Sigma_a,
\quad
\sfD\bigl(q_i-\epsilon_i\sqrt{t_i}\bigr),
\quad
\sfD\left(p_a-\eta_a\frac{\beta}{2}\sqrt{s_a}\right)
=
\OO\bigl(R^{-1-\ell}\bigr),
\label{eq:edge-coeff-all-derivatives-1}
\\
&\sfD V_i^{(t)},
\quad
\sfD V_a^{(s)}
=
\OO\bigl(R^{-2-\ell}\bigr).
\label{eq:edge-coeff-all-derivatives-2}
\end{align}
The same all-order estimates hold for every complete first-order
coefficient in \(\sfR_i^{(t)}\) and \(\sfR_a^{(s)}\), with
\(R^{-1-\ell}\) on the right-hand side. The implicit constants may
depend on \(\beta,n,m\) and on the differential monomial \(\sfD\), but
are uniform over \(\sfC\).
\end{lemma}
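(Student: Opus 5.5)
The proof is a homogeneity-and-separation argument on the sector $\sfC$. The first step is to record the geometry. On $\sfC(\bm\tau,D)$ every coordinate satisfies $|\arg z_j-\pi\tau_j/3|<\pi/12$, so $|\arg z_j|\in(\pi/4,5\pi/12)$. Consequently $\sqrt{z_j}$ lies in a fixed sector $|\arg\sqrt{z_j}|\in(\pi/8,5\pi/24)$ and satisfies $|\sqrt{z_j}|\asymp |z_j|^{1/2}\ge R^{1/2}$.

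The key elementary estimates are lower bounds for the linear combinations $\pm\sqrt{u}\pm\sqrt{v}$ appearing in $\Omega_i,\Sigma_a$. For a sum of two square roots lying in a common half-plane sector we expect $|\sqrt u+\sqrt v|\gtrsim |u|^{1/2}+|v|^{1/2}$. For a difference we use
\[
|\sqrt u-\sqrt v|=\frac{|u-v|}{|\sqrt u+\sqrt v|}\gtrsim \frac{|u-v|}{|u|^{1/2}+|v|^{1/2}}.
\]
When the two variables lie in opposite half-planes, the square roots have arguments of opposite signs bounded away from $0$. Then $|\sqrt u\pm\sqrt v|\gtrsim |u|^{1/2}+|v|^{1/2}$ for both signs, since the imaginary parts or the real parts cannot cancel.

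Combining these with the prefactor $1/\sqrt{t_i}$, a typical term of $\Omega_i$ is bounded by
\[
\frac{1}{|t_i|^{1/2}\,|\epsilon_i\sqrt{t_i}\pm\sqrt{z}|}\lesssim \max\Bigl(\frac{1}{|t_i|},\frac{|t_i|^{1/2}+|z|^{1/2}}{|t_i|^{1/2}|t_i-z|}\Bigr).
\]
This should be $\OO(R^{-1})$, and the second case is the main obstacle. If $|z|\le 4|t_i|$ it is at most $3/|t_i-z|\le 3/R$. If $|z|>4|t_i|$, then $|t_i-z|\gtrsim |z|$, so the bound is $\lesssim |z|^{1/2}/(|t_i|^{1/2}|z|)\le |t_i|^{-1}\le R^{-1}$. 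This gives \eqref{eq:edge-Om-Sig-est}.

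For \eqref{eq:edge-q-p-est}, the statement $r_i=-1/(4t_i)+\OO(t_i^{-5/2})$ is already recorded in \eqref{eq:edge-r-v}, and it follows from \eqref{e:phi_asymptotics} in the sector $|\arg z|<\pi-\delta$. We additionally need that $\phi_{\epsilon_i}$ has no zeros in the region $\{|z|>D,\ |\arg z|<5\pi/12\}$, so that $q_i$ is holomorphic there. This holds because the uniform Airy asymptotics give $\phi_\pm\ne0$ for $|z|$ large in the closed subsector. Since $|t_i|\ge R$, we get $r_i=\OO(R^{-1})$.

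For derivatives I will use Cauchy estimates rather than differentiating formulas. The point $(\bmt,\bms)$ has a polydisc of radius $cR$ for a small $c$, depending only on the angular aperture. On this polydisc all quantities $|t_i|,|s_a|,|t_i-t_j|,|t_i-s_a|,|s_a-s_b|$ change by at most a factor of $2$, and the arguments stay in a slightly enlarged sector $|\arg z_j-\pi\tau_j/3|<\pi/8$. The ordering condition is not needed for these estimates. Applying the previous bounds on the enlarged sector (with a constant depending on it) gives the sup bound $\OO(R^{-1})$ on the polydisc. Cauchy's inequality then yields $\sfD(\cdot)=\OO(R^{-1-\ell})$. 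The same applies to each complete first-order coefficient, such as $2\Omega_i$, $\frac{2}{\beta}(t_i-t_j)^{-1}$, and $(t_i-s_a)^{-1}$, which are all $\OO(R^{-1})$ on the polydisc.

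Finally, $V_i^{(t)},V_a^{(s)}$ are sums of three kinds of terms. The first kind is $\partial\Omega$, which is $\OO(R^{-2})$ by Cauchy. The second kind is products of two $\OO(R^{-1})$ quantities among $\Omega,\Sigma,r,v$. The third kind is an $\OO(R^{-1})$ quantity divided by a separation $|t_i-t_j|$ or $|t_i-s_a|\ge R$. Each kind is $\OO(R^{-2})$ on the polydisc, and Cauchy estimates give \eqref{eq:edge-coeff-all-derivatives-2}. All constants depend only on $\beta,n,m$, the sector aperture, and the order of $\sfD$. Uniformity on $\sfC$ follows because $D$ large ensures $R\ge D$ is large enough for the Airy asymptotics.
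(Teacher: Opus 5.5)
Your argument is correct and is essentially the paper's proof: your split according to whether $|z|\le 4|t_i|$ is exactly the paper's inequality $|u(u\pm v)|\gtrsim\min(|u|^2,|u^2-v^2|)$ with $u=\sqrt{t_i}$, $v=\sqrt{z}$. It is followed by the same steps: the Airy asymptotics for $r_i,v_a$, Cauchy estimates on a polydisc of radius comparable to $R$, and termwise bounds on $V_i^{(t)},V_a^{(s)}$. (Your sector-based lower bounds for $\sqrt u\pm\sqrt v$ are superfluous, since $|\sqrt u\pm\sqrt v|\ge |u-v|/(|u|^{1/2}+|v|^{1/2})$ already covers every case.) The only thing to make explicit is that bounding $\partial_{t_i}\Omega_i$ inside $V_i^{(t)}$, and then differentiating $V_i^{(t)}$, uses Cauchy estimates on two nested concentric polydiscs, as the paper does; your observation that $\phi_\pm$ does not vanish on the sector, so that $q_i,p_a$ are holomorphic there, is a point the paper leaves implicit.
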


\begin{proof}
We begin with the zeroth-order estimates. We use the elementary inequality
\begin{equation}\label{eq:uv-estimate}
|u(u\pm v)|
\gtrsim
\min\bigl(|u|^2,|u^2-v^2|\bigr),
\qquad
u,v\in\bC.
\end{equation}
Indeed, if \(|v|\ge 2|u|\), then
\[
|u\pm v|\ge |v|-|u|\ge |u|,
\]
so the left-hand side of \eqref{eq:uv-estimate} is at least
\(|u|^2\). If \(|v|\le 2|u|\), then
\[
|u\mp v|\le |u|+|v|\le 3|u|,
\]
and therefore
\[
|u^2-v^2|
=
|u+v|\,|u-v|
\le
3|u|\,|u\pm v|.
\]
This proves \eqref{eq:uv-estimate}.

Applying \eqref{eq:uv-estimate} to the terms in
\eqref{eq:edge-Omega-Sigma-clean}, we obtain, for \(j\neq i\),
\[
\left|
\sqrt{t_i}
\bigl(\epsilon_i\sqrt{t_i}+\epsilon_j\sqrt{t_j}\bigr)
\right|
\gtrsim
\min\bigl(|t_i|,|t_i-t_j|\bigr)
\gtrsim R.
\]
Similarly, for \(b\neq a\),
\[
\left|
\sqrt{s_a}
\bigl(\eta_a\sqrt{s_a}+\eta_b\sqrt{s_b}\bigr)
\right|
\gtrsim
\min\bigl(|s_a|,|s_a-s_b|\bigr)
\gtrsim R.
\]
For the mixed terms,
\[
\left|
\sqrt{t_i}
\bigl(\epsilon_i\sqrt{t_i}-\eta_a\sqrt{s_a}\bigr)
\right|
\gtrsim
\min\bigl(|t_i|,|t_i-s_a|\bigr)
\gtrsim R,
\]
and
\[
\left|
\sqrt{s_a}
\bigl(\epsilon_i\sqrt{t_i}-\eta_a\sqrt{s_a}\bigr)
\right|
\gtrsim
\min\bigl(|s_a|,|t_i-s_a|\bigr)
\gtrsim R.
\]
Thus every complete summand in
\eqref{eq:edge-Omega-Sigma-clean} is \(\OO(R^{-1})\). Since the
numbers of summands depend only on \(n\) and \(m\), this proves
\eqref{eq:edge-Om-Sig-est}.

The Airy asymptotics \eqref{eq:edge-r-v} give
\[
q_i-\epsilon_i\sqrt{t_i}
=
-\frac{1}{4t_i}+\OO(t_i^{-5/2}),
\]
and
\[
p_a-\eta_a\frac{\beta}{2}\sqrt{s_a}
=
-\frac{1}{4s_a}+\OO(s_a^{-5/2}),
\]
uniformly in the sectors under consideration. Since
$
|t_i|\ge R
$, $
|s_a|\ge R
$,
and \(R\) is sufficiently large, this proves
\eqref{eq:edge-q-p-est}.

We next establish the derivative estimates. The angular restrictions in
\eqref{e:edgeC} place every coordinate in a sector having a fixed positive
angular distance from the real axis and from the branch cut of the
principal square root. Consequently, there exists a sufficiently small
constant \(c>0\) such that the polydisc obtained by moving each coordinate
by at most \(4cR\) remains in the same half-plane and in a fixed slightly
larger sector on which the square-root branches and the Airy asymptotics
are valid.

After decreasing \(c\) if necessary, every coordinate modulus and every
pairwise difference on this polydisc remain bounded below by a constant
multiple of \(R\). Indeed, the definition of \(R\) gives
\[
|t_i|,\ |s_a|,\ |t_i-t_j|,\ |t_i-s_a|,\ |s_a-s_b|
\ge R
\]
at the center, while moving each coordinate by at most \(4cR\) changes
a coordinate modulus by at most \(4cR\) and a pairwise difference by at
most \(8cR\).

It follows that the zeroth-order estimates proved above hold uniformly
throughout this polydisc. In particular,
\[
\Omega_i,\quad
\Sigma_a,\quad
q_i-\epsilon_i\sqrt{t_i},\quad
p_a-\eta_a\frac{\beta}{2}\sqrt{s_a}
=
\OO(R^{-1})
\]
there. These functions are holomorphic on the polydisc. Applying the
multivariable Cauchy estimates on concentric smaller polydiscs therefore
gives
\[
\sfD\Omega_i,
\quad
\sfD\Sigma_a,
\quad
\sfD\bigl(q_i-\epsilon_i\sqrt{t_i}\bigr),
\quad
\sfD\left(p_a-\eta_a\frac{\beta}{2}\sqrt{s_a}\right)
=
\OO(R^{-1-\ell}),
\]
which is \eqref{eq:edge-coeff-all-derivatives-1}.

We now use \eqref{eq:edge-Vt-simplified} and
\eqref{eq:edge-Vs-simplified}. Recall that
\[
r_i=q_i-\epsilon_i\sqrt{t_i},
\qquad
v_a=p_a-\eta_a\frac{\beta}{2}\sqrt{s_a}.
\]
On a smaller concentric polydisc,
\[
r_i,\ v_a,\ \Omega_i,\ \Sigma_a=\OO(R^{-1}),
\]
and \eqref{eq:edge-coeff-all-derivatives-1} gives
\[
\partial_{t_i}\Omega_i=\OO(R^{-2}),
\qquad
\partial_{s_a}\Sigma_a=\OO(R^{-2}).
\]
Hence
\[
\partial_{t_i}\Omega_i,
\quad
\Omega_i^2,
\quad
r_i\Omega_i
=
\OO(R^{-2}),
\]
and, for \(j\neq i\),
\[
\frac{r_i-r_j+\Omega_i-\Omega_j}{t_i-t_j}
=
\OO(R^{-2}).
\]
Likewise,
\[
\frac{
r_i+(2/\beta)v_a+\Omega_i+(2/\beta)\Sigma_a
}{t_i-s_a}
=
\OO(R^{-2}).
\]
Substitution into \eqref{eq:edge-Vt-simplified} gives
$
V_i^{(t)}=\OO(R^{-2}).
$
The same estimates applied to \eqref{eq:edge-Vs-simplified} give
$
V_a^{(s)}=\OO(R^{-2}).
$
This proves \eqref{eq:edge-V-est}, uniformly on the smaller polydisc.

The functions \(V_i^{(t)}\) and \(V_a^{(s)}\) are holomorphic on that
polydisc. A further application of the multivariable Cauchy estimates
therefore yields
\[
\sfD V_i^{(t)},
\quad
\sfD V_a^{(s)}
=
\OO(R^{-2-\ell}),
\]
which proves \eqref{eq:edge-coeff-all-derivatives-2}.

Finally, after collecting the coefficient of each first-order derivative
in \eqref{eq:edge-Rt-clean} and \eqref{eq:edge-Rs-clean}, every such
coefficient is a finite sum of terms of the form
\[
\Omega_i,\qquad
\Sigma_a,\qquad
\frac{1}{t_i-t_j},\qquad
\frac{1}{t_i-s_a},\qquad
\frac{1}{s_a-s_b},
\]
multiplied by constants depending only on \(\beta\). Each complete
coefficient is therefore \(\OO(R^{-1})\) on the same polydisc and is
holomorphic there. Cauchy's estimates give the asserted
\(\OO(R^{-1-\ell})\) bound for all of its derivatives.
\end{proof}

\subsection{Solutions attached to the branch factors}
\label{s:edge_construct_solution}

\begin{proposition}\label{p:direction-sector}
Let \(\sfC=\sfC(\bm\tau,D)\) be the domain
\eqref{e:edgeC}, and let
\[
\bm\sigma=(\sigma_1,\dots,\sigma_{n+m})\in\{\pm1\}^{n+m}.
\]
There exist a vector
\(\bm\omega=\bm\omega(\bm\tau,\bm\sigma)\), independent of the base
point \(\bmz\in\sfC\), and a constant \(c>0\), depending only on \(n+m\),
such that, for every \(\bmz\in\sfC\) and every \(r\ge0\), we have $\bmz+r\bm\omega\in\sfC$ and 
\begin{align}
&\sfR(\bmz+r\bm\omega)
\ge
c\bigl(\sfR(\bmz)+r\bigr),
\label{eq:dir-sector-R}
\\
&\left|
\arg\left(
-\sigma_j\omega_j\sqrt{z_j+r\omega_j}
\right)
\right|
\le
\frac{23\pi}{48},
\qquad
1\le j\le n+m,
\label{eq:dir-sector-angle}
\\
&\Re\left[
-\sigma_j\omega_j\sqrt{z_j+r\omega_j}
\right]
\ge
c\bigl(\sfR(\bmz)+r\bigr)^{1/2},\qquad
1\le j\le n+m.
\label{eq:dir-sector-phase}
\end{align}
Here the square root is the principal square root.
\end{proposition}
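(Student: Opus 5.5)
The plan is to take $\bm\omega$ with each $\omega_j$ lying strictly inside the same sector as $z_j$, but tilted toward one of the two edges of that sector according to $\sigma_j$. Write $N:=n+m$ and $\theta_0:=\pi/12$. For $\tau_j=+1$ the sector is $\{\arg z\in(\pi/4,5\pi/12)\}$. I would set
\[
\arg\omega_j:=\frac{\pi}{4}+\frac{\pi}{96}\ \ \text{if }\sigma_j=-1,
\qquad
\arg\omega_j:=\frac{5\pi}{12}-\frac{\pi}{96}\ \ \text{if }\sigma_j=+1.
\]
For $\tau_j=-1$ I would use the complex conjugate choice. The moduli are fixed by the normalization $\Re[\omega_j]=N+1-j$. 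Since $\cos(\arg\omega_j)\ge\cos(5\pi/12)>0$, these moduli are bounded above and below by constants depending only on $N$. The vector $\bm\omega$ depends only on $(\bm\tau,\bm\sigma)$.

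\emph{Step 1: staying in $\sfC$.} The sector $\{|\arg z-\tau_j\pi/3|<\pi/12\}$ is a convex cone containing $\omega_j$. Hence $z_j+r\omega_j$ stays in it, and $\arg(z_j+r\omega_j)$ lies between $\arg z_j$ and $\arg\omega_j$. The real-part conditions follow because $\Re[z_N+r\omega_N]=\Re z_N+r>D$ and $\Re[(z_r+r'\omega_r)-(z_{r+1}+r'\omega_{r+1})]=\Re[z_r-z_{r+1}]+r'>D$.

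\emph{Step 2: the separation bound \eqref{eq:dir-sector-R}.}
\begin{itemize}
\item Coordinate moduli: the angle between $z_j$ and $\omega_j$ is less than $\pi/6$. Therefore $|z_j+r\omega_j|\ge\cos(\pi/12)\,(|z_j|+r|\omega_j|)\gtrsim \sfR(\bmz)+r$.
\item Pairwise differences, $i<j$: the real part gives $|(z_i+r\omega_i)-(z_j+r\omega_j)|\ge\Re[z_i-z_j]+r(j-i)\ge r$. Alternatively, adding the positive real number $r(j-i)$ to $z_i-z_j$, which has positive real part, does not decrease the modulus. Subtracting the imaginary parts of the $\omega$'s then gives a lower bound $|z_i-z_j|-Cr\ge\sfR(\bmz)-Cr$.
\item Combining: $\max\{r,\sfR-Cr\}\ge (\sfR+r)/(C+1)$, exactly as in \Cref{p:direction}.
\end{itemize}

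\emph{Step 3: the phase conditions \eqref{eq:dir-sector-angle}--\eqref{eq:dir-sector-phase}.} Take $\tau_j=+1$; the other case is obtained by conjugation. Put $u:=z_j+r\omega_j$. Then $\arg u\in(\pi/4,5\pi/12)$, so $\arg\sqrt{u}\in(\pi/8,5\pi/24)$.
\begin{itemize}
\item If $\sigma_j=-1$, we have $\arg(\omega_j\sqrt u)\in(3\pi/8,\ 11\pi/24+\pi/96)$. This is contained in $[-23\pi/48,23\pi/48]$, since $11\pi/24+\pi/96<23\pi/48$.
\item If $\sigma_j=+1$, we have $\arg(\omega_j\sqrt u)\in(13\pi/24-\pi/96,\ 5\pi/8)$. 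After multiplying by $-1$, the argument lies in $(-3\pi/8,\ -11\pi/24-\pi/96)$, again within $23\pi/48$ of $0$.
\end{itemize}
This proves \eqref{eq:dir-sector-angle}. For \eqref{eq:dir-sector-phase}, the real part is at least $\cos(23\pi/48)\,|\omega_j|\,|u|^{1/2}$. Step 2 gives $|u|^{1/2}\gtrsim(\sfR(\bmz)+r)^{1/2}$, which finishes the argument.

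The only delicate point is the margin bookkeeping in Step 3. The tilt must keep $\omega_j$ strictly inside the $\pi/12$-sector, so that Step 1 holds. At the same time it must push $\arg(-\sigma_j\omega_j\sqrt u)$ within $23\pi/48$ of $0$. The choice of offset $\pi/96$ above leaves room on both sides, and I would double-check these inequalities carefully.
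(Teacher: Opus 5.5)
Your proof is correct and is essentially the paper's: the paper also tilts each $\omega_j$ inside its sector toward the edge selected by $\sigma_j$, taking $\arg\omega_j=\tau_j(\pi/3+\sigma_j\pi/16)$, a tilt of $6\pi/96$ where yours is $7\pi/96$. It then uses convexity of the cone and does the same angle bookkeeping for the phase. The only differences are that the paper uses geometric moduli $|\omega_j|=K^{n+m+1-j}$ and bounds $|(z_i-z_j)+r(\omega_i-\omega_j)|$ via the angle between $z_i-z_j$ and $\omega_i-\omega_j\approx\omega_i$, whereas your normalization $\Re\omega_j=n+m+1-j$ lets you reuse the bulk argument of \Cref{p:direction} directly. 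One harmless slip: the correct bound is $\max\{r,\mathsf R-Cr\}\ge(\mathsf R+r)/(C+2)$, not $(\mathsf R+r)/(C+1)$; with $C=2$ this gives the constant $1/4$ of \Cref{p:direction}.
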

\begin{proof}
Choose \(K>1\), depending only on \(n+m\), sufficiently large, and define
\begin{equation}\label{e:edge-fixed-direction}
\omega_j
:=
K^{n+m+1-j}
\exp\!\left[
\ri\tau_j
\left(
\frac{\pi}{3}
+
\sigma_j\frac{\pi}{16}
\right)
\right],
\qquad
1\le j\le n+m.
\end{equation}
For every \(j\),
\[\left|\arg\omega_j-\frac{\pi}{3}\tau_j\right|\leq \frac{\pi}{16}.
\]
Thus \(z_j\) and \(\omega_j\) belong to the same convex cone appearing in
the definition of \(\sfC\), and hence
$
(z_j+r\omega_j)_{1\leq j\leq n+m}
$
remains in that cone for every \(r\ge0\).

Moreover,
\[
\Re\omega_j
\ge
\cos\!\left(\frac{19\pi}{48}\right)
K^{n+m+1-j},\quad |\omega_j|\leq K^{n+m+1-j}.
\]
Consequently, after taking \(K\) sufficiently large,
\[
\Re\omega_j-\Re\omega_{j+1}
\ge
K^{n+m-j}
\left[
K\cos\!\left(\frac{19\pi}{48}\right)-1
\right]
>0,
\qquad
1\le j<n+m,
\]
and
$
\Re\omega_{n+m}>0.
$
It follows that
\begin{align*}
\Re(z_j+r\omega_j)-\Re(z_{j+1}+r\omega_{j+1})
>
D,\quad 
1\le j<n+m,\quad 
\Re(z_{n+m}+r\omega_{n+m})
>
D.
\end{align*}
Thus all the inequalities defining \(\sfC\) are preserved along the ray,
which proves $\bmz+r\bm\omega\in\sfC$.

We next prove \eqref{eq:dir-sector-R}. Since \(z_j\) and \(\omega_j\)
lie in the same cone of opening \(\pi/6\),
\[
|z_j+r\omega_j|
\ge
c\bigl(|z_j|+r|\omega_j|\bigr)
\ge
c\bigl(\sfR(\bmz)+r\bigr).
\]
For \(1\le i<j\le n+m\), the geometric growth in
\eqref{e:edge-fixed-direction} gives
\[
\omega_i-\omega_j
=
\omega_i\bigl(1+\OO(K^{-1})\bigr),
\]
uniformly in \(i<j\). Increasing \(K\), if necessary, we may therefore
assume that
\[
\left|\arg(\omega_i-\omega_j)\right|
<
\frac{11\pi}{24},
\qquad
|\omega_i-\omega_j|\ge1.
\]
On the other hand, the ordering inequalities defining \(\sfC\) give
$
\Re(z_i-z_j)>0,
$
so the angle between \(z_i-z_j\) and \(\omega_i-\omega_j\) is at most
\(23\pi/24\). Hence
\begin{align*}
\left|
(z_i-z_j)+r(\omega_i-\omega_j)
\right|
&\ge
\sin\!\left(\frac{\pi}{48}\right)
\left(
|z_i-z_j|+r|\omega_i-\omega_j|
\right)\ge
c\bigl(\sfR(\bmz)+r\bigr).
\end{align*}
Taking the minimum over all coordinates and all pairs proves
\eqref{eq:dir-sector-R}.

It remains to prove the angular estimate. By
$\bmz+r\bm\omega\in\sfC$,
\[
\frac{\pi}{4}
<
\tau_j\arg(z_j+r\omega_j)
<
\frac{5\pi}{12}.
\]
Since the square root is the principal square root, its argument is one-half
of the argument of \(z_j+r\omega_j\). If \(\sigma_j=-1\), then
\[
\arg\left(
-\sigma_j\omega_j\sqrt{z_j+r\omega_j}
\right)
=
\tau_j
\left(
\frac{\pi}{3}
-\frac{\pi}{16}
+
\frac{\tau_j}{2}\arg(z_j+r\omega_j)
\right).
\]
If \(\sigma_j=+1\), its principal argument is
\[
-\tau_j
\left(
\pi
-\frac{\pi}{3}
-\frac{\pi}{16}
-
\frac{\tau_j}{2}\arg(z_j+r\omega_j)
\right).
\]
In either case,
\[
\frac{19\pi}{48}
<
\left|
\arg\left(
-\sigma_j\omega_j\sqrt{z_j+r\omega_j}
\right)
\right|
<
\frac{23\pi}{48},
\]
which proves \eqref{eq:dir-sector-angle}. Consequently,
\begin{align*}
\Re\left[
-\sigma_j\omega_j\sqrt{z_j+r\omega_j}
\right]\ge
\sin\!\left(\frac{\pi}{48}\right)
|\omega_j|\,|z_j+r\omega_j|^{1/2}\ge
c\,\sfR(\bmz+r\bm\omega)^{1/2}\ge
c\bigl(\sfR(\bmz)+r\bigr)^{1/2},
\end{align*}
where we used \(|\omega_j|\ge1\) and
\eqref{eq:dir-sector-R}. This proves
\eqref{eq:dir-sector-phase}.
\end{proof}

\begin{proposition}[Radial block system for the edge problem]
\label{prop:edge-radial-block}
Fix a sign pattern
\[
(\bm\epsilon,\bm\eta)\in\{\pm1\}^n\times\{\pm1\}^m.
\]
Let \(\sfC\) be a domain of the form \eqref{e:edgeC}, fix
\[
\bm z=(\bmt,\bms)\in\sfC,
\qquad
R_0\leq \sfR(\bm z)\leq 2R_0,
\]
and let \(\bm\omega\) be the fixed vector provided by
\Cref{p:direction-sector} for
\[
\bm\sigma
:=
(\epsilon_1,\dots,\epsilon_n,\eta_1,\dots,\eta_m).
\]
Define
\[
\gamma_{\bm z,\bm\omega}(\lambda)
:=
\bm z+(\lambda-R_0)\bm\omega,
\qquad
\lambda\in[R_0,\infty).
\]

Let \(W\) be a holomorphic solution of the conjugated edge system
\eqref{eq:edge-conj-t-clean}--\eqref{eq:edge-conj-s-clean}. Let
\(\mathcal J\) be the set of square-free index pairs
\[
(I,A),
\qquad
I\subseteq\{1,\dots,n\},
\quad
A\subseteq\{1,\dots,m\},
\]
and write
\[
\mathcal J^\ast:=\mathcal J\setminus\{(\varnothing,\varnothing)\}.
\]
For \((I,A)\in\mathcal J\), define the scaled square-free derivatives along
\(\gamma_{\bm z,\bm\omega}\) by
\[
U_{I,A}(\lambda;\bm z,\bm\omega)
:=
\lambda^{|I|+|A|}
\partial_{t_I}\partial_{s_A}W
\bigl(\gamma_{\bm z,\bm\omega}(\lambda)\bigr).
\]
Write
\[
U=(U_0,U_\ast),
\qquad
U_0:=U_{\varnothing,\varnothing}
=
W\bigl(\gamma_{\bm z,\bm\omega}(\lambda)\bigr),
\]
where \(U_\ast\) collects the components indexed by
\(\mathcal J^\ast\).

Set
\[
\zeta:=\frac23\lambda^{3/2},
\qquad
\zeta_0:=\frac23R_0^{3/2}.
\]
Then there exist matrix-valued functions
\[
B_{01}(\zeta;\bm z,\bm\omega),
\qquad
B_{10}(\zeta;\bm z,\bm\omega),
\qquad
B_{11}(\zeta;\bm z,\bm\omega),
\]
and a matrix
\[
\Gamma(\zeta;\bm z,\bm\omega)
\]
on the non-empty derivative block. The entries of
\(B_{01},B_{10},B_{11}\) are uniformly bounded for
\(\zeta\ge\zeta_0\), and the restriction of the square-free derivative
system to \(\gamma_{\bm z,\bm\omega}\) has the form
\begin{align}
\frac{\rd}{\rd\zeta}U_0
&=
\frac1\zeta B_{01}(\zeta;\bm z,\bm\omega)U_\ast,
\label{eq:edge-radial-top}
\\[1mm]
\frac{\rd}{\rd\zeta}U_\ast
&=
\Gamma(\zeta;\bm z,\bm\omega)U_\ast
+
\frac1\zeta B_{10}(\zeta;\bm z,\bm\omega)U_0
+
\frac1\zeta B_{11}(\zeta;\bm z,\bm\omega)U_\ast.
\label{eq:edge-radial-bottom}
\end{align}

Order the components of \(U_\ast\) by increasing total derivative order
\(|I|+|A|\). Then \(\Gamma(\zeta;\bm z,\bm\omega)\) is block lower
triangular. More precisely,
\begin{equation}
\label{eq:edge-Gamma-triangular}
\Gamma_{(I,A),(J,B)}(\zeta;\bm z,\bm\omega)=0
\end{equation}
unless either \((I,A)=(J,B)\), or
\[
J\subseteq I,
\qquad
B\subseteq A,
\qquad
1\le |J|+|B|<|I|+|A|.
\]
For \((I,A)\in\mathcal J^\ast\), the corresponding diagonal entry is
\begin{align}
\Gamma_{I,A}(\zeta;\bm z,\bm\omega)
:={}&
\frac{1}{\lambda^{1/2}}
\left(
-2\sum_{i\in I}\epsilon_i\omega_i
\sqrt{t_i+(\lambda-R_0)\omega_i}
-\beta\sum_{a\in A}\eta_a\omega_{n+a}
\sqrt{s_a+(\lambda-R_0)\omega_{n+a}}
\right).
\label{eq:edge-mu-IA-statement}
\end{align}
Moreover, whenever \((I,A)\ne(J,B)\) and the entry on the left-hand side
of \eqref{eq:edge-Gamma-triangular} is nonzero,
\begin{equation}
\label{eq:edge-Gamma-relative-bound}
\left|
\Gamma_{(I,A),(J,B)}(\zeta;\bm z,\bm\omega)
\right|
\le
C\,\Re\left[
\Gamma_{I,A}(\zeta;\bm z,\bm\omega)
-
\Gamma_{J,B}(\zeta;\bm z,\bm\omega)
\right].
\end{equation}
In particular, there exists \(c_\delta>0\) such that
\begin{equation}
\label{eq:edge-spectral-gap}
\Re\Gamma_{I,A}(\zeta;\bm z,\bm\omega)
\ge
c_\delta(2|I|+\beta|A|)
\ge c_\delta
\end{equation}
for every \((I,A)\in\mathcal J^\ast\) and every
\(\zeta\ge\zeta_0\), after decreasing \(c_\delta\) if necessary.
\end{proposition}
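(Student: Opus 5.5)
The plan follows the bulk \Cref{prop:radial-block}. The new ingredients are the change of variable \(\zeta=\frac23\lambda^{3/2}\) and a diagonal that now depends on \(\lambda\). Along \(\gamma_{\bm z,\bm\omega}\) the chain rule gives \(\frac{\rd}{\rd\lambda}=\sum_j\omega_j\partial_{z_j}\), and \(\frac{\rd\lambda}{\rd\zeta}=\lambda^{-1/2}\).

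\emph{Equation for \(U_0\).} Differentiating \(U_0\) gives
\[
\frac{\rd U_0}{\rd\zeta}=\lambda^{-3/2}\Bigl(\sum_i\omega_iU_{\{i\},\varnothing}+\sum_a\omega_{n+a}U_{\varnothing,\{a\}}\Bigr).
\]
Since \(\lambda^{-3/2}=\frac{2}{3\zeta}\), this is \eqref{eq:edge-radial-top}, and \(B_{01}\) is bounded because \(\bm\omega\) is fixed.

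\emph{Equation for \(U_{I,A}\).} I then write
\[
\frac{\rd U_{I,A}}{\rd\zeta}=\lambda^{-1/2}\Bigl[\frac{|I|+|A|}{\lambda}U_{I,A}+\lambda^{|I|+|A|}\sum_j\omega_j\partial_{z_j}\partial_{t_I}\partial_{s_A}W\Bigr].
\]
The first term is \(\OO(\zeta^{-1})U_\ast\). For \(j\notin I\cup A\) the derivative is square-free with coefficient \(\lambda^{-3/2}=\OO(\zeta^{-1})\), so it goes into \(B_{11}\). For repeated indices, say \(i\in I\), I reduce \(\partial_{t_i}^2\) by \eqref{eq:edge-conj-t-clean}:
\[
\partial_{t_i}^2W=-2q_i\partial_{t_i}W-\sfR_i^{(t)}W.
\]
The same works for \(s_a\) with \(2p_a\).

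\emph{Diagonal.} When \(\partial_{t_{I'}}\partial_{s_A}\) does not hit \(q_i\), the principal term is \(-2q_i\omega_iU_{I,A}\). After the \(\lambda^{-1/2}\) factor this gives \(-2\epsilon_i\omega_i\sqrt{t_i(\lambda)}\,\lambda^{-1/2}\), plus a correction \(r_i\) which is \(\OO(R^{-1})\) by \Cref{lem:edge-coeff-estimates}. That correction is \(\OO(\lambda^{-3/2})=\OO(\zeta^{-1})\). Summing over \(I\) and \(A\) gives \eqref{eq:edge-mu-IA-statement}.

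\emph{Off-diagonal entries.} If \(r\geq 1\) derivatives hit \(\epsilon_i\sqrt{t_i}\), they produce \(\OO(\lambda^{1/2-r})\) times a square-free derivative of order \(|I|+|A|-r\) with index set \((J,B)\subset(I,A)\). After scaling and the \(\lambda^{-1/2}\) factor, the coefficient is \(\OO(1)\) when \(r=1\) (only the factor \(\partial_{t_i}\sqrt{t_i}\)) and smaller otherwise. These are the strictly lower-triangular entries of \(\Gamma\). The \(\sfR\)-terms are treated by the same bookkeeping as in the bulk, using \Cref{lem:edge-coeff-estimates}. Each first-order coefficient has decay \(R^{-1-\ell}\), each scalar has decay \(R^{-2-\ell}\), and repeated variables are reduced recursively. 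Every such term is then either \(\OO(\lambda^{-3/2})=\OO(\zeta^{-1})\), contributing to \(B_{10}\) or \(B_{11}\), or a principal reduction \(2q_j\partial_{t_j}\) producing a bounded coupling to a lower subset. The subset property \(J\subseteq I\), \(B\subseteq A\) should hold because derivatives are only removed, never added, in the \(\OO(1)\) terms. I need to recheck that claim for the mixed \(\sfR\)-terms, where a remainder such as \(\partial_{s_a}/(t_i-s_a)\) applied at a repeated variable could add an index. My expectation is that those contributions always carry an extra \(\lambda^{-1}\) and so land in \(B_{11}\).

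\emph{Spectral gap.} By \eqref{eq:dir-sector-phase}, \(\Re[-\sigma_j\omega_j\sqrt{z_j+r\omega_j}]\ge c(R_0+r)^{1/2}\). Since \(\lambda\asymp R_0+r\), each index contributes real part at least \(c\), which gives \eqref{eq:edge-spectral-gap}. Upper bounds of the same order follow from \(|\omega_j|\lesssim1\) and \(|z_j+r\omega_j|\lesssim\lambda\); the latter uses \(\lambda\ge R_0\) together with the uniform bound on \(\sfC\) after shrinking the neighborhood as in the bulk.

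\emph{Relative bound and main obstacle.} The main obstacle is \eqref{eq:edge-Gamma-relative-bound}. The plan is to note that \(\Gamma_{I,A}-\Gamma_{J,B}\) is a sum over \((I\setminus J)\cup(A\setminus B)\) of the same positive-real-part single-index terms. By \eqref{eq:dir-sector-angle}, each has real part at least \(c\) times its modulus, hence at least \(c'>0\). Meanwhile the off-diagonal entries are bounded by an absolute \(C\). Hence the off-diagonal modulus is at most \(C'\,\Re(\Gamma_{I,A}-\Gamma_{J,B})\). The care needed is that the off-diagonal entries might grow like \(\lambda^{0}\) times \(\sqrt{\cdot}\) ratios; I would verify from the \(r=1\) computation above that they are genuinely \(\OO(1)\), since \(\partial_t\sqrt t\cdot\lambda\cdot\lambda^{-1/2}\sim1\).
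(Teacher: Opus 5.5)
There is a genuine gap, and it is at the step you flagged as the main obstacle. First, the off-diagonal source you describe does not occur. After reducing \(\partial_{t_i}^2\), the operator \(\partial_{t_{I'}}\partial_{s_A}\) with \(I'=I\setminus\{i\}\) never acts on \(q_i=q_{\epsilon_i}(t_i)\), and the same holds at every later reduction. So there are no ``\(r\ge1\) derivatives hitting \(\epsilon_i\sqrt{t_i}\)''. The actual principal off-diagonal entries come from the pair-interaction parts of \(\sfR_i^{(t)},\sfR_a^{(s)}\), for instance \(\frac{2}{\beta}\frac{\partial_{t_j}}{t_i-t_j}\) with \(j\in I'\). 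Such a term creates \(\partial_{t_j}^2\), which is then reduced through the leading part \(\epsilon_j\sqrt{t_j}\) of \(q_j\). Write \(t_k(\lambda)\) for the \(t_k\)-coordinate of \(\gamma_{\bm z,\bm\omega}(\lambda)\). In the \(\zeta\)-system such an entry has size \(\lambda^{1/2}|\sqrt{t_j(\lambda)}|/|t_i(\lambda)-t_j(\lambda)|\), possibly after a longer chain of such steps. This is not uniformly bounded. The domain \(\sfC\) is unbounded, and \(\sfR(\bm z)\) only controls the smallest modulus and the smallest gap. So one can have \(|t_i|,|t_j|\gg\lambda\) while \(|t_i-t_j|\asymp\lambda\). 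For the same reason your claim \(|z_j+r\omega_j|\lesssim\lambda\) is false, and the diagonal entries \(\Gamma_{I,A}\) are themselves unbounded. ``Shrinking the neighborhood'' only gives constants depending on \(\bm z\). That is useless here, because \Cref{thm:edge-sectorial-solution} needs the constants uniform over \(\sfC\), both for the contraction \(C_2\zeta_0^{-1}<1\) and for \(W=1+\OO(\sfR^{-3/2})\). Hence the inference ``off-diagonal \(\le C\), diagonal difference \(\ge c'\), so relative bound'' fails. This is exactly why the proposition is stated as a relative bound rather than with bounded entries.

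The missing idea is a square-root transfer estimate. For any two coordinates \(u,v\) of \(\gamma_{\bm z,\bm\omega}(\lambda)\) one has \(|u|,|v|,|u-v|\gtrsim\lambda\), and hence \(\lambda|\sqrt v|/|u-v|\le C|\sqrt u|\). To see this, split into two cases. If \(|v|\le 2|u|\), the bound \(|u-v|\gtrsim\lambda\) suffices. If \(|v|>2|u|\), use \(|u-v|\ge|v|/2\) and \(|u|\gtrsim\lambda\). Apply this backwards along each principal interaction chain, pairing each interaction denominator with the compensating factor \(\lambda\) from the scaling. This moves the square root from the terminal repeated index to the initial index \(i\), which lies in \(I\setminus J\) (or \(a\in A\setminus B\)). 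The entry is then bounded by \(C|\omega_i||\sqrt{t_i(\lambda)}|/\lambda^{1/2}\). By \eqref{eq:dir-sector-angle}, \(|\omega_i||\sqrt{t_i(\lambda)}|\le C\Re[-\epsilon_i\omega_i\sqrt{t_i(\lambda)}]\), and this is one of the nonnegative summands of \(\lambda^{1/2}\Re[\Gamma_{I,A}-\Gamma_{J,B}]\). This gives \eqref{eq:edge-Gamma-relative-bound} with a uniform constant. Your worry about interaction terms adding a new index resolves as you expected. Such terms produce a square-free derivative with an extra \(\lambda^{-1}\) and go into \(B_{11}\). Only terms whose derivative index is already present can continue a principal chain, and this is also what gives \(J\subseteq I\), \(B\subseteq A\).
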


\begin{proof}
We prove the statement in three steps.

\medskip
\noindent
\emph{Step 1: Coefficient estimates along the shifted ray.}

By \Cref{p:direction-sector}, along the shifted ray
\(\gamma_{\bm z,\bm\omega}\) we have
\[
\sfR\bigl(\gamma_{\bm z,\bm\omega}(\lambda)\bigr)
\gtrsim\lambda,
\qquad
\Re\left[
-\sigma_j\omega_j
\sqrt{z_j+(\lambda-R_0)\omega_j}
\right]
\gtrsim\lambda^{1/2},\quad \lambda\geq R_0
\]
Therefore \Cref{lem:edge-coeff-estimates} gives, uniformly along the ray,
\[
\Omega_i,\Sigma_a,r_i,v_a=\OO(\lambda^{-1}),
\qquad
\partial\Omega_i,\partial\Sigma_a=\OO(\lambda^{-2}),
\]
and the first-order coefficients in \(\sfR_i^{(t)}\) and
\(\sfR_a^{(s)}\) are \(\OO(\lambda^{-1})\), whereas the scalar
coefficients \(V_i^{(t)}\) and \(V_a^{(s)}\) are
\(\OO(\lambda^{-2})\). 
More generally, every additional derivative falling
on one of these coefficients improves its decay by one further power of
\(\lambda^{-1}\).

The conjugated equations give the reduction rules
\begin{align}
\partial_{t_i}^2W
&=
-2q_i\partial_{t_i}W-\sfR_i^{(t)}W,
\label{eq:edge-red-ti-clean}
\\
\partial_{s_a}^2W
&=
-2p_a\partial_{s_a}W-\sfR_a^{(s)}W.
\label{eq:edge-red-sa-clean}
\end{align}
We shall use these identities repeatedly to reduce every repeated derivative
back to square-free derivatives.

We also record a comparison that will be used for the principal off-diagonal
terms. Let \(u\) and \(v\) be any two coordinates along the shifted ray. Since
$
|u|,\ |v|,\ |u-v|\gtrsim\lambda,
$
one has
\begin{equation}
\label{eq:edge-square-root-transfer}
\frac{\lambda|\sqrt v|}{|u-v|}
\le
C|\sqrt u|.
\end{equation}
Indeed, if \(|v|\le2|u|\), then
\[
\frac{\lambda|\sqrt v|}{|u-v|}
\le C|\sqrt u|
\]
because \(|u-v|\gtrsim\lambda\). If \(|v|>2|u|\), then
\(|u-v|\ge |v|/2\), and hence
\[
\frac{\lambda|\sqrt v|}{|u-v|}
\le
\frac{2\lambda}{|\sqrt v|}
\le
C|\sqrt u|,
\]
where we used \(|u|,|v|\gtrsim\lambda\). The same estimate applies to every
pair of \(t\)- and \(s\)-coordinates.

\medskip
\noindent
\emph{Step 2: The radial system in the \(\lambda\)-variable.}

First,
\[
U_0(\lambda;\bm z,\bm\omega)
=
W\bigl(\gamma_{\bm z,\bm\omega}(\lambda)\bigr).
\]
Since, along the shifted ray,
\[
\frac{\rd}{\rd\lambda}
=
\sum_{i=1}^n\omega_i\partial_{t_i}
+
\sum_{a=1}^m\omega_{n+a}\partial_{s_a},
\]
we obtain
\begin{equation}
\label{eq:edge-lambda-top-clean}
\frac{\rd}{\rd\lambda}U_0
=
\frac1\lambda C_{01}(\lambda;\bm z,\bm\omega)U_\ast,
\end{equation}
where \(C_{01}\) is uniformly bounded.

We next derive the equations for the non-empty square-free derivatives. Fix
\[
(I,A)\in\mathcal J^\ast.
\]
By the chain rule,
\begin{align}
\frac{\rd}{\rd\lambda}U_{I,A}
={}&
\frac{|I|+|A|}{\lambda}U_{I,A}
+
\lambda^{|I|+|A|}
\left(
\sum_{i=1}^n
\omega_i\partial_{t_i}\partial_{t_I}\partial_{s_A}W
+
\sum_{a=1}^m
\omega_{n+a}\partial_{s_a}\partial_{t_I}\partial_{s_A}W
\right)
\bigl(\gamma_{\bm z,\bm\omega}(\lambda)\bigr).
\label{eq:edge-lambda-chain-clean}
\end{align}

If \(i\notin I\), then the derivative remains square-free and
\[
\lambda^{|I|+|A|}
\partial_{t_i}\partial_{t_I}\partial_{s_A}W
\bigl(\gamma_{\bm z,\bm\omega}(\lambda)\bigr)
=
\frac1\lambda U_{I\cup\{i\},A}.
\]
Similarly, if \(a\notin A\), then
\[
\lambda^{|I|+|A|}
\partial_{s_a}\partial_{t_I}\partial_{s_A}W
\bigl(\gamma_{\bm z,\bm\omega}(\lambda)\bigr)
=
\frac1\lambda U_{I,A\cup\{a\}}.
\]
Thus all terms in \eqref{eq:edge-lambda-chain-clean} with
\(i\notin I\) or \(a\notin A\) belong to the
\(\lambda^{-1}\)-part of the radial system.

Suppose now that \(i\in I\), and write
\[
I':=I\setminus\{i\}.
\]
Using \eqref{eq:edge-red-ti-clean}, and observing that
\(\partial_{t_{I'}}\partial_{s_A}\) does not act on \(q_i\), we obtain
\begin{align}
\begin{split}
\lambda^{|I|+|A|}
\partial_{t_i}^2\partial_{t_{I'}}\partial_{s_A}W
={}&
\lambda^{|I|+|A|}
\partial_{t_{I'}}\partial_{s_A}
\left(
-2q_i(t_i+(\lambda-R_0)\omega_i)\partial_{t_i}
-\sfR_i^{(t)}
\right)W
\\
={}&
-2q_i(t_i+(\lambda-R_0)\omega_i)U_{I,A}
-
\lambda^{|I|+|A|}
\partial_{t_{I'}}\partial_{s_A}
\bigl(\sfR_i^{(t)}W\bigr),
\end{split}
\label{eq:edge-two-derivative-reduction}
\end{align}
where all functions are evaluated along the shifted ray. Since
\[
q_i(t_i+(\lambda-R_0)\omega_i)
=
\epsilon_i\sqrt{t_i+(\lambda-R_0)\omega_i}
+
\OO(\lambda^{-1}),
\]
the first term on the right-hand side of
\eqref{eq:edge-two-derivative-reduction}, after multiplication by
\(\omega_i\), contributes
\[
-2\epsilon_i\omega_i
\sqrt{t_i+(\lambda-R_0)\omega_i}\,U_{I,A}
+
\frac1\lambda\OO(1)U_{I,A}.
\]
The first term is the \(t_i\)-part of the diagonal entry in
\eqref{eq:edge-mu-IA-statement}.

We now analyze the terms arising from \(\sfR_i^{(t)}\). Consider first a
first-order summand involving \(\partial_{t_j}W\). After applying the
Leibniz rule, a typical term is
\begin{align}
\lambda^{|I|+|A|}
\bigl(
\partial_{t_{I_1}}\partial_{s_{A_1}}
\text{(coefficient)}
\bigr)
\partial_{t_{I_2}}\partial_{s_{A_2}}\partial_{t_j}W,
\label{eq:edge-first-order-typical-term}
\end{align}
where
\[
I_1\sqcup I_2=I',
\qquad
A_1\sqcup A_2=A.
\]
By Step 1, the differentiated coefficient in
\eqref{eq:edge-first-order-typical-term} is
\[
\OO\bigl(\lambda^{-1-|I_1|-|A_1|}\bigr).
\]

If \(j\notin I_2\), then the derivative acting on \(W\) is already
square-free. Passing to the corresponding scaled component gives
\begin{align*}
&
\lambda^{|I|+|A|}
\lambda^{-1-|I_1|-|A_1|}
\lambda^{-|I_2|-|A_2|-1}
=
\lambda^{-1}.
\end{align*}
Hence this term belongs to the \(\lambda^{-1}\)-part of the radial system.
The same conclusion holds for a first-order summand involving
\(\partial_{s_b}W\) when \(b\notin A_2\).

Suppose instead that \(j\in I_2\). Then
\[
\partial_{t_{I_2}}\partial_{s_{A_2}}\partial_{t_j}W
=
\partial_{t_{I_2\setminus\{j\}}}\partial_{s_{A_2}}
\partial_{t_j}^2W.
\]
Using \eqref{eq:edge-red-ti-clean}, we may either use the leading square-root
term in \(q_j\), use the remainder \(r_j\), or continue the reduction through
\(-\sfR_j^{(t)}W\). If we use the leading square-root term, then the
resulting square-free derivative is \(\partial_{t_{I_2}}\partial_{s_{A_2}}W\),
and its coefficient in the \(\lambda\)-system is bounded by
\begin{align*}
C\lambda^{|I|+|A|}
\lambda^{-1-|I_1|-|A_1|}
\left|
\sqrt{t_j+(\lambda-R_0)\omega_j}
\right|
\lambda^{-|I_2|-|A_2|}
=
C\left|
\sqrt{t_j+(\lambda-R_0)\omega_j}
\right|.
\end{align*}
Moreover,
\[
|I_2|+|A_2|
=
|I|+|A|-1-|I_1|-|A_1|
<
|I|+|A|.
\]
Thus this is a principal off-diagonal contribution from a strictly lower
square-free derivative order. After the change of variable
\(\zeta=2\lambda^{3/2}/3\), its coefficient acquires the factor
\(\lambda^{-1/2}\). Such a coefficient need not be uniformly bounded when
some coordinates are much larger than the separation scale; it will instead
be controlled by the relative estimate below.

If we use \(r_j\) rather than the leading square-root part of \(q_j\), then
\(r_j=\OO(\lambda^{-1})\), and the resulting term belongs to the
\(\lambda^{-1}\)-part. If we use \(-\sfR_j^{(t)}W\), the reduction may
continue. At this stage the remaining square-free derivatives no longer
contain \(t_j\). Consequently, the term
\(\Omega_j\partial_{t_j}W\), as well as the \(\partial_{t_j}\)-part of a
pair-interaction operator, cannot create another repeated derivative and
therefore terminates the reduction with a \(\lambda^{-1}\)-term. A scalar
term also terminates the reduction and contributes only to the
\(\lambda^{-1}\)-part. The only terms that can continue a principal
reduction are pair-interaction terms whose first-order derivative has an
index still present among the remaining square-free derivatives. Such a term
creates a new repeated derivative, to which the corresponding reduction rule
is applied.

The same discussion applies when the first-order summand involves
\(\partial_{s_b}W\). In particular, a principal reduction chain may pass
between the \(t\)- and \(s\)-variables, but at every step the next repeated
index is already present among the remaining square-free derivatives. Thus no
new index can occur in a principal off-diagonal term. If a principal
contribution from \(U_{J,B}\) appears in the equation for \(U_{I,A}\), then
\[
J\subseteq I,
\qquad
B\subseteq A,
\qquad
|J|+|B|<|I|+|A|.
\]
This proves the strict block-lower-triangular structure of the principal
matrix.

We next prove the relative estimate for the principal off-diagonal entries.
It is enough to consider a chain beginning with the radial
\(t_i\)-derivative; the argument for a chain beginning with an
\(s_a\)-derivative is identical. Fix \(i\in I\), and retain the notation
\(I'=I\setminus\{i\}\). Consider first the pair-interaction term in \eqref{eq:edge-first-order-typical-term} involving
\[
\frac{\partial_{t_j}}{t_i-t_j},
\qquad
j\in I'.
\]
If no derivative falls on its coefficient, then this term produces, up to a
fixed constant,
\[
\omega_i\lambda^{|I|+|A|}
\frac{1}{
 t_i-t_j+(\lambda-R_0)(\omega_i-\omega_j)
}
\partial_{t_{I'\setminus\{j\}}}\partial_{s_A}
\partial_{t_j}^2W.
\]
Using the leading square-root part of the reduction of
\(\partial_{t_j}^2W\), and then passing to the \(\zeta\)-variable, gives a
contribution to \(U_{I',A}\) bounded by
\begin{align*}
&C|\omega_i|
\frac{
\lambda^{1/2}
\left|
\sqrt{t_j+(\lambda-R_0)\omega_j}
\right|
}{
\left|
 t_i-t_j+(\lambda-R_0)(\omega_i-\omega_j)
\right|
}
=
C\frac{|\omega_i|}{\lambda^{1/2}}
\frac{
\lambda
\left|
\sqrt{t_j+(\lambda-R_0)\omega_j}
\right|
}{
\left|
 t_i-t_j+(\lambda-R_0)(\omega_i-\omega_j)
\right|
}.
\end{align*}
By \eqref{eq:edge-square-root-transfer}, this is bounded by
\[
C
\frac{
|\omega_i|
\left|
\sqrt{t_i+(\lambda-R_0)\omega_i}
\right|
}{\lambda^{1/2}}.
\]
Thus the square-root factor associated with the terminal repeated index has
been transferred to the initial index \(i\), which is deleted from the final
square-free pair.

The same mechanism applies to an arbitrary principal reduction chain. Every
additional pair-interaction step contributes one shifted difference
denominator and one compensating factor of \(\lambda\) coming from the scaled
square-free derivatives. Starting from the square-root factor produced at the
last reduction and applying \eqref{eq:edge-square-root-transfer} successively,
in the reverse order of the interaction chain, transfers that square-root
factor back across all the interaction denominators. Therefore every
principal chain beginning with the radial \(t_i\)-derivative is bounded, in
the \(\zeta\)-system, by
\[
C
\frac{
|\omega_i|
\left|
\sqrt{t_i+(\lambda-R_0)\omega_i}
\right|
}{\lambda^{1/2}}.
\]
Likewise, every principal chain beginning with the radial
\(s_a\)-derivative is bounded by
\[
C
\frac{
|\omega_{n+a}|
\left|
\sqrt{s_a+(\lambda-R_0)\omega_{n+a}}
\right|
}{\lambda^{1/2}}.
\]

Derivatives falling on a pair-interaction coefficient do not change this
conclusion. They produce additional inverse powers of the corresponding
shifted difference. After passage to the scaled derivatives, each such
additional inverse difference is accompanied by one additional factor of
\(\lambda\). By \Cref{p:direction-sector}, every shifted difference is
bounded below by a constant multiple of \(\lambda\), so these additional
factors are uniformly bounded. They only change the constant in the preceding
estimates.

Since there are only finitely many reduction chains, we conclude that a
principal contribution from \(U_{J,B}\) to the equation for \(U_{I,A}\) is
bounded by
\begin{align*}
C\Bigg(
&\sum_{i\in I\setminus J}
\frac{
|\omega_i|
\left|
\sqrt{t_i+(\lambda-R_0)\omega_i}
\right|
}{\lambda^{1/2}}
+
\sum_{a\in A\setminus B}
\frac{
|\omega_{n+a}|
\left|
\sqrt{s_a+(\lambda-R_0)\omega_{n+a}}
\right|
}{\lambda^{1/2}}
\Bigg).
\end{align*}
By the angular estimate \eqref{eq:dir-sector-phase} in \Cref{p:direction-sector},
\begin{align*}
|\omega_i|
\left|
\sqrt{t_i+(\lambda-R_0)\omega_i}
\right|
&\le
C\Re\left[
-\epsilon_i\omega_i
\sqrt{t_i+(\lambda-R_0)\omega_i}
\right],
\\
|\omega_{n+a}|
\left|
\sqrt{s_a+(\lambda-R_0)\omega_{n+a}}
\right|
&\le
C\Re\left[
-\eta_a\omega_{n+a}
\sqrt{s_a+(\lambda-R_0)\omega_{n+a}}
\right].
\end{align*}
All real parts in these displays are nonnegative. Since
\begin{align*}
&\Gamma_{I,A}(\zeta;\bm z,\bm\omega)
-
\Gamma_{J,B}(\zeta;\bm z,\bm\omega)
\\
&\qquad=
\frac1{\lambda^{1/2}}
\left(
-2\sum_{i\in I\setminus J}
\epsilon_i\omega_i
\sqrt{t_i+(\lambda-R_0)\omega_i}
-
\beta\sum_{a\in A\setminus B}
\eta_a\omega_{n+a}
\sqrt{s_a+(\lambda-R_0)\omega_{n+a}}
\right),
\end{align*}
the preceding estimate yields
\[
\left|
\Gamma_{(I,A),(J,B)}(\zeta;\bm z,\bm\omega)
\right|
\le
C\Re\left[
\Gamma_{I,A}(\zeta;\bm z,\bm\omega)
-
\Gamma_{J,B}(\zeta;\bm z,\bm\omega)
\right].
\]
This proves the relative estimate for the terms arising from the
\(t_i\)-equations. The proof for the \(s_a\)-equations is identical.

For completeness, we collect the remaining terms. If a first-order summand
does not create a repeated derivative, then the preceding scaling calculation
shows that it contributes \(\OO(\lambda^{-1})\). If a reduction uses one of
the remainders \(r_i\) or \(v_a\), then it again contributes
\(\OO(\lambda^{-1})\). A scalar coefficient is
\(\OO(\lambda^{-2})\); if
\(|I_1|+|A_1|\) derivatives fall on it, then its contribution after passage
to scaled square-free derivatives is bounded by
\begin{align*}
&
\lambda^{|I|+|A|}
\lambda^{-2-|I_1|-|A_1|}
\lambda^{-(|I|+|A|-1-|I_1|-|A_1|)}
=
\lambda^{-1}.
\end{align*}
The terms involving \(\Omega_i\partial_{t_i}\) or
\(\Sigma_a\partial_{s_a}\) also terminate the reduction and belong to the
\(\lambda^{-1}\)-part, because the remaining square-free derivatives do not
contain the currently repeated index. Thus every term which is not part of a
principal pair-interaction chain terminating through a leading square-root
term is included in the matrices \(C_{10}\) and \(C_{11}\).

The case \(a\in A\) is now identical. Writing
$
A':=A\setminus\{a\},
$
and using \eqref{eq:edge-red-sa-clean}, the leading diagonal contribution is
\[
-\beta\eta_a\omega_{n+a}
\sqrt{s_a+(\lambda-R_0)\omega_{n+a}}\,U_{I,A},
\]
up to a term of the form \(\lambda^{-1}\OO(1)U_{I,A}\). The remaining
principal terms satisfy the same strict-inclusion and relative estimates, and
all other terms belong to the \(\lambda^{-1}\)-part.

Collecting all contributions in \eqref{eq:edge-lambda-chain-clean}, we obtain
\begin{align}
\frac{\rd}{\rd\lambda}U_0
&=
\frac1\lambda C_{01}(\lambda;\bm z,\bm\omega)U_\ast,
\label{eq:edge-lambda-top-block-clean}
\\[1mm]
\frac{\rd}{\rd\lambda}U_\ast
&=
\lambda^{1/2}\Gamma(\zeta;\bm z,\bm\omega)U_\ast
+
\frac1\lambda C_{10}(\lambda;\bm z,\bm\omega)U_0
+
\frac1\lambda C_{11}(\lambda;\bm z,\bm\omega)U_\ast,
\label{eq:edge-lambda-bottom-block-clean}
\end{align}
where the entries of \(C_{01},C_{10},C_{11}\) are uniformly bounded. Under
the ordering by increasing \(|I|+|A|\), the matrix \(\Gamma\) is block lower
triangular, its diagonal entries are given by
\eqref{eq:edge-mu-IA-statement}, and its off-diagonal entries satisfy
\eqref{eq:edge-Gamma-relative-bound}. Notice also that a principal reduction
always leaves at least one derivative. Hence no principal term couples
\(U_0\) into \(U_\ast\); all couplings from \(U_0\) into \(U_\ast\) are
contained in \(\lambda^{-1}C_{10}U_0\).

\medskip
\noindent
\emph{Step 3: Change of variable and the diagonal spectral gap.}

Since
\[
\zeta=\frac23\lambda^{3/2},
\qquad
\frac{\rd}{\rd\zeta}
=
\lambda^{-1/2}\frac{\rd}{\rd\lambda},
\qquad
\lambda^{-3/2}=\frac{2}{3\zeta},
\]
applying \(\lambda^{-1/2}\rd/\rd\lambda\) to
\eqref{eq:edge-lambda-top-block-clean}--
\eqref{eq:edge-lambda-bottom-block-clean} gives
\eqref{eq:edge-radial-top}--\eqref{eq:edge-radial-bottom}. The constant
factor \(2/3\) is absorbed into \(B_{01},B_{10},B_{11}\). The triangularity
property \eqref{eq:edge-Gamma-triangular} is unchanged by this change of
variable.

Finally, \Cref{p:direction-sector} gives
\[
\Re\left[
-\epsilon_i\omega_i
\sqrt{t_i+(\lambda-R_0)\omega_i}
\right]
\ge
c_\delta\lambda^{1/2},
\quad
\Re\left[
-\eta_a\omega_{n+a}
\sqrt{s_a+(\lambda-R_0)\omega_{n+a}}
\right]
\ge
c_\delta\lambda^{1/2}.
\]
Substituting these estimates into the diagonal formula
\eqref{eq:edge-mu-IA-statement} gives
\[
\Re\Gamma_{I,A}(\zeta;\bm z,\bm\omega)
\ge
c_\delta(2|I|+\beta|A|).
\]
Since \((I,A)\ne(\varnothing,\varnothing)\), the right-hand side is bounded
below by a positive constant after decreasing \(c_\delta\). This proves
\eqref{eq:edge-spectral-gap}.
\end{proof}

\begin{proof}[Proof of the second statement in
\Cref{thm:edge-sectorial-solution}]

Fix a sign pattern \((\bm\epsilon,\bm\eta)\), and let \(\bm\omega\) be the
fixed vector from \Cref{p:direction-sector}, with
\[
\bm\sigma
=(\epsilon_1,\dots,\epsilon_n,\eta_1,\dots,\eta_m).
\]
We use this same vector at every point of \(\sfC\). Fix first
\(\bm z\in\sfC\), set \(R_0:=\sfR(\bm z)\), consider
\[
\gamma_{\bm z,\bm\omega}(\lambda)
:=
\bm z+(\lambda-R_0)\bm\omega,
\qquad
\lambda\ge R_0.
\]
and set
\[
\zeta:=\frac23\lambda^{3/2},
\qquad
\zeta_0:=\frac23R_0^{3/2}.
\]
Along this ray, \Cref{prop:edge-radial-block} gives
\begin{align}
\frac{\rd}{\rd\zeta}U_0
&=
\frac1\zeta B_{01}(\zeta;\bm z,\bm\omega)U_\ast,
\label{eq:edge-radial-top-thm}
\\[1mm]
\frac{\rd}{\rd\zeta}U_\ast
&=
\Gamma(\zeta;\bm z,\bm\omega)U_\ast
+
\frac1\zeta B_{10}(\zeta;\bm z,\bm\omega)U_0
+
\frac1\zeta B_{11}(\zeta;\bm z,\bm\omega)U_\ast.
\label{eq:edge-radial-bottom-thm}
\end{align}
After increasing \(M\) if necessary, we may assume that
\[
\|B_{01}(\zeta;\bm z,\bm\omega)\|
+
\|B_{10}(\zeta;\bm z,\bm\omega)\|
+
\|B_{11}(\zeta;\bm z,\bm\omega)\|
\le M,
\qquad
\zeta\ge\zeta_0,
\]
and that the constant in \eqref{eq:edge-Gamma-relative-bound} is at most
\(M\). Recall that \(\Gamma\) is block lower triangular under the ordering by
increasing \(|I|+|A|\). Its entries need not be uniformly bounded when some
coordinates are much larger than the separation scale; the relative estimate
\eqref{eq:edge-Gamma-relative-bound} is used instead.

\medskip
\noindent
\emph{Step 1: Exponential decay of the principal backward evolution.}

For \(\xi\ge\zeta\), let
\(\mathcal E(\zeta,\xi;\bm z,\bm\omega)\) be the backward evolution generated
by the full triangular matrix \(\Gamma\), namely
\begin{equation}
\partial_\zeta\mathcal E(\zeta,\xi;\bm z,\bm\omega)
=
\Gamma(\zeta;\bm z,\bm\omega)
\mathcal E(\zeta,\xi;\bm z,\bm\omega),
\qquad
\mathcal E(\xi,\xi;\bm z,\bm\omega)=I.
\label{eq:edge-principal-evolution}
\end{equation}
By \eqref{eq:edge-spectral-gap}, the backward evolution generated by the
diagonal part of \(\Gamma\) satisfies
\begin{equation}
\left\|
\exp\left(
-\int_\zeta^\xi
\operatorname{diag}\Gamma(u;\bm z,\bm\omega)\,\rd u
\right)
\right\|
\le
 e^{-c_\delta(\xi-\zeta)}.
\label{eq:edge-diagonal-evolution}
\end{equation}

By \eqref{eq:edge-Gamma-triangular}, every off-diagonal factor strictly
increases the total derivative order from its column to its row. Therefore,
as in the bulk expansion \eqref{eq:finite-variation-expansion}, the
variation-of-constants expansion terminates after at most \(n+m-1\)
off-diagonal factors.

We estimate the terms in this finite expansion. Consider a strictly increasing
chain of non-empty square-free pairs, with \(1\le r\le n+m-1\),
\[
(I_0,A_0)
\subsetneq
(I_1,A_1)
\subsetneq
\cdots
\subsetneq
(I_r,A_r),
\]
and transition times
$
\xi\ge\tau_1\ge\cdots\ge\tau_r\ge\zeta.
$
The diagonal factors appearing in the term associated with this chain are
\begin{align*}
&
\exp\left(
-\int_{\tau_1}^{\xi}
\Gamma_{I_0,A_0}(u;\bm z,\bm\omega)\,\rd u
\right)
\prod_{j=1}^{r-1}
\exp\left(
-\int_{\tau_{j+1}}^{\tau_j}
\Gamma_{I_j,A_j}(u;\bm z,\bm\omega)\,\rd u
\right)
\exp\left(
-\int_{\zeta}^{\tau_r}
\Gamma_{I_r,A_r}(u;\bm z,\bm\omega)\,\rd u
\right).
\end{align*}
Taking absolute values and using the telescoping identity, we obtain
\begin{align*}
&\exp\left(
-\int_\zeta^\xi
\Re\Gamma_{I_0,A_0}(u;\bm z,\bm\omega)\,\rd u
\right)
\prod_{j=1}^r
\exp\left(
-\int_\zeta^{\tau_j}
\Re\left[
\Gamma_{I_j,A_j}(u;\bm z,\bm\omega)
-
\Gamma_{I_{j-1},A_{j-1}}(u;\bm z,\bm\omega)
\right] \,\rd u
\right).
\end{align*}
By \eqref{eq:edge-Gamma-relative-bound}, the off-diagonal factor at
\(\tau_j\) is bounded by
\[
M\Re\left[
\Gamma_{I_j,A_j}(\tau_j;\bm z,\bm\omega)
-
\Gamma_{I_{j-1},A_{j-1}}(\tau_j;\bm z,\bm\omega)
\right].
\]
The real part in the last display is nonnegative by the angular estimate
proved in \Cref{p:direction-sector} and the diagonal formula
\eqref{eq:edge-mu-IA-statement}. Hence, after dropping the ordering
restriction on the transition times, the absolute value of the integral
associated with the chain is bounded by
\begin{align*}
&M^r
\exp\left(
-\int_\zeta^\xi
\Re\Gamma_{I_0,A_0}(u;\bm z,\bm\omega)\,\rd u
\right)
\prod_{j=1}^r
\int_\zeta^\xi
\Re\left[
\Gamma_{I_j,A_j}(\tau;\bm z,\bm\omega)
-
\Gamma_{I_{j-1},A_{j-1}}(\tau;\bm z,\bm\omega)
\right]
\\
&\hspace{28mm}\times
\exp\left(
-\int_\zeta^\tau
\Re\left[
\Gamma_{I_j,A_j}(u;\bm z,\bm\omega)
-
\Gamma_{I_{j-1},A_{j-1}}(u;\bm z,\bm\omega)
\right] \,\rd u
\right)\rd\tau.
\end{align*}
Each integral in the product is at most \(1\). Indeed, since the real
part in question is nonnegative, for every \(j=1,\dots,r\),
\begin{align*}
&\int_\zeta^\xi
\Re\left[
\Gamma_{I_j,A_j}(\tau;\bm z,\bm\omega)
-
\Gamma_{I_{j-1},A_{j-1}}(\tau;\bm z,\bm\omega)
\right]
\\
&\quad\times
\exp\left(
-\int_\zeta^\tau
\Re\left[
\Gamma_{I_j,A_j}(u;\bm z,\bm\omega)
-
\Gamma_{I_{j-1},A_{j-1}}(u;\bm z,\bm\omega)
\right] \,\rd u
\right)\rd\tau
\\
&=
1-
\exp\left(
-\int_\zeta^\xi
\Re\left[
\Gamma_{I_j,A_j}(u;\bm z,\bm\omega)
-
\Gamma_{I_{j-1},A_{j-1}}(u;\bm z,\bm\omega)
\right] \,\rd u
\right)
\le 1.
\end{align*}
Moreover, \eqref{eq:edge-spectral-gap} gives
\[
\exp\left(
-\int_\zeta^\xi
\Re\Gamma_{I_0,A_0}(u;\bm z,\bm\omega)\,\rd u
\right)
\le
 e^{-c_\delta(\xi-\zeta)}.
\]
There are only finitely many strictly increasing chains of square-free pairs.
Summing the preceding bounds and enlarging \(C\), we obtain
\begin{equation}
\|\mathcal E(\zeta,\xi;\bm z,\bm\omega)\|
\le
C e^{-c_\delta(\xi-\zeta)},
\qquad
\xi\ge\zeta\ge\zeta_0.
\label{eq:edge-propagator-est}
\end{equation}
Thus the full principal backward evolution has uniform exponential decay.

\medskip
\noindent
\emph{Step 2: Reduction to a Volterra equation.}

We impose the normalization
\[
U_0(\zeta)\to1,
\qquad
U_\ast(\zeta)\to0,
\qquad
\zeta\to\infty.
\]
Integrating \eqref{eq:edge-radial-top-thm} from \(\zeta\) to infinity gives
\begin{equation}
U_0(\zeta)
=
1+(\mathcal K U_\ast)(\zeta),
\qquad
(\mathcal K U_\ast)(\zeta)
:=
-\int_\zeta^\infty
\frac1\xi
B_{01}(\xi;\bm z,\bm\omega)U_\ast(\xi)\,\rd\xi.
\label{eq:edge-U0-volterra}
\end{equation}
Substituting this into \eqref{eq:edge-radial-bottom-thm} and using variation
of constants with the full triangular propagator \(\mathcal E\) gives
\begin{equation}
U_\ast=\mathcal T(U_\ast),
\label{eq:edge-volterra}
\end{equation}
where
\begin{align}
(\mathcal T U_\ast)(\zeta)
:={}&
-\int_\zeta^\infty
\mathcal E(\zeta,\xi;\bm z,\bm\omega)
\frac1\xi
\Bigl[
B_{10}(\xi;\bm z,\bm\omega)
\bigl(1+\mathcal K U_\ast(\xi)\bigr)
+
B_{11}(\xi;\bm z,\bm\omega)U_\ast(\xi)
\Bigr] \,\rd\xi.
\label{eq:edge-T-def}
\end{align}

\medskip
\noindent
\emph{Step 3: The fixed-point space and the basic estimates.}

Let \(\mathcal B_{\bm z}\) be the Banach space of continuous functions
\[
U_\ast:[\zeta_0,\infty)\longrightarrow\bC^{|\mathcal J^\ast|}
\]
with norm
\[
\|U_\ast\|_{\mathcal B_{\bm z}}
:=
\sup_{\zeta\ge\zeta_0}\zeta\|U_\ast(\zeta)\|_2.
\]
If \(U_\ast\in\mathcal B_{\bm z}\), then by
\eqref{eq:edge-U0-volterra},
\begin{align*}
|(\mathcal K U_\ast)(\zeta)|
&\le
M\int_\zeta^\infty
\frac{\|U_\ast(\xi)\|_2}{\xi}\,\rd\xi
\le
M\|U_\ast\|_{\mathcal B_{\bm z}}
\int_\zeta^\infty\xi^{-2}\,\rd\xi
=
\frac{M}{\zeta}\|U_\ast\|_{\mathcal B_{\bm z}}.
\end{align*}
Hence
\begin{equation}
|(\mathcal K U_\ast)(\zeta)|
\le
\frac{C}{\zeta}\|U_\ast\|_{\mathcal B_{\bm z}}.
\label{eq:edge-K-estimate}
\end{equation}

Using \eqref{eq:edge-propagator-est}, \eqref{eq:edge-T-def}, and
\eqref{eq:edge-K-estimate}, we have
\begin{align*}
\zeta\|(\mathcal T U_\ast)(\zeta)\|_2
&\le
M\zeta\int_\zeta^\infty
\|\mathcal E(\zeta,\xi;\bm z,\bm\omega)\|
\frac1\xi
\Bigl(
1+|(\mathcal K U_\ast)(\xi)|+\|U_\ast(\xi)\|_2
\Bigr)\,\rd\xi
\\
&\le
CM\zeta\int_\zeta^\infty
 e^{-c_\delta(\xi-\zeta)}
\Bigl(
\xi^{-1}
+
\|U_\ast\|_{\mathcal B_{\bm z}}\xi^{-2}
\Bigr)\,\rd\xi
\\
&\le
C_0+C_1\zeta_0^{-1}\|U_\ast\|_{\mathcal B_{\bm z}}.
\end{align*}
Therefore
\begin{equation}
\|\mathcal T U_\ast\|_{\mathcal B_{\bm z}}
\le
C_0+C_1\zeta_0^{-1}\|U_\ast\|_{\mathcal B_{\bm z}}.
\label{eq:edge-T-map}
\end{equation}

Similarly, for \(U_\ast,V_\ast\in\mathcal B_{\bm z}\),
\[
|(\mathcal K U_\ast)(\zeta)-(\mathcal K V_\ast)(\zeta)|
\le
\frac{C}{\zeta}
\|U_\ast-V_\ast\|_{\mathcal B_{\bm z}}.
\]
Hence
\begin{align*}
\zeta\|(\mathcal T U_\ast-\mathcal T V_\ast)(\zeta)\|_2
&\le
M\zeta\int_\zeta^\infty
\|\mathcal E(\zeta,\xi;\bm z,\bm\omega)\|
\frac1\xi
\Bigl(
|(\mathcal K U_\ast)(\xi)-(\mathcal K V_\ast)(\xi)|
+
\|U_\ast(\xi)-V_\ast(\xi)\|_2
\Bigr)\,\rd\xi
\\
&\le
CM\zeta\int_\zeta^\infty
 e^{-c_\delta(\xi-\zeta)}\xi^{-2}\,\rd\xi
\,\|U_\ast-V_\ast\|_{\mathcal B_{\bm z}}
\\
&\le
C_2\zeta_0^{-1}
\|U_\ast-V_\ast\|_{\mathcal B_{\bm z}}.
\end{align*}
Thus
\begin{equation}
\|\mathcal T U_\ast-\mathcal T V_\ast\|_{\mathcal B_{\bm z}}
\le
C_2\zeta_0^{-1}
\|U_\ast-V_\ast\|_{\mathcal B_{\bm z}}.
\label{eq:edge-T-contraction}
\end{equation}

\medskip
\noindent
\emph{Step 4: Existence, asymptotics, and raywise uniqueness.}

If \(R_0\), equivalently \(\zeta_0\), is sufficiently large, then
\(C_2\zeta_0^{-1}<1\), so by \eqref{eq:edge-T-contraction}
\(\mathcal T\) is a contraction on \(\mathcal B_{\bm z}\). Hence
\eqref{eq:edge-volterra} admits a unique fixed point
\(U_\ast\in\mathcal B_{\bm z}\). Defining \(U_0\) by
\eqref{eq:edge-U0-volterra}, we obtain a unique solution
\((U_0,U_\ast)\) of
\eqref{eq:edge-radial-top-thm}--\eqref{eq:edge-radial-bottom-thm}
satisfying
\[
U_0(\zeta)\to1,
\qquad
U_\ast(\zeta)\to0,
\qquad
\zeta\to\infty.
\]
Moreover, \eqref{eq:edge-K-estimate} and \eqref{eq:edge-T-map} imply
\begin{equation}
U_0(\zeta)=1+\OO(\zeta^{-1}),
\qquad
U_\ast(\zeta)=\OO(\zeta^{-1}),
\qquad
\zeta\ge\zeta_0.
\label{eq:edge-ray-asymptotics}
\end{equation}

Now define
\[
W(\bm z):=U_0(\zeta_0).
\]
Since \(\zeta_0=2R_0^{3/2}/3\), we have
\[
W(\bm z)
=
1+\OO(\zeta_0^{-1})
=
1+\OO\bigl(\mathsf R(\bm z)^{-3/2}\bigr).
\]
For every \((I,A)\in\mathcal J^\ast\),
\eqref{eq:edge-ray-asymptotics} also gives
\[
U_{I,A}(\zeta_0)
=
\OO\bigl(\mathsf R(\bm z)^{-3/2}\bigr).
\]
At this stage these are estimates for the components of the constructed
square-free jet. In Step 5 we verify that this jet is horizontal for the full
flat system; its components are then identified with the square-free
derivatives of \(W\), yielding the desired derivative estimates.

The same argument applied to the difference of two normalized solutions has
no constant term in \eqref{eq:edge-U0-volterra}. Therefore the corresponding
Volterra map is a strict contraction with zero forcing, and the difference
vanishes. This proves uniqueness along every admissible shifted ray.

\medskip
\noindent
\emph{Step 5: Holomorphic dependence and gluing on \(\sfC\).}

Fix \(\bm z_0\in\sfC\). Choose a fixed number \(R_0>0\), and shrink to a sufficiently small neighborhood
\(\fU\subset\sfC\) of \(\bm z_0\), so that
\[
R_0\le\sfR(\bm z)\le2R_0,
\qquad
\bm z\in\fU.
\]
For every \(\bm z\in\fU\), use the rays
\[
\bm z+(\lambda-R_0)\bm\omega,
\qquad
\lambda\ge R_0.
\]
The separation and phase estimates of \Cref{p:direction-sector} hold
uniformly on this family of rays. Since both \(R_0\) and \(\bm\omega\) are
fixed on \(\fU\), all coefficients of the Volterra equation depend
holomorphically on \(\bm z\), and its contraction estimate is uniform.
Hence the normalized fixed point depends holomorphically on \(\bm z\).

We have so far solved the first-order system only in the radial direction. To
check the remaining coordinate directions, fix one coordinate and compare two
ways of differentiating the constructed square-free jet: differentiation with
respect to the base point, and the corresponding coordinate equation of the
full square-free first-order system. Their difference satisfies the
homogeneous radial system by flatness. The estimates above hold uniformly for
nearby base points, so this difference has zero normalization at infinity.
Raywise uniqueness forces it to vanish. Thus the constructed jet is a
holomorphic horizontal section of the full flat system. Its empty component
is a holomorphic solution
\(W_{\bm\epsilon,\bm\eta}^{(\fU)}\) of the conjugated edge system, and
\[
W_{\bm\epsilon,\bm\eta}^{(\fU)}
=
1+\OO\bigl(\sfR^{-3/2}\bigr),
\qquad
\partial_{\bmt_I}\partial_{\bms_A}
W_{\bm\epsilon,\bm\eta}^{(\fU)}
=
\OO\bigl(\sfR^{-|I|-|A|-3/2}\bigr).
\]

The local solutions agree on overlaps. Indeed, all neighborhoods use the
same vector \(\bm\omega\). At a point \(\bm z\in\fU\cap\fV\), undo the
radial scaling and write both constructions on the same geometric ray
\(\bm z+r\bm\omega\), \(r\ge0\). They solve the same radial system and have
the same normalization at infinity. Raywise uniqueness therefore gives
\[
W_{\bm\epsilon,\bm\eta}^{(\fU)}
=
W_{\bm\epsilon,\bm\eta}^{(\fV)}
\qquad
\text{on }\fU\cap\fV.
\]
The local solutions glue to a holomorphic function
\(W_{\bm\epsilon,\bm\eta}\) on all of \(\sfC\). Setting
\[
F_{\bm\epsilon,\bm\eta}
:=
\Phi_{\bm\epsilon,\bm\eta}
W_{\bm\epsilon,\bm\eta}
\]
gives a holomorphic solution of the original edge system satisfying
\eqref{eq:edge-W-basic-est}--\eqref{eq:edge-jet-basic-est}.

\medskip
\noindent
\emph{Step 6: Linear independence.}

For a square-free pair \((I,A)\), the Airy asymptotics, the explicit
logarithmic derivatives of \(\Phi_{\bm\epsilon,\bm\eta}\), and
\eqref{eq:edge-W-basic-est}--\eqref{eq:edge-jet-basic-est} give, uniformly on
\(\sfC\),
\begin{align}\label{eq:edge-normalized-jet-matrix}
&\left(
\prod_{i\in I}\sqrt{t_i}
\prod_{a\in A}\frac{\beta}{2}\sqrt{s_a}
\right)^{-1}
\Phi_{\bm\epsilon,\bm\eta}^{-1}
\partial_{\bmt_I}\partial_{\bms_A}
F_{\bm\epsilon,\bm\eta}
=
\prod_{i\in I}\epsilon_i
\prod_{a\in A}\eta_a
+
\OO\bigl(D^{-3/2}\bigr).
\end{align}
The leading matrix in \eqref{eq:edge-normalized-jet-matrix}, with rows indexed
by \((I,A)\) and columns indexed by \((\bm\epsilon,\bm\eta)\), is the
character table
\[
\left(
\prod_{i\in I}\epsilon_i
\prod_{a\in A}\eta_a
\right)_{(I,A),(\bm\epsilon,\bm\eta)}
\]
of \(\{\pm1\}^{n+m}\). It is invertible. Increasing \(D\), if necessary,
the exact normalized jet matrix is also invertible. Therefore the
\(2^{n+m}\) solutions are linearly independent.
\end{proof}

\appendix

\section{Airy functions}
\label{a:Airy}

We recall some standard facts about the Airy function \(\Ai\). For real
\(x\), it is given by the conditionally convergent integral
\[
\Ai(x)
=
\frac{1}{\pi}
\lim_{R\to\infty}
\int_0^R
\cos\left(\frac{t^3}{3}+xt\right)\,\rd t .
\]
The function \(\Ai\) extends to an entire function on \(\bC\), and it solves
the Airy equation
\begin{align}\label{e:Airy_eq}
\Ai''(w)-w\Ai(w)=0 .
\end{align}
It is characterized by the decay condition
\[
\Ai(w)\to0
\qquad \text{as } w\to+\infty
\]
along the positive real axis. All zeros of \(\Ai\) are real and negative; we
write them as
\[
0>\fa_1>\fa_2>\fa_3>\cdots .
\]
It is known that the Airy zeros satisfy
$
\fa_i\sim -\left({3\pi i}/{2}\right)^{2/3}$.
More precisely, for every \(i\in\bN\), see for example
\cite[eqs. (9.9.6) and (9.9.18)]{NIST:DLMF},
\begin{equation}\label{e:airy-zero-location0}
\left|
\fa_i+\left(\frac{3\pi i}{2}\right)^{2/3}
\right|
\lesssim i^{-1/3}.
\end{equation}
We refer to \cite[Chapter 9]{NIST:DLMF} for further background on Airy
functions.

\subsection{Nevanlinna representation}

The logarithmic derivative
\[
-\frac{\Ai'(w)}{\Ai(w)}
\]
is a particle-generated Nevanlinna function. Its associated configuration is
the Airy-zero configuration
\[
\Gamma_{\Ai}:=\sum_{i=1}^{\infty}\delta_{\fa_i}.
\]
More precisely, one has the partial fraction expansion
\begin{equation}\label{eq:weairy}
-\frac{\Ai'(w)}{\Ai(w)}
=
\sum_{i=1}^{\infty}
\left(
\frac{1}{\fa_i-w}
-
\frac{1}{\fa_i}
\right)
-
\frac{\Ai'(0)}{\Ai(0)} .
\end{equation}
We shall also use the square-root asymptotic
\begin{equation}\label{e:aiasymp}
\left|
\frac{\Ai'(w)}{\Ai(w)}
+
\sqrt w
\right|
\lesssim |w|^{-1},
\end{equation}
valid for \(|w|\) sufficiently large and
\[
|\arg w|<\pi-|w|^{-9/7}.
\]
The estimates \eqref{eq:weairy} and \eqref{e:aiasymp} are proved in
\cite[Appendix A.1]{huang2024convergence}.

\subsection{Asymptotics for \(\Ai\)}

We recall the standard Airy asymptotic
\begin{align}\label{e:Ai_asymp}
\Ai(w)
=
\frac{1}{2\sqrt{\pi}}w^{-1/4}
\exp\left(-\frac23 w^{3/2}\right)
\left(1+\OO_\delta(|w|^{-3/2})\right),
\qquad |w|\to\infty,
\end{align}
uniformly for \(|\arg w|\le \pi-\delta\), see
\cite[Section 9.7(iv)]{NIST:DLMF} and \cite[Appendix B]{nemes2017error}. Here and below, fractional powers such as \(\sqrt w\), \(w^{3/2}\),
and \(w^{-1/4}\) are taken with the principal branch on $\bC\setminus \bR_{<0}$. 

Let
\[
\omega:=e^{-2\pi\ri/3}.
\]
If \(\alpha^3=1\), then \(\Ai(\alpha z)\) also solves the Airy equation
\eqref{e:Airy_eq}. In particular,
\[
\ri\omega \Ai(\omega z),
\qquad
-\ri\omega^2 \Ai(\omega^2 z)
\]
are also solutions of \eqref{e:Airy_eq}. 

Moreover, in the upper half-plane \(0<\arg z<\pi\), the solution
\(\ri\omega \Ai(\omega z)\) has the pure \(e^{2z^{3/2}/3}\) asymptotic
\begin{equation}\label{e:Bi-minus-iAi-upper}
\ri\omega \Ai(\omega z)
=
\frac{1}{2\sqrt{\pi}}z^{-1/4}
\exp\left(\frac23 z^{3/2}\right)
\left(1+\OO(|z|^{-3/2})\right),
\qquad |z|\to\infty.
\end{equation}
Similarly, in the lower half-plane \(-\pi<\arg z<0\),
\(-\ri\omega^2 \Ai(\omega^2 z)\) has the pure \(e^{2z^{3/2}/3}\)
asymptotic
\begin{equation}\label{e:Bi-plus-iAi-lower}
-\ri\omega^2 \Ai(\omega^2 z)
=
\frac{1}{2\sqrt{\pi}}z^{-1/4}
\exp\left(\frac23 z^{3/2}\right)
\left(1+\OO(|z|^{-3/2})\right),
\qquad |z|\to\infty.
\end{equation}

\section{Loop equations of $\beta$-ensembles}
\label{s:beta_ensemble}

Here we recall the notation, estimates, and loop equations for the
\(\beta\)-ensemble from \cite{bourgade2022optimal}. We use the notation
introduced in \Cref{s:beta_loop}. As in \cite[Section 2.1]{bourgade2022optimal},
\(m_V\) satisfies the fixed-point equation
\begin{align}\label{e:fix}
	m_V(z)^2+V'(z)m_V(z)+h(z)=0,
\end{align}
where
\[
	h(z)
	:=
	\int_A^B
	\frac{V'(\lambda)-V'(z)}{\lambda-z}\,
	\varrho_V(\lambda)\,\rd\lambda .
\]
Moreover,
\begin{align}\label{e:MV2}
	2m_V(z)+V'(z)=2r(z)\sqrt{z-A}\sqrt{z-B},
\end{align}
with the usual branch choice. In particular, at the right edge,
\begin{equation}
	\label{eq:right_edge_identity}
	2m_B+V'(B)=0.
\end{equation}

We denote the empirical Stieltjes transform as
\[
	\fs_N(z)
	:=
	\frac{1}{N}\sum_{j=1}^N\frac{1}{\lambda_j-z}.
\]

\begin{proposition}[Loop equation {\cite[Proposition A.3]{bourgade2022optimal}}]
For any \(p\geq 1\) and
\(z, z_2,z_3,\dots,z_p\in\bC\setminus\bR\),
\begin{equation}
	\label{eqn:loop1}
	\begin{aligned}
	\bE\left[
	\left(
		\fs_N(z)^2
		+\frac{1}{N}\left(\frac{2}{\beta}-1\right)\fs_N'(z)
		+\frac{1}{N}\sum_{k=1}^N
		\frac{V'(\lambda_k)}{\lambda_k-z}
	\right)\prod_{i=2}^{p}\fs_N(z_i)\right]\\
	+\frac{2}{\beta N^2}\bE\left[
	\sum_{j=2}^{p}
	\partial_{z_j}
	\frac{\fs_N(z)-\fs_N(z_j)}{z-z_j}
	\prod_{\substack{i=2\\ i\neq j}}^p \fs_N(z_i)
	\right]=0
	\end{aligned}
\end{equation}
\end{proposition}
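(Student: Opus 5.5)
The plan is to derive \eqref{eqn:loop1} by integration by parts in the density \eqref{eq:beta_ensembles_density}, that is, through an infinitesimal change of variables $\lambda_k\mapsto\lambda_k+\varepsilon\,(\lambda_k-z)^{-1}$ weighted by the product $\prod_{i\ge2}\fs_N(z_i)$. Concretely, fix $z,z_2,\dots,z_p\in\bC\setminus\bR$ and set $\Pi:=\prod_{i=2}^p\fs_N(z_i)$. For each $k$ consider
\[
\int \partial_{\lambda_k}\Bigl(\frac{1}{\lambda_k-z}\,\Pi\,\rho_N(\bm\lambda)\Bigr)\,\rd\bm\lambda=0,
\]
where $\rho_N$ is the unnormalized density. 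Boundary terms vanish because $1/(\lambda_k-z)$ and $\fs_N(z_i)$ are bounded by $|\Im z|^{-1}$, $|\Im z_i|^{-1}$, while $e^{-\beta N V/2}$ decays by \ref{assumption:V'_at_infinity}. Summing over $k$ and dividing by the normalization gives an exact identity. I would work with finite $N$, since no limits are needed.

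Next I would expand the derivative into four types of terms. (i) Differentiating $1/(\lambda_k-z)$ gives $-\sum_k(\lambda_k-z)^{-2}=-N\fs_N'(z)$. (ii) Differentiating the Vandermonde $\prod|\lambda_i-\lambda_j|^\beta$ gives $\beta\sum_{j\ne k}(\lambda_k-\lambda_j)^{-1}$. After multiplying by $(\lambda_k-z)^{-1}$, I would symmetrize over pairs $k\neq j$ using
\[
\frac{1}{(\lambda_k-\lambda_j)}\Bigl(\frac{1}{\lambda_k-z}-\frac{1}{\lambda_j-z}\Bigr)=-\frac{1}{(\lambda_k-z)(\lambda_j-z)}.
\]
This produces $\frac{\beta}{2}\bigl(N^2\fs_N(z)^2-N\fs_N'(z)\bigr)$ up to sign, where the diagonal correction comes from adding back $j=k$. (iii) Differentiating the potential gives $-\frac{\beta N}{2}\sum_k V'(\lambda_k)/(\lambda_k-z)$. (iv) Differentiating $\Pi$ uses $\partial_{\lambda_k}\fs_N(z_j)=-\frac1N(\lambda_k-z_j)^{-2}$. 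Combined with the factor $(\lambda_k-z)^{-1}$ and summed over $k$, this gives
\[
-\frac1N\sum_k\frac{1}{(\lambda_k-z)(\lambda_k-z_j)^2}=-\partial_{z_j}\frac{\fs_N(z)-\fs_N(z_j)}{z-z_j},
\]
by the identity recorded after \eqref{e:fest}.

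Finally I would collect the terms with signs tracked. The sum reads
\[
\bE\Bigl[\bigl(-N\fs_N'+\tfrac\beta2 N^2\fs_N^2-\tfrac\beta2 N\fs_N'-\tfrac{\beta N}{2}\textstyle\sum_k \tfrac{V'(\lambda_k)}{\lambda_k-z}\bigr)\Pi\Bigr]-\bE\Bigl[\textstyle\sum_j\partial_{z_j}\tfrac{\fs_N(z)-\fs_N(z_j)}{z-z_j}\Pi^{(j)}\Bigr]=0,
\]
where $\Pi^{(j)}$ is the product with the $j$-th factor removed. Dividing by $-\beta N^2/2$ should reproduce \eqref{eqn:loop1}. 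The main obstacle is bookkeeping the sign convention $1/(\lambda-z)$ against the paper's form, in particular checking that the $V'$ term enters with a plus sign. If the signs come out opposite, I would instead redo the computation with the vector field $(z-\lambda_k)^{-1}$, which flips the overall sign uniformly and so leaves the resulting identity unchanged. Integrability is routine: every factor is bounded except $V'(\lambda_k)$, which is integrable by \ref{assumption:V'_at_infinity}.
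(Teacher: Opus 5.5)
Your approach (integration by parts with the vector field $(\lambda_k-z)^{-1}$, weighted by $\prod_{i\ge 2}\fs_N(z_i)$) is the standard derivation behind the cited result; the paper gives no proof of its own and only refers to \cite{bourgade2022optimal}. Terms (i), (iii) and (iv) are computed correctly. The Vandermonde term (ii), however, enters your collected identity with the wrong sign. Your own symmetrization identity gives
\[
\beta\sum_{k\neq j}\frac{1}{(\lambda_k-\lambda_j)(\lambda_k-z)}
=-\frac{\beta}{2}\sum_{k\neq j}\frac{1}{(\lambda_k-z)(\lambda_j-z)}
=-\frac{\beta}{2}\bigl(N^2\fs_N(z)^2-N\fs_N'(z)\bigr).
\]
So the bracket should read $-N\fs_N'-\frac{\beta}{2}N^2\fs_N^2+\frac{\beta}{2}N\fs_N'-\frac{\beta N}{2}\sum_k\frac{V'(\lambda_k)}{\lambda_k-z}$. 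Multiplying by $-2/(\beta N^2)$ then gives exactly
\[
\fs_N^2+\frac1N\Bigl(\frac2\beta-1\Bigr)\fs_N'+\frac1N\sum_k\frac{V'(\lambda_k)}{\lambda_k-z},
\]
and the cross terms come with coefficient $+\frac{2}{\beta N^2}$. This is \eqref{eqn:loop1}.

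With the sign as you displayed it, the same division would instead produce $-\fs_N^2$ and the coefficient $\frac1N(\frac2\beta+1)$, so the claimed reproduction fails as written. Your fallback of switching to $(z-\lambda_k)^{-1}$ cannot fix this. It multiplies every term by $-1$ and leaves the relative signs unchanged. The remedy is simply to carry the sign from the identity you already wrote down.

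A minor justification point: it is not true that every factor other than $V'$ is bounded. Before symmetrization, the logarithmic derivative of the Vandermonde is singular, and for $0<\beta<1$ the integrand behaves like $|\lambda_k-\lambda_j|^{\beta-1}$. This is harmless, because that function is locally integrable and $x\mapsto|x|^\beta$ is absolutely continuous. Hence the one-variable integration by parts and Fubini go through, and after symmetrization all terms are bounded.
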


Introduce
\begin{equation}\label{eq:def_Delta}
	P_N(z):=\fs_N(z)^2+V'(z)\fs_N(z)+h(z),\quad
	\Delta_N(z)
	:=
	\frac{1}{N}\sum_{k=1}^N
	\frac{V'(\lambda_k)-V'(z)}{\lambda_k-z}
	-
	h(z).
\end{equation}
Then \eqref{eqn:loop1} becomes
\begin{equation}
	\label{eqn:loop2}
\bE\left[
		\left(
			P_N(z)
			+\frac{1}{N}\left(\frac{2}{\beta}-1\right)\fs_N'(z)
			+\Delta_N(z)
		\right)
		\prod_{i=2}^{p}\fs_N(z_i)
	\right]
	+\frac{2}{\beta N^2}
	\bE\left[
		\sum_{j=2}^{p}
		\partial_{z_j}
		\frac{\fs_N(z)-\fs_N(z_j)}{z-z_j}
		\prod_{\substack{i=2\\ i\neq j}}^p\fs_N(z_i)
	\right]=0.
\end{equation}

\begin{proposition}[ {\cite[Lemma 2.1 and Proposition 3.5]{bourgade2022optimal}}]
For any compact set $K\subset \bC$, there exists $C>0$ such that for any 
\(q\geq 1\),
\begin{equation}
	\label{eq:Delta_moment_bound}
	\bE\left[\abs{\Delta_N(z)}^{2q}\right]
	\leq
	\frac{(Cq)^{2q}}{N^{2q}}.
\end{equation}
Moreover, there exist $\eta_0>0$ and $C>0$ such that for any $q\geq 1$, 
any \(z=E+\ri\eta\) with \(0<\eta\leq \eta_0\) and
\(A-\eta_0\leq E\leq B+\eta_0\),
\begin{equation}
	\label{eq:local_law_beta}
	\bE\left[\abs{\fs_N(z)-m_V(z)}^q\right]
	\leq
	\frac{(Cq)^{q/2}}{(N\eta)^q}
	+
	\frac{(Cq)^{2q}}{(N\eta)^q}.
\end{equation}
\end{proposition}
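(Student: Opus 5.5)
The two estimates are proved in this order: first \eqref{eq:Delta_moment_bound}, which is an estimate on a single analytic linear statistic, and then \eqref{eq:local_law_beta}, obtained by a moment bootstrap in $\eta$ from the loop equation \eqref{eqn:loop2}. That bootstrap uses \eqref{eq:Delta_moment_bound} as a small forcing term.

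For \eqref{eq:Delta_moment_bound}, I first use assumptions \ref{assumption:V'_at_infinity} and \ref{assumption:large_dev} together with the standard large deviation estimate for the empirical measure. This confines all particles to a fixed neighbourhood $U$ of $[A,B]$ with probability $1-e^{-cN}$. On the complement, the growth condition on $V$ bounds the moments crudely, and these contributions are negligible.

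Next, write $g_z(\lambda):=(V'(\lambda)-V'(z))/(\lambda-z)$. By \ref{assumption:analytic}, $g_z$ is entire in $\lambda$ and jointly analytic in $(\lambda,z)$, so
\[
\Delta_N(z)=\frac1N\sum_k g_z(\lambda_k)-\int g_z\,\rd\mu_V .
\]
This is a centered linear statistic of an analytic function, bounded uniformly for $z\in K$ on a fixed complex neighbourhood of $U$.

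The key step is an exponential moment bound of Johansson type, obtained by tilting the potential:
\[
\bE\Bigl[\exp\Bigl(t\sum_k \bigl(f(\lambda_k)-\textstyle\int f\,\rd\mu_V\bigr)\Bigr)\Bigr]\le e^{C(1+t^2)},\qquad |t|\le c,
\]
for $f=\Re g_z$ and $f=\Im g_z$. To prove it, write the left side as a ratio $Z_N(V-\tfrac{2t}{\beta N}f)/Z_N(V)$. The perturbed potential stays one-cut regular for small $t$, and one expands $\log Z_N$ to second order; equivalently, one integrates the first loop equation in $t$ along the tilted family. An exponential tail at scale $1/N$ then gives $\bE|\Delta_N|^{2q}\le (Cq)^{2q}N^{-2q}$. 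Uniformity over $z\in K$ follows from analyticity in $z$, by Cauchy's formula on a slightly larger compact set.

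For \eqref{eq:local_law_beta}, set $Q:=\fs_N-m_V$. The fixed-point equation \eqref{e:fix} gives
\[
P_N=Q\,(Q+b),\qquad b:=2m_V+V'=2r\sqrt{z-A}\sqrt{z-B}.
\]
Following the proof of \eqref{eqn:LocalLaw}, apply \eqref{eqn:loop2} with the $z_i$ equal to $z$ or $\bar z$, arranged so that the test function is $F=(Q+b)^{q-1}Q^{q-1}\overline{(Q+b)^qQ^q}$ expanded as a polynomial in $\fs_N(z),\fs_N(\bar z)$. The error terms are controlled as follows:
\begin{itemize}
\item the term $\tfrac1N(\tfrac2\beta-1)\fs_N'$ is bounded by $\Im \fs_N/(N\eta)$ using \eqref{e:sbound};
\item the difference-quotient terms are bounded by $\Im \fs_N/(N\eta)^2$ using \eqref{e:fest};
\item the term $\Delta_N$ is handled by Hölder's inequality and \eqref{eq:Delta_moment_bound}.
\end{itemize}
Young's inequality then yields a self-consistent bound on $\bE|Q(Q+b)|^{2q}$ in terms of $(Cq/(N\eta)^2)^q\,\bE[(\Im m_V+|Q|)^q|\fs_N|^q]$.

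The main obstacle is converting this into a bound on $Q$ itself near the edges, where $|b|\asymp\sqrt{\kappa+\eta}$ degenerates. I would treat two regimes. Where $|Q|\ll|b|$, one has $|Q|\lesssim |Q(Q+b)|/|b|$; otherwise $|Q|\lesssim |Q(Q+b)|^{1/2}$. These two cases produce the two terms $(Cq)^{q/2}/(N\eta)^q$ and $(Cq)^{2q}/(N\eta)^q$ of the bound.

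To exclude the wrong root $Q\approx-b$, I would run a continuity argument in $\eta$, descending from $\eta=\eta_0$, where the bound holds by the large deviation input. The descent uses a lattice of $\eta$ values with multiplicative steps $1+N^{-10}$. At each step, the Lipschitz bound $|\partial_\eta \fs_N|\le \Im \fs_N/\eta$ transfers the estimate down to the next value, and the self-consistent inequality then closes it.
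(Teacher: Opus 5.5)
The paper does not prove this statement; it imports both bounds from \cite{bourgade2022optimal}, so there is no in-paper argument to compare with. Your route to \eqref{eq:Delta_moment_bound} is sound. $N\Delta_N(z)=\sum_k g_z(\lambda_k)-N\int g_z\,\rd\mu_V$ is a centred linear statistic of a function analytic near $[A,B]$, uniformly in $z\in K$. A Laplace transform bounded for $|t|\le c$ (Johansson; Shcherbina; Borot--Guionnet) gives exactly the sub-exponential form $(Cq)^{2q}/N^{2q}$. The architecture for \eqref{eq:local_law_beta} is also the right one: test the loop equation against $P_N^{q-1}\overline{P_N}^{\,q}$, use $P_N=Q(Q+b)$, and close with Young's inequality. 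However, your derivative term is misstated. Differentiating $F$ in $\fs_N(z)$ produces $2\fs_N+V'=2Q+b$, not $\fs_N$. With your factor $|\fs_N|^q\asymp1$, the good-root case only gives $|Q|\lesssim\sqrt q\,(\Im m_V)^{1/2}/(N\eta|b|)$. Inside the spectrum at distance $\kappa=|E-B|$ from the edge, this loses a factor $(\kappa+\eta)^{-1/4}$. It is the factor $|2Q+b|\le|b|+|Q|$, together with $\Im m_V\lesssim|b|$, that yields $(Cq)^{1/2}/(N\eta)$ uniformly. Mimicking \eqref{eqn:LocalLaw}, where $b=2\ri$ is nondegenerate, hides this. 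Also, the term $(Cq)^{2q}/(N\eta)^q$ comes from the $\Delta_N$ forcing, whose tails are only sub-exponential, not from the regime $|Q|\lesssim|P_N|^{1/2}$.

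The genuine gap is root selection. Inside the spectrum, and whenever $|E-B|\lesssim\eta$ (in particular at $\eta=\eta_0$ for every $E\in[A-\eta_0,B+\eta_0]$), no continuity argument is needed. There $\Im(m_V-b)=-\Im m_V-\Im V'(z)\approx-\Im m_V\asymp-|b|$, so $\Im\fs_N\ge0$ excludes $Q\approx-b$ deterministically, just as $|s+\sqrt w|\ge\Im\sqrt w$ is used for \eqref{eqn:edge}. The difficulty is $E-B\gg\eta$, where both roots are nearly real, and there your scheme cannot give the sharp $q$-dependence. Suppose you transfer the moment bound, with constant $C_0$, from one lattice point to the next. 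Then the wrong-root event $W=\{|Q+b|<|b|/2\}$ is controlled only through $\bP(|Q|>|b|/2)$. The resulting bound $(2|b|)^q\inf_p(C_0p)^{p/2}\bigl(2/(N\eta|b|)\bigr)^p$ on $\bE[|Q|^q\bm 1_W]$ reaches $(16C_0q)^{q/2}/(N\eta)^q$ when $N\eta|b|\asymp\sqrt{C_0q}$. So the induction does not close, and the loss compounds over $\sim N^{10}\log N$ steps. Suppose instead you argue pathwise with a union bound over the lattice. The factor $N^{10}$ then excludes $W$ only when $x=N\eta|b|\gtrsim\sqrt{\log N}$. In the window $1\lesssim x\lesssim\sqrt{\log N}$ (e.g.\ $\eta=N^{-2/3}$, $E-B\asymp(\log N)N^{-2/3}$), an unexcluded $W$ allows $\bE|Q|^2\asymp(\log N)/(N\eta)^2$, contradicting the claim at fixed $q$.

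What you need is $\bP(W)\lesssim e^{-c(N\eta|b|)^2}$ (plus a $\Delta_N$-type tail); this suffices because $|Q|\le2|b|$ on $W$. It follows from three ingredients. First, on $\{|P_N|\le\theta^2|b|^2\}$ the dichotomy "$|Q|\le\theta|b|$ or $|Q+b|\le\theta|b|$" is deterministic. The tail of $|P_N|$ at that level can be bounded without choosing a root, by inserting $|Q|\le2|b|+2|P_N|/|b|$ into the self-consistent inequality. Second, replace the $N^{-10}$ lattice by a geometric one with $\OO(1)$ points per dyadic scale. This suffices because $|\partial_\eta\fs_N|\le\Im\fs_N/\eta\lesssim(|b|+|Q|)/\eta$ moves $Q$ by only $\OO(\delta|b|)$ between neighbouring points while $Q$ is on the good root. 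The failure probabilities $e^{-c(N\eta_j|b_j|)^2}$ then sum geometrically. Third, anchor at $\eta=\eta_0$ through the imaginary-part separation above. Finally, the large deviation principle concentrates $\fs_N$ only at scale $N^{-1/2}$ up to logarithms. It can select the root at $\eta_0$, but the $1/(N\eta_0)$ rate must still come from the self-consistent inequality.
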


\subsection{Bulk loop equations for $\beta$-ensembles}

\begin{proof}[Proof of \Cref{p:bulk-loop-beta}]
Set
\begin{align}\label{e:defz}
	z=E+\frac{w}{N\pi\varrho_E},
	\qquad
	z_j=E+\frac{w_j}{N\pi\varrho_E}.
\end{align}
By linearity of the loop equation, we may replace
$
	\prod_{j=2}^p \fs_N(z_j)
$
by
$
	\prod_{j=2}^p
	(
		\fs_N(z_j)+V'(E)/2).
$
Starting from the loop equation in the form \eqref{eqn:loop2}, we therefore get
\begin{align}
	\label{e:bulk-loop-before-scaling-new}
&\bE\left[
\left(
P_N(z)
+\frac1N\left(\frac2\beta-1\right)\partial_z\fs_N(z)
+\Delta_N(z)
\right)
\prod_{j=2}^{p}
\left(
	\fs_N(z_j)+\frac{V'(E)}{2}
\right)
\right]
\notag\\
&\quad
+
\frac{2}{\beta N^2}
\bE\left[
\sum_{j=2}^{p}
\partial_{z_j}
\frac{\fs_N(z)-\fs_N(z_j)}{z-z_j}
\prod_{\substack{i=2\\ i\neq j}}^p
\left(
	\fs_N(z_i)+\frac{V'(E)}{2}
\right)
\right]=0.
\end{align}

We recall from \eqref{e:defsN0}
\begin{align}\label{e:defsN2}
	\fs_N(z)+\frac{V'(E)}{2}
	=
	\pi\varrho_E s_N(w).
\end{align}

We divide \eqref{e:bulk-loop-before-scaling-new} by
$(\pi\varrho_E)^{p+2}$. First consider the derivative term. Using the relation \eqref{e:defz} and \eqref{e:defsN2}
we have
\[
	\partial_w s_N(w)
	=
	\frac{1}{\pi\varrho_E}
	\frac{1}{N\pi\varrho_E}
	\partial_z\fs_N(z)
	=
	\frac{1}{N(\pi\varrho_E)^2}
	\partial_z\fs_N(z).
\]
Therefore
\[
	\frac{1}{(\pi\varrho_E)^2}
	\frac1N
	\left(\frac2\beta-1\right)
	\partial_z\fs_N(z)
	=
	\frac{2-\beta}{\beta}
	\partial_w s_N(w).
\]

Next consider the second loop term. Using the relation \eqref{e:defz} and \eqref{e:defsN2}, we get
\[
	\frac{\fs_N(z)-\fs_N(z_j)}{z-z_j}
	=
	N(\pi\varrho_E)^2
	\frac{s_N(w)-s_N(w_j)}{w-w_j}.
\]
We further notice that
$
	\partial_{z_j}
	=
	N\pi\varrho_E\,\partial_{w_j},
$
and it follows 
\[
	\frac{1}{N^2}
	\partial_{z_j}
	\frac{\fs_N(z)-\fs_N(z_j)}{z-z_j}
	=
	(\pi\varrho_E)^3
	\partial_{w_j}
	\frac{s_N(w)-s_N(w_j)}{w-w_j}.
\]
After division by \((\pi\varrho_E)^{p+2}\), and using \eqref{e:defsN2}
\[
	\prod_{\substack{i=2\\ i\neq j}}^p
	\left(
		\fs_N(z_i)+\frac{V'(E)}{2}
	\right)
	=
	(\pi\varrho_E)^{p-1}
	\prod_{\substack{i=2\\ i\neq j}}^ps_N(w_i),
\]
the second loop term becomes exactly
\[
	\partial_{w_j}
	\frac{s_N(w)-s_N(w_j)}{w-w_j}
	\prod_{\substack{\ell=2\\ \ell\neq j}}^p s_N(w_\ell).
\]

It remains to identify the leading part of \(P_N(z)\). Recall from \eqref{eq:def_Delta}
\[
	P_N(z)=\fs_N(z)^2+V'(z)\fs_N(z)+h(z).
\]
Using \eqref{e:defsN2}, 
we have
\begin{align}
	P_N(z)
	=
	(\pi\varrho_E)^2s_N(w)^2
	+
	\bigl(V'(z)-V'(E)\bigr)\pi\varrho_E s_N(w)
	+
	h(z)
	+\frac{V'(E)^2}{4}
	-\frac12 V'(z)V'(E).
	\label{e:bulk-P-expansion-new}
\end{align}
At the bulk point \(E\), the boundary value of \(m_V\) satisfies
\begin{align}\label{e:MV}
	m_V(E+\ri0)
	=
	-\frac12 V'(E)+\ri\pi\varrho_E.
\end{align}
Using the fixed point equation \eqref{e:fix}
\[
	m_V(z)^2+V'(z)m_V(z)+h(z)=0
\]
at \(E+\ri0\), together with \eqref{e:MV}, we obtain
\[
	h(E)
	=
	\frac{V'(E)^2}{4}
	+
	\pi^2\varrho_E^2.
\]
Since \(V\) and \(h\) are analytic near \(E\), and $|z-E|=\OO_K(1/N)$,
we have uniformly for \(w\in K\),
\[
	V'(z)-V'(E)=\OO_K(N^{-1}),
	\qquad
	h(z)-h(E)=\OO_K(N^{-1}).
\]
Thus \eqref{e:bulk-P-expansion-new} gives
\[
	P_N(z)
	=
	(\pi\varrho_E)^2
	\bigl(s_N(w)^2+1\bigr)
	+
	\OO_K(N^{-1})
	\bigl(1+|s_N(w)|\bigr).
\]
Equivalently,
\begin{equation}
	\label{e:bulk-P-final-new}
	\frac{P_N(z)}{(\pi\varrho_E)^2}
	=
	s_N(w)^2+1
	+
	\OO_K(N^{-1})
	\bigl(1+|s_N(w)|\bigr).
\end{equation}

We now record the moment bounds needed to absorb the error terms. The
local law \eqref{eq:local_law_beta} gives, for every fixed \(q\ge1\),
\begin{align}\label{e:smomentbb}
	\sup_{w\in K}
	\bE\left[
		|s_N(w)|^q
	\right]
	\leq C_{q,K}.
\end{align}
For points in the lower half-plane, this follows by conjugation. Using \eqref{e:sbound} and \eqref{e:fest}, the derivative terms have uniformly bounded moments as well. For \(K\subset\bC\setminus\bR\),
\[
	\abs{\partial_w s_N(w)}
	\leq
	C_K\abs{s_N(w)},\quad 
	\left|
	\partial_{w_j}
	\frac{s_N(w)-s_N(w_j)}{w-w_j}
	\right|
	\leq
	C_K\bigl(\abs{s_N(w)}+\abs{s_N(w_j)}\bigr).
\]

Finally, the \(\Delta_N\)-term is negligible. In fact \eqref{eq:Delta_moment_bound}, \eqref{e:smomentbb} and 
Hölder's inequality gives,
\[
	\sup_{w,w_2,\dots,w_p\in K}
	\left|
	\bE\left[
		\frac{\Delta_N(z)}{(\pi\varrho_E)^2}
		\prod_{j=2}^p s_N(w_j)
	\right]
	\right|
	=
	\OO(N^{-1})
	=
	\oo_N(1).
\]

Substituting \eqref{e:bulk-P-final-new} into the scaled form of
\eqref{e:bulk-loop-before-scaling-new}, and using the preceding moment bounds,
we obtain
\begin{align*}
	\bE\left[
		\left(
			\frac{2-\beta}{\beta}\partial_w s_N(w)
			+s_N(w)^2+1
		\right)
		\prod_{j=2}^p s_N(w_j)
	\right]
	+
	\frac{2}{\beta}
	\bE\left[
		\sum_{j=2}^p
		\partial_{w_j}
		\frac{s_N(w)-s_N(w_j)}{w-w_j}
		\prod_{\substack{i=2\\ i\neq j}}^ps_N(w_i)
	\right]
	=
	\oo_N(1),
\end{align*}
uniformly for \(w,w_2,w_3,\dots,w_p\in K\). This is
\eqref{e:approx-loopeq2-bulk}.
\end{proof}

\subsection{Edge loop equations for $\beta$-ensembles}

\begin{proof}[Proof of \Cref{p:edge-loop-beta}]
We write
\[
	z=B+\gamma N^{-2/3}w,
	\qquad
	z_j=B+\gamma N^{-2/3}w_j.
\]

First we record the deterministic edge expansion of \(m_V\). We recall from \eqref{e:MV2} and \eqref{eq:right_edge_identity}
\[
	2m_V(z)+V'(z)=2r(z)\sqrt{z-A}\sqrt{z-B}, \quad 2m_B+V'(B)=0.
\]
We have, uniformly for \(w\in K\),
\[
	m_V(B+\gamma N^{-2/3}w)-m_B
	=
	r(B)\sqrt{B-A}\,\gamma^{1/2}N^{-1/3}\sqrt w
	+
	\OO_K(N^{-2/3}).
\]
The branch of \(\sqrt w\) is the principal branch. By the definition of
\(\gamma\) in \eqref{e:defgamma},
\[
	\gamma^{3/2}r(B)\sqrt{B-A}=1,
\]
and therefore
\begin{equation}
	\label{e:edge-m-expansion}
	\gamma N^{1/3}
	\left(
		m_V(B+\gamma N^{-2/3}w)-m_B
	\right)
	=
	\sqrt w+\OO_K(N^{-1/3}).
\end{equation}
Consequently, we can rewrite \eqref{e:defsN3} as
\begin{align}\begin{split}
	\label{e:edge-centering-relation}
	&s_N(w)
	=
	\gamma N^{1/3}
	\left(
		\fs_N(B+\gamma N^{-2/3}w)-m_B
	\right),\\
	&s_N(w)-\sqrt w
	=
	\gamma N^{1/3}
	\left(
		\fs_N(B+\gamma N^{-2/3}w)
		-
		m_V(B+\gamma N^{-2/3}w)
	\right)
	+
	\OO_K(N^{-1/3}).
\end{split}\end{align}

By the local law \eqref{eq:local_law_beta}, for every fixed \(q\geq1\),
\begin{equation}
	\label{e:scaled-s-moment}
	\sup_{w\in K}
	\bE\left[\abs{s_N(w)}^q\right]
	\leq
	C_{q,K}.
\end{equation}

We now apply the loop equation. By linearity, \eqref{eqn:loop2} remains valid
when
$
	\prod_{j=z}^p \fs_N(z_j)
$
is replaced by the centered product
$
	\prod_{j=2}^p \bigl(\fs_N(z_j)-m_B\bigr)$.
Thus,
\begin{align}
	\label{e:unscaled-loop-centered}
	&\bE\left[
		\left(
			P_N(z)
			+\frac{1}{N}\left(\frac{2}{\beta}-1\right)\partial_z\fs_N(z)
			+\Delta_N(z)
		\right)
		\prod_{j=2}^p
		\bigl(\fs_N(z_j)-m_B\bigr)
	\right]
	\notag\\
	&\qquad
	+
	\frac{2}{\beta N^2}
	\bE\left[
		\sum_{j=2}^p
		\partial_{z_j}
		\frac{\fs_N(z)-\fs_N(z_j)}{z-z_j}
		\prod_{\substack{i=2\\ i\neq j}}^p
		\bigl(\fs_N(z_i)-m_B\bigr)
	\right]=0.
\end{align}

Substituting \(z=B+\gamma N^{-2/3}w\) and
\(z_j=B+\gamma N^{-2/3}w_j\), and multiplying
\eqref{e:unscaled-loop-centered} by
$
	\bigl(\gamma N^{1/3}\bigr)^{p+2}$,
we obtain
\begin{align}
	\label{e:scaled-loop-before-expansion}
	&\bE\left[
		\left(
			\gamma^2N^{2/3}P_N(z)
			+
			\left(\frac{2}{\beta}-1\right)\partial_w s_N(w)
			+
			\gamma^2N^{2/3}\Delta_N(z)
		\right)
		\prod_{j=2}^p s_N(w_j)
	\right]
	\notag\\
	&\qquad
	+
	\frac{2}{\beta}
	\bE\left[
		\sum_{j=2}^p
		\partial_{w_j}
		\frac{s_N(w)-s_N(w_j)}{w-w_j}
		\prod_{\substack{i=2\\ i\neq j}}^p s_N(w_i)
	\right]=0.
\end{align}
Indeed, by \eqref{e:edge-centering-relation}
\[
	\partial_w s_N(w)
	=
	\gamma^2N^{-1/3}\partial_z\fs_N(z)
	=
	\frac{\gamma^2N^{2/3}}{N}\partial_z\fs_N(z),
\]
and the same change of variables gives the second derivative term in
\eqref{e:scaled-loop-before-expansion}.

It remains to identify the leading part of
\(\gamma^2N^{2/3}P_N(z)\). We recall from \eqref{eq:def_Delta} and \eqref{e:edge-centering-relation}
\[
	P_N(z)=\fs_N(z)^2+V'(z)\fs_N(z)+h(z),\quad \fs_N(z)=m_B+\frac{s_N(w)}{\gamma N^{1/3}},
\]
then
\begin{align}
	\label{e:scaled-P-expansion}
	\gamma^2N^{2/3}P_N(z)
	=
	s_N(w)^2
	+
	\gamma N^{1/3}\bigl(2m_B+V'(z)\bigr)s_N(w)
	+
	\gamma^2N^{2/3}
	\bigl(m_B^2+V'(z)m_B+h(z)\bigr).
\end{align}
 By \eqref{eq:right_edge_identity},
\[
	2m_B+V'(z)
	=
	V'(B+\gamma N^{-2/3}w)-V'(B)
	=
	\OO_K(N^{-2/3}),
\]
and hence the middle term in \eqref{e:scaled-P-expansion} is negligible,
\begin{equation}
	\label{e:middle-term-bound}
	\gamma N^{1/3}\bigl(2m_B+V'(z)\bigr)
	=
	\OO_K(N^{-1/3}).
\end{equation}

For the final term in \eqref{e:scaled-P-expansion}, use the fixed point equation
for \(m_V\) in \eqref{e:fix}:
\[
	m_V(z)^2+V'(z)m_V(z)+h(z)=0.
\]
Subtracting the expression at \(m_B\) gives
\[
	m_B^2+V'(z)m_B+h(z)
	=
	-\bigl(2m_B+V'(z)\bigr)
	\bigl(m_V(z)-m_B\bigr)
	-
	\bigl(m_V(z)-m_B\bigr)^2.
\]
Therefore, using \eqref{e:edge-m-expansion} and
\eqref{e:middle-term-bound},
\begin{align}
	\label{e:edge-drift}
	\gamma^2N^{2/3}
	\bigl(m_B^2+V'(z)m_B+h(z)\bigr)
	=
	-w+\OO_K(N^{-1/3}).
\end{align}

Combining \eqref{e:scaled-P-expansion}, \eqref{e:middle-term-bound}, and
\eqref{e:edge-drift}, we have
\begin{equation}
	\label{e:scaled-P-final}
	\gamma^2N^{2/3}P_N(z)
	=
	s_N(w)^2-w
	+
	\OO_K(N^{-1/3})\bigl(1+\abs{s_N(w)}\bigr).
\end{equation}

Using \eqref{e:sbound} and \eqref{e:fest},  the derivative terms have uniformly bounded moments as well. For any compact set \(K\subset\bC\setminus\bR\),
\[
	\abs{\partial_w s_N(w)}
	\leq
	C_K\abs{s_N(w)},\quad 
	\left|
	\partial_{w_j}
	\frac{s_N(w)-s_N(w_j)}{w-w_j}
	\right|
	\leq
	C_K\bigl(\abs{s_N(w)}+\abs{s_N(w_j)}\bigr).
\]
Together with \eqref{e:scaled-s-moment} and Hölder's inequality, this gives
uniform moment bounds for all random variables appearing in
\eqref{e:scaled-loop-before-expansion}.

Finally, the \(\Delta_N\)-term is negligible. From
\eqref{eq:Delta_moment_bound},
\[
	\bE\left[\abs{\Delta_N(z)}^{2q}\right]
	\leq
	\frac{(Cq)^{2q}}{N^{2q}},
\]
uniformly for \(z=B+\gamma N^{-2/3}w\), \(w\in K\). Hence, by Hölder's
inequality and \eqref{e:scaled-s-moment},
\[
	\sup_{w,w_2,\dots,w_p\in K}
	\left|
	\bE\left[
		\gamma^2N^{2/3}\Delta_N(z)
		\prod_{j=2}^p s_N(w_j)
	\right]
	\right|
	=
	\OO(N^{-1/3})
	=
	\oo_N(1).
\]

Substituting \eqref{e:scaled-P-final} into
\eqref{e:scaled-loop-before-expansion}, and using the preceding moment bounds,
we obtain, uniformly for \(w,w_2, w_3\dots,w_p\in K\),
\begin{align*}
	\bE\left[
		\left(
			\frac{2-\beta}{\beta}\partial_w s_N(w)
			+s_N(w)^2
			-w
		\right)
		\prod_{j=2}^p s_N(w_j)
	\right]
	+
	\frac{2}{\beta}
	\bE\left[
		\sum_{j=2}^p
		\partial_{w_j}
		\frac{s_N(w)-s_N(w_j)}{w-w_j}
		\prod_{\substack{i=2\\ i\neq j}}^ps_N(w_i)
	\right]
	=
	\oo_N(1).
\end{align*}
This is \eqref{e:edge-loop-from-beta}.
\end{proof}

\section{Proof of the edge local law}\label{s:asymp_edge}

In this section, we prove \Cref{p:edge_concentration}. The proof is parallel
to the bulk case. The main difference from the bulk case is that the edge loop equation is centered
around the algebraic relation \(s(w)^2=w\), rather than \(s(w)^2=-1\). Thus the
natural quantity to estimate is
\[
P(w):=s(w)^2-w,
\]
and one must use a stability estimate to convert moment bounds on \(P(w)\) into
bounds on \(s(w)-\sqrt w\). 

\subsection{Proof of \Cref{eqn:edge} and \Cref{eqn:edge2}}

We start with a deterministic stability estimate.

\begin{lemma}\label{lem:edge-stability}
Let
\begin{align}\label{e:defP2}
P(w):=s(w)^2-w .
\end{align}
Then, with the principal branch of \(\sqrt w\),
\begin{equation}\label{eq:edge-stability}
\min\{|s(w)-\sqrt w|,\ |s(w)+\sqrt w|\}
\le
\frac{|P(w)|}{\sqrt{|w|}}\wedge |P(w)|^{1/2}.
\end{equation}
\end{lemma}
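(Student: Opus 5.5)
The plan is to factor $P(w)=s(w)^2-w=(s(w)-\sqrt w)(s(w)+\sqrt w)$, which is a purely algebraic identity valid for any complex number $s(w)$. Write $a:=s(w)-\sqrt w$ and $b:=s(w)+\sqrt w$, so that $ab=P(w)$ and $b-a=2\sqrt w$. The claim is that $\min\{|a|,|b|\}$ is bounded by each of $|P(w)|^{1/2}$ and $|P(w)|/\sqrt{|w|}$. Both bounds are elementary, and the Nevanlinna property of $s$ is not needed.

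For the first bound, note that $\min\{|a|,|b|\}^2\le |a||b|=|P(w)|$. Taking square roots gives $\min\{|a|,|b|\}\le |P(w)|^{1/2}$.

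For the second bound, use the triangle inequality to get $\max\{|a|,|b|\}\ge \tfrac12|b-a|=|\sqrt w|=\sqrt{|w|}$. Then
\[
\min\{|a|,|b|\}=\frac{|a||b|}{\max\{|a|,|b|\}}\le \frac{|P(w)|}{\sqrt{|w|}}.
\]
If $a=b=0$, then $w=0$ and both sides vanish, so this case is trivial. Combining the two bounds gives the minimum, which is \eqref{eq:edge-stability}.

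No step is a real obstacle. The one point that needs care is that the principal branch of $\sqrt w$ plays no role here beyond fixing one of the two roots; the choice only matters later, when one must decide which of $|s-\sqrt w|$ and $|s+\sqrt w|$ attains the minimum. I expect that later step to use $\Im[s(w)]\ge 0$ together with $\Im[\sqrt w]>0$ for $w\in\bC_+$, which should give $|s(w)-\sqrt w|\lesssim |s(w)+\sqrt w|\cdot(|\sqrt w|/\Im[\sqrt w])$. That is the origin of the factor $|w|/(\Im[\sqrt w])^2$ in \eqref{eqn:edge}, but it is not part of this lemma.
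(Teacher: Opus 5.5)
Your proof is correct and is the paper's argument: you factor $P(w)=(s(w)-\sqrt w)(s(w)+\sqrt w)$, bound $\max\{|s(w)-\sqrt w|,|s(w)+\sqrt w|\}\ge\sqrt{|w|}$ by the triangle inequality to get the first estimate, and use $\min\le\sqrt{|s(w)-\sqrt w|\,|s(w)+\sqrt w|}$ for the second. One small point: the degenerate case is vacuous because $w\in\bC_+$ forces $w\neq0$, so you need not claim that ``both sides vanish'' there ($|P(w)|/\sqrt{|w|}$ is not even defined at $w=0$).
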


\begin{proof}
Since
\[
P(w)=\bigl(s(w)-\sqrt w\bigr)\bigl(s(w)+\sqrt w\bigr),
\]
we have
\[
\max\{|s(w)-\sqrt w|,\ |s(w)+\sqrt w|\}
\ge
\frac12\left|(s(w)+\sqrt w)-(s(w)-\sqrt w)\right|
=
|\sqrt w|
=
\sqrt{|w|}.
\]
This gives
\[
\min\{|s(w)-\sqrt w|,\ |s(w)+\sqrt w|\}
\le
\frac{|P(w)|}{\sqrt{|w|}}.
\]
The second bound follows from
\[
\min\{|s(w)-\sqrt w|,\ |s(w)+\sqrt w|\}
\le
\sqrt{|s(w)-\sqrt w|\,|s(w)+\sqrt w|}
=
|P(w)|^{1/2}.
\]
\end{proof}

\begin{proof}[Proof of \eqref{eqn:edge} and \eqref{eqn:edge2}]
Throughout the proof, \(C\) denotes a constant depending only on \(\beta\), whose
value may change from line to line. Recall \(P(w)\) from \eqref{e:defP2}. By the
edge loop equation, exactly as in the bulk case, we have
\begin{equation}\label{eq:edge-Xq-second}
\mathbb E\bigl[|P(w)|^{2q}\bigr]
\le
\left(\frac{Cq}{\eta^2}\right)^q
\mathbb E\bigl[\Im [s(w)]^q |s(w)|^q\bigr],
\qquad w=E+\ri\eta .
\end{equation}

We first derive a general moment bound for \(P(w)\). Since \(s\) is a
Nevanlinna function, \(\Im[s(w)]\ge0\), and hence \(\Im[s(w)]\le |s(w)|\).
Therefore,
\begin{align}\label{e:Pmoment}
\mathbb E\bigl[|P(w)|^{2q}\bigr]
\le
\left(\frac{Cq}{\eta^2}\right)^q
\mathbb E\bigl[|s(w)|^{2q}\bigr].
\end{align}
Using \(s(w)^2=w+P(w)\), we get
\begin{align}\label{e:smoment}
|s(w)|^{2q}=|w+P(w)|^q
\le C^q\bigl(|w|^q+|P(w)|^q\bigr).
\end{align}
Thus \eqref{e:Pmoment} and \eqref{e:smoment} together imply
\[
\mathbb E\bigl[|P(w)|^{2q}\bigr]
\le
\left(\frac{Cq}{\eta^2}\right)^q |w|^q
+
\left(\frac{Cq}{\eta^2}\right)^q
\mathbb E\bigl[|P(w)|^q\bigr]\leq \left(\frac{Cq}{\eta^2}\right)^q |w|^q
+
\left(\frac{Cq}{\eta^2}\right)^q
\mathbb E\bigl[|P(w)|^{2q}\bigr]^{1/2}.
\]
Solving the above quadratic inequality yields
\begin{equation}\label{eq:edge-P-moment}
\mathbb E\bigl[|P(w)|^{2q}\bigr]
\le
\left(\frac{Cq}{\eta^2}\right)^{2q}
+
\left(\frac{Cq}{\eta^2}\right)^q |w|^q .
\end{equation}

We now prove \eqref{eqn:edge}. Since \(\Im[s(w)]\ge0\), we have
\[
|s(w)+\sqrt w|\ge \Im[s(w)+\sqrt w]\ge \Im[\sqrt w].
\]
Consequently,
\[
|s(w)-\sqrt w|
\le
|s(w)+\sqrt w|+2|\sqrt w|
\le
C\frac{\sqrt{|w|}}{\Im[\sqrt w]} |s(w)+\sqrt w|.
\]
Therefore the stability estimate \eqref{eq:edge-stability} gives,
\begin{equation}\label{eq:edge-branch-stability}
|s(w)-\sqrt w|
\le
C\frac{\sqrt{|w|}}{\Im[\sqrt w]}
\min\{|s(w)-\sqrt w|,\ |s(w)+\sqrt w|\}
\le
C\frac{\sqrt{|w|}}{\Im[\sqrt w]}\left(
\frac{|P(w)|}{\sqrt{|w|}}\wedge |P(w)|^{1/2}
\right).
\end{equation}

If \(|w|\ge q/\eta^2\), then by \eqref{eq:edge-branch-stability} and
\eqref{eq:edge-P-moment},
\begin{align*}
\mathbb E\bigl[|s(w)-\sqrt w|^{2q}\bigr]\le
\frac{C^q}{(\Im[\sqrt{w}])^{2q}}
\mathbb E\bigl[|P(w)|^{2q}\bigr]  \le
\frac{C^q}{(\Im[\sqrt{w}])^{2q}}
\left[
\left(\frac{Cq}{\eta^2}\right)^{2q}
+
\left(\frac{Cq}{\eta^2}\right)^q |w|^q
\right] \le
\frac{(Cq)^q}{\eta^{2q}}
\frac{|w|^q}{(\Im[\sqrt{w}])^{2q}} .
\end{align*}
If \(|w|<q/\eta^2\), then again by \eqref{eq:edge-branch-stability} and
\eqref{eq:edge-P-moment},
\begin{align*}
\mathbb E\bigl[|s(w)-\sqrt w|^{2q}\bigr]
&\le
C^q \frac{|w|^q}{(\Im[\sqrt{w}])^{2q}}
\mathbb E\bigl[|P(w)|^q\bigr]  \le
C^q \frac{|w|^q}{(\Im[\sqrt{w}])^{2q}}
\mathbb E\bigl[|P(w)|^{2q}\bigr]^{1/2} \\
&\le
\frac{|w|^q}{(\Im[\sqrt{w}])^{2q}}
\left[
\left(\frac{Cq}{\eta^2}\right)^q
+
\left(\frac{Cq}{\eta^2}\right)^{q/2}|w|^{q/2}
\right] \le
\frac{|w|^q}{(\Im[\sqrt{w}])^{2q}}
\frac{(Cq)^q}{\eta^{2q}} .
\end{align*}
This proves \eqref{eqn:edge}.

It remains to prove \eqref{eqn:edge2}. Assume $w=E+\ri \eta$ and \(E\ge\eta\). Then
\(|w|\asymp E\). Moreover,
\[
\left|\Im\left[s(w)-\sqrt w\right]\right|
\le
|s(w)-\sqrt w|.
\]
Since \(\Im[s(w)]\ge0\) and \(\Im[\sqrt w]\ge0\), we also have
\[
\left|\Im\left[s(w)-\sqrt w\right]\right|
=
|\Im[s(w)]-\Im[\sqrt w]|
\le
\Im[s(w)]+\Im[\sqrt w]
=
\Im\left[s(w)+\sqrt w\right]
\le
|s(w)+\sqrt w|.
\]
Hence, with
\begin{align}\label{e:defdelta}
\delta(w):=\min\{|s(w)-\sqrt w|,\ |s(w)+\sqrt w|\},
\end{align}
we have
\begin{equation}\label{eq:edge-im-delta}
\left|\Im\left[s(w)-\sqrt w\right]\right|
\le
\delta(w).
\end{equation}
By \Cref{lem:edge-stability},
\begin{equation}\label{eq:edge-im-stability}
\left|\Im\left[s(w)-\sqrt w\right]\right|
\le
\frac{|P(w)|}{\sqrt{|w|}} .
\end{equation}

We next derive a moment bound for \(P(w)\) adapted to the regime \(E\ge\eta\).
In this case,
\begin{align}\label{e:wwbound}
|\sqrt w|\le C\sqrt E,
\qquad
\Im[\sqrt w]\le C\frac{\eta}{\sqrt E}.
\end{align}
Using \eqref{e:defdelta}, \eqref{eq:edge-im-delta} and \eqref{e:wwbound}, we get
\[
\Im[s(w)]\le \Im[\sqrt w]+\delta(w)
\le
C\frac{\eta}{\sqrt E}+\delta(w),\quad 
|s(w)|\le |\sqrt{w}|+\delta(w)\le C\sqrt E+\delta(w),
\]
because \(s(w)\) is within distance \(\delta(w)\) of either \(\sqrt w\) or
\(-\sqrt w\). Therefore,
\[
\Im [s(w)]|s(w)|
\le
C\left(\frac{\eta}{\sqrt E}+\delta(w)\right)
\left(\sqrt E+\delta(w)\right)
\le
C\bigl(\eta+\sqrt E\,\delta(w)+\delta(w)^2\bigr).
\]
By \Cref{lem:edge-stability} and \(|w|\asymp E\),
\[
\sqrt E\,\delta(w)\le C|P(w)|,
\qquad
\delta(w)^2\le |P(w)|.
\]
Thus
\begin{equation}\label{e:key}
\Im [s(w)]|s(w)|
\le
C\bigl(\eta+|P(w)|\bigr).
\end{equation}
Raising \eqref{e:key} to the \(q\)-th power gives
\[
\Im [s(w)]^q |s(w)|^q
\le
C^q\bigl(\eta^q+|P(w)|^q\bigr).
\]
Substituting this into \eqref{eq:edge-Xq-second}, we obtain
\begin{align*}
\mathbb E\bigl[|P(w)|^{2q}\bigr]
\le
\left(\frac{Cq}{\eta^2}\right)^q
\left(
\eta^q+\mathbb E[|P(w)|^q]
\right) \le
\left(\frac{Cq}{\eta^2}\right)^q
\left(
\eta^q+\mathbb E[|P(w)|^{2q}]^{1/2}
\right).
\end{align*}
Solving this quadratic inequality yields
\begin{equation}\label{eq:edge-P-moment2}
\mathbb E\bigl[|P(w)|^{2q}\bigr]
\le
\frac{(Cq)^q}{\eta^q}
+
\frac{(Cq)^{2q}}{\eta^{4q}} .
\end{equation}

Finally, using \(|w|\asymp E\), \eqref{eq:edge-im-stability}, and
\eqref{eq:edge-P-moment2}, we get
\begin{align*}
\mathbb E\left[
\left|\Im\left[s(w)-\sqrt w\right]\right|^{2q}
\right]\le
\frac{C}{E^q}\mathbb E\bigl[|P(w)|^{2q}\bigr] \le
\frac{(Cq)^q}{\eta^q E^q}
+
\frac{(Cq)^{2q}}{\eta^{4q}E^q}.
\end{align*}
This proves \eqref{eqn:edge2}.
\end{proof}

\subsection{Proof of \Cref{e:concentration2}}

\begin{lemma}[Edge counting estimate]\label{lem:edge-counting}
Adopt \Cref{a:edge}. There exists an almost surely finite random variable
\(M\ge 2\) such that, almost surely,
\begin{equation}\label{e:x1-bound}
	x_1\leq M,
\end{equation}
and, for every \(a\ge 2\),
\begin{equation}\label{e:edge-counting}
\left|
\#\{x\in P:-a\le x\le M\}
-\frac{2}{3\pi}a^{3/2}
\right|
\le M(\log a)^2.
\end{equation}
\end{lemma}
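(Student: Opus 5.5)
The plan mirrors the bulk argument (\Cref{prop:as-uniform-rectangles} and \Cref{l:numberbound}). The comparison function is now \(s_\varrho(w)=\sqrt w\), the Nevanlinna function of the density \(\varrho(x)=\sqrt{(-x)_+}\) with respect to \(\frac1\pi\rd x\). Its counting function is \(\frac1\pi\int_{-a}^0\sqrt{-x}\,\rd x=\frac{2}{3\pi}a^{3/2}\).

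\textbf{Step 1: almost-sure uniform bounds.} From \eqref{eqn:edge}, Markov's inequality and optimization in \(q\) give the sub-Gaussian tail
\[
\bP\bigl(\eta\,\Im[\sqrt w]\,|w|^{-1/2}|s(w)-\sqrt w|\ge t\bigr)\le Ce^{-ct^2}.
\]
Write \(\mathcal R_n=\{|E|\le 2^{2n},\ 2^{-n}\le\eta\le 2^{2n}\}\). We take a lattice net of \(\mathcal R_n\) of polynomial cardinality in \(2^n\), apply a union bound and Borel--Cantelli, and then interpolate off the net with the Gronwall argument based on \eqref{e:sbound}. This should give, almost surely,
\[
\eta\,\Im[\sqrt w]\,|w|^{-1/2}|s(w)-\sqrt w|\le Mn \quad\text{on }\mathcal R_n .
\]
For the imaginary part we use \eqref{eqn:edge2} in the same way on \(E\ge\eta\). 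For \(E\le -\eta\) we have \(\Im\sqrt w\asymp\sqrt{|w|}\), so \eqref{eqn:edge} already gives the bound. For \(\eta\) below \(2^{-n}\) we use the monotonicity of \(\eta\mapsto\eta\Im s(E+\ri\eta)\).

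\textbf{Step 2: \(x_1\le M\).} On a fixed \(n\) we take \(E\) large positive and \(\eta=1\). Step 1 via \eqref{eqn:edge2} gives \(\Im s(E+\ri)\le \Im\sqrt{E+\ri}+Mn/\sqrt{E}\lesssim (1+Mn)/\sqrt E\). A pole at \(x\ge E\) would instead force \(\Im s(x+\ri)\ge 1\). Hence there is no pole beyond \(E\approx (Mn)^2\). Because the dyadic bounds grow only like \(n\), the same argument run over all scales \(E\in[2^{n},2^{n+1}]\) rules out poles at arbitrarily large \(E\), so \(x_1\) is almost surely finite.

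\textbf{Step 3: counting via Helffer--Sjöstrand.} For \(a\ge2\) we choose \(n\) with \(2^n\asymp a\) and smooth cutoffs \(f_\pm\) that equal \(1\) on \([-a,M]\) (respectively \([-a+1,M-1]\)), with \(O(1)\) transition layers. We then apply \Cref{lem:HS} with \(s_\varrho=\sqrt w\), \(\eta=1\), and \(\chi\equiv1\) on \([-a,a]\).

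\begin{itemize}
\item The transition layer near \(-a\): here \(w\) lies in the left half-plane, where \(\Im\sqrt w\asymp\sqrt{|w|}\). The bound from Step 1 then has the bulk form \(|s-\sqrt w|\lesssim Mn/\eta\), so the terms II and IV contribute \(O(Mn)\).
\item The transition layer near \(M\): we use the imaginary-part bound from \eqref{eqn:edge2}, which contributes \(O(M^{C}n)\). This is absorbed by enlarging \(M\).
\item The term III: this needs \(\int_1^{a}\sup_x|s(x+\ri y)-\sqrt{x+\ri y}|\,\rd y\) over \(x\) in the transition layers. Over the left layer the integrand is \(\lesssim Mn/y\), giving \(Mn\log a\).
\item The term I (\(y\in[a,a+1]\)): we use \(|s-\sqrt w|\lesssim Mn/y\) times \(|\supp f|\lesssim a\), which gives \(O(Mn)\).
\end{itemize}

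Altogether, \(\sum f_\pm(x)=\frac{2}{3\pi}a^{3/2}+O(1)+O(Mn^2)\). Sandwiching \(\mathbf 1_{[-a,M]}\) between \(f_-\) and \(f_+\), as in \Cref{l:numberbound}, then yields \eqref{e:edge-counting}.

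\textbf{Main obstacle.} The delicate part is the region near the positive real axis and for large \(|E|\) with \(E>0\). There \(\Im\sqrt w\sim\eta/\sqrt E\) degenerates, so the bound \eqref{eqn:edge} is weak and one must rely on the imaginary-part estimate \eqref{eqn:edge2}. I would handle this by supporting the cutoffs (and the rightmost transition layer) only to the left of \(M+1\), so that only bounded positive \(E\) enters. The \(E\gg1\) region is used solely for Step 2, where only \(\Im s\) matters.
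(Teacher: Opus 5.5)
Your overall architecture matches the paper's. You get almost-sure net bounds from \eqref{eqn:edge}--\eqref{eqn:edge2}, use Borel--Cantelli for $x_1$, and apply Helffer--Sj\"ostrand against $s_\varrho(w)=\sqrt w$ with a sandwich by $f_\pm$. Steps 1 and 2 are fine. For Step 2 the paper uses something simpler: $q=2$ and $\eta=1$ in \eqref{eqn:edge2} give $\bP(\Im s(E+\ri)>1/4)\le C/E^2$, which is summable over integers $E$.

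\textbf{The gap: the left transition layer.} Step 3 fails at the moving endpoint $-a$. The reference density $\sqrt{-x}/\pi$ equals $\sqrt a/\pi$ there, so a transition layer of width $O(1)$ carries reference mass of order $\sqrt a$. With $f_+=1$ on $[-a,M]$ and supported in $[-a-1,M+1]$, and $f_-=1$ on $[-a+1,M-1]$, you get $\frac1\pi\int f_\pm(x)\sqrt{(-x)_+}\,\rd x=\frac{2}{3\pi}a^{3/2}\pm\Theta(\sqrt a)$. This is not $\frac{2}{3\pi}a^{3/2}+O(1)$ as you claim. The sandwich then only yields
$$\Bigl|\#\{x\in P:-a\le x\le M\}-\tfrac{2}{3\pi}a^{3/2}\Bigr|\lesssim \sqrt a+Mn^2,$$
far from $M(\log a)^2$. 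Downstream it would give only $O(1)$ errors in \eqref{e:concentration2}, instead of $(\log j)^2j^{-1/3}$. The underlying point is that the particle spacing near $-a$ is $a^{-1/2}$. Counting to within $(\log a)^2$ particles therefore requires resolving sub-unit length scales there.

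\textbf{The fix.} Take the left layer of width $a^{-1/2}$, as the paper does. Then $|f_\pm'|\lesssim a^{1/2}$, $|f_\pm''|\lesssim a$, and the reference integral changes only by $O(1)$. Your choice $\eta=1$ then breaks near $-a$: term II becomes of size $\int|f''|\,\rd x\cdot\sup_y y|\Im[s-\sqrt w]|\asymp\sqrt a\,Mn$. You must split the $y$-integral at $\eta=a^{-1/2}$ for the left layer. Your treatment of the right layer near $M$ is fine; the paper splits at $M+1$ there. Your Step 1 bounds hold in the left half-plane down to $y\ge 2^{-n}\lesssim a^{-1}$, so the split works:
\begin{itemize}
\item term II is at most $a^{-1/2}\cdot a^{1/2}\cdot Mn=O(Mn)$;
\item term IV is $O(Mn)$;
\item term III is at most $Mn\int_{a^{-1/2}}^{a+1}\rd y/y=O(Mn\log a)$.
\end{itemize}
This closes the argument.
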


\begin{proof}[Proof of \eqref{e:x1-bound}]
We first prove that there exists an almost surely finite random integer \(M\)
such that
\begin{equation}\label{e:small-im-positive-axis}
\Im[s(E+\ri)]\le \frac14,
\qquad E\ge M-1,\quad E\in \mathbb Z .
\end{equation}

Fix \(q=2\). By \eqref{eqn:edge2}, with \(\eta=1\), for all sufficiently large
integers \(E\),
\begin{equation}\label{e:edge2-q2}
\mathbb E\left[
\left|\Im\left[s(E+\ri)-\sqrt{E+\ri}\right]\right|^4
\right]
\le
\frac{C}{E^2}.
\end{equation}
Here we used that the assumptions of \eqref{eqn:edge2} are satisfied for
\(\eta=1\), \(q=2\), and all sufficiently large \(E\).

Moreover, for all sufficiently large \(E\),
\[
\Im[ \sqrt{E+\ri}]\le \frac{1}{2\sqrt{E}}\leq \frac18.
\]
Therefore,
\[
\left\{\Im[s(E+\ri)]>\frac14\right\}
\subset
\left\{
\left|\Im\left[s(E+\ri)-\sqrt{E+\ri}\right]\right|>\frac18
\right\}.
\]
By Markov's inequality and \eqref{e:edge2-q2},
\[
\mathbb P\left(\Im[s(E+\ri)]>\frac14\right)
\le
8^4
\mathbb E\left[
\left|\Im\left[s(E+\ri)-\sqrt{E+\ri}\right]\right|^4
\right]
\le
\frac{C}{E^2}.
\]
Consequently,
\[
\sum_{E=1}^{\infty}
\mathbb P\left(\Im[s(E+\ri)]>\frac14\right)
<\infty .
\]
By the Borel--Cantelli lemma, almost surely only finitely many of the events
\[
\left\{\Im[s(E+\ri)]>\frac14\right\},
\qquad E\in\mathbb Z_{\ge1},
\]
occur. Thus there exists an almost surely finite random integer \(M\) such that
\eqref{e:small-im-positive-axis} holds.

We now show that this implies boundedness of the top particle. Suppose, for
contradiction, that there exists a particle \(x\in P\) with \(x\ge M\). Choose
an integer \(E\ge M-1\) such that \(x\in [E,E+1]\). Evaluating at
\(z=E+\ri\), the contribution of this single particle gives
\[
\Im[s(E+\ri)]
=
c+\sum_{y\in P}\frac{1}{(y-E)^2+1}
\ge
\frac{1}{(x-E)^2+1}
\ge
\frac12,
\]
since \(c\ge0\) and \(0\le x-E\le 1\). This contradicts
\eqref{e:small-im-positive-axis}. Therefore no particle can lie in
\([M,\infty)\), and hence
\[
x_1\le M .
\]
After enlarging \(M\), we may also assume \(M\ge 2\). This proves
\eqref{e:x1-bound}.
\end{proof}

\begin{proof}[Proof of \eqref{e:edge-counting}]
The edge concentration estimates \eqref{eqn:edge} and \eqref{eqn:edge2},  together with the net argument, imply that,
almost surely, after possibly enlarging \(M\), for every dyadic scale \(2^n\),
\begin{align}
\begin{split}\label{e:edge-as-bound}
&|s(x+\ri y)-\sqrt{x+\ri y}|
\le \frac{Mn}{y},
\qquad
2^{-n}\le y\le 2^n,\quad -2^n\leq x\leq y,
\\
&|\Im[s(x+\ri y)-\sqrt{x+\ri y}]|
\le \frac{Mn}{y},
\qquad
0< y\le 2^n,\quad |x|\le 2^n .
\end{split}
\end{align}
This is the edge analogue of the almost-sure rectangle bound in the bulk case  \Cref{prop:as-uniform-rectangles}, and can be proven in the same way. So we omit its proof.

Fix \(a\ge 2\), and choose \(n\) such that
\[
2^{n-1}\le a\le 2^n.
\]
Choose a smooth nonnegative function \(f_+\) such that
\[
f_+=1 \quad \text{on }[-a,M],
\qquad
f_+=0 \quad \text{outside }[-a-a^{-1/2},M+1],
\]
with
\[
|f_+'|\lesssim a^{1/2},\qquad |f_+''|\lesssim a
\]
on the interval \([-a-a^{-1/2},-a]\), and
\[
|f_+'|+|f_+''|\lesssim 1
\]
on the interval \([M,M+1]\).

Similarly, choose a smooth nonnegative function \(f_-\) such that
\[
f_-=1 \quad \text{on }[-a+a^{-1/2},M-1],
\qquad
f_-=0 \quad \text{outside }[-a,M],
\]
with
\[
|f_-'|\lesssim a^{1/2},\qquad |f_-''|\lesssim a
\]
on the interval \([-a,-a+a^{-1/2}]\), and
\[
|f_-'|+|f_-''|\lesssim 1
\]
on the interval \([M-1,M]\). In particular,
\[
f_-\le \mathbf 1_{[-a,M]}\le f_+ .
\]

We claim that for either \(f=f_+\) or \(f=f_-\),
\begin{equation}\label{e:edge-HS-sharp-bound}
\left|
\sum_{x\in P}f(x)
-\frac{1}{\pi}\int_{-\infty}^0 f(x)\sqrt{-x}\,\rd x
\right|
\le CMn^2.
\end{equation}
We now prove \eqref{e:edge-HS-sharp-bound}. Let
\(\chi\in C_c^\infty(\mathbb R)\) satisfy
\[
\chi=1 \quad \text{on }[-a,a],
\qquad
\chi=0 \quad \text{outside }[-a-1,a+1],
\qquad
|\chi'|\lesssim 1,
\]
and set
\[
\widetilde f(x+\ri y):=(f(x)+\ri y f'(x))\chi(y).
\]
We apply \Cref{lem:HS} with
\[
s_\varrho(z)=\sqrt z,
\qquad
\varrho(x)=\frac{\sqrt{-x}}{\pi}\,\mathbf 1_{x\le 0}.
\]
For the \(f''\)-term near the fixed endpoint \(M\), we split the \(y\)-integral
at \(\eta=M+1\). For the \(f''\)-term near the moving endpoint \(-a\), we split
the \(y\)-integral at \(\eta=a^{-1/2}\). Using
\eqref{e:edge-as-bound} as input, \Cref{lem:HS} gives
\eqref{e:edge-HS-sharp-bound}.

Now apply \eqref{e:edge-HS-sharp-bound} to \(f_+\). Since
\(f_+\ge \mathbf 1_{[-a,M]}\),
\[
\#\{x\in P:-a\le x\le M\}
\le
\sum_{x\in P}f_+(x).
\]
Moreover,
\[
\frac{1}{\pi}\int_{-\infty}^0 f_+(x)\sqrt{-x}\,\rd x
=
\frac{2}{3\pi}a^{3/2}+\OO(1),
\]
because the smoothing interval near \(-a\) has length \(a^{-1/2}\), and hence
its contribution to the reference integral is \(\OO(1)\). Therefore
\[
\#\{x\in P:-a\le x\le M\}
\le
\frac{2}{3\pi}a^{3/2}+CMn^2+\OO(1).
\]

Similarly, applying \eqref{e:edge-HS-sharp-bound} to \(f_-\), and using
\(f_-\le \mathbf 1_{[-a,M]}\), gives
\[
\#\{x\in P:-a\le x\le M\}
\ge
\frac{2}{3\pi}a^{3/2}-CMn^2-\OO(1).
\]
Since \(n\asymp \log a\), after enlarging \(M\) once more we obtain
\begin{equation}\label{e:numberbb}
\left|
\#\{x\in P:-a\le x\le M\}
-\frac{2}{3\pi}a^{3/2}
\right|
\le M(\log a)^2 .
\end{equation}
This proves \eqref{e:edge-counting}.
\end{proof}

\begin{proof}[Proof of \eqref{e:concentration2}]
Since
\[
\#\{x\in P:-a\le x\le M\}\to\infty
\qquad \text{as } a\to\infty,
\]
the poles may be enumerated in decreasing order as
\[
x_1\ge x_2\ge x_3\ge \cdots,
\qquad x_j\to-\infty.
\]
The estimate \eqref{e:concentration2} then follows by rearranging the counting
estimate \eqref{e:edge-counting}.
\end{proof}

\subsection{Proof of \Cref{e:normalized}}

\begin{proof}[Proof of \eqref{e:normalized}]
We recall from \eqref{e:airy-zero-location0},
\begin{equation}\label{e:airy-zero-location}
\left|
\fa_i+\left(\frac{3\pi i}{2}\right)^{2/3}
\right|
\lesssim i^{-1/3}.
\end{equation}
Combining \eqref{e:concentration2} with \eqref{e:airy-zero-location}, we obtain
\begin{equation}\label{e:xi-ai-close}
|x_i-\fa_i|
\le
M\frac{(\log(2+i))^2}{i^{1/3}},
\qquad i\ge 1.
\end{equation}
Moreover,
\begin{equation}\label{e:xaasymp}
|x_i|\asymp i^{2/3},
\qquad
|\fa_i|\asymp i^{2/3},
\qquad i\ge 1.
\end{equation}

We first prove that the right-hand side of \eqref{e:normalized} is
well-defined. Let \(w\) lie in a compact subset of \(\bC\setminus\bR\). Then,
for \(i\) large enough,
$
|x_i-w|\asymp i^{2/3},
|\fa_i|\asymp i^{2/3}.
$
Hence
\[
\left|
\frac{1}{x_i-w}-\frac{1}{\fa_i}
\right|
=
\left|
\frac{\fa_i-x_i+w}{(x_i-w)\fa_i}
\right|
\le
C_w\left(
\frac{1}{i^{4/3}}
+
\frac{(\log(2+i))^2}{i^{5/3}}
\right).
\]
The right-hand side is summable in \(i\). Thus the series in
\eqref{e:normalized} converges absolutely and locally uniformly on
\(\bC\setminus\bR\). Define
\[
\widetilde s(w)
:=
\sum_{i=1}^{\infty}
\left(
\frac{1}{x_i-w}-\frac{1}{\fa_i}
\right)
-\frac{\Ai'(0)}{\Ai(0)}.
\]

We compare \(\widetilde s\) with the Airy Nevanlinna function. Recall the
standard partial fraction expansion \eqref{eq:weairy}:
\begin{equation}\label{e:airy-partial-fraction}
-\frac{\Ai'(w)}{\Ai(w)}
=
\sum_{i=1}^{\infty}
\left(
\frac{1}{\fa_i-w}-\frac{1}{\fa_i}
\right)
-\frac{\Ai'(0)}{\Ai(0)}.
\end{equation}
Therefore
\begin{equation}\label{e:tilde-minus-airy}
\widetilde s(w)+\frac{\Ai'(w)}{\Ai(w)}
=
\sum_{i=1}^{\infty}
\left(
\frac{1}{x_i-w}-\frac{1}{\fa_i-w}
\right).
\end{equation}

We claim that the right-hand side tends to \(0\) along the imaginary axis.
Indeed, let \(w=\ri y\), with \(y\ge 2\), and split the sum into
\(i\le y^{3/2}\) and \(i>y^{3/2}\). For \(i\le y^{3/2}\), we have
$
|x_i-\ri y|\,|\fa_i-\ri y|\ge cy^2$,
and therefore, by \eqref{e:xi-ai-close},
\[
\sum_{i\le y^{3/2}}
\left|
\frac{1}{x_i-\ri y}-\frac{1}{\fa_i-\ri y}
\right|
\le
\frac{C}{y^2}
\sum_{i\le y^{3/2}}
\frac{(\log(2+i))^2}{i^{1/3}}
\le
\frac{C(\log y)^2}{y}.
\]
For \(i>y^{3/2}\), using \eqref{e:xaasymp}, we get
\[
\sum_{i>y^{3/2}}
\left|
\frac{1}{x_i-\ri y}-\frac{1}{\fa_i-\ri y}
\right|
\le
C\sum_{i>y^{3/2}}
\frac{(\log(2+i))^2}{i^{5/3}}
\le
\frac{C(\log y)^2}{y}.
\]
Thus
\begin{equation}\label{e:tilde-airy-axis}
\widetilde s(\ri y)+\frac{\Ai'(\ri y)}{\Ai(\ri y)}
\longrightarrow 0,
\qquad y\to\infty.
\end{equation}

By the Airy asymptotic \eqref{e:aiasymp},
\[
-\frac{\Ai'(w)}{\Ai(w)}=\sqrt w+\OO(|w|^{-1}),
\]
uniformly in fixed sectors away from the negative real axis. Together with
\eqref{e:tilde-airy-axis}, this gives
\begin{equation}\label{e:tilde-s-axis}
\widetilde s(\ri y)-\sqrt{\ri y}\longrightarrow 0,
\qquad y\to\infty.
\end{equation}

On the other hand, \eqref{e:edge-as-bound} implies
\begin{equation}\label{e:slimit}
|s(\ri y)-\sqrt{\ri y}|
\le
\frac{M\log(2+y)}{y}
\longrightarrow 0,
\qquad y\to+\infty.
\end{equation}
It follows from \eqref{e:tilde-s-axis} and \eqref{e:slimit} that
\[
s(\ri y)-\widetilde s(\ri y)\longrightarrow 0,
\qquad y\to\infty.
\]

It remains to identify \(s\) and \(\widetilde s\). Both are Nevanlinna
functions with the same representing measure \(\sum_i\delta_{x_i}\). Hence
their difference is an affine function:
\[
s(w)-\widetilde s(w)=b+cw,
\qquad b\in\mathbb R,\quad c\ge0.
\]
Since \(s(\ri y)-\widetilde s(\ri y)\to0\) as \(y\to\infty\), we must have
\(c=0\) and \(b=0\). Therefore \(s=\widetilde s\), that is,
\[
s(w)
=
\sum_{i\geq 1}
\left(
\frac{1}{x_i-w}-\frac{1}{\fa_i}
\right)
-\frac{\Ai'(0)}{\Ai(0)}.
\]
This proves \eqref{e:normalized}.
\end{proof}

\section{Explicit solutions for $\beta=2$ bulk system}
\label{s:b2solution} 
We prove \Cref{thm:beta2-explicit} in this section. The key idea is to remove
the mixed interaction terms by factoring out the mixed Vandermonde product
\[
P(\bmt,\bms)=\prod_{i=1}^n\prod_{a=1}^m(t_i-s_a).
\]
After writing \(F=P\,G\), the conjugated equations for \(G\) become the
ordinary scalar \(A_{n+m-1}\)-type system in the combined variables
\[
x_1=t_1,\dots,x_n=t_n,
\qquad
x_{n+a}=s_a.
\]
Thus, when \(\beta=2\), the deformed system is gauge-equivalent, through
multiplication by \(P(\bmt,\bms)\), to the standard scalar \(A\)-type CMS
system \eqref{eq:beta2-A-system}--\eqref{eq:beta2-A-operator}; see
\cite{olshanetsky1983quantum}.
\subsection{Reduction to the \(A\)-type CMS system}
\label{s:beta2-reduction}

Let $(\bmt,\bms)=(t_1,\cdots, t_n, s_1,\cdots, s_m)$, and
\[
P(\bmt,\bms):=\prod_{i=1}^n\prod_{a=1}^m (t_i-s_a),
\qquad
F(\bmt,\bms)=P(\bmt,\bms)\,G(\bmt,\bms).
\]
Introduce
\[
A_i:=\partial_{t_i}\log P
=
\sum_{a=1}^m \frac{1}{t_i-s_a},
\qquad
B_a:=\partial_{s_a}\log P
=
\sum_{i=1}^n \frac{1}{s_a-t_i}.
\]
Then
\begin{align}\begin{split}\label{e:trelation}
\partial_{t_i}F
=
P(\partial_{t_i}+A_i)G,
\quad
\partial_{t_i}^2F
=
P\Bigl(
\partial_{t_i}^2
+2A_i\partial_{t_i}
+\partial_{t_i}A_i+A_i^2
\Bigr)G,
\end{split}\end{align}
and similarly
\begin{align}\label{e:srelation}
\partial_{s_a}F
=
P(\partial_{s_a}+B_a)G,
\quad
\partial_{s_a}^2F
=
P\Bigl(
\partial_{s_a}^2
+2B_a\partial_{s_a}
+\partial_{s_a}B_a+B_a^2
\Bigr)G.
\end{align}

\begin{theorem}
\label{thm:beta2-reduction}
Assume \(\beta=2\). Then \(F=P\,G\) satisfies the original \(\beta=2\)
system \eqref{e:bulkbeta=2} if and only if \(G\) satisfies
\begin{align}
\label{eq:beta2-G-ti}
\left(
\partial_{t_i}^2+1
+\sum_{j\neq i}\frac{\partial_{t_i}-\partial_{t_j}}{t_i-t_j}
+\sum_{a=1}^m\frac{\partial_{t_i}-\partial_{s_a}}{t_i-s_a}
\right)G=0,
\qquad i=1,\dots,n,
\end{align}
and
\begin{align}
\label{eq:beta2-G-sa}
\left(
\partial_{s_a}^2+1
+\sum_{b\neq a}\frac{\partial_{s_a}-\partial_{s_b}}{s_a-s_b}
+\sum_{i=1}^n\frac{\partial_{s_a}-\partial_{t_i}}{s_a-t_i}
\right)G=0,
\qquad a=1,\dots,m.
\end{align}
Equivalently, after setting
\[
x_1=t_1,\dots,x_n=t_n,
\qquad
x_{n+1}=s_1,\dots, x_{n+m}=s_m,
\]
the function \(G\) satisfies
\begin{align}
\label{eq:beta2-A-system}
\mathcal L_r G=0,
\qquad r=1,\dots,n+m,
\end{align}
where
\begin{align}
\label{eq:beta2-A-operator}
\mathcal L_r
:=
\partial_{x_r}^2+1
+\sum_{u\neq r}
\frac{\partial_{x_r}-\partial_{x_u}}{x_r-x_u}.
\end{align}
\end{theorem}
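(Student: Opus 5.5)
The proof is a direct conjugation computation. Since $P$ is a nonvanishing holomorphic function on $\mathfrak X$, the equations for $F=PG$ are equivalent to the equations $P^{-1}\cT_iP\,G=0$ and $P^{-1}\cS_aP\,G=0$, where $\cT_i,\cS_a$ denote the $\beta=2$ operators in \eqref{e:bulkbeta=2}. We therefore compute the conjugated operators and show that they coincide with the operators in \eqref{eq:beta2-G-ti}--\eqref{eq:beta2-G-sa}. The substitution rules are \eqref{e:trelation}--\eqref{e:srelation}. Under them, each first-order operator $\partial_{t_i}$ (resp. $\partial_{s_a}$) becomes $\partial_{t_i}+A_i$ (resp. $\partial_{s_a}+B_a$), and $\partial^2$ acquires the additional terms $2A\partial+\partial A+A^2$.

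We begin with $P^{-1}\cT_iP$. The derivative part is
\[
\partial_{t_i}^2+2A_i\partial_{t_i}+\sum_{j\ne i}\frac{\partial_{t_i}-\partial_{t_j}}{t_i-t_j}-\sum_a\frac{\partial_{t_i}+\partial_{s_a}}{t_i-s_a}.
\]
Because $2A_i\partial_{t_i}=\sum_a 2\partial_{t_i}/(t_i-s_a)$, the mixed terms combine to $\sum_a(\partial_{t_i}-\partial_{s_a})/(t_i-s_a)$, which is the derivative part of \eqref{eq:beta2-G-ti}. It remains to show that the scalar part
\[
\partial_{t_i}A_i+A_i^2+\sum_{j\ne i}\frac{A_i-A_j}{t_i-t_j}-\sum_a\frac{A_i+B_a}{t_i-s_a}
\]
vanishes. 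We expand each piece as a sum over $a$ and $(a,b)$:
\begin{itemize}
\item $\partial_{t_i}A_i=-\sum_a(t_i-s_a)^{-2}$;
\item $A_i^2=\sum_a(t_i-s_a)^{-2}+\sum_{a\ne b}(t_i-s_a)^{-1}(t_i-s_b)^{-1}$;
\item $\frac{A_i-A_j}{t_i-t_j}=-\sum_a (t_i-s_a)^{-1}(t_j-s_a)^{-1}$;
\item $-\sum_a\frac{A_i}{t_i-s_a}=-A_i^2$;
\item $-\sum_a\frac{B_a}{t_i-s_a}=\sum_{a}\sum_{k}(t_i-s_a)^{-1}(t_k-s_a)^{-1}$.
\end{itemize}
In the last sum, the $k=i$ terms give $\sum_a(t_i-s_a)^{-2}$, and the $k\ne i$ terms cancel the $(A_i-A_j)$ contributions. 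After these cancellations the total reduces to
\[
-\sum_a(t_i-s_a)^{-2}+A_i^2-A_i^2+\sum_a(t_i-s_a)^{-2}=0.
\]

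The computation for $P^{-1}\cS_aP$ is symmetric: exchange the roles of $t$ and $s$, use $B_a$ in place of $A_i$, and note that $B_a$ carries the opposite sign convention $(s_a-t_i)^{-1}$. The same cancellation pattern applies. The derivative part becomes $\sum_i(\partial_{s_a}-\partial_{t_i})/(s_a-t_i)$ because $2B_a\partial_{s_a}-\sum_i\partial_{s_a}/(s_a-t_i)=\sum_i\partial_{s_a}/(s_a-t_i)$, and the scalar part vanishes by the identical bookkeeping. The only real obstacle in the proof is tracking signs and the diagonal $k=i$ versus off-diagonal $k\ne i$ terms in this scalar computation. We organize it as above, grouping terms by which of the three families of denominators they involve.

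Finally, with $x_r$ equal to the concatenation of $t$ and $s$, equations \eqref{eq:beta2-G-ti}--\eqref{eq:beta2-G-sa} are precisely $\mathcal L_rG=0$ as in \eqref{eq:beta2-A-operator}. In each equation, the two interaction sums merge into a single sum over $u\ne r$. Since every step is an equivalence (conjugation by the invertible function $P$), this proves the if-and-only-if statement.
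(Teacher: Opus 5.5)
Your proposal is correct and follows the paper's proof. You conjugate by the nonvanishing factor $P$, merge $2A_i\partial_{t_i}$ with the mixed first-order terms to obtain $\sum_a(\partial_{t_i}-\partial_{s_a})/(t_i-s_a)$, and check that the scalar remainder vanishes. Your bookkeeping for that last step — cancelling $A_i^2$ against $-\sum_a A_i/(t_i-s_a)$ and splitting the $B_a$ sum into $k=i$ and $k\neq i$ — is only a minor reorganization of the paper's $c_{ia}=1/(t_i-s_a)$ computation of $Z_i$ (and $W_a$).
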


\begin{proof}
We work on the complement of the collision hyperplanes, i.e. \eqref{e:defX}. We first treat the
\(t_i\)-equation; the \(s_a\)-equation is analogous.

Substituting \(F=PG\) into the \(t_i\)-equation of the original \(\beta=2\)
system \eqref{e:bulkbeta=2} and dividing by \(P\), using \eqref{e:trelation} and
\eqref{e:srelation}, gives
\begin{align*}
0
=
\Bigl(
\partial_{t_i}^2
+2A_i\partial_{t_i}
+\partial_{t_i}A_i+A_i^2
+1
\Bigr)G
+
\sum_{j\neq i}
\frac{
(\partial_{t_i}+A_i)-(\partial_{t_j}+A_j)
}{t_i-t_j}G
-
\sum_{a=1}^m
\frac{
(\partial_{t_i}+A_i)+(\partial_{s_a}+B_a)
}{t_i-s_a}G .
\end{align*}
Collecting the first-order terms involving the mixed variables, we find
\[
2A_i\partial_{t_i}
-
\sum_{a=1}^m
\frac{\partial_{t_i}+\partial_{s_a}}{t_i-s_a}
=
\sum_{a=1}^m
\frac{\partial_{t_i}-\partial_{s_a}}{t_i-s_a},
\]
since \(A_i=\sum_a(t_i-s_a)^{-1}\). Therefore
\[
0=
\left(
\partial_{t_i}^2+1
+\sum_{j\neq i}\frac{\partial_{t_i}-\partial_{t_j}}{t_i-t_j}
+\sum_{a=1}^m\frac{\partial_{t_i}-\partial_{s_a}}{t_i-s_a}
\right)G
+Z_iG,
\]
where
\[
Z_i
=
\partial_{t_i}A_i+A_i^2
+\sum_{j\neq i}\frac{A_i-A_j}{t_i-t_j}
-\sum_{a=1}^m\frac{A_i+B_a}{t_i-s_a}.
\]

It remains to show that \(Z_i=0\). Set
\[
c_{ia}:=\frac{1}{t_i-s_a}.
\]
Then
\[
A_i=\sum_{a=1}^m c_{ia},
\qquad
B_a=-\sum_{i=1}^n c_{ia}.
\]
We compute the three contributions to \(Z_i\). First,
\[
\partial_{t_i}A_i+A_i^2
=
-\sum_a c_{ia}^2
+
\left(\sum_a c_{ia}\right)^2
=
2\sum_{a<b}c_{ia}c_{ib}.
\]
Second,
\[
\frac{A_i-A_j}{t_i-t_j}
=
\sum_{a=1}^m
\frac{c_{ia}-c_{ja}}{t_i-t_j}
=
-\sum_{a=1}^m c_{ia}c_{ja},
\]
and hence
\[
\sum_{j\neq i}\frac{A_i-A_j}{t_i-t_j}
=
-\sum_{j\neq i}\sum_{a=1}^m c_{ia}c_{ja}.
\]
Third, since
\[
A_i+B_a
=
\sum_{b\neq a}c_{ib}
-
\sum_{j\neq i}c_{ja},
\]
we have
\[
\sum_{a=1}^m
\frac{A_i+B_a}{t_i-s_a}
=
\sum_{a=1}^m\sum_{b\neq a}c_{ia}c_{ib}
-
\sum_{a=1}^m\sum_{j\neq i}c_{ia}c_{ja}
=
2\sum_{a<b}c_{ia}c_{ib}
-
\sum_{j\neq i}\sum_{a=1}^m c_{ia}c_{ja}.
\]
Combining these identities gives \(Z_i=0\), and therefore
\eqref{eq:beta2-G-ti} follows.

The \(s_a\)-equation is handled in the same way. Substituting \(F=PG\) into the
\(s_a\)-equation and collecting first-order terms gives
\[
0=
\left(
\partial_{s_a}^2+1
+\sum_{b\neq a}\frac{\partial_{s_a}-\partial_{s_b}}{s_a-s_b}
+\sum_{i=1}^n\frac{\partial_{s_a}-\partial_{t_i}}{s_a-t_i}
\right)G
+W_aG,
\]
where
\[
W_a
=
\partial_{s_a}B_a+B_a^2
+\sum_{b\neq a}\frac{B_a-B_b}{s_a-s_b}
-\sum_{i=1}^n\frac{B_a+A_i}{s_a-t_i}.
\]
The same algebra as above gives \(W_a=0\). Hence \eqref{eq:beta2-G-sa}
follows.

Finally, the unified form \eqref{eq:beta2-A-system} is immediate after the
identification
\[
x_1=t_1,\dots,x_n=t_n,
\qquad
x_{n+1}=s_1,\dots, x_{n+m}=s_m.
\]
This proves the theorem.
\end{proof}

\subsection{Proof of \Cref{thm:beta2-explicit}}

\begin{proof}[Proof of \Cref{thm:beta2-explicit}]
Set
\[
x_1=t_1,\dots,x_n=t_n,
\qquad
x_{n+1}=s_1,\dots, x_{n+m}=s_m,
\]
and write
\[
\bm\sigma=(\sigma_1,\dots,\sigma_{n+m})
:=
(\epsilon_1,\dots,\epsilon_n,\eta_1,\dots,\eta_m)
\in\{\pm1\}^{n+m}.
\]
Let
\[
P(\bmt,\bms):=\prod_{i=1}^n\prod_{a=1}^m(t_i-s_a).
\]
By \Cref{thm:beta2-reduction}, a function of the form
\[
F(\bmt,\bms)=P(\bmt,\bms)\,G(\bmx)
\]
solves the original \(\beta=2\) system if and only if \(G\) solves the reduced
system
\begin{equation}\label{eq:proof-beta2-Lr}
\mathcal L_rG=0,
\qquad r=1,\dots,n+m,
\end{equation}
where
\[
\mathcal L_r
:=
\partial_{x_r}^2+1
+
\sum_{u\neq r}
\frac{\partial_{x_r}-\partial_{x_u}}{x_r-x_u}.
\]

\medskip
\noindent
\emph{Step 1. Explicit solutions of the reduced system.}
For \(\bm\sigma\in\{\pm1\}^{n+m}\), define
\begin{equation}\label{eq:proof-Delta-sigma}
\Delta_{\bm\sigma}(\bmx)
:=
\prod_{1\le r<u\le n+m}
(x_r-x_u)^{(\sigma_r\sigma_u-1)/2}.
\end{equation}
Since \((\sigma_r\sigma_u-1)/2\in\{0,-1\}\), the function
\(\Delta_{\bm\sigma}\) is rational. Set
\begin{equation}\label{eq:proof-Gsigma}
G_{\bm\sigma}(\bmx)
:=
\exp\!\left(\ri\sum_{r=1}^{n+m}\sigma_rx_r\right)
\Delta_{\bm\sigma}(\bmx).
\end{equation}
We claim that
\[
\mathcal L_rG_{\bm\sigma}=0,
\qquad r=1,\dots,n+m.
\]

Fix \(r\), and introduce
\[
J_r:=\{u\neq r:\sigma_u\neq\sigma_r\},
\qquad
K_r:=\{u\neq r:\sigma_u=\sigma_r\}.
\]
Then
\[
\psi_r
:=
\partial_{x_r}\log\Delta_{\bm\sigma}
=
-\sum_{u\in J_r}\frac{1}{x_r-x_u}.
\]
Hence
\[
\frac{\partial_{x_r}G_{\bm\sigma}}{G_{\bm\sigma}}
=
\ri\sigma_r+\psi_r,
\qquad
\frac{\partial_{x_r}^2G_{\bm\sigma}}{G_{\bm\sigma}}
=
(\ri\sigma_r+\psi_r)^2+\partial_{x_r}\psi_r.
\]
Therefore
\begin{align}\label{eq:proof-LrGsigma}
\frac{\mathcal L_rG_{\bm\sigma}}{G_{\bm\sigma}}
&=
(\ri\sigma_r)^2+1
+\partial_{x_r}\psi_r+\psi_r^2
+2\ri\sigma_r\psi_r
+
\sum_{u\neq r}
\frac{\ri(\sigma_r-\sigma_u)+\psi_r-\psi_u}{x_r-x_u}.
\end{align}
Since \((\ri\sigma_r)^2+1=0\), it remains to check that the rational terms
cancel.

First, the exponential contributions cancel:
\[
2\ri\sigma_r\psi_r
+
\sum_{u\in J_r}
\frac{\ri(\sigma_r-\sigma_u)}{x_r-x_u}
=
-2\ri\sigma_r\sum_{u\in J_r}\frac{1}{x_r-x_u}
+
2\ri\sigma_r\sum_{u\in J_r}\frac{1}{x_r-x_u}
=0.
\]
For the remaining terms, we compute
\[
\partial_{x_r}\psi_r+\psi_r^2
=
2\sum_{a\in J_r}\frac{1}{(x_r-x_a)^2}
+
\sum_{\substack{a,b\in J_r\\ a\neq b}}
\frac{1}{(x_r-x_a)(x_r-x_b)}.
\]
Moreover,
\[
\sum_{u\in K_r}
\frac{\psi_r-\psi_u}{x_r-x_u}
=
\sum_{u\in K_r}
\sum_{a\in J_r}
\frac{1}{(x_r-x_a)(x_u-x_a)},
\]
while
\[
\sum_{a\in J_r}
\frac{\psi_r-\psi_a}{x_r-x_a}
=
-2\sum_{a\in J_r}\frac{1}{(x_r-x_a)^2}
-
\sum_{\substack{a,b\in J_r\\ a\neq b}}
\frac{1}{(x_r-x_a)(x_r-x_b)}
-
\sum_{a\in J_r}
\sum_{u\in K_r}
\frac{1}{(x_r-x_a)(x_u-x_a)}.
\]
These three expressions cancel term by term. Thus
\[
\mathcal L_rG_{\bm\sigma}=0,
\qquad r=1,\dots,n+m.
\]

\medskip
\noindent
\emph{Step 2. Lifting back to the original system.}
Define
\[
F_{\bm\sigma}(\bmt,\bms):=P(\bmt,\bms)\,G_{\bm\sigma}(\bmt,\bms).
\]
Since \(G_{\bm\sigma}\) solves the reduced system \eqref{eq:proof-beta2-Lr},
\Cref{thm:beta2-reduction} implies that \(F_{\bm\sigma}\) solves the original
\(\beta=2\) system.

We now identify \(F_{\bm\sigma}\). By definition,
\[
\Delta_{\bm\sigma}(x)
=
\prod_{1\le i<j\le n}
(t_i-t_j)^{(\epsilon_i\epsilon_j-1)/2}
\prod_{1\le a<b\le m}
(s_a-s_b)^{(\eta_a\eta_b-1)/2}
\prod_{i=1}^n
\prod_{a=1}^m
(t_i-s_a)^{(\epsilon_i\eta_a-1)/2}.
\]
Multiplication by
\[
P(t,s)=\prod_{i=1}^n\prod_{a=1}^m(t_i-s_a)
\]
changes the mixed exponent from
$
(\epsilon_i\eta_a-1)/{2}
$
to
$
(1+\epsilon_i\eta_a)/{2}.
$
Therefore
\begin{align*}
F_{\bm\sigma}(\bmt,\bms)
&=
\exp\!\left(
\ri\sum_{i=1}^n\epsilon_i t_i
+
\ri\sum_{a=1}^m\eta_a s_a
\right)
\prod_{1\le i<j\le n}
(t_i-t_j)^{(\epsilon_i\epsilon_j-1)/2}\\
&\qquad \times\prod_{1\le a<b\le m}
(s_a-s_b)^{(\eta_a\eta_b-1)/2}
\prod_{i=1}^n
\prod_{a=1}^m
(t_i-s_a)^{(1+\epsilon_i\eta_a)/2}.
\end{align*}
This is exactly \eqref{eq:beta2-explicit-F}. Since all exponents are integers,
these solutions are single-valued.

\medskip
\noindent
\emph{Step 3. Linear independence.}
Choose \(\bmv=(v_1,\cdots,v_{n+m})\in\bC^{n+m}\) such that
\[
v_r\neq v_u\quad(r\neq u),
\]
and
\[
\Re[\ri\,\bm\sigma\cdot \bmv]
\neq
\Re[\ri\,\bm\sigma'\cdot \bmv]
\qquad
\text{for } \bm\sigma\neq\bm\sigma'.
\]
Such a choice is possible after excluding finitely many proper real hyperplanes.

Along the ray \(\bmx=R\bmv\), as \(R\to+\infty\), we have
\[
G_{\bm\sigma}(R\bmv)
=
c_{\bm\sigma}
R^{-N_+(\bm\sigma)N_-(\bm\sigma)}
\exp\!\left(\ri R\,\bm\sigma\cdot \bmv\right)
\left(1+\OO(R^{-1})\right),
\]
where
\[
c_{\bm\sigma}
=
\prod_{1\le r<u\le n+m}
(v_r-v_u)^{(\sigma_r\sigma_u-1)/2}
\neq0,
\quad
N_\pm(\bm\sigma):=\#\{r:\sigma_r=\pm1\}.
\]
The numbers \(\Re[\ri\,\bm\sigma\cdot \bmv]\) are pairwise distinct, so the
exponentials
$
\exp\!\left(\ri R\,\bm\sigma\cdot\bmv\right)
$
have distinct growth rates along this ray. Hence no nontrivial linear
combination of the \(G_{\bm\sigma}\) can vanish identically.

Therefore the functions \(G_{\bm\sigma}\) are linearly independent. Since
\(F_{\bm\sigma}=P\,G_{\bm\sigma}\) and \(P\) is not identically zero, the functions
\(F_{\bm\sigma}\) are also linearly independent. This proves the theorem.
\end{proof}

\section{Explicit solutions for \(\beta=2\) edge system}
\label{s:edge-b2solution}

\subsection{Reduction to the edge \(A\)-type CMS system}
\label{s:edge-beta2-reduction}

Let
\[
(\bmt,\bms)=(t_1,\dots,t_n,s_1,\dots,s_m),
\]
and define
\[
P(\bmt,\bms):=\prod_{i=1}^n\prod_{a=1}^m(t_i-s_a),
\qquad
F(\bmt,\bms)=P(\bmt,\bms)\,G(\bmt,\bms).
\]

\begin{theorem}[Reduction to the scalar edge \(A_{n+m-1}\)-system]
\label{thm:edge-beta2-reduction}
Assume \(\beta=2\). Then \(F=P\,G\) satisfies the original edge system
\eqref{e:edgebeta=2} if and only if \(G\) satisfies
\begin{align}
\label{eq:edge-reduced-ti}
\left(
\partial_{t_i}^2-t_i
+\sum_{j\neq i}\frac{\partial_{t_i}-\partial_{t_j}}{t_i-t_j}
+\sum_{a=1}^m\frac{\partial_{t_i}-\partial_{s_a}}{t_i-s_a}
\right)G=0,
\qquad i=1,\dots,n,
\end{align}
and
\begin{align}
\label{eq:edge-reduced-sa}
\left(
\partial_{s_a}^2-s_a
+\sum_{b\neq a}\frac{\partial_{s_a}-\partial_{s_b}}{s_a-s_b}
+\sum_{i=1}^n\frac{\partial_{s_a}-\partial_{t_i}}{s_a-t_i}
\right)G=0,
\qquad a=1,\dots,m.
\end{align}
Equivalently, after setting
\[
x_1=t_1,\dots,x_n=t_n,
\qquad
x_{n+1}=s_1,\dots,x_{n+m}=s_m,
\]
the function \(G\) satisfies
\begin{align}
\label{eq:edge-reduced}
 \sfL_rG=0,
\qquad r=1,\dots,n+m,
\end{align}
where
\begin{align}
\label{eq:edge-A-operator}
\sfL_r 
:=
\partial_{x_r}^2-x_r
+\sum_{u\neq r}
\frac{\partial_{x_r}-\partial_{x_u}}{x_r-x_u}.
\end{align}
\end{theorem}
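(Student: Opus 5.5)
The plan is to reuse, essentially verbatim, the gauge computation from the proof of \Cref{thm:beta2-reduction}. The edge operators in \eqref{e:edgebeta=2} differ from the bulk ones in \eqref{e:bulkbeta=2} only in the zeroth-order potential: $+1$ is replaced by $-t_i$ in the $t_i$-equation and by $-s_a$ in the $s_a$-equation. Multiplication by the scalar $P$ commutes with multiplication by any scalar potential. So conjugating by $P$ treats the potential terms trivially, and only the derivative and interaction parts need to be redone.

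Concretely, we work on the collision-free set \eqref{e:defX} and use the notation
\[
A_i=\partial_{t_i}\log P=\sum_a (t_i-s_a)^{-1},\qquad B_a=\partial_{s_a}\log P=\sum_i(s_a-t_i)^{-1}.
\]
We substitute $F=PG$ into the $t_i$-equation of \eqref{e:edgebeta=2}, using the conjugation formulas \eqref{e:trelation}--\eqref{e:srelation}, and divide by $P$, which is nonvanishing there. The result is
\[
\Bigl(\partial_{t_i}^2-t_i+\sum_{j\neq i}\tfrac{\partial_{t_i}-\partial_{t_j}}{t_i-t_j}+\sum_a\tfrac{\partial_{t_i}-\partial_{s_a}}{t_i-s_a}\Bigr)G+Z_iG=0.
\]
Two facts produce this form. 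First, the first-order mixed terms recombine exactly as in the bulk case, via $2A_i\partial_{t_i}-\sum_a(\partial_{t_i}+\partial_{s_a})/(t_i-s_a)=\sum_a(\partial_{t_i}-\partial_{s_a})/(t_i-s_a)$. Second, $-t_i$ passes through $P$ unchanged. Here $Z_i$ is the same rational function as in the bulk proof,
\[
Z_i=\partial_{t_i}A_i+A_i^2+\sum_{j\ne i}\frac{A_i-A_j}{t_i-t_j}-\sum_a\frac{A_i+B_a}{t_i-s_a}.
\]
It does not involve the potential, and it was shown to vanish there via the $c_{ia}$ bookkeeping. The same argument applies to the $s_a$-equations, where $W_a=0$ is likewise inherited from the bulk proof. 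This gives \eqref{eq:edge-reduced-ti}--\eqref{eq:edge-reduced-sa}.

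Since each step is an identity of operators after conjugation by an invertible scalar, the equivalence runs in both directions. Relabelling $x_r$ then merges the two families into \eqref{eq:edge-reduced}, because $\partial_{t_i}-\partial_{s_a}$ over $t_i-s_a$ has exactly the same form as the same-species interaction.

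There is no genuine obstacle. The only point needing care is to confirm that no cross term arises from the potential, and it cannot, since $[P,t_i]=0$. I would state explicitly that the vanishing of $Z_i$ and $W_a$ is literally the identity already established in \Cref{thm:beta2-reduction}.
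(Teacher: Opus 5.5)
Your proposal is correct and follows essentially the same route as the paper, which also conjugates by $P$, observes that the first-order mixed terms recombine exactly as in \Cref{thm:beta2-reduction}, and reuses the $c_{ia}$-identity $Z_i=W_a=0$. The linear potentials $-t_i$ and $-s_a$ commute with multiplication by $P$, and the equivalence holds in both directions because $P$ is nonvanishing off the collision hyperplanes.
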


\begin{proof}
This is the same conjugation as in the bulk \(\beta=2\) case. Indeed, after
writing \(F=P\,G\), the logarithmic derivatives
\[
A_i:=\partial_{t_i}\log P
=
\sum_{a=1}^m\frac{1}{t_i-s_a},
\qquad
B_a:=\partial_{s_a}\log P
=
\sum_{i=1}^n\frac{1}{s_a-t_i}
\]
are exactly the same as in the proof of \Cref{thm:beta2-reduction}. Hence the
first-order terms conjugate in the same way, and the same \(c_{ia}\)-algebra
shows that all zero-order terms coming from \(P\) cancel.

The only difference from the bulk computation is that the constant term \(+1\)
is replaced by the linear potential \(-t_i\) or \(-s_a\). These terms are
unaffected by conjugation with \(P\). Therefore the bulk proof carries over
verbatim and yields \eqref{eq:edge-reduced-ti} and
\eqref{eq:edge-reduced-sa}. The unified form \eqref{eq:edge-reduced} is then
immediate from the identification of the \(x\)-variables.
\end{proof}

\subsection{Proof of \Cref{thm:edge-beta2-explicit}}
The proof is parallel in spirit to the bulk case. The reduced scalar equation is
now the edge \(A\)-type Calogero system \eqref{eq:edge-reduced}, so the role of
the plane waves is played by Airy solutions and a Slater determinant.
\begin{proof}[Proof of \Cref{thm:edge-beta2-explicit}]
We write
\[
\bmx=(x_1,\dots,x_{n+m}),
\qquad
x_1=t_1,\dots,x_n=t_n,
\qquad
x_{n+1}=s_1,\dots,x_{n+m}=s_m.
\]
Let
\[
\Delta(\bmx):=\prod_{1\le r<u\le {n+m}}(x_r-x_u).
\]

For each sign pattern
\[
\bm\sigma=(\sigma_1,\dots,\sigma_{n+m})\in\{\pm1\}^{n+m},
\]
choose Airy solutions \(\phi_{\sigma_r}\) (recall from \eqref{e:defphi}) of
\[
\phi_{\sigma_r}''(x)-x\,\phi_{\sigma_r}(x)=0.
\]
Define
\[
D_{\bm\sigma}(\bmx)
:=
\det\!\left[
\partial_x^{\,j-1}\phi_{\sigma_r}(x_r)
\right]_{1\le r,j\le {n+m}},
\]
and
\begin{equation}\label{eq:G-sigma-edge}
G_{\bm\sigma}(\bmx)
:=
\frac{D_{\bm\sigma}(\bmx)}{\Delta(\bmx)}.
\end{equation}

The above determinant quotient, with an arbitrary row-by-row choice of
one-variable Airy solutions, gives local solutions of
\eqref{eq:edge-reduced} on the configuration space \(x_r\neq x_u\).
The special symmetric solution obtained by taking
\(\phi_1=\cdots=\phi_{n+m}=\Ai\) is regular across the diagonals and agrees, up to normalization, with the
standard \(\beta=2\) multivariate Airy function, equivalently the
Kontsevich matrix Airy function; see
\cite{kontsevich1992intersection, desrosiers2014asymptotics}.

\medskip
\noindent
\emph{Step 1. Solving the reduced system.}
We first prove that
\[
\sfL_r G_{\bm\sigma}=0,
\qquad r=1,\dots,{n+m}.
\]
Equivalently,
\begin{equation}\label{eq:direct-edge-reduced}
\left(
\partial_{x_r}^2-x_r
+
\sum_{u\neq r}\frac{\partial_{x_r}-\partial_{x_u}}{x_r-x_u}
\right)G_{\bm\sigma}=0.
\end{equation}

Let
\[
\mathfrak X:=\{\bmx\in\bC^{n+m}:\ x_r\neq x_u \text{ for } r\neq u\}.
\]
We prove the identity on \(\mathfrak X\), where the operator is well defined.

Since the Airy equation has a two-dimensional space of solutions spanned by
Airy contour branches, and since \(D_{\bm\sigma}\) is multilinear in its rows,
it suffices to prove the claim when each \(\phi_{\sigma_a}\) is represented by
a single admissible Airy contour integral. Thus, for each \(a=1,\dots,{n+m}\), choose
an admissible Airy contour \(\Gamma_{\sigma_a}\) such that
\[
\phi_{\sigma_a}(x)
=
\int_{\Gamma_{\sigma_a}} e^{z^3/3-xz}\,\rd z .
\]
Then
\[
\partial_x^{\,j-1}\phi_{\sigma_a}(x_a)
=
(-1)^{j-1}
\int_{\Gamma_{\sigma_a}} z^{j-1}e^{z^3/3-x_a z}\,\rd z .
\]
By multilinearity of the determinant in the rows,
\begin{align}
D_{\bm\sigma}(\bmx)
&=
(-1)^{\frac{(n+m)(n+m-1)}2}
\int_{\Gamma_{\sigma_1}\times\cdots\times\Gamma_{\sigma_{n+m}}}
\det[z_a^{\,j-1}]_{a,j=1}^{n+m}
\exp\!\left(\sum_{a=1}^{n+m}\left(\frac{z_a^3}{3}-x_a z_a\right)\right)
\,\rd\bmz
\nonumber\\
&=
c_{n+m}
\int_{\Gamma_{\bm\sigma}}
\Delta(\bmz)e^{S(\bmz;\bmx)}\,\rd\bmz,
\label{eq:D-contour}
\end{align}
where
\[
c_{n+m}:=(-1)^{\frac{(n+m)({n+m}-1)}2},
\qquad
\Gamma_{\bm\sigma}:=\Gamma_{\sigma_1}\times\cdots\times\Gamma_{\sigma_{n+m}},
\]
\[
S(\bmz;\bmx):=
\sum_{a=1}^{n+m}\left(\frac{z_a^3}{3}-x_a z_a\right),
\qquad
\rd\bmz:=\rd z_1\cdots \rd z_{n+m} .
\]
Hence, for \(\bmx\in\mathfrak X\),
\begin{equation}
G_{\bm\sigma}(\bmx)
=
c_{n+m}
\int_{\Gamma_{\bm\sigma}}
K(\bmx,\bmz)\,\rd\bmz,
\qquad
K(\bmx,\bmz)
:=
\frac{\Delta(\bmz)}{\Delta(\bmx)}e^{S(\bmz;\bmx)}.
\label{eq:G-contour}
\end{equation}

Fix \(r\in\{1,\dots,{n+m}\}\), and set
\[
\sfL_r
:=
\partial_{x_r}^2-x_r
+
\sum_{u\neq r}
\frac{\partial_{x_r}-\partial_{x_u}}{x_r-x_u}.
\]
For \(r=1,\dots,{n+m}\), write
\[
A_r(\bmx):=
\partial_{x_r}\log\Delta(\bmx)
=
\sum_{u\neq r}\frac{1}{x_r-x_u}.
\]
Then
\[
\partial_{x_r}K(\bmx,\bmz)
=
-(z_r+A_r(\bmx))K(\bmx,\bmz).
\]
Therefore,
\[
\partial_{x_r}^2K
=
\left((z_r+A_r)^2-\partial_{x_r}A_r\right)K,
\]
and hence
\begin{align}
\sfL_rK
&=
\Biggl(
z_r^2-x_r
+2z_rA_r
+A_r^2-\partial_{x_r}A_r
+
\sum_{u\neq r}\frac{z_u-z_r}{x_r-x_u}
+
\sum_{u\neq r}\frac{A_u-A_r}{x_r-x_u}
\Biggr)K
\nonumber\\
&=
\Biggl(
z_r^2-x_r
+
\sum_{u\neq r}\frac{z_r+z_u}{x_r-x_u}
+
C_r(\bmx)
\Biggr)K,
\label{eq:LrK-pre}
\end{align}
where
\[
C_r(\bmx)
:=
A_r^2-\partial_{x_r}A_r
+
\sum_{u\neq r}\frac{A_u-A_r}{x_r-x_u}.
\]

We now simplify \(C_r(\bmx)\). Write
\[
d_u:=x_r-x_u,
\qquad u\neq r.
\]
Then
\[
A_r=\sum_{u\neq r}\frac1{d_u},
\qquad
-\partial_{x_r}A_r=\sum_{u\neq r}\frac1{d_u^2},
\]
and therefore
\begin{equation}
A_r^2-\partial_{x_r}A_r
=
2\sum_{u\neq r}\frac1{d_u^2}
+
2\sum_{\substack{u<v\\u,v\neq r}}\frac1{d_ud_v}.
\label{eq:Ar-part}
\end{equation}
On the other hand, for \(u\neq r\),
\[
A_u
=
-\frac1{d_u}
+
\sum_{\substack{v\neq r\\v\neq u}}\frac1{x_u-x_v}.
\]
Thus
\[
\frac{A_u-A_r}{d_u}
=
-\frac{2}{d_u^2}
+
\sum_{\substack{v\neq r\\v\neq u}}
\frac{1}{d_u(x_u-x_v)}
-
\sum_{\substack{v\neq r\\v\neq u}}
\frac{1}{d_ud_v}.
\]
Summing over \(u\neq r\) and pairing the ordered pairs \((u,v)\) and \((v,u)\),
we get
\begin{align}
\sum_{u\neq r}\frac{A_u-A_r}{d_u}
&=
-2\sum_{u\neq r}\frac1{d_u^2}
+
\sum_{\substack{u<v\\u,v\neq r}}
\left(
\frac{1}{d_u(x_u-x_v)}
+
\frac{1}{d_v(x_v-x_u)}
\right)
-
2\sum_{\substack{u<v\\u,v\neq r}}\frac1{d_ud_v}
\nonumber\\
&=
-2\sum_{u\neq r}\frac1{d_u^2}
+
\sum_{\substack{u<v\\u,v\neq r}}
\frac{1}{x_u-x_v}
\left(\frac1{d_u}-\frac1{d_v}\right)
-
2\sum_{\substack{u<v\\u,v\neq r}}\frac1{d_ud_v}
\nonumber\\
&=
-2\sum_{u\neq r}\frac1{d_u^2}
-
\sum_{\substack{u<v\\u,v\neq r}}\frac1{d_ud_v}.
\label{eq:Au-part}
\end{align}
In the last step we used
\[
\frac1{x_u-x_v}\left(\frac1{d_u}-\frac1{d_v}\right)
=
\frac1{d_ud_v}.
\]
Combining \eqref{eq:Ar-part} and \eqref{eq:Au-part}, we find
\[
C_r(\bmx)
=
\sum_{\substack{u<v\\u,v\neq r}}
\frac1{(x_r-x_u)(x_r-x_v)}.
\]
Thus \eqref{eq:LrK-pre} becomes
\begin{equation}
\sfL_rK(\bmx,\bmz)
=
B_r(\bmx,\bmz)K(\bmx,\bmz),
\label{eq:LrK}
\end{equation}
where
\begin{equation}
B_r(\bmx,\bmz)
:=
z_r^2-x_r
+
\sum_{u\neq r}\frac{z_r+z_u}{x_r-x_u}
+
\sum_{\substack{u<v\\u,v\neq r}}
\frac1{(x_r-x_u)(x_r-x_v)}.
\label{eq:Br}
\end{equation}

Therefore, by \eqref{eq:G-contour},
\[
\sfL_rG_{\bm\sigma}(\bmx)
=
\frac{c_{n+m}}{\Delta(\bmx)}
\int_{\Gamma_{\bm\sigma}}
B_r(\bmx,\bmz)\Delta(\bmz)e^{S(\bmz;\bmx)}\,\rd\bmz.
\]
It remains to prove that
\begin{equation}
\int_{\Gamma_{\bm\sigma}}
B_r(\bmx,\bmz)\Delta(\bmz)e^{S(\bmz;\bmx)}\,\rd\bmz
=0.
\label{eq:target-vanishing}
\end{equation}

\medskip
\noindent
\emph{Step 1a. A family of integration-by-parts identities.}
Set
\[
\Phi(\bmz;\bmx):=\Delta(\bmz)e^{S(\bmz;\bmx)},
\qquad
U_r:=\{1,\dots,{n+m}\}\setminus\{r\}.
\]
For each subset \(I\subseteq U_r\), define
\[
J_I:=I\cup\{r\},
\qquad
\Delta_{J_I}(\bmz)
:=
\prod_{\substack{a<b\\a,b\in J_I}}(z_a-z_b),
\qquad
\Omega_I(\bmz;\bmx)
:=
\frac{\Phi(\bmz;\bmx)}{\Delta_{J_I}(\bmz)}.
\]
Since \(\Delta_{J_I}(\bmz)\) divides \(\Delta(\bmz)\), the function
\(\Omega_I\) is entire in \(\bmz\).

If \(J_I=\{j_0<j_1<\cdots<j_m\}\), define
\begin{equation}
\mathcal D_I
:=
\sum_{\ell=0}^m
(-1)^\ell
\Delta_{J_I\setminus\{j_\ell\}}(\bmz)\,
\partial_{z_{j_\ell}}.
\label{eq:DI-def}
\end{equation}
For each \(\ell\), the coefficient
\(\Delta_{J_I\setminus\{j_\ell\}}(\bmz)\) is independent of \(z_{j_\ell}\).
Moreover, \(\Omega_I\) is a polynomial in \(\bmz\) times
\(e^{S(\bmz;\bmx)}\), and it decays exponentially at the ends of the admissible
Airy contours in the variable being differentiated. Therefore integration by
parts in \(z_{j_\ell}\) yields
\begin{equation}
\int_{\Gamma_{\bm\sigma}} \mathcal D_I\Omega_I\,\rd\bmz=0,
\qquad
I\subseteq U_r.
\label{eq:IBP}
\end{equation}

We now introduce
\[
c_I(\bmx):=\prod_{u\in I}\frac1{x_r-x_u},
\qquad
c_\varnothing(\bmx):=1.
\]
The key identity is
\begin{equation}
\sum_{I\subseteq U_r} c_I(\bmx)\,\mathcal D_I\Omega_I
=
B_r(\bmx,\bmz)\,\Phi(\bmz;\bmx).
\label{eq:key-identity}
\end{equation}
Once \eqref{eq:key-identity} is proved, integrating it over
\(\Gamma_{\bm\sigma}\) and using \eqref{eq:IBP} gives
\eqref{eq:target-vanishing}. Then \eqref{eq:LrK} implies
\(\sfL_r G_{\bm\sigma}=0\).

\medskip
\noindent
\emph{Step 1b. Proof of the key identity.}
For a finite set \(J\subseteq\{1,\dots,{n+m}\}\) and \(a\in J\), define
\[
w_a^J(\bmz)
:=
\frac{1}{\prod_{b\in J\setminus\{a\}}(z_a-z_b)}.
\]
If \(J=\{j_0<\cdots<j_m\}\) and \(a=j_\ell\), then a direct sign check gives
\begin{equation}
(-1)^\ell
\frac{\Delta_{J\setminus\{a\}}(\bmz)}
{\Delta_J(\bmz)}
=
w_a^J(\bmz).
\label{eq:w-ratio}
\end{equation}
Since
\[
\frac{\partial_{z_a}\Omega_I}{\Omega_I}
=
z_a^2-x_a
+
\sum_{v\notin J_I}\frac1{z_a-z_v},
\qquad a\in J_I,
\]
it follows from \eqref{eq:DI-def} and \eqref{eq:w-ratio} that
\begin{equation}
\frac{\mathcal D_I\Omega_I}{\Phi}
=
\sum_{a\in J_I}
w_a^{J_I}(\bmz)
\left(
z_a^2-x_a
+
\sum_{v\notin J_I}\frac1{z_a-z_v}
\right).
\label{eq:DI-over-Phi}
\end{equation}

We use two elementary identities.

\smallskip
\noindent
\emph{(i) Lagrange interpolation identity.}
For every finite set \(J\),
\begin{equation}
\sum_{a\in J} w_a^J(\bmz)\,z_a^m
=
\begin{cases}
0,& 0\le m\le |J|-2,\\[1mm]
1,& m=|J|-1.
\end{cases}
\label{eq:Lagrange}
\end{equation}
This is the standard Lagrange interpolation formula applied to \(t^m\).

\smallskip
\noindent
\emph{(ii) Partial fraction identity.}
If \(v\notin J\), then
\begin{equation}
\sum_{a\in J}
\frac{w_a^J(\bmz)}{z_a-z_v}
=
-\frac1{\prod_{a\in J}(z_v-z_a)}
=
-w_v^{J\cup\{v\}}(\bmz).
\label{eq:partial-frac}
\end{equation}
This follows from the partial fraction decomposition of
\[
\frac1{\prod_{a\in J}(t-z_a)}
=
\sum_{a\in J}\frac{w_a^J(\bmz)}{t-z_a},
\]
evaluated at \(t=z_v\).

We now sum \eqref{eq:DI-over-Phi} over all \(I\subseteq U_r\) with
coefficients \(c_I(\bmx)\).

First, consider the terms involving \(z_a^2\). By \eqref{eq:Lagrange}, only the
cases \(|J_I|=1,2,3\) contribute:
\[
\sum_{a\in J_I}w_a^{J_I}(\bmz)z_a^2
=
\begin{cases}
z_r^2,& I=\varnothing,\\[1mm]
z_r+z_u,& I=\{u\},\\[1mm]
1,& I=\{u,v\},\ u<v,\ u,v\neq r,\\[1mm]
0,& |I|\ge3.
\end{cases}
\]
Hence the total contribution of the \(z_a^2\)-terms is
\begin{equation}
z_r^2
+\sum_{u\neq r}\frac{z_r+z_u}{x_r-x_u}
+\sum_{\substack{u<v\\u,v\neq r}}
\frac1{(x_r-x_u)(x_r-x_v)}.
\label{eq:z2-contrib}
\end{equation}

It remains to treat the \(-x_a\)-terms and the terms containing
\((z_a-z_v)^{-1}\). These cancel telescopically.

Suppose \(I\neq\varnothing\). Since \(|J_I|\ge2\), identity
\eqref{eq:Lagrange} with \(m=0\) gives
\[
\sum_{a\in J_I} w_a^{J_I}(\bmz)=0.
\]
Therefore
\begin{align}
-c_I(\bmx)
\sum_{a\in J_I}w_a^{J_I}(\bmz)x_a
&=
-c_I(\bmx)
\sum_{b\in I}w_b^{J_I}(\bmz)(x_b-x_r)
\nonumber\\
&=
\sum_{b\in I}
c_{I\setminus\{b\}}(\bmx)w_b^{J_I}(\bmz),
\label{eq:x-terms}
\end{align}
because
\[
-c_I(\bmx)(x_b-x_r)
=
\prod_{u\in I\setminus\{b\}}\frac1{x_r-x_u}
=
c_{I\setminus\{b\}}(\bmx).
\]

Now fix \(b\in I\). Consider the contribution of the term \(v=b\) in the last
sum of \eqref{eq:DI-over-Phi}, but for the smaller subset \(I\setminus\{b\}\).
Since
\[
J_{I\setminus\{b\}}\cup\{b\}=J_I,
\]
the partial fraction identity \eqref{eq:partial-frac} gives
\begin{align}
c_{I\setminus\{b\}}(\bmx)
\sum_{a\in J_{I\setminus\{b\}}}
\frac{w_a^{J_{I\setminus\{b\}}}(\bmz)}{z_a-z_b}
&=
-\,c_{I\setminus\{b\}}(\bmx)w_b^{J_I}(\bmz).
\label{eq:partner-term}
\end{align}
This cancels exactly with the corresponding summand in \eqref{eq:x-terms}.
Thus every \(-x_a\)-term with \(I\neq\varnothing\) is cancelled by a unique
term coming from the \((z_a-z_v)^{-1}\)-part for a smaller subset.

The only unmatched \(x\)-term is the one coming from \(I=\varnothing\), namely
\[
-\sum_{a\in\{r\}}w_a^{\{r\}}(\bmz)x_a=-x_r.
\]
Therefore, after all cancellations, the total contribution of the
\(-x_a\)-terms together with the \((z_a-z_v)^{-1}\)-terms is exactly \(-x_r\).
Combining this with \eqref{eq:z2-contrib}, we obtain
\[
\sum_{I\subseteq U_r}
c_I(\bmx)\frac{\mathcal D_I\Omega_I}{\Phi}
=
z_r^2-x_r
+\sum_{u\neq r}\frac{z_r+z_u}{x_r-x_u}
+\sum_{\substack{u<v\\u,v\neq r}}
\frac1{(x_r-x_u)(x_r-x_v)}.
\]
By the definition of \(B_r(\bmx,\bmz)\) in \eqref{eq:Br}, this proves
\eqref{eq:key-identity}.

Integrating \eqref{eq:key-identity} over \(\Gamma_{\bm\sigma}\) and using
\eqref{eq:IBP} give
\[
\int_{\Gamma_{\bm\sigma}}
B_r(\bmx,\bmz)\Phi(\bmz;\bmx)\,\rd\bmz=0.
\]
Thus \(\sfL_r G_{\bm\sigma}=0\), which proves \eqref{eq:direct-edge-reduced}.

\medskip
\noindent
\emph{Step 2. Lifting back to the original system.}
By \Cref{thm:edge-beta2-reduction},
\[
F_{\bm\sigma}=P\,G_{\bm\sigma}
\]
solves the original edge system \eqref{e:edgebeta=2}. Since
\[
\Delta(\bmx)=\Delta(\bmt)\Delta(\bms)P(\bmt,\bms),
\]
formula \eqref{eq:F-sigma-edge} follows directly.

\medskip
\noindent
\emph{Step 3. Linear independence.}
In the sector \(0<|\arg x|<\pi-\delta\),
\[
\partial_x^{\,j-1}\phi_{\pm}(x)
\sim
(\pm\sqrt{x})^{j-1}\phi_{\pm}(x).
\]
Hence, along any admissible ray in this sector,
\[
D_{\bm\sigma}(\bmx)
\sim
\left(\prod_{r=1}^{n+m} \phi_{\sigma_r}(x_r)\right)
\det\!\left[(\sigma_r\sqrt{x_r})^{j-1}\right]_{1\le r,j\le {n+m}}
=
\left(\prod_{r=1}^{n+m} \phi_{\sigma_r}(x_r)\right)
\Delta(\sigma_1\sqrt{x_1},\dots,\sigma_{n+m}\sqrt{x_{n+m}}).
\]
Therefore
\[
F_{\bm\sigma}(\bmt,\bms)
\sim
\frac{
\Delta(\sigma_1\sqrt{x_1},\dots,\sigma_{n+m}\sqrt{x_{n+m}})
}{
\Delta(\bmt)\Delta(\bms)
}
\prod_{r=1}^{n+m} \phi_{\sigma_r}(x_r).
\]
Distinct sign vectors \(\bm\sigma\) produce distinct leading exponential
factors
\[
\exp\!\left(
\sum_{r=1}^{n+m}
\sigma_r\frac{2}{3}x_r^{3/2}
\right).
\]
Thus the same large-ray argument as in the bulk proof shows that the functions
\(\{F_{\bm\sigma}\}\) are linearly independent.
\end{proof}

\section{The equilibrium BBGKY hierarchy and the loop equations}\label{App:BBGKY}
In this appendix we explain that the equilibrium Bogoliubov–Born–Green–Kirkwood–Yvon equations for the one-dimensional logarithmic interaction imply the loop equations (\ref{e:loopeq2-bulk}), and therefore 
the particle system is the ${\rm Sine}_{\beta}$ ensemble.  This provides one example of a point process characterized by the BBGKY hierarchy in the case of long-range and singular interactions.
For the proof,  we assume a priori mild estimates on the point process,  see \eqref{eqn:weakRigid}.

The definition and analysis of the BBGKY hierarchy for particle systems are delicate for long-range interactions. 
We first discuss a general setting  and then specialize to the log interaction in dimension $1$. 
Consider a simple point process
\[
\mu=\sum_{i\in I}\delta_{x_i},\ x_i\in X=\mathbb{R}^d,
\]
and its correlation functions defined as the measures $(\rho_p)_{p\geq 1}$ such that
\[
\mathbb{E}\!\left[
\sum_{i_1,\dots,i_p\in I}^{*}
f(x_{i_1},\dots,x_{i_p})
\right]
=
\int_{X^p}
f(q_1,\dots,q_p)\,
\rd\rho_p(q_1,\dots,q_p),
\]
for all suitable test functions $f$, where $\sum^*$ means the summation is over all ordered $p$-tuples of distinct indices.  Assume $\mu$ is the local limit of a finite interacting particle system
with the same charges and symmetric,  singular,  long-range interaction potential $\varphi$ (e.g.  consider either  Coulombic $\varphi=(-\Delta)^{-1}$ or logarithmic $\varphi=-\log$ below), at inverse temperature $\beta$: defining
\begin{equation}\label{eqn:finiteN}
\mathbb{P}_N(y_1,\dots,y_N)=\frac{1}{Z_{N,\beta}}e^{-\beta N^\alpha\big(\frac{1}{2}\sum_{i\neq j}\varphi(y_i-y_j)-N\sum_{k=1}^NV(y_k)\big)}\rd y_1\dots\rd y_N
\end{equation}
with confining $V$ (e.g.  $V(y)=|y|^2$) we assume that the following convergence holds in distribution with respect to the weak topology, as $N\to\infty$ (we only consider convergence around the energy level $E=0$ to simplify the discussion):
\[\sum_{i=1}^N\delta_{N^{1/d} y_i}\to \mu \quad \text{as } N\to\infty.\]
Depending on $\alpha\in\mathbb{R}$, the point process $\mu$ is expected to be either Poisson,  crystallized,  or a non-trivial indermediate depending on $\beta$. 
When $\mu$ is non-trivial,
one expects the following Yvon–Born–Green or YBG hierarchy (which corresponds to the BBGKY  hierarchy at equilibrium) for the correlation functions of $\mu$, see \cite[Equation (2.10)]{Martin}: 
assuming some decay of correlations (called a clustering condition in  \cite{Martin}), we have
for distinct points $q_1,\dots, q_p$, 
\begin{multline}
\frac{1}{\beta}\nabla_{q_1}\rho_p(q_1,\dots,q_p)
+
\big(\sum_{j=2}^p \nabla\varphi(q_1-q_j)\big)\rho_p(q_1,\dots,q_p)\\
+
\int_{X} \nabla \varphi(q_1-y)\left(\rho_{p+1}(q_1,\dots,q_p,y)
-\rho_{p}(q_1,\dots,q_p)\rho_1(y)
\right){\rm d}y=0.\label{eqn:BBGKY}
\end{multline}
For singular $\varphi$, this pointwise identity \eqref{eqn:BBGKY} holds only away from the collision diagonals.  It does not by itself imply the following weak formulation, as this would require \emph{a priori} information on the behavior of $\rho_p$ near collisions. We therefore adopt the following distributional BBGKY hierarchy as the definition.

\begin{definition}[Equilibrium BBGKY]\label{def:bbgky}
We say that $\mu$ satisfies the equilibrium BBGKY hierarchy for the interaction $\varphi$ if
for any $p\geq 1$ and any test function $\Phi\in C_c^\infty(X^p)$, we have
\begin{equation}\label{eqn:IBP}
\mathbb E
\sum_{i_1,\dots,i_p\in I}
\left[
\frac{1}{\beta}
\nabla_{x_{i_1}}\Phi(x_{i_1},\dots,x_{i_p})
-
\Phi(x_{i_1},\dots,x_{i_p})\int
\nabla\varphi(x_{i_1}-u)
(\rd\mu_{i_1}-\rd\rho_1)(u)
\right]=0,
\end{equation}
where the indices $i_1,\dots,i_p$ are not required to be distinct and we denote $\mu_i=\mu-\delta_{x_i}$.
\end{definition}

\begin{remark}
For symmetric interactions,  constant density of states  $\rho_1$, and $\nabla\varphi\in{\rm L}^1$ we have $\int\nabla\varphi(q-y)\rho_1(y)=0$, so $\rho_1$ above and the $\rho_p\rho_1$ term in \eqref{eqn:BBGKY} can be omitted.  However, in our long-range and singular context
$\nabla\varphi\not\in{\rm L}^1$ and this counter-term is needed.
\end{remark}

\begin{remark} The above definition is natural: 
integrating by parts with respect to the measure
(\ref{eqn:finiteN}) gives, for any $F\in C_c^\infty(X^p)$
\[
{\mathbb E}_N
\left[
\frac{1}{\beta}
\nabla_{y_{1}}F
-
F\big(\sum_{k\neq 1}
\nabla\varphi(y_{1}-y_k)
+N\nabla V(y_1)
\big)
\right]=0.
\]
For example, picking $V(y)=|y|^2$ and $F(y)=\Phi(N^{1/d}y)$,   the contribution from $V$ can be shown to be negligible as $N\to\infty$.  
Defining the equilibrium density through $N^{-1}\sum\delta_{y_i}\to\rho_{\rm eq}$, up to a negligible error one can replace  $\sum_{k\neq 1}
\nabla\varphi(y_{1}-y_k)$ with $\sum_{k\neq 1}
\nabla\varphi(y_{1}-y_k)-N\int\nabla \varphi(y_1-u)\rd\rho_{\rm eq}(u)$. Under some mild assumption on the discrepancy of the point process,  dominated convergence can then be justified and one obtains \eqref{eqn:IBP} with constant $\rho_1$.
\end{remark}

We now show that the distributional formulation \Cref{def:bbgky} implies the pointwise
BBGKY equation \eqref{eqn:BBGKY} away from the collision diagonals. To see this,  consider distinct $q_i$'s and $\Phi(x_1,\dots,x_p)=\prod_{i=1}^p\xi^i_\varepsilon(x_i)$ with $\xi^i_\varepsilon(x)=\varepsilon^{-d}\xi((x-q_i)/\varepsilon)$, $\xi$ nonnegative, smooth,  supported on $|x|<1$ and $\int\xi=1$.  When
 $\varepsilon<\min_{i\neq j}|q_i-q_j|/2$, the supports of
\(\xi_\varepsilon^1,\dots,\xi_\varepsilon^p\) are pairwise disjoint.  Applying Definition \ref{def:bbgky} with this test function, we observe that, since the supports of the
\(\xi_\varepsilon^a\)'s are pairwise disjoint, every term with repeated
indices vanishes.  Indeed, if \(i_a=i_b\) for some \(a\neq b\), then the
factor
\[
\xi_\varepsilon^a(x_{i_a})\xi_\varepsilon^b(x_{i_b})
=
\xi_\varepsilon^a(x_{i_a})\xi_\varepsilon^b(x_{i_a})
\]
is zero.  The same argument applies to the derivative term, since
\(\operatorname{supp}\nabla\xi_\varepsilon^i\subset
\operatorname{supp}\xi_\varepsilon^i\).  Therefore, for this particular choice of \(\Phi\), the contribution of the unrestricted sum is equal to the contribution of the distinct-index sum. Hence \Cref{def:bbgky} gives
\begin{multline*}
\mathbb E
\sum_{i_1,\dots,i_p\in I}^*
\left[
\frac{1}{\beta}
(\nabla_{x_{i_1}}\xi_\varepsilon^1(x_{i_1}))\prod_{j=2}^p\xi_\varepsilon^j(x_{i_j})
-
\prod_{j=1}^p\xi_\varepsilon^j(x_{i_j})\sum_{k=2}^p\nabla\varphi(x_{i_1}-x_{i_k})\right.\\
\left.-
\prod_{j=1}^p\xi_\varepsilon^j(x_{i_j})\left(\sum_{\ell\in I\backslash\{i_1,\dots,i_p\}}\nabla\varphi(x_{i_1}-x_{\ell})
-\int\nabla\varphi(x_{i_1}-u)\rd\rho_1(u)
\right)
\right]=0,
\end{multline*}
so
\begin{multline*}
\int_{X^p}
\frac{1}{\beta}
(\nabla_{r_1}\xi_\varepsilon^1(r_1))\prod_{j=2}^p\xi_\varepsilon^j(r_j)\rho_p(r_1,\dots,r_p)\rd r_1\dots\rd r_p
-\int_{X^p}
\prod_{j=1}^p\xi_\varepsilon^j(r_j)\sum_{k=2}^p\nabla\varphi(r_1-r_k)\rho_p(r_1,\dots,r_p)\rd r_1\dots\rd r_p\\
-\int_{X^{p+1}}
\prod_{j=1}^p\xi_\varepsilon^j(r_j)\nabla\varphi(r_1-x)\big(\rho_{p+1}(r_1,\dots,r_p,x)-\rho_{p}(r_1,\dots,r_p)\rho_1(x)\big)\rd r_1\dots\rd r_p\rd x
=0.
\end{multline*}
Integrating by parts in \(r_1\), taking \(\varepsilon \to 0\), and using that
\(\prod_{j=1}^p\xi_\varepsilon^j\) converges to a Dirac mass at
\((q_1,\dots,q_p)\), we obtain \eqref{eqn:BBGKY}.

We now consider the case \(d=1\), \(\varphi(x)=-\log|x|\), and
\[
\nabla\varphi(x-y)=-\frac{1}{x-y}.
\]
We also assume that the density of states is constant, with normalization
\(\rho_1\equiv\pi^{-1}\). Finally, we will need a very weak control on the
eigenvalue counts, which can possibly be weakened (we have not tried to
optimize this assumption): there exists \(\kappa>0\) such that, for any
\(p\geq 1\), the following holds. There exists \(C_p>0\) such that, for any
interval \(I\) with \(|I|\geq 1\),
\begin{equation}\label{eqn:weakRigid}
\mathbb{E}\big[|\mu(I)-|I|/\pi|^p\big]^{1/p}
\leq C_p|I|^{1-\kappa}.
\end{equation}
Under the above assumption, the following properties are elementary to prove:
for any \(w\notin\mathbb{R}\),
\[
s(w)={\rm P.V.}\sum_i\frac{1}{x_i-w}
\]
is a.s. well-defined and belongs to \(L^p\) for every \(p\geq 1\), and it is
a Nevanlinna function with parameters
$
c=0$, 
$b={\rm P.V.}\sum_i{x_i}/({x_i^2+1})$.
The main result of this section is the following.

\begin{proposition}\label{prop:bbgky}
Fix \(\beta>0\) rational. Let \(\mu\) satisfy the above assumption
\eqref{eqn:weakRigid} and the equilibrium BBGKY hierarchy \eqref{eqn:IBP}.
Then \(s\) satisfies the loop equations \eqref{e:loopeq2-bulk}, and \(\mu\)
is the \({\rm Sine}_\beta\) point process.
\end{proposition}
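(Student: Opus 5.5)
The plan is to derive \eqref{e:loopeq2-bulk} from \eqref{eqn:IBP} by choosing test functions that turn the unrestricted sums into products of Stieltjes transforms. Once the loop equations hold, \Cref{t:bulk-characterization} identifies $\mu$ as ${\rm Sine}_\beta$, since $\beta$ is rational. The remaining hypotheses of \Cref{a:bulk} (particle-generated Nevanlinna, $L^q$ bounds) follow from \eqref{eqn:weakRigid}, as noted before the proposition.

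First take, formally, $\Phi(x_1,\dots,x_p)=\prod_{j=1}^p (x_j-w_j)^{-1}$, which is smooth but not compactly supported. Then $\nabla_{x_1}\Phi=-(x_1-w_1)^{-1}\Phi$. The unrestricted sum over $i_1,\dots,i_p$ factorizes into $\prod_{j\ge 2}\sum_{i}(x_i-w_j)^{-1}=\prod_{j\ge2}(-s(w_j))$, interpreted as principal values. For the interaction term with $i_1=i$, write
\[
\frac{1}{x_i-w_1}\sum_{\ell\neq i}\frac{1}{x_i-x_\ell}.
\]
Symmetrizing in $(i,\ell)$ gives the identity
\[
\sum_{i\neq \ell}\frac{1}{(x_i-w)(x_i-x_\ell)}=\frac12\Big[\Big(\sum_i\frac{1}{x_i-w}\Big)^2-\sum_i\frac{1}{(x_i-w)^2}\Big],
\]
which is $\tfrac12(s(w_1)^2-s'(w_1))$ up to signs. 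The counter-term $\int (x_i-u)^{-1}\rho_1(u)\,\rd u$ equals $\pi^{-1}\cdot{\rm P.V.}\!\int \rd u/(x_i-u)=0$. However, this must be combined with the P.V. regularization of the $\ell$-sum, and the combination produces the constant $+1$ (the mean of $s^2$ is $-1$ because $s\approx \ri$, i.e.\ the P.V.\ sum $\sum_\ell (x_i-x_\ell)^{-1}$ centered against $\rho_1$). For the cross terms, the indices $i_1=i_j$ produce diagonal contributions $\sum_i (x_i-w_1)^{-1}(x_i-w_j)^{-2}$, which is exactly $\partial_{w_j}\frac{s(w_1)-s(w_j)}{w_1-w_j}$. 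Collecting the terms with the factor $1/\beta$ versus the interaction factor, and multiplying by $2/\beta$, should give precisely \eqref{e:loopeq2-bulk}; in particular the $\frac{2-\beta}{\beta}\partial s$ coefficient comes from combining $\frac1\beta\cdot(-s')$ with $-\frac12(-s')$. I would first check this bookkeeping at $p=1$, then treat general $p$ by multilinearity.

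To make this rigorous, use truncated test functions $\Phi_L(x)=\prod_j \chi(x_j/L)(x_j-w_j)^{-1}$ with $\chi\in C_c^\infty$ equal to $1$ on $[-1,1]$, apply \eqref{eqn:IBP}, and let $L\to\infty$. The derivative hitting $\chi$ contributes $L^{-1}\sum_i|\chi'(x_i/L)||x_i-w|^{-1}\cdots$, which is $O(L^{-1})$ in $L^q$ by \eqref{eqn:weakRigid} and Hölder. Truncated sums converge to the P.V.\ sums in $L^q$ by summation by parts against the counting function, using $\mu(I)-|I|/\pi=O(|I|^{1-\kappa})$ in $L^p$.

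The main obstacle is the interaction term. One needs the truncated version
\[
\sum_{i}\chi(x_i/L)(x_i-w_1)^{-1}\Big[\sum_{\ell\ne i}(x_i-x_\ell)^{-1}-\tfrac1\pi{\rm P.V.}\!\!\int\!(x_i-u)^{-1}\rd u\Big]
\]
to converge, in expectation against the other bounded-moment factors, to $\tfrac12(s^2-s')+$const. Here the inner sum is over all $\ell$, not truncated, and is long-range. I would split $\ell$ into $|x_\ell-x_i|\le L^{1/2}$ and its complement. For the near part, symmetrize exactly as in the identity above. For the far part, integrate by parts against $\mu-\rd u/\pi$ and bound it by $\sum_k 2^{-k(\kappa)}$ via dyadic shells and \eqref{eqn:weakRigid}, showing it is negligible after summing the $O(L)$ particles weighted by $|x_i-w_1|^{-1}$. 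This uses decay of the weight, so that only $O(1)$ effective particles matter at each scale. The constant term must also be identified as $+1$; I would pin it down by consistency with $\mathbb E[s(w)]=\ri$, which follows from the constant density $\rho_1=\pi^{-1}$.
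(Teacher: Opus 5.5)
Your treatment of the derivative term, of the repeated indices $i_1=i_k$ (which produce $\partial_{w_k}\frac{s(w_1)-s(w_k)}{w_1-w_k}$), and of the coefficient $\frac{2-\beta}{\beta}$ agrees with the paper. There is one sign slip: $\sum_i(x_i-w_j)^{-1}=s(w_j)$, not $-s(w_j)$.

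\textbf{The gap.} Your argument does not produce the constant $+1$, which is the only nontrivial output of the interaction term. The finite-sum symmetrization identity does not transfer to infinite configurations. The expressions $\sum_i\frac{1}{x_i-w_1}\bigl[\operatorname{P.V.}\sum_{\ell\neq i}\frac{1}{x_i-x_\ell}-\text{background}\bigr]$ and $-\frac12(s^2-s')$ are different rearrangements of a conditionally convergent double sum. The first equals the second minus $\frac12$.

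In your rigorous version this is concrete. With $\chi_i:=\chi(x_i/L)$, symmetrizing the near part $|x_\ell-x_i|\le L^{1/2}$ gives, up to negligible cutoff terms,
\[
-\frac12\sum_{i\neq\ell,\ |x_i-x_\ell|\le L^{1/2}}\frac{\chi_i\chi_\ell}{(x_i-w_1)(x_\ell-w_1)}
=-\frac12\bigl(s_L(w_1)^2-s_L'(w_1)\bigr)+\frac12F_L+\oo(1),
\]
where
\[
F_L:=\sum_{i\neq\ell,\ |x_i-x_\ell|>L^{1/2}}\frac{\chi_i\chi_\ell}{(x_i-w_1)(x_\ell-w_1)} .
\]
This $F_L$ is not small. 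Replacing sums by integrals against $\rho_1=\pi^{-1}$ gives $F_L\approx\pi^{-2}\bigl[\bigl(\int_{-L}^{L}\frac{\rd x}{x-w_1}\bigr)^2-\iint_{|x-y|\le L^{1/2}}\frac{\rd x\,\rd y}{(x-w_1)(y-w_1)}\bigr]\to-1-0$. The first integral tends to $\ri\pi$. The second, taken over $\bR^2$ (which changes it only by $\OO(L^{-1/2})$), vanishes by closing the contour in $\bC_-$.

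So the constant sits in the far \emph{pairs}. The far \emph{fluctuation} term you estimate is indeed negligible. But your two claims together (near part gives $\frac12(s^2-s')$, far part negligible) would yield the hierarchy with constant $0$, which is false.

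Fixing the constant ``by consistency with $\bE[s]=\ri$'' does not repair this, for three reasons:
\begin{enumerate}
\item $\bE[s]=\ri$ does not determine $\bE[s^2]$.
\item The heuristic ``$\bE[s^2]=-1$ because $s\approx\ri$'' is precisely the $p=1$ loop equation you are proving.
\item Nothing in your scheme shows that a deterministic constant, the same for every $p$ and every $w_j$, appears at all.
\end{enumerate}

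\textbf{How the paper handles it.} The paper truncates both indices by the same \emph{absolute} cutoff $\chi_R$, with transition width $R^{1-\varepsilon}$. The truncated pair sum then symmetrizes exactly to $-\frac12(s_R^2-s_R')$. The constant comes from the equally truncated background,
\[
\sum_i\frac{\chi_R(x_i)}{x_i-w_1}\int\frac{\chi_R(x)}{x_i-x}\,\rd\rho_1(x)\longrightarrow\frac{1}{\pi^2}\int_{-1}^{1}\frac1u\log\frac{1+u}{1-u}\,\rd u=\frac12 .
\]
This is nonzero because the inner integral is essentially $\pi^{-1}\log\frac{R+x_i}{R-x_i}$ rather than $0$. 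It enters the interaction term as $-\frac12$ and becomes the $+1$ after multiplying the identity by $-2$.

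The remainder $\int\frac{1-\chi_R(x)}{x_i-x}(\rd\mu_i-\rd\rho_1)(x)$ is shown negligible in \Cref{lem:LongRange}. The paper first peels off the layer $\chi_{R'}-\chi_R$ (by symmetrization plus counting), then integrates by parts against $\mu[0,u]-u/\pi$. Counting from a fixed origin means \eqref{eqn:weakRigid} is only used on deterministic intervals. Your windows centered at the random points $x_i$ would additionally require a union bound.

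To complete your own route you would have to prove $F_L\to-1$ in $L^q$ from \eqref{eqn:weakRigid}. That is the counterpart of the paper's $\frac12$ computation, and it is the missing step.
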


\begin{proof}
Let \(\varepsilon>0\) be small enough, for example
\(\varepsilon=\kappa/10\), where \(\kappa\) is defined in
\eqref{eqn:weakRigid}. Let \(R>0\), which will eventually tend to infinity,
and let \(\chi_a:\mathbb{R}\to\mathbb{R}_+\) be smooth and even, with
\[
\chi_a=1 \quad \text{on } [-a,a],
\qquad
\chi_a=0 \quad \text{on }
[-(a+R^{1-\varepsilon}),a+R^{1-\varepsilon}]^{\complement},
\]
and
$
\|\chi_a'\|_\infty\leq R^{-1+\varepsilon}$.
We will consider \(a=R\) and \(a=R+10R^{1-\varepsilon}\) in the proof.

We write \(Y=Y(R)\) for a random variable, changing from line to line, which
is uniformly bounded in \(L^q\), for every fixed \(q\geq 1\).

We introduce
\begin{align}
s_R(w)=\sum_i\frac{\chi_R(x_i)}{x_i-w}
,\quad 
\Phi(x_1,\dots,x_p)
=
\frac{\chi_R(x_1)}{x_1-w_1}\cdots
\frac{\chi_R(x_p)}{x_p-w_p}.
\end{align}
Then \eqref{eqn:IBP} gives
\begin{equation}\label{eqn:inter}
\mathbb E
\left[
\frac{1}{\beta}\sum_{i_1,\dots,i_p}
\partial_{x_{i_1}}\left(
\frac{\chi_R(x_{i_1})}{x_{i_1}-w_1}
\prod_{j=2}^p\frac{\chi_R(x_{i_j})}{x_{i_j}-w_j}
\right)
+\sum_{i_1,\dots,i_p}
\prod_{j=1}^p \frac{\chi_R(x_{i_j})}{x_{i_j}-w_j}
\int
\frac{1}{x_{i_1}-x}(\rd \mu_{i_1}-\rd \rho_1)(x)
\right]=0.
\end{equation}

We first consider the \(\partial_{x_{i_1}}\) term:
\begin{align*}
\partial_{x_{i_1}}\left(
\frac{\chi_R(x_{i_1})}{x_{i_1}-w_1}
\prod_{j=2}^p\frac{\chi_R(x_{i_j})}{x_{i_j}-w_j}
\right)
&=
\left(
-\partial_{w_1}\frac{\chi_R(x_{i_1})}{x_{i_1}-w_1}
+\frac{\chi_R'(x_{i_1})}{x_{i_1}-w_1}
\right)
\prod_{j=2}^p\frac{\chi_R(x_{i_j})}{x_{i_j}-w_j}
\\
&-
\frac{\chi_R(x_{i_1})}{x_{i_1}-w_1}
\sum_{k=2}^p\mathds{1}_{i_k=i_1}
\left(
\frac{\chi_R(x_{i_k})}{(x_{i_k}-w_k)^2}
-\frac{\chi_R'(x_{i_k})}{x_{i_k}-w_k}
\right)
\prod_{\substack{2\leq j\leq p\\ j\neq k}}
\frac{\chi_R(x_{i_j})}{x_{i_j}-w_j},
\end{align*}
so that
\begin{align*}
\sum_{i_1,\dots,i_p}
\partial_{x_{i_1}}\left(
\frac{\chi_R(x_{i_1})}{x_{i_1}-w_1}
\prod_{j=2}^p\frac{\chi_R(x_{i_j})}{x_{i_j}-w_j}
\right)
&=
-\partial_{w_1}s_R(w_1)\prod_{j=2}^p s_R(w_j)
-
\sum_{k=2}^p
\sum_{i_1}
\frac{\chi_R(x_{i_1})^2}
{(x_{i_1}-w_1)(x_{i_1}-w_k)^2}
\prod_{\substack{2\leq j\leq p\\ j\neq k}} s_R(w_j)
\\
&+
\sum_{i_1}\frac{\chi_R'(x_{i_1})}{x_{i_1}-w_1}
\prod_{j=2}^p s_R(w_j)
+
\sum_{k=2}^p
\sum_{i_1}
\frac{\chi_R(x_{i_1})}{x_{i_1}-w_1}
\frac{\chi_R'(x_{i_1})}{x_{i_1}-w_k}
\prod_{\substack{2\leq j\leq p\\ j\neq k}} s_R(w_j).
\end{align*}
From \eqref{eqn:weakRigid}, all terms above involving \(\chi_R'\) are of
order \(Y/R\). For example,
\[
\left|
\sum_{i_1}\frac{\chi_R'(x_{i_1})}{x_{i_1}-w_1}
\right|
\leq
\|\chi_R'\|_\infty
\sum_{|x_{i_1}|\in[R,R+R^{1-\varepsilon}]}
\frac{1}{|x_{i_1}-w_1|}
\lesssim
R^{-1+\varepsilon}
\frac{\#\{|x_i|\in[R,R+R^{1-\varepsilon}]\}}{R}
\leq
\frac{Y}{R}.
\]
Note that
\begin{multline*}
\sum_i
\frac{\chi_R(x_i)^2}{(x_i-w_1)(x_i-w_k)^2}
=
\partial_{w_k}
\frac{s_R(w_1)-s_R(w_k)}{w_1-w_k}
+
\sum_i
\frac{\chi_R(x_i)^2-\chi_R(x_i)}
{(x_i-w_1)(x_i-w_k)^2}
\\
=
\partial_{w_k}
\frac{s_R(w_1)-s_R(w_k)}{w_1-w_k}
+
{\rm O}\left(
\frac{\#\{|x_i|\in[R,R+R^{1-\varepsilon}]\}}{R^3}
\right)
=
\partial_{w_k}
\frac{s_R(w_1)-s_R(w_k)}{w_1-w_k}
+
\frac{Y}{R^{2+\varepsilon}}.
\end{multline*}
We obtain
\begin{align}\begin{split}\label{eqn:1stterm}
&\phantom{{}={}}\sum_{i_1,\dots,i_p}
\partial_{x_{i_1}}\left(
\frac{\chi_R(x_{i_1})}{x_{i_1}-w_1}
\prod_{j=2}^p\frac{\chi_R(x_{i_j})}{x_{i_j}-w_j}
\right)
\\
&=
-\partial_{w_1}s_R(w_1)\prod_{j=2}^p s_R(w_j)
-
\sum_{k=2}^p
\partial_{w_k}
\frac{s_R(w_1)-s_R(w_k)}{w_1-w_k}
\prod_{\substack{2\leq j\leq p\\ j\neq k}}s_R(w_j)
+
\frac{Y}{R}.
\end{split}\end{align}

For the remaining term in \eqref{eqn:inter}, using the symmetry under
exchange of \(i_1\) and \(k\),
\begin{align*}
&\phantom{{}={}}\sum_{i_1\neq k}
\frac{\chi_R(x_{i_1})}{x_{i_1}-w_1}
\frac{\chi_R(x_k)}{x_{i_1}-x_k}
 =
\frac12
\sum_{i_1\neq k}
\frac{\chi_R(x_{i_1})\chi_R(x_k)}{x_{i_1}-x_k}
\left(
\frac{1}{x_{i_1}-w_1}
-
\frac{1}{x_k-w_1}
\right)
\\
&=
-\frac12
\sum_{i_1\neq k}
\frac{\chi_R(x_{i_1})\chi_R(x_k)}
{(x_{i_1}-w_1)(x_k-w_1)} =
-\frac12
\left(
s_R(w_1)^2-\partial_{w_1}s_R(w_1)
\right)
+
\frac{Y}{R^{1+\varepsilon}},
\end{align*}
so that
\begin{multline}\label{eqn:2ndterm}
\sum_{i_1,\dots,i_p}
\prod_{j=1}^p \frac{\chi_R(x_{i_j})}{x_{i_j}-w_j}
\int
\frac{1}{x_{i_1}-x}(\rd \mu_{i_1}-\rd \rho_1)(x)
=
-\frac12
\left(
s_R(w_1)^2-\partial_{w_1}s_R(w_1)
\right)
\prod_{j=2}^p s_R(w_j)
\\
+
\sum_{i_1,\dots,i_p}
\prod_{j=1}^p \frac{\chi_R(x_{i_j})}{x_{i_j}-w_j}
\int
\frac{1}{x_{i_1}-x}
\big((1-\chi_R)\rd \mu_{i_1}-\rd \rho_1\big)(x)
+
\frac{Y}{R}.
\end{multline}

For \(|x_{i_1}|<R-R^{1-\varepsilon/2}\), the point \(x_{i_1}\) is far from
the transition region of \(\chi_R\). Hence
\[
\int
\frac{\chi_R(x)}{x_{i_1}-x}\,\rd x
=
{\rm P.V.}\!\int_{-R}^{R}
\frac{\rd x}{x_{i_1}-x}
+
{\rm O}(R^{-\varepsilon/2})
=
\log\frac{R+x_{i_1}}{R-x_{i_1}}
+
{\rm O}(R^{-\varepsilon/2}).
\]
The remaining particles, satisfying
$
R-R^{1-\varepsilon/2}
<
|x_{i_1}|
<
R+R^{1-\varepsilon},
$
lie in a boundary layer. Their contribution is negligible. Indeed, by
\eqref{eqn:weakRigid}, the number of such particles is
\(YR^{1-\varepsilon/2}\), while
$
\left|
{\chi_R(x_{i_1})}/({x_{i_1}-w_1})
\right|
\lesssim 1/R
$
in this region, and the integral against
\(\chi_R(x)\rd x/(x_{i_1}-x)\) is at most logarithmic. Thus the total
boundary-layer contribution is \(Y/R^{\varepsilon/4}\).

Combining the interior and boundary estimates gives
\[
\sum_{i_1}
\frac{\chi_R(x_{i_1})}{x_{i_1}-w_1}
\int
\frac{\chi_R(x)}{x_{i_1}-x}\,\rd\rho_1(x)
=
\frac1\pi
\sum_{|x_{i_1}|\leq R-R^{1-\varepsilon/2}}
\frac{\chi_R(x_{i_1})}{x_{i_1}-w_1}
\log\frac{R+x_{i_1}}{R-x_{i_1}}
+
\frac{Y}{R^{\varepsilon/4}}.
\]
We now replace the particle sum by the corresponding density integral, using
the weak rigidity estimate \eqref{eqn:weakRigid}. This gives
\[
\frac1\pi
\sum_{|x_{i_1}|\leq R-R^{1-\varepsilon/2}}
\frac{\chi_R(x_{i_1})}{x_{i_1}-w_1}
\log\frac{R+x_{i_1}}{R-x_{i_1}}
=
\frac1{\pi^2}
{\rm P.V.}\!\int_{-R}^{R}
\frac{1}{x-w_1}
\log\frac{R+x}{R-x}\,\rd x
+
\frac{Y}{R^{\varepsilon/4}}.
\]
After the change of variables \(x=Ru\), the deterministic integral becomes
\[
\frac1{\pi^2}
\int_{-1}^{1}
\frac{1}{u-w_1/R}
\log\frac{1+u}{1-u}\,\rd u.
\]
Letting \(R\to\infty\), this converges to
\[
\frac1{\pi^2}
\int_{-1}^{1}
\frac1u
\log\frac{1+u}{1-u}\,\rd u
=
\frac{1}{2}.
\]
Consequently,
\[
\sum_{i_1}
\frac{\chi_R(x_{i_1})}{x_{i_1}-w_1}
\int
\frac{\chi_R(x)}{x_{i_1}-x}\,\rd\rho_1(x)
=
\frac12+\frac{Y}{R^{\varepsilon/4}}.
\]
Substituting the above equation and the long-range cancellation
Lemma \ref{lem:LongRange} below into \eqref{eqn:2ndterm} gives
\begin{equation}\label{eqn:2ndtermbis}
\sum_{i_1,\dots,i_p}
\prod_{j=1}^p \frac{\chi_R(x_{i_j})}{x_{i_j}-w_j}
\int
\frac{1}{x_{i_1}-x}(\rd \mu_{i_1}-\rd \rho_1)(x)
=
\left(
-\frac12
\left(
s_R(w_1)^2-\partial_{w_1}s_R(w_1)
\right)
-\frac{1}{2}
\right)
\prod_{j=2}^p s_R(w_j)
+
\frac{Y}{R^{\varepsilon/4}}.
\end{equation}
From the estimates \eqref{eqn:1stterm} and \eqref{eqn:2ndtermbis},
\eqref{eqn:inter} can be written as
\[
\bE\left[
\left(
\frac{2-\beta}{\beta}\,\partial_{w_1}s_R(w_1)
+s_R(w_1)^2+1
\right)
\prod_{j=2}^p s_R(w_j)
\right]
+
\frac{2}{\beta}\,
\bE\left[
\sum_{j=2}^p
\partial_{w_j}
\frac{s_R(w_1)-s_R(w_j)}{w_1-w_j}
\prod_{\substack{\ell=2\\ \ell\neq j}}^p s_R(w_\ell)
\right]
=
{\rm O}(R^{-\varepsilon/4}).
\]
Taking \(R\to\infty\) gives the loop equation
\eqref{e:loopeq2-bulk} by dominated convergence.
Since the principal value \(s\) is a Nevanlinna function, we can apply
Theorem \ref{t:bulk-characterization}, and \(\mu\) is the
\({\rm Sine}_\beta\) point process.
\end{proof}

\begin{lemma}\label{lem:LongRange}
We have
\begin{align}\label{e:long_range}
\sum_{i}
\frac{\chi_R(x_i)}{x_i-w_1}
\int
\frac{1-\chi_R(x)}{x_i-x}
(\rd\mu_i-\rd\rho_1)(x)
=
\frac{Y}{R^{\varepsilon/2}},
\end{align}
where \(Y=Y(R)\) is uniformly bounded in \(L^p\) for every fixed
\(p\geq 1\).
\end{lemma}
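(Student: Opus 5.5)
The plan is to split the integral according to the support of $1-\chi_R$ and to exploit that $x_i$ lies in $\operatorname{supp}\chi_R\subset[-R-R^{1-\varepsilon},R+R^{1-\varepsilon}]$. On this support $1-\chi_R(x)$ vanishes for $|x|\le R$, so the integrand is singular only when both $x_i$ and $x$ sit in the thin transition layer $\{R\le|x|\le R+R^{1-\varepsilon}\}$. Throughout, the prefactor obeys $|\chi_R(x_i)/(x_i-w_1)|\lesssim (1+|x_i|)^{-1}$. The inner integral is a genuinely long-range object, and its control should come from \eqref{eqn:weakRigid} through an integration by parts against the counting discrepancy.

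Write $N(x):=\mu([0,x])-x/\pi$ for $x>0$, and analogously on the negative axis. For fixed $x_i$ with $|x_i|\le R-R^{1-\varepsilon/2}$, the distance $|x_i-x|$ is at least $R^{1-\varepsilon/2}$ on the support of $1-\chi_R$. There I write
\[
\int\frac{1-\chi_R(x)}{x_i-x}(\rd\mu-\rd\rho_1)(x)=-\int N(x)\,\partial_x\!\left(\frac{1-\chi_R(x)}{x_i-x}\right)\rd x,
\]
cutting off at a large $L$, after which the boundary terms $N(\pm L)/L$ go to $0$ almost surely by Borel--Cantelli applied to \eqref{eqn:weakRigid} on dyadic scales. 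The derivative is bounded by $|\chi_R'|/|x_i-x|+(1-\chi_R)/|x_i-x|^2$. Using $|N(x)|\le Y|x|^{1-\kappa}\log$ uniformly on dyadic scales, which again follows from \eqref{eqn:weakRigid}, the inner integral is at most $Y R^{-\kappa}(R/\operatorname{dist})^{\,}$ up to logs. For interior $x_i$ at distance $d\ge R^{1-\varepsilon/2}$ from the layer, the $\chi_R'$ part contributes $R^{1-\kappa}/d$ and the tail part contributes $\int_{|x|\ge R}|x|^{1-\kappa}/(x-x_i)^2\lesssim R^{1-\kappa}/d$. Summing against $\chi_R(x_i)/|x_i-w_1|$ over the roughly $d$-many particles at distance about $d$, and using dyadic shells in $d$, gives a total of order $YR^{-\kappa}R^{\varepsilon/2}\log R$ up to logs, which is at most $YR^{-\varepsilon/2}$ because $\varepsilon=\kappa/10$. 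The removal of $\delta_{x_i}$ in $\mu_i$ is harmless here since $1-\chi_R(x_i)=0$ for these particles.

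The boundary-layer particles $R-R^{1-\varepsilon/2}<|x_i|\le R+R^{1-\varepsilon}$ are the main obstacle, since $x$ can approach $x_i$. I would treat the near region $|x-x_i|\le R^{1-\varepsilon}$ by bounding the $\mu_i$ and $\rho_1$ parts separately, using $|1-\chi_R(x)-(1-\chi_R(x_i))|\le R^{-1+\varepsilon}|x-x_i|$ so that the singular piece reduces to $(1-\chi_R(x_i))$ times a local principal-value sum. That sum I would control by $\log$ of the minimal gap plus the local count. The far region is handled as in the interior case. Gap smallness needs an input, and I would bound $\sum_{i\ne k\text{ in layer}}|x_i-x_k|^{-1}$ in $L^p$ using the two-point correlation $\rho_2$. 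This is finite thanks to the BBGKY-derived repulsion, or one can simply use that $\int\log$-singularities are integrable against the number of pairs. The layer has $YR^{1-\varepsilon/2}$ particles, each with weight $1/R$ and an inner contribution of $Y\log R$, for a total of $YR^{-\varepsilon/2}\log R$. Adjusting the exponent slightly, which only requires shrinking $\varepsilon$ in the statement's normalization, yields \eqref{e:long_range}. The step I am least sure of is the $L^p$ control of the near-diagonal singular sum in the layer, and if it resists I would first try smoothing $\chi_R$ so that $1-\chi_R$ vanishes quadratically at $\pm R$.
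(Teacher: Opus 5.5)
Your interior estimate is sound and is essentially the paper's first step: integrate by parts against the counting discrepancy and use \eqref{eqn:weakRigid}. For $|x_i|<R$ one has $1-\chi_R(x_i)=0$, so the kernel is nonsingular there.

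\textbf{The gap is in the boundary layer.} After your Lipschitz step $1-\chi_R(x_k)=(1-\chi_R(x_i))+\OO(R^{-1+\varepsilon}|x_k-x_i|)$, each of the $\asymp R^{1-\varepsilon}$ particles with $R<|x_i|\le R+R^{1-\varepsilon}$ still carries the singular term
\[
(1-\chi_R(x_i))\sum_{k\ne i,\,|x_k-x_i|\le R^{1-\varepsilon}}\frac{1}{x_i-x_k}.
\]
You propose to bound this particle by particle in absolute value, and that cannot work, for four reasons.
\begin{itemize}
\item The nearest-neighbour term alone is $1/\mathrm{gap}$, not $\log$ of the gap.
\item The singularity $|x_i-x_k|^{-1}$ is not logarithmic, so the remark about integrating $\log$-singularities against pair counts does not apply.
\item The hypotheses of \Cref{prop:bbgky} are only \eqref{eqn:weakRigid} and \eqref{eqn:IBP}. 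They give no information on $\rho_2$ near the diagonal, so no ``BBGKY-derived repulsion'' is available.
\item Even for ${\rm Sine}_\beta$ itself, $\rho_2(x,x+u)\asymp u^\beta$, so $\bE\bigl[(\sum_{i\ne k}|x_i-x_k|^{-1})^p\bigr]=\infty$ for $p\ge\beta+1$. The lemma, however, requires $Y$ bounded in $L^p$ for every $p$.
\end{itemize}
Re-smoothing $\chi_R$ is not allowed, since $\chi_R$ is fixed by the statement. It would not help anyway, because $1-\chi_R(x_i)\asymp 1$ in the middle of the transition region.

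\textbf{The missing idea is symmetrization.} Never take absolute values particle by particle; treat the near-diagonal contribution as a double sum over pairs and symmetrize it. The paper sets $R'=R+10R^{1-\varepsilon}$ and $h=\chi_{R'}-\chi_R$, and splits $1-\chi_R=(1-\chi_{R'})+h$.

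The first piece is nonsingular for every $x_i\in\operatorname{supp}\chi_R$, since the distance is at least $9R^{1-\varepsilon}$. It is handled exactly by your integration by parts.

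For the $h$-piece, restricted to pairs with $|x_i-x_j|<100R^{1-\varepsilon}$, the paper writes
\[
\frac12\sum_{i\ne j}\frac{1}{x_i-x_j}\Bigl(\frac{\chi_R(x_i)h(x_j)}{x_i-w_1}-\frac{\chi_R(x_j)h(x_i)}{x_j-w_1}\Bigr).
\]
The bracket vanishes at $x_i=x_j$ and has Lipschitz constant $\lesssim R^{-2+\varepsilon}$. Each pair therefore contributes $\OO(R^{-2+\varepsilon})$ whatever its gap. By \eqref{eqn:weakRigid} there are $\lesssim YR^{2-2\varepsilon}$ such pairs, giving $YR^{-\varepsilon}$ with no gap estimate at all.

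Your own reduction can be repaired the same way. Set $f(x)=\chi_R(x)(1-\chi_R(x))/(x-w_1)$; then
\[
\sum_{i\ne k,\,|x_i-x_k|\le R^{1-\varepsilon}}\frac{f(x_i)}{x_i-x_k}
=\frac12\sum_{i\ne k,\,|x_i-x_k|\le R^{1-\varepsilon}}\frac{f(x_i)-f(x_k)}{x_i-x_k},
\]
which is at most $\|f'\|_\infty\lesssim R^{-2+\varepsilon}$ times the number of near pairs.

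\textbf{A minor point on exponents.} Your final $YR^{-\varepsilon/2}\log R$ is not of the stated form, and $\varepsilon$ cannot be shrunk because it is built into $\chi_R$. Only particles in $(R,R+R^{1-\varepsilon}]$ carry a singular term, and after symmetrization the layer contributes $YR^{-\varepsilon}$.
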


\begin{proof}
Let \(R'=R+10R^{1-\varepsilon}\).  We decompose \eqref{e:long_range} as
\begin{align}\begin{split}\label{e:long_range2}
\eqref{e:long_range}
&=\sum_i
\frac{\chi_R(x_i)}{x_i-w_1}
\int
\frac{1-\chi_{R'}(x)}{x_i-x}
(\rd\mu_i-\rd\rho_1)(x)\\
&+\sum_{i_1}\frac{\chi_R(x_{i_1})}{x_{i_1}-w_1}\int
\frac{(\chi_{R'}-\chi_{R})(x)}{x_{i_1}-x}(\rd \mu_{i_1}-\rd \rho_1)(x).
\end{split}\end{align}

We start with the first term on the right-hand side of \eqref{e:long_range2}.
Note that, for any \(f\) defined on \([0,\infty)\) such that \(f(0)=0\),
\(|f(u)|\lesssim u^{-1}\), and \(|f'(u)|\lesssim u^{-2}\) as \(u\to\infty\), we have
\[
\int_0^\infty f(x)(\rd\mu-\rd\rho_1)(x)
=
-\int_0^\infty f'(x)(\mu[0,x]-x/\pi)\rd x.
\]
For \(f(u)=\frac{1-\chi_{R'}(u)}{x_{i_1}-u}\), for any
\(|x_{i_1}|<R+R^{1-\varepsilon}=:b\) and \(u>R+5R^{1-\varepsilon}\), we have
\[
|f'(u)|
\leq
\frac{1-\chi_{R'}(u)}{|x_{i_1}-u|^2}
+
\frac{|\chi_{R'}'(u)|}{|x_{i_1}-u|}
\leq
\frac{1-\chi_{R'}(u)}{|b-u|^2}
+
\frac{|\chi_{R'}'(u)|}{|b-u|}
=:g(u).
\]
Denoting \(\Delta(u)=|\mu[0,u]-u/\pi|\), we have
\[
\left|
\int_0^\infty
\frac{1-\chi_{R'}(x)}{x_{i_1}-x}
(\rd\mu_{i_1}-\rd\rho_1)(x)
\right|
\leq
\int_0^\infty g(u)\Delta(u)\rd u
=:Y.
\]
Moreover, for any positive \(\ell(u)\),
\begin{multline*}
|Y|^p
\leq
\int_{[0,\infty)^p}
g(u_1)\dots g(u_p)\Delta(u_1)\dots\Delta(u_p)
\rd u_1\dots\rd u_p\\
\lesssim
\int_{[0,\infty)^p}
g(u_1)\dots g(u_p)\ell(u_1)\dots\ell(u_p)
\left(
\frac{\Delta(u_1)^p}{\ell(u_1)^p}
+\dots+
\frac{\Delta(u_p)^p}{\ell(u_p)^p}
\right)
\rd u_1\dots\rd u_p.
\end{multline*}
Choosing \(\ell(u)=|b-u|^{1-\frac{\kappa}{2}}\), and using the weak local law
estimate \eqref{eqn:weakRigid}, we obtain
\[
\mathbb{E}\left[|Y|^p\right]
\lesssim
\frac{1}{R^{p(1-\varepsilon)\frac{\kappa}{2}}}.
\]
Thus, for some \(Y\) uniformly bounded in \(L^p\), \(p\geq 1\), we have
\[
\left|
\sum_{i_1}
\frac{\chi_R(x_{i_1})}{x_{i_1}-w_1}
\int_0^\infty
\frac{1-\chi_{R'}(x)}{x_{i_1}-x}
(\rd \mu_{i_1}-\rd \rho_1)(x)
\right|
\lesssim
\frac{Y}{R^{\kappa/4}}
\sum_{|x_{i_1}|<2R}
\frac{1}{|x_{i_1}-w_1|}.
\]
The sum on the right-hand side is at most \(Y\log R\), and the same estimate
holds over \((-\infty,0]\). We have therefore proved
\begin{equation}\label{eqn:st1}
\left|
\sum_{i_1}
\frac{\chi_R(x_{i_1})}{x_{i_1}-w_1}
\int_{-\infty}^\infty
\frac{1-\chi_{R'}(x)}{x_{i_1}-x}
(\rd \mu_{i_1}-\rd \rho_1)(x)
\right|
\lesssim
\frac{Y}{R^{\kappa/8}}.
\end{equation}

Next we bound the second term on the right-hand side of \eqref{e:long_range2},
\[
\sum_{i_1}
\frac{\chi_R(x_{i_1})}{x_{i_1}-w_1}
\int
\frac{(\chi_{R'}-\chi_{R})(x)}{x_{i_1}-x}
(\rd \mu_{i_1}-\rd \rho_1)(x),
\]
by considering the \(\mu_{i_1}\) and \(\rho_1\) contributions separately,
starting with \(\mu_{i_1}\). Denoting \(h=\chi_{R'}-\chi_R\), we notice that
\(h(x_j)\neq 0\) only if \(|x_j|\in [R,R+20R^{1-\varepsilon}]\). In the
following we restrict to the case \(x_j\in [R,R+20R^{1-\varepsilon}]\); the
case \(x_j<0\) can be bounded in the same way.

We discuss the two cases
$
|x_{i_1}-x_j|<100R^{1-\varepsilon}$
and
$|x_{i_1}-x_j|\geq 100R^{1-\varepsilon}$
separately.

For the sum over \(|x_{i_1}-x_j|<100R^{1-\varepsilon}\), by symmetrization we
have
\begin{multline*}
\left|
\sum_{\substack{i_1\neq j\\ |x_{i_1}-x_j|<100R^{1-\varepsilon}}}
\frac{\chi_R(x_{i_1})}{x_{i_1}-w_1}
\frac{h(x_j)}{x_{i_1}-x_j}
\right|
\lesssim
\sum_{\substack{i_1\neq j\\ |x_{i_1}-x_j|<100R^{1-\varepsilon}}}
\left|
\frac{\chi_R(x_{i_1})h(x_j)}{x_{i_1}-w_1}
-
\frac{\chi_R(x_j)h(x_{i_1})}{x_j-w_1}
\right|
\frac{1}{|x_{i_1}-x_j|}\\
\lesssim
\frac{\|h'\|_{\infty}+\|\chi_{R'}'\|_{\infty}+R^{-1}}{R}
\cdot
\left(\#\{x_i\in[R-1000R^{1-\varepsilon},R+1000R^{1-\varepsilon}]\}\right)^2
\lesssim
\frac{Y}{R^\varepsilon},
\end{multline*}
where in the second line we used that
$
|x_{i_1}-w_1|,\ |x_j-w_1|\asymp R$.

For the sum over \(|x_{i_1}-x_j|\geq 100R^{1-\varepsilon}\), if
\(\chi_R(x_{i_1})\neq 0\) and \(h(x_j)\neq0\), then
$
|x_{i_1}-x_j|\gtrsim |x_{i_1}-R|$.
Thus we also have
\begin{multline*}
\left|
\sum_{\substack{i_1\neq j\\ |x_{i_1}-x_j|\geq 100R^{1-\varepsilon}}}
\frac{\chi_R(x_{i_1})}{x_{i_1}-w_1}
\frac{h(x_j)}{x_{i_1}-x_j}
\right|
\lesssim
\#\{x_j\in [R,R+20R^{1-\varepsilon}]\}
\cdot
\sum_{|x_i|<R-10R^{1-\varepsilon}}
\frac{1}{|x_i-w_1|\cdot|x_i-R|}\\
\lesssim
Y\,R^{1-\varepsilon}
\left(
\frac{1}{R}\sum_{|x_i|<R/2}\frac{1}{|x_i-w_1|}
+
\frac{1}{R}
\sum_{|x_i|\in[R/2,R-10R^{1-\varepsilon}]}
\frac{1}{|x_i-R|}
\right)
\leq
\frac{Y}{R^{\varepsilon/2}}.
\end{multline*}
Together with similar, easier estimates for the contribution of \(\rho_1\), we have obtained
\[
\left|
\sum_{i_1}
\frac{\chi_R(x_{i_1})}{x_{i_1}-w_1}
\int
\frac{(\chi_{R'}-\chi_{R})(x)}{x_{i_1}-x}
(\rd \mu_{i_1}-\rd \rho_1)(x)
\right|
\lesssim
\frac{Y}{R^{\varepsilon/2}}.
\]
This estimate, combined with \eqref{eqn:st1} and the choice
\(\varepsilon\leq \kappa/4\), concludes the proof.
\end{proof}

Finally, we explain how the loop equations \eqref{e:loopeq2-bulk} allow us to obtain the equilibrium hierarchy
\eqref{eqn:BBGKY}. The reasoning below can be made fully rigorous by testing against local test functions and passing to limits, but we focus only on the algebraic aspects.
For a function \(F\) analytic off the real axis, define the jump operator
\[
\Delta_E F
:=
\frac1{2\pi {\rm i}}\left(F(E+\ri 0)-F(E-\ri 0)\right).
\]

\begin{lemma}\label{lem:basisEq}
Let
\[
s(z)=\operatorname{P.V.} \sum_i \frac{1}{x_i-z},
\qquad 
\mu=\sum_i \delta_{x_i},
\qquad 
H(E)=\operatorname{P.V.}\sum_i \frac{1}{x_i-E}.
\]
Then, in the sense of distributions,
\begin{align}\label{e:DE}
\Delta_E s = \mu(E),\quad \Delta_E s' =\partial_E \mu(E),\quad 
\Delta_E (s^2)
=
2\mu(E)H(E)+\partial_E\mu(E).
\end{align}
Here \(\mu(E)H(E)\) is understood with the pole at \(E=x_i\) omitted, namely
\[
\mu(E)H(E)
=
\sum_i
\left(
\operatorname{P.V.}\sum_{j\neq i}\frac{1}{x_j-x_i}
\right)
\delta_{x_i}(E).
\]
Moreover, for \(E\neq E'\),
\begin{align}\label{e:DDEE}
\Delta_E\Delta_{E'}\partial_{w'}
\frac{s(w)-s(w')}{w-w'}
=0,
\end{align}
where \(\Delta_E\) acts on the variable \(w\), and \(\Delta_{E'}\) acts on the variable \(w'\).
\end{lemma}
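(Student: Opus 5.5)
The plan is to compute each jump from the Sokhotski--Plemelj formula, applied termwise to the principal-value representation of $s$. The basic identity is
\[
\frac1{x-E\mp\ri0}=\operatorname{P.V.}\frac1{x-E}\pm\ri\pi\delta_x(E).
\]
Paired with a test function $f\in C_c^\infty(\bR)$, the local sum over $x_i$ near $\operatorname{supp} f$ is finite. The remaining tail converges uniformly in the principal-value sense, because the symmetric truncations are harmless after testing. This gives $s(E\pm\ri0)=H(E)\pm\ri\pi\mu(E)$, so $\Delta_E s=\mu(E)$. Since $\partial_w$ commutes with taking boundary values in the distributional sense, differentiating gives $\Delta_E s'=\partial_E\mu(E)$. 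I will be careful that the sign convention of $\Delta_E$ matches the orientation of the poles, $s(w)\sim 1/(x_i-w)$.

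For $s^2$, I would avoid multiplying distributions directly. Instead I isolate the singular part locally. Near a given atom $x_i$, write
\[
s(w)=\frac{1}{x_i-w}+r_i(w),
\]
where $r_i$ is analytic near $x_i$ and $r_i(x_i)=\operatorname{P.V.}\sum_{j\neq i}1/(x_j-x_i)$. Then
\[
s^2=(x_i-w)^{-2}+2r_i(w)(x_i-w)^{-1}+r_i^2.
\]
The term $r_i^2$ has no jump at $x_i$. The jump of $(x_i-w)^{-2}=\partial_w(x_i-w)^{-1}$ is $\partial_E\delta_{x_i}$. The jump of $2r_i(w)/(x_i-w)$ is $2r_i(x_i)\delta_{x_i}$, because $r_i(w)-r_i(x_i)$ vanishes at $x_i$ and so cancels the pole. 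Summing over $i$ with a partition of unity subordinate to the locally finite configuration gives
\[
\Delta_E(s^2)=2\mu(E)H(E)+\partial_E\mu(E),
\]
with exactly the stated convention of omitting the self-pole.

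For \eqref{e:DDEE}, I use the identity from \eqref{e:fest},
\[
\partial_{w'}\frac{s(w)-s(w')}{w-w'}=\sum_i\frac{1}{(x_i-w)(x_i-w')^2},
\]
together with the observation that its constant-in-$w$ part is killed by $\Delta_E$. Taking $\Delta_E$ in $w$ gives $\sum_i\delta_{x_i}(E)(x_i-w')^{-2}$. Applying $\Delta_{E'}$ in $w'$ gives $-\sum_i\delta_{x_i}(E)\,\partial_{E'}\delta_{x_i}(E')$, up to sign. This distribution is supported on the diagonal $E=E'$, so it vanishes for $E\neq E'$.

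The main obstacle is justifying these termwise manipulations for the infinite principal-value sum. I would handle this by testing against $f$, supported away from the other atoms locally. I would split $s=s_{\rm loc}+s_{\rm far}$, where $s_{\rm far}$ is analytic across $\operatorname{supp} f$ and has no jump. This reduces everything to finitely many atoms, which is legitimate because the configuration is locally finite.
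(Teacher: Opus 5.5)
Your proposal is correct. For $\Delta_E s$, $\Delta_E s'$ and \eqref{e:DDEE} you argue as the paper does: Sokhotski--Plemelj applied termwise, then the explicit formula $\sum_i (x_i-w)^{-1}(x_i-w')^{-2}$, whose double jump is supported on $E=E'$. With your own convention $\Delta_{E'}(x_i-w')^{-2}=\partial_{E'}\delta_{x_i}(E')$ the sign there is $+$, as in the paper; only the support matters, so your hedge ``up to sign'' is harmless.

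The real difference is in the $s^2$ identity. The paper splits $s^2$ globally into the off-diagonal double sum $\sum_{i\neq j}(x_i-z)^{-1}(x_j-z)^{-1}$ and a diagonal sum. It takes the jump of each pair by partial fractions and gets the factor $2$ by pairing $(i,j)$ with $(j,i)$. This is transparent, but for an infinite configuration it formally rearranges a double series that is not absolutely convergent.

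You instead read off the jump from the principal part of the meromorphic function $s^2$ at each pole, writing $s=(x_i-w)^{-1}+r_i$. In your version the factor $2$ comes from the cross term. The ``pole omitted'' convention for $\mu H$ then appears automatically as $r_i(x_i)$.

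Your route needs only local finiteness and locally uniform convergence of the P.V. sum away from the atoms, which your $s_{\rm loc}+s_{\rm far}$ splitting provides. So it is the more careful of the two. The paper's route is shorter and more explicitly combinatorial.
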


\begin{proof}[Proof of \Cref{lem:basisEq}]
The identity \(\Delta_E s=\mu(E)\) follows directly from the Sokhotski--Plemelj formula. Indeed,
\[
s(E\pm \mathrm i0)
=
H(E)\pm \mathrm i\pi \mu(E),
\]
and therefore
\[
\Delta_E s
=
\frac{1}{2\pi \mathrm i}
\left(
s(E+\mathrm i0)-s(E-\mathrm i0)
\right)
=
\mu(E).
\]

We also have, in the distributional sense,
\[
\Delta_E
\frac{1}{(x_i-z)^2}
=
\partial_E\delta_{x_i}(E).
\]
Summing over \(i\) gives
\begin{align}\label{e:ddest}
\Delta_E
\sum_i
\frac{1}{(x_i-z)^2}
=
\Delta_E s'
=
\sum_i \partial_E\delta_{x_i}(E)
=
\partial_E\mu(E).
\end{align}

We next prove the identity for \(\Delta_E(s^2)\). Split \(s^2\) into its off-diagonal and diagonal parts:
\[
s(z)^2
=
\sum_{i\neq j}
\frac{1}{(x_i-z)(x_j-z)}
+
\sum_i
\frac{1}{(x_i-z)^2}.
\]

Consider first the off-diagonal terms. For \(i\neq j\), the function
\[
\frac{1}{(x_i-z)(x_j-z)}
\]
has simple poles at \(x_i\) and \(x_j\). Hence its jump is
\[
\Delta_E
\frac{1}{(x_i-z)(x_j-z)}
=
\frac{\delta_{x_i}(E)}{x_j-x_i}
+
\frac{\delta_{x_j}(E)}{x_i-x_j}.
\]
Summing over \(i\neq j\), we obtain
\[
\Delta_E
\sum_{i\neq j}
\frac{1}{(x_i-z)(x_j-z)}
=
2\sum_i
\left(
\sum_{j\neq i}\frac{1}{x_j-x_i}
\right)
\delta_{x_i}(E)
=
2\mu(E)H(E).
\]
The diagonal contribution is given by \eqref{e:ddest}. Combining the off-diagonal and diagonal parts proves
\[
\Delta_E(s^2)
=
2\mu(E)H(E)+\partial_E\mu(E).
\]

Finally, we prove \eqref{e:DDEE}. We note that
\[
\partial_{w'}
\frac{s(w)-s(w')}{w-w'}
=
\sum_i
\frac{1}{(x_i-w)(x_i-w')^2}.
\]
Taking the jump first in \(w'\) and then in \(w\) gives
\[
\Delta_E\Delta_{E'}
\partial_{w'}
\frac{s(w)-s(w')}{w-w'}
=
\sum_i
\delta_{x_i}(E)\,
\partial_{E'}\delta_{x_i}(E').
\]
This distribution is supported on the diagonal \(E=E'\). Hence it vanishes on the region \(E\neq E'\), which proves \eqref{e:DDEE}.
\end{proof}

We now start from the loop equation \eqref{e:loopeq2-bulk}
\[
\mathbb E\left[
\left(
\frac{2-\beta}{\beta}\partial_{w_1}s(w_1)
+s(w_1)^2+1
\right)
\prod_{j=2}^p s(w_j)
\right]
+
\frac2\beta
\sum_{j=2}^p
\mathbb E\left[\partial_{w_j}\left(
\frac{s(w_1)-s(w_j)}{w_1-w_j}\right)
\prod_{\substack{\ell=2\\ \ell\neq j}}^p s(w_\ell)
\right]
=0.
\]
We now apply
\[
\Delta_{E_1}\cdots\Delta_{E_p}
\]
with \(w_a=E_a\pm {\rm i}0\), where the \(E_a\)'s are distinct.
Lemma \ref{lem:basisEq} gives, for the \(\partial_{w_1}s\) term,
\[
\Delta_{E_1}\cdots\Delta_{E_p}
\mathbb E\left[
\partial_{w_1}s(w_1)
\prod_{j=2}^p s(w_j)
\right]
=
\partial_{E_1}\rho_p(E_1,\dots,E_p);
\]
for the quadratic term,
\[
\Delta_{E_1}\cdots\Delta_{E_p}
\mathbb E\left[
s(w_1)^2
\prod_{j=2}^p s(w_j)
\right]
=
2\,{\rm P.V.}\!\int_{\mathbb R}
\frac{\rho_{p+1}(E_1,x,E_2,\dots,E_p)}{x-E_1}\rd x
+
2\sum_{j=2}^p
\frac{\rho_p(E_1,\dots,E_p)}{E_j-E_1}
+
\partial_{E_1}\rho_p(E_1,\dots,E_p).
\]
The analytic term \(+1\) has no jump in \(E_1\) and hence disappears.
When the \(E_a\)'s are distinct, the jumps of the last term also vanish:
\begin{align}
\Delta_{E_1}\cdots\Delta_{E_p}\mathbb E\left[\partial_{w_j}\left(
\frac{s(w_1)-s(w_j)}{w_1-w_j}\right)
\prod_{\substack{\ell=2\\ \ell\neq j}}^p s(w_\ell)
\right]=0.
\end{align}
Note that this term would be important in the equilibrium BBGKY hierarchy that includes contact terms, but it does not contribute to deriving \eqref{eqn:BBGKY}.

Therefore, away from coincident external points, we have proved
\[
\frac1\beta\partial_{E_1}\rho_p
+
{\rm P.V.}\!\int
\frac{\rho_{p+1}(x,E_1,E_2,\dots,E_p)}{x-E_1}\,\rd x
+
\sum_{j=2}^p
\frac{\rho_p(E_1,\dots,E_p)}{E_j-E_1}
=0.
\]
This is the equilibrium BBGKY hierarchy for the one-dimensional logarithmic gas as stated in \eqref{eqn:BBGKY}; the background term \(\rho_p\rho_1\) vanishes after integration in the principal-value sense.

\bibliography{References_capitalized2.bib}
\bibliographystyle{abbrv}

\end{document}